\newtheorem{thm}{Theorem}[section]
\newtheorem{prop}[thm]{Proposition}
\newtheorem{cor}[thm]{Corollary}
\newtheorem{lem}[thm]{Lemma}
\theoremstyle{definition}
\newtheorem{define}[thm]{Definition}
\theoremstyle{remark}
\newtheorem{rem}[thm]{Remark}
\newtheorem{example}[thm]{Example}
\newtheorem{question}[thm]{Question}
\newcommand{\ve}[1]{\boldsymbol{\mathbf{#1}}}
\newcommand{\R}{\mathbb{R}}
\newcommand{\Z}{\mathbb{Z}}
\newcommand{\N}{\mathbb{N}}
\newcommand{\Q}{\mathbb{Q}}
\renewcommand{\d}{\partial}
\renewcommand{\subset}{\subseteq}
\renewcommand{\bar}{\overline}
\newcommand{\iso}{\cong}
\DeclareMathOperator{\alg}{{alg}}
\DeclareMathOperator{\Cyl}{{Cyl}}
\DeclareMathOperator{\Ext}{{Ext}}
\DeclareMathOperator{\gr}{{gr}}
\DeclareMathOperator{\Hom}{{Hom}}
\DeclareMathOperator{\id}{{id}}
\DeclareMathOperator{\im}{{im}}
\DeclareMathOperator{\Maz}{{Maz}}
\DeclareMathOperator{\Spin}{{Spin}}
\newcommand{\lk}{\mathrm{lk}}
\newcommand{\bA}{\mathbb{A}}
\newcommand{\bB}{\mathbb{B}}
\newcommand{\bE}{\mathbb{E}}
\newcommand{\bF}{\mathbb{F}}
\newcommand{\bH}{\mathbb{H}}
\newcommand{\bI}{\mathbb{I}}
\newcommand{\bJ}{\mathbb{J}}
\newcommand{\bM}{\mathbb{M}}
\newcommand{\bT}{\mathbb{T}}
\newcommand{\bX}{\mathbb{X}}
\newcommand{\cA}{\mathcal{A}}
\newcommand{\cB}{\mathcal{B}}
\newcommand{\cC}{\mathcal{C}}
\newcommand{\cD}{\mathcal{D}}
\newcommand{\cF}{\mathcal{F}}
\newcommand{\cH}{\mathcal{H}}
\newcommand{\cK}{\mathcal{K}}
\newcommand{\cS}{\mathcal{S}}
\newcommand{\cT}{\mathcal{T}}
\newcommand{\cU}{\mathcal{U}}
\newcommand{\cV}{\mathcal{V}}
\newcommand{\cX}{\mathcal{X}}
\newcommand{\cY}{\mathcal{Y}}
\newcommand{\cZ}{\mathcal{Z}}
\newcommand{\frC}{\mathfrak{C}}
\newcommand{\frH}{\mathfrak{H}}
\newcommand{\frL}{\mathfrak{L}}
\newcommand{\frR}{\mathfrak{R}}
\newcommand{\frV}{\mathfrak{V}}
\newcommand{\frh}{\mathfrak{h}}
\newcommand{\fro}{\mathfrak{o}}
\newcommand{\frs}{\mathfrak{s}}
\newcommand{\frv}{\mathfrak{v}}
\newcommand{\frz}{\mathfrak{z}}
\newcommand{\scC}{\mathscr{C}}
\newcommand{\scE}{\mathscr{E}}
\newcommand{\scF}{\mathscr{F}}
\newcommand{\scH}{\mathscr{H}}
\newcommand{\scJ}{\mathscr{J}}
\newcommand{\scM}{\mathscr{M}}
\newcommand{\scU}{\mathscr{U}}
\newcommand{\scV}{\mathscr{V}}
\newcommand{\cCFL}{\mathcal{C\!F\!L}}
\newcommand{\cCFK}{\mathcal{C\hspace{-.5mm}F\hspace{-.3mm}K}}
\newcommand{\cHFL}{\mathcal{H\!F\! L}}
\newcommand{\cHFK}{\mathcal{H\!F\! K}}
\newcommand{\CF}{\mathit{CF}}
\newcommand{\HF}{\mathit{HF}}
\newcommand{\HFL}{\mathit{HFL}}
\newcommand{\PD}{\mathit{PD}}
\newcommand{\xs}{\ve{x}}
\newcommand{\ys}{\ve{y}}
\newcommand{\zs}{\ve{z}}
\newcommand{\ws}{\ve{w}}
\newcommand{\ps}{\ve{p}}
\renewcommand{\a}{\alpha}
\renewcommand{\b}{\beta}
\newcommand{\g}{\gamma}
\newcommand{\veps}{\varepsilon}
\DeclareMathOperator{\Cone}{{Cone}}
\numberwithin{equation}{section}
\newcommand{\ar}{\mathrm{a.r.}}
\newcommand{\llsquare}{[\hspace{-.5mm}[}
\newcommand{\rrsquare}{]\hspace{-.5mm}]}
\newcommand{\co}{\mathrm{co}}
\newcommand{\vecotimes}{\mathrel{\vec{\otimes}}}
\title{L-space satellite operators and knot Floer homology}
\author{Daren Chen}
\address{Department of Mathematics\\California Institute of Technology\\ Pasadena, CA, USA}
\email{darenc@caltech.edu}
\author{Ian Zemke}
\address{Department of Mathematics\\University of Oregon\\  Eugene, OR, USA}
\email{izemke@uoregon.edu}
\thanks{IZ was partially supported by NSF grant DMS-2204375 and a Sloan fellowship.}
\author{Hugo Zhou}
\address{Department of Mathematics\\University of Michigan\\  Ann Arbor, MI, USA}
\email{hugozhou@umich.edu}
\thanks{HZ was  supported by the Max Planck Institute for Mathematics during the course of this work.}
\begin{document}
\maketitle
\begin{abstract}
We consider satellite operators where the corresponding 2-component link is an L-space link. This family includes many commonly studied satellite operators, including cabling operators, the Whitehead operator, and a family of Mazur operators. We give a formula which computes the knot Floer complex of a satellite of $K$ in terms of the knot Floer complex of $K$. Our main tools are the Heegaard Floer Dehn surgery formulas and their refinements. A key step in our computation is a proof that 2-component L-space links have formal knot Floer complexes. We use this to show that the link Floer complexes of 2-component L-space links are determined by their multivariable Alexander polynomials. We implement our satellite formula in Python code, which we also make available.
\end{abstract}
\tableofcontents

\section{Introduction}

In this paper we study satellite operators and knot Floer homology. Knot Floer homology is an invariant of knots which was introduced independently by Ozsv\'{a}th and Szab\'{o} \cite{OSKnots} and Rasmussen \cite{RasmussenKnots}. In this paper, we focus on the full knot Floer complex. If $K\subset S^3$ is a knot, we write $\cCFK(K)$ for this invariant. It takes the form of a finitely generated, free chain complex over a 2-variable polynomial ring $R=\bF[W,Z]$. Note, many authors write $\cU$ in place of $W$, and $\cV$ in place of $Z$; we reserve $U$ to denote the product $U=WZ$.

Satellite constructions are some of the most fundamental operations in knot theory. Given a knot $P$ in $S^1\times D^2$, and a knot $K\subset S^3$, we may form a new knot 
\[
P(K,n)\subset S^3
\]
for each $n\in \Z$. To construct this knot, we consider an embedding
\[
i_{K,n}\colon S^1\times D^2\hookrightarrow S^3
\]
of a tubular neighborhood of $K$. We assume that the longitude $S^1\times \{\theta\}$, $\theta\in \d D^2$ is sent to an $n$-framed longitude of $K$. The knot $P(K,n)$ is defined as the image of $P$ under $i_{K,n}$:
\[
P(K,n)=i_{K,n}(P)\subset S^3.
\]

%
%

 Note that there is a corresponding 2-component link $L_P\subset S^3$ obtained by viewing $S^1\times D^2$ as the tubular neighborhood of an unknot $U$ in $S^3$. We set
\[
L_P=\mu\cup P
\]
where $\mu$ is the core of the solid torus complementary to solid torus containing $P$. See Figure~\ref{fig:11}.

\begin{figure}[h]
\begingroup%
  \makeatletter%
  \providecommand\color[2][]{%
    \errmessage{(Inkscape) Color is used for the text in Inkscape, but the package 'color.sty' is not loaded}%
    \renewcommand\color[2][]{}%
  }%
  \providecommand\transparent[1]{%
    \errmessage{(Inkscape) Transparency is used (non-zero) for the text in Inkscape, but the package 'transparent.sty' is not loaded}%
    \renewcommand\transparent[1]{}%
  }%
  \providecommand\rotatebox[2]{#2}%
  \newcommand*\fsize{\dimexpr\f@size pt\relax}%
  \newcommand*\lineheight[1]{\fontsize{\fsize}{#1\fsize}\selectfont}%
  \ifx\svgwidth\undefined%
    \setlength{\unitlength}{284.04015121bp}%
    \ifx\svgscale\undefined%
      \relax%
    \else%
      \setlength{\unitlength}{\unitlength * \real{\svgscale}}%
    \fi%
  \else%
    \setlength{\unitlength}{\svgwidth}%
  \fi%
  \global\let\svgwidth\undefined%
  \global\let\svgscale\undefined%
  \makeatother%
  \begin{picture}(1,0.34407329)%
    \lineheight{1}%
    \setlength\tabcolsep{0pt}%
    \put(0,0){\includegraphics[width=\unitlength,page=1]{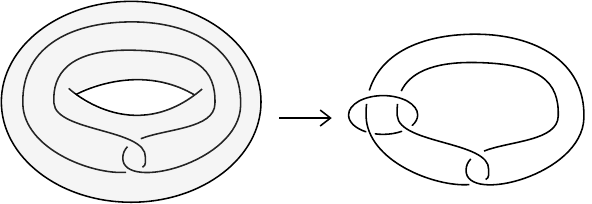}}%
    \put(0.71255525,0.17176173){\makebox(0,0)[lt]{\lineheight{1.25}\smash{\begin{tabular}[t]{l}$\mu$\end{tabular}}}}%
    \put(0.12249339,0.08126561){\makebox(0,0)[lt]{\lineheight{1.25}\smash{\begin{tabular}[t]{l}$P$\end{tabular}}}}%
    \put(0.95678889,0.05219165){\makebox(0,0)[lt]{\lineheight{1.25}\smash{\begin{tabular}[t]{l}$L_P$\end{tabular}}}}%
  \end{picture}%
\endgroup%

\caption{(Left) A knot $P\subset S^1\times D^2$. (Right) The 2-component link $L_P\subset S^3$.}
\label{fig:11}
\end{figure}

In this paper, we consider the problem of computing $\cCFK(P(K,n))$ given $\cCFK(K)$. We consider a restricted family of satellite operators:
\begin{define}
\item
\begin{enumerate}
\item A rational homology 3-sphere $Y$ is called an \emph{L-space} if $\widehat{\HF}(Y,\frs)\iso \Z/2$ for all $\frs\in \Spin^c(Y)$. 
 \item A link $L\subset S^3$ is called an \emph{L-space link} if the Dehn surgery $S^3_{\Lambda}(L)$ is an L-space for all integral framings $\Lambda \gg 0$.
\item A pattern $P\subset S^1\times D^2$ is an \emph{L-space pattern} if the 2-component link $L_P\subset S^3$ is an L-space link.
\end{enumerate}
\end{define}
Many important families of links are L-space links. For example, algebraic links are L-space links \cite{GorskyNemethiAlgebraicLinks}.  We refer the reader to Section~\ref{sec:background-L-space-link} more more details about L-space links.

In this paper, we completely describe $\cCFK(P(K,n))$ in terms of $\cCFK(K)$ when $P$ is an L-space pattern. Many common satellite operators are formed using L-space patterns. This includes all cabling operators, the Whitehead operator, and a family of generalized Mazur operators. We illustrate our techniques with numerous example computations. 

 Our main tool is the link surgery formula of Manolescu and Ozsv\'{a}th \cite{MOIntegerSurgery}, as well as its interpretation in terms of bordered manifolds with torus boundary due to the second author \cite{ZemBordered} \cite{ZemExact}.

Satellite operators in knot Floer theory is a well-trodden subject. Early papers by Hedden \cite{HeddenCabling1} \cite{HeddencablingII} \cite{HeddenWhiteheadDoubles} give substantial information about the knot Floer homology of cables and Whitehead doubles. More recent efforts have centered on using the bordered Heegaard Floer theory of Lipshitz, Ozsv\'{a}th and Thurston \cite{LOTBordered} to study satellite operations. This has led to numerous advances in the subject, notably work of Hom \cite{HomTauCables}, Petkova \cite{PetkovaCablesThin}, Levine \cite{LevineDoubling}, among many others.  Using the immersed curves interpretation of bordered Heegaard Floer homology \cite{HRWImmersedCurves}, Hanselman and Watson \cite{HWCabling} describe a very elegant description of the $U=0$ version of the knot Floer complex in terms of geometric operations on immersed curves. These techniques have been extended to compute $\cCFK(P(K,n))/U$ for $(1,1)$-patterns by Chen and Hanselman \cite{Chen11} \cite{ChenHanselmanSatellites}.  Comparing our work to \cite{ChenHanselmanSatellites}, we remark that there are L-space patterns that are not $(1,1)$-patterns, and vice versa. See Section \ref{sec: More general L space patterns} for further discussion.

 It would be interesting to compare our techniques with the immersed curve techniques, for example by interpreting them in terms of Hanselman's minus version of the immersed curves technology for knot Floer homology \cite{HanselmanMinus}.

We note that all of the above techniques compute a weaker version of knot Floer homology than $\cCFK(K)$. Indeed, they compute the versions over one of the rings $\bF[W,Z]/U$, $\bF[W,Z]/W$ or $\bF[W,Z]/(W,Z)$. We note that these invariants still contain substantial topological information. For example the $\bF[W,Z]/U$ version is sufficient to compute Hom's $\veps$-invariant \cite{HomEpsilon}, the $\varphi_k$ invariants of Dai, Hom, Stoffregen and Truong \cite{DHSTmore} and Ozsv\'{a}th and Szab\'{o}'s $\tau$-invariant \cite{OS4ballgenus}.

 We note that the $\bF[W,Z]/U$ theory is not sufficient to compute Ozsv\'{a}th, Stipsicz and Szab\'{o}'s $\Upsilon$-invariant \cite{OSSUpsilon}, or the $d$-invariants of Dehn surgeries. Our techniques also allow one to compute $\CF^-$ of Dehn surgeries along these satellite knots using the mapping cone formula \cite{OSIntegerSurgeries}.

In small examples, the complex $\cCFK(K)/U$ frequently determines $\cCFK(K)$, though there are simple algebraic examples of knot-like complexes $C$ over $\bF[W,Z]$ where $C/U$ does not determine $C$.

Our techniques are also suited for direct computation. We build a finite dimensional (albeit rather large) model for $\cCFK(P(K,n))$. We have implemented this algorithm in Python code which we have made available \cite{CZZCode}.

\subsection{Formality and 2-component L-space links}

To a link $L$ in $S^3$, Ozsv\'{a}th and Szab\'{o} define a link Floer complex $\cCFL(L)$ \cite{OSLinks}. For an $n$-component link, this invariant can be interpreted as being a finitely generated and free chain complex over the $2n$-variable polynomial ring
\[
R_n:=\bF[W_1,Z_1,\dots,W_n,Z_n].
\]
We write $U_i$ for the product $W_iZ_i$. We note that $U_i$ and $U_j$ give chain homotopic endomorphisms of $\cCFL(L)$, and therefore induce the same action on homology.

In this paper, we describe how to compute $\cCFL(L)$ for 2-component L-space links. The form of our computation is inspired by \cite{BLZLattice}, though our proof is substantially different. The key step in establishing our computation is the following result:

\begin{thm}
\label{thm:formality-intro}
If $L$ is a 2-component L-space link, then $\cCFL(L)$ is formal (i.e. there exists a quasi-isomorphism from $\cCFL(L)$ to its homology $\cHFL(L)$).
\end{thm}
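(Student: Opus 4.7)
The plan is to prove formality by passing to a minimal model of $\cCFL(L)$ over $R_2$ and showing that its differential agrees with a formal resolution of $\cHFL(L)$ built directly from the $H$-function of $L$ (which for a 2-component L-space link is determined by the multivariable Alexander polynomial).

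Since $\cCFL(L)$ is finitely generated and free over $R_2$, I would first apply repeated Gaussian elimination (cancelling pairs of generators connected by a unit entry of the differential) to reduce to a homotopy equivalent free complex $C$ whose differential has all entries in the maximal ideal $\frm = (W_1, Z_1, W_2, Z_2)$. Such a minimal model is unique up to isomorphism, and its generators are in bijection with a homogeneous $\bF$-basis of the reduction $C \otimes_{R_2} \bF$. Showing $\cCFL(L)$ is formal reduces to identifying $C$ with a minimal free resolution of $\cHFL(L)$.

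Next, I would use the L-space hypothesis to pin down both the generators and the possible differentials of $C$. For each $\vec s \in \bZ^2$, the sub-quotient complex $A^-(\vec s)$ of $\cCFL(L)$ has homology isomorphic to $\bF[U]$ concentrated in Maslov grading $-2H(\vec s)$. Via inclusion-exclusion over the lattice $\bZ^2$, this pins down the multigraded $\bF$-dimension of $\cHFL(L)$ in each Alexander bigrading, and hence the generators of $C$. Any surviving matrix entry of the differential on $C$ must be a monomial $W_1^{a_1} Z_1^{b_1} W_2^{a_2} Z_2^{b_2}$ with at least one exponent positive, homogeneous with respect to the Maslov and multi-Alexander bigradings.

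The heart of the proof is to show that these surviving differentials assemble into a minimal free resolution of $\cHFL(L)$ with no extra Massey-type corrections. I would argue this by induction on the Alexander bigrading, using the following rigidity: any deviation from the anticipated resolution pattern would survive in some sub-quotient $A^-(\vec s)$ and contribute extra homology there, contradicting $H_*(A^-(\vec s)) \cong \bF[U]$ in a single Maslov grading. Once $C$ is identified as this formal resolution, the natural surjection $C \to \cHFL(L)$ composed with the homotopy equivalence $\cCFL(L) \simeq C$ yields the desired quasi-isomorphism.

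The main obstacle is precisely this last rigidity step: ruling out all possible higher-order corrections to the minimal differential simultaneously. The 2-component hypothesis is essential here, as the Alexander lattice $\bZ^2$ is low-dimensional enough that the L-space constraints coming from the one-parameter family of sub-quotients $\{A^-(\vec s)\}_{\vec s \in \bZ^2}$ collectively suffice to determine $\partial_C$. In higher link rank, analogous Massey-type obstructions on a higher-dimensional lattice cannot be controlled by the same argument, so I would expect the proof to make essential use of the combinatorics of $\bZ^2$ and the explicit structure of the 2-variable $H$-function.
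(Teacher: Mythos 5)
Your overall framing — reduce to a minimal model $C$ over $R_2$, observe that the L-space condition forces the homology $\cHFL(L)$ to be free over $\bF[U]$ so that the generators of $C$ are pinned down, and then try to identify $C$ with a minimal free resolution of $\cHFL(L)$ — is a reasonable plan, and you correctly isolate the crux: ruling out higher corrections to the differential of $C$. But the proposed mechanism for that last step does not work. You argue that any deviation from the resolution pattern ``would survive in some sub-quotient $A^-(\vec s)$ and contribute extra homology there, contradicting $H_*(A^-(\vec s)) \cong \bF[U]$.'' This cannot produce a contradiction: the minimal model $C$ is homotopy equivalent to $\cCFL(L)$, so every sub-quotient $A^-(\vec s)$ of $C$ automatically computes the correct L-space homology, no matter what the differential of $C$ looks like. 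The L-space hypothesis constrains the homology, and hence the generators of $C$, but it places no direct constraint on the differential of $C$ at the level of individual $A^-(\vec s)$ subquotients, because those homology groups are homotopy invariants that $C$ inherits for free. The difficulty is precisely that there is no intrinsic ``homological degree'' on $\cCFL(L)$ with respect to which one could detect whether $C$ is a resolution, so you cannot read off formality from the shape of the $H$-function alone.

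The paper closes exactly this gap by a different mechanism: Koszul duality against $\Lambda^*(\theta_{W_i},\theta_{Z_i})$, which introduces an algebraic grading that plays the role of homological degree. One first slices $\cCFL(L)$ by the $A_1$-grading, obtaining staircase complexes $\cC_q$ (this uses the $n=1$ parity argument that kills $m_3$ for L-space knots). One then tensors the bimodule ${}_R\cCFL(L)^R$ with ${}^R\Lambda^R$ and reduces formality to showing that the homotopies $h_{W,Z}$ and $h_{Z,W}$ arising from the two models agree up to chain homotopy; this comes down to Lemma~\ref{lem:computation-ext-complex}, a concrete computation that $\mathrm{Hom}(\cS^R,\cS^R)$ has trivial homology in $(\gr_{\alg},\gr_{\ws},\gr_{\zs})$-degree $(-1,-1,-1)$, equivalently that the relevant bigraded piece of $\Ext^1_R(H_*(\cS),H_*(\cS))$ vanishes for every staircase $\cS$. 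That $\Ext$-vanishing is the nontrivial rigidity input that your argument replaces with an appeal to $A^-(\vec s)$; without a substitute for it, your induction has no base to stand on, and indeed the paper's discussion of $n$-component links in Section~\ref{sec:remarks} gives explicit examples (e.g.\ $\cCFL(T(2,4))$) showing that the analogous $\Ext^2$ groups do \emph{not} vanish over $R_2$, which is why the proof is genuinely restricted to $n\le 2$. To repair your proposal you would need to articulate and prove some version of this $\Ext$-vanishing, and then explain how it propagates through the perturbation that produces the minimal model.
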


Our proof of Theorem~\ref{thm:formality-intro} is purely algebraic, and makes use of the Koszul duality between the polynomial ring and the exterior algebra.

We now sketch how to use Theorem~\ref{thm:formality-intro} to compute $\cCFL(L)$. Firstly, by some basic homological algebra, it follows from Theorem~\ref{thm:formality-intro} that $\cCFL(L)$ is homotopy equivalent to a free resolution over $R_2$ of $\cHFL(L)$. Therefore it suffices to understand the homology $\cHFL(L)$ as an $R_2$-module.

 If $L$ is a link with $n$-components, the link Floer complex $\cCFL(L)$ admits an $n$-component Alexander grading, which takes values in the set
\[
\bH(L)=\prod_{i=1}^n \left(\Z+\frac{\lk(K_i,L\setminus K_i)}{2}\right)
\]
where $K_i$ is the $i$-th component of $L$. The differential preserves the Alexander grading, as do the actions of $U_1,\dots, U_n$.

 It is a well-known fact that $L$ is an L-space link if and only if the homology $\cHFL(L)$ is torsion free as an $\bF[U]$-module (where $U$ acts by any of the $U_i$; all $U_i$ have the same action on homology). Therefore the homology $\cHFL(L)$ decomposes as a direct sum of one copy of $\bF[U]$ in each Alexander grading:
\[
\cHFL(L)=\bigoplus_{\ve{s}\in \bH(L)} \bF[U].
\]
Since the actions of $W_i$ and $Z_i$ have simple Maslov and Alexander grading shifts, the structure of $\cHFL(L)$ as an $R_n$-module is determined by the set of data of the Maslov grading of the $\bF[U]$-generator in each Alexander grading $\ve{s}$. This data is exactly encoded by the $H$-function
\[
H_L\colon \bH(L)\to \Z^{\ge 0}.
\]
Gorsky and N\'{e}methi \cite{GorskyNemethiLattice} prove that for an L-space link $L\subset S^3$ the function $H_L$ is determined by multi-variable Alexander polynomial of $L$ and its sublinks. See Proposition~\ref{prop:GNH-function} below for a precise statement of their result. As a consequence:

\begin{cor} If $L$ is a 2-component L-space link, then $\cCFL(L)$ is computable from the Alexander polynomials of $L$ and its sublinks.
\end{cor}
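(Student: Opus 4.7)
My plan would be a chain of three reductions, following the outline sketched immediately before the corollary. First, I would invoke Theorem~\ref{thm:formality-intro} to replace $\cCFL(L)$ by its homology: since $\cCFL(L)$ is formal, it is quasi-isomorphic to $\cHFL(L)$ regarded as a chain complex with zero differential. Because $\cCFL(L)$ is finitely generated and free over the polynomial ring $R_2=\bF[W_1,Z_1,W_2,Z_2]$, a standard consequence of the uniqueness of free resolutions is that $\cCFL(L)$ is chain homotopy equivalent to any free resolution of $\cHFL(L)$ over $R_2$. Free resolutions over the polynomial ring $R_2$ are algorithmically computable (for instance, as the minimal graded free resolution), so this step reduces the problem to identifying $\cHFL(L)$ as a bigraded $R_2$-module.

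Second, for the module identification, I would use the L-space hypothesis to unpack the decomposition already recorded in the text,
\[
\cHFL(L)=\bigoplus_{\ve{s}\in \bH(L)} \bF[U]\cdot x_{\ve{s}},
\]
with a single $\bF[U]$-generator $x_{\ve{s}}$ in each Alexander grading. The actions of $W_i$ and $Z_i$ are homogeneous of fixed Maslov and Alexander bidegree and send $\bF[U]\cdot x_{\ve{s}}$ into $\bF[U]\cdot x_{\ve{s}\mp e_i}$. Any grading-homogeneous map $\bF[U]\to \bF[U]$ is multiplication by a power of $U$ uniquely determined by the difference of Maslov gradings of the two generators, so the entire $R_2$-module structure is recovered from the Maslov gradings of the $x_{\ve{s}}$, equivalently from $H_L$.

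Third, I would invoke Proposition~\ref{prop:GNH-function} of Gorsky and N\'{e}methi, which expresses $H_L$ in closed form in terms of the multivariable Alexander polynomials of $L$ and its sublinks. Concatenating the three reductions — Alexander data determine $H_L$, which determines the $R_2$-module $\cHFL(L)$, which via any free resolution determines $\cCFL(L)$ up to chain homotopy equivalence — yields the corollary. The only conceptually nontrivial input is the formality Theorem~\ref{thm:formality-intro}; the remaining steps amount to grading bookkeeping plus an invocation of the Gorsky--N\'{e}methi formula, so I do not anticipate any obstacle beyond what has already been overcome in establishing Theorem~\ref{thm:formality-intro}.
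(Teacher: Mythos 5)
Your proposal is correct and follows essentially the same chain of reductions as the paper's own sketch: formality (Theorem~\ref{thm:formality-intro}) identifies $\cCFL(L)$ with a free resolution of $\cHFL(L)$, the L-space condition plus grading homogeneity reduce the $R_2$-module $\cHFL(L)$ to the data of the $H$-function, and Proposition~\ref{prop:GNH-function} recovers $H_L$ from Alexander polynomials. The one small point you glide over—that each $W_i$ or $Z_i$ acts by a \emph{nonzero} power of $U$, which follows from $U_i$-torsion-freeness—is glided over at the same level in the paper, so no gap relative to it.
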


We note that Theorem~\ref{thm:formality-intro} has two important antecedents. The first is Ozsv\'{a}th and Szab\'{o}'s computation of the knot Floer complex of L-space knots \cite{OSlens}. Therein, Ozsv\'{a}th and Szab\'{o} prove that L-space knots have $\cCFK(K)$ equal to a staircase complex. Note that staircase complexes are easily seen to be formal. 

The second antecedent to Theorem~\ref{thm:formality-intro} is \cite{BLZLattice}*{Theorem~1.2}, wherein it is proven that \emph{plumbed} L-space links have formal link Floer complexes.  Note that the proof of formality from \cite{BLZLattice} is substantially different than the proof we give in the present paper. The proof in \cite{BLZLattice} goes by way of extending the second author's proof of the equivalence of lattice homology and Heegaard Floer homology \cite{ZemLattice} to give a version of lattice link homology which computes the link Floer complex of plumbed links. A key property of the lattice link complex is that it decomposes as a chain complex of the form
\[
\begin{tikzcd}
C_q \ar[r, "\d"] & C_{q-1}\ar[r, "\d"] & \cdots \ar[r, "\d"] & C_0
\end{tikzcd}
\] 
where each $C_j$ is an $R_n$-module (with vanishing internal differential). The subscript $j$ is the lattice grading. Interestingly, the existence of the lattice grading is sufficient to force the link Floer complex of a plumbed L-space link to be formal.

The above discussion motivates the following question:

\begin{question} If $L$ is an L-space link, is $\cCFL(L)$ formal?
\end{question}
We note that \cite{OSlens}, \cite{BLZLattice} and the present work answer the above question in the affirmative when either $|L|\in \{1,2\}$ or $L$ is plumbed. We discuss the challenges of extending our proof beyond these cases in Section~\ref{sec:remarks}.

\subsection{L-space satellite operators}

In the present paper, we use Theorem~\ref{thm:formality-intro} to compute $\cCFK(P(K,n))$ for any satellite operation $P$ where the corresponding 2-component link $L_P$ is an L-space link. To do this, we use the link surgery formula of Manolescu and Ozsv\'{a}th \cite{MOIntegerSurgery} and its reinterpretation as a theory for manifolds with parametrized torus boundary due to the second author \cite{ZemBordered}.

The second author reformulated the link surgery formula in terms of an associative algebra $\cK$ called \emph{the surgery algebra}. We recall the definition of $\cK$. It is an associative algebra over the ring of two idempotents $\ve{I}=\ve{I}_0\oplus \ve{I}_1$ (where each $\ve{I}_i=\bF=\Z/2$. Furthermore
\[
\ve{I}_0\cdot \cK\cdot \ve{I}_0=\bF[W,Z],\quad \ve{I}_0 \cdot \cK \cdot \ve{I}_1=0
\]
\[
\ve{I}_1\cdot \cK\cdot \ve{I}_0=\bF[U,T,T^{-1}] \otimes \langle \sigma, \tau\rangle \quad \text{and}, \quad \ve{I}_1\cdot \cK\cdot \ve{I}_1=\bF[U,T,T^{-1}].
\]
The algebra is subject to the relations that
\[
\sigma W=UT^{-1} \sigma, \quad \sigma Z=T \sigma,\quad \tau W=T^{-1} \tau,\quad \tau  Z=UT  \tau.
\]

  To a knot $K\subset S^3$ with a framing $n\in \Z$, there is a type-$D$ module
\[
\cX_n(K)^{\cK}.
\]
To a two component link $L\subset S^3$, equipped with an integral framing $\Lambda$, there is a $DA$-bimodule
\[
{}_{\cK} \cX_{\Lambda}(L)^{\cK}.
\]
We think of each of the two actions of $\cK$ as corresponding to one of the link components. By ignoring the link surgery maps for the component of $L$ corresponding to the right action of $\cK$, we obtain a $DA$-bimodule
\[
{}_{\cK} \cX_{\Lambda}(L)^R,
\]
where $R=\bF[W,Z]$. This bimodule is computable from ${}_{\cK} \cX_{\Lambda}(L)^{\cK}$, but contains less information. Our proof of Theorem~\ref{thm:formality-intro} extends very naturally to prove the following:

\begin{thm}
\label{thm:compute-bimodule-intro}
 If $L\subset S^3$ is a 2-component L-space link, then the bimodule ${}_{\cK} \cX_{\Lambda}(L)^R$ is determined by the Alexander polynomials of $L$ and its sublinks.
\end{thm}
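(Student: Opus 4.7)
The plan is to extend the Koszul-duality formality argument of Theorem~\ref{thm:formality-intro} from $R_2$-modules to $DA$-bimodules. The key observation is that, after forgetting the $\sigma, \tau$ parts of the type-$D$ action, the bimodule ${}_{\cK}\cX_\Lambda(L)^R$ has the underlying structure of an $R_2$-module: the right $R = \bF[W,Z]$-action records the unsurgered component's variables, while the $\ve{I}_0 \cdot \cK \cdot \ve{I}_0 = \bF[W,Z]$ portion of the left type-$D$ action records the surgered component's variables. The remaining data — the $\ve{I}_1$-valued components of the structure maps involving $\sigma$, $\tau$, and $T^{\pm 1}$ — encode the inclusion and spin$^c$-conjugation maps from the link surgery formula for the first component.

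First I would apply the Koszul-duality argument of Theorem~\ref{thm:formality-intro} to ${}_{\cK}\cX_\Lambda(L)^R$ viewed with its underlying $R_2$-action. This should produce a bimodule quasi-isomorphism to a model whose underlying $R_2$-module is $\cHFL(L) = \bigoplus_{\ve{s} \in \bH(L)} \bF[U]$, with Maslov shifts governed by $H_L$. By the Gorsky-Némethi formula (Proposition~\ref{prop:GNH-function}), $H_L$ is determined by the multivariable Alexander polynomials of $L$ and its sublinks, so the underlying $R_2$-module is as claimed.

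Next I would verify that the remaining type-$D$ structure maps (the $\sigma, \tau$ components) are themselves determined by $H_L$. These maps arise from inclusions between Alexander-filtered subcomplexes of $\cCFL(L)$, which under formality become canonical inclusions between Alexander-graded summands of $\cHFL(L)$. The grading constraints imposed by the relations $\sigma W = UT^{-1}\sigma$, $\sigma Z = T\sigma$, $\tau W = T^{-1}\tau$, and $\tau Z = UT \tau$, together with the $\bF[U]$-tower structure on each Alexander-graded summand, force these maps to be the unique inclusions of $\bF[U]$-towers of a prescribed Maslov degree. Hence the whole $DA$-bimodule is determined by $H_L$.

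The main obstacle is the first step: verifying that the Koszul-duality formality argument upgrades naturally to the bimodule setting. Concretely, one needs the quasi-isomorphism supplied by Theorem~\ref{thm:formality-intro} to be compatible with the auxiliary $\sigma, \tau$ structure, which amounts to showing that the formality homotopy respects the relevant Alexander sublevel filtrations. Since the Koszul resolution of $R_2$ is functorial and Alexander-graded, and the surgery-algebra structure maps preserve these filtrations by construction, this compatibility should follow from naturality of the resolution, with the remaining verifications reducing to grading bookkeeping.
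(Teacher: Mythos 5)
Your plan captures the paper's high-level strategy --- formality reduces the problem to the $H$-function, which Gorsky--N\'emethi express via Alexander polynomials, and the remaining structure is pinned down by gradings --- but it glosses over two concrete points the actual proof hinges on. First, ${}_{\cK}\cX_\Lambda(L)^R$ is not simply ``an $R_2$-module after forgetting $\sigma,\tau$'': only its idempotent-$0$ piece is (recovering, up to completion, $\cCFL(L)$). The idempotent-$1$ piece is a separate complex, homotopy equivalent to $\bF[T,T^{-1}]\otimes\cCFK(S^3,L_2)$, and determining it needs an additional ingredient: sublinks of L-space links are L-space links (Lemma~\ref{lem:sublinks}), so $L_2$ is an L-space knot and $\cCFK(L_2)$ is a staircase determined by $\Delta_{L_2}$ (Theorem~\ref{thm:free-res-L-space-knot}). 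Your proposal skips the idempotent-$1$ analysis that the paper carries out in Proposition~\ref{prop:iso-idempotent-1}.

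Second, the assertion that grading constraints plus the $\bF[U]$-tower structure ``force these maps to be the unique inclusions'' only fixes the induced maps on homology. What the proof actually needs is that the chain-level maps \emph{and} the higher $A_\infty$ terms ($\delta_3^1$, i.e.\ the homotopies $h_{W,Z}$, $h_{\sigma,W}$, etc.) are canonical up to chain homotopy, and this is not ``grading bookkeeping.'' The bottleneck is an $\Ext^1$-vanishing computation for staircase complexes (Lemma~\ref{lem:computation-ext-complex}; cf.\ Remark~\ref{rem:hom=ext}): endomorphisms of $(\gr_{\ws},\gr_{\zs})$-bidegree $(-1,-1)$ that raise the algebraic grading by one are null-homotopic. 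This lemma is what matches the $\delta_3^1$ terms in idempotent $0$ and, via a variant (the lemma preceding it about the cone of $WZ+U$), controls the $\sigma,\tau$ morphisms in Proposition~\ref{prop:module-structure}. The paper stresses in Section~\ref{sec:remarks} that the analogous vanishing fails for more link components, so this $\Ext$ computation is the genuine input, not a formal consequence of ``naturality of the Koszul resolution.''
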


Our model of ${}_{\cK} \cX_{\Lambda}(L)^R$ is described in Section~\ref{sec:2-component-L-space}. We perform a number of example computations in Section~\ref{sec:Examples of bimodules} to illustrate the technique. The bimodule ${}_{\cK} \cX_{\Lambda}(L)^R$ is quite straightforward to read off from the $H_L$ function. Furthermore, we show how to encode the data in a finite list of staircase complexes and a finite list of morphisms between them.

If $P\subset S^1\times D^2$ is a satellite operator, and $L_P$ is the associated link, then we may use the module ${}_{\cK} \cX_{\Lambda}(L_P)^R$ and the connected sum formulas from \cite{ZemBordered} to obtain a homotopy equivalence
\begin{equation}
\cCFK(P(K,n))^R\simeq \cX_{n}(K)^{\cK}\boxtimes {}_{\cK}\cH^{\cK}\boxtimes {}_{\cK} \cX_{(0,0)}(L_P)^{R}.
\label{eq:satellite-formula}
\end{equation}
Here $\cX_n(K)^{\cK}$ is the type-$D$ surgery module for $K$, and ${}_{\cK} \cH^{\cK}$ is the type-$DA$ module for the $(0,0)$-framed Hopf link (computed in \cite{ZemBordered}). When $K\subset S^3$, the type-$D$ module $\cX_n(K)^{\cK}$ contains the same information as $\cCFK(K)$. The tensor product description in Equation~\eqref{eq:satellite-formula} corresponds to a simple Dehn surgery description of the knot $P(K,n)$, shown below in Figure~\ref{fig:satellite1}.

We spend a substantial effort describing how to use Equation~\eqref{eq:satellite-formula} to perform concrete computations. We will write $\bX(P,K,n)^R$ for the type-$D$ module on the right hand side of Equation~\eqref{eq:satellite-formula}. We will write this chain complex as a complex of the form:
\[
\bX(P,K,n)=\begin{tikzcd}[column sep=1.5cm, row sep=1.5cm]
 \bE
 	\ar[r, "\Phi^\mu+\Phi^{-\mu}"]
 	\ar[d, "\Phi^K+\Phi^{-K}",swap]
 	\ar[dr,dashed, "*"]
 	 &
 \bF
 	 \ar[d, "\Phi^K+\Phi^{-K}"]
 \\
\bJ
	 	\ar[r, "\Phi^\mu+\Phi^{-\mu}",swap]
& 
\bM
\end{tikzcd}
\]
Here $*$ denotes a sum of four maps $\Phi^{K,\mu}$, $\Phi^{-K,\mu}$, $\Phi^{K,-\mu}$ and $\Phi^{-K,-\mu}$.  The complexes $\bE$, $\bF$, $\bJ$ and $\bM$ are infinitely generated type-$D$ modules over $R$. The $\bE$, $\bF$, $\bJ$ and $\bM$ complexes are the analogs of the $\bA$ and $\bB$ complexes from Ozsv\'{a}th and Szab\'{o}'s mapping cone formula for Dehn surgeries \cite{OSIntegerSurgeries}. The complex $\bE$ decomposes as a direct product
\[
\bE=\prod_{(s,t)\in \bH(P)} E_{s,t}
\]
where $\bH(P)=(\tfrac{1}{2}+\Z)\times (\frac{l-1}{2}+\Z)$ and $l$ denotes the winding number of $P$. Each of the $E_{s,t}$ is a finitely generated type-$D$ module over $R$. The complexes $\bF$, $\bJ$ and $\bM$ admit similar descriptions.

In Section~\ref{sec:truncation}, we describe in detail how to truncate the complex $\bX(P,K,n)$ to obtain a finitely generated type-$D$ module over $R$ which computes $\cCFK(P(K,n))$. We perform a number of concrete example computations and also implement this algorithm into a computer program written in Python \cite{CZZCode}. 

\subsection{The identity and elliptic satellite operators}

We prove several additional results which are important for the bordered link surgery theory, and which are helpful in our proofs of the above.

The most basic satellite operator is the identity satellite operator. In terms of bordered manifolds with torus boundary, this corresponds to gluing the mapping cylinder of the identity map $\bI\colon \bT^2\to \bT^2$. Of additional interest is the elliptic involution $E\colon \bT^2\to \bT^2$. If we write $\bT^2=\R^2/\Z^2$, then $E(x,y)=(-x,-y)$.

 The construction from \cite{ZemExact} gives a $DA$-bimodule to any manifold with parametrized torus boundary components. We therefore obtain bimodules
\[
{}_{\cK}\cX(\Cyl_{\bI})^{\cK}, \quad \text{and} \quad {}_{\cK} \cX(\Cyl_E)^{\cK}.
\]

The algebra $\cK$ has a symmetry $E\colon \cK\to \cK$, given by the formula
\[
E(W)=Z\quad E(Z)=W\quad E(\sigma)=\tau\quad E(\tau)=\sigma,\quad E(T^{\pm 1})=T^{\mp 1} \quad \text{and} \quad E(U)=U.
\]
 
We recall additionally that given any algebra morphism $\phi\colon A\to B$, there is a canonical $DA$-bimodule ${}_A [\phi]^B$. See Section~\ref{sec:typeDA-background} for more details.

In this paper, we prove the following:
\begin{thm}
\label{thm:mapping-cylinder-identity-intro} There are homotopy equivalences
\[
{}_{\cK} \cX(\Cyl_{\bI})^{\cK}\simeq {}_{\cK} [\bI]^{\cK} \quad \text{and} \quad {}_{\cK} \cX(\Cyl_E)^{\cK}\simeq {}_{\cK}[E]^{\cK}.
\] 
\end{thm}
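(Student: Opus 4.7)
My plan is to combine a pairing-theorem argument with an explicit model computation.

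First, for the identity mapping cylinder: by the pairing theorem from \cite{ZemExact}, for any bordered manifold $M$ with one torus boundary component, one has
\[
{}_{\cK}\cX(\Cyl_{\bI})^{\cK}\boxtimes \cX(M)^{\cK}\simeq \cX(\Cyl_{\bI}\cup_{\bT^2}M)^{\cK}=\cX(M)^{\cK}.
\]
This shows ${}_{\cK}\cX(\Cyl_{\bI})^{\cK}$ acts as a two-sided identity under $\boxtimes$. To promote this to an actual homotopy equivalence with ${}_{\cK}[\bI]^{\cK}$, I would compute a concrete model of $\cX(\Cyl_{\bI})$ from a thin-collar bordered Heegaard diagram in which the two boundary parametrizations coincide. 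In this model, the underlying $(\ve{I},\ve{I})$-bimodule should be canonically identified with $\cK$ itself, and a finite inspection of the moduli spaces should show that the only nonvanishing DA structure maps are the operations $\delta^1_{1+1}(a,b)=a\cdot b$, reproducing ${}_{\cK}[\bI]^{\cK}$ on the nose.

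For the elliptic case, I would observe that $\Cyl_E$ is obtained from $\Cyl_{\bI}$ by precomposing the parametrization on one boundary with $E$. This reparametrization is implemented at the bimodule level by precomposing the corresponding $\cK$-action with the induced algebra automorphism $E\colon \cK\to\cK$. The specific formulas $E(W)=Z$, $E(Z)=W$, $E(\sigma)=\tau$, $E(\tau)=\sigma$, $E(T^{\pm 1})=T^{\mp 1}$ are dictated by how $E(x,y)=(-x,-y)$ acts on the standard presentation: it swaps the two basepoints on $\bT^2$ (giving $W\leftrightarrow Z$), exchanges the meridional and longitudinal surgery arcs (giving $\sigma\leftrightarrow \tau$), and reverses the orientation of the longitude (inverting $T$). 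Composing with the identity result then yields ${}_{\cK}\cX(\Cyl_E)^{\cK}\simeq {}_{\cK}[E]^{\cK}$.

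The main obstacle is the model computation for $\Cyl_{\bI}$: one must verify that the chosen bordered diagram produces no spurious higher $A_\infty$ maps $\delta^1_{1+j}$ beyond multiplication. I expect this to follow from selecting a minimal diagram together with a standard homological perturbation argument to cancel auxiliary generators. As an alternative route, one could exploit the rigidity of the Morita identity: once the pairing identity holds for a sufficiently large collection of modules $\cX(M)^{\cK}$ (for example, those coming from framed knot complements, which generate the relevant derived category), a Yoneda-type argument forces any model of ${}_{\cK}\cX(\Cyl_{\bI})^{\cK}$ to be homotopy equivalent to the canonical identity ${}_{\cK}[\bI]^{\cK}$, circumventing the direct moduli count.
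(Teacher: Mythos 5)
Your proposal sketches two routes but carries out neither, and both have serious gaps compared to what the paper actually needs.

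First, in this framework $\cX(\Cyl_{\bI})^{\cK}$ is not defined via a bordered Heegaard diagram in the Lipshitz--Ozsv\'ath--Thurston sense; it is defined through the link surgery formula, by representing the mapping cylinder as $0$-surgery on the middle component of a connected sum of a positive and a negative Hopf link. So the ``thin-collar diagram with coinciding parametrizations'' route does not match the setup. Concretely, the theorem reduces to showing ${}_{\cK}\cH_-^{\cK}\boxtimes {}_{\cK}\cH_+^{\cK}\simeq {}_{\cK}[\bI]^{\cK}$ and ${}_{\cK}\cH_-^{\cK}\boxtimes {}_{\cK}\cH_-^{\cK}\simeq {}_{\cK}[E]^{\cK}$, and the actual work is a direct computation: decomposing the underlying type-$D$ module of $\cH_-\boxtimes\cH_-$ into infinite staircases, showing almost all of them are contractible, retracting onto a rank-$2$ module $\ve{I}$, and then using the homological perturbation lemma to read off the induced $\delta^1_2$ operations and verify they match ${}_{\cK}[E]^{\cK}$. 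Your alternative ``Yoneda/Morita rigidity'' argument is not available: the derived-category/quasi-invertibility machinery it would rest on is exactly what the theorem is used to establish (see Corollary~\ref{cor:quasi-inverse}), so invoking it here would be circular.

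Second, and more fundamentally, your proposal never addresses the completion topology, which is the genuine subtlety the paper flags (Remark~\ref{rem:string-orientation-reversal} aside, see the remark after the introduction statement and throughout Section~\ref{sec:simplifying hopf link tensor hopf link}). The statement holds for the chiral topology but \emph{fails} for the $U$-adic topology: several of the staircase summands (e.g.\ $X_1$, $Y_0$, the $t\in\{0,-1\}$ staircases in idempotent $(0,1)$, and the $W$- and $Z$-weighted staircases in idempotent $(1,0)$) are contractible only in the chiral topology, because the null-homotopy built from $\sum_n (h\circ\a)^n$ converges only for that choice of completion. A proof that never mentions this cannot be correct as stated, because in the $U$-adic topology $\cX(\Cyl_{\bI})^{\cK}$ is strictly larger than $[\bI]^{\cK}$.

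Third, for the elliptic case, the assertion that precomposing the boundary parametrization by $E$ ``is implemented at the bimodule level by precomposing the $\cK$-action with the algebra automorphism $E$'' is precisely the content to be proved, not a fact you may assume. The paper instead deduces this from the concrete computation: it shows $\cH_-\boxtimes\cH_-\simeq [E]$ directly, records that $\cH_+ = \cH_-\boxtimes [E]$ (a separate explicit calculation for the positive Hopf link), and then obtains $\cH_-\boxtimes\cH_+\simeq[\bI]$ by combining these with $[E]\boxtimes[E]\simeq[\bI]$. Your argument for the formulas of $E$ on generators is a heuristic narrative about basepoints and arcs, not a derivation; in the paper those formulas are forced by the perturbation-lemma computation (Lemma~\ref{lem:identity-d21-computation}).
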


The above theorem has several helpful consequences. We recall that in \cite{ZemBordered} there was an algebraically defined bimodule ${}_{\cK|\cK} \bI^{\Supset}$ which we used to turn type-$D$ modules over $\cK$ into type-$A$ modules over $\cK$. One corollary of Theorem~\ref{thm:mapping-cylinder-identity-intro} is the following:

\begin{cor}\label{cor:quasi-inverse} The algebraic bimodule ${}_{\cK|\cK} \bI^{\Supset}$ admits a quasi-inverse. This quasi-inverse is given by $\cX(\Cyl_{\bI})^{\cK\otimes \cK}$. 
\end{cor}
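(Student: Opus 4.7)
The plan is to deduce the corollary essentially as a direct consequence of Theorem~\ref{thm:mapping-cylinder-identity-intro}, using the general compatibility between the type-$DA$ and type-$DD$ forms of a bordered bimodule with torus boundary. In the framework of \cite{ZemExact}, the mapping cylinder $\Cyl_{\bI}$ has two parametrized torus boundary components, and one can independently place either a type-$A$ or a type-$D$ action on each. The type-$DA$ form ${}_{\cK}\cX(\Cyl_{\bI})^{\cK}$ (which is computed in Theorem~\ref{thm:mapping-cylinder-identity-intro}) and the type-$DD$ form $\cX(\Cyl_{\bI})^{\cK\otimes \cK}$ are therefore tautologically related, and the passage between them is governed by box-tensoring with ${}_{\cK|\cK}\bI^{\Supset}$.

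Concretely, the first step is to verify the compatibility
\[
{}_{\cK}\cX(\Cyl_{\bI})^{\cK}\simeq {}_{\cK|\cK}\bI^{\Supset}\boxtimes \cX(\Cyl_{\bI})^{\cK\otimes \cK},
\]
which is a general statement about how different bimodule types arise from the same underlying geometric data within the bordered formalism. Combining this identity with Theorem~\ref{thm:mapping-cylinder-identity-intro}, we obtain
\[
{}_{\cK|\cK}\bI^{\Supset}\boxtimes \cX(\Cyl_{\bI})^{\cK\otimes \cK}\simeq {}_{\cK}[\bI]^{\cK},
\]
which says that box-tensoring first with the $DD$ bimodule $\cX(\Cyl_{\bI})^{\cK\otimes \cK}$ and then with the $AA$ bimodule ${}_{\cK|\cK}\bI^{\Supset}$ recovers the $DA$ identity.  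This gives one direction of the quasi-inverse relation.

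For the reverse composition, one must show that box-tensoring in the opposite order is homotopy equivalent to the appropriate $DD$ identity bimodule. This will follow by applying the same reasoning to the other boundary component of $\Cyl_{\bI}$, using the symmetry of the mapping cylinder construction under interchange of the two boundary tori; equivalently, one can invoke a standard algebraic lemma showing that a one-sided quasi-inverse among compatible bordered bimodules automatically extends to a two-sided quasi-inverse once the grading and idempotent conventions are matched.

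The main obstacle is the first step: carefully establishing the compatibility formula relating the $DA$ and $DD$ forms of $\cX(\Cyl_{\bI})$ within the conventions of \cite{ZemExact}. This is where one must unpack how actions on opposite boundary components can be independently prescribed as type-$A$ or type-$D$, and how ${}_{\cK|\cK}\bI^{\Supset}$ mediates this change of perspective. Once this compatibility is in place, the corollary is essentially immediate from Theorem~\ref{thm:mapping-cylinder-identity-intro}, with no further Floer-theoretic input required.
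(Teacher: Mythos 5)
Your overall route is the same as the paper's, but you have misjudged where the work lies. The compatibility
\[
{}_{\cK}\cX(\Cyl_{\bI})^{\cK}\simeq \cX(\Cyl_{\bI})^{\cK\otimes \cK}\boxtimes {}_{\cK|\cK}\bI^{\Supset}
\]
is not a lemma to be established within the bordered formalism; in the paper's setup (and in \cite{ZemBordered}, \cite{ZemExact}) the type-$DA$ module ${}_{\cK}\cX(\Cyl_{\bI})^{\cK}$ is \emph{defined} to be this box tensor product. So what you flag as ``the main obstacle'' is actually a tautology, and once you replace it with the definition, the corollary follows immediately from Theorem~\ref{thm:mapping-cylinder-identity-intro}, exactly as in the paper.

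Your second paragraph, on the ``reverse composition,'' is also unnecessary and potentially problematic. The paper's notion of quasi-inverse (spelled out just after Corollary~\ref{cor:quasi-inverse} in the introduction) only requires a homotopy equivalence $\cX(\Cyl_{\bI})^{\cK\otimes \cK}\boxtimes {}_{\cK|\cK}\bI^{\Supset}\simeq {}_{\cK}[\bI]^{\cK}$, together with the remark that this holds regardless of which $\cK$-factor the box tensor is taken along (which follows by the symmetry of $\Cyl_{\bI}$). You are not asked to prove a two-sided composition is the $DD$ identity bimodule, and the ``standard algebraic lemma'' you invoke to upgrade a one-sided inverse to a two-sided one is not clearly applicable in the setting of completed $A_\infty$-bimodules over $\cK$ with the chiral topology; this would require care if it were actually needed. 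Fortunately it is not.

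So the fix is straightforward: drop the attempted ``compatibility verification'' and the reverse-composition argument, cite the definition of ${}_{\cK}\cX(\Cyl_{\bI})^{\cK}$ in terms of $\boxtimes\,{}_{\cK|\cK}\bI^{\Supset}$, and conclude directly from Theorem~\ref{thm:mapping-cylinder-identity-intro}.
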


In the above corollary, by \emph{quasi-inverses} we mean that there is a homotopy equivalence
\[
\cX(\Cyl_{\bI})^{\cK\otimes \cK} \boxtimes {}_{\cK|\cK} \bI^{\Supset}\simeq {}_{\cK} \bI^{\cK}.
\]
The tensor product is formed along just one copy of $\cK$ (and therefore is a $DA$-bimodule over $(\cK,\cK)$). The result holds regardless of which of the $\cK$ factors the tensor product is taken along.

\begin{rem}
There is a technical subtlety in Theorems~\ref{thm:mapping-cylinder-identity-intro} and Corollary~\ref{cor:quasi-inverse} that we are not emphasizing in the introduction. The algebra $\cK$ requires a choice of linear topology, and there are two natural choices described in \cite{ZemExact}: a \emph{chiral} topology and the \emph{$U$-adic} topology. The results stated above hold for the chiral topology, but not the $U$-adic topology. See Section~\ref{sec:identity-cobordism}.
\end{rem}

\subsection{Acknowledgements}

We thank Maciej Borodzik, Jonathan Hanselman, Kristen Hendricks, Jennifer Hom, Adam S. Levine, Beibei Liu, Yi Ni, Diego Santoro and Matthew Stoffregen for helpful discussions. 

\section{Satellite operations and Dehn surgery}

We recall the basics of satellite operators. Suppose that $P$ is a knot in the solid torus $S^1\times D^2$. If $K\subset Y$ is a knot with Morse framing $\lambda$, we can form a satellite knot $P(K,\lambda)$ by carving out $S^1\times D^2$ and gluing in $(S^1\times D^2,P)$. We assume that the longitude $S^1\times \{\theta\}$, $\theta\in \d D^2$ is sent to the longitude $\lambda$. We note that this construction also depends on an orientation for $K$. Of particular interest is  the case that $K$ is a knot in $S^3$. For such knots, a Morse framing is specified by an integer $n\in \Z$. We define $P(K,n)$ to be the knot obtained by gluing $S^1\times \{p\}$, $p\in \d D$, to the $n$-framed longitude of $K$.

Satellite operations can be achieved by a straightforward Dehn surgery construction, as we now describe. We can view $(S^1\times D^2, P)$ as being a knot in the complement of $\mu\subset S^3$. We write $L_P$ for the two component link $\mu\cup P$. We write $H$ for the negative Hopf link.  We then consider the 3-component link $K\# H\# L_P$, as shown in Figure~\ref{fig:satellite1}. If we perform $n$-surgery on $K$, and $0$-surgery on the knot $\mu$, we are left with the knot $P(K,n)\subset S^3$.

A particularly important special case of the construction is cabling:

\begin{define}
If $\gcd(p,q)=1$, the $(p,q)$-cable of $K$, denoted $K_{p,q}$, is the knot which lies on the 2-torus $\d \nu (K)$, which winds $p$ times longitudinally, and $q$ times meridianally.
\end{define}

In Section~\ref{sec:cables}, we will use the link Floer complex of $T(2,2q)$ to compute the cabling bimodule for the $(q,nq+ 1)$-satellite operator. The cabling bimodule for general $p$ and $q$ is more complicated, but can also be computed using our constructions. See Section~\ref{sec:general cabling} for more information.

\begin{figure}[h]
\begingroup%
  \makeatletter%
  \providecommand\color[2][]{%
    \errmessage{(Inkscape) Color is used for the text in Inkscape, but the package 'color.sty' is not loaded}%
    \renewcommand\color[2][]{}%
  }%
  \providecommand\transparent[1]{%
    \errmessage{(Inkscape) Transparency is used (non-zero) for the text in Inkscape, but the package 'transparent.sty' is not loaded}%
    \renewcommand\transparent[1]{}%
  }%
  \providecommand\rotatebox[2]{#2}%
  \newcommand*\fsize{\dimexpr\f@size pt\relax}%
  \newcommand*\lineheight[1]{\fontsize{\fsize}{#1\fsize}\selectfont}%
  \ifx\svgwidth\undefined%
    \setlength{\unitlength}{196.44827952bp}%
    \ifx\svgscale\undefined%
      \relax%
    \else%
      \setlength{\unitlength}{\unitlength * \real{\svgscale}}%
    \fi%
  \else%
    \setlength{\unitlength}{\svgwidth}%
  \fi%
  \global\let\svgwidth\undefined%
  \global\let\svgscale\undefined%
  \makeatother%
  \begin{picture}(1,0.35123729)%
    \lineheight{1}%
    \setlength\tabcolsep{0pt}%
    \put(0,0){\includegraphics[width=\unitlength,page=1]{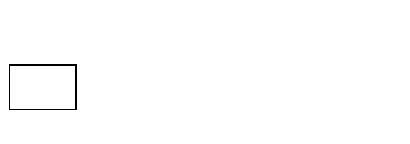}}%
    \put(0.08505322,0.12893885){\makebox(0,0)[lt]{\lineheight{1.25}\smash{\begin{tabular}[t]{l}$K$\end{tabular}}}}%
    \put(0,0){\includegraphics[width=\unitlength,page=2]{fig_satellite1.pdf}}%
    \put(0.43781318,0.16241423){\makebox(0,0)[lt]{\lineheight{1.25}\smash{\begin{tabular}[t]{l}$0$\end{tabular}}}}%
    \put(0.08459551,0.02595012){\makebox(0,0)[rt]{\lineheight{1.25}\smash{\begin{tabular}[t]{r}$n$\end{tabular}}}}%
    \put(0.80593731,0.02309524){\makebox(0,0)[lt]{\lineheight{1.25}\smash{\begin{tabular}[t]{l}$P(K,n)$\end{tabular}}}}%
    \put(0,0){\includegraphics[width=\unitlength,page=3]{fig_satellite1.pdf}}%
  \end{picture}%
\endgroup%

\caption{The knot $P(K,n)$ in terms of Dehn surgery on $K\# H\# L_P$.}
\label{fig:satellite1}
\end{figure}

\section{Algebraic background}

\subsection{Type-$D$ and $A$ modules}
\label{sec:typeDA-background}

We make use of Lipshitz, Ozsv\'{a}th and Thurston's formalism of type-$D$, type-$A$ and type-$DA$ modules  \cite{LOTBimodules} \cite{LOTBordered}.

\begin{define} Suppose that $A$ is an algebra over a ring $\ve{k}$. A \emph{type-$D$ module} $X^A$ consists of a right $\ve{k}$-module $X$ equipped with a $\ve{k}$-linear map $\delta^1\colon X\to X\otimes_{\ve{k}} A$ such that
\[
(\bI_{X}\otimes \mu_2)\circ (\delta^1\otimes \bI_{A})\circ \delta^1=0.
\]
\end{define}

\begin{define}Suppose that $A$ is an algebra over a ring $\ve{k}$. A \emph{type-$A$} module ${}_A X$ is a left $\ve{k}$-module $X$ equipped with $\ve{k}$-linear maps
\[
m_{n+1}\colon \underbrace{A\otimes_{\ve{k}}\cdots \otimes_{\ve{k}} A}_n\otimes_{\ve{k}} X\to X
\]
for $n\ge 0$. These maps are required to further satisfy
\[
\begin{split}
&\sum_{j=0}^n m_{n-j}(a_n,\dots, m_{j+1}(a_j,\dots, a_1, x))\\
+&\sum_{j=1}^{n-1} m_{n}(a_n,\dots, a_{j+1}a_j,\dots, a_1, x)=0
\end{split}
\]
\end{define}

Note that the definition of a type-$A$ module is identical to the more standard notion of an $A_\infty$-module. We refer the reader to \cite{KellerNotes} \cite{KellerAddendumNotes} for more extensive background.

We additionally need the notion of a $DA$-bimodule:
\begin{define} If $A$ and $B$ are algebras over $\ve{k}$ and $\ve{j}$, respectively, a $DA$-bimodule ${}_A X^B$ is a $(\ve{k},\ve{j})$-bimodule which is equipped with $(\ve{k},\ve{j})$-linear maps
\[
\delta_{n+1}^1 \colon \underbrace{A\otimes_{\ve{k}}\cdots \otimes_{\ve{k}} A}_n\otimes_{\ve{k}}X\to X\otimes_{\ve{j}} B,
\]
for $n\ge 0$. These maps are further required to satisfy the following relation:
\[
\begin{split}
&\sum_{j=0}^n (\bI\otimes \mu_2)(\delta_{n-j}^1\otimes \bI_{B})(a_n,\dots, \delta^1_{j+1}(a_j,\dots, a_1, x))\\
+&\sum_{j=1}^{n-1} \delta^1_{n}(a_n,\dots, a_{j+1}a_j,\dots, a_1, x)=0.
\end{split}
\]
\end{define}

An important example of a $DA$-bimodule is the following (see \cite{LOTBimodules}*{Definition~2.2.48} for further details): 
\begin{define}
\label{def:morphism->bimodule}
Suppose that $A$ and $B$ are $A_\infty$-algebras over the ring $\ve{k}$, and $\phi_*\colon A\to B$ is a morphism of $A_\infty$-algebras. Then there is an induced $DA$ bimodule ${}_A[\phi]^B$ as follows. The underlying vector $(\ve{k},\ve{k})$-bimodule of ${}_A [\phi]^B$ is $\ve{k}$. The structure map $\delta_1^1$ is zero, we have 
\[
\delta_{j+1}(a_j,\dots, a_1,1)=1\otimes \phi_{j}(a_j,\dots, a_1).
\]
In the special case that $A$ and $B$ are actually just associative algebras and $\phi$ is an algebra homomorphism, the only non-trivial action is $\delta_2^1(a,1)=1\otimes \phi(a)$. 
\end{define}

\subsection{Homological perturbation lemma}

We now recall the homological perturbation lemma for $A_\infty$-modules. Before stating the lemma, we recall the following notion:

\begin{define}
\label{def:strong-deformation-retraction} Suppose that $C$ and $Z$ are two chain complexes. We say that $Z$ is a \emph{strong deformation retraction} of $C$ if there exist maps $i,\pi,h$, fitting in to the following diagram:
\[
\begin{tikzcd}
C \ar[loop left, "h"] \ar[r, "\pi", shift left] &Z \ar[l, "i", shift left]
\end{tikzcd}
\]
such that 
\[
\pi\circ i=\id_Z,\quad i\circ \pi=\id_C+\d(h),\quad h\circ h=0,\quad h\circ i=0,\quad \text{and} \quad \pi \circ h=0.
\]
\end{define}
In the above, $\d(h)$ denotes the morphism differential, $\d(h)=\d\circ h+h\circ \d$. Note that definition of a strong deformation retraction makes sense in any $dg$-category (i.e. a category where the morphism spaces are chain complexes).

We now recall the homological perturbation lemma for $A_\infty$-modules. This is well known. See \cite{KontsevichSoibelman}*{Section~6.4} for the result in the context of $A_\infty$-algebras. See \cite{KellerNotes}*{Section~3.3} and \cite{SeidelFukaya}*{Remark~1.15} for more background and historical context. 

\begin{lem}
\label{lem:HPL-modules}
Let $A$ be an associative algebra over a ring $\ve{k}$ of characteristic 2. Suppose that ${}_A M$ is an $A_\infty$-module over $A$ and suppose that $Z$ is a chain complex of $\ve{k}$-modules, and that there is a strong deformation retraction of chain complexes
\[
\begin{tikzcd}
M \ar[loop left, "h"] \ar[r, "\pi", shift left] &Z \ar[l, "i", shift left].
\end{tikzcd}
\]
Then there is an induced $A_\infty$-module structure on ${}_A Z$, and the maps $i$, $\pi$ and $h$ extend to morphisms of $A_\infty$-modules $i_*$, $\pi_*$ and $h_*$ which give a strong deformation retraction from ${}_A M$ to ${}_A Z$. 
\end{lem}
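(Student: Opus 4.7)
The plan is to transfer the $A_\infty$-structure using a Kadeishvili-type tree formula, which I would derive from the ordinary chain-complex perturbation lemma applied to the bar construction. Concretely, for $n\ge 1$ I would define
\[
m_{n+1}^Z(a_n,\dots,a_1,z) = \sum \pi\, m_{k_r+1}^M\!\bigl(a_n,\dots, h\, m_{k_{r-1}+1}^M(\dots, h\, m_{k_1+1}^M(a_{k_1},\dots, a_1, i(z))\dots)\bigr),
\]
where the sum runs over all $r\ge 1$ and compositions $n = k_1+\cdots+k_r$ with $k_j\ge 1$, together with $m_1^Z := d_Z$. The morphisms $(i_*)_{n+1}$, $(\pi_*)_{n+1}$, $(h_*)_{n+1}$ would be given by the same tree sums with, respectively, the outermost $\pi$ replaced by $h$, the innermost $i$ replaced by $\id$, and the outermost $\pi$ replaced by $h$ (with $i_* = i$, $\pi_* = \pi$, $h_* = h$ in arity zero). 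Each sum is finite because every internal vertex consumes at least one algebra input.

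To verify the $A_\infty$-relations, I would recast everything in the bar coalgebra formalism. An $A_\infty$-module structure on $M$ is equivalent to a comodule codifferential $\partial^M$ on the bar module $B(A,M) := T_c(A[1])\otimes M$ over the tensor coalgebra $T_c(A[1])$; an $A_\infty$-morphism is a dg-comodule map lifting a map on the cogenerators. The given strong deformation retraction extends tensorwise to chain-complex maps $\id\otimes i$, $\id\otimes \pi$, $\id\otimes h$ between $(T_c(A[1])\otimes M,\, \id\otimes d_M)$ and $(T_c(A[1])\otimes Z,\, \id\otimes d_Z)$, still satisfying the side conditions. The higher-arity part of $\partial^M$, namely the coderivation extensions of $\sum_{k\ge 1} m_{k+1}^M$, strictly decreases tensor length in $A$, so it is nilpotent on every bounded-length subcomplex and the classical chain-complex perturbation lemma applies, producing a transferred differential on $T_c(A[1])\otimes Z$ together with lifts of $i$, $\pi$, $h$ satisfying all the strong-deformation-retract identities.

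It remains to check that the transferred differential is a coderivation (equivalently, corresponds to an $A_\infty$-structure) and that the lifts are dg-comodule maps (equivalently, $A_\infty$-morphisms). These follow by induction on tensor length from the fact that $\id\otimes i$, $\id\otimes \pi$, $\id\otimes h$ all respect the cotensor structure of $T_c(A[1])$, together with the universal property that a coderivation on $T_c(A[1])\otimes Z$ is determined by its projection to the cogenerators. The side-condition identities $\pi_* i_* = \id_Z$, $h_*^2 = 0$, $h_* i_* = 0$, $\pi_* h_* = 0$ reduce to $\pi i = \id$, $h^2 = 0$, $h i = 0$, $\pi h = 0$, because every higher tree contributing to these composites contains an internal $h\circ h$, $h\circ i$, or $\pi\circ h$ and therefore vanishes. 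Working in characteristic $2$ removes all sign bookkeeping.

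The main obstacle is essentially combinatorial: confirming that the tree-sum formulas really define honest $A_\infty$-module operations and morphisms and that all retract identities hold. A direct verification is a lengthy tree-pairing argument, so I would push the work onto a single invocation of the classical homological perturbation lemma on $B(A,M)$, supplemented by the routine check that the transferred differential and the lifted maps respect the tensor coalgebra structure; this reduces the proof to well-established material.
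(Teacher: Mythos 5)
Your bar-construction route is a valid and standard way to establish this lemma; the paper itself does not give a proof but simply cites \cite{KontsevichSoibelman}, \cite{KellerNotes}, \cite{SeidelFukaya} and records the explicit tree formulas in Figure~\ref{fig:homological-perturbation}. Your formula for $m^Z_{n+1}$ ($i$ innermost, $\pi$ outermost, $h$ at internal edges) matches the paper's. However, your descriptions of $\pi_*$ and $h_*$ do not. The transferred morphisms come from $\Pi = \pi(1+\alpha h)^{-1}$, $I = (1+h\alpha)^{-1}i$, $H = h(1+\alpha h)^{-1}$, so in tree form $\pi_*$ replaces the \emph{innermost $i$ by $h$} (not by $\id$), and $h_*$ replaces \emph{both} the innermost $i$ and the outermost $\pi$ by $h$. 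With ``innermost $i \to \id$'' the $A_\infty$-morphism relation for $\pi_*$ already fails in arity two: one would need $\pi m_2(a,(1+i\pi)x)=0$, which does not hold. As written you have also described $i_*$ and $h_*$ identically, which cannot be right.

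On the bar-construction argument itself, the one place that carries real content is the claim that the transferred differential on $T_c(A[1])\otimes Z$ is a coderivation, and your ``follows by induction on tensor length from [the maps] respecting the cotensor structure'' is too thin to carry it. The maps $\id\otimes i$, $\id\otimes\pi$, $\id\otimes h$ are indeed comodule maps, and the perturbation $\alpha$ is a coderivation, but a composite $\pi\alpha(h\alpha)^k i$ of comodule maps with a coderivation in the middle is \emph{not} automatically a coderivation. What saves it are precisely the side conditions: computing $\Delta_Z\,\pi\alpha(h\alpha)^k i$ using $\Delta h = (\id\otimes h)\Delta$, $\Delta i = (\id\otimes i)\Delta$, $\Delta\pi = (\id\otimes\pi)\Delta$ and the coderivation identity for $\alpha$ produces a ``bad'' $\delta\otimes(\cdot)$ term at each internal node, and those terms vanish exactly because $hi=0$ at the $k=1$ step (making $h\alpha i$ a comodule \emph{map}) and $\pi h=0$ at the outer step; the $\delta\otimes\id$ contribution survives only for the $k=0$ term $\pi\alpha i$, thanks to $\pi i=\id$. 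You use the side conditions later, to show the transferred data satisfies the SDR annihilation identities, but they must also be invoked here. If you fill in that computation and fix the $\pi_*,h_*$ formulas, the argument is complete and consistent with the references the paper points to.
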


One particularly helpful aspect of the lemma is that the actions on ${}_A Z$ and the maps $i_*$, $h_*$ and $\pi_*$ are given by concrete formulas. These are shown in Figure~\ref{fig:homological-perturbation}. Therein, if $\cT^* A=\bigoplus_n \otimes^n A$ denotes the tensor algebra over $A$, then $\Delta$ denotes the map
\[
\Delta\colon \cT^* A\to \cT^* A\otimes \cT^* A
\]
which sends elementary tensor $a_1\otimes \cdots \otimes a_n$  to the sum
\[
(a_1)\otimes (a_2\otimes \cdots\otimes a_n)+(a_1\otimes a_2)\otimes (a_3\otimes \cdots \otimes a_n)+\cdots +(a_1\otimes \cdots \otimes a_{n-1})\otimes a_n. 
\]

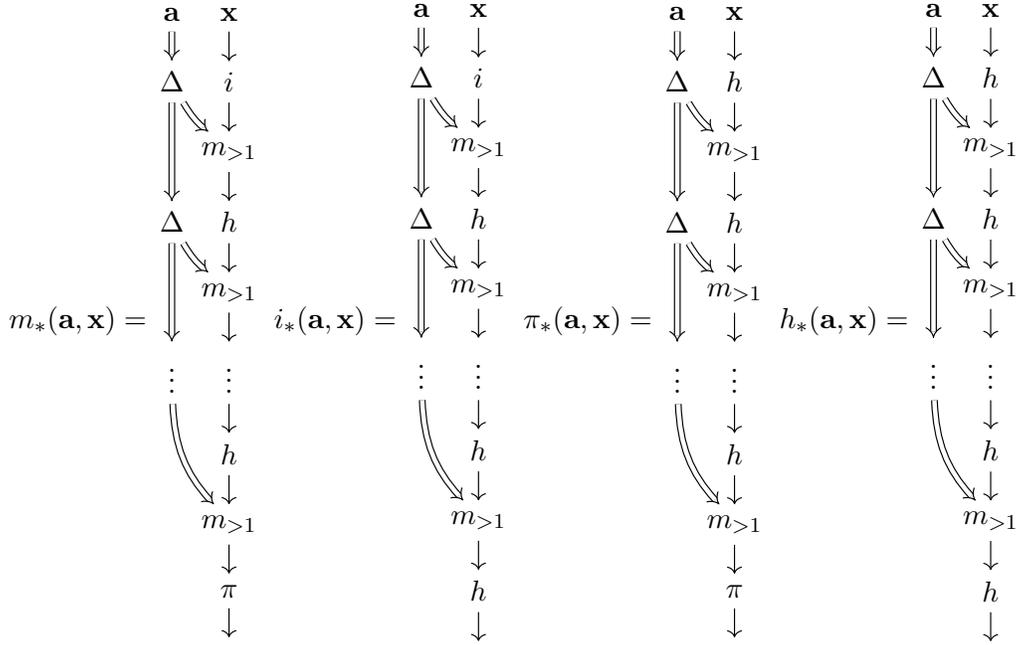
\begin{figure}[ht]
\[
m_*(\ve{a},\xs)=\hspace{-1mm}\begin{tikzcd}[column sep=-.1cm,row sep=.4cm]
\ve{a}\ar[d, Rightarrow]& \ve{x} \ar[d]\\
 \Delta\ar[dr,bend right=10, Rightarrow]\ar[dd,Rightarrow]&i\ar[d]\\
\,& m_{>1} \ar[d]\\
\Delta\ar[dr,bend right=10, Rightarrow]\ar[dd,Rightarrow]&h\ar[d]\\
\,&m_{>1}\ar[d]\\
\vdots \ar[ddr,bend right=20,Rightarrow] & \vdots \ar[d]\\
\,&h\ar[d]\\
\,&m_{>1}\ar[d]\\
\,&\pi\ar[d] \\
\, &\, 
\end{tikzcd}
\,
i_*(\ve{a}, \xs)=\hspace{-1mm}\begin{tikzcd}[column sep=-.1cm,row sep=.4cm]
\ve{a}\ar[d, Rightarrow]& \ve{x} \ar[d]\\
 \Delta\ar[dr,bend right=10, Rightarrow]\ar[dd,Rightarrow]&i\ar[d]\\
\,& m_{>1} \ar[d]\\
\Delta\ar[dr,bend right=10, Rightarrow]\ar[dd,Rightarrow]&h\ar[d]\\
\,&m_{>1}\ar[d]\\
\vdots \ar[ddr,bend right=20,Rightarrow] & \vdots\ar[d]\\
\,&h\ar[d]\\
\,&m_{>1}\ar[d]\\
\,&h\ar[d] \\
\, &\, 
\end{tikzcd}
\,
\pi_*(\ve{a}, \xs)=\hspace{-1mm}\begin{tikzcd}[column sep=-.1cm,row sep=.4cm]
\ve{a}\ar[d, Rightarrow]& \ve{x} \ar[d]\\
 \Delta\ar[dr,bend right=10, Rightarrow]\ar[dd,Rightarrow]&h\ar[d]\\
\,& m_{>1} \ar[d]\\
\Delta\ar[dr,bend right=10, Rightarrow]\ar[dd,Rightarrow]&h\ar[d]\\
\,&m_{>1}\ar[d]\\
\vdots \ar[ddr,bend right=20,Rightarrow] & \vdots\ar[d]\\
\,&h\ar[d]\\
\,&m_{>1}\ar[d]\\
\,&\pi\ar[d] \\
\, &\, 
\end{tikzcd}
\,
 h_*(\ve{a},\xs)=\hspace{-1mm}
\begin{tikzcd}[column sep=-.1cm,row sep=.4cm]
\ve{a}\ar[d, Rightarrow]& \ve{x} \ar[d]\\
 \Delta\ar[dr,bend right=10, Rightarrow]\ar[dd,Rightarrow]&h\ar[d]\\
\,& m_{>1} \ar[d]\\
\Delta\ar[dr,bend right=10, Rightarrow]\ar[dd,Rightarrow]&h\ar[d]\\
\,&m_{>1}\ar[d]\\
\vdots \ar[ddr,bend right=20,Rightarrow] & \vdots\ar[d]\\
\,&h\ar[d]\\
\,&m_{>1}\ar[d]\\
\,&h\ar[d] \\
\, &\, 
\end{tikzcd}
\]
\caption{The maps appearing in the homological perturbation lemma for $A_\infty$-modules.}
\label{fig:homological-perturbation}
\end{figure}

We note that the above homological perturbation results have a more basic formalism in terms of chain complexes, which we now review. We will have use for this perspective.

\begin{lem}
\label{lem:HPL-chain-complexes} Suppose that $(C,d)$ is a chain complex and $\a$ is an endomorphism of $C$ which satisfies the Mauer-Cartan relation
\[
\a^2+[d,\a]=0.
\]
(This implies that $(C,d+\a)$ is a chain complex). Suppose that $(Z,\delta)$ is a chain complex and there is a strong deformation retraction 
\[
\begin{tikzcd}
(C,d) \ar[loop left, "h"] \ar[r, "\pi", shift left] &(Z,\delta) \ar[l, "i", shift left]
\end{tikzcd}.
\]
Suppose additionally that $(1+h \circ \a)$ is invertible, and that
\[
\pi\circ h=0\quad h\circ i=0\quad \text{and}\quad h\circ h=0.
\]
 Then there is an endomorphism $\b\colon Z\to Z$ which satisfies the Mauer-Cartan condition $\b^2+[\delta,\b]=0$ such that there is a strong deformation retraction
\[
\begin{tikzcd}
(C,d+\a) \ar[loop left, "H"] \ar[r, "\Pi", shift left] &(Z,\delta+\beta) \ar[l, "I", shift left]
\end{tikzcd}.
\]
\end{lem}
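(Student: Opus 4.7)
The plan is to write down explicit closed-form expressions for $\beta$, $I$, $\Pi$, $H$ of the standard Basic Perturbation Lemma type, and then verify the required relations by direct algebra. Set $\Sigma := (1+h\alpha)^{-1}$, which exists by hypothesis. The operator $1+\alpha h$ is then automatically invertible with inverse $1+\alpha\Sigma h$, as one checks by multiplying out (using the relation $(1+h\alpha)\Sigma=1$). The intertwining identity $h\Sigma=(1+\alpha h)^{-1}h$ holds because $h(1+\alpha h)=h+h\alpha h=(1+h\alpha)h$. With these in hand, define
\[
I := \Sigma\, i,\qquad \Pi := \pi(1+\alpha h)^{-1},\qquad H := h(1+\alpha h)^{-1}=\Sigma h,\qquad \beta := \pi\,\alpha\,\Sigma\, i.
\]

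First I would verify the four side conditions $\Pi I=1_Z$, $HI=0$, $\Pi H=0$, $HH=0$. Expanding $\Sigma=\sum_{n\ge 0}(h\alpha)^n$ and $(1+\alpha h)^{-1}=\sum_{n\ge 0}(\alpha h)^n$ as formal geometric series (the computations are purely algebraic, so convergence is not an issue), each of the four expressions becomes a double sum of words of the shape $\pi(\cdots)i$, $h(\cdots)i$, $\pi(\cdots)h$, or $h(\cdots)h$. In every non-identity word, either two adjacent $h$'s appear in the middle, or the word begins with $\pi h$, or it ends with $hi$, or it has $h^2$ somewhere; the assumed side conditions $\pi h=0$, $hi=0$, $h^2=0$, $\pi i=1_Z$ then collapse each such sum to either $1_Z$ or $0$ as required.

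Next I would verify the homotopy equation $I\Pi=1_C+[d+\alpha,H]$, the Maurer--Cartan condition $\beta^2+[\delta,\beta]=0$, and the fact that $I,\Pi$ intertwine the perturbed differentials. The key identities are the telescoping relations $\alpha h(1+\alpha h)^{-1}=(1+\alpha h)^{-1}-1$ and $h\alpha\Sigma=\Sigma-1$ (immediate from the definition of the inverses), combined with the original SDR homotopy $i\pi=1_C+[d,h]$ and the Maurer--Cartan relation $\alpha^2=[d,\alpha]$ for $\alpha$. For example, the homotopy equation reduces, after substituting $i\pi=1+dh+hd$ into $I\Pi=\Sigma(i\pi)(1+\alpha h)^{-1}$, to two applications of the telescoping identities; and the Maurer--Cartan equation for $\beta$ reduces to showing that $\pi\alpha(d\Sigma+\Sigma d)\alpha\Sigma i$ can be rewritten using $[d,\Sigma]=\Sigma[\alpha h,d]\Sigma=\Sigma([\alpha,d]h+\alpha[d,h])\Sigma$ in a way that, after inserting $\alpha^2=[d,\alpha]$ and $[d,h]=i\pi-1$, cancels against $\beta^2=\pi\alpha\Sigma i\pi\alpha\Sigma i=\pi\alpha\Sigma(1+[d,h])\alpha\Sigma i$.

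The main obstacle is purely bookkeeping: each verification expands into several terms that must cancel in exactly the right way using the five hypotheses on $(i,\pi,h)$ together with the Maurer--Cartan relation for $\alpha$. No essentially new idea is required beyond the classical Brown--Gugenheim--Stasheff perturbation lemma, but care is needed to check that the proof goes through using only the stated assumptions (in particular, the full trio $\pi h=hi=h^2=0$ is used, as is the invertibility of $1+h\alpha$; without these the closed forms above are not even defined). Once the formulas are written down and the side conditions unpacked term by term, everything follows by formal manipulation in the algebra generated by $d,\alpha,h,i,\pi$ modulo the listed relations.
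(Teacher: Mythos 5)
Your closed-form expressions for $\beta$, $I$, $\Pi$, $H$ match exactly the ones the paper records immediately after the lemma statement, and your verification plan (expand $\Sigma=(1+h\alpha)^{-1}$ and $(1+\alpha h)^{-1}$ as geometric series, kill all non-trivial words using the side conditions $\pi h = h i = h^2 = 0$ and $\pi i = 1_Z$, then use the original SDR homotopy $i\pi = 1_C + [d,h]$ together with $\alpha^2 = [d,\alpha]$ for the chain-map and Maurer--Cartan checks) is precisely the classical Brown--Gugenheim--Stasheff perturbation-lemma argument the paper defers to the literature. One small slip in your sketch of the Maurer--Cartan verification: since $\Sigma = (1+h\alpha)^{-1}$, the conjugation identity should read $[d,\Sigma] = \Sigma[d,h\alpha]\Sigma = \Sigma\bigl([d,h]\alpha + h[d,\alpha]\bigr)\Sigma$, not the $\alpha h$-version you wrote; this is only a bookkeeping typo (the correct form is what the computation actually needs) and does not affect the soundness of the approach.
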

We leave the proof to the reader since it is well-known and elementary. See, e.g. \cite{HK_Homological_Perturbation}. We remark, however, that the maps $H$, $\Pi$, $I$ and $\b$ have the following simple formulas:
\[
\begin{split}
\b&=\pi\circ \a\circ (1+h\circ \a)^{-1} \circ i\\
\Pi&= \pi\circ (1+\a \circ h)^{-1}\\
H&= (1+h \circ \a)^{-1} \circ h\\
I&= (1+h \circ \a)^{-1} \circ i.
\end{split}
\]
Note that typically $(1+h\circ \a)$ is shown to be invertible by showing that the infinite series
\[
(1+h\circ \a)^{-1}=\sum_{n=0}^\infty (h\circ \a)^n
\]
is convergent.

\begin{rem}
	\label{rem:relax-HPL}
 For our purposes, it is also helpful to use the perturbation lemma in situations where all of the conditions of Lemma~\ref{lem:HPL-chain-complexes} are satisfied, except that $h\circ h$ is potentially nonzero. It is straightforward to show that $\beta= \a \circ (1+h \circ \a)^{-1}$ still satisfies the Mauer-Cartan equation, and that $\Pi$ and $I$ (given by the same formulas as above) are chain maps satisfying $I\circ \Pi=1+\d(H)$. Less obviously, it is possible to show that $\Pi$ and $I$ are in fact chain homotopy equivalences. The only non-trivial relation is $\Pi \circ I\simeq 1$. This follows from work of Markl \cite{Markl_IdealHPL}, and is stated in \cite{HogancampHPL}*{Corollary~4.10}. See also \cite{Crainic}*{2.4}. Translating the formula of Markl to our present case, a chain homotopy $\Pi\circ I=1+\d(K)$ is given by the formula $K=\Pi \circ h\circ I.$ It is straightforward to verify that $K$ is a null-homotopy of $I\circ \Pi+1$.
\end{rem}

\subsection{Formality}

We now recall some basic definitions and well-known results about formality and $A_\infty$-modules. We recall that if ${}_A M$ is an $A_\infty$-module, then the action $m_2$ naturally induces an $A$-module structure on $H_*(M)$, for which we write ${}_A H_*(M)$. Note that here we equip ${}_A H_*(M)$ with no higher actions; it is only an $A$-module.

\begin{define} We say that an $A_\infty$-module ${}_A M$ is \emph{formal} if there exists a quasi-isomorphism of $A_\infty$-modules
\[
f\colon {}_A M\to {}_A H_*(M). 
\]
\end{define}

The following is well-known, though we include a proof for the sake of exposition:

\begin{lem}
\label{lem:SDR-module} Suppose that $A$ is an algebra over $\bF=\Z/2$, and ${}_A M$ is an $A_\infty$-module with a $\Z/2$-valued homological grading. Then there is an $A_\infty$-module ${}_A \scH$ with vanishing differential $m_1$ such that there is a homotopy equivalence of $A_\infty$-modules ${}_A M\simeq {}_A \scH$.  
\end{lem}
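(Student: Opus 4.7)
The plan is to reduce the statement to a direct application of the homological perturbation lemma for $A_\infty$-modules (Lemma~\ref{lem:HPL-modules}) applied to a strong deformation retraction of $(M,m_1)$ onto its homology viewed as a chain complex with zero differential. All of the real content lies in producing the underlying SDR of chain complexes over the field $\bF$; after that, the $A_\infty$-machinery runs automatically and delivers both the transferred module structure and the homotopy equivalence.

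First, I would construct a strong deformation retraction at the chain-complex level. Since $m_1$ is an $\bF$-linear map with $m_1^2=0$, I would choose vector space splittings giving a decomposition $M=K\oplus \im(m_1)\oplus \scH$, where $K$ is a complement of $\ker(m_1)$ inside $M$ and $\scH$ is a complement of $\im(m_1)$ inside $\ker(m_1)$. The restriction $m_1|_K\colon K\to \im(m_1)$ is then an isomorphism, and $\scH$ maps isomorphically onto $H_*(M)$ via the obvious inclusion. I would then take $i\colon \scH\hookrightarrow M$ to be the inclusion, $\pi\colon M\to \scH$ the projection onto the $\scH$ summand, and $h\colon M\to M$ the map equal to $(m_1|_K)^{-1}$ on $\im(m_1)$ and zero on $K\oplus \scH$. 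A direct check verifies $\pi\circ i=\id_\scH$, $\id_M+i\circ \pi=m_1\circ h+h\circ m_1$, and $h\circ h=h\circ i=\pi\circ h=0$, which are precisely the conditions of Definition~\ref{def:strong-deformation-retraction}. The $\Z/2$-homological grading plays no role beyond the remark that all splittings can be taken to respect it.

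Second, I would invoke Lemma~\ref{lem:HPL-modules} to transfer the $A_\infty$-module structure. This endows $\scH$ with higher actions $\{m_n\}_{n\ge 1}$ making ${}_A\scH$ an $A_\infty$-module, and extends $i,\pi,h$ to $A_\infty$-module morphisms $i_*,\pi_*,h_*$ giving a strong deformation retraction from ${}_A M$ to ${}_A\scH$. In particular $i_*$ and $\pi_*$ are mutual homotopy inverses, which produces the desired homotopy equivalence ${}_A M\simeq {}_A\scH$.

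Finally, I would observe that the transferred $m_1$ on $\scH$ vanishes. From the explicit formula for the transferred action displayed in Figure~\ref{fig:homological-perturbation}, specialized to the case of no algebra inputs, the new $m_1$ on $\scH$ is given by $\pi\circ m_1\circ i$, which is zero because $i(\scH)\subset \ker(m_1)$. Thus ${}_A\scH$ has vanishing differential, as required. There is no substantive obstacle in this argument: everything rests on the elementary fact that a chain complex over a field admits a strong deformation retraction onto its homology, together with the already-established homological perturbation lemma for $A_\infty$-modules.
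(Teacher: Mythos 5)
Your proof is correct and follows essentially the same path as the paper: you produce a strong deformation retraction of $(M,m_1)$ onto a complement of $\operatorname{im}(m_1)$ inside $\ker(m_1)$ (your $M=K\oplus\operatorname{im}(m_1)\oplus\scH$ is the same decomposition the paper arrives at after splitting by the $\Z/2$-grading), and then invoke Lemma~\ref{lem:HPL-modules}. The only cosmetic difference is that you make explicit the observation that the transferred $m_1=\pi\circ m_1\circ i$ vanishes, which in the paper's version is implicit since the target is literally $H_*(M)$ with its zero differential.
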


\begin{proof}
The main step in the proof is to  describe a strong deformation retraction of chain complexes 
\[
\begin{tikzcd}
M
 	\ar[r, "\pi", shift left]
	\ar[loop left, "h"]	
& H_*(M). \ar[l, "i", shift left] 
\end{tikzcd}
\]
Such a strong deformation retraction exists by straightforward linear algebra, though we repeat the argument for the reader. Write the chain complex $M$ as
\[
\begin{tikzcd}
M_0 
	\ar[r, "d_0", shift left]
& M_1 
	\ar[l, "d_1", shift left]
\end{tikzcd}
\]
where $M_i$ is the $i$-th graded subspace with respect to the $\Z/2$-grading. Inside of $M_i$, we pick a vector space complement $M_i'$ of $\im(d_{i+1})$. Note that $d_i$ vanishes on $\im d_{i+1}$, so we can describe our chain complex via the following diagram:
\[
\begin{tikzcd}[row sep=-.2cm]
M_0' \ar[r, "d_0", twoheadrightarrow] & \im d_0 
\\
\oplus& \oplus
\\
\im d_1 & M_1' \ar[l, "d_1", twoheadrightarrow]
\end{tikzcd} 
\]
We write $H_i=\ker(d_i|_{M_i'})$, and we pick vector space complements $W_i$ of $H_i$ in $M_i'$, so that our chain complex takes the following form:
\[
\begin{tikzcd}[row sep=-.2cm]
W_0' \ar[r, "d_0", "\sim"' ] & \im d_0 
\\
\oplus& \oplus
\\
H_0  & H_1
\\
\oplus& \oplus
\\
\im d_1 & W_1' \ar[l, "d_1", "\sim"']
\end{tikzcd} 
\]
The above diagram gives a strong deformation retraction of chain complexes of $C$ onto $H_0\oplus H_1$, which is isomorphic to $H_*(M)$. Lemma~\ref{lem:HPL-modules} equips $H=H_0\oplus H_1$ with an $A_\infty$-module structure, for which we write ${}_{A} \scH$, so that ${}_A \scH\simeq {}_A M$. 
\end{proof}

\begin{lem} Suppose that $A$ is an algebra over $\bF=\Z/2$ and $f_*\colon {}_A M\to {}_A N$ is a quasi-isomorphism of $A_\infty$-modules, then $f$ is a homotopy equivalence. 
\end{lem}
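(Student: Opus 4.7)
The plan is to reduce the statement to the contractibility of the mapping cone of $f_*$, and then apply Lemma~\ref{lem:SDR-module}. Given the morphism $f_*\colon {}_A M\to {}_A N$, I would form the $A_\infty$-module $\Cone(f_*)$, whose underlying $\ve{k}$-module is $M[1]\oplus N$ and whose $A_\infty$-operations are assembled from $m^M_*$, $m^N_*$, and the components $f_n$ of $f_*$.

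The argument rests on two standard facts about cones of $A_\infty$-module morphisms. First, from the short exact sequence of underlying chain complexes $0\to N\to \Cone(f_*)\to M[1]\to 0$ and the associated long exact sequence in homology, $f_*$ is a quasi-isomorphism if and only if $H_*(\Cone(f_*))=0$. Second, $f_*$ is a homotopy equivalence of $A_\infty$-modules if and only if $\Cone(f_*)$ is contractible as an $A_\infty$-module, i.e.\ $\id_{\Cone(f_*)}$ is $A_\infty$-null-homotopic, which is the same as saying $\Cone(f_*)\simeq 0$. The nontrivial direction of this second equivalence is obtained by decomposing a null-homotopy $H_*$ of $\id_{\Cone(f_*)}$ according to the splitting $\Cone(f_*)=M[1]\oplus N$: the $N\to M$ block of the linear term $H_1$, together with the higher components dictated by the $A_\infty$-relations satisfied by $H_*$, furnishes a homotopy inverse $g_*$ for $f_*$ and the requisite homotopies $g_*\circ f_*\simeq \id_M$, $f_*\circ g_*\simeq \id_N$.

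With these reductions, the proof is immediate. By hypothesis $f_*$ is a quasi-isomorphism, so $H_*(\Cone(f_*))=0$. Applying Lemma~\ref{lem:SDR-module} to the $\Z/2$-graded $A_\infty$-module $\Cone(f_*)$ yields an $A_\infty$-module ${}_A \scH$ with vanishing differential whose underlying space is $H_*(\Cone(f_*))=0$, and a homotopy equivalence $\Cone(f_*)\simeq \scH=0$. Consequently $\id_{\Cone(f_*)}$ is $A_\infty$-null-homotopic, and by the second fact above $f_*$ is a homotopy equivalence. The main technical obstacle is the careful verification of the second fact, in particular unwinding the block-matrix data of a null-homotopy of $\id_{\Cone(f_*)}$ into a genuine $A_\infty$-homotopy inverse for $f_*$; this is routine $A_\infty$-bookkeeping but must be carried out consistently across all the higher components.
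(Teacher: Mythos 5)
Your argument is correct, but it follows a genuinely different route from the paper's. The paper applies Lemma~\ref{lem:SDR-module} to $M$ and $N$ separately to reduce to the case $m_1^M=m_1^N=0$; in that case the quasi-isomorphism $f_*$ has $f_1$ an honest vector-space isomorphism, and an inverse $A_\infty$-morphism is then built term by term by an inductive argument (with references to Keller and to Lipshitz--Ozsv\'{a}th--Thurston for the details). You instead pass to the mapping cone: $f_*$ a quasi-isomorphism means $\Cone(f_*)$ is acyclic, Lemma~\ref{lem:SDR-module} applied to the cone gives $\Cone(f_*)\simeq 0$, and contractibility of the cone is equivalent to $f_*$ being a homotopy equivalence. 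Both approaches hinge on Lemma~\ref{lem:SDR-module}; yours uses it once on the cone, the paper's twice on the source and target. The work you defer to ``routine $A_\infty$-bookkeeping''---unwinding the block decomposition of a null-homotopy of $\id_{\Cone(f_*)}$ to produce $g_*$ and the two homotopies---is the analogue of the paper's inductive construction of the inverse, and neither step is entirely trivial. The cone-based formulation has the advantage of being the standard DG-categorical statement (the category of $A_\infty$-modules with $A_\infty$-morphisms is a DG category in which this equivalence holds), so it is arguably the more conceptual packaging; the paper's version has the advantage of pointing to explicit references for the only nonobvious step. Both are legitimate proofs of the lemma.
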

\begin{proof} By applying Lemma~\ref{lem:SDR-module}, it suffices to consider the case that $m_1$ vanishes on $M$ and $N$ and that $f_1\colon M\to N$ is an isomorphism of vector spaces. An inverse of $f_*$ is constructed by an inductive argument \cite{KellerNotes}*{Section~4}. See \cite{LOTDiagonals}*{Lemma~3.31} for a helpful exposition.
\end{proof}

\subsection{Hypercubes of chain complexes}

We now recall the formalism of hypercubes of chain complexes due to Manolescu and Ozsv\'{a}th. We write $\bE_n$ for the $n$-cube $\{0,1\}^n$. If $\veps,\veps'\in \bE_n$, we write $\veps\le \veps'$ if the inequality holds for each component.

\begin{define} A \emph{hypercube of chain complexes} of dimension $n$ consists of a collection $C_{\veps}$ of vector spaces over $\bF=\Z/2$, as well as a collection of maps $D_{\veps,\veps'}\colon C_{\veps}\to C_{\veps'}$ for  $\veps\le \veps'$. We assume that if $\veps\le \veps''$, then
\[
\sum_{\veps'| \veps\le \veps'\le \veps''} D_{\veps',\veps''}\circ D_{\veps,\veps'}=0. 
\]
\end{define}

We refer the reader to \cite{MOIntegerSurgery}*{Section~5} for additional background.

One helpful technique we will use is the extension of the homological perturbation lemma to hypercubes of chain complexes. This result is written out in \cite{HHSZDuals}*{Lemma~2.10}, though it also follows from earlier work on homological perturbation and filtered chain complexes \cite{HK_Homological_Perturbation}.

\begin{lem} 
\label{lem:homotopy perturbation of hypercube}	
Suppose that $\cC=(C_{\veps}, D_{\veps,\veps'})_{\veps\in \bE_n}$ is a hypercube of chain complexes. Suppose further that for each $\veps\in \bE_n$, we have a strong deformation retraction
\[
\begin{tikzcd}
C_{\veps} \ar[loop left, "h_{\veps}"] \ar[r, "\pi_{\veps}", shift left] &Z_{\veps} \ar[l, "i_{\veps}", shift left],
\end{tikzcd}
\]
for some chain complexes $Z_{\veps}$. 
Then there is a hypercube $\cZ=(Z_{\veps}, \delta_{\veps,\veps'})$ such that the maps $i_{\veps},$ $ \pi_{\veps}$ and $h_{\veps}$ extend to morphisms of hypercubes which give a strong deformation retraction of hypercubes
\[
\begin{tikzcd}
\cC \ar[loop left, "H"] \ar[r, "\Pi", shift left] &\cZ \ar[l, "I", shift left].
\end{tikzcd}
\]
\end{lem}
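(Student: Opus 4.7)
The plan is to bootstrap Lemma~\ref{lem:HPL-chain-complexes} (the chain-level perturbation lemma) by viewing a hypercube as a single chain complex equipped with a Maurer--Cartan perturbation. Concretely, set $C = \bigoplus_{\veps \in \bE_n} C_{\veps}$ with differential $d = \bigoplus_{\veps} D_{\veps,\veps}$ (the length-zero edge maps, which are precisely the internal differentials of the $C_\veps$), and let $\a \colon C \to C$ be the sum of all length-$\geq 1$ edge maps, $\a = \sum_{\veps < \veps'} D_{\veps,\veps'}$. The hypercube compatibility relation
\[
\sum_{\veps \leq \veps' \leq \veps''} D_{\veps',\veps''} \circ D_{\veps,\veps'} = 0
\]
splits into the contribution of terms where exactly one of the factors is an internal differential (which reads $[d,\a]$) and terms where both factors have positive length (which read $\a^2$). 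Together these yield the Maurer--Cartan equation $\a^2 + [d,\a] = 0$.

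Next I form $Z = \bigoplus_\veps Z_\veps$, equipped with $\delta = \bigoplus_\veps \delta_\veps$ where $\delta_\veps$ is the given differential on $Z_\veps$, and assemble the pointwise retraction data into block-diagonal maps $i = \bigoplus i_\veps$, $\pi = \bigoplus \pi_\veps$, $h = \bigoplus h_\veps$. These are strong deformation retraction data from $(C,d)$ to $(Z,\delta)$, and they preserve the cube grading $|\veps|$. The perturbation $\a$, by construction, strictly increases the cube grading by at least $1$. Hence $(h\a)^{n+1}$ raises cube grading by at least $n+1$, and since the cube has top grading $n$, this composition vanishes. Thus $(1 + h\a)^{-1} = \sum_{k=0}^{n} (h\a)^k$ is a finite sum, in particular well-defined and invertible, so Lemma~\ref{lem:HPL-chain-complexes} applies and produces a perturbation $\b$ of $(Z,\delta)$ together with morphisms $I, \Pi, H$ satisfying the strong deformation retraction identities.

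It remains to repackage the resulting data as a hypercube. Because $i$, $\pi$, $h$ preserve cube grading while $\a$ strictly increases it, each of the composites
\[
\b = \pi \a (1+h\a)^{-1} i,\qquad \Pi = \pi(1+\a h)^{-1},\qquad I = (1+h\a)^{-1} i,\qquad H = (1+h\a)^{-1} h
\]
decomposes uniquely into components indexed by pairs $\veps \leq \veps'$: the $\b$-components define the new edge maps $\delta_{\veps,\veps'}$ for $\veps < \veps'$ of $\cZ$, and similarly for $I$, $\Pi$, $H$. The Maurer--Cartan equation for $\b$ recovers the hypercube relations for $\cZ$, and the chain-level strong deformation retraction identities $\Pi \circ I = \id$, $I \circ \Pi = \id + d_{\mathrm{tot}}(H)$, $H \circ I = 0$, $\Pi \circ H = 0$, $H \circ H = 0$ decompose into the corresponding identities for the hypercube morphisms.

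The main obstacle is purely bookkeeping rather than mathematical: one must check that each of the filtration-preserving identities on the total complex really does refine into the hypercube morphism relations edge by edge, and that the convergence of $(1 + h\a)^{-1}$ can be controlled. Both issues are resolved by the strict increase of cube grading by $\a$ together with the finiteness of $\bE_n$. The filtration argument makes the content essentially automatic, so no genuinely new estimate is required beyond what already appears in Lemma~\ref{lem:HPL-chain-complexes}.
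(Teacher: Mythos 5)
Your proof is correct and is precisely the standard argument for the hypercube version of the homological perturbation lemma: view the total complex with the cube filtration, note that the sum of the higher-length edge maps is a Maurer--Cartan perturbation that strictly raises the cube filtration, and invoke Lemma~\ref{lem:HPL-chain-complexes}. This is the approach taken in the cited reference \cite{HHSZDuals}*{Lemma~2.10}, and unwinding $(1+h\a)^{-1}$ as a finite geometric series reproduces exactly the explicit formulas for $\delta_{\veps,\veps'}$, $I$, $\Pi$, $H$ recorded after the lemma.
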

The structure maps $\delta_{\veps,\veps'}$, as well as the maps $I$, $\Pi$ and $H$ have simple formulas. Namely, if $\veps<\veps'$, they are given by
\[
\begin{split}
\delta_{\veps,\veps'}
=&\sum_{\veps=\veps_1<\cdots<\veps_n=\veps'} \pi_{\veps_n}\circ D_{\veps_{n-1}, \veps_n} \circ h_{\veps_{n-1}}\circ \cdots \circ h_{\veps_2} \circ D_{\veps_1,\veps_2} \circ i_{\veps_1} \\
I_{\veps,\veps'}
=&\sum_{\veps=\veps_1<\cdots<\veps_n=\veps'}   h_{\veps_{n}}\circ D_{\veps_{n-1},\veps_n} \cdots \circ h_{\veps_2} \circ D_{\veps_1,\veps_2} \circ i_{\veps_1}\\
\Pi_{\veps,\veps'}
=&\sum_{\veps=\veps_1<\cdots<\veps_n=\veps'} \pi_{\veps_n}\circ D_{\veps_{n-1}, \veps_n} \circ h_{\veps_{n-1}}\circ \cdots \circ  D_{\veps_1,\veps_2} \circ h_{\veps_1} \\
H_{\veps,\veps'}
=&\sum_{\veps=\veps_1<\cdots<\veps_n=\veps'} h_{\veps_n}\circ D_{\veps_{n-1}, \veps_n} \circ h_{\veps_{n-1}}\circ \cdots \circ h_{\veps_2} \circ D_{\veps_1,\veps_2} \circ h_{\veps_1}.
\end{split}
\]

There are some additional refinements of the homological perturbation lemma for hypercubes of $DA$-bimodules which we will use at a few places. We refer to \cite{ZemBordered}*{Section~4.4} for more background. 

\subsection{Linear topological spaces and completions}

We now recall some background on linear topological spaces and completions. One may consult \cite{AtiayhMacdonald}*{Chapter~10} for general background.

\begin{define} A \emph{linear topological space} consists of a vector space $X$ which is also a topological space so that the following are satisfied:
\begin{enumerate}
\item There is a basis of open sets at $0$ consisting of vector subspaces.
\item Addition is continuous as a map from $X\times X\to X$.
\end{enumerate}
\end{define}

A linear topological space is uniquely specified by a basis of open subspaces centered at 0. 

If $X$ is a linear topological space and $(X_{\a})_{\a\in A}$ is a basis of open subspaces centered at $0$, then the \emph{completion} of $X$ is defined as the inverse limit
\[
\ve{X}:=\varprojlim X/X_{\a}.
\]
It is straightforward to show that $\ve{X}$ is independent (up to canonical isomorphism) of the choice of basis of opens subspaces centered at 0. Note that $\ve{X}$ is itself a linear topological space (whose completion is canonically isomorphic to itself).

\begin{example} \label{ex:cofinite} Suppose that $(X_{\a})_{\a\in A}$ is a family of vector spaces (viewed as having the discrete topology). We can consider the linear topology on $X:=\bigoplus_{\a\in A} X_{\a}$ where the open sets consist of $\bigoplus_{\a\in A\setminus S} X_{\a}$ ranging over finite sets $S\subset A$.  The completion is the direct product $\prod_{\a\in A} X_{\a}$. When each $X_{\a}$ is one-dimensional, we call the topology the \emph{cofinite basis topology}.
\end{example}

\begin{rem}
Linear topological spaces form a category. For our purposes, we define a morphism from $X$ to $Y$ to be a continuous linear map from $\ve{X}$ to $\ve{Y}$ (i.e. a continuous map between completions). With this convention, a linear topological space $X$ is isomorphic (in the so-defined category of linear topological spaces) with its completion $\ve{X}$. Therefore we will not distinguish between linear topological spaces and their completions.
\end{rem}

We now discuss the tensor product of linear topological spaces. There is not a canonical choice of topology on the tensor product, but there are several natural topologies that one can consider. We refer to \cite{Positelski-Linear}*{Section~12} for a helpful background. 

Suppose that $X$ and $Y$ are linear topological spaces, with bases of open subspaces $(X_{\a})_{\a \in A}$ and $(Y_{\b})_{\b\in B}$. We consider the following two linear topologies on $X\otimes Y$:
\begin{enumerate}
\item (The standard tensor product $X\otimes^! Y$): A subspace $E\subset X\otimes Y$ is open if and only if $X_{\a}\otimes Y+X\otimes Y_{\b}\subset E$,for some $\a\in A$ and $\b\in B$.
\item (The chiral tensor product $X\vecotimes Y$): A subspace $E$ is open if and only if $X_{\a}\otimes Y \subset E$ for some $\a$, and for all $\xs\in X$ there is a $\b_{\xs}$ so that  $\xs\otimes Y_{\b_{\xs}}\subset E.$
\end{enumerate}

The standard tensor product is classical, while the chiral tensor product is due to Beilinson \cite{Beilinson-Tensors}.

\label{sec:linear topological spaces}

\section{Heegaard Floer background}

\subsection{Knot and link Floer homology}

Knot Floer homology is a refinement of Heegaard Floer homology for knots, due independently to Ozsv\'{a}th and Szab\'{o} \cite{OSKnots}, and Rasmussen \cite{RasmussenKnots}. Link Floer homology is a generalization of this theory to links, due to Ozsv\'{a}th and Szab\'{o} \cite{OSLinks}.

Given a knot in $S^3$, there is a finitely generated free chain complex $\cCFK(K)$ over the two variable polynomial ring 
\[
R=\bF[W,Z].
\]
 The chain complex $\cCFK(K)$ has a $\Z\times \Z$ bigrading, which we denote by $(\gr_{\ws}, \gr_{\zs})$. Here
\[
(\gr_{\ws},\gr_{\zs})(W)=(-2,0)\quad \text{and} \quad (\gr_{\ws}, \gr_{\zs})(Z)=(0,-2).
\]
It is also helpful to consider the Alexander grading, defined by
\[
A=\frac{ \gr_{\ws}-\gr_{\zs}}{2}.
\]

To a link $L\subset S^3$ there is a finitely generated, free chain complex $\cCFL(L)$ over the ring 
\[
R_n:=\bF[W_1,Z_1,\dots, W_n,Z_n].
\]
 This chain complex also has a Maslov bigrading $(\gr_{\ws}, \gr_{\zs})$, as well as an $n$-component Alexander grading $(A_1,\dots, A_n)$. The Alexander grading takes values in the set
\begin{equation}
\bH(L)=\prod_{i=1}^n\left( \Z+\frac{\lk(K_i,L\setminus K_i)}{2}\right).
\label{eq:Alexander-grading-def}
\end{equation}

The differential is graded as follows:
\[
(\gr_{\ws},\gr_{\zs})(\d)=(-1,-1)\quad \text{and} \quad A(\d)=(0,\dots, 0). 
\]

We now discuss the $H$-function of a link $L\subset S^3$, which takes the form of a map
\[
H_L\colon \bH(L)\to \Z^{\ge 0}.
\]
To define this map, if $\ve{s}\in \bH(L)$, we consider the subcomplex
\[
\cCFL(L,\ve{s})\subset \cCFL(L).
\]
We write 
\[
U_i=W_iZ_i
\] 
and we observe that the variables $U_1,\dots, U_n$ preserve Alexander grading.  Therefore $\cCFL(L,\ve{s})$ is a free, finitely generated chain complex over $\bF[U_1,\dots, U_n]$. It is not hard to see that $U_i\simeq U_j$ as endomorphisms of $\cCFL(L,\ve{s})$ for all $i$ and $j$. See, for example,  \cite{BLZNonCuspidal}*{Equation~2.8}. 
 Therefore we view the homology group $\cHFL(L,\ve{s})$ as a module over $\bF[U]$, where $U$ acts by any of the $U_i$. 

\begin{define} 
The map $H_L\colon \bH(L)\to \Z$ is defined via the equation
\[
H_L(\ve{s})=-\frac{1}{2} \max \{\gr_{\ws}(\xs): \xs\in \cHFL(L,\ve{s}), \xs \text{ is $U$ non-torsion}\}.
\]
\label{def:H function}
\end{define}

\subsection{L-space links}
\label{sec:background-L-space-link}

We recall that a rational homology 3-sphere is an \emph{L-space} if $\HF^-(Y,\frs)\iso \bF[U]$ for each $\frs\in \Spin^c(Y)$. A knot $K\subset S^3$ is called an \emph{L-space knot}  if $S_n^3(K)$ is an L-space for all $n\gg 0$.  Being an L-space knot puts a strong restriction on the knot Floer homology of $K$, as evidenced by the following result of Ozsv\'{a}th and Szab\'{o}:

\begin{thm}[\cite{OSlens}] \label{thm:L-space-knot-OS}
 If $K\subset S^3$ is an L-space knot, then $\cCFK(K)$ is a staircase complex, and hence determined completely by the Alexander polynomial $\Delta_K(t)$.
\end{thm}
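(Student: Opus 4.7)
The plan is to invoke Ozsv\'{a}th-Szab\'{o}'s \emph{large surgery formula}, which asserts that for $n \gg 0$ and each $\Spin^c$ structure $\frs_s$ on $S^3_n(K)$ (indexed by $s \in \Z$),
\[
\HFh(S^3_n(K),\frs_s) \;\iso\; H_*\bigl(\widehat{A}_s(K)\bigr),
\]
where $\widehat{A}_s(K)$ is the subquotient of $\cCFK(K)/U$ supported in $\{\max(i,j-s)\le 0\}$ (in the usual $(i,j)$-coordinates $i=\gr_{\ws}/(-2)$, $j=\gr_{\zs}/(-2)$). By hypothesis $S^3_n(K)$ is an L-space, so each left-hand side is one-dimensional over $\bF$. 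Hence I would first establish the structural statement: for every $s\in \Z$, $H_*(\widehat{A}_s(K)) \iso \bF$.

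Next I would extract information about $\widehat{\HFK}(K,s)$ from these rank-one constraints. Consider the increasing filtration of $\cCFK(K)/U$ by $\widehat{A}_s(K)$ as $s$ varies, and the natural short exact sequences between consecutive $\widehat{A}_s$'s; the connecting maps must take a rank-one space to a rank-one space and their compositions are controlled by the bigrading. A careful bookkeeping forces $\widehat{\HFK}(K,s)$ to be either $0$ or $\bF$ in every Alexander grading $s$, and moreover the Maslov gradings of the surviving generators are determined (up to an overall shift) by the jumps of the $H$-function $s\mapsto -\min_{\xs\in \widehat{A}_s} \gr_{\ws}(\xs)$. Combined with the Euler-characteristic identity
\[
\Delta_K(t) \;=\; \sum_{s\in \Z} \chi\bigl(\widehat{\HFK}(K,s)\bigr)\, t^s,
\]
this identifies the set of Alexander gradings carrying a generator with the support of $\Delta_K$, and forces the coefficients of $\Delta_K$ to alternate $\pm 1$.

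Given this bigraded skeleton, I would reconstruct $\cCFK(K)$ itself. Let $s_0 > s_1 > \cdots > s_{2\ell}$ enumerate the Alexander gradings of the generators, and let $x_0,\ldots,x_{2\ell}$ be the corresponding $R$-generators. The requirement that $\cCFK(K)/Z$ and $\cCFK(K)/W$ each have homology a rank-one free module over $\bF[W]$ and $\bF[Z]$ respectively (which follows from the Heegaard Floer computation of $\HF^-(S^3)$), combined with the rank-one condition on each $\widehat{A}_s$, leaves only one possibility for the differential: consecutive generators $x_i, x_{i+1}$ are joined by an arrow weighted by a power of $W$ or $Z$, with the exponents determined by the Alexander-grading differences. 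The Alexander involution symmetry of $\cCFK(K)$ further enforces the symmetric staircase shape. Since the step sizes are read off directly from the exponents in $\Delta_K(t)$, the complex is determined by $\Delta_K$.

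The main obstacle is the second paragraph: ruling out $\bF^{2}$ or higher-dimensional groups in $\widehat{\HFK}(K,s)$. A priori, pairs of generators in the same Alexander grading could cancel in the homology of every $\widehat{A}_s$ simultaneously, compatibly with the L-space rank-one condition. Showing this cannot happen requires a careful induction on $s$: one tracks the $U$-non-torsion tower and uses that any excess generator in $\widehat{\HFK}(K,s)$ would have to be paired with an excess generator either in a lower Alexander grading or on the symmetric side, and in either case one of the $\widehat{A}_s$'s is forced to have $H_*$ of dimension at least two. Once this rigidity step is secured the rest is essentially a bigraded reconstruction argument.
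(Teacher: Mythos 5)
You take a genuinely different route from the paper. Your proposal is essentially Ozsv\'{a}th--Szab\'{o}'s original argument from \cite{OSlens}: use the large surgery formula to show each $H_*(\widehat{A}_s(K))$ is one-dimensional, then argue by bookkeeping that $\widehat{\HFK}(K,s)$ has dimension at most one in each Alexander grading, and finally reconstruct the differential from the bigraded skeleton together with the constraints coming from $\cCFK(K)/W$, $\cCFK(K)/Z$, and the flip symmetry. The paper instead gives a purely algebraic proof via Koszul duality: it first observes that, for an L-space knot, the homology ${}_R H_*(\cCFK(K))$ is concentrated in even Maslov bigradings and in non-positive $(\gr_{\ws},\gr_{\zs})$-degrees, hence is canonically a monomial ideal in $\bF[W,Z]$, whose free resolution is a staircase. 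It then shows $\cCFK(K)$ is \emph{formal} by tensoring with the dualizing bimodule ${}^R\Lambda^R$: only $m_2$ and $m_3$ of the homology module can contribute to the box-tensor, and $m_3(a,b,-)$ must vanish because it shifts the $(\gr_{\ws},\gr_{\zs})$-gradings by an odd integer while the homology and the algebra are both supported in even gradings. Invertibility of ${}^R\Lambda^R$ then forces $\cHFK(K)$ (with its induced higher actions) to coincide with the naive $R$-module $H_*(\cCFK(K))$, giving the staircase conclusion. The payoff of the paper's approach is that it generalizes almost verbatim to $2$-component L-space links, which is the central point of Section~\ref{sec:2-component-L-space}; the reconstruction argument you sketch does not extend readily. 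Your proposal also has an acknowledged gap at the ``rigidity step'' (excluding $\dim_{\bF}\widehat{\HFK}(K,s)\ge 2$); filling it via the induction you describe is exactly where the original \cite{OSlens} proof requires the most care, and your outline stops short of a full argument.
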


In particular, using the Dehn surgery formulas of Ozsv\'{a}th and Szab\'{o} \cite{OSIntegerSurgeries} \cite{OSRationalSurgeries}, it is straightforward to compute the Heegaard Floer homologies of all Dehn surgeries on L-space knots. More background on staircase complexes is presented in Section~\ref{sec:staircase-complexes}.

 In this paper, we consider the analogous notion for links:

\begin{define}
 If $L\subset S^3$ is a link, we say that $L$ is an \emph{L-space link} if $S^3_{\Lambda}(L)$ is an L-space for all sufficiently positive framings $\Lambda$.
\end{define}

Note that using the surgery exact triangle, it is straightforward to show that a knot $K$ is an L-space knot if and only if it admits a single positive, integral surgery which is an L-space. This turns out not to be the case for links.

L-space links have been previously studied by a number of authors   \cite{GorskyNemethiLattice} \cite{GorskyNemethiAlgebraicLinks} \cite{GorskyHom} \cite{LiuLSpaceLinks} \cite{BorodzikGorskyImmersed} \cite{LiuB} \cite{BLZLattice} \cite{CavalloLiu}, though the theory is not as well understood as for L-space knots.

One helpful result concerning L-space links is the following:
\begin{lem}[\cite{LiuLSpaceLinks}*{Lemma~1.10}]\label{lem:sublinks} If $L\subset S^3$ is an L-space link, then all sublinks of $L$ are also L-space links. 
\end{lem}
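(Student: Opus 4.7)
The plan is to induct on the number of components being removed, so that it suffices to prove the following: if $L = L' \sqcup K$ is an L-space link with $K$ a single knot component, then $L'$ is also an L-space link.

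Fix an integral framing $\Lambda'$ on $L'$ with $\Lambda' \gg 0$; the goal is to show that $Y := S^3_{\Lambda'}(L')$ is an L-space. Let $K_{\Lambda'}\subset Y$ denote the image of $K$ under the surgery, and for each $n\in \Z$ write $Y_n := S^3_{(\Lambda',n)}(L)$, which is realized as $n$-surgery on $K_{\Lambda'}$ inside $Y$. By the hypothesis that $L$ is an L-space link and that $\Lambda'\gg 0$, the manifold $Y_n$ is an L-space for all $n\gg 0$. My approach is to apply the large integer surgery formula of Ozsv\'{a}th--Szab\'{o} \cite{OSIntegerSurgeries} (extended to knots in rational homology spheres) to $K_{\Lambda'}\subset Y$. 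The content of this formula is that, for every $\frt \in \Spin^c(Y)$ and all $n$ sufficiently large, there is a $\Spin^c$-structure $\frs_n(\frt)$ on $Y_n$ (corresponding to a relative Alexander grading exceeding the support of the knot Floer complex of $(Y,K_{\Lambda'})$) for which
\[
\widehat{\HF}(Y_n, \frs_n(\frt)) \iso \widehat{\HF}(Y, \frt).
\]
Granting this identification, the L-space property of $Y_n$ forces the left-hand side, and hence $\widehat{\HF}(Y,\frt)$, to be one-dimensional over $\bF$; varying over $\frt$ then shows $Y$ is an L-space, as desired.

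The main obstacle is justifying the displayed isomorphism in the required generality. The statement in \cite{OSIntegerSurgeries} is written for a knot in $S^3$, whereas $K_{\Lambda'}$ is a knot in a rational homology sphere and may be non null-homologous if $K$ has non-trivial linking number with components of $L'$. A clean route around this is to invoke the Manolescu--Ozsv\'{a}th link surgery formula \cite{MOIntegerSurgery} directly for $L$: for fixed $\Lambda'$ and for $n$ very large, truncating the surgery hypercube in the ``$K$-direction'' at Alexander gradings beyond the support of the link Floer complex reduces the computation of the relevant summand of $\widehat{\HF}(Y_n)$ to a single ``$\bA$-style'' complex whose homology is precisely $\widehat{\HF}(Y, \frt)$. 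An alternative, more elementary but less conceptual route is to iterate the surgery exact triangle for the Farey triple $\{n, n+1, \infty\}$ of slopes on $K_{\Lambda'}$ and carry out a $\Spin^c$-refined rank count; however, this requires delicate analysis of the connecting maps and does not seem to yield as clean an argument as the large surgery approach.
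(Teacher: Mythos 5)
The paper itself gives no proof of this statement; it is quoted directly from Liu's work, which the paper cites. So I'll assess your argument on its own terms.

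Your argument is correct, and it is essentially the standard one. The reduction to removing one component $K$ at a time is straightforward. The topological identification $S^3_{(\Lambda',n)}(L) = Y_n(K_{\Lambda'})$ where $Y = S^3_{\Lambda'}(L')$ is correct, and the large-surgery argument — that for $\frt\in\Spin^c(Y)$ and $n$ sufficiently large there is an ``extremal'' $\Spin^c$-structure $\frs_n(\frt)$ on $Y_n$ with $\widehat{\HF}(Y_n,\frs_n(\frt))\cong\widehat{\HF}(Y,\frt)$ — is valid, since for such $\frs_n(\frt)$ the relevant $\widehat A$-complex stabilizes to $\widehat{\CF}(Y,\frt)$. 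You are right to flag the one genuine technical point, namely that $K_{\Lambda'}$ lives in a rational homology sphere and is in general only rationally null-homologous, so that the large-surgery statement must be invoked in that generality rather than in the form originally proved for knots in $S^3$. Your fallback via the Manolescu--Ozsv\'ath link surgery formula is the cleanest fix: for $n\gg 0$, truncating the hypercube $\cC_{(\Lambda',n)}(L)$ in the $K$-direction produces the complex $\cC_{\Lambda'}(L')$, and the rank count transfers directly. This is, up to a change of perspective, the same argument Liu gives: Liu works algebraically with the generalized $\frA^-_{\ve{s}}$ complexes and observes that $\frA^-_{\ve{s}'}(L')$ is recovered as the stabilization of $\frA^-_{(\ve{s}',m)}(L)$ for $m\gg 0$, whence $H_*(\frA^-_{\ve{s}'}(L'))\cong\bF[U]$; you instead phrase the identical stabilization phenomenon in terms of the surgered manifolds. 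Both routes are fine. One small quantifier point worth making explicit: the L-space link condition supplies a single threshold $N$ so that every $\Lambda\geq(N,\dots,N)$ yields an L-space, and you need $\Lambda'\geq(N,\dots,N)$ in every coordinate; then $(\Lambda',n)$ is above threshold for all $n\geq N$, so the same $N$ works as the L-space-link threshold for $L'$. Your exact-triangle alternative would indeed require a careful $\Spin^c$-refined rank count and is not as clean; the large-surgery argument is the right one.
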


A natural question is whether Theorem~\ref{thm:L-space-knot-OS} generalizes to links. One result in this spirit is the following:

\begin{thm}[\cite{BLZLattice}*{Theorem~1.2}] \label{thm:free-res-plumbed}
If $L\subset S^3$ is a plumbed L-space link, the chain complex $\cCFL(L)$ is homotopy equivalent to a free resolution of its homology as an $R_n$-module.
\end{thm}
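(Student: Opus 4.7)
The plan is to exhibit $\cCFL(L)$ as chain-homotopy equivalent to a \emph{lattice link complex} whose natural grading, combined with the L-space hypothesis, forces it to be a free resolution of $\cHFL(L)$. This extends the second author's equivalence of lattice homology and Heegaard Floer homology \cite{ZemLattice} from plumbed 3-manifolds to plumbed links.

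First, to a plumbing graph $\Gamma$ realizing $(S^3,L)$, I would associate a lattice link complex $\bL(L)$ over $R_n$ generated by the integer lattice $\Z^{|V(\Gamma)|}$, with differentials counting combinatorial paths weighted by monomials in the $W_i$ and $Z_i$. Producing this model, and a homotopy equivalence $\cCFL(L)\simeq \bL(L)$, requires extending the chain-level arguments of \cite{ZemLattice} to the multivariable link setting, running the large-surgery/truncation machinery in the presence of the link variables and the $n$-component Alexander grading. By construction, $\bL(L)$ carries a nonnegative \emph{lattice grading} (counting the number of coordinate changes in a path), and the differential strictly decreases this grading; thus $\bL(L)$ splits as
\[
\bL(L) = \bigl( C_q \xrightarrow{\partial} C_{q-1} \xrightarrow{\partial} \cdots \xrightarrow{\partial} C_0 \bigr),
\]
where each $C_j$ is a free $R_n$-module with vanishing internal differential.

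To conclude, I invoke the L-space hypothesis. By Lemma~\ref{lem:sublinks}, every sublink of $L$ is again an L-space link, so $\cHFL(L,\ve{s})\iso \bF[U]$ in each Alexander grading $\ve{s}\in \bH(L)$, with Maslov grading pinned down by the $H$-function (Definition~\ref{def:H function}). Restricting $\bL(L)$ to a fixed Alexander grading $\ve{s}$ produces a bounded complex of free $\bF[U]$-modules whose total homology is a single $\bF[U]$ in a known Maslov degree, while the ranks in each lattice grading are controlled by the multivariable Alexander polynomial. A bigraded rank comparison, Maslov grading by Maslov grading, then forces the homology to vanish in all positive lattice gradings, and the projection $C_0 \twoheadrightarrow \cHFL(L)$ exhibits $\bL(L)$ as a free resolution of $\cHFL(L)$ as an $R_n$-module. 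Combined with $\cCFL(L)\simeq \bL(L)$, this yields the theorem. The principal obstacle is the construction and identification of $\bL(L)$: extending \cite{ZemLattice} to track the full $R_n$-module structure and the $n$-component Alexander grading, rather than the single $\bF[U]$-action relevant in the 3-manifold case, is substantially more intricate than its 3-manifold predecessor and is where essentially all of the Heegaard Floer input enters the argument.
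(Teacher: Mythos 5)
The statement you were asked to prove is not actually proven in this paper; it is a citation to \cite{BLZLattice}*{Theorem~1.2}, and the paper only summarizes the strategy of that external proof (in the paragraph preceding the question ``If $L$ is an L-space link, is $\cCFL(L)$ formal?''). Your proposal reproduces that high-level strategy faithfully: build a lattice link complex generalizing \cite{ZemLattice}, equip it with a nonnegative lattice grading so that the complex takes the form $C_q\to C_{q-1}\to\cdots\to C_0$ with each $C_j$ a free $R_n$-module and no internal differential, identify it with $\cCFL(L)$, and then exploit the L-space hypothesis. So at the level of architecture this is the right route, and you correctly identify the construction and identification of the lattice link complex as the main technical burden.

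Where the proposal has a genuine gap is the final deduction. You assert that a ``bigraded rank comparison, Maslov grading by Maslov grading, then forces the homology to vanish in all positive lattice gradings.'' This step does not follow as stated. The multivariable Alexander polynomial controls Euler characteristics in each bigrading, not ranks, so a ``rank comparison'' cannot be extracted directly from it; and even knowing the rank of each free $R_n$-module $C_j$ together with the fact that $\cHFL(L,\ve{s})\iso\bF[U]$ does not by itself pin down which lattice gradings carry the homology. One can write down complexes $C_q\to\cdots\to C_0$ of free $\bF[U]$-modules whose total homology is a single copy of $\bF[U]$ sitting in a positive lattice grading, so some additional structural input from the lattice construction is required. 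The paper's own summary of \cite{BLZLattice} flags this step as the substantive one --- ``Interestingly, the existence of the lattice grading is sufficient to force the link Floer complex of a plumbed L-space link to be formal'' --- which signals a careful algebraic argument using the lattice grading (essentially a link-theoretic analogue of N\'{e}methi's vanishing of lattice homology in positive degrees for plumbed L-spaces), rather than a counting argument. To make your proof complete you would need to supply that argument, or, at minimum, establish directly that the augmentation $C_0\twoheadrightarrow\cHFL(L)$ realizes all of the homology and that $H_j$ vanishes for $j>0$, using the specific form of the lattice differential and the L-space constraint on Maslov gradings, not just numerology.
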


Since staircase complexes are easily seen to be free resolutions of their homology, Theorem~\ref{thm:free-res-plumbed} naturally generalizes Theorem~\ref{thm:L-space-knot-OS}. A natural question is whether Theorem~\ref{thm:free-res-plumbed} holds for non-plumbed L-space links. In Section~\ref{sec:2-component-L-space}, we answer this question in the affirmative for 2-component L-space links.

We now describe how to compute the $R_n$-module $\cHFL(L)$ for an L-space link. 
Before presenting the technique, it is helpful to reformulate the condition of being an L-space link in terms of $\cHFL(L)$:

\begin{lem} A link $L\subset S^3$ is an L-space link if and only if $\cHFL(L,\ve{s})\iso \bF[U]$ for all $\ve{s}\in \bH(L)$. Equivalently, $L$ is an L-space link if and only $\cHFL(L)$ is free as an $\bF[U]$-module. 
\end{lem}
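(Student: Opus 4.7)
The plan is to invoke the Manolescu--Ozsv\'{a}th integer surgery formula, together with the classical ``large surgery'' specialization. For $\Lambda \gg 0$, the surgery formula gives a canonical identification
\[
\HF^-\bigl(S^3_{\Lambda}(L),\frs\bigr) \cong H_*\bigl(\fA^-(L,\ve{s}(\frs))\bigr),
\]
where $\ve{s}(\frs)\in \bH(L)$ is determined by $\frs$ and $\fA^-(L,\ve{s})$ is the ``large A-complex''. Concretely $\fA^-(L,\ve{s})$ is the $\bF[U]$-chain complex obtained from $\cCFL(L)$ by inverting an appropriate subset of the $W_i$'s and $Z_i$'s (depending on the signs of the coordinates of $\ve{s}$), and taking the subcomplex spanned by generators of Alexander grading $\le \ve{s}$. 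The $\bF[U]$-action on $\fA^-$ agrees with any of the (homotopic) $U_i$ actions.

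\textbf{Step 1 (reformulation).} Observe that $L$ is an L-space link if and only if $H_*(\fA^-(L,\ve{s}))\cong \bF[U]$ for all $\ve{s}\in \bH(L)$. Indeed, for $\Lambda\gg 0$ the $\Spin^c$-structures on $S^3_\Lambda(L)$ are indexed by (a large range of) Alexander gradings in $\bH(L)$, and the L-space condition asserts $\HF^-(S^3_\Lambda(L),\frs)\cong \bF[U]$ for each $\frs$; via the large surgery identification this becomes the stated condition on $\fA^-$.

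\textbf{Step 2 (backward direction).} Suppose $\cHFL(L,\ve{s})\cong \bF[U]$ for every $\ve{s}$. The Alexander filtration equips $\fA^-(L,\ve{s})$ with a spectral sequence whose $E^1$ page is the direct sum of the groups $\cHFL(L,\ve{t})$ (with appropriate powers of $U_i$) over $\ve{t}$ in the half-space determined by $\ve{s}$. Since this $E^1$ page is $U$-torsion-free and each Alexander-graded summand is a single copy of $\bF[U]$, a standard rank count (together with the well-known fact that $H_*(\fA^-)$ is a finitely generated $\bF[U]$-module with a single non-torsion tower \`a la the tau/$d$-invariant) forces the spectral sequence to collapse and the limit to be a single $\bF[U]$. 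By Step~1, $L$ is an L-space link. The same argument shows the equivalent second formulation, since $\cHFL(L)=\bigoplus_{\ve{s}}\cHFL(L,\ve{s})$ being $\bF[U]$-free is the same as each summand being $\bF[U]$.

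\textbf{Step 3 (forward direction).} Suppose $L$ is an L-space link. Pick any $\ve{s}\in\bH(L)$; we must show $\cHFL(L,\ve{s})\cong \bF[U]$. Choose $\ve{s}'\ge \ve{s}$ so large (coordinatewise) that $\fA^-(L,\ve{s}')$ equals $\cCFL(L)$ truncated trivially, i.e.\ contains all generators in a neighborhood of $\ve{s}$. By Step~1 we have $H_*(\fA^-(L,\ve{s}'))\cong \bF[U]$, and the Alexander spectral sequence of Step~2 converges to this. Extracting the $\ve{s}$-graded piece of the $E^\infty$ page and comparing $\bF[U]$-ranks in each Alexander grading (varying $\ve{s}'$ so that the extremal grading slides) pins down each $\cHFL(L,\ve{s})$ to be exactly one copy of $\bF[U]$ with no torsion summands; any torsion summand would survive into $H_*(\fA^-(L,\ve{s}''))$ for some nearby $\ve{s}''$ and contradict its being $\bF[U]$.

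\textbf{Main obstacle.} The technical heart is Step~3: converting the global collapse $H_*(\fA^-)\cong \bF[U]$ into the graded statement that each $\cHFL(L,\ve{s})$ is itself a single copy of $\bF[U]$. The challenge is that the spectral sequence only directly controls the sum of the $\cHFL(L,\ve{t})$ over a half-space, so one must let $\ve{s}$ vary and chase how copies of $\bF[U]$ and $U$-torsion summands move under the $U_i$-actions. The cleanest way around this obstacle is to pass to a sublink (using Lemma~\ref{lem:sublinks}) or to a single component, reducing by induction on $|L|$ to the knot case, where the analogous characterization goes back to Ozsv\'{a}th--Szab\'{o}.
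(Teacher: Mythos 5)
Your Step~1 is the right invocation of the Manolescu--Ozsv\'ath large surgery theorem, but the remainder of the proposal is built on a misunderstanding of the object $\frA^-(L,\ve{s})$, and the spectral-sequence argument that results has genuine gaps.

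The key fact you are missing is that $\frA^-(L,\ve{s})$ is, as an $\bF[U]$-chain complex, nothing other than $\cCFL(L,\ve{s})$ itself: the Alexander-multigrading-$\ve{s}$ direct \emph{summand} of $\cCFL(L)$, which is a free, finitely generated module over $\bF[U_1,\dots,U_n]$ on which all the $U_i$ act homotopically. No localization of the $W_i$ or $Z_i$ is involved in forming the $A$-complex (that description applies to the $B$-complexes and the intermediate faces of the surgery hypercube, not to $\frA^-$), and no spectral sequence is needed: one has $H_*(\frA^-(L,\ve{s}))=\cHFL(L,\ve{s})$ on the nose. Once this identification is noted, Step~1 already finishes the proof; this is exactly the paper's (one-sentence) argument, which just cites Theorem~12.1 of \cite{MOIntegerSurgery} as directly identifying $\cHFL(L,\ve{s})$ with $\HF^-(S^3_\Lambda(L),\frs_{\ve{s}})$ for $\Lambda\gg 0$.

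Because you instead try to compare $H_*(\frA^-(L,\ve{s}))$ and $\cHFL(L,\ve{s})$ via a filtration spectral sequence, Steps~2 and~3 have real holes. In Step~2, a $U$-torsion-free $E^1$ page does \emph{not} force collapse: the differentials are $\bF[U]$-linear maps between copies of $\bF[U]$, hence are multiplications by powers of $U$, and a nonzero such differential produces $U$-torsion on the next page rather than vanishing. Step~3, which you flag as the technical heart, is likewise not a proof: sliding the truncation parameter $\ve{s}'$ controls the whole half-space sum, not the individual Alexander-graded pieces, and the appeal to Lemma~\ref{lem:sublinks} and induction on $|L|$ does not help, since passing to a sublink forgets the Alexander-grading decomposition one is trying to control. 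All of these difficulties evaporate once you observe that there is no filtration to take: $\frA^-(L,\ve{s})$ \emph{is} $\cCFL(L,\ve{s})$.
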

\begin{proof} The proof is well-known, but we include it for the benefit of the reader. The large surgeries formula of Manolescu and Ozsv\'{a}th \cite{MOIntegerSurgery}*{Theorem~12.1}, which identifies $\cHFL(L,\ve{s})$ with $\HF^-(S^3_{\Lambda}(L),\frs_{\ve{s}})$ for all sufficiently large $\Lambda$ (relative to $\ve{s}$). Here $\frs_{\ve{s}}$ denotes a particular $\Spin^c$ structure on $S^3_{\Lambda}$, depending on $\ve{s}$. The second statement follows from the first since 
\[
\cHFL(L)=\bigoplus_{\ve{s}\in \bH(L)} \cHFL(L,\ve{s}). 
\]
\end{proof}

In particular, we observe that for an L-space link $L\subset S^3$, the group $\cHFL(L)$ has rank 0 or 1 over $\bF$ in each $(\gr_{\ws},A)$ multi-grading. Since the actions of $W_i$ and $Z_i$ are homogeneously graded, the $R_n$-module structure on $\cHFL(L)$ is completely determined by the function $H_L$ for an L-space link $L$. A concrete generators and relations description of $\cHFL(L)$ in terms of the $H_L$-function is given in \cite{BLZLattice}*{Section~7.1}.

Gorsky and N\'{e}methi \cite{GorskyNemethiAlgebraicLinks} give a concrete description of the function $H_L$ for an L-space link $L$ in terms of the Alexander polynomials of $L$ and its sublinks:

\begin{prop}[\cite{GorskyNemethiAlgebraicLinks}*{Theorem~2.10}] \label{prop:GNH-function}Let $L$ be an oriented L-space link in $S^3$. Then
\[
H_L(\ve{s})=\sum_{L'\subset L} (-1)^{|L'|-1} \sum_{\substack{\ve{s}'\in \bH(L') \\ \ve{s}'\ge \pi_{L,L'}(\ve{s}+\ve{1})}} \chi(\HFL^-(L',\ve{s}')).
\]
\end{prop}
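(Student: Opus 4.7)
The plan is to reduce the statement to an inclusion-exclusion over the sublink lattice of $L$, using as main inputs (a) the L-space link identification $\cHFL(L,\ve{s})\cong \bF[U]$ as a single $\bF[U]$-tower whose top generator lies in Maslov grading $-2H_L(\ve{s})$, and (b) an identification of $\chi(\HFL^-(L',\ve{s}'))$ with a coefficient of the multivariable Alexander polynomial of $L'$ via the standard Euler-characteristic interpretation. Recall that every sublink of $L$ is itself an L-space link by Lemma~\ref{lem:sublinks}, so (a) and (b) apply to every term in the proposed sum.

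First, I would express $H_L(\ve{s})$ as a regularized count of tower generators sitting in the positive cone $\{\ve{s}'\in\bH(L):\ve{s}'\ge \ve{s}+\ve{1}\}$. Since each $\cHFL(L,\ve{s}')$ is a single $\bF[U]$-tower, comparing $H_L$ at adjacent Alexander gradings via the $W_i$ and $Z_i$ actions shows that the jumps $H_L(\ve{s}+\ve{e}_i)-H_L(\ve{s})$ are controlled by whether a single tower is present in $\cHFL(L,\ve{s}+\ve{e}_i)$. Iterating over all coordinates and using that $\HFL^-(L,\ve{s}')$ vanishes outside a bounded region yields the identity
\[
H_L(\ve{s})=\sum_{\ve{s}'\ge \ve{s}+\ve{1}} \chi(\HFL^-(L,\ve{s}'))
\]
after careful normalization, where the shift by $\ve{1}$ reflects the asymmetry between the "top Maslov grading of the tower" used in Definition~\ref{def:H function} and the cumulative count of towers in the positive cone.

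Second, I would apply inclusion-exclusion to descend from $L$ to its sublinks. Destabilizing the basepoint pair $(w_i,z_i)$ on a component $K_i$ relates $\HFL^-(L)$ and $\HFL^-(L\setminus K_i)$ through a chain map whose cone is controlled in each Alexander grading, mirroring the Torres formula for the multivariable Alexander polynomial. Iterating destabilization over components in $L\setminus L'$ produces an alternating sum indexed by sublinks $L'\subseteq L$ with coefficient $(-1)^{|L'|-1}$, and converts the summation domain $\{\ve{s}'\ge \ve{s}+\ve{1}\}\subset \bH(L)$ to $\{\ve{s}'\ge \pi_{L,L'}(\ve{s}+\ve{1})\}\subset \bH(L')$, where the projection $\pi_{L,L'}$ adjusts Alexander gradings by linking numbers of the deleted components with the remaining ones according to \eqref{eq:Alexander-grading-def}.

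The main obstacle I anticipate is the bookkeeping of the sign $(-1)^{|L'|-1}$, the off-by-one shift $\ve{s}+\ve{1}$, and the precise form of $\pi_{L,L'}$. Each is a normalization consequence of matching conventions between Definition~\ref{def:H function}, the Alexander grading set \eqref{eq:Alexander-grading-def}, and the basepoint destabilization chain maps, but the verification is delicate. The essential non-trivial content lies in the single-slab identity of step one, which uses the L-space hypothesis to ensure each $\cHFL(L,\ve{s}')$ contributes a single tower whose top Maslov grading assembles cleanly into the definition of $H_L$.
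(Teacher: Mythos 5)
The paper does not give a proof of this proposition: it is quoted directly from Gorsky and N\'emethi \cite{GorskyNemethiAlgebraicLinks}*{Theorem~2.10}, so there is no internal argument for you to reproduce or be compared against. That said, your sketch as written has a genuine gap that would need to be repaired before it could serve as a proof.

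The main problem is your first step, the claimed identity
\[
H_L(\ve{s})=\sum_{\ve{s}'\ge \ve{s}+\ve{1}} \chi\bigl(\HFL^-(L,\ve{s}')\bigr).
\]
This is correct for knots, where only the summand $L'=L$ is present in the final formula, but it is false for multi-component links. Consider the $2$-component unlink $U_2$. Its multi-variable Alexander polynomial vanishes, and indeed $\HFL^-(U_2,\ve{s}')$ is two-dimensional with generators in adjacent Maslov gradings for every $\ve{s}'\le(0,0)$, so $\chi\bigl(\HFL^-(U_2,\ve{s}')\bigr)=0$ for all $\ve{s}'$. Your step one would then predict $H_{U_2}(\ve{s})=0$, whereas in fact $H_{U_2}(s_1,s_2)=\max(0,-s_1)+\max(0,-s_2)$. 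In the Gorsky--N\'emethi formula for $U_2$ the entire answer comes from the two one-component sublinks; the $L'=L$ term vanishes identically.

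This reveals a structural problem in your plan: you present the sublink inclusion-exclusion in step two as a bookkeeping correction applied to an identity that you have already asserted as an equality. Those cannot both hold unless the proper-sublink terms cancel, and the unlink example shows they do not. For genuinely multi-component L-space links, $H_L$ is not determined by the graded Euler characteristics of $\HFL^-(L)$ alone; the sublink contributions are essential content, reflecting the fact that the top of the $\bF[U]$-tower in $\cHFL(L,\ve{s})$ is seen by $\HFL^-$ only after passing to the appropriate subquotients associated to sublinks (morally, a Torres-type formula applied to the entire chain of destabilizations). A correct proof must establish the alternating sum over $L'\subset L$ directly, not derive it as a normalization of a single-link formula, and this is where the real work of Gorsky--N\'emethi's argument lies.
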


In the above, $\HFL^-(L')$ denotes the homology of $\cCFL(L')/(Z_1=\cdots=Z_m=0)$. In particular $\chi(\HFL^-(L',\ve{s}'))$ is a coefficient of a monomial in a suitable normalization of the multivariable Alexander polynomial by \cite{OSLinks}*{Section~1.1}. 
Furthermore, $\pi_{L,L'}\colon \bH(L)\to \bH(L')$ is the function which omits each component associated to link components of $L\setminus L'$, and which modifies the $i$-th component (whenever $K_i\subset L'$) via the transformation
\[
s_i\mapsto s_i-\frac{\lk(K_i, L\setminus L')}{2}.
\]
Also $\ve{1}$ denotes $(1,\dots, 1)$.

\subsection{The knot and link surgery formulas}

We now briefly review the knot and link surgery formulas of Manolescu, Ozsv\'{a}th and Szab\'{o} \cite{OSIntegerSurgeries} \cite{OSRationalSurgeries} \cite{MOIntegerSurgery}.

Given a knot $K$ in $S^3$ (or more generally a null-homologous knot in a 3-manifold $Y$) Ozsv\'{a}th and Szab\'{o} \cite{OSIntegerSurgeries} describe a chain complex $\bX_n(K)$ which is homotopy equivalent to $\ve{\CF}^-(S^3_n(K))$ and which is defined in terms of the knot Floer homology of $K$. The complex $\bX_n(K)$ takes the form
\[
\bX_n(K)=\Cone\left(v+h_n\colon \bA(K)\to \bB(K)\right).
\]
The complex $\bA(K)$ decomposes as
\[
\bA(K)=\prod_{s\in \Z}A_s(K).
\]
Here $A_s(K)$ is equal to the subspace of $\cCFK(K)$ in Alexander grading $s$, tensored over $\bF[U]$ with $\bF\llsquare U\rrsquare$. Similarly $\bB(K)$ decomposes as
\[
\bB(K)=\prod_{s\in \Z} B_s(K),
\]
where each $B_s(K)$ is a copy of $\CF^-(S^3)\otimes_{\bF[U]} \bF\llsquare U\rrsquare$. The map $v$ sends $A_s(K)$ to $B_s(K)$, and $h_n$ sends $A_{s}(K)$ to $B_{s+n}(K)$.

We now discuss the link surgery formula of Manolescu and Ozsv\'{a}th \cite{MOIntegerSurgery}.  For a link $L\subset S^3$ with integer framing $\Lambda$, Manolescu and Ozsv\'{a}th describe a chain complex $\cC_{\Lambda}(L)$ which computes $\CF^-(S^3_{\Lambda}(L))\otimes_{\bF[U]} \bF\llsquare U\rrsquare$. The complex $\cC_{\Lambda}(L)$ is an $n=|L|$ dimensional hypercube of chain complexes. As a hypercube, $\cC_{\Lambda}(L)$ decomposes over cube points $\veps\in \bE_n$. We write $\cC_{\Lambda}(L)=\bigoplus_{\epsilon\in \bE_n}\cC_{\veps}$. Frequently one identifies $\bE_n$ with the set of sublinks of $L$. We do this so that $(0,\dots, 0)\in \bE_n$ corresponds to $L\subset L$, and $(1,\dots, 1)$ corresponds to $\emptyset\subset L$.

The structure maps $D_{\veps,\veps'}$ further decompose over \emph{oriented} sublinks of $L$. If $L_{\veps'}\subset L_{\veps}$ are sublinks of $L$ and $M=L_{\veps}\setminus L_{\veps'}$, then 
\[
D_{\veps,\veps'}=\sum_{\vec{M}\in \fro(M)} \Phi^{\vec{M}}
\]
where $\fro(M)$ denotes the set of orientations on $M$.

Finally, each complex $\cC_{\veps}\subset \cC_{\Lambda}(L)$ decomposes as a direct product of chain complexes 
\[
\cC_{\veps}=\prod_{\ve{s}\in \bH(L)} \cC_{\veps}(\ve{s}).
\]
Recall that $\bH(L)$ is defined in Equation~\eqref{eq:Alexander-grading-def} and is the set of Alexander gradings on $\cCFL(L)$.

A choice of orientation on $L$ determines an identification of $H_1(S^3\setminus L)\iso \Z^{n}$. If $\vec{M}\subset L$ is an orientated sublink of $L$, then a Morse framing on $L$ allows us to construct an element $\Lambda_{\vec{M}}\in H_1(S^3\setminus L)$, obtained by pushing $M$ off of itself into $S^3\setminus L$. The map $\Phi^{\vec{M}}$ maps Alexander grading $\ve{s}$ to Alexander grading $\ve{s}+\Lambda_{\vec{M}}$.

Finally, we briefly describe the complexes $\cC_{\veps}$. The complex $\cC_{(0,\dots,0)}$ is isomorphic to a completion of $\cCFL(L)$. If $\veps$ corresponds to the sublink $L'\subset L$, then the complex $\cC_{\veps}$ can be identified (up to homotopy equivalence of $\bF[U_1,\dots, U_n]$-chain complexes) with the localization
\begin{equation}
\cC_{\veps}\simeq  S^{-1}_{\veps}\cdot \cCFL(L)
\label{eq:C_veps-def}
\end{equation}
where $S^{-1}_{\veps}\subset \bF[W_1,Z_1,\dots, W_n,Z_n]$ is the multiplicatively closed subset generated by $Z_i$ for components $K_i\subset L$ such that $\veps_i=1$.  See \cite{ZemBordered}*{Lemma~7.5}. 

It is sometimes helpful to view Equation~\eqref{eq:C_veps-def} instead as a twisted link Floer complex. That is, if we set $Z_i=T_i$ and $U_i=W_iZ_i$, we can instead view $\cC_{\veps}$ as a twisted and multi-pointed version of the link Floer complex of a sublink of $L$.
We let $L_{\veps}$ denote the sublink of $L$ consisting of components $K_i\subset L$ such that $\veps_i=1$. Then $\cC_{\veps}$ can be viewed as the link Floer complex of $L_{\veps}$, twisted by the classes of the link components $L_i$ with $\veps_i=1$, and with $|\veps|$ extra (non-link) basepoints added.  Note that since we are considering a link $L$ in $S^3$, the additional twisted coefficients have the effect of tensoring with an extra copy of $\bF[T^{-1},T]$ for each component of $L\setminus L_\veps$. In more detail,  let $\ps_{\veps}$ denote $\{w_i: \veps_i=1\}\subset \ws$. 
Then there is a homotopy equivalence
\begin{equation}
\cC_{\veps}\iso \cCFL(L_{\veps}, \ps_{\veps})\otimes \bigotimes_{ \{i| \veps_i=1\} } \bF[T_i, T_i^{-1} ].
\label{eq:twisted-Floer-identification}
\end{equation}
See \cite{ZemExact}*{Lemma~7.2}.

 \subsection{Gradings on the link surgery formula}
 \label{sec:Alexander-gradings}
 
 In this section, we describe several Maslov and Alexander gradings on the link surgery formula.
 
  We have already described an Alexander grading taking values in $\bH(L)$. This is related to the identification in Equation~\eqref{eq:C_veps-def}. Note that the identification is only well-defined up to overall multiplication by powers of $Z_i$ for $\veps_i=1$. These isomorphisms may be normalized by requiring the maps $\Phi^{+K_i}$ to preserve the Alexander grading. Therefore we will sometimes refer to this grading as the \emph{$\sigma$-normalized Alexander grading}. We will write 
  \[
  A^{\sigma}(\xs)\in \bH(L)
  \]
   for this grading.
 
 Additionally, the identification in Equation~\eqref{eq:twisted-Floer-identification} gives a $\bH(L_\veps)$-valued Alexander grading on $\cC_{\veps}$, which we refer to as the \emph{sublink normalized Alexander gradings}. (Here we give each $T_i$ Alexander grading 0).
 We write
 \[
A^{\veps}(\xs)\in \bH(L_{\veps})
 \]
 for this grading. 
 
 The gradings $A^\sigma$ and $A^\veps$ are related by the formula
 \begin{equation}
A^{\veps}(\xs)= \pi_{L,L_{\veps}}\left(A^\sigma(\xs)\right),\label{eq:A-veps-A-sigma-relation}
 \end{equation}
 where $\pi_{L,L_{\veps}}\colon \bH(L)\to \bH(L_{\veps})$ is the map which sends $\ve{s}=(s_1,\dots, s_n)$ to the tuple consisting of
 \[
 s_i-\frac{\lk(K_i,L\setminus L_\veps)}{2}
 \]
 ranging over $i$ with $\veps_i=0$.
 
 Additionally, there is a Maslov bigrading $(\gr_{\ws},\gr_{\zs})$. This is induced by the identification in Equation~\eqref{eq:twisted-Floer-identification}. Note that the maps $\Phi^{+K_i}$ preserves $\gr_{\ws}$, while the maps $\Phi^{-K_i}$ preserve $\gr_{\zs}$. Note that the $\gr_{\ws}$ grading is also induced by the identification in Equation~\eqref{eq:C_veps-def}, but $\gr_{\zs}$ is not. 
 
 Note that the Alexander and Maslov gradings on $\cC_{\veps}$ are related by the formula
 \begin{equation}
 \frac{\gr_{\ws}-\gr_{\zs}}{2}=\sum_{K_i\subset L_{\veps}} A_i^\veps.
 \label{eq:Relation-Maslov-Alexander}
 \end{equation}
 
 See \cite{ZemBordered}*{Lemma~7.4} for related results about gradings.
 
\subsection{The surgery algebra}
\label{sec:background-surgery-algebra}

We now describe an alternate perspective on the link surgery formula which is due to the second author \cite{ZemBordered}. This perspective reinterprets the surgery formula in terms of modules over an algebra, called the \emph{surgery algebra} $\cK$, defined as follows.

The surgery algebra $\cK$ is an algebra over the idempotent ring $\ve{I}=\ve{I}_0\oplus \ve{I}_1$, where $\ve{I}_{\veps}\iso \bF$. We set
\[
\ve{I}_0\cdot \cK\cdot \ve{I}_0=\bF[W,Z],\quad  \ve{I}_1 \cdot \cK \cdot \ve{I}_0=\bF[U,T,T^{-1}]\otimes_{\bF} \langle \sigma,\tau\rangle,
\]
\[
\ve{I}_1\cdot \cK\cdot \ve{I}_1=\bF[U,T,T^{-1}], \quad \text{and} \quad \ve{I}_0\cdot \cK\cdot \ve{I}_1=0.
\]
We now describe the multiplication in the algebra. Multiplication of two elements in $\ve{I}_\veps\cdot \cK\cdot \ve{I}_\veps$ is given by polynomial multiplication. Similarly 
\[
U^iT^j\cdot U^sT^t \sigma=U^{i+s}T^{j+t} \sigma
\]
and similarly when $\tau$ replaces $\sigma$ in the above formula. We additionally have the relations
\begin{equation}
\sigma W=UT^{-1} \sigma,\quad \sigma  Z=T \sigma, \quad \tau W=T^{-1} \tau, \quad \tau Z=UT \tau.
\label{eq:algebra-relations}
\end{equation}

It is convenient to package the above relations in terms of two maps
\[
\phi^\sigma,\phi^\tau\colon \bF[W,Z]\to \bF[U,T,T^{-1}],
\]
which are given by the formulas
\begin{equation}
\phi^{\sigma}(W^iZ^j)=U^i T^{j-i}\quad \text{and} \quad \phi^\tau(W^iZ^j)=U^jT^{j-i}. \label{eq:def-phisigma-phitau}
\end{equation}
Then the relations from Equation~\eqref{eq:algebra-relations} can be rewritten as
\[
\sigma\cdot a=\phi^\sigma(a)\cdot \sigma\quad \text{and} \quad \tau \cdot a=\phi^\tau(a)\cdot \tau.
\]

\begin{rem}
 In \cite{ZemBordered}, sometimes a different generating set for the algebra is used. Therein, $\ve{I}_0\cdot \cK\cdot \ve{I}_0\iso \bF[\scU,\scV]$, and $\ve{I}_1\cdot \cK \cdot \ve{I}_1\iso \bF[\scU,\scV,\scV^{-1}]$. In terms of our present notation, we identify $W=\scU$ and $Z=\scV$ in $\ve{I}_0\cdot \cK\cdot \ve{I}_0$. In $\ve{I}_1\cdot \cK \cdot \ve{I}_1$, we identify $U=\scU \scV$ and $T=\scV$. 
\end{rem}

There are two natural ways to topologize the algebra $\cK$, which lead to slightly different theories. The first is to complete at the ideal $(U)$, i.e. define a basis of opens to be the two sided ideals $(U^i)\subset \cK$. We refer to this as the \emph{$(U)$-adic topology}. Multiplication determines a continuous map
\[
\mu_2\colon \cK\otimes^!\cK\to \cK.
\]

Additionally, there is another topology that we can put on $\cK$ so that multiplication is continuous as a map
\[
\mu_2\colon \cK\vecotimes \cK\to \cK.
\]
In \cite{ZemExact}, this is referred to as the \emph{chiral topology}. The chiral topology is generated by the right ideals $(J_n)_{n\in \N}$ given by the following formulas:
\[
\begin{split}
\ve{I}_0\cdot J_n\cdot \ve{I}_0&=(W^n,Z^n)\subset \bF[W,Z]\\
\ve{I}_1\cdot J_n \cdot \ve{I}_0&= (\ve{I}_1\cdot \cK\cdot\ve{I}_0)\cdot (W^n,Z^n)\\
\ve{I}_1\cdot J_n\cdot \ve{I}_1&=(\ve{I}_1\cdot \cK\cdot \ve{I}_1) \cdot U^n.
\end{split}
\]

\begin{rem}
In \cite{ZemBordered}, the second author considers the chiral topology exclusively. In \cite{ZemExact}, the second author considers both the chiral and $U$-adic topologies (therein the algebra with the $U$-adic topology is denoted $\mathfrak{K}$). In this paper, we focus mostly the chiral algebra, though in Section~\ref{sec:identity-cobordism} we consider both the $U$-adic and chiral topologies. 
\end{rem}

\subsection{Modules over the surgery algebra}

We now describe some basic modules over the surgery algebra. If $K$ is a knot in a 3-manifold $Y$ which is equipped with Morse framing $\lambda$, there is a finitely generated type-$D$ module 
\[
\cX_{\lambda}(Y,K)^{\cK}.
\]
When $K$ is null-homologous, the data of $\cX_{\lambda}(Y,K)^{\cK}$  is essentially equivalent to the data of the mapping cone formula $\bX_{\lambda}(Y,K)$, defined by Ozsv\'{a}th and Szab\'{o} \cite{OSIntegerSurgeries}. We usually write $n$ in place of $\lambda$ in this case. For general $K$, the construction is given in \cite{ZemExact}. We briefly recall the construction of $\cX_{n}(Y,K)^{\cK}$ when $Y$ is an integer homology 3-sphere. We define
\[
\cX_{n}Y,K)\cdot \ve{I}_0
\]
to be the $\bF$ span of a free $\bF[W,Z]$ basis of $\cCFK(K)$. The space
\[
\cX_{n}(Y,K)\cdot \ve{I}_1
\]
consists of the $\bF$ span of a free $\bF[U]$-basis of $\CF^-(Y)$. 

The structure map $\delta^1$ of $\cX_{n}(Y,K)^{\cK}$ counts several terms. The terms of the differential $\delta^1$ which are weighted by elements of $\ve{I}_0\cdot \cK\cdot \ve{I}_0$ correspond to the differential of $\cCFK(K)$. The terms of the differential $\delta^1$ which are weighted by $\ve{I}_1\cdot \cK\cdot \ve{I}_1$ correspond to the differential of $\CF^-(Y)$. (In particular, in the model we are describing, no $T$ powers appear in terms of $\delta^1$ weighted by elements of $\ve{I}_1\cdot \cK\cdot \ve{I}_1$). 

Finally, there are the terms of $\delta^1$ which are weighted by multiples of $\sigma$ and $\tau$. To define these terms, we view the complex $\bB(K)$ appearing in the mapping cone formula as a completion of $\CF^-(Y)\otimes \bF[T,T^{-1}]$.

 If $\xs\in \cX_{n}(Y,K)\cdot \ve{I}_0$, and $v(\xs)$ has a summand of $U^i T^j \cdot \ys$, then $\delta^1(\xs)$ has a summand of $\ys\otimes U^i T^j \sigma$. Similarly if $h_{n}(\xs)$ contains a summand of $U^iT^j \cdot \ys$, then $\delta^1(\xs)$ contains a summand of $\ys\otimes U^iT^j \tau$. See \cite{ZemBordered}*{Section~8} for more details.

As an example, the type-$D$ module of an $n$-framed unknot in $S^3$ takes the form
\[
\cX_n(U)^{\cK}:=\begin{tikzcd}\xs_0 \ar[r, "\sigma+T^n \tau"] & \xs_1 \end{tikzcd}
\]
where $\xs_\veps$ is in idempotent $\veps\in \{0,1\}$. 

We now describe the type-$A$ module for a 0-framed solid torus ${}_{\cK} \cD$ (denoted ${}_{\cK} \cD_0$ in \cite{ZemBordered}*{Section~8.2}). We set 
\[
\ve{I}_0\cdot \cD=\bF\llsquare W,Z\rrsquare\qquad \text{and} \quad \ve{I}_1\cdot \cD=\bF\llsquare U,T,T^{-1}\rrsquare.
\]
The actions of $\ve{I}_\veps\cdot \cK\cdot \ve{I}_\veps$ are polynomial multiplication, in the obvious manner. The algebra elements $\sigma$ and $\tau$ act by $\phi^\sigma$ and $\phi^\tau$ as follows. If $x\in \bF\llsquare W,Z\rrsquare $, we set
\[
m_2(\sigma, x)=\phi^\sigma(x)\in \bF\llsquare U,T,T^{-1}\rrsquare=\ve{I}_1\cdot \cD.
\]
We define $m_2(\tau ,x)$ similarly.

When we work with the chiral topology on $\cK$, we give $\cD$ the cofinite basis topology (see Examples~\ref{ex:cofinite}) with respect to the $\bF$ basis of generators $(W^iZ^j)_{i,j\in \N}$ and $(U^iT^j)_{i\in \N, j\in \Z}$. 

If we work in the $U$-adic topology on $\cK$, then we need to topologize $\cD$ differently. In this case, we give $\cD$ the $U$-adic topology on $\bF[W,Z]\oplus \bF[U,T,T^{-1}]$.

The above type-$D$ and $A$ modules recover Ozsv\'{a}th and Szab\'{o}'s surgery formulas for Morse framed knots in the sense that there is a canonical isomorphism
\[
\cX_{n}(Y,K)^{\cK}\boxtimes {}_{\cK} \cD\simeq \bX_{n}(Y,K),
\]
whenever $K$ is rationally null-homologous. When $K$ is homologically essential, it follows from \cite{ZemExact}*{Theorem~1.5} that there is a homotopy equivalence
\[
\cX_{n}(Y,K)^{\cK}\boxtimes {}_{\cK} \cD\simeq \ve{\CF}^-(Y). 
\]

Manolescu and Ozsv\'{a}th's link surgery formula \cite{MOIntegerSurgery} also has a natural interpretation in terms of the algebra $\cK$. In this setting, the data of the link surgery complex $\cC_{\Lambda}(L)$ can naturally be interpreted in terms of a type-$D$ module
\[
\cX_{\Lambda}(S^3,L)^{\cK\otimes_{\bF}\cdots \otimes_{\bF} \cK}.
\]
The complex $\cC_{\Lambda}(L)$ can be recovered by tensoring the above type-$D$ module with $|L|$-copies of the type-$A$ module ${}_{\cK} \cD$. 

\subsection{Tensor product formulas}
\label{sec:tensor-product}
In \cite{ZemBordered}, the second author described several connected sum formulas for the link surgery formula. The most basic version of these formulas computes the link surgery complex of a connected sum of links $L_1\#L_2$ in terms of the surgery complexes of $L_1$ and $L_2$.

 To state the connected sum formula, we first introduce a bimodule ${}_{\cK|\cK} \bI^{\Supset}$. Ignoring the completions, this is a type-$A$ module over the algebra $\cK\otimes_{\bF} \cK$. The underlying $\ve{I}\otimes \ve{I}$-modules is as follows. We set 
\[
(\ve{I}_0|\ve{I}_0)\cdot \bI^{\Supset}=\bF\llsquare W,Z\rrsquare\quad \text{and} \quad (\ve{I}_1|\ve{I}_1)\cdot \bI^{\Supset}=\bF\llsquare U,T,T^{-1}\rrsquare.
\]
The module ${}_{\cK|\cK} \bI^{\Supset}$ vanishes in other idempotents. The action $m_2$ is supported only on tensors $(a|a')\otimes \xs$ where $a,a'\in \cK$ are elements in a single idempotent. In this case, we set $m_2(a|a', \xs)=aa'\cdot \xs$ (using ordinary polynomial multiplication). Additionally, the module $\bI^{\Supset}$ has an additional $m_3$, determined by the formulas
\[
m_3(\sigma|1,1|\sigma, \xs)=\phi^\sigma(\xs),\qquad m_3(1|\sigma, \sigma|1, \xs)=0\quad \text{and}
\]
\[
m_3(\tau|1,1|\tau, \xs)=\phi^\tau(\xs)\quad \text{and} \quad m_3(1|\tau, \tau|1, \xs)=0.
\]

For our purposes, we will need the following version of the connected sum formula:
\begin{thm}\label{thm:connected-sum} Suppose that $L_1,L_2\subset S^3$ are links. Then there is a homotopy equivalence
\[
\cX_{\Lambda_1+\Lambda_2}(L_1\# L_2)^{\otimes^{n_1+n_2-1} \cK}\boxtimes {}_{\cK} \cD\simeq \left(\cX_{\Lambda_1}(L_1)^{\otimes^{n_1} \cK}\otimes \cX_{\Lambda_2}(L_2)^{\otimes^{n_2} \cK}\right)\boxtimes {}_{\cK|\cK} \bI^{\Supset}. 
\]
\end{thm}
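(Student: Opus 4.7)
The plan is to show that both sides compute the Manolescu--Ozsv\'ath link surgery complex for $L_1 \# L_2$ by interpreting each side via connect-sum Heegaard diagrams and matching structure maps. The starting point is to select Heegaard multi-diagrams for $(S^3, L_i, \Lambda_i)$ in which basepoints lie near the components being summed, then form the connect-sum multi-diagram representing $(S^3, L_1\#L_2, \Lambda_1+\Lambda_2)$. The standard connect-sum formula for Heegaard Floer homology yields an identification of link Floer complexes $\cCFL(L_1 \# L_2) \simeq \cCFL(L_1) \otimes_{\bF[W,Z]} \cCFL(L_2)$, where the tensor product is over the polynomial ring generated by the variables attached to the summed components. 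This identification must then be extended across the full hypercube $\cC_{\Lambda_1+\Lambda_2}(L_1\#L_2)$, using the fact that its cube points decompose as pairs of sublinks of $L_1$ and $L_2$.

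Next I would verify that the bimodule ${}_{\cK|\cK}\bI^{\Supset}$ algebraically encodes this geometric identification. The $m_2$ actions on the diagonal idempotents $(\ve{I}_\veps|\ve{I}_\veps)$ reduce to ordinary polynomial multiplication in $\bF\llsquare W,Z\rrsquare$ and $\bF\llsquare U,T,T^{-1}\rrsquare$, which matches the tensor-product identification at the connect-sum site. Meanwhile, the capping by ${}_{\cK}\cD$ on the left-hand side balances the consumption of two $\cK$-actions by $\bI^{\Supset}$ on the right-hand side: both sides are left with $n_1 + n_2 - 2$ free $\cK$-factors, corresponding to the non-summed components of $L_1\cup L_2$.

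The main obstacle is establishing the $m_3$ actions of $\bI^{\Supset}$, namely $m_3(\sigma|1, 1|\sigma, x) = \phi^\sigma(x)$ and $m_3(\tau|1, 1|\tau, x) = \phi^\tau(x)$, together with the vanishing $m_3(1|\sigma, \sigma|1, x) = 0$ and $m_3(1|\tau, \tau|1, x) = 0$. Geometrically, these terms correspond to diagonal arrows in the two-dimensional hypercube faces obtained by simultaneously resolving the summed component from each side; they arise from holomorphic triangle or quadrilateral counts in the connect-sum Heegaard diagram that are not visible from merely tensoring the surgery complexes over $\bF[W,Z]$. Verifying these formulas requires tracking the $T$-powers, which record intersection numbers with meridians of the summed component, and understanding the asymmetry induced by the ordering convention built into the chiral tensor product. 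Once these $m_3$ terms are established, the identification of structure maps can be completed cube-point by cube-point, and the vanishing of the higher actions $m_{n+1}$ on $\bI^{\Supset}$ follows from inspection of the generators of $\cK$.
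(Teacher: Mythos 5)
The paper does not prove this theorem from first principles; the entire ``proof'' is the one-line citation ``The above follows by combining [ZemBordered, Theorems~12.1 and 13.1].'' Your proposal is therefore taking a fundamentally different, and far more ambitious, route: rather than cite the connected-sum pairing theorem, you are attempting to re-derive it from scratch via connect-sum Heegaard multi-diagrams. That is a legitimate strategy, and your outline does capture the correct high-level shape of what [ZemBordered, Thm.~12.1] establishes: the $m_2$ of ${}_{\cK|\cK}\bI^{\Supset}$ records the classical tensor-product identification of $\cCFL(L_1\#L_2)$ at the bottom cube level, while the $m_3$ must account for the length-$2$ diagonal maps in the surgery hypercube. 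The count-balancing observation about $n_1+n_2-2$ remaining $\cK$-factors is also correct.

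However, the step that you yourself flag as ``the main obstacle'' is precisely where all of the work lies, and your sketch does not supply it. Three specific gaps: (a) the identification $\cCFL(L_1\#L_2)\simeq \cCFL(L_1)\otimes_{\bF[W,Z]}\cCFL(L_2)$ only addresses the cube point $(0,\dots,0)$; extending it over the whole hypercube requires verifying compatibility of the length-$1$ maps (the $\sigma$, $\tau$ arrows and the destabilization/holomorphic-triangle maps) with the connect-sum diagrammatic identification, and then precisely matching the length-$2$ quadrilateral maps with the $m_3$ output, $T$-powers included; (b) the essential asymmetry $m_3(\sigma|1,\,1|\sigma,\,x)=\phi^\sigma(x)$ versus $m_3(1|\sigma,\,\sigma|1,\,x)=0$ encodes an ordering convention that you mention but do not justify---this is not a bookkeeping footnote but the algebraic heart of why $\bI^{\Supset}$ reproduces the surgery hypercube, and it is where the ``alpha-beta asymmetry'' in Heegaard Floer theory enters; and (c) your remark that the vanishing of higher $m_{n+1}$ ``follows from inspection of the generators of $\cK$'' is beside the point, since $\bI^{\Supset}$ is defined to have no actions beyond $m_3$; the nontrivial claim is that the box tensor product with this finite algebraic model is homotopy equivalent to the actual surgery complex, which requires a chain-level argument and, moreover, a careful treatment of completions (the chiral topology of Section~3.5) so that the relevant infinite sums converge. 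In short, your plan reconstructs the statement of the pairing theorem you would need, but it does not contain a proof of it, and the simplest correct proof in this paper's framework remains the citation.
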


The above follows by combining \cite{ZemBordered}*{Theorems~12.1 and 13.1} 

\subsection{Invariance and the sublink surgery formula}

We now discuss two related properties of the link surgery modules which are proven in \cite{ZemExact}. The first is invariance of the modules under Dehn surgery. Given an arbitrary link $L$ in a closed 3-manifold $Y$ (not necessarily null-homologous) with Morse framing $\Lambda$, the construction from \cite{ZemExact} gives a type-$D$ module $\cX_{\Lambda}(Y,L)^{\cK\otimes\cdots\otimes \cK}$. This invariant is shown to be an invariant of $(Y,L,\Lambda)$ up to homotopy equivalence in \cite{ZemExact}*{Theorem~1.3}. Similarly, the $DA$-bimodules obtained by tensoring the above module with ${}_{\cK|\cK} \bI^{\Supset}$ are also invariants of $(Y,L,\Lambda)$. In \cite{ZemExact}*{Theorem~1.5}, it is proven that $\cX_{\Lambda}(Y,L)$ are natural with respect to performing Dehn surgery, in the following sense:

\begin{thm}
\label{thm:invariance}
Suppose that $(Y,L,\Lambda)$ is a Morse framed link, with a distinguished component $K\subset L$, and let $\lambda=\Lambda|_K$. Let $L'=L\setminus K$, viewed as a link in the 3-manifold $Y':=Y_{\lambda}(K)$. Write $\Lambda'=\Lambda|_{L'}$. Then
\[
\cX_{\Lambda'}(Y',L')^{\otimes^{n-1} \cK}\simeq \cX_{\Lambda}(Y,L)^{\otimes^n \cK}\boxtimes {}_{\cK} \cD.
\]
\end{thm}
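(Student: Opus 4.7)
The plan is to realize both sides as bordered Heegaard Floer invariants of three-manifolds with parametrized torus boundary components, and then invoke a pairing theorem. First, I would establish that $\cX_\Lambda(Y,L)^{\otimes^n \cK}$ is the bordered invariant of the link exterior $E_L := Y \setminus \nu(L)$, equipped with $n$ parametrized torus boundary components whose parametrizations encode the Morse framing $\Lambda$ (one torus for each component $K_i$ of $L$). This is essentially the content of the construction in \cite{ZemExact}: the link surgery hypercube of \cite{MOIntegerSurgery} can be recast as a type-$D$ module over $\cK^{\otimes n}$ in such a way that the resulting module depends, up to homotopy, only on the pair $(E_L,\Lambda)$, which is the invariance statement cited as \cite{ZemExact}*{Theorem~1.3} just before our theorem.

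Second, I would identify the type-$A$ module ${}_\cK \cD$ as the bordered invariant of a $0$-framed solid torus $S^1 \times D^2$, parametrized so that $\{\mathrm{pt}\} \times \partial D^2$ bounds a disk in the filling and $S^1 \times \{\theta\}$ plays the role of a $0$-framed longitude. A basic consistency check is that $\cX_n(U)^\cK \boxtimes {}_\cK \cD$ recovers $\ve{\CF}^-(S^3_n)$ for the $n$-framed unknot, a compatibility already noted in the excerpt. The key geometric observation is that gluing a $0$-framed solid torus to the $K$-boundary of $E_L$ (parametrized by the Morse framing $\lambda = \Lambda|_K$) produces precisely the exterior of $L' = L \setminus K$ inside $Y' = Y_\lambda(K)$, with the remaining torus boundaries carrying the restricted parametrizations $\Lambda|_{L'}$. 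A pairing theorem asserting that the bordered invariant of a glued manifold is the box tensor product of the invariants of the two pieces then yields the claimed homotopy equivalence.

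The main obstacle is proving the pairing theorem itself for the surgery algebra $\cK$. This requires a holomorphic polygon degeneration argument along the gluing torus (analogous to the Lipshitz--Ozsv\'ath--Thurston pairing theorem \cite{LOTBordered}, now adapted to $\cK$), together with a careful check that the higher $A_\infty$-action $m_3$ on ${}_\cK \cD$ (in particular the identities built from $\phi^\sigma$ and $\phi^\tau$ of Equation~\eqref{eq:def-phisigma-phitau}) exactly matches the interactions between the $\sigma$- and $\tau$-type maps in the surgery hypercube. One must also track the topology on $\cK$ (chiral versus $U$-adic) so that the infinite sums implicit in the box tensor product converge; when $K$ is homologically essential, this step is genuinely nontrivial because powers of $T^{\pm 1}$ can accumulate. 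An alternative, more algebraic route would be to prove the equivalence by diagrammatic invariance: present the surgery module via a bordered Heegaard diagram in which $K$ is separated from $L \setminus K$ by the gluing torus, verify the theorem on a model diagram (the $n=1$ case is Ozsv\'ath--Szab\'o's knot surgery formula), and then extend by handleslide and stabilization invariance.
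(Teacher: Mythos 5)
The paper gives no proof of this statement: it is quoted verbatim as Theorem~1.5 of \cite{ZemExact}, so there is no internal argument to compare yours against. Your sketch correctly identifies the high-level picture (the link-exterior module, ${}_\cK\cD$ as the $0$-framed solid torus, and a gluing/pairing principle), but your primary proposed route --- a Lipshitz--Ozsv\'ath--Thurston-style holomorphic polygon degeneration along the gluing torus --- is not available here. The modules $\cX_\Lambda(Y,L)^{\otimes^n\cK}$ are not defined via bordered Heegaard diagrams for $\cK$; there is no bordered Heegaard diagram formalism for this algebra. They are algebraic repackagings of the Manolescu--Ozsv\'ath link surgery hypercube, built from polygon counts on ordinary (closed) Heegaard diagrams together with algebraic twisting, so a neck-stretching argument in the LOT style simply cannot be run. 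The secondary route you mention --- verify the identification on a model and propagate by handleslide and stabilization invariance --- is much closer to what actually happens: one checks directly that boxing the hypercube with the explicitly given ${}_\cK\cD$ reproduces, on the nose, the link surgery complex of $(Y',L')$, and the remaining content is an invariance statement for that complex, which generalizes Ozsv\'ath--Szab\'o's proof that the mapping cone formula computes $\HF^-$ of surgeries.

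Two further caveats. First, your $n=1$ sanity check should not be the classical Ozsv\'ath--Szab\'o knot surgery formula but its extension to homologically essential $K\subset Y$, which is itself one of the main theorems of \cite{ZemExact}, not a given. Second, the convergence issue you flag (the $T^{\pm 1}$ accumulation when $K$ is homologically essential) is genuine; it is handled by the chiral topology and cannot be deferred, so a correct proof must choose the completion at the outset and verify that the box tensor product is well-defined in it.
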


Additionally, a very helpful property of the surgery modules is that there is a natural equivalence
\begin{equation}
\cX_{\lambda}(Y,K)^{\cK}\cdot \ve{I}_0\simeq \cCFK(Y,K)^{\bF[W,Z]}.
\label{eq:sublink-surgery-formula}
\end{equation}
See \cite{ZemExact}*{Section~1.4}.

\section{2-component L-space links}

\label{sec:2-component-L-space}

In this section, we study 2-component L-space links. In this section, we describe a procedure for computing the bimodule ${}_{\cK} \cX_{\Lambda}(L)^R$ when $L\subset S^3$ is an L-space link.  Here $\Lambda$ is a  Morse framing on $L$ (though only the framing on one component affects the module). 

The organization is as follows.  In Sections~\ref{sec:Y0}, ~\ref{sec:Y1} and~\ref{sec:definition-L-sigma/tau}, we define a candidate bimodule
\[
{}_{\cK} \cY_{\Lambda}(L)^{R},
\] which is combinatorial and computable from the $H$-function of $L$. After describing the candidate module ${}_{\cK} \cY_{\Lambda}(L)^R$, we prove the following:

\begin{thm}
\label{thm:isomorphism-resolution}
There is a homotopy equivalence ${}_{\cK} \cX_{\Lambda}(L)^{R}\simeq {}_{\cK} \cY_{\Lambda}(L)^{R}$.
\end{thm}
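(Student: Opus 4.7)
The plan is to deduce the theorem from the formality result (Theorem~\ref{thm:formality-intro}) combined with the homological perturbation lemma applied to a natural hypercube description of ${}_{\cK}\cX_\Lambda(L)^R$.

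First I would recall the structural description of ${}_{\cK}\cX_\Lambda(L)^R$ coming from the link surgery formula. Since the right action is by $R=\bF[W,Z]$, only the surgery data for one component $K_1$ enters via $\cK$, while the component $K_2$ is seen only through its $R_2$-action. Concretely, the underlying $\ve{I}$-module of $\cX_\Lambda(L)$ splits as $\ve{I}_0\cdot \cX \oplus \ve{I}_1\cdot \cX$, where the first summand is (a completion of) $\cCFL(L)$ regarded as an $R$-complex through the inclusion $R\into R_2$ sending $W,Z$ to $W_2,Z_2$, and the second summand is (a completion of) the localization $\cCFL(L)[Z_1^{-1}]$; these are identified via Equations~\eqref{eq:C_veps-def} and~\eqref{eq:twisted-Floer-identification} specialized to $\veps=(\veps_1,0)$. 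The two summands are linked by the $\cK$-actions of $\sigma$ and $\tau$, which encode the oriented surgery maps $\Phi^{+K_1}$ and $\Phi^{-K_1}$; together with the internal differentials and $(W_2,Z_2)$-actions these assemble into the full $DA$-structure.

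Second, by Theorem~\ref{thm:formality-intro}, $\cCFL(L)$ is quasi-isomorphic to $\cHFL(L)$ as an $R_2$-complex, and since localization is exact this equivalence passes to $\cCFL(L)[Z_1^{-1}]\simeq \cHFL(L)[Z_1^{-1}]$. For a 2-component L-space link both homology groups are explicitly determined by the $H$-function: $\cHFL(L)=\bigoplus_{\ve{s}\in\bH(L)}\bF[U]$ with $R_2$-action read off from $H_L$ via Proposition~\ref{prop:GNH-function}. Consequently each of the two underlying $R$-complexes of ${}_{\cK}\cX_\Lambda(L)^R$ admits a strong deformation retraction onto its homology in the sense of Definition~\ref{def:strong-deformation-retraction}, and the resulting target hypercube is entirely computable from $H_L$.

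Third, I would apply the hypercube-level homological perturbation lemma (Lemma~\ref{lem:homotopy perturbation of hypercube}), together with its $DA$-bimodule extension referenced in the text, to transfer the full $DA$-structure from ${}_{\cK}\cX_\Lambda(L)^R$ onto the homology at each vertex. This produces a $DA$-bimodule over $(\cK,R)$, homotopy equivalent to ${}_{\cK}\cX_\Lambda(L)^R$, whose underlying $\ve{I}$-module is $\cHFL(L)\oplus \cHFL(L)[Z_1^{-1}]$. I would then verify that this transferred bimodule coincides with the combinatorial model ${}_{\cK}\cY_\Lambda(L)^R$ defined in Sections~\ref{sec:Y0}--\ref{sec:definition-L-sigma/tau}. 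The grading constraints of Section~\ref{sec:Alexander-gradings}, in particular Equations~\eqref{eq:A-veps-A-sigma-relation} and~\eqref{eq:Relation-Maslov-Alexander}, together with the requirement that $\Phi^{+K_1}$ preserves the $\sigma$-normalized Alexander grading, should rigidify the induced actions enough to force this identification.

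The main obstacle will be the last verification step. The homological perturbation lemma yields explicit but potentially complicated formulas for the transferred $m_n$ and $\delta^1_{n+1}$, and one must check that the higher $\cK$-actions (especially those involving $\sigma$ and $\tau$ crossing between the two idempotents) match the combinatorial definition of $\cY$ on the nose. A clean way to handle this is to note that, once formality is established at each vertex and the edge maps $\Phi^{\pm K_1}$ are identified, any homotopies between two $DA$-bimodule structures with the same grading data and the same homology-level edges would also be forced by the $A_\infty$-relations together with the Koszul-duality rigidity already used in the proof of Theorem~\ref{thm:formality-intro}. Conceptually, formality should propagate from the chain complex $\cCFL(L)$ to the full bimodule ${}_{\cK}\cX_\Lambda(L)^R$, as is made precise in Theorem~\ref{thm:compute-bimodule-intro}.
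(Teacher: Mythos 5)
Your plan has a genuine logical problem, and a separate genuine gap.

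The logical problem is circularity. In the paper's development, Theorem~\ref{thm:formality-intro} is the same statement as Theorem~\ref{thm:proof-free-resolution}, and the latter is explicitly announced as a \emph{consequence} of Theorem~\ref{thm:isomorphism-resolution}: the paper first shows that ${}_R\cY^0(L)^R$ is (up to $R\boxtimes$) a free resolution of $\cHFL(L)$, then shows ${}_R\cY^0(L)^R\simeq{}_R\cX^0(L)^R$, and only then concludes formality of $\cCFL(L)$. You cannot invoke Theorem~\ref{thm:formality-intro} as an input to prove Theorem~\ref{thm:isomorphism-resolution} without either giving an independent proof of formality or unwinding the argument into the Koszul-duality computations directly; as written, your step ``by Theorem~\ref{thm:formality-intro}, $\cCFL(L)\simeq\cHFL(L)$'' is the conclusion you are supposed to be heading toward.

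The gap is in the final identification step, which you acknowledge is the obstacle but treat as a routine consequence of gradings and $A_\infty$-relations. That is precisely what is not automatic. Once you have strong deformation retracts at each cube corner, the homological perturbation lemma hands you \emph{some} transferred $DA$-bimodule, and you must show this transferred structure is homotopy equivalent to the combinatorial model regardless of the choices made. In the paper this is secured by an explicit $\Ext$-vanishing statement (Lemma~\ref{lem:computation-ext-complex}): every chain endomorphism of a staircase complex in $(\gr_\alg,\gr_{\ws},\gr_{\zs})$-degree $(-1,-1,-1)$ is null-homotopic, which is exactly what forces $\delta_3^1(W,Z,-)$ and $\delta_3^1(Z,W,-)$ (and later the $h_{\sigma,\cdot}$, $h_{\tau,\cdot}$ maps) to agree with the combinatorial ones up to chain homotopy. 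This is not a generic feature of Koszul duality: the paper's Section~\ref{sec:remarks} exhibits, for $T(2,4)$, a non-trivial $\Ext^2_{R_2}$ class in the relevant grading, explaining why the same strategy breaks for $\ge 3$ components. Without invoking (or reproving) Lemma~\ref{lem:computation-ext-complex} you have no rigidity argument, and grading parity alone does not kill the length-two differentials in the $R$-module direction, only the $m_3$ terms internal to a single staircase. So the proposal correctly identifies the two-idempotent decomposition and the role of the surgery maps, but it skips the one computation that makes the argument go through in the two-component case specifically.
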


We will prove that the following result is a consequence of the above theorem:

\begin{thm}
\label{thm:proof-free-resolution} If $L$ is a 2-component L-space link in $S^3$, then ${}_{R_2}\cCFL(L)$ is a free resolution of its homology. Equivalently, ${}_{R_2}\cCFL(L)$ is formal.
\end{thm}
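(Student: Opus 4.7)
The plan is to deduce Theorem~\ref{thm:proof-free-resolution} directly from Theorem~\ref{thm:isomorphism-resolution} by passing to the $\ve{I}_0$ idempotent on the left factor. The bimodule ${}_{\cK} \cX_{\Lambda}(L)^R$ is obtained from the full link surgery bimodule by discarding the $\sigma$- and $\tau$-weighted surgery maps on one factor, so that one link component is tracked by a $\cK$-action and the other by a plain $R$-action. Restricting the remaining left $\cK$-action to $\ve{I}_0$ kills the $\sigma,\tau$-weighted terms on that factor as well, and the natural two-component analog of the sublink surgery identification in Equation~\eqref{eq:sublink-surgery-formula} gives a homotopy equivalence of DG $(R,R)$-bimodules
\[
\ve{I}_0 \cdot {}_{\cK} \cX_{\Lambda}(L)^R \;\simeq\; \cCFL(L),
\]
where the two $R$-actions correspond to the two link components. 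Applying $\ve{I}_0$ on the left to the equivalence in Theorem~\ref{thm:isomorphism-resolution} therefore produces a homotopy equivalence of DG $R_2$-modules
\[
\cCFL(L) \;\simeq\; \ve{I}_0 \cdot {}_{\cK} \cY_{\Lambda}(L)^R.
\]

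The second step is a direct inspection of the construction of $\cY$ given in Sections~\ref{sec:Y0}--\ref{sec:definition-L-sigma/tau}: by design, $\ve{I}_0 \cdot {}_{\cK} \cY_{\Lambda}(L)^R$ is a chain complex of free $R_2$-modules whose homology realizes $\cHFL(L)$ in the $R_2$-module structure prescribed by Proposition~\ref{prop:GNH-function} and the L-space link description from Section~\ref{sec:background-L-space-link}. In other words, $\ve{I}_0 \cdot {}_{\cK} \cY_{\Lambda}(L)^R$ is manifestly a free resolution of $\cHFL(L)$ over $R_2$. Combining the two equivalences, $\cCFL(L)$ is homotopy equivalent to such a free resolution, and therefore admits a quasi-isomorphism to $\cHFL(L)$ regarded as a DG module with vanishing differential; this is the formality statement.

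The only non-routine ingredient in this strategy is Theorem~\ref{thm:isomorphism-resolution} itself, that is, building the combinatorial model ${}_{\cK} \cY_{\Lambda}(L)^R$ from the $H$-function and producing the homotopy equivalence with ${}_{\cK} \cX_{\Lambda}(L)^R$. Once that equivalence is available, the passage to Theorem~\ref{thm:proof-free-resolution} is essentially bookkeeping: restrict to $\ve{I}_0$, apply the link version of the sublink surgery identification, and read off the resolution structure from the explicit shape of $\cY$. I expect the main obstacle to lie entirely upstream of this deduction, in verifying that the $\ve{I}_0\otimes\ve{I}_0$ part of the candidate $\cY$ really does match $\cCFL(L)$ at the level of differentials and $R_2$-actions under the L-space link hypothesis.
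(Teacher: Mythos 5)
Your overall strategy matches the paper's intended route: start from Theorem~\ref{thm:isomorphism-resolution}, restrict to the $\ve{I}_0$ idempotent so that both link components are tracked by plain $R$-actions, identify the result with $\cCFL(L)$ via the sublink surgery identification, and then read off formality from the shape of $\cY$. Step one (the identification $\ve{I}_0\cdot\cX \simeq \cCFL(L)$) is fine and is exactly how the paper defines ${}_R\cX^0(L)^R$.

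The gap is in your assertion that $\ve{I}_0\cdot{}_\cK\cY_\Lambda(L)^R$ is ``manifestly a free resolution of $\cHFL(L)$ over $R_2$.'' Having free terms and the correct homology does not by itself make a chain complex a free resolution of its homology; one also needs the homology to be concentrated in a single resolution degree, with the bottom term surjecting onto it. That is a genuine acyclicity statement, not bookkeeping. Concretely: ${}_R\cY^0(L)^R$ is a $DA$-bimodule, not yet an $R_2$-chain complex, and the passage to one involves tensoring away the left $A_\infty$-action, in which the $\delta_3^1$ terms contribute nontrivially. The paper handles this with a dedicated proposition: it forms ${}^R\cF^R := {}^R\Lambda^R\boxtimes{}_R\cY^0(L)^R$ using the Koszul dual bimodule of $R=\bF[W,Z]$, observes that the algebraic grading (staircase generators in degrees $0,1$; $\delta_2^1$ preserves it; $\delta_3^1$ raises it by one; $\delta^{1,1}$ on $\Lambda$ drops it by one) turns $\cF$ into a bounded complex $\cF_3\to\cF_2\to\cF_1\to\cF_0$, and then uses the explicit quasi-isomorphism $[\bI]^R\to{}_RR_R\boxtimes{}^R\Lambda^R$ to conclude that the homology of $R\boxtimes\cF\boxtimes R$ is supported in algebraic grading $0$. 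You should make this algebraic-grading/Koszul-duality argument explicit rather than calling it ``by design''; without it, there is nothing ruling out homology in higher resolution degree.
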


See \cite{BLZLattice}*{Section~2} for background material on formality of chain complexes in a closely related context.

\begin{rem}
 The construction of the candidate module generalizes rather straightforwardly to the case of $n$-component L-space links in $S^3$. In this case, there is a candidate module
 \[
 {}_{\cK} \cY_{\Lambda}(L)^{R_{n-1}}.
 \]
 Our proof of Theorem~\ref{thm:isomorphism-resolution} does \emph{not} seem to extend to $n>2$. We are able to describe the failure of our proof in terms of the vanishing of certain $\Ext$ groups. For specific $L$, it may be possible to prove Theorem~\ref{thm:isomorphism-resolution} by computing these groups.
\end{rem}

\begin{rem} It would be natural to ask whether one could extend these results to compute the entire $DA$-bimodules ${}_{\cK} \cX_{\Lambda}(L)^{\cK}$ for 2-component L-space links. This is a somewhat more subtle question. For example, in \cite{ZemBordered}*{Section~16}, it is proven that the full $DA$-bimodule ${}_{\cK} \cX_{\Lambda}(H)^{\cK}$  for the Hopf link depends non-trivially on the choice of arc system, whereas Theorem~\ref{thm:isomorphism-resolution} shows that ${}_{\cK} \cX_{\Lambda}(H)^{R}$ does not. 
\end{rem}

\subsection{Staircases complexes}
\label{sec:staircase-complexes}
In this section we describe some background about staircase complexes, and maps between them. 

We recall that a \emph{staircase complex} $\cS$ consists of a finitely generated free chain complex over $R:=\bF[W,Z]$ (i.e. a finitely generated type-$D$ module over $R$). The complex $\cS$ has a free $R$-basis $\xs_0,\ys_1,\xs_2,\dots, ,\ys_{2n-1},\xs_{2n}$ with respect to which the differential takes the following form:
\[
\d(\xs_{2i})=0\quad \text{and} \quad \d(\ys_{2i+1})=\xs_{2i}\otimes W^{\a_{i}}+\xs_{2i+2}\otimes Z^{\b_{i}}
\]
for some $\a_{i},\b_{i}>0$. 

We say that a staircase complex has \emph{standard gradings} if it is equipped with a $(\gr_{\ws},\gr_{\zs})$-bigrading so that $\gr_{\ws}(\xs_{0})=0$ and $\gr_{\zs}(\xs_{2n})=0$. This is the case whenever $\cS$ is the staircase complex of a knot in $S^3$. 

Note that we can write a staircase complex as a mapping cone complex $\cS_1\to \cS_0$, where $\cS_1$ is spanned by the $\ys_{i}$ and the $\cS_0$ is spanned by the $\xs_i$.  We  refer to the subscript $\veps$ of $\cS_\veps$ as the \emph{algebraic grading} of $\cS$.

 The following lemma is helpful for our purposes:

\begin{lem} \label{lem:monomial-ideals-resolutions}
\item
\begin{enumerate}
\item If $\cS$ is a staircase with standard gradings, then $H_*(\cS)$ is canonically isomorphic to a monomial ideal in $\bF[W,Z]$. 
\item A staircase complex $\cS$ is a free resolution over $R$ of its homology, i.e., if $\cS=(\cS_1\to \cS_0)$ is a staircase complex, then there is an exact sequence
\[
\begin{tikzcd}0\ar[r] &\cS_1\ar[r, "\d"] & \cS_0\ar[r, twoheadrightarrow] & \cS_0/\im \cS_1=H_*(\cS).
\end{tikzcd}
\] 
\end{enumerate}
\end{lem}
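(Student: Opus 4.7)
The plan is to prove (2) first, since it gives $H_*(\cS) = \cS_0/\im(\d)$, which is then identified with an explicit monomial ideal in (1). For (2), I verify injectivity of $\d : \cS_1 \to \cS_0$ by direct inspection. Writing a general element as $c = \sum_{i=0}^{n-1} p_i(W,Z)\,\ys_{2i+1}$, the coefficient of $\xs_{2j}$ in $\d c$ equals $p_0 W^{\a_0}$ for $j=0$, equals $p_{j-1}Z^{\b_{j-1}} + p_j W^{\a_j}$ for $0<j<n$, and equals $p_{n-1}Z^{\b_{n-1}}$ for $j=n$. Setting $\d c = 0$ and using that $R = \bF[W,Z]$ is an integral domain, the $j=0$ equation forces $p_0 = 0$, then the $j=1$ equation forces $p_1 = 0$, and so on by cascade. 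Hence $\d$ is injective, so $H_*(\cS) = \cS_0/\im(\d)$ and $\cS$ is a length-one complex over $R$.

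For (1), set $A_i := \sum_{k<i}\a_k$ and $B_i := \sum_{k\ge i}\b_k$, so that $0 = A_0 < A_1 < \cdots < A_n$ and $B_0 > B_1 > \cdots > B_n = 0$. Define
\[
I := \bigl(W^{A_0}Z^{B_0},\ \ldots,\ W^{A_n}Z^{B_n}\bigr) \subset \bF[W,Z],
\]
and the $R$-linear map $\phi : \cS_0 \to I$ by $\phi(\xs_{2i}) := W^{A_i}Z^{B_i}$. This map is surjective by construction, and the identities $A_{i+1} = A_i + \a_i$ and $B_i = B_{i+1} + \b_i$, combined with working over $\bF = \Z/2$, yield $\phi(\d\ys_{2i+1}) = W^{A_{i+1}}Z^{B_i} + W^{A_{i+1}}Z^{B_i} = 0$. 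Hence $\phi$ descends to a surjection $\bar\phi : H_*(\cS) \twoheadrightarrow I$ of bigraded $R$-modules, with bigrading compatibility coming from the fact that the standard gradings assign $\xs_{2i}$ the bigrading $(-2A_i,-2B_i)$, matching that of $W^{A_i}Z^{B_i}$.

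To show $\bar\phi$ is an isomorphism I will construct its inverse $\psi : I \to H_*(\cS)$ directly. For each monomial $W^a Z^b \in I$, pick any $i$ with $A_i \le a$ and $B_i \le b$, and set $\psi(W^a Z^b) := [W^{a-A_i}Z^{b-B_i}\,\xs_{2i}]$, extending $\bF$-linearly. Independence of the choice of $i$ reduces to the case of two consecutive valid indices $i$ and $i+1$, where the relation $[W^{\a_i}\xs_{2i}] = [Z^{\b_i}\xs_{2(i+1)}]$ in $H_*(\cS)$ (coming from $\d\ys_{2i+1}$) together with $A_{i+1} - A_i = \a_i$ and $B_i - B_{i+1} = \b_i$ shows the two assignments yield the same class; transitivity handles all other pairs. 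This well-definedness check is the only step with any content and is the main, though mild, obstacle; it amounts to combinatorial bookkeeping of the exponents. The resulting $\psi$ is manifestly $R$-linear, and one verifies $\psi \circ \bar\phi = \id$ on the generators $\xs_{2i}$ and $\bar\phi \circ \psi = \id$ on the monomial basis of $I$. This completes (1), and together with (2) realizes $\cS$ as a length-one free resolution of $I \cong H_*(\cS)$.
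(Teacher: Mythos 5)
Your proof is correct and supplies exactly the details that the paper declares ``straightforward and left to the reader'' (the paper's only hint being the example in Figure~\ref{fig:cable6}, which depicts precisely your monomial ideal $(W^{A_i}Z^{B_i})_{i}$). The injectivity-by-cascade argument for part (2) and the explicit map $\xs_{2i}\mapsto W^{A_i}Z^{B_i}$ with its well-defined inverse for part (1) are the standard route, and your check that the set of valid indices $i$ forms a contiguous interval correctly justifies reducing well-definedness to consecutive pairs.
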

\begin{proof} Both claims are straightforward and left to the reader. See Figure~\ref{fig:cable6} for an example.
\end{proof}

\begin{figure}[h]
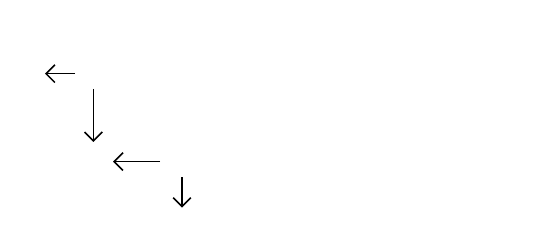
\caption{A staircase complex (left) and its homology (right). On the right, each dot represents a generator over $\bF$. The action of $W$ shifts rightward, and the action of $Z$ shifts upward.}
\label{fig:cable6}
\end{figure}

We now prove several straightforward lemmas which will help us later on.

\begin{lem}\label{lem:homology-to-chain}
 Let $\cA$ and $\cB$ be staircase complexes and let $f\colon H_*(\cA)\to H_*(\cB)$ be an $R$-equivariant map which is homogeneously graded. Then $f$ is induced by a chain map $F\colon \cA\to \cB$ which preserves the algebraic grading. Any two maps $F,G\colon \cA\to \cB$ which induce the same map on homology and preserve the algebraic grading are chain homotopic.
\end{lem}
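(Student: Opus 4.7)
The plan is to apply the standard homological algebra principle that maps between free resolutions lift any map of their homologies, uniquely up to chain homotopy. This principle applies directly because Lemma~\ref{lem:monomial-ideals-resolutions}(2) tells us that each of $\cA$ and $\cB$ is a free resolution of its homology over $R$. Write $\cA = (\cA_1 \xrightarrow{d^\cA} \cA_0)$ with free generators $\ys_{2i+1}$ and $\xs_{2i}$ respectively, satisfying $d^\cA \ys_{2i+1} = W^{\alpha_i} \xs_{2i} + Z^{\beta_i} \xs_{2i+2}$, and similarly for $\cB$. Since both components of a chain map between two-term complexes automatically preserve the algebraic grading, the question of algebraic grading preservation is subsumed by the question of constructing chain maps at all.

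For existence, I would construct $F$ by hand on generators. For each $\xs_{2i}$, choose a homogeneous preimage $F_0(\xs_{2i}) \in \cB_0$ of $f([\xs_{2i}]) \in H_*(\cB)$; this is possible because $\cB_0 \twoheadrightarrow H_*(\cB)$ is a graded surjection. To define $F_1$, note that the relation $W^{\alpha_i}[\xs_{2i}] + Z^{\beta_i}[\xs_{2i+2}] = 0$ in $H_*(\cA)$ is mapped by the $R$-linear $f$ to the zero relation in $H_*(\cB)$, so
\[
W^{\alpha_i} F_0(\xs_{2i}) + Z^{\beta_i} F_0(\xs_{2i+2}) \;\in\; \ker\bigl(\cB_0 \twoheadrightarrow H_*(\cB)\bigr) \;=\; \im d^\cB.
\]
Pick a homogeneous preimage and declare it to be $F_1(\ys_{2i+1})$, then extend $R$-linearly.

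For uniqueness, given two such lifts $F$ and $G$, the difference $F-G$ induces zero on homology. I would build a null-homotopy $h\colon \cA_0 \to \cB_1$ of algebraic degree $+1$ by choosing, for each generator $\xs_{2i}$, a homogeneous element $h(\xs_{2i}) \in \cB_1$ with $d^\cB h(\xs_{2i}) = (F_0 - G_0)(\xs_{2i})$. This provides the homotopy relation in algebraic degree $0$ on the nose. In algebraic degree $1$, the required identity $(F_1 - G_1)(\ys_{2i+1}) = h(d^\cA \ys_{2i+1})$ is verified by applying $d^\cB$ to both sides: both yield $(F_0 - G_0)(d^\cA \ys_{2i+1})$. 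The key point is then that $d^\cB \colon \cB_1 \to \cB_0$ is injective, which is again the content of Lemma~\ref{lem:monomial-ideals-resolutions}(2) applied to $\cB$, so equality of the two sides follows from equality after applying $d^\cB$.

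There is no real obstacle here: the statement is essentially the comparison theorem for projective resolutions, made concrete on the nose for staircase complexes. The only mild point worth flagging is that the null-homotopy is determined in algebraic degree $1$ by its behavior in degree $0$, thanks to injectivity of $d^\cB \colon \cB_1 \to \cB_0$; this is a feature of the staircase shape rather than of general length-one free resolutions, and it also makes the construction concrete rather than requiring an additional correction step.
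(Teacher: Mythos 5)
Your proposal is correct and takes essentially the same route as the paper, which simply observes that staircase complexes are free resolutions of their homology and invokes the comparison theorem for projective resolutions. You have unpacked the two-sentence proof into a concrete lifting argument, which is fine but adds no new idea beyond the paper's.
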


\begin{proof}
Staircase complexes are free resolutions of their homology. Therefore the claim follows from the homological algebra of free resolutions.
\end{proof}

\begin{lem} Let $f\colon \cA\to \cB$ be a chain map between staircase complexes. Suppose that $f$ preserves the algebraic grading,  and $f=0$ in algebraic grading 0. Then $f=0$.
\end{lem}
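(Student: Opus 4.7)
The plan is to decompose $f$ according to the algebraic grading and then use that the staircase differential is injective in algebraic grading $1$.

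Write $\cA = \cA_1 \xrightarrow{\d_{\cA}} \cA_0$ and $\cB = \cB_1 \xrightarrow{\d_{\cB}} \cB_0$, where $\cA_{\veps}$ and $\cB_{\veps}$ denote the free $R$-submodules spanned by the generators in algebraic grading $\veps \in \{0,1\}$. Since $f$ preserves algebraic grading, it decomposes as $f = f_0 \oplus f_1$, with $f_{\veps}\colon \cA_{\veps} \to \cB_{\veps}$. By hypothesis $f_0 = 0$. The chain map relation $\d_{\cB} \circ f = f \circ \d_{\cA}$, applied to a generator $\ys \in \cA_1$, gives
\[
\d_{\cB}(f_1(\ys)) = f_0(\d_{\cA}(\ys)) = 0,
\]
since $\d_{\cA}(\ys) \in \cA_0$ and $f_0 = 0$. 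Hence $f_1$ takes values in $\ker \d_{\cB}$.

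The key observation is that $\d_{\cB}\colon \cB_1 \to \cB_0$ is injective. This is an immediate consequence of part~(2) of Lemma~\ref{lem:monomial-ideals-resolutions}, which states that a staircase complex is a free resolution of its homology; in particular the leftmost differential of the resolution is injective. Therefore $\ker \d_{\cB} = 0$, which forces $f_1 = 0$, and combined with $f_0 = 0$ we conclude $f = 0$.

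I expect no real obstacle here; the proof is essentially a one-line consequence of the resolution property of staircase complexes established in the preceding lemma. The only thing to double check is that the algebraic grading convention really does make the nontrivial part of the differential land entirely in grading $0$ (so that the chain-map condition isolates $f_1$ as above), which is immediate from the definition of a staircase complex given at the start of Section~\ref{sec:staircase-complexes}.
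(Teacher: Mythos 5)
Your proof is correct and follows essentially the same route as the paper: both isolate $f_1$ via the chain-map relation and $f_0=0$, then conclude $f_1=0$ from injectivity of the staircase differential on the algebraic-grading-$1$ part. Your write-up is just a more explicit version of the paper's argument.
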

\begin{proof} The map $f$ vanishes on $\cA_0$ so $f\circ \d=0$. Since $f$ is a chain map we have $\d \circ f+f\circ \d=\d\circ f=0$ on $\cA_1$. Since $\d$ is injective on $\cB_1$, it follows that $f=0$ on $\cA_1$ and hence on all of $\cA$. 
\end{proof}

\begin{lem}\label{lem:increase-cube-grading}
 Suppose that $f\colon \cA\to \cB$ is a chain map between staircase complexes which increases the algebraic grading. Then $f=0$.
\end{lem}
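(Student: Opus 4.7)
The plan is to exploit the fact that the algebraic grading on a staircase complex takes only the two values $0$ and $1$, together with the injectivity of the differential on the degree-$1$ part.

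First, I would observe that since $\cA$ has algebraic grading concentrated in $\{0,1\}$, a map $f$ that strictly increases algebraic grading must vanish on $\cA_1$ (there is no grading $2$ to land in). So the only nontrivial component of $f$ is $f|_{\cA_0}\colon \cA_0\to \cB_1$.

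Next, I would use the chain map relation on $\cA_0$. Since the differential on $\cA$ vanishes on $\cA_0$ (the generators $\xs_{2i}$ are cycles), the chain map condition $\d_\cB\circ f=f\circ \d_\cA$ reduces to $\d_\cB\circ f|_{\cA_0}=0$, i.e.\ $f(\cA_0)\subset \ker(\d_\cB|_{\cB_1})$.

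Finally, I would invoke injectivity of $\d_\cB$ on $\cB_1$. This is exactly the property used in the immediately preceding lemma and follows directly from the structure of a staircase: if $\cB$ has basis $\ys_1,\dots,\ys_{2m-1}$ for $\cB_1$ with $\d(\ys_{2j+1})=\xs_{2j}W^{\a_j}+\xs_{2j+2}Z^{\b_j}$, then a relation $\d(\sum r_j \ys_{2j+1})=0$ in the free $R$-module $\cB_0$ forces $r_0 W^{\a_0}=0$, hence $r_0=0$ since $R=\bF[W,Z]$ is a domain, and then inductively $r_j=0$ for all $j$. Therefore $f|_{\cA_0}=0$, and combined with $f|_{\cA_1}=0$ this gives $f=0$. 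There is no serious obstacle here; the argument is essentially a two-line consequence of the previous lemma once one notes that algebraic grading $2$ is empty.
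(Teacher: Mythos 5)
Your proof is correct and matches the paper's argument step for step: $f$ vanishes on $\cA_1$ because the algebraic grading is concentrated in $\{0,1\}$, the chain map relation and the vanishing of $\d_\cA$ on $\cA_0$ force $\d_\cB\circ f|_{\cA_0}=0$, and injectivity of $\d_\cB$ on $\cB_1$ gives $f|_{\cA_0}=0$. The only addition is your explicit verification that $\d_\cB$ is injective on $\cB_1$, which the paper takes as understood.
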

\begin{proof} By assumption, $f$ can only be non-trivial on $\cA_0$. Since $\d=0$ on $\cA_0$, and $f$ is a chain map, we conclude that $\d\circ f=0$.  Since $\d$ is injective on $\cB_1$, we conclude that $f=0$ on $\cA_0$, and hence on all of $\cA$.
\end{proof}

\subsection{The module $\cY^0(L)$}
\label{sec:Y0}

In this section, we define the module ${}_{R} \cY^0(L)^{R}$. This will be the idempotent 0 portion of ${}_{\cK} \cY_{\Lambda}(L)^{R}$.

 We begin by describing the underlying type-$D$ module $\cY^0(L)^{R}$, gotten by forgetting about the type-$A$ action of $R$. The module $\cY^0(L)^R$ will take the form
\[
\cY^0(L)^{R}:=\bigoplus_{s\in \lk(L_1,L_2)/2+\Z} \cC_s^{R},
\]
for a collection of staircase complexes $\cC_s^R$, which we describe presently. For each $s\in \Z+\lk(L_1,L_2)/2$, the staircase complex is determined by the function $H_L(s,-)$ . The generators of $\cC_s$ are as follows:
\begin{enumerate}[label=($y_0$-\arabic*), ref=$y_0$-\arabic*]
\item\label{Y-0} For each $s\in \Z+\lk(L_1,L_2)/2$, we consider the set of $t$ such that 
\[
H_L(s,t+1)=H_L(s,t)\quad \text{and} \quad H_L(s,t-1)=H_L(s,t)+1.
\]
For each $t$, as above, we add a generator $\xs_{(s,t)}$ to $\cC_s$ with
\[
\gr_{\ws}(\xs_{(s,t)})=-2H_L(s,t)\qquad \text{and} \quad A(\xs_{(s,t)})=(s,t).
\]
We enumerate these generators as $\xs_0,\xs_2,\dots, \xs_{2n}$, ordered so that $\gr_{\ws} (\xs_i)>\gr_{\ws}(\xs_{i+2})$.
\item\label{Y-1} Additionally, we add $n$ generators $\ys_1,\ys_{3},\dots, \ys_{2n-1}$ to $\cC_s$ and declare
\[
\d \ys_{2i+1}=\xs_{2i}\otimes \a_{2i+1}+ \xs_{2i+2}\otimes \b_{2i+1}
\]
for some non-zero monomials $\a_{2i+1}$ and $\b_{2i+1}$ in $R=\bF[W,Z]$, determined uniquely by the property that
\[
\gr(\xs_{2i})+\gr(\a_{2i+1})=\gr(\xs_{2i+2})+\gr(\b_{2i+1})=\gr(\ys_{2i+1})-(1,1),
\] 
where $\gr=(\gr_{\ws},\gr_{\zs})$. 
\end{enumerate}

We now describe the left type-$A$ action of $R$ on ${}_R \cY^0(L)^R$. For each monomial $a\in R$ and $s\in \Z+\lk(L_1,L_2)/2$, we define
 \[
\delta_2^1(a,-)\colon \cC_s^{R}\to \cC_{s+A(a)}^{R},
\]
by picking a chain map
\[
L_a\colon \cC_s^{R}\to \cC_{s+A(a)}^{R}
\]
 which is non-zero on homology and has Maslov bigrading
\[
(\gr_{\ws}, \gr_{\zs})(L_a)=(\gr_{\ws}, \gr_{\zs})(a).
\] 
We set $\delta_2^1(a,-)=L_a$. Note that any such $L_a$ will preserve the algebraic grading.

For monomials $a,b\in F$, we define $\delta_3^1(a,b,-)$ to be any choice of type-$D$ morphism
\[
h_{a,b}\colon \cC_{s}^R\to \cC_{s+A(a)+A(b)}^R
\]
which satisfies
\begin{equation}
\d(h_{a,b})=L_{a} \circ L_{b}+L_{ab}. \label{eq:def-h-a-b}
\end{equation}
and which increases algebraic grading by 1 and has Maslov bigrading 
\[
\gr(h_{a,b})=\gr(a)+\gr(b)+(1,1).
\]
Such a map $h_{a,b}$ exists by Lemma~\ref{lem:homology-to-chain}. 
We declare $\delta_n^1$ to vanish if $n>3$. Note that in Equation~\eqref{eq:def-h-a-b} and elsewhere in the paper, we write $\d(h_{a,b})$ for the morphism differential applied to $h_{a,b}$, i.e. $\d(h_{a,b})=\delta^1_1\circ h_{a,b}+h_{a,b}\circ \delta^1_1$. 

\begin{prop}
\label{prop:DA-relations}
If $L$ is a 2-component L-space link,  then ${}_R\cY^0(L)^R$ satisfies the type-$DA$ structure relations. Furthermore,  ${}_{R}\cY^0(L)^{R}$ is independent of the choices in the construction, up to homotopy equivalence. 
\end{prop}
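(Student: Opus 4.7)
The plan is to verify the $DA$ structure relations by going through each value of $n$ (the number of algebra inputs) and to exploit the rigidity of staircase complexes furnished by Lemmas~\ref{lem:homology-to-chain}--\ref{lem:increase-cube-grading} at every step. The key observation is that $L_a=\delta_2^1(a,-)$ preserves the algebraic grading of each $\cC_s$, while $h_{a,b}=\delta_3^1(a,b,-)$ raises it by exactly $1$. Since each $\cC_s$ is concentrated in algebraic gradings $0$ and $1$, any composition of two $h$-maps is identically zero, and this immediately takes care of all relations with $n\ge 4$: every term in such a relation either involves a vanishing $\delta_k^1$ with $k\ge 4$ or factors through $h\circ h$.

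The cases $n=0,1,2$ are essentially built into the construction. The relation $\delta_1^1\circ\delta_1^1=0$ holds because each $\cC_s$ is a staircase; $n=1$ says $L_a$ is a chain map, which is imposed by construction via Lemma~\ref{lem:homology-to-chain}; $n=2$ is Equation~\eqref{eq:def-h-a-b}. Existence of $h_{a,b}$ at the $n=2$ level is the first place the L-space hypothesis enters, because $\cHFL(L)$ is free over $\bF[U]$ in each multi-Alexander grading; both $L_a\circ L_b$ and $L_{ab}$ therefore induce multiplication by $ab$ on homology, and Lemma~\ref{lem:homology-to-chain} provides the required null-homotopy, which also preserves the algebraic grading shift of $+1$ dictated by the bidegree.

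The main case is $n=3$. Using $\delta_k^1=0$ for $k\ge 4$, the structure relation collapses, for algebra inputs $a,b,c$, to the identity
\[
L_a\circ h_{b,c}+h_{a,b}\circ L_c+h_{a,bc}+h_{ab,c}=0
\]
of type-$D$ morphisms $\cC_s\to\cC_{s+A(abc)}$. First I would show that the left-hand side is a chain map by expanding its morphism differential using $\d(L_*)=0$ and $\d(h_{*,*})=L_*L_*+L_{**}$: the expansion produces eight terms of the form $L_aL_bL_c$, $L_aL_{bc}$, $L_{ab}L_c$, $L_{abc}$, each appearing twice, and these cancel in characteristic two. The resulting chain map raises the algebraic grading by $1$, so Lemma~\ref{lem:increase-cube-grading} forces it to vanish. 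This is the step I expect to be the crux; everything else in the structure relations is either definitional or killed on algebraic grading grounds.

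For independence of choices, given two systems of data $(L_a,h_{a,b})$ and $(L_a',h_{a,b}')$ with induced bimodules $\cY^0$ and $(\cY^0)'$, Lemma~\ref{lem:homology-to-chain} provides null-homotopies $k_a$ with $\d(k_a)=L_a+L_a'$, since $L_a$ and $L_a'$ induce the same action on $\cHFL(L)$. Taking $f_1^1=\id$ and $f_2^1(a,-)=k_a$ as the first two levels of a $DA$-morphism $f\colon\cY^0\to(\cY^0)'$, the obstructions to defining higher components $f_{n+1}^1$ are, exactly as in the $n=3$ case above, chain maps between staircases that raise the algebraic grading, and hence vanish by Lemma~\ref{lem:increase-cube-grading}. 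The same lemma ensures that the higher components can be chosen so as to satisfy the $DA$-morphism relations on the nose, and the resulting $f$ is a homotopy equivalence because its underlying chain map is the identity.
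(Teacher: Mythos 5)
Your proof is correct and, at the crucial point, is actually more careful than the paper's. For the uniqueness statement you take a slightly different route (you build the higher morphism components $f_{n+1}^1$ inductively, whereas the paper simply sets $F_n^1=0$ for $n>2$), but both ultimately lean on the same rigidity lemmas.

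The interesting difference is in the $n=3$ structure relation. The paper disposes of all relations with three or more algebra inputs with the single remark that each summand ``would increase algebraic grading by at least 2,'' but as you correctly identify, this is only accurate for $n\ge 4$ (where the one nonzero composition is $\delta_3^1\circ\delta_3^1$, raising by $2$). For $n=3$, the four surviving terms $L_a\circ h_{b,c}$, $h_{a,b}\circ L_c$, $h_{ab,c}$, $h_{a,bc}$ each raise the algebraic grading by exactly $1$, so the blanket ``grading $\ge 2$'' argument does not directly apply. Your computation that the sum is a chain map (the eight terms $L_aL_bL_c$, $L_aL_{bc}$, $L_{ab}L_c$, $L_{abc}$ each appear twice in $\d(\mathrm{LHS})$ and cancel over $\bF_2$), combined with Lemma~\ref{lem:increase-cube-grading}, is exactly what is needed to close this case, and is the argument the paper's terse remark is presumably gesturing at. One small quibble: you describe the existence of $h_{a,b}$ as ``the first place the L-space hypothesis enters,'' but really the hypothesis is already baked into the fact that each $\cC_s$ is a staircase determined by $H_L$, which is what drives every rigidity step; this does not affect the logic of your argument.
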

\begin{proof} The structure relations for one module input and no algebra inputs amount to the claim that each $\cC_s$ is a chain complex. The structure relations for one algebra input and one module input follow from the fact that $L_W$ and $L_Z$ are chain maps. The structure relations for two algebra inputs and one module input follow from the fact that $\delta_3^1(a,b,-)$ is a chain homotopy between $\delta_2^1(a,\delta_2^1(b,-))$ and $\delta_2^1(ab,-)$. The structure relations for more inputs are satisfied because each summand of the structure relation for three or more algebra inputs  would increase algebraic grading by at least 2, and all such maps vanish. 

We now consider the statement that the construction produces a unique module, up to homotopy equivalence.  We assume that we use maps $L_a$ and $h_{a,b}$ to construct $\cY^0(L)$, and that we use $L_a'$ and $h_{a,b}'$ to construct a model $\cZ^0(L)$. 
We now define a morphism of $DA$-bimodules
\[
F_*^1\colon {}_{R}\cY^0(L)^{R}\to {}_{R}\cZ^0(L)^{R}.
\]
The two modules are identified as graded vector spaces, so we set $F_1^1(\xs)=\xs\otimes 1$, for all $\xs$. Next, we observe that for all monomials $a\in \bF[W,Z]$, the maps  $L_a$ and $L_a'$ are chain homotopic by Lemma~\ref{lem:homology-to-chain}. We declare therefore $F_2^1(a,-)$ to be any suitably graded chain homotopy between $L_a$ and $L_a'$, which increases algebraic grading by 1. We declare $F_n^1=0$ for $n>2$. A similar argument as above shows that $F_*^1$ is a chain map. By adapting the above line of reasoning, it is straightforward to see that $F_*^1$ is a homotopy equivalence.
\end{proof}

\subsection{The module $\cY^1(L)$}
\label{sec:Y1}
We now describe the idempotent 1 portion of $\cY(L)$. We denote this as $\cY^1(L)$, which we view as a type-$DA$ module 
\[
{}_{\bF[U,T,T^{-1}]} \cY^1(L)^{R}.
\]

 Write $\cS^R$ for the complex $\cCFK(L_2)^{R}$, where $L_2\subset L$ is the second component. Firstly, we write
 $\bF[\Z+\lk(L_1,L_2)/2]$ for the vector space with generators $T^s$ for $s\in \Z+\lk(L_1,L_2)/2$. This is naturally an $\bF[T,T^{-1}]$-module. We define
\[
\cY^1(L)^{R}:=\bF[\Z+\lk(L_1,L_2)/2]\otimes_{\bF} \cS^{R}
\]
 and write $\cS_s$ for $T^s\otimes \cS$.
 
We give this a type-$A$ action of $\bF[U,T,T^{-1}]$ by declaring $T$ to act only on the $\bF[\Z+\lk(L_1,L_2)/2]$ factor, and declaring $\delta_2^1(U^i,\xs)=\xs\otimes U^i$ for all $\xs\in \cY^1(L)$ and $i\in \N$. If $a\in \bF[U,T,T^{-1}]$, we write $L_a\colon \cS_s\to \cS_{s+A(a)}$  for $\delta_2^1(a,-)$, as defined above.

 We equip $\cY^1(L)^R$ with the $(\gr_{\ws},\gr_{\zs})$ grading induced from its identification with $\bF[\Z+\lk(L_1,L_2)/2]\otimes \cCFK(L_2)^{R}$ (declaring $(\gr_{\ws},\gr_{\zs})(T^s)=(0,0)$).

 \subsection{The actions of $\sigma$ and $\tau$}
 \label{sec:definition-L-sigma/tau}

We now describe the actions involving $\sigma$ and $\tau$. Similar to the actions of $W$ and $Z$ in idempotent 0, these are specified by their gradings. Namely, for each $s$ we define $L_{a\sigma}:=\delta_2^1(a\sigma,-)$ by picking a map
\[
L_{a\sigma}\colon \cC_s^{R}\to \cS_{s+A(a)}^{R}
\]
which has 
\[
\gr_{\ws}(L_{\a\sigma})=\gr_{\ws}(a)
\quad \text{and} \quad \gr_{\zs}(L_{a\sigma})=\gr_{\zs}(a)+2s+\lk(L_1,L_2),
\]
and which is furthermore not null-homotopic. (Such a map is unique up to chain homotopy). We define $L_{a\tau}:=\delta_2^1(a\tau,-)$ by picking a map
\[
L_{a\tau}\colon \cC_s^{R}\to \cS_{s+\lambda_1+A(a)}^{R}
\]
which has
\[
\gr_{\ws}(L_{a\tau})=\gr_{\ws}(a)+\lk(L_1,L_2)-2s\quad \text{and} \quad \gr_{\zs}(L_{\a\tau})= \gr_{\zs}(a).
\]
In the above, we are writing $\lambda_1$ for the first component of the framing $\Lambda=(\lambda_1,\lambda_2)$.

Next, we define $\delta_3^1$ terms involving $\sigma$ and $\tau$ similarly to how we defined $\delta_3^1$ in idempotent 0. For example, we set $\delta_3^1(a \sigma, b,-)$ to be any choice of type-$D$ morphism
\[
h_{a\sigma,b}\colon \cC_s^R\to \cS_{s+A(a)+A(b)}^R
\]
to be any map which has $\gr_{\ws}$-grading $\gr_{\ws}(a)+\gr_{\ws}(b)+1$ and which has $\gr_{\zs}$-grading equal to $\gr_{\zs}(a)+\gr_{\zs}(b)+2s+\lk(L_1,L_2)+1$, which furthermore increases the algebraic grading by $1$, and which satisfies
\[
\d(h_{a\sigma ,b})=L_{a \sigma b}+L_{a\sigma}\circ L_{b}.
\]
(Note that we can always select $\delta_3^1(a, b\sigma,-)=0$).

 We set $\delta_j^1=0$ for $j>3$.

 Essentially the same argument as in Proposition~\ref{prop:DA-relations} yields the following:

\begin{prop} The module ${}_{\cK} \cY_{\Lambda}(L)^{R}$ satisfies the type-$DA$ structure relations. Furthermore, the construction above specifies this module uniquely, up to homotopy equivalence. 
\end{prop}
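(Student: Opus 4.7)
The plan is to adapt the argument of Proposition~\ref{prop:DA-relations} to the full module ${}_\cK \cY_\Lambda(L)^R$. The essentially new content compared with that proposition is concentrated in the structure relations that involve $\sigma$ and $\tau$, and in the corresponding uniqueness statement; everything else follows the same template.

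First I would verify the type-$DA$ structure relations by case analysis on the algebra inputs. Relations with only inputs from $\bF[W,Z]$ acting on $\cY^0(L)$ are exactly those of Proposition~\ref{prop:DA-relations}. Relations with only inputs from $\bF[U,T,T^{-1}]$ acting on $\cY^1(L)$ follow from the fact that this subalgebra acts associatively and strictly on $\cY^1(L)$, so all higher $\delta_n^1$ vanish and the one- and two-input relations are immediate. The new cases mix $\sigma$ or $\tau$ with polynomial inputs. For a single algebra input involving $\sigma$ or $\tau$, the maps $L_{a\sigma}$ and $L_{a\tau}$ are chain maps by construction: the space of homogeneously graded maps between two staircase complexes in a prescribed Maslov bigrading is, up to homotopy, detected by the induced map on homology (Lemma~\ref{lem:homology-to-chain}), and we have selected non-null-homotopic representatives inducing the corresponding algebra action on $\cHFL(L)$.

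For two algebra inputs, the key new subcases are $(a\sigma, b)$ with $b\in \bF[W,Z]$ and $(a, b\sigma)$ with $a\in \bF[U,T,T^{-1}]$, together with their $\tau$-analogues. In $\cK$ the product satisfies $\sigma\cdot b = \phi^\sigma(b)\cdot \sigma$, so $L_{a\sigma b}$ and $L_{a\sigma}\circ L_b$ are maps $\cC_s\to \cS_{s'}$ with the same Maslov bigrading inducing the same map on $\cHFL(L)$; thus they differ by a boundary, and the null-homotopy $h_{a\sigma,b}=\delta_3^1(a\sigma,b,-)$ exists by Lemma~\ref{lem:homology-to-chain} and satisfies the required $DA$ relation. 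The subcase $(a,b\sigma)$ is handled symmetrically, and $\tau$ is identical. For three or more algebra inputs, every summand of the relevant $DA$ relation corresponds to a chain map between staircase complexes raising the algebraic grading by at least two, which vanishes by Lemma~\ref{lem:increase-cube-grading}; this is why setting $\delta_n^1=0$ for $n>3$ is consistent.

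For uniqueness, I would follow the construction of $F_*^1$ in Proposition~\ref{prop:DA-relations}. Given two models of ${}_\cK \cY_\Lambda(L)^R$ built from different choices of $L_\bullet$ and $h_\bullet$, set $F_1^1$ to be the identity on the underlying $\ve{I}$-bimodule, and define $F_2^1(a,-)$, $F_2^1(a\sigma,-)$, $F_2^1(a\tau,-)$ to be homogeneously graded chain homotopies between the two choices of the corresponding $L$-maps, raising algebraic grading by one; such homotopies exist and are essentially unique by the same staircase argument. Higher $F_n^1$ vanish by the algebraic-grading vanishing lemma. That $F_*^1$ is a morphism of $DA$-bimodules and is a homotopy equivalence then follows from the same pattern as in Proposition~\ref{prop:DA-relations}.

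The main obstacle I anticipate is purely bookkeeping: checking that the algebra relations $\sigma W = UT^{-1}\sigma$, $\sigma Z = T\sigma$, $\tau W = T^{-1}\tau$, $\tau Z = UT\tau$ translate correctly to Maslov bigrading equalities for each pair of maps compared in a $DA$ relation, so that the comparison really does take place inside a single bigraded component of maps between staircases. Once the bigradings match, the entire argument is driven by the general principle already used in Section~\ref{sec:Y0}: chain maps between staircase complexes are determined up to homotopy by their induced maps on homology, and the relevant induced maps on $\cHFL(L)$ are prescribed by the $\cK$-module structure coming from the $H$-function and Proposition~\ref{prop:GNH-function}.
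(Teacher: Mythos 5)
Your proposal correctly identifies the paper's strategy — the paper's proof is literally to rerun the argument of Proposition~\ref{prop:DA-relations}, and you fill in precisely the new cases involving $\sigma$ and $\tau$ using the same ingredients (Lemma~\ref{lem:homology-to-chain} to compare chain maps between staircases via their induced maps on homology, and the algebraic-grading vanishing to kill higher $\delta_n^1$ and force $F_n^1=0$ for $n>2$). One cosmetic point: for the mixed input $(a, b\sigma)$ with $a\in\bF[U,T,T^{-1}]$ the paper observes that because the $\bF[U,T,T^{-1}]$-action on idempotent~1 is strict one may simply take $\delta_3^1(a,b\sigma,-)=0$; this is not quite "symmetric" to the $(a\sigma,b)$ case, which genuinely requires a nontrivial null-homotopy, though the conclusion is the same.
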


\subsection{$\bF[U]$-equivariant models}
\label{sec:U-equivariant}

There is a convenient set of choices in the construction of the module ${}_{\cK} \cY_{\Lambda}(L)^R$ which make the actions $U$-equivariant and also encodable in a relatively small collection of maps. In all of the examples computed in this paper, we use this construction. 

For each $s$ we pick non-zero chain maps
 \[
L_{W}\colon \cC_s^{R}\to \cC_{s-1}^{R}\qquad \text{and} \qquad L_{Z}\colon \cC_s^{R}\to \cC_{s+1}^{R}
\]
of $(\gr_{\ws}, \gr_{\zs})$ bigrading $(-2,0)$ and $(0,-2)$, respectively. We also pick maps 
\[
h_{W,Z},h_{Z,W}\colon \cC_{s}^{R}\to \cC_{s}^{R}
\]
of bigrading $(-1,-1)$ such that
\[
\d(h_{W,Z})=L_W\circ L_Z+\id\otimes U\qquad \text{and} \qquad \d(h_{Z,W})=L_Z\circ L_W+\id\otimes U. 
\]
We may then define $\delta_2^1$ to be $\bF[U]$-equivariant and satisfy
\[
\delta_2^1(Z^n,-)=\overbrace{L_Z\circ \cdots \circ L_Z}^n\qquad \text{and} \qquad
\delta_2^1(W^n,-)=\overbrace{L_W\circ \cdots \circ L_W}^n.
\]
We define $\delta_3^1$ to be $\bF[U]$-equivariant and to satisfy
\[
\begin{split}
\delta_3^1(W^i, W^j, -)&=0\\
\delta_3^1(Z^i, Z^j, -)&=0\\
\delta_3^1(W^i, Z^j,-)&=\sum_{k=1}^{\min(i,j)} U^{k-1} L_W^{i-k}\circ h_{W,Z}\circ L_Z^{j-k}\\
\delta_3^1(Z^i, W^j, -)&=\sum_{k=1}^{\min(i,j)} U^{k-1} L_Z^{i-k} \circ h_{Z,W}\circ L_W^{j-k}
\end{split}
\]

We can extend construction to encode the actions of $\sigma$ and $\tau$ as follows. We first pick chain maps $L_\sigma$ and $L_\tau$ as described in the construction of ${}_{\cK} \cY_{\Lambda}(L)^R$. We then pick (families of) maps
\[
h_{\sigma,W}, h_{\sigma,Z},  h_{\tau,W}, h_{\tau, Z}.
\]
The map $h_{\sigma,W}$ satisfies
\[
\d(h_{\sigma,W})=L_{\sigma}\circ L_W+UT^{-1} L_{\sigma},
\]
and similarly for the other three maps. We then define the map $\delta_3^1$ as
\[
\delta_3^1(U^iT^j \sigma, U^n W^m)=\sum_{s=0}^{m-1} U^{n+i+s-1} T^{j-s+1}  h_{\sigma,W}\circ \underbrace{L_{W}\circ\cdots\circ L_W}_{m-s-1}.
\]
The other $\delta_3^1$ terms are similarly determined by the maps $h_{\sigma,Z}$, $h_{\tau,W}$, $h_{\tau,Z}$ and the operators $L_W$ and $L_Z$.

\subsection{Koszul duality}

We now recall some well-known facts about the Koszul duality between the exterior algebra and the polynomial ring. Priddy's work \cite{PriddyKoszul} is the original historical reference. Let $A$ denote the polynomial ring $\bF[X_1,\dots, X_n]$ and let $\Lambda$ denote the exterior algebra $\Lambda^*(\theta_1,\dots, \theta_n)$. 

There is a $DD$-bimodule ${}^{A} \Lambda^A$, whose differential is given by
\[
\delta^{1,1}(\theta_i)=X_i\otimes 1\otimes 1+1\otimes 1\otimes X_i,
\]
extended over all of $\Lambda$ via the Leibniz rule.

The following can be viewed as a consequence of the Koszul duality between the polynomial ring and the exterior algebra:
\begin{lem}
\label{lem:quasi-inverses} Let $A=\bF[X_1,\dots, X_n]$ and $\Lambda=\Lambda^*(\theta_1,\dots, \theta_n)$, as above. The bimodules ${}_A A_A$ and ${}^A \Lambda^A$ are quasi-inverses, i.e.
\[
{}_{A} A_A\boxtimes {}^{A} \Lambda^A\simeq {}_A [\bI]^{A}\quad \text{and} \quad {}^{A} \Lambda^A\boxtimes {}_{A}A_A\simeq {}^{A}[\bI]_A.
\]
\end{lem}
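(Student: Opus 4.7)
The plan is to prove the first equivalence ${}_A A_A\boxtimes {}^A\Lambda^A \simeq {}_A[\bI]^A$; the second is formally symmetric in the two $A$-actions. First I would unpack the left-hand side. Writing $P:={}_A A_A\boxtimes {}^A\Lambda^A$, its underlying vector space is $A\otimes_{\bF}\Lambda$, and using $\delta^{1,1}(\theta_i)=X_i\otimes 1\otimes 1+1\otimes 1\otimes X_i$ extended by the Leibniz rule, the induced DA structure comes out to
\[
\delta^1_1(a\otimes \theta_I)=\sum_{j\in I}(X_j a\otimes \theta_{I\setminus j})\otimes 1\;+\;\sum_{j\in I}(a\otimes \theta_{I\setminus j})\otimes X_j,\quad \delta^1_2(b,a\otimes \theta_I)=ba\otimes\theta_I\otimes 1,
\]
with $\delta^1_n=0$ for $n\geq 3$. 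I would split $\delta^1_1=d+\Phi$, where the internal part $d(a\otimes\theta_I)=\sum_{j\in I} X_j a\otimes\theta_{I\setminus j}$ is the classical Koszul differential and $\Phi$ collects the terms that emit a nontrivial $A$-output.

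Next I would recall that $(A\otimes \Lambda,d)$ is the standard Koszul resolution of $\bF=A/(X_1,\dots,X_n)$ as a left $A$-module, hence acyclic in positive degrees with $H_0$ generated by $[1\otimes 1]$. A strong deformation retraction
\[
\begin{tikzcd} (A\otimes \Lambda,d) \ar[loop left,"h"] \ar[r,"\pi",shift left] & \bF \ar[l,"i",shift left] \end{tikzcd}
\]
is then provided by an explicit Koszul contraction, which exists in any characteristic (e.g.\ by induction on $n$, using that $X_n$ acts as a non-zero-divisor on $A/(X_1,\dots,X_{n-1})$). Applying the $DA$-bimodule version of the homological perturbation lemma (the straightforward extension of Lemma~\ref{lem:HPL-modules}) transfers the DA structure of $P$ to an equivalent bimodule $\tilde P$ with underlying space $\bF$, trivial $\delta^1_1$, and higher structure maps obtained by alternating the original $\delta^1_2$ and $\Phi$ with $h$, tracking the accumulated $D$-outputs.

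The core verification is that $\tilde P$ matches ${}_A[\bI]^A$, i.e.\ $\delta^1_2(a,1)=1\otimes a$ for all $a\in A$ and $\delta^1_n=0$ for $n\neq 2$. For a monomial $a=X^\alpha$, the surviving sequence in the HPL sum is $i\to \delta^1_2(a,-)\to h\to \Phi\to h\to \Phi\to \cdots\to \Phi\to \pi$, in which each application of $\Phi$ pulls one factor of some $X_j$ from the left $A$-factor out onto the $D$-side while $h$ absorbs the accompanying internal $X_j$-term; by induction on $|\alpha|$ this telescopes to $1\otimes X^\alpha$. A parallel check shows any would-be contribution to $\delta^1_n$ for $n\geq 3$ is killed either by $\pi$ (since the intermediate element retains positive $\Lambda$-degree after the last $h$-step) or by exactness of the Koszul differential.

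The main obstacle I anticipate is exactly this bookkeeping: the HPL sum has many a priori terms and one must argue that the non-surviving ones vanish while the surviving ones assemble to precisely the identity action. If the direct combinatorial argument becomes unwieldy, a cleaner fallback is to establish the equivalence through its action on test modules. For any type-$A$ module ${}_A N$, the tensor product $P\boxtimes {}_A N$ has underlying chain complex $A\otimes \Lambda\otimes N$ equipped with the Koszul-twisted differential, hence is naturally quasi-isomorphic to $N$ (the Koszul resolution of $N$ over $A$ collapses via augmentation). Since ${}_A[\bI]^A\boxtimes {}_A N={}_A N$ on the nose, the bimodules $P$ and ${}_A[\bI]^A$ induce naturally equivalent functors on type-$A$ modules, and standard arguments in the $DA$-bimodule category then promote this to the desired homotopy equivalence $P\simeq {}_A[\bI]^A$.
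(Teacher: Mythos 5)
Your argument is correct but proceeds by a genuinely different route from the paper. You work directly in $n$ variables: you split $\delta^1_1$ into the internal Koszul differential $d$ and the $D$-output piece $\Phi$, contract the Koszul complex $(A\otimes\Lambda,d)$ onto $\bF$, and then transfer the $DA$-structure through the perturbation series while tracking the accumulated $A$-outputs. The paper instead reduces immediately to $n=1$: since $A$, $\Lambda$, and both bimodules are external tensor products (over $\bF$) of $n$ copies of the corresponding objects for $\bF[X]$, and external tensor products of homotopy equivalences are homotopy equivalences, it suffices to treat the two-term complex $\bF[X]\otimes\theta\to\bF[X]\otimes 1$, where the explicit homotopy $H(X^i\otimes 1)=\sum_{j=0}^{i-1}(X^j\otimes\theta)\otimes X^{i-j-1}$ can be written in one line and the transferred $\delta^1_2$ read off by inspection. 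Your approach is more uniform and self-contained, but it concentrates exactly the bookkeeping you flag as the main obstacle into the argument proper: one must fix a Koszul contraction valid for all $n$ (checking $h^2=0$, $\pi h=0$, $hi=0$ for that choice); absorb $\Phi$ first to upgrade the $\bF$-linear SDR to a SDR of type-$D$ modules before the left action is transferred (your appeal to ``the straightforward extension of Lemma~\ref{lem:HPL-modules}'' compresses these two distinct perturbation steps); and then run a $\Lambda$-degree argument to kill $\delta^1_m$ for $m\geq 3$ (after each $H$ the intermediate state has positive $\Lambda$-degree, and $\Pi=\pi(1+\Phi h)^{-1}$ vanishes there since $\Phi h$ preserves $\Lambda$-degree while $\pi$ only sees degree $0$). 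The paper's external-tensor-product reduction trades all of that for one sentence. Your test-module fallback is also reasonable in principle, but to upgrade ``equivalent induced functors on type-$A$ modules'' to a homotopy equivalence of $DA$-bimodules you would need to supply a precise Yoneda-type statement, which is more work than either direct argument.
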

\begin{proof} The proof is well-known, though we describe it for the benefit of the reader. We first observe that it is sufficient to show the claim when $n=1$. This is because the algebra $A$ is the external tensor product (i.e. tensor product over $\bF$) of $n$ copies of $\bF[X]$, and similarly $\Lambda$ is the external tensor product of $n$ copies of $\Lambda^*(\theta)$. The modules ${}_A A_A$ and ${}^A \Lambda^A$ are themselves isomorphic to external tensor products. Since the external tensor product of a collection of homotopy equivalences is a homotopy equivalence, it suffices to show the claim for $n=1$.

In this case, the $DA$-bimodule ${}_{\bF[X]} \bF[X]_{\bF[X]} \boxtimes {}^{\bF[X]} \Lambda^{\bF[X]}$ has underlying chain complex
\[
\begin{tikzcd}[column sep=2cm]
\bF[X]\otimes \theta \ar[r,"d"] &\bF[X]\otimes 1
\end{tikzcd}
\]
where 
\[
d(X^i\otimes \theta)=(X^i\otimes 1)\otimes X+(X^{i+1}\otimes 1)\otimes 1.
\]
There is a strong deformation retraction of the above complex onto the type-$D$ module $\bF^{\bF[X]}$ (with vanishing differential). The map $\Pi$ sends $X^i\otimes 1$ to $1\otimes X^i$. The map $I$ sends $1$ to $(1\otimes 1)\otimes 1$. The homotopy $H$ sends $X^i\otimes 1$ to $\sum_{j=0}^{i-1} (X^{j}\otimes \theta)\otimes X^{i-j-1}$. Homological perturbation theory gives an induced $DA$-bimodule structure on $\bF^{\bF[X]}$, which is homotopy equivalent to ${}_{\bF[X]} \bF[X]_{\bF[X]}\boxtimes{}^{\bF[X]} \Lambda^{\bF[X]}$, and it is straightforward to see that it coincides with the identity bimodule ${}_{\bF[X]} [\bI]^{\bF[X]}$.  
\end{proof}

\begin{rem} When working over the 2-variable polynomial ring $R=\bF[W,Z]$, we will typically write $\Lambda^*(\theta_W,\theta_Z)$ for the Koszul dual.
\end{rem}

\subsection{Free resolutions}

We now prove that the module ${}_{R} \cY^0(L)^{R}$ is homotopy equivalent to a free resolution of  $H_*(\cCFL(L))$.

We now recall some standard terminology from homological algebra.  Let ${}_R M_R$ be an $(R,R)$-bimodule (i.e. a type-$AA$ bimodule with only $m_{1|1|0}$ and $m_{0|1|1}$ non-trivial). We say that a pair $({}^{R} \cF^R,\epsilon)$ is a \emph{free resolution} of ${}_R M_R$ if the following hold:
\begin{enumerate}
\item  ${}^{R} \cF^R$ is a $DD$-bimodule and can be written as
\[
\begin{tikzcd}
\cdots
	\ar[r ,"\delta^{1,1}"] 
&
 {}^R\cF_2{}^R
 	\ar[r,"\delta^{1,1}"] 
&
 {}^R\cF_1^R
 	 \ar[r,"\delta^{1,1}"]
& 
{}^R\cF_0^R
\end{tikzcd}
\]
\item The map $\epsilon$ is a quasi-isomorphism
\[
\epsilon\colon {}_R R_R \boxtimes {}^R \cF^R\boxtimes {}_R R_R \to {}_R M_R
\]
which is supported on $R\boxtimes \cF_0\boxtimes R$.
\end{enumerate} 

Note that the terminology is justified by the fact that if ${}^R \cF^R$ is a $DD$-bimodule, then ${}_R R_R\boxtimes {}^R \cF^R\boxtimes {}_R R_R$ may be viewed as a free $dg$-module over $R\otimes R$.

\begin{prop} Let ${}^R\cF^R$ be a free resolution of ${}_R H_*(\cCFL(L))_R$ (where we view $H_*(\cCFL(L))$ as having no higher actions). There is a homotopy equivalence
\[
{}_{R} \cY^0(L)^{R}\simeq {}_{R}R_{R}\boxtimes {}^{R} \cF^{R}.
\]
\end{prop}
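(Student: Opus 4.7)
The strategy is to reinterpret both sides as resolutions of $\cHFL(L)$ via Koszul duality and then invoke uniqueness of free resolutions. Let $\Lambda:=\Lambda^*(\theta_W,\theta_Z)$ be the Koszul dual of $R=\bF[W,Z]$. By Lemma~\ref{lem:quasi-inverses}, ${}_R R_R \boxtimes {}^R \Lambda^R \simeq {}_R[\bI]^R$, so
\[
{}_R \cY^0(L)^R \simeq {}_R R_R \boxtimes \cG, \qquad \cG := {}^R \Lambda^R \boxtimes {}_R \cY^0(L)^R,
\]
which is a $DD$-bimodule. Thus it suffices to exhibit an equivalence $\cG \simeq \cF$ of $DD$-bimodules, and for this it is enough to show that $\cG$ is itself a free resolution of $\cHFL(L)$.

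For the resolution structure on $\cG$, the relevant non-negative grading is the sum of the exterior degree on $\Lambda$ (taking values in $\{0,1,2\}$) and the staircase algebraic grading on $\cY^0(L)$ (taking values in $\{0,1\}$); the total differential decreases this grading by one. For the augmentation, applying Lemma~\ref{lem:quasi-inverses} once more gives
\[
{}_R R_R \boxtimes \cG \boxtimes {}_R R_R \simeq {}_R \cY^0(L)^R \boxtimes {}_R R_R,
\]
which is precisely the underlying chain complex of $\cY^0(L)$ equipped with its strict right $R$-action (from the type-$D$ structure) and its left $A_\infty$-action of $R$. As a right $R$-module, $\cY^0(L)^R = \bigoplus_s \cC_s^R$ is a direct sum of staircase complexes, each a free resolution of a monomial ideal by Lemma~\ref{lem:monomial-ideals-resolutions}. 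Comparing the bigradings of the generators $\xs_{2i}$ with the $H$-function, and matching against the generators-and-relations presentation of $\cHFL(L)$ for L-space links recalled in Section~\ref{sec:background-L-space-link}, we get $H_*(\cY^0(L)) \cong \cHFL(L)$ as bigraded right $R$-modules. The induced left $R$-action is forced by Lemma~\ref{lem:homology-to-chain}: the chain map $L_a = \delta_2^1(a,-)$ has bigrading $\gr(a)$ and is homologically nonzero, and any such map is unique up to chain homotopy, so it descends to multiplication by $a$ on $\cHFL(L)$; the $\delta_3^1$-operations enforce strict associativity on homology. This shows $\cG$ is a free resolution of $\cHFL(L)$.

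With both $\cG$ and $\cF$ established as free resolutions of the same $(R,R)$-bimodule $\cHFL(L)$, standard homological algebra yields a homotopy equivalence $\cG \simeq \cF$ of $DD$-bimodules: one lifts the identity of $\cHFL(L)$ to a $DD$-morphism by induction on the resolution grading, using projectivity of the graded pieces over $R\otimes R$, and any two such lifts are chain homotopic. Tensoring with ${}_R R_R$ yields the claimed equivalence. The step I expect to be the main obstacle is verifying that the augmentation $\epsilon$ is a quasi-isomorphism of $(R,R)$-bimodules and not merely of right $R$-modules: one must track how the left $A_\infty$-action on $\cY^0(L)$ interacts with the Koszul differential on $\Lambda$ inside $\cG$, and check that after tensoring with ${}_R R_R$ on both sides the resulting strict $(R,R)$-bimodule structure on $\cHFL(L)$ coincides with the one coming from the link Floer complex. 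This reduces to bigrading bookkeeping together with Lemmas~\ref{lem:homology-to-chain} and~\ref{lem:increase-cube-grading}, which force all potential obstruction terms to vanish.
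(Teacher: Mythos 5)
Your overall strategy mirrors the paper's: you form $\cG := {}^R\Lambda^R\boxtimes{}_R\cY^0(L)^R$, you want to show $\cG$ is a free resolution of $\cHFL(L)$, and then conclude by uniqueness of free resolutions together with Lemma~\ref{lem:quasi-inverses}. The algebraic grading you introduce (exterior degree on $\Lambda$ plus staircase grading on $\cY^0(L)$) is the same one the paper uses.

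The gap is in the last step. You establish that $R\boxtimes\cG\boxtimes R$ is homotopy equivalent to $\cY^0(L)\boxtimes R$ and hence has total homology $\cHFL(L)$, but the definition of free resolution in Section~5 asks for more: the augmentation must be a quasi-isomorphism supported on $R\boxtimes\cG_0\boxtimes R$, which forces the homology of $R\boxtimes\cG\boxtimes R$ to be concentrated in algebraic grading $0$. Knowing the abstract isomorphism type of $H_*(R\boxtimes\cG\boxtimes R)$ does not give this; a complex of free modules can have the right total homology while carrying classes in positive grading. The paper supplies exactly the missing verification: it tracks the quasi-isomorphism $f_*^1\boxtimes\bI\colon {}_R[\bI]^R\boxtimes\cY^0(L)\boxtimes R\to R\boxtimes\cG\boxtimes R$ coming from Lemma~\ref{lem:quasi-inverses}, notes that $f_1^1(1)=(1_R\otimes 1_\Lambda)\otimes 1$, and observes that the homology generators of $\cY^0(L)\boxtimes R$ (which live in staircase grading $0$) are sent to cycles in algebraic grading $0$ of $R\boxtimes\cG\boxtimes R$; since the map is a quasi-isomorphism, the homology of $R\boxtimes\cG\boxtimes R$ is therefore supported in grading $0$. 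Your proposal does not perform this check. Relatedly, the ``main obstacle'' you flag at the end --- that $\epsilon$ should be a quasi-isomorphism of $(R,R)$-bimodules rather than merely of right $R$-modules --- is a secondary issue and is not what the paper's proof is concerned with; the substantive point is the grading concentration of the homology.
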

\begin{proof}
Write $R=\bF[W,Z]$. We consider the quasi-invertible bimodule ${}^{R} \Lambda^R$, from Section~\ref{lem:quasi-inverses} whose quasi-inverse is ${}_{R} R_R$.  We consider the tensor product
\[
{}^R \cF^{R}:={}^{R} \Lambda^R\boxtimes {}_{R} \cY^0(L)^{R}. 
\]
This is a type-$DD$ module. It suffices to show that the above is a free resolution of $H_*(\cCFL(L))$. As a first step, we observe that $\cF$ has an algebraic grading. The algebraic grading $\gr_{\alg}$ is gotten by setting
\[
\gr_{\alg}(1)=0\quad \gr_{\alg}(\theta_W)=\gr_{\alg}(\theta_Z)=1 \qquad \text{and} \qquad \gr_{\alg}(\theta_W\theta_Z)=2.
\]
We define an algebraic grading on $\Lambda\boxtimes \cY^0(L)$ tensorially, using the algebraic grading on $\cY^0(L)$ described in Section~\ref{sec:Y0} (where we view each staircase as being concentrated in algebraic gradings $0$ and $1$). We observe that $\delta^{1,1}$ decreases the algebraic grading on $\cF$ by 1. Therefore, we may write
\[
{}^R\cF^R=
\left(\begin{tikzcd}
{}^R\cF_3^R
	\ar[r ,"\delta^{1,1}"] 
&
 {}^R\cF_2{}^R
 	\ar[r,"\delta^{1,1}"] 
&
 {}^R\cF_1^R
 	 \ar[r,"\delta^{1,1}"]
& 
{}^R\cF_0^R
\end{tikzcd}\right).
\]

It suffices to show the homology of $R\boxtimes \cF\boxtimes R$ is supported in $\cF_0$.  The homology of $\cY^0(L)\boxtimes R$ is easily seen to be supported in algebraic grading 0. On the other hand, the  homotopy equivalence
\[
f_*^1\colon {}_{R}[\bI]^{R}\to {}_{R} R_R\boxtimes {}^{R} \Lambda^R
\]
from Lemma~\ref{lem:quasi-inverses} has $f_1^1(1)=(1_R\otimes 1_\Lambda)\otimes 1.$ The map
\[
f_*^1\boxtimes \bI\colon {}_{R}[\bI]^R\boxtimes {}_{R} \cY^0(L)^R\boxtimes {}_R R_R\to {}_RR_R\boxtimes {}^R\Lambda^R\boxtimes {}_{R} \cY^0(L)^R\boxtimes {}_RR_R
\] 
is a quasi-isomorphism. Furthermore, $f_*^1\boxtimes \bI$ sends the generators of homology of $\cY^0(L)\boxtimes R$ to algebraic grading 0 in $R\boxtimes \Lambda\boxtimes \cY^0(L)\boxtimes R$. Therefore the homology of $R\boxtimes \Lambda\boxtimes \cY^0(L)\boxtimes R$ is supported in algebraic grading 0, completing the proof. 
\end{proof}

\subsection{L-space knots}

Before proving Theorem~\ref{thm:isomorphism-resolution} for 2-component L-space links, we recall the situation of L-space knots. We recall the following theorem of Ozsv\'{a}th and Szab\'{o}:

\begin{thm}[\cite{OSlens}]\label{thm:free-res-L-space-knot} If $K$ is an L-space knot in $S^3$, then $\cCFK(K)$ is homotopy equivalent to a staircase complex.
\end{thm}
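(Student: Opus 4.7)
The plan is to extract the staircase form directly from the large surgery formula combined with the L-space hypothesis. First I would fix $n \gg 0$ so that $S^3_n(K)$ is an L-space, and invoke the large surgery formula of Ozsv\'{a}th--Szab\'{o}, which for every $s \in \Z$ identifies
\[
HF^-(S^3_n(K),\mathfrak{s}_s) \iso H_*(A_s(K))
\]
for an appropriate $\Spin^c$ structure $\mathfrak{s}_s$. The L-space condition then forces $H_*(A_s(K)) \iso \bF[U]$ for every $s \in \Z$, and the analogous statement for the hat flavor forces $\widehat{HFK}(K,s)$ to have total rank at most one in each Alexander grading.

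Second, I would pin down the Alexander gradings that actually carry a generator by a Euler characteristic argument. Since $\sum_s \chi(\widehat{HFK}(K,s))\, t^s \doteq \Delta_K(t)$, the rank-at-most-one constraint forces the Alexander polynomial to be alternating with $\pm 1$ coefficients, of the form $\sum_{i=0}^{2k}(-1)^{k-i} t^{n_i}$ for a strictly increasing sequence $n_0 < n_1 < \cdots < n_{2k}$. Consequently $\widehat{HFK}(K)$ is supported in precisely these $2k+1$ Alexander gradings, with one $\bF$-generator in each, call them $\xs_0, \xs_2, \dots, \xs_{2k}$.

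Third, I would promote this to a description of the full complex $\cCFK(K)^R$ over $R = \bF[W,Z]$. The Maslov bigrading of each $\xs_{2i}$ is forced by the requirement that $H_*(A_s)$ be $U$-torsion free and generated in a specific Maslov grading determined by the gap sequence $\{n_i\}$. By picking a basis and writing the complex as an $R$-module with these generators together with auxiliary generators $\ys_1, \ys_3, \dots, \ys_{2k-1}$ to account for the differentials, a grading analysis identical to that used in Section~\ref{sec:Y0} forces
\[
\d \ys_{2i+1} = \xs_{2i}\otimes W^{a_i} + \xs_{2i+2}\otimes Z^{b_i},
\]
with $a_i, b_i > 0$ determined by the bigrading differences between consecutive $\xs_{2i}$.

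The main obstacle will be ruling out additional, ``longer'' terms in the differential (for example a term from $\ys_{2i+1}$ hitting $\xs_{2j}$ for $j \neq i, i+1$, or extra generators beyond the skeleton described above). These are excluded by combining two observations: the grading constraint, which confines any such term to a narrow range, and the fact that each $A_s(K)$ must have homology of rank exactly one as a free $\bF[U]$-module, which forces the total differential to be a minimal free resolution of its monomial-ideal homology. Once this uniqueness step is in hand, the resulting complex is manifestly a staircase, and the proof is complete.
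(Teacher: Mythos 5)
Your proposal takes a genuinely different route from the paper: you are re-deriving the original Ozsv\'{a}th--Szab\'{o} argument from \cite{OSlens} (large surgery formula, rank bounds on $\widehat{HFK}$, pin down the differential directly), whereas the paper's proof is a short algebraic argument: it shows $\cCFK(K)$ is \emph{formal} by tensoring with the Koszul dual ${}^R\Lambda^R$ and observing that higher products $m_{\ge 3}$ must vanish for parity reasons, then invokes the fact that any free resolution of a monomial ideal over $\bF[W,Z]$ is automatically a staircase. The paper's route is deliberately designed to generalize to the $2$-component case (Theorem~\ref{thm:isomorphism-resolution}), which your more hands-on route would not.

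There is, however, a genuine gap in your second step. The constraint $\mathrm{rank}\,\widehat{HFK}(K,s)\le 1$, combined with symmetry of $\Delta_K$ and the normalization $\Delta_K(1)=1$, does \emph{not} force the alternating form $\sum_i (-1)^{k-i}t^{n_i}$. For example, the symmetric polynomial
\[
t^3+t^2-t-1-t^{-1}+t^{-2}+t^{-3}
\]
has all coefficients in $\{-1,0,1\}$ and evaluates to $1$ at $t=1$, but is not alternating. The alternation is a chain-level consequence of the L-space condition—in \cite{OSlens} it is extracted from the interaction between the torsion submodules of the $A_s^+$ complexes and the maps $v_s$, $h_s$, not from the numerical rank bound. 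As written, your argument would also ``prove'' the nonexistent alternating form for the polynomial above.

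The third step is also under-justified as stated. Knowing that $\cCFK(K)$ is a free $R$-complex of rank $2k+1$ whose homology $\cHFK(K)$ is a monomial ideal with $k+1$ minimal generators does not, by itself, force the differential to realize a minimal free resolution; one could in principle have differentials $C_{\mathrm{even}}\to C_{\mathrm{odd}}$ (weighted by monomials $W^aZ^b$ with $a+b$ odd) that are invisible on homology—exactly the higher $A_\infty$-operations. Ruling these out is the content of the formality statement, and it requires a real argument: either the paper's Koszul duality and parity observation, or the careful chain-level analysis in \cite{OSlens}. Your appeal to ``grading constraints plus rank-one $\bF[U]$-homology of each $A_s(K)$'' is the right raw material, but the inference to minimality needs to be made explicit to close the loop.
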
 
We now give a proof of the above using Koszul duality, which will be a helpful starting point for our proof of Theorem~\ref{thm:isomorphism-resolution} for 2-component links.

\begin{proof}[Proof of Theorem~\ref{thm:free-res-L-space-knot}] Let $K$ be an L-space knot. We consider two \emph{a-priori} different $A_\infty$-modules. Firstly, we define the $R$-module ${}_R H(K)$ as the homology $\cHFK(K)$, equipped with its natural $R$-action, but no higher actions.

Next, we write ${}_R \cHFK(K)$ for the $A_\infty$-module obtained whose higher actions are obtained by applying homological perturbation to the free chain complex ${}_R\cCFK(K)$, so that there is a homotopy equivalence of $A_\infty$-modules
\[
{}_{R} \cHFK(K)\simeq {}_{R} \cCFK(K).
\]

Since $K$ is an L-space knot, $H(K)$ is supported in even gradings. There is a chain map from $\cCFK(K)$ to $\CF^-(S^3)$ obtained by sending $W^iZ^j$ to $U^i$. This map preserves $\gr_{\ws}$ and sends $\bF[U]$-non-torsion elements to $\bF[U]$-non-torsion elements. Therefore $H(K)$ is supported in non-positive $\gr_{\ws}$-gradings. A similar argument shows that $H(K)$ is supported in non-positive $\gr_{\zs}$-gradings. Therefore $H(K)$ can naturally be viewed as a monomial ideal in $\bF[W,Z]$. We recall from Lemma~\ref{lem:monomial-ideals-resolutions} that a free resolution of ${}_RH(K)$ is therefore a staircase complex. Quasi-isomorphisms are invertible in the category of $A_\infty$-modules over $R$  \cite{KellerNotes}*{Section~4}. See also \cite{SeidelFukaya}*{Corollary~1.14} for a similar result. Therefore we conclude that ${}_R H(K)$ is homotopy equivalent as a type-$A$ module to a staircase complex. 
 To prove Theorem~\ref{thm:free-res-L-space-knot}, it therefore suffices to show that there is a homotopy equivalence of $A_\infty$-modules
\begin{equation}
{}_R H(K)\simeq {}_R \cHFK(K).
\label{eq:H-versus-cHFK}
\end{equation}

By Lemma~\ref{lem:quasi-inverses}, the bimodule ${}^{R} \Lambda^R$ is quasi-invertible, so it suffices to show Equation~\eqref{eq:H-versus-cHFK} after tensoring both modules with ${}^{R} \Lambda^R$.  We claim that  in fact there is an equality
\begin{equation}
{}^{R} \Lambda^R\boxtimes {}_R H(K)={}^{R} \Lambda^R\boxtimes {}_R\cHFK(K).\label{eq:equality-after-boxing}
\end{equation}
To see this, observe that $(\delta^{1,1})^n\colon \Lambda\to R^{\otimes n}\otimes  \Lambda\otimes R^{\otimes n}$ vanishes if $n>2$. Therefore only $m_2$ and $m_3$ of ${}_R H(K)$ and ${}_R \cHFK(K)$ can contribute to the box tensor product. On the other hand, the underlying groups of both ${}_R H(K)$ and ${}_R \cHFK(K)$ are concentrated in even gradings. By assumption, the maps $m_2(a,-)$ coincide on the two modules. Since the algebra $R$ is itself concentrated in even gradings, it follows that for all $a,b\in R$, the map $m_3(a,b,-)$ will shift the $\gr_{\ws}$ and $\gr_{\zs}$ gradings by an odd integer. Therefore the map $m_3(a,b,-)$ is zero for all $a$ and $b$. Since no higher multiplications contribute to the tensor products in Equation~\eqref{eq:equality-after-boxing}, Theorem~\ref{thm:free-res-L-space-knot} follows.
\end{proof} 

\subsection{Proof of Theorem~\ref{thm:isomorphism-resolution}}

We now consider L-space links with 2 components and prove Theorem~\ref{thm:isomorphism-resolution}.  The proof takes three steps:
\begin{enumerate}
\item Show that ${}_{R} \cY^0(L)^{R}$ is homotopy equivalent to ${}_{R}[\ve{I}_0]^{\cK}\boxtimes {}_{\cK} \cX_\Lambda(L)^R$.
\item Show that ${}_{\bF[U,T,T^{-1}]} \cY^1(L)^{R}$ is homotopy equivalent to ${}_{\bF[U,T,T^{-1}]}[\ve{I}_1]^{\cK}\boxtimes {}_{\cK} \cX_\Lambda(L)^R$.
\item Show that the actions of $\sigma$ and $\tau$ on ${}_{\cK}\cY(L)^R$ and ${}_{\cK}\cX(L)^R$ coincide up to homotopy.
\end{enumerate}

Let us write 
\[
{}_R \cX^0(L)^R:={}_{R}[\ve{I}_0]^{\cK}\boxtimes {}_{\cK} \cX_\Lambda(L)^R\quad \text{and} \quad {}_{\bF[U,T,T^{-1}]} \cX^1(L)^R:={}_{\bF[U,T,T^{-1}]}[\ve{I}_1]^{\cK}\boxtimes {}_{\cK} \cX_\Lambda(L)^R
\]

In the above, ${}_R[\ve{I}_0]^{\cK}$ denotes the bimodule induced by the inclusion of $R$ into the idempotent 0 subspace of $\cK$. Similarly ${}_{\bF[U,T,T^{-1}]} [\ve{I}_1]^{\cK}$ denotes the inclusion of $\bF[U,T,T^{-1}]$ into the idempotent 1 subspace of $\cK$. Here, the vector space of $[\ve{I}_\veps]$ is the subspace of the idempotent ring $\ve{I}_{\veps}\subset \ve{I}$.

\subsubsection{Idempotent 0}

In this section we prove the following:

\begin{prop}
\label{prop:step-1} There is a homotopy equivalence ${}_R \cY^0(L)^{R}\simeq {}_R \cX^0(L)^R$.
\end{prop}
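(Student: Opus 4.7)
The plan is to exhibit both DA-bimodules as homotopy equivalent to the same free resolution of $\cHFL(L)$ over $R\otimes R$, adapting the Koszul duality argument from the proof of Theorem~\ref{thm:free-res-L-space-knot} to the bimodule setting.

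First, I would identify the homologies. For ${}_R\cX^0(L)^R$, the sublink surgery formula \eqref{eq:sublink-surgery-formula} exhibits the underlying right type-$D$ module as $\cCFL(L)$, so its homology is tautologically $\cHFL(L)$ as an $(R,R)$-bimodule. For ${}_R\cY^0(L)^R$, the staircase decomposition $\cY^0(L)^R=\bigoplus_s \cC_s^R$ from items \ref{Y-0}--\ref{Y-1}, together with Lemma~\ref{lem:monomial-ideals-resolutions}, identifies each $H_*(\cC_s^R)$ with a monomial ideal in $\bF[W,Z]$ whose Maslov bigradings are dictated by $H_L(s,-)$. The Gorsky--N\'emethi formula (Proposition~\ref{prop:GNH-function}) shows these match $\cHFL(L,s,-)$ grading for grading, and the left $R$-action maps $L_a$ of Section~\ref{sec:Y0} were chosen precisely so that the induced $R$-action on homology is the correct one. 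Summing over $s$ yields an isomorphism $H_*({}_R\cY^0(L)^R)\cong \cHFL(L)$ of $(R,R)$-bimodules.

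Next, I would promote this homology agreement to a DA-bimodule equivalence via Koszul duality. By Lemma~\ref{lem:quasi-inverses}, it suffices to produce a homotopy equivalence after tensoring on the left with the Koszul dual DD-bimodule ${}^R\Lambda^R$, where $\Lambda=\Lambda^*(\theta_W,\theta_Z)$. Since $(\delta^{1,1})^n$ on $\Lambda$ vanishes for $n>2$, only the $A_\infty$-actions $m_2$ and $m_3$ from the left $R$-action contribute to the boxed DD-bimodules. The $m_2$ contributions coincide by the previous step. The L-space link hypothesis implies $\cHFL(L)$ is supported in even Maslov bigradings $(\gr_\ws,\gr_\zs)$, while any $m_3(a,b,-)$ shifts $(\gr_\ws,\gr_\zs)$ by $\gr(a)+\gr(b)+(1,1)$, which is odd in each coordinate since $R$ is concentrated in even bigradings. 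Hence the $m_3$ contributions vanish on homology, and both boxed DD-bimodules coincide, up to homotopy, with the Koszul free resolution built from the $(R,R)$-bimodule $\cHFL(L)$. Boxing back with ${}_RR_R$ yields the desired ${}_R\cY^0(L)^R\simeq {}_R\cX^0(L)^R$.

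The main obstacle I anticipate is the grading-parity argument in the Koszul duality step: unlike in the one-variable L-space knot setting, here the right type-$D$ structure (coming from the other link component) must be carried along unchanged, and one must verify carefully that it interacts correctly with the parity constraints on the left $A_\infty$-structure. Specifically, one has to check that potential $m_3$ contributions do not accidentally land in an even-graded part of $\cHFL(L)$ via cancellation against the right type-$D$ differential, which itself moves Alexander grading via the maps $\Phi^{\pm K_i}$. I expect this to be handled by observing that the right type-$D$ differential is itself homogeneously graded, so the parity argument applies summand by summand in the Alexander decomposition. Any remaining ambiguity in the auxiliary chain homotopies chosen in the construction of $\cY^0(L)$ will then be absorbed by the uniqueness-up-to-homotopy statement in Proposition~\ref{prop:DA-relations}.
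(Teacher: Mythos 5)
Your parity argument, ported directly from the L-space knot case, does not apply in the bimodule setting, and this is the central gap in the proposal. In the knot case (Theorem~\ref{thm:free-res-L-space-knot}), one replaces $\cCFK(K)$ by its homology via homological perturbation; the underlying vector space of both ${}_R H(K)$ and ${}_R\cHFK(K)$ is then $H_*(\cCFK(K))$, which is concentrated in even Maslov bigradings, so an $m_3$ of odd bidegree must land outside the support and hence vanish. Here you cannot pass to homology without destroying the right type-$D$ structure you need to carry along. The underlying type-$D$ module $\cX^0(L)^R\simeq\cCFL(L)^R$ is a direct sum of staircase complexes $\cC_s^R$, and a staircase has generators in \emph{both} parities: the $\xs_i$'s are even-graded but the $\ys_i$'s are odd. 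A map of $(\gr_\ws,\gr_\zs)$-bidegree $(-1,-1)$, such as $\delta_3^1(W,Z,-)$, can move $\xs$'s to $\ys$'s and is not forced to vanish on parity grounds. Restricting to an Alexander summand does not rescue the argument, because within each summand the staircase still mixes parities.

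The paper's proof closes this gap differently. After arranging that the underlying type-$D$ modules and the $\delta_2^1$-actions literally agree, it compares ${}^R\Lambda^R\boxtimes{}_R\cY^0(L)^R$ and ${}^R\Lambda^R\boxtimes{}_R\cX^0(L)^R$ as $2$-dimensional hypercubes (Equation~\eqref{eq:H-cube}); these differ only in their diagonal maps, $h_{W,Z}+h_{Z,W}$ versus $\delta_3^1(W,Z,-)+\delta_3^1(Z,W,-)$. One must then show these diagonals are \emph{chain homotopic}, not zero, which reduces (after a degree filtration argument on the staircase) to the statement that every chain self-map of a staircase $\cC_s^R$ of $(\gr_{\alg},\gr_\ws,\gr_\zs)$-degree $(-1,-1,-1)$ is null-homotopic. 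That is exactly Lemma~\ref{lem:computation-ext-complex}, an explicit computation that $\Ext^1_R(H_*\cS,H_*\cS)$ vanishes in Maslov bidegree $(-1,-1)$; this calculation is the load-bearing step and is absent from your proposal. Finally, your fallback route---identify both boxed DD-bimodules with the Koszul free resolution of $\cHFL(L)$---would implicitly require formality of $\cCFL(L)$, i.e.\ Theorem~\ref{thm:proof-free-resolution}, which the paper deduces \emph{from} the result you are trying to prove; running the argument that way risks circularity unless you supply an independent proof of formality.
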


 We recall that  ${}_{R} \cX^{0}(L)^R$ can be constructed by viewing ${}^{R} \cCFL(L)^{R}$ as a $DD$-bimodule and tensoring on the left with ${}_R R_R$. We will write $\cX^0(L)^{R}$ and $\cY^0(L)^{R}$ for the underlying type-$D$ modules of ${}_{R} \cX^0(L)^R$ and ${}_R \cY^0(L)^{R}$, respectively. We may assume, via homological perturbation theory, that $\cX^0(L)^R$ is \emph{reduced} as a type-$D$ module (i.e. no components of $\delta_1^1$ are weighted by $1\in R$). 

We will now show that there is a homotopy equivalence of type-$D$ modules
\[
\cX^0(L)^R\simeq \cY^0(L)^R.
\]
In fact, this homotopy equivalence will be unique up to chain homotopy.

We consider the generators of both ${}_R \cX^0(L)^R$ and ${}_R \cY^0(L)^R$ which lie in a fixed $A_1$-Alexander grading, i.e. those which have Alexander grading $(s_1,*)$ for a fixed $s_1$. The differential on $\cCFL(L)$ preserves the Alexander bigrading. Similarly, our $\delta_1^1$ map on ${}_R \cY^0(L)^R$ also preserves the Alexander bigrading. Therefore, as type-$D$ modules, we can decompose over the $A_1$-grading as follows:
 \[
 \cY^0(L)^R=\prod_{q}\cC_q^R,\qquad \cX^0(L)^R=\prod_{q} \cT_q^R.
 \]
Here the direct products are over $q\in \Z+\tfrac{\lk(L_1,L_2)}{2}$.

 \begin{lem} For all $q$, there is a homotopy equivalence $\cC_q^R\simeq \cT_q^R$.
 \end{lem}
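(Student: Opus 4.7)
The plan is to show that both $\cC_q^R$ and $\cT_q^R$ are free resolutions (as type-$D$ modules over $R$) of a common graded $R$-module, and then invoke uniqueness of minimal free resolutions to obtain the homotopy equivalence.

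First I would identify $\cT_q^R$ concretely. By the sublink surgery formula~\eqref{eq:sublink-surgery-formula} (applied to the first component of $L$), the tensor product ${}_R[\ve{I}_0]^{\cK}\boxtimes {}_{\cK}\cX_\Lambda(L)^R$ is identified with a completion of the ordinary link Floer complex $\cCFL(L)$, equipped with its natural $(R,R)$-bimodule structure in which the left $R=\bF[W_1,Z_1]$ acts via $W_1,Z_1$ and the right $R=\bF[W_2,Z_2]$ acts via $W_2,Z_2$. Restricting to Alexander grading $A_1=q$ and forgetting the left $R$-action (keeping only the type-$D$ structure over the right $R$) identifies $\cT_q^R$ with the $A_1=q$ subcomplex of $\cCFL(L)$, viewed as a type-$D$ module over $R=\bF[W_2,Z_2]$.

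Next I would compute the homology $H_*(\cT_q^R)$ as a graded $R$-module and match it with $H_*(\cC_q^R)$. Because $L$ is a $2$-component L-space link, $\cHFL(L,(q,t))\cong \bF[U]$ for every $(q,t)\in \bH(L)$, with the unique $\bF$-generator sitting in $\gr_{\ws}$-grading $-2H_L(q,t)$. Hence $H_*(\cT_q^R)\cong \bigoplus_t \cHFL(L,(q,t))$ as a bigraded $\bF$-vector space, with the same bigradings as the generators $\xs_{(q,t)}$ used to build $\cC_q^R$ in Section~\ref{sec:Y0}. The $R$-module structure on this bigraded space is pinned down by the $H$-function: grading considerations force each action of $W_2$ or $Z_2$ between adjacent Alexander levels to equal a specific power of $U$ (or the identity), and the L-space hypothesis (i.e., $U$-torsion-freeness of $\cHFL(L)$) ensures that none of these are zero. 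The resulting $R$-module coincides with $H_*(\cC_q^R)$ by construction, giving an isomorphism $H_*(\cT_q^R)\cong H_*(\cC_q^R)$ of graded $R$-modules.

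The final step is to upgrade this isomorphism on homology to a homotopy equivalence of type-$D$ modules over $R$. By Lemma~\ref{lem:monomial-ideals-resolutions}, $\cC_q^R$ is a minimal free resolution of its homology. For $\cT_q^R$, I would reduce to a minimal model using the homological perturbation lemma (Lemma~\ref{lem:HPL-chain-complexes}): since the Maslov $\gr_{\ws}$-grading is bounded above and each bigraded piece is finite-dimensional over $\bF$, iterative Gaussian elimination of components of the differential weighted by $1\in R$ terminates in the appropriate linear topology, producing a minimal type-$D$ module representing $\cT_q^R$. A bigraded minimal type-$D$ module over $R$ with a given bigraded homology is unique up to isomorphism, so this minimal model must agree with $\cC_q^R$, yielding $\cT_q^R\simeq \cC_q^R$.

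The main obstacle I expect is pinning down the $R$-module structure on $H_*(\cT_q^R)$. Although the $\bF$-dimensions in each bigrading are immediate from $H_L$, proving that every grading-allowed action of $W_2$ or $Z_2$ is genuinely nonzero (rather than accidentally zero, which would ruin the match with $\cC_q^R$) requires careful use of the L-space property via $U$-torsion-freeness. A secondary technical point is that $\cT_q^R$ has infinite $R$-rank due to the $W_1,Z_1$-scalars present in $\cCFL(L)$, so one must verify that the minimal-model reduction converges in the topology on $\cT_q^R$ and yields a finitely generated result.
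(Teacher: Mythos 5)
Your identification of $\cT_q^R$ with the $A_1=q$ subcomplex of $\cCFL(L)$, and the computation of its homology from the $H$-function, are both correct and match the paper's setup. The gap is in the final step. You assert that ``a bigraded minimal type-$D$ module over $R$ with a given bigraded homology is unique up to isomorphism.'' This is not a general principle: a reduced (i.e.\ minimal) free chain complex over $R=\bF[W,Z]$ is not determined by its homology as a graded $R$-module, since the complex may carry nontrivial $A_\infty$-module structure on its homology (higher Massey products) that the underlying $R$-module cannot see. What \emph{is} unique up to isomorphism is the minimal free resolution of a given module; but to invoke that you must first know that the reduced model of $\cT_q^R$ is a free resolution of its homology --- i.e.\ that $\cT_q^R$ is formal --- and that is precisely the nontrivial content of this lemma and of Theorem~\ref{thm:proof-free-resolution} more broadly. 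As written, your argument quietly assumes the formality it is meant to help establish.

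The paper fills this gap by rerunning the argument from the proof of Theorem~\ref{thm:free-res-L-space-knot}. Since $L$ is a $2$-component L-space link, the homology of $\cT_q^R$ is concentrated in even $(\gr_{\ws},\gr_{\zs})$-bigradings. Passing to the induced $A_\infty$-module on homology and tensoring with the Koszul dual bimodule ${}^R\Lambda^R$, only $m_2$ and $m_3$ can contribute; and $m_3$ shifts the bigrading by an odd amount while its source and target lie in even bigradings, so $m_3=0$. This forces formality, hence that both $\cC_q^R$ and $\cT_q^R$ are staircase complexes, i.e.\ free resolutions of the same monomial-ideal-type module. Only at that point does the uniqueness of minimal free resolutions yield the desired homotopy equivalence. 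In short, you correctly matched the homologies and correctly identified the shape of the endgame, but omitted the even-grading/Koszul-duality step that is the heart of the lemma.
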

 \begin{proof} By construction, both
 \[
 \cC_q^R\boxtimes {}_R R_R\qquad \text{and} \qquad \cT_q^R\boxtimes {}_R R_R
 \]
 have homology isomorphic to $\cHFL_{(q,*)}(L)_R$ as an $R$-module (i.e. ignoring higher actions). Since $L$ is an L-space link, the right $R$-module $\cHFL_{(q,*)}(L)_R$ is concentrated in only even bigradings. The argument from our proof of Theorem~\ref{thm:free-res-L-space-knot} implies that both $\cC_q^R$ and $\cT_q^R$ are staircase complexes. Since staircase complexes are free resolutions of their homologies, and the homologies of $\cC_q$ and $\cT_q$ coincide, we conclude that $\cC_q$ and $\cT_q$ are homotopy equivalent.
 \end{proof}

 \begin{rem} Note that two staircase complexes which are homotopy equivalent are in fact identical as staircase complexes (i.e. have same step lengths). Also Lemma~\ref{lem:homology-to-chain} implies that the homotopy equivalence is unique up to chain homotopy.
 \end{rem}

 \begin{lem} The isomorphism $\cY^0(L)^R\simeq \cX^0(L)^{R}$ intertwines the map $L_W$ with $\delta_2^1(W,-)$ up to chain homotopy. Similarly the map intertwines $L_Z$  and $\delta_2^1(Z,-)$ up to chain homotopy.
 \end{lem}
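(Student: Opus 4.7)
The plan is to work on each $A_1$-Alexander graded summand separately, and apply Lemma~\ref{lem:homology-to-chain} to conclude that two chain maps of staircases inducing the same map on homology are chain homotopic.

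First, I would use that $L_W$ and $\delta_2^1(W,-)$ both preserve the $A_2$-grading and shift the $A_1$-grading by $-1$, so each restricts to a chain map $\cC_q^R \to \cC_{q-1}^R$ (respectively $\cT_q^R \to \cT_{q-1}^R$) for each $q$. Transporting $\delta_2^1(W,-)$ through the chain homotopy equivalence $\cT_q \simeq \cC_q$ of the preceding lemma (which itself is unique up to chain homotopy by Lemma~\ref{lem:homology-to-chain}), the task reduces to showing that two chain maps $\cC_q \to \cC_{q-1}$ between staircases are chain homotopic for each $q$.

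Next, I would verify that both maps induce the same $R$-equivariant map on homology. Since $L$ is an L-space link, $H_*(\cC_q \boxtimes {}_RR_R)$ and $H_*(\cT_q \boxtimes {}_R R_R)$ are both canonically identified with the $A_1 = q$ summand of $\cHFL(L)$, which (up to grading shift) is a monomial ideal in $R = \bF[W,Z]$. By construction of the link surgery bimodule, $\delta_2^1(W,-)$ realizes the action of $W$ on this homology. For $L_W$, the bigrading constraint $(\gr_{\ws},\gr_{\zs})(L_W) = (-2,0)$ forces any $R$-equivariant map between the relevant monomial ideals with this shift to be a scalar multiple of $W$-multiplication; since $L_W$ was chosen non-zero on homology and we work over $\bF = \Z/2$, its induced map is precisely $W$-multiplication. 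Hence both maps realize the same $R$-equivariant map on homology.

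Since staircases are free resolutions of their homologies by Lemma~\ref{lem:monomial-ideals-resolutions}, the uniqueness clause of Lemma~\ref{lem:homology-to-chain} now gives that the two chain maps are chain homotopic in each summand. Assembling the homotopies over all $q$ produces a chain homotopy intertwining $L_W$ with $\delta_2^1(W,-)$ as $\delta^1$-morphisms under the identification $\cY^0(L)^R \simeq \cX^0(L)^R$. The proof for $L_Z$ is identical, with $Z$ replacing $W$ throughout and bigrading shift $(0,-2)$ in place of $(-2,0)$. The main (mild) technical obstacle is checking that the two induced homology maps agree, which is precisely where the L-space hypothesis enters: it is what promotes the homology to a monomial ideal where $R$-equivariant maps of a fixed bigrading are uniquely pinned down.
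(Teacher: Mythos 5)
Your proposal follows essentially the same route as the paper: decompose along the $A_1$-Alexander grading into chain maps of staircases $\cC_q\to\cC_{q-1}$, and appeal to Lemma~\ref{lem:homology-to-chain}, whose uniqueness clause (staircases being free resolutions of their homology) makes any two such maps inducing the same map on homology chain homotopic. Where you explicitly verify that both maps realize multiplication by $W$ on the monomial-ideal homology, the paper is terser and simply observes that a non-zero chain map $\cC_q\to\cC_{q-1}$ of Maslov bigrading $(-2,0)$ preserving the algebraic grading is determined up to chain homotopy; the underlying reason (uniqueness of the induced homology map in that bidegree) is the same thing you spell out.

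The one hypothesis of Lemma~\ref{lem:homology-to-chain} that you do not verify is the algebraic-grading condition for $\delta_2^1(W,-)$. The uniqueness clause requires \emph{both} maps to preserve the algebraic grading; for $L_W$ this is baked into its construction, but for the transported $\delta_2^1(W,-)$ it must be checked. The paper dispatches this in a parenthetical: $\delta_2^1(W,-)$ has even Maslov bigrading $(-2,0)$, and on a reduced staircase the generators in algebraic grading $0$ sit in even Maslov gradings while those in algebraic grading $1$ sit in odd Maslov gradings, so a chain map of even Maslov bidegree automatically preserves the algebraic grading (its algebraic-grading-shifting components would have to reverse Maslov parity, hence vanish). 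Without some such observation you have not established the hypotheses of the uniqueness clause, so Lemma~\ref{lem:homology-to-chain} cannot yet be invoked. This is the only substantive omission; the rest of your argument matches the paper's.
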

 \begin{proof} In the construction of ${}_R \cY^0(L)^R$, we can pick $L_W$ to be any chain map with $(\gr_{\ws},\gr_{\zs})$ bigrading $(-2,0)$ from $\cC_q^R$ to $\cC_{q-1}^R$ which is non-zero on homology and preserves the algebraic grading. By Lemma~\ref{lem:homology-to-chain} such a map is determined up to chain homotopy.  On the other hand, the map $\delta_2^1(W,-)$ has this bigrading, preserves the algebraic grading (because it has even Maslov bigrading) and is non-zero on homology. Therefore, we may use the map $\delta_2^1(W,-)$ from ${}_{R}\cX^{0}(L)^R$ as our definition of $L_W$ in  ${}_R \cY^0(L)^R$. The same reasoning applies to $\delta_2^1(Z,-)$.
 \end{proof}
 
 Note that if we use a reduced model for $\cX^0(L)^{R}$, then we can choose  $\cX^0(L)^R$ to coincide with $\cY^0(L)^{R}$. Furthermore, by the previous lemma, we can choose $\delta_2^1(W,-)$ and $\delta_2^1(Z,-)$ to coincide for ${}_R\cX^0(L)^{R}$ and ${}_R\cY^0(L)^R$.

 We now compare the tensor products ${}^R \Lambda^R\boxtimes {}_R \cY^0(L)^R$ and ${}^R \Lambda^R\boxtimes {}_R \cX^0(L)^R$. The tensor product with ${}_R\cX^0(L)^R$ takes the following form:
 \begin{equation}
 \begin{tikzcd}[column sep=6cm, row sep=1.7cm, labels=description]
 \cX^0(L)^R
 	\ar[r, "{1\otimes \delta_2^1(W,-)+W\otimes \id}"]
 	\ar[dr, dashed, " {1\otimes (\delta_3^1(W,Z,-)+\delta_3^1(Z,W,-))}"] 
 	\ar[d, "{1\otimes \delta_2^1(Z,-)+Z\otimes \id}"]
 &\cX^0(L)^R
 	\ar[d, "{1\otimes \delta_2^1(Z,-)+Z\otimes \id}"]	
 	\\
 \cX^0(L)^R \ar[r, "{1\otimes \delta_2^1(W,-)+W\otimes \id}"]& \cX^0(L)^R
 \end{tikzcd}
\label{eq:H-cube}
 \end{equation}
The complex ${}^{R} \Lambda^R \boxtimes {}_R \cY^0(L)^{R}$ is identical except that along the diagonal we have
\[
1\otimes (h_{W,Z}+h_{Z,W}).
\]

To show the complexes ${}^R \Lambda^R \boxtimes {}_R \cY^0(L)^R$ and ${}^R \Lambda^R \boxtimes {}_R \cX^0(L)^R$ are homotopy equivalent it suffices to show that
\begin{equation}
h_{W,Z}+\delta_3^1(W,Z,-)\simeq 0 \quad \text{and}\quad  h_{Z,W}+\delta_3^1(Z,W,-)\simeq 0
\label{eq:maps-null-homotopic}
\end{equation}
as endomorphisms of $\cX^0(L)^R$. 

 Note that by construction, the maps $h_{W,Z}$ and $h_{Z,W}$ increase the algebraic grading by 1. \emph{A-priori} we do not know how $\delta_3^1(Z,W,-)$ and $\delta_3^1(W,Z,-)$ interact with the algebraic grading.  We observe, however, that both $h_{W,Z}+\delta_3^1(W,Z,-)$ and $h_{Z,W}+\delta_3^1(Z,W,-)$ are chain maps. These maps both shift the Maslov $(\gr_{\ws},\gr_{\zs})$-grading by $(-1,-1)$. 
 
 We focus on 
 \[
 f:=h_{W,Z}+\delta_3^1(W,Z,-)
 \]
first. We can view $f$ as a collection of chain maps
\[
f_s\colon \cC_s^R\to \cC_s^R
\]
ranging over $s\in \Z+\lk(L_1,L_2)/2$. 

Write $\cC_s$ as the cone of a map $\delta^1$ from $\cC_s^1$ to $\cC_s^0$. The map $f_s$ can be written as a sum of two maps $f_{s}^{01}+f_s^{10}$, where $f_s^{01}$ maps 
 $\cC_s^{1}$ to $\cC_s^0$, and $f_s^{10}$ maps $\cC_s^0$ to $\cC_s^1$.  See the below diagram:
\[
\begin{tikzcd}[column sep=1.5cm, row sep=1.7cm]
\cC_s^1 
	\ar[r, "\delta^1"]
	\ar[dr, "f_{s}^{01}",pos=.2,swap]
& 
\cC_s^0
	\ar[dl, "f_{s}^{10}", pos=.2,crossing over]
\\
\cC_s^1 
	\ar[r, "\delta^1"]
 & 
\cC_s^0
\end{tikzcd}
\]

Since $f_s$ is a chain map and $\delta^1_1$ is injective as a map from $\cC_s^1$ to $\cC_s^0$, we conclude that $f^{10}_s=0$. It therefore suffices to show that $f_s^{01}$ is chain homotopic to zero. To show this, we will in fact show that all endomorphisms of $\cC_s^R$ which are chain maps and have $(\gr_{\alg}, \gr_{\ws},\gr_{\zs})$-degree $(-1,-1,-1)$ are null-homotopic.

\begin{lem}
\label{lem:computation-ext-complex} Let $\cS^R$ be a finite staircase complex. Then $\Hom(\cS^R,\cS^R)$ has trivial homology in $(\gr_{\alg}, \gr_{\ws}, \gr_{\zs})$-degree $(-1,-1,-1)$. 
\end{lem}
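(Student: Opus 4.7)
The plan is to reduce the claim to an explicit linear algebra computation forced by the bigradings of the staircase. Write $\cS$ as $\cS_1 \xrightarrow{\d} \cS_0$ with $\cS_0 = \bigoplus_{i=0}^n R\,\xs_{2i}$, $\cS_1 = \bigoplus_{i=0}^{n-1} R\,\ys_{2i+1}$, and $\d(\ys_{2i+1}) = W^{\a_i}\xs_{2i} + Z^{\b_i}\xs_{2i+2}$ with $\a_i,\b_i > 0$. A morphism $f$ of $(\gr_{\alg},\gr_{\ws},\gr_{\zs})$-tridegree $(-1,-1,-1)$ can have only its $\cS_1 \to \cS_0$ component nonzero. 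For any such $f$, both $\d\circ f$ and $f\circ \d$ vanish identically (the first because $\d = 0$ on $\cS_0$, the second because $f = 0$ outside $\cS_1$), so $f$ is automatically a chain map. It therefore suffices to exhibit a null-homotopy $h = h^{00} + h^{11}$ of tridegree $(0,0,0)$ satisfying $\d\circ h^{11} + h^{00}\circ \d = f$ on $\cS_1$.

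Next I would use the strict staircase structure---namely that $\gr_{\ws}(\xs_{2i})$ decreases by $2\a_i$ and $\gr_{\zs}(\xs_{2i})$ increases by $2\b_i$ as $i$ increases---to pin down the normal forms of $f$, $h^{00}$, and $h^{11}$. Writing $f(\ys_{2i+1}) = \sum_j p_{ij}\,\xs_{2j}$ with $p_{ij} \in R$, the bigrading condition combined with the requirement that each $p_{ij}$ have non-negative $W,Z$-exponents and with the strict monotonicity of $\gr_{\ws}(\xs_{2k})$ and $\gr_{\zs}(\xs_{2k})$ forces $j \in \{i,i+1\}$, with $p_{ii} = \lambda_i\, W^{\a_i}$ and $p_{i,i+1} = \mu_i\, Z^{\b_i}$ for scalars $\lambda_i, \mu_i \in \bF$. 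A parallel monotonicity argument forces $h^{00}$ and $h^{11}$ to be diagonal with scalar entries, i.e.\ $h^{00}(\xs_{2j}) = \theta_j\,\xs_{2j}$ and $h^{11}(\ys_{2i+1}) = \eta_i\,\ys_{2i+1}$ for some $\theta_j, \eta_i \in \bF$.

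With these normal forms in place, a direct computation gives
\[
(\d\circ h^{11} + h^{00}\circ \d)(\ys_{2i+1}) = (\eta_i+\theta_i)\,W^{\a_i}\xs_{2i} + (\eta_i+\theta_{i+1})\,Z^{\b_i}\xs_{2i+2},
\]
so the equation $\d\circ h^{11} + h^{00}\circ \d = f$ reduces to the linear system
\[
\eta_i + \theta_i = \lambda_i, \qquad \eta_i + \theta_{i+1} = \mu_i \qquad (i = 0, \ldots, n-1).
\]
This system is always solvable in $\bF$: set $\theta_0 = 0$ and recursively define $\eta_i := \lambda_i - \theta_i$ and $\theta_{i+1} := \mu_i - \eta_i$. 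Hence every $f$ of the prescribed tridegree is null-homotopic, which proves the lemma. The main conceptual obstacle is the bigrading bookkeeping of the middle step; after the normal forms of $f$, $h^{00}$, and $h^{11}$ are in hand, the existence of a null-homotopy becomes an elementary check.
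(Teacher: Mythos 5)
Your proof is correct and follows essentially the same strategy as the paper's: both arguments use the strict monotonicity of the staircase gradings to pin down the normal form of a chain map $f$ of tridegree $(-1,-1,-1)$, and then exhibit an explicit grading-preserving null-homotopy supported on the diagonal of the generating sets. The only cosmetic difference is that the paper breaks the null-homotopy into two steps (first a homotopy diagonal on the $\ys_i$'s that reduces $f$ to a sum of elementary $W$-only pieces $f_{2i+1}$, and then an explicit primitive $J$ for each $f_{2i+1}$), whereas you solve the resulting $2n$ linear equations in $2n+1$ unknowns in one pass; these are equivalent.
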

\begin{proof}
Write $\xs_0,\ys_1,\xs_2, \dots,\ys_{2n-1},\xs_{2n}$ for the generators of $\cS^R$, so that $\d(\ys_{2i+1})=\xs_{2i}\otimes W^{\a_{2i+1}}+\xs_{2i+2}\otimes Z^{\b_{2i+1}}.$

A morphism in algebraic grading $-1$ is supported on the $\ys_i$ generators, and sends $\ys_i$ to a sum of the $\xs_j$ generators (tensored with elements of $R$). As a first step, we enumerate all morphisms in this grading. Let $f\in \Hom(\cS^R,\cS^R)$ be a map with degree $(-1,-1,-1)$. For each $\ys_{2i+1}$, we claim that there are exactly four possibilities of $f(\ys_{2i+1})$, as below:
\begin{enumerate}
\item $f(\ys_{2i+1})=0$;
\item $f(\ys_{2i+1})=\xs_{2i}\otimes W^{\a_{2i+1}}+\xs_{2i+2}\otimes Z^{\b_{2i+1}}$;
\item $f(\ys_{2i+1})=\xs_{2i}\otimes W^{\a_{2i+1}}$;
\item $f(\ys_{2i+1})=\xs_{2i+2}\otimes Z^{\b_{2i+1}}$.
\end{enumerate}
There are no other possibilities for $f(\ys_{2i+1})$ because the only generators of $\cS$ in algebraic grading 0 with Maslov grading $(\gr_{\ws}(\ys_{2i+1})-1,\gr_{\zs}(\ys_{2i+1})-1)$ are $\xs_{2i}\otimes W^{\a_{2i+1}}$ and $\xs_{2i+2}\otimes Z^{\b_{2i+1}}$. To see this, note that any other element in this grading would have a summand the form $\xs_j\otimes a$, for some $a\in \bF[W,Z]$, and therefore we would have in particular that
\[
\begin{split}
\gr_{\ws}(\xs_j)&\ge \gr_{\ws}(\xs_{2i+2}\otimes Z^{\b_{2i+1}})=\gr_{\ws}(\xs_{2i+2})\quad \text{and}\\
\ \gr_{\zs}(\xs_j)&\ge \gr_{\zs}(\xs_{2i}\otimes W^{\a_{2i+1}})=\gr_{\zs}(\xs_{2i}).
\end{split}
\]
Note that $\gr_{\ws}(\xs_j)\ge \gr_{\ws}(\xs_{2i+2})$ implies that $j\le 2i+2$ while $\gr_{\zs}(\xs_j)\ge \gr_{\zs}(\xs_{2i})$ implies that $j \ge 2i$. Therefore $j\in \{2i,2i+2\}$, as claimed.

We may define a chain homotopy $H$ by setting $H(\ys_{2i+1})=\ys_{2i+1}$ (for a fixed $i$) and setting $H$ to vanish on all other generators. We note that adding $\d(H)$ to $f$ has the effect of switching cases (1) and (2), and switching cases (3) and (4) without changing the value of $f$ on any other generators. 

Let $f_{1},f_3,\dots, f_{2n-1}$ denote endomorphisms of $\cS^R$, where $f_{2i+1}(\ys_{2i+1})=\xs_{2i}\otimes W^{\a_{2i+1}}$ and $f_{2i+1}$ vanishes on all other generators. The above argument shows that any element of $\Hom(\cS^R,\cS^R)$ in Maslov gradings $(-1,-1)$ and algebraic grading $-1$ will be chain homotopic to a sum of some of the $f_{2i+1}$'s. We now observe, however, that
\[
f_{2i+1}=\d(J),
\]
where $J(\xs_j)=\xs_j\otimes 1$ if $j<2i+1$, $J(\ys_j)=\ys_j\otimes 1$ if $j<2i+1$ and vanishes on other generators. Therefore all morphisms in the gradings of interest are boundaries, and the main claim follows.
\end{proof}

\begin{rem}
\label{rem:hom=ext} Note that since a staircase $\cS^R$ is quasi-isomorphic to its homology, we observe that the  subspace of $H_* \Hom(\cS^R,\cS^R)$ in algebraic grading $-1$ is isomorphic to the more familiar module $\Ext^1_R(H_*(\cS),H_*(\cS))$. Therefore Lemma~\ref{lem:computation-ext-complex} can be restated as saying that the subspace of $\Ext^1_R(H_*(\cS),H_*(\cS))$ in $(\gr_{\ws},\gr_{\zs})$-bigrading $(-1,-1)$ is trivial. 
\end{rem}

Finally, we conclude from Lemma~\ref{lem:computation-ext-complex} that Equation~\eqref{eq:maps-null-homotopic} is satisfied, so we may conclude that
\[
{}_R \cX^0(L)^R\simeq {}_R \cY^0(L)^R.
\]

\subsubsection{Idempotent 1}

In this section, we consider the idempotent 1 subspace of the module ${}_{\cK} \cX_{\Lambda}(L)^{R}$. Our main result is the following:

\begin{prop}
\label{prop:iso-idempotent-1} There is a homotopy equivalence of type-$DA$ bimodules
\[
{}_{\bF[U,T,T^{-1}]} \cX^1(L)^{R}\simeq {}_{\bF[U,T,T^{-1}]} \cY^1(L)^{R}
\]
\end{prop}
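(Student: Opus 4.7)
The plan is to adapt the proof of Proposition~\ref{prop:step-1} to the idempotent~1 setting. The key new input is Lemma~\ref{lem:sublinks}: since $L$ is an L-space link, the component $L_2$ is itself an L-space knot, and so by Theorem~\ref{thm:free-res-L-space-knot} the complex $\cCFK(L_2)^R$ is homotopy equivalent to a staircase complex. This replaces the observation, used in the proof of Proposition~\ref{prop:step-1}, that each $A_1$-graded summand of $\cHFL(L)$ is concentrated in even Maslov bigradings.

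I would proceed in three steps. First, identify the underlying type-$D$ modules. By the twisted Floer identification in Equation~\eqref{eq:twisted-Floer-identification}, the module $\cX^1(L)^R$, which corresponds to the cube complex $\cC_{(1,0)}$ of the link surgery formula, is homotopy equivalent to $\cCFL(L_2,\ps)\otimes \bF[T,T^{-1}]$ and decomposes (as a type-$D$ module over $R$) over the $A_1$-grading $s\in \Z+\lk(L_1,L_2)/2$ as a direct sum of shifted copies of the staircase $\cCFK(L_2)^R$. By construction, $\cY^1(L)^R=\bigoplus_s \cS_s^R$ has exactly the same form. Since staircases are free resolutions of their homology (Lemma~\ref{lem:monomial-ideals-resolutions}) and the two modules have matching homology in each $A_1$-grading, the underlying type-$D$ modules are isomorphic, with the isomorphism uniquely determined up to chain homotopy by Lemma~\ref{lem:homology-to-chain}. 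Second, match the $\delta_2^1$ actions: the action of $T$ shifts between $A_1$-summands and is the unique nontrivial chain map of its bigrading, while the action of $U$ within each summand is similarly forced by its bigrading and by being $\bF[U]$-non-torsion on homology.

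The third and most delicate step is to show that the higher actions $\delta_3^1$ match up to homotopy. Working in $\bF[U]$-equivariant models as in Section~\ref{sec:U-equivariant}, we may choose $L_{U^n}=L_U^n$ strictly on both sides. Over $\bF=\Z/2$ the $A_\infty$-relation for $\delta_3^1(U^i,U^k,-)$ then reduces to $\d(\delta_3^1(U^i,U^k,-))=0$, so $\delta_3^1(U^i,U^k,-)$ is a cycle in $\Hom(\cS_s^R,\cS_s^R)$; any two such choices on the $\cX^1$ and $\cY^1$ sides differ by a cycle of Maslov bigrading $(-2(i+k)+1,-2(i+k)+1)$ and algebraic grading $-1$. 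After dividing by $U^{i+k-1}$, this is controlled by the bigrading $(-1,-1)$, algebraic grading $-1$ piece of $\Hom(\cS_s^R,\cS_s^R)$, which vanishes by Lemma~\ref{lem:computation-ext-complex}. The $T$-twisted variants $\delta_3^1(U^iT^j,U^kT^\ell,-)$ reduce to the same computation since $T$ is a unit acting by $A_1$-grading shift. I expect the main obstacle to be arranging the $\bF[U]$-equivariant reduction compatibly with the $T$-action, so that all $A_1$-summands can be handled simultaneously; this should follow from the fact that $T$ acts invertibly by grading shift, so the module structure genuinely reduces to the idempotent~0 case summand-wise.
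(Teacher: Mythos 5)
Your approach is genuinely different from the paper's, and while I believe it can be made to work, it is substantially more involved than necessary. The paper's proof is a direct computation: since $K_1$ is null-homologous in $S^3$, the local coefficients twisting the knot Floer complex of $K_2$ can be trivialized, so after this trivialization the type-$D$ module $\cX^1(L)^{\bF[U,T,T^{-1}]\otimes R}$ is literally $\cCFK(S^3,K_2)^R\otimes(\xs\xrightarrow{U+WZ}\ys)$ with no $T$-powers in $\delta^1$ at all. Tensoring with $(\ve{I}_1|\ve{I}_1)\cdot\bI^{\Supset}$ is then a completely explicit operation (the $m_3$-terms of $\bI^{\Supset}$ live only on $\sigma,\tau$, which vanish in a single idempotent), and the resulting $DA$-bimodule is exactly $\bF[T,T^{-1}]\otimes_\bF\cCFK(S^3,K_2)^R$ with only $\delta^1_1,\delta^1_2$ and nothing higher; combined with $\cCFK(K_2)$ being a staircase by Lemma~\ref{lem:sublinks} and Theorem~\ref{thm:free-res-L-space-knot}, this \emph{is} the definition of $\cY^1(L)^R$. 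Nothing needs to be matched up to homotopy; there is an on-the-nose identification.

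Your strategy reconstitutes the $\Ext$-vanishing argument from Proposition~\ref{prop:step-1} (idempotent $0$), which is overkill in idempotent $1$ precisely because of the coefficient trivialization. Your step~3 can be repaired, but as written it elides two points. First, you invoke the $\bF[U]$-equivariant construction of Section~\ref{sec:U-equivariant} to make $\delta_2^1$ strictly associative, but that construction is for the \emph{candidate} module $\cY^1$; for $\cX^1$ you instead need to note that $(\ve{I}_1|\ve{I}_1)\cdot\bI^{\Supset}$ has only $m_2$ (ordinary multiplication), so strict associativity of $\delta_2^1$ on the tensor product is automatic. Second, the assertion that the $T$-twisted terms ``reduce to the same computation'' is not quite accurate as stated: $\delta_3^1(T^j,T^\ell,-)$ has $(\gr_{\ws},\gr_{\zs})$-bigrading $(1,1)$, not $(-1,-1)$, so Lemma~\ref{lem:computation-ext-complex} does not directly apply. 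It happens that there are \emph{no} nonzero homogeneous maps $\cS^R\to\cS^R$ of algebraic degree $-1$ and bigrading $(1,1)$ (the $\gr_{\ws}$- and $\gr_{\zs}$-constraints on a target $\xs_j\otimes W^aZ^b$ are simultaneously unsatisfiable for $a,b\ge 0$), so these terms vanish identically rather than merely up to homotopy; but you should say so rather than reducing to the lemma. With these two repairs your argument goes through, though it remains a longer path than the paper's.
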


\begin{proof}
We recall how $\cX^1(L)$ is constructed. It is defined as the twisted knot Floer complex
\[
\underline{\cCFK}(S^3,K_2,w_1)^{\bF[U,T,T^{-1}]\otimes \bF[W,Z]}.
\]
Here, we have an extra free-basepoint $w_1$, which contributes the $U$ variable. Additionally, we use coefficients which are twisted by the class of $K_1$ (this contributes the $T$-variable). Since $K_1$ is null-homologous in $S^3$, we can trivialize the local coefficients and we see that there is a homotopy equivalence
\begin{equation}
\underline{\cCFK}(S^3,K_2,w_1)^{\bF[U,T,T^{-1}]\otimes \bF[W,Z]}\simeq \cCFK(S^3,K_2)^{\bF[W,Z]}\otimes (\begin{tikzcd}\xs\ar[r, "U+WZ"]& \ys \end{tikzcd}).
\label{eq:homotopy-equivalence-idempotent-1}
\end{equation}
In particular, the differential on the right hand side has no $T$-powers. (The tensor factor of $(\begin{tikzcd}\xs\ar[r, "U+WZ"]& \ys \end{tikzcd})$  is the result of adding a basepoint to $Y$; see \cite{OSLinks}*{Proposition~6.5}). 

We recall that changing from type-$D$ to type-$A$ is accomplished by tensoring with the type-$A$ module $(\ve{I}_1|\ve{I}_1)\cdot \bI^{\Supset}\iso \bF\llsquare U,T,T^{-1}\rrsquare$ described in Section~\ref{sec:tensor-product}.   It is straightforward to check that the $DA$-bimodule obtained by tensoring the right hand side of Equation~\eqref{eq:homotopy-equivalence-idempotent-1} is homotopy equivalent the bimodule
\[
\bF[T,T^{-1}]\otimes_{\bF} \cCFK(S^3,K_2)^{\bF[W,Z]},
\]
which we can also identify with $\bF[\Z+\lk(L_1,L_2)/2]\otimes_{\bF} \cCFK(S^3,K_2)^{\bF[W,Z]}$. Here, the structure map $\delta_1^1$ is the same as the structure map for $\cCFK(S^3,K_2)^{\bF[W,Z]}$, and 
\[
\delta_2^1(T^iU^j, T^s\otimes \xs)=(T^{s+i}\otimes \xs)\otimes U^j.
\]
Furthermore, $\delta_j^1=0$ for $j>2$.

Note that by Lemma~\ref{lem:sublinks}, the property of being an L-space link is closed under sublinks. Therefore $\cCFK(S^3,K_2)$ is itself a staircase complex. This is exactly the definition of ${}_{\bF[U,T,T^{-1}]}\cY^1(L)^{R}$ given in Section~\ref{sec:Y1}. 
\end{proof}

\subsubsection{The actions of $\sigma$ and $\tau$}

In this section, we consider the actions of $\sigma$ and $\tau$. It is helpful to recall that we can view the module ${}_{\cK} \cX_{\Lambda}(L)^{R}$ itself as a mapping cone
\[
{}_{\cK}\cX_{\Lambda}(L)^R\simeq \Cone(v_*^1+h_*^1\colon  {}_{\cK} \cX^0(L)^R\to {}_{\cK} \cX^1(L)^R)
\]
where we view ${}_{\cK} \cX^\veps(L)^R$ as being supported in a single idempotent, for $\veps\in \{0,1\}$. The morphism $v_*^1$ is the sum of all of the non-trivial actions on the full module ${}_{\cK} \cX_{\Lambda}(L)^R$ which have $\sigma$ as an algebra input, while $h_*^1$ encodes all actions which have $\tau$ as an input. The construction described in Section~\ref{sec:definition-L-sigma/tau} gives a specific algebraic candidate for the $DA$-module morphisms $v_*^1$ and $h_*^1$. We will write $\mathfrak{v}_*^1$ and $\mathfrak{h}_*^1$ for the maps constructed in Section~\ref{sec:definition-L-sigma/tau}. I.e. $\mathfrak{v}_2^1(\sigma,-)=L_\sigma$, and  $\mathfrak{h}_2^1(\tau,-)=L_\tau$. There are also $\mathfrak{v}_3^1$ and $\mathfrak{h}_3^1$, whose formulas coincide with the terms of $\delta_3^1$ described in Section~\ref{sec:Y1}.

\begin{prop}\label{prop:module-structure} The homotopy equivalences ${}_{R} \cX^0(L)^{R}\simeq {}_R \cY^0(L)^R$ and ${}_{\bF[U,T,T^{-1}]} \cX^1(L)^R\simeq {}_{\bF[U,T,T^{-1}]} \cY^1(L)^R$ constructed in Propositions~\ref{prop:step-1} and~\ref{prop:iso-idempotent-1} intertwine the maps $v_*^1$ and $h_*^1$ with $\mathfrak{v}_*^1$ and $\mathfrak{h}_*^1$, up to chain homotopy.
\end{prop}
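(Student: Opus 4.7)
The plan is to adapt the strategy of the proof of Proposition~\ref{prop:step-1} (the idempotent-$0$ comparison) to the new algebra inputs $\sigma$ and $\tau$. Both $v_*^1$ and $\mathfrak{v}_*^1$ are cycles in the chain complex of type-$DA$-bimodule morphisms from ${}_{R}\cY^0(L)^R$ to ${}_{\bF[U,T,T^{-1}]}\cY^1(L)^R$, where we use the equivalences of Propositions~\ref{prop:step-1} and~\ref{prop:iso-idempotent-1} to transport $v_*^1$ and $h_*^1$ onto the staircase models, and the task is to produce a null-homotopy of the difference. The argument for $h_*^1 \simeq \mathfrak{h}_*^1$ is entirely analogous under the symmetry $\sigma \leftrightarrow \tau$, $W \leftrightarrow Z$, so I will focus on the $\sigma$ case.

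At arity two, for each monomial $a\in R$ and each Alexander level $s$, the maps $v_2^1(a\sigma,-)$ and $\mathfrak{v}_2^1(a\sigma,-) = L_{a\sigma}$ are both type-$D$ morphisms $\cC_s^R \to \cS_{s+A(a)}^R$ between staircase complexes. By the grading conventions of Section~\ref{sec:Alexander-gradings} and Section~\ref{sec:definition-L-sigma/tau}, they share the same Maslov bigrading. Since $L$ is a $2$-component L-space link, each group $\cHFL(L,(s,*))$ is a rank-one free $\bF[U]$-module concentrated in even Maslov bigradings, so the space of $R$-module maps on homology with the prescribed bigrading is at most one-dimensional. Both maps induce non-zero maps on homology and hence agree there, so Lemma~\ref{lem:homology-to-chain} yields a chain homotopy between them. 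The standard assembly procedure for $A_\infty$-morphisms then lets us absorb this homotopy to align $v_2^1 = \mathfrak{v}_2^1$ identically, at the cost of modifying higher-arity terms.

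At arity three, the $A_\infty$-structure equations force each of $v_3^1(a\sigma,b,-) - \mathfrak{v}_3^1(a\sigma,b,-)$ and $v_3^1(a,b\sigma,-)$ (recall $\mathfrak{v}_3^1(a,b\sigma,-) = 0$) to be a type-$D$ chain map $\cC_s^R \to \cS_{s+A(a)+A(b)}^R$ of the prescribed Maslov bigrading that raises algebraic grading by one. A direct generalization of Lemma~\ref{lem:computation-ext-complex} from endomorphisms of a single staircase to morphisms between two distinct staircases shows any such chain map is null-homotopic: the grading constraints restrict the possible values on each $\ys$-generator of the source to a two-dimensional $\bF$-subspace of the target, and the explicit null-homotopy constructed in Lemma~\ref{lem:computation-ext-complex} adapts verbatim. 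For $n \geq 4$ we have $\mathfrak{v}_n^1 = 0$, and any residual $v_n^1$ would raise algebraic grading by $n - 2 \geq 2$; since the staircases are concentrated in algebraic gradings $\{0,1\}$, such maps vanish identically.

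The main obstacle will be the generalization of Lemma~\ref{lem:computation-ext-complex} to morphisms between distinct staircases $\cC_s^R$ and $\cS_{s+A(a)+A(b)}^R$, taking account of the $\gr_{\zs}$-grading shift by $2s + \lk(L_1,L_2)$ introduced by the $\sigma$-action. The argument requires a careful identification of the generators of both staircases in adjacent algebraic gradings and matching bigradings, which is governed by the $H$-function via Proposition~\ref{prop:GNH-function}. Once this $\Ext^1$-vanishing is in place, the chain homotopies assembled at arities two and three combine into the required null-homotopy of $v_*^1 - \mathfrak{v}_*^1$, and the symmetric argument handles $h_*^1 \simeq \mathfrak{h}_*^1$.
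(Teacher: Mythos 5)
Your approach — a direct arity-by-arity comparison of $v_*^1$, $h_*^1$ with $\mathfrak{v}_*^1$, $\mathfrak{h}_*^1$ as $DA$-bimodule morphisms — is genuinely different from the paper's route, and it stalls exactly at the obstacle you flag: the generalization of Lemma~\ref{lem:computation-ext-complex} does \emph{not} adapt verbatim. The proof of that lemma exploits that the source and target are the same staircase: the pigeonholing that forces $f(\ys_{2i+1})$ to be a combination of $\xs_{2i}\otimes W^{\a_{2i+1}}$ and $\xs_{2i+2}\otimes Z^{\b_{2i+1}}$ uses the diagonal interlacing of the $\ys$- and $\xs$-generators on a single staircase, and the explicit coboundaries $\d(J)$ constructed at the end of the argument interpolate between $\xs$- and $\ys$-generators of one and the same complex. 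For a chain map $\cC_s^R\to\cS^R$ between two potentially different staircases, shifted in $\gr_{\zs}$ by $2s+\lk(L_1,L_2)$, neither the two-target-per-$\ys$-generator counting nor the existence of those interpolating null-homotopies is automatic. In the language of Remark~\ref{rem:hom=ext}, what you would need is vanishing of a subspace of $\Ext^1_R\bigl(H_*(\cC_s),H_*(\cS)\bigr)$ in a nonzero $\gr_{\zs}$-shifted bidegree, and that is an open computation, not a restatement of Lemma~\ref{lem:computation-ext-complex}.

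The paper sidesteps this entirely. It works at the type-$D$ level, writing both $\cX_{\Lambda}(L)^{\cK\otimes R}$ and the candidate $\cY_{\Lambda}(L)^{\cK\otimes R}$ as $\Cone(v^1+h^1)$ and $\Cone(\frv^1+\frh^1)$ from idempotent $0$ to idempotent $1$. It then observes that $v^1$ and $\frv^1$ (and likewise $h^1$, $\frh^1$) are induced by chain maps of the form
\[
V^1,\frV^1\colon \cX^0(L)^{R\otimes R}\boxtimes {}_{R}[\phi^\sigma]^{\bF[U,T,T^{-1}]}\longrightarrow \cX^1(L)^{\bF[U,T,T^{-1}]\otimes R}
\]
that are evidently non-zero on homology, with both source and target homotopy equivalent to the \emph{single} complex $\cS'=\cCFK(S^3,K_2)^R\otimes(\xs\xrightarrow{\,U+WZ\,}\ys)$. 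The entire comparison therefore reduces to showing that any $(\gr_{\ws},\gr_{\zs})$-preserving endomorphism of $\cS'$ which is non-zero on homology is homotopic to the identity, and that lemma only needs Lemma~\ref{lem:computation-ext-complex} applied to the one staircase $\cS=\cCFK(K_2)$ — the $(\xs\to\ys)$ factor is peeled off by showing the off-diagonal components $f_{20}$, $f_{02}$ vanish up to homotopy. No $\Ext^1$-vanishing between distinct staircases is ever invoked. The price the paper pays is a forward reference to the quasi-invertibility of ${}_{\cK|\cK}\bI^{\Supset}$ (Corollary~\ref{cor:quasi-inverse-text}, proven in Section~\ref{sec:identity-cobordism}); your route, if the $\Ext^1$ computation were actually carried out, would avoid that dependency. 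As written, though, the central step of your proposal is asserted rather than proved.
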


We now describe the proof of Proposition~\ref{prop:module-structure}. We can give a parallel description of the type-$D$ module $\cX_{\Lambda}(L)^{\cK\otimes R}$ as a mapping cone:
\[
\cX_{\Lambda}(L)^{\cK\otimes R}= \Cone\left(v^1+h^1\colon \cX^0(L)^{\cK\otimes R}\to \cX^1(L)^{\cK\otimes R}\right).
\]
Here the superscript $\veps$ in $\cX^\veps(L)$ indicates the idempotent. Also, $\cX^0(L)$ is concentrated in idempotent 0, and can be viewed as a type-$D$ module $\cX^0(L)^{R\otimes R}.$ Similarly $\cX^1(L)$ can be viewed as a type-$D$ module $\cX^1(L)^{\bF[U,T,T^{-1}]\otimes R}$.  By construction, the map $v^1$ is induced by a homotopy equivalence
\[
V^1\colon \cX^0(L)^{R\otimes R}\boxtimes {}_{R} [\phi^\sigma]^{\bF[U,T,T^{-1}]}\to \cX^1(L)^{\bF[U,T,T^{-1}]\otimes R}.
\] 
The map $h^1$ is similarly induced by a homotopy equivalence
\[
H^1\colon \cX^0(L)^{R\otimes R}\boxtimes {}_{R} [\phi^\tau]^{\bF[U,T,T^{-1}]}\to \cX^1(L)^{\bF[U,T,T^{-1}]\otimes R}.
\]
Here, $\phi^\sigma,\phi^\tau\colon R\to \bF[U,T,T^{-1}]$ are the algebra homomorphisms described in Equation~\eqref{eq:def-phisigma-phitau}, and ${}_R[\phi^\sigma]^{\bF[U,T,T^{-1}]}$ and ${}_{R}[\phi^\tau]^{\bF[U,T,T^{-1}]}$ denote the corresponding rank 1 bimodules (see Definition~\ref{def:morphism->bimodule}).

We recall that by definition, ${}_{\cK}\cX_{\Lambda}(L)^{R}$ is obtained by tensoring the type-$D$ module $\cX_{\Lambda}(L)^{\cK\otimes R}$ with the type-$A$ module ${}_{\cK|\cK} \bI^{\Supset}$. In Corollary~\ref{cor:quasi-inverse-text}, below we will prove that the bimodule ${}_{\cK|\cK} \bI^{\Supset}$ is quasi-invertible. Therefore it follows that our candidate $DA$-bimodule ${}_{\cK} \cY_{\Lambda}(L)^{R}$ also admits a type-$D$ description as  $\cY_{\Lambda}(L)^{\cK\otimes R}$ (related to the type-$DA$ module ${}_{\cK} \cY_{\Lambda}(L)^{R}$ by tensoring with ${}_{\cK|\cK}\bI^{\Supset}$). We write $\frv^1$ and $\frh^1$ for the maps so that it takes the form of a mapping cone
\[
\cY_{\Lambda}(L)^{\cK\otimes R}=\Cone(\frv^1+\frh^1\colon \cY^0(L)^{\cK\otimes R}\to \cY^1(L)^{\cK\otimes R}).
\]
Note by construction of the surgery algebra, the maps $\frv^1$ and $\frh^1$ are induced by chain maps
\[
\frV^1\colon \cY^0(L)^{R\otimes R}\boxtimes {}_{R} [\phi^\sigma]^{\bF[U,T,T^{-1}]}\to \cY^1(L)^{\bF[U,T,T^{-1}]\otimes R}
\]
and
\[
\frH^1\colon \cY^0(L)^{R\otimes R}\boxtimes {}_{R} [\phi^\tau]^{\bF[U,T,T^{-1}]}\to \cY^1(L)^{\bF[U,T,T^{-1}]\otimes R}
\]

As in the proof of Proposition~\ref{prop:iso-idempotent-1}, $\cX^1(L)^{\bF[U,T,T^{-1}]\otimes R}$ is homotopy equivalent to the type-$D$ module
\[
\cCFK(S^3,K_2)^{R}\otimes (\begin{tikzcd}\xs\ar[r, "U+WZ"]& \ys \end{tikzcd})
\]
(Note here that we are abusing notation and writing $U\in \bF[U,T,T^{-1}]$ and $WZ\in R$ for distinct algebraic elements).
Since $K_2$ is an L-space knot, $\cCFK(S^3,K_2)$ is a staircase complex. We note that the maps $\frV^1$ and $\frH^1$ are obviously non-zero on homology. Therefore, Proposition~\ref{prop:module-structure} is a consequence of the following lemma:

\begin{lem} Let $\cS$ be a staircase complex over $R=\bF[W,Z]$, and write $\cS'$ for the complex
\[
\cS'=\begin{tikzcd}\cS \otimes \xs \ar[r, "WZ+U"]& \cS \otimes \ys\end{tikzcd}
\]
viewed as a type-$D$ module over $\bF[W,Z,U]$. Suppose that $F\colon \cS'\to \cS'$ is a $(\gr_{\ws},\gr_{\zs})$-grading preserving map which is non-zero on homology. Then $F\simeq \id$. 
\end{lem}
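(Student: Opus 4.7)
The plan is to recognize $\cS'$ as a free resolution of a simple module over $\bF[W,Z,U]$, and then reduce the hypothesis of the lemma to a computation of an ordinary $\Hom$-group in bigrading $(0,0)$.

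First I would view $\cS'$ as the mapping cone of $U+WZ \colon \cS\otimes_{\bF} \bF[U]\to \cS\otimes_{\bF}\bF[U]$ in the category of $\bF[W,Z,U]$-modules. By Lemma~\ref{lem:monomial-ideals-resolutions}, the staircase $\cS$ is a free $R$-resolution of its homology, a monomial ideal $I\subset R=\bF[W,Z]$ (possibly up to a grading shift). Extending scalars along the flat inclusion $R\hookrightarrow \bF[W,Z,U]$, the complex $\cS\otimes \bF[U]$ is a free $\bF[W,Z,U]$-resolution of $I[U]$. Since $\bF[W,Z,U]$ is a domain, $U+WZ$ is a non-zerodivisor, so it acts injectively on $I[U]$; hence the cone $\cS'$ is a free $\bF[W,Z,U]$-resolution of $I[U]/(U+WZ)I[U]\cong I$, where $U$ acts on $I$ as $WZ$.

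Next I would compute $\Hom_{\bF[W,Z,U]}(I,I)$ in bigrading $(0,0)$. Because $U$ already acts on $I$ as $WZ$, an $\bF[W,Z,U]$-linear endomorphism of $I$ is the same thing as an $R$-linear one. Standard commutative algebra (using that $I$ is a nonzero ideal of the UFD $R$, which is therefore integrally closed) gives $\Hom_R(I,I)\cong R$, acting by multiplication, and in bigrading $(0,0)$ this is $\bF$, spanned by $\id_I$. Since $\cS'$ is a free resolution of $I$, the group of bigrading-$(0,0)$ chain maps $\cS'\to \cS'$ modulo chain homotopy is canonically isomorphic to this copy of $\bF$. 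Any such map which is nonzero on homology must represent the nonzero class, namely $\id$, and therefore $F\simeq \id$.

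I do not anticipate any genuine obstacle; the only point requiring care is verifying that $U+WZ$ acts injectively on $I[U]$ so that $\cS'$ really resolves the claimed module, but this is immediate from $\bF[W,Z,U]$ being a domain. Everything else is formal homological algebra together with the elementary computation of the endomorphism ring of a nonzero ideal in a normal domain.
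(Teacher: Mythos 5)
The identification of $\cS'$ as a free $\bF[W,Z,U]$-resolution of the ideal $I=H_*(\cS)$ (with $U$ acting as $WZ$), and the computation $\Hom_{\bF[W,Z,U]}(I,I)=\Hom_{R}(I,I)\cong R$ via normality of $R=\bF[W,Z]$, are both correct, and the normality argument is a pleasant alternative to the paper's more hands-on identification of the endomorphism ring. The gap is in the sentence ``Since $\cS'$ is a free resolution of $I$, the group of bigrading-$(0,0)$ chain maps $\cS'\to\cS'$ modulo chain homotopy is canonically isomorphic to this copy of $\bF$.'' That would be automatic if one were restricting to chain maps that preserve the \emph{homological} (algebraic) degree of the resolution, but the hypothesis only asks that $F$ preserve the Maslov bigrading, and $\cS'$ carries no intrinsic homological grading: its algebraic degrees $0,1,2$ are not encoded by $(\gr_{\ws},\gr_{\zs})$. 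A bigrading-preserving endomorphism can therefore have components shifting the algebraic grading by $\pm 2$ (the shift-by-$\pm1$ components are ruled out by Maslov parity). The component raising algebraic degree by $2$ (the paper's $f_{20}$) is forced to vanish because $F$ is a chain map and $\d_{\cS}$ is injective, but the component lowering the algebraic degree by $2$ (the paper's $f_{02}$) represents a class in $\Ext^{2}_{\bF[W,Z,U]}(I,I)$ in Maslov bidegree $(0,0)$, and nothing in the free-resolution formalism forces this to vanish. Indeed, if it did not vanish, the lemma itself would fail: adding such a nontrivial class to $F$ would produce a second bigrading-preserving chain map nonzero on homology, not homotopic to the identity.

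By the change of rings along $\bF[W,Z,U]\twoheadrightarrow R$ (Koszul resolution by $U+WZ$) and the fact that $I$ has projective dimension $1$ over $R$, one gets $\Ext^{2}_{\bF[W,Z,U]}(I,I)_{(0,0)}\cong \Ext^{1}_{R}(I,I)_{(-1,-1)}$, and showing this group vanishes is exactly the content of Lemma~\ref{lem:computation-ext-complex} in the paper. That lemma is a concrete computation relying on the specific shape of staircase complexes; it is not an instance of normality or of the fundamental theorem of projective resolutions. So what you dismissed as ``formal homological algebra'' is in fact the heart of the argument, and your proof as written does not establish the lemma. To repair it, you would need to explicitly address the length-$2$ issue: observe the parity constraint so that only the algebraic-degree-$\pm2$ components can appear, rule out the raising component by the chain-map/injectivity argument, and then invoke (or reprove) the vanishing of $\Ext^{1}_{R}(H_*(\cS),H_*(\cS))$ in bidegree $(-1,-1)$ to kill the lowering component.
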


\begin{proof}
Write $\cS'$ as the tensor product of $\cS$ with a complex 
\[
\cC=\begin{tikzcd}[column sep=2cm] \bF[W,Z,U]\otimes\xs\ar[r, "WZ+U"]&  \bF[W,Z,U]\otimes\ys\end{tikzcd}.
\]
 We can write $\cS'$ as
\[
\begin{tikzcd} [labels=description ]
\cS'_2 \arrow[r,"d_2"]& \cS_1'\arrow[r, "d_1"]& \cS_0',
\end{tikzcd}
\]
where we give $\xs$ algebraic grading 1, $\ys$ algebraic grading 0, so that the grading $\cS'$ is the sum of the algebraic gradings from $\cS$ and $\begin{tikzcd} \xs \ar[r, "U+WZ"] & \ys\end{tikzcd}$.

Write $\d_{\cS}$ for the differential on $S$, and write $\d_{\cC}$ for the differential on $\cC$. We consider a $(\gr_{\ws},\gr_{\zs})$-grading preserving chain map $f$ on $\cS'$. This map must take the following form:
\[
\begin{tikzcd}[labels=description, row sep=1.5cm, column sep=1.5cm] \cS' \ar[d, "f"] \\ \cS'
\end{tikzcd}
=
\begin{tikzcd}[labels=description, row sep=1.5cm, column sep=1.5cm]
\cS_2'
	\ar[r, "d_2"]
	\ar[d, "f_2"]
	 &
\cS_1'
	\ar[r, "d_1"]
	\ar[d,  "f_1"]
&
\cS_0'
	\ar[d, "f_0"]
	\ar[dll, "f_{20}", pos=.3, bend right =15,shorten=0.1cm,crossing over]
\\
 \cS_2'
 \ar[r, "d_2"]
 &
 \cS_1'
 	\ar[r, "d_1"]
 & \cS_0'
 	\ar[from = ull, bend right=15, "f_{02}", pos=.7,shorten=0.1cm,crossing over]
\end{tikzcd}
\]
We consider the component of $\d_{\cS'}\circ f+f\circ \d_{\cS'}$ starting at $\cS_0\otimes \ys$ and ending at $\cS_0\otimes \xs$. This component is equal to $\d_{\cS}\circ f_{20}$. Since $f$ is a chain map, this is zero. Since $\d_{\cS}$ is injective, we conclude that $f_{20}=0$.

Next, we observe that the map $f_{02}\colon \cS_2'\to \cS_0'$ can be viewed as a map from $\cS_1$ to $\cS_0$ which drops the bigrading by $(-1,-1)$. We can view the map $f_{02}$ therefore as determining an endomorphism of $\cS$ (supported only on $\cS_1$), which drops the $(\gr_{\ws},\gr_{\zs})$-bigrading by $(-1,-1)$. This map is clearly a chain map.  Up to chain homotopy, such chain maps are described by the Maslov grading $(-1,-1)$ subspace of $\Ext^1_{R}(\cS,\cS)$, which we saw in Lemma~\ref{lem:computation-ext-complex} vanishes.

If $h\colon \cS\to \cS$ is a null-homotopy of $f_{02}$, viewed as an endomorphism of $\cS$, then we can use $h$ to build a null-homotopy of $f_{02}$ as an endomorphism of $\cS'$, as follows. Let $h_{\veps}\colon \cS_{\veps}\to \cS_\veps$, denote the restriction of $h$, for $\veps\in \{0,1\}$. Then we define a null-homotopy $H$ of $f_{02}$ on $\cS'$ as in the following diagram (the gray arrows denote the differential on $\cS'$)
\[H=
\begin{tikzcd}[labels=description]
& \cS_1\otimes \ys	\ar[dr, "\d_S",gray]
\\
\cS_1\otimes \xs \ar[ur, bend left, "h_1"] \ar[ur, "U+WZ",gray] \ar[dr,"\d_{\cS}",gray]
 && \cS_0\otimes \ys\\
& \cS_0\otimes \xs \ar[ur, "U+WZ",gray] \ar[ur, "h_0", bend right]
\end{tikzcd}
\]
 Therefore we may take $f_{20}=f_{02}=0$. We observe now that $\cS'$ is a free resolution of its homology. Since the homology of $\cS'$ coincides with that of $\cS$, which is a monomial ideal in $\bF[W,Z]$, and $f$ is grading preserving and non-zero on homology, we conclude that $f$ is the identity on homology.  Since we have shown that $f$ preserves the algebraic grading, by standard homological algebra we conclude that $f$ is homotopic to the identity.
\end{proof}

\subsection{Remarks about $n$-component links}
\label{sec:remarks}
We now make some remarks about $n$-component links. The techniques used in the previous section do not generalize easily to $n$-component links. There are two natural strategies to generalize the proofs in the previous section to the case that $n>2$:
\begin{enumerate}
\item Via $\cCFL(L)$ as a type-$DA$ bimodule ${}_R \cCFL(L)^{R_{n-1}}$ and attempt a Koszul duality argument as in Equation~\eqref{eq:H-cube}.
\item Via $\cCFL(L)$ as a  type-$DA$ bimodule ${}_{R_{n-1}} \cCFL(L)^{R}$ and perform a similar Koszul duality argument, using the external tensor product of $n-1$ copies of ${}^R\Lambda^R$.
\end{enumerate}

Both strategies would be of an inductive nature. In the first case, we could write the underlying type-$D$ structure of ${}_R \cCFL(L)^{R_{n-1}}$ as a direct sum 
\[
\bigoplus_{s_1\in \frac{\lk(K_1,L\setminus K_1)}{2}+\Z} \cCFL(L, \{s_1\}\times \Q^{n-1})^{R_{n-1}}.
\]
Assuming (e.g. by an inductive argument) that one could show $\cCFL(L,\{s_1\}\times \Q^{n-1})^{R_{n-1}}$ is formal, it does not seem that the even the maps $\delta_2^1(W,-)$ and $\delta_2^1(Z,-)$ would be algebraically determined up to chain homotopy. When $n=2$ (as in the previous sections), we repeatedly used the fact that a morphism between two staircase complexes which was a chain map and preserved the mod 2 grading was uniquely determined by the induced map on homology. For general formal type-$D$ structures over $R_{n-1}$ for $n>2$, this is not the case. We illustrate this with the link Floer complex of $T(2,4).$ This takes the form shown below:
\[
\cCFL(T(2,4))=\begin{tikzcd}[row sep =.3cm, column sep=2cm,labels=description,ampersand replacement=\&]
\&\&\xs_0
\\
\&\ve{a}_1
	\ar[ur, "W_2"]
	\ar[ddr, "Z_1"]
\\
\&\ve{b}_1
	\ar[uur,crossing over, "W_1"]
	\ar[dr, "Z_2"]
\\
\ve{e}_2
	\ar[ruu, "W_1"]
	\ar[ru, "W_2"]
	\ar[dr, "Z_2"]
	\ar[ddr, "Z_1"]
\&\&\ve{y}_0
\\
\&\ve{c}_1
	\ar[ur, "W_2"]
	\ar[ddr, "Z_1"]
\\
\&\ve{d}_1
	\ar[uur,crossing over, "W_1"]
	\ar[dr, "Z_2"]
\\
\&\&\ve{z}_0
\end{tikzcd}
\]
Note that the endomorphism $\phi$ which sends $\ve{e}_2$ to $\ve{y}_0$ and vanishes on other generators preserves the $\gr_{\ws}$ and Alexander gradings. Further $\phi$ is a chain map and is not null-homotopic, and therefore represents an element of $\Ext^2_{R_2}(\cCFL(L), \cCFL(L))$ of $\gr_{\ws}$ and Alexander gradings 0.

We now consider the second strategy, where one could analyze the $DA$-bimodule ${}_{R_{n-1}} \cCFL(L)^R$. In this case, the underlying type-$D$ module would decompose (up to chain homotopy) as a direct sum of staircase complexes. One could attempt a Koszul duality argument similar to Equation~\eqref{eq:H-cube}, however one would need to tensor with $n-1$ copies of the dualizing bimodule ${}^R \Lambda^R$. This would entail considering a large number of higher homotopies, such as $\delta_{6}^1(W_1,W_2,W_3,W_4,Z_5,-)$. It is not clear what a suitable analog of Lemma~\ref{lem:computation-ext-complex} would be.

\section{Examples of bimodules}
\label{sec:Examples of bimodules}

In this section, we use the techniques of Section~\ref{sec:2-component-L-space} to compute the bimodules ${}_{\cK} \cX(L)^{R}$ for several L-space links. We will consider the torus links $T(2,2q)$, several cables of the Hopf link, the Whitehead link, and a family of Mazur links.

We remind the reader that we normalize the multi-variable Alexander polynomial as the graded Euler characteristic of $\HFL^-(L)=H_* \left(\cCFL(L)/(Z_1=Z_2=\cdots=Z_n=0)\right)$. Therefore, we consider normalizations of the Alexander polynomial which satisfy
\[
\Delta_L(t_1^{-1},t_2^{-1})=t_1^{-1}t_2^{-1} \Delta_L(t_1,t_2).
\]
Equivalently, we can write
\[
\Delta_L(t_1,t_2)=t_1^{1/2}t_2^{1/2} \delta_L(t_1,t_2),
\]
where $\delta_L(t_1,t_2)$ satisfies $\delta_L(t_1^{-1},t_2^{-1})=\delta_L(t_1,t_2)$.

\subsection{The torus link $T(2,2q)$}

\label{sec:T(2,2q)}

In this section, we compute the $DA$-bimodule
\[
{}_{\cK} \cX_{(0,0)}(T_{2,2q})^{\bF[W,Z]}.
\]
The link $T_{2,2q}$ is algebraic and therefore an L-space link by \cite{GorskyNemethiAlgebraicLinks}*{Theorem~2}. 

We first compute the multivariate Alexander polynomial. This is well-known to be
\[
\Delta_{T_{2,2q}}(t_1,t_2)=t_1^{1/2}t_2^{1/2}\left((t_1t_2)^{(q-1)/2}+(t_1t_2)^{(q-3)/2}+\cdots+(t_1t_2)^{-(q-1)/2}\right).
\]
One procedure for computing the above is to first view $T(2,2q)$ as being the $(q,1)$-cable of the Hopf link, which has multivariable Alexander polynomial equal to $(t_1t_2)^{1/2}$. Then we apply a well-known cabling formula for the multivariable Alexander polynomial \cite{Turaev_torsion}*{Theorem~1.3}.

\begin{rem}
The mirror of $T_{2,2q}$ is not an L-space link. See \cite{GorskyNemethiAlgebraicLinks}*{Figure~4}. On the other hand, if we let $T'_{2,2q}$ denote the link $T_{2,2q}$ with the orientation of one strand reversed, then $T'_{2,2q}$ is an L-space link, since Dehn surgery is independent of string orientation. Note that the multi-variable Alexander polynomial is not preserved by string orientation reversal. 
To compute ${}_{\cK} \cX_{(0,0)}(T_{2,2q}')^{\bF[W,Z]}$, we can either apply the same strategy as for $T_{2,2q}$, or we can observe that reversing the orientation of a component has the effect of tensoring with the elliptic bimodule ${}_{\cK} [E]^{\cK}$, so that
\[
{}_{\cK} \cX_{(0,0)}(T_{2,2q}')^{\bF[W,Z]}={}_{\cK} [E]^{\cK} \boxtimes {}_{\cK} \cX_{(0,0)}(T_{2,2q})^{\bF[W,Z]}.
\]

See Section \ref{sec:identity-cobordism} for a more detailed discussion of the elliptic bimodule ${}_{\cK} [E]^{\cK}$.
\end{rem}

Returning to the link $T_{2,2q}$, we may use Gorsky and N\'{e}methi's formula \cite{GorskyNemethiAlgebraicLinks}*{Theorem~2.10} to compute $H_{T_{2,2q}}$. We remind the reader that their formula states that if $L$ be an oriented L-space link in $S^3$, then
\[
H_L(\ve{s})=\sum_{L'\subset L} (-1)^{|L'|-1} \sum_{\substack{\ve{s}'\in \bH(L') \\ \ve{s}'\ge \pi_{L,L'}(\ve{s}+\ve{1})}} \chi(\HFL^-(L',\ve{s}')).
\]
We first illustrate the $H$ function of the link $T_{2,6}$ as well as the bimodule ${}_{\cK} \cX_{(0,0)}(T_{2,6})^R$ in Figure~\ref{fig:T226-diagram-H}. The general case of $T_{2,2q}$ is illustrated in Figure~\ref{fig:T22q-complex}.

\begin{figure}[h]
\begin{tabular}{m{4cm}p{8cm}}
 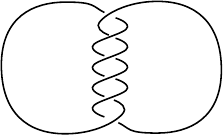
&$ \begin{array}{|c|ccccccccc|}
\hline
\hbox{\diagbox{$s_2$}{$s_1$}}&\cdots&\sfrac{-5}{2}&\sfrac{-3}{2}&\sfrac{-1}{2}&\sfrac{1}{2}&\sfrac{3}{2}&\sfrac{5}{2}&\cdots&\\
\hline	\vdots&\ddots&4&3&2&1&0&0&\reflectbox{$\ddots$}&\\
	\sfrac{3}{2}&\cdots&4&3 &2&1&\boxed{0} & \boxed{0}&\cdots& \\
	\sfrac{1}{2}&\cdots&4&3 &2&\boxed{1}&1 &1&\cdots&\\
	\sfrac{-1}{2}&\cdots&4&3 &\boxed{2}&2&2& 2&\cdots&\\
	\sfrac{-3}{2}&\cdots&\boxed{4}&\boxed{3} &3&3&3&3&\cdots&\\
	\vdots&\reflectbox{$\ddots$}&5&4 &4&4&4&4&\ddots &\,\\
\hline
\end{array}
$
\end{tabular}
\[
\begin{tikzcd}[labels=description, column sep=1.2cm, row sep=0cm]
\cdots
	\ar[r, bend left, "Z|U"]
&\xs_{-\frac{5}{2}}
	\ar[r, bend left, "Z|U"]
	\ar[l, bend left, "W|1"]
	\ar[d, "\substack{\sigma|W^3U \\ \tau|1}"]
& \xs_{-\frac{3}{2}}
	\ar[r, bend left, "Z|W"]
	\ar[l, bend left, "W|1"]
	\ar[d, "\substack{\sigma|W^3 \\ \tau|1}"]
&\xs_{-\frac{1}{2}}
	\ar[r, "Z|W", bend left]
	\ar[l, "W|Z", bend left]
	\ar[d, "\substack{\sigma|W^2\\ \tau|Z}"]
&\xs_{\frac{1}{2}}
	\ar[r, bend left, "Z|W"]
	\ar[l, bend left, "W|Z"]
	\ar[d, "\substack{\sigma|W \\ \tau|Z^2}"]
&\xs_{\frac{3}{2}}
	\ar[r, bend left, "Z|1"]
	\ar[l, bend left,"W|Z"]
	\ar[d, "\substack{\sigma|1\\ \tau|Z^3}"]
&\xs_{\frac{5}{2}}
	\ar[r, bend left, "Z|1"]
	\ar[l, bend left,"W|U"]
	\ar[d,"\substack{\sigma|1\\ \tau|UZ^3}"]
& \cdots
	\ar[l, bend left,"W|U"]
\\[2cm]
\cdots
	\ar[r, bend left, "T|1"]
&T^{-\frac{5}{2}}
	\ar[r, bend left, "T|1"]
	\ar[l, bend left, "T^{-1}|1"]
	\ar[loop below,looseness=20, "U|U"]
& T^{-\frac{3}{2}}
	\ar[r, bend left, "T|1"]
	\ar[l, bend left, "T^{-1}|1"]
	\ar[loop below,looseness=20, "U|U"]
& T^{-\frac{1}{2}}
	\ar[r, "T|1", bend left]
	\ar[l, "T^{-1}|1", bend left]
	\ar[loop below,looseness=20, "U|U"]
&T^{\frac{1}{2}}
	\ar[r, bend left, "T|1"]
	\ar[l, bend left,"T^{-1}|1"]
	\ar[loop below,looseness=20, "U|U"]
&T^{\frac{3}{2}}
	\ar[r, bend left, "T|1"]
	\ar[l, bend left,"T^{-1}|1"]
	\ar[loop below,looseness=20, "U|U"]
& T^{\frac{5}{2}}
	\ar[l, bend left,"T^{-1}|1"]
	\ar[loop below,looseness=20, "U|U"]
	\ar[r, "T|1", bend left]
&\cdots 
	\ar[l, "T^{-1}|1", bend left]
\end{tikzcd}
\]
\caption{Top right: the function $H_{T_{2,6}}$. The boxes indicate the generators of type~\eqref{Y-0} from Section~\ref{sec:Y0}. On the bottom is the bimodule ${}_{\cK} \cX_{(0,0)}(T_{2,6})^{\bF[W,Z]}$. The $\sigma$ and $\tau$ arrows to the left and right of the region shown are weighted by either $1$, $W^3 U^i$ or $Z^3 U^i$ }
\label{fig:T226-diagram-H}
\end{figure}

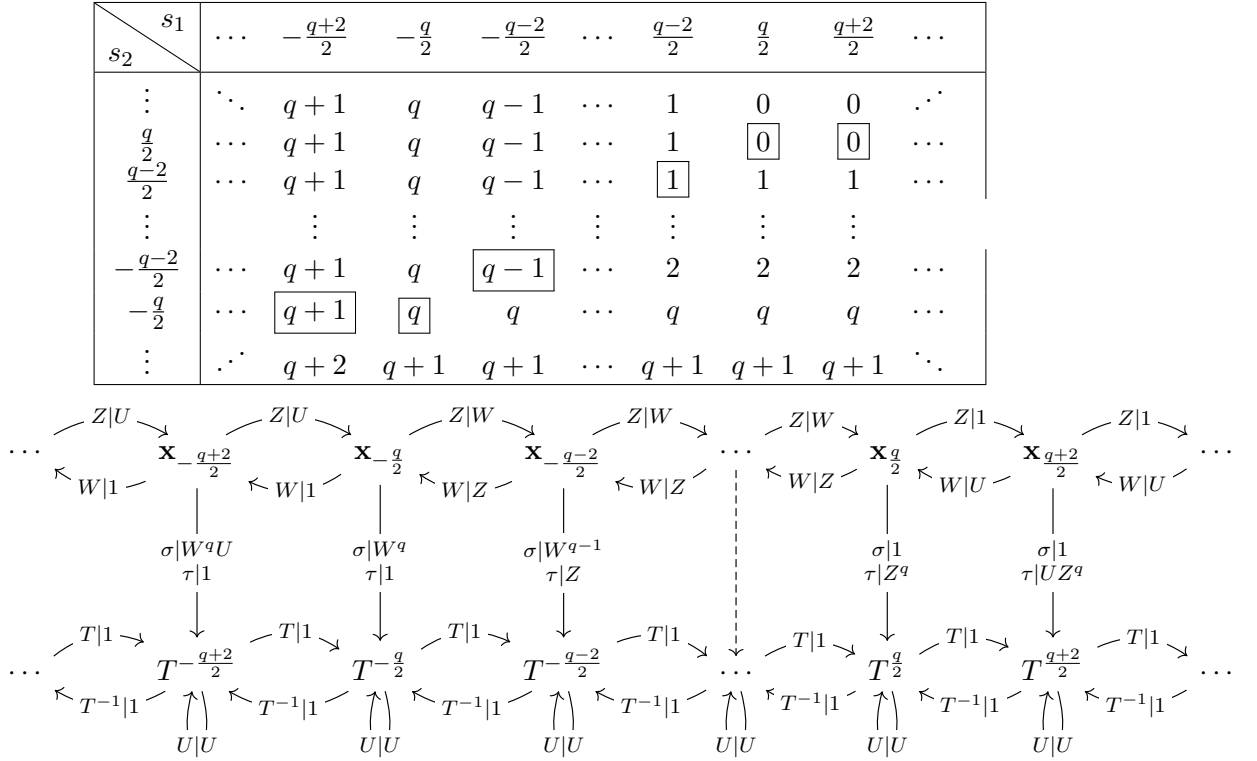
\begin{figure}[h]
\[
\begin{array}{|c|cccccccccc|}
\hline
\hbox{\diagbox{$s_2$}{$s_1$}}&\cdots&-\tfrac{q+2}{2}&-\tfrac{q}{2}&-\tfrac{q-2}{2}&\cdots &\tfrac{q-2}{2}&\tfrac{q}{2}&\tfrac{q+2}{2}&\cdots&\\
\hline	\vdots&\ddots&q+1&q&q-1&\cdots&1&0&0&\reflectbox{$\ddots$}&\\
	\tfrac{q}{2}&\cdots&q+1&q &q-1&\cdots&1&\boxed{0} & \boxed{0}&\cdots& \\
	\tfrac{q-2}{2}&\cdots&q+1&q &q-1&\cdots&\boxed{1}&1 &1&\cdots&\\
\vdots&&\vdots&\vdots & \vdots& \vdots & \vdots &\vdots&\vdots
\\
	-\tfrac{q-2}{2}&\cdots&q+1&q &\boxed{q-1}&\cdots&2&2& 2&\cdots&\\
	-\tfrac{q}{2}&\cdots&\boxed{q+1}&\boxed{q} &q&\cdots&q&q&q&\cdots&\\
	\vdots&\reflectbox{$\ddots$}&q+2&q+1 &q+1&\cdots&q+1&q+1&q+1&\ddots &\,\\
\hline
\end{array}
\]
\[
\begin{tikzcd}[labels=description, column sep=1.2cm, row sep=0cm]
\cdots
	\ar[r, bend left, "Z|U"]
&\xs_{-\frac{q+2}{2}}
	\ar[r, bend left, "Z|U"]
	\ar[l, bend left, "W|1"] 
	\ar[d, "\substack{\sigma|W^qU \\ \tau|1}"]
& \xs_{-\frac{q}{2}}
	\ar[r, bend left, "Z|W"]
	\ar[l, bend left, "W|1"]
	\ar[d, "\substack{\sigma|W^q \\ \tau|1}"]
&\xs_{-\frac{q-2}{2}}
	\ar[r, "Z|W", bend left]
	\ar[l, "W|Z", bend left]
	\ar[d, "\substack{\sigma|W^{q-1}\\ \tau|Z}"]
&\cdots
	\ar[r, bend left, "Z|W"]
	\ar[l, bend left, "W|Z"]
	\ar[d,dashed]
&\xs_{\frac{q}{2}}
	\ar[r, bend left, "Z|1"]
	\ar[l, bend left,"W|Z"]
	\ar[d, "\substack{\sigma|1\\ \tau|Z^q}"]
&\xs_{\frac{q+2}{2}}
	\ar[r, bend left, "Z|1"]
	\ar[l, bend left,"W|U"]
	\ar[d,"\substack{\sigma|1\\ \tau|UZ^q}"]
& \cdots
	\ar[l, bend left,"W|U"]
\\[2cm]
\cdots
	\ar[r, bend left, "T|1"]
&T^{-\frac{q+2}{2}}
	\ar[r, bend left, "T|1"]
	\ar[l, bend left, "T^{-1}|1"]
	\ar[loop below,looseness=20, "U|U"]
& T^{-\frac{q}{2}}
	\ar[r, bend left, "T|1"]
	\ar[l, bend left, "T^{-1}|1"]
	\ar[loop below,looseness=20, "U|U"]
& T^{-\frac{q-2}{2}}
	\ar[r, "T|1", bend left]
	\ar[l, "T^{-1}|1", bend left]
	\ar[loop below,looseness=20, "U|U"]
&\cdots
	\ar[r, bend left, "T|1"]
	\ar[l, bend left,"T^{-1}|1"]
	\ar[loop below,looseness=20, "U|U"]
&T^{\frac{q}{2}}
	\ar[r, bend left, "T|1"]
	\ar[l, bend left,"T^{-1}|1"]
	\ar[loop below,looseness=20, "U|U"]
& T^{\frac{q+2}{2}}
	\ar[l, bend left,"T^{-1}|1"]
	\ar[loop below,looseness=20, "U|U"]
	\ar[r, "T|1", bend left]
&\cdots 
	\ar[l, "T^{-1}|1", bend left]
\end{tikzcd}
\]
\caption{The $H$-function and $DA$-bimodule for $T_{2,2q}$.}
\label{fig:T22q-complex}
\end{figure}

\subsection{The Whitehead link}

\label{sec:Whitehead double}
We now compute the $DA$-module of the (positively clasped) Whitehead link $W^+$, shown in Figure~\ref{fig:7}. The Whitehead link is an L-space link \cite{LiuLSpaceLinks}*{Example~3.1}.  Note however that the mirror of the Whitehead link (the negatively clasped Whitehead link) is not an L-space link \cite{Liu2Bridge}*{Proposition~6.9} (see also \cite{GLM_dinvariant}*{Theorem~5.1} or \cite{Santoro_Whitehead}*{Theorem~1.1}).

The Alexander polynomial of the Whitehead link is given by
\[
\Delta_{W^+}(t_1,t_2)=-(t_1-1)(t_2-1)
\]
Using this, we can compute the function $H_{W^+}$. Using the construction from Section~\ref{sec:2-component-L-space}, we compute the bimodule ${}_{\cK} \cX(W^+)^{\bF[W,Z]}$. These are shown in Figure~\ref{fig:7}.

\begin{figure}[h]
\begin{tabular}{m{4cm}p{8cm}}
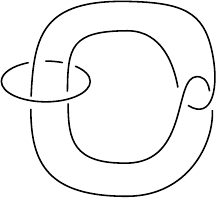
&$ \begin{array}{|c|cccccccc|}
\hline
\hbox{\diagbox{$s_2$}{$s_1$}}&\cdots&-2&-1&0&1&2&\cdots&\\
\hline	\vdots&\ddots&2&1&0&0&0&\reflectbox{$\ddots$}&\\
	1&\cdots&2&1 &\boxed{0}&0&0 & \cdots& \\
	0&\cdots&\boxed{2}&\boxed{1} &1&\boxed{0}&\boxed{0} &\cdots&\\
	-1&\cdots&3&2 &\boxed{1}&1&1& \cdots&\\
	\vdots&\reflectbox{$\ddots$}&4&3 &2&2&2&\ddots &\,\\
\hline
\end{array}
$
\end{tabular}
\[
\begin{tikzcd}[labels=description, column sep=1.4cm]
&&& \xs_0
	\ar[dl, bend left, "W|Z",pos=0.7]
	\ar[dr, "Z|Z", bend left]
	\ar[ddd,bend left=20, "\substack{\sigma|Z\\ +\tau|Z}",pos=.7]
	\ar[d,bend left =80, "{(W,Z)|Z}"]
&&
\\[.5cm]
\cdots
	\ar[r,bend left, "Z|U"]
& \xs_{-2}
	\ar[r, bend left, "Z|U"]
	\ar[l, bend left, "W|1"]
	\ar[dd,"\substack{\sigma|U^2\\ +\tau|1}"]
&
\xs_{-1}
	\ar[ur, bend left, "Z|W"]
	\ar[l, "W|1", bend left]
	\ar[dd, "\substack{\sigma|U \\ +\tau|1 }"]
&
\ys_0
	\ar[u,gray, "W"]
	\ar[d,gray, "Z"]
&
\xs_1
	\ar[r, bend left, "Z|1"]
	\ar[dl, "W|Z", bend left]
	\ar[dd, "\substack{\sigma|1 \\ +\tau|U}"]
&
\xs_2
	\ar[l, bend left, "W|U"]
	\ar[r,bend left, "Z|1"]
	\ar[dd, "\substack{\sigma|1\\+\tau|U^2}"]
&
\cdots
	\ar[l,bend left, "W|U"]
\\[.5cm]
&&&
\zs_0
	\ar[ul, "W|W", bend left]
	\ar[ur, "Z|W", bend left,crossing over,pos = 0.75]
	\ar[d,bend right, "\substack{\sigma|W\\ +\tau|W}"]
	\ar[u,bend left =70, "{(Z,W)|W}"]
&&
\\[1.5cm]
\cdots
	\ar[r, bend left, "T|1"]
&T^{-2}
	\ar[r, bend left, "T|1"]
	\ar[l, bend left, "T^{-1}|1"]
	\ar[loop below,looseness=20, "U|U"]
& T^{-1}
	\ar[r, bend left, "T|1"]
	\ar[l, bend left, "T^{-1}|1"]
	\ar[loop below,looseness=20, "U|U"]
&T^{0}
	\ar[r, "T|1", bend left]
	\ar[l, "T^{-1}|1", bend left]
	\ar[loop below,looseness=20, "U|U"]
&T^{1}
	\ar[r, bend left, "T|1"]
	\ar[l, bend left,"T^{-1}|1"]
	\ar[loop below,looseness=20, "U|U"]
&T^{2}
	\ar[r, bend left, "T|1"]
	\ar[l, bend left,"T^{-1}|1"]
	\ar[loop below,looseness=20, "U|U"]
&\cdots 
	\ar[l, bend left,"T^{-1}|1"]
\end{tikzcd}
\]
\caption{The $H$-function and $DA$-bimodule of the positively clasped Whitehead link $W^+$. }
\label{fig:7}
\end{figure}

In Figure~\ref{fig:7}, we record only $\delta^1_1$, $L_W=\delta_2^1(W,-)$, $L_Z = \delta_2^1(Z,-)$ and $h_{W,Z}=\delta_{3}^1(W,Z,-)$ and $h_{Z,W}=\delta_3^1(Z,W,-)$. 
The remaining actions are determined via the formulas in Section~\ref{sec:U-equivariant}.

\subsection{Mazur links}
The Mazur link is shown in Figure~\ref{fig:Mazur}. This is an L-space link by \cite{LiuLSpaceLinks}*{Theorem~3.8}. (See Section~\ref{sec:family-Mazur} below for more details).  The Alexander polynomial of the Mazur link $M$ is given by
\[
\Delta_{M}(t_1,t_2) = t_1^{1/2}t_2^{1/2} ( t_1^{-1}+t_2^{-1}+t_1+t_2-t_1^{-1}t_2^{-1}-1-t_1t_2).
\]
The resulting $H$-function and $DA$-bimodule are shown in Figure~\ref{fig:Mazur}. 

\begin{figure}[h]
\begin{tabular}{m{4cm}p{8cm}}
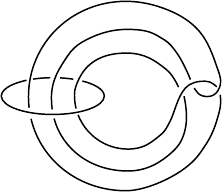
&$ \begin{array}{|c|ccccccccc|}
\hline
\hbox{\diagbox{$s_2$}{$s_1$}}&\cdots&\sfrac{-5}{2}&\sfrac{-3}{2}&\sfrac{-1}{2}&\sfrac{1}{2}&\sfrac{3}{2}&\sfrac{5}{2}&\cdots&\\
\hline	\vdots&\ddots&3&2&1&0&0&0&\reflectbox{$\ddots$}&\\
	\sfrac{3}{2}&\cdots&3&2&1 &\boxed{0}&0&0 & \cdots& \\
	\sfrac{1}{2}&\cdots&3&2&\boxed{1} &1&\boxed{0}&\boxed{0} &\cdots&\\
	\sfrac{-1}{2}&\cdots&\boxed{3}&\boxed{2}&2 &\boxed{1}&1&1& \cdots&\\
        \sfrac{-3}{2}&\cdots&4&3&\boxed{2} &2&2&2& \cdots&\\
	\vdots&\reflectbox{$\ddots$}&5&4&3 &3&3&3&\ddots &\,\\
\hline
\end{array}
$
\end{tabular}
\[
\begin{tikzcd}[labels=description, column sep=1.5cm]
&&& \xs_{\frac{1}{2}}
	\ar[dl, bend left, "W|Z",pos=0.65]	
        \ar[dr, bend left, "Z|Z"]	
	\ar[dddd,bend left=20, "\substack{\sigma|Z\\ +\tau|Z^2}",pos=.7]
	\ar[d,bend left =80, "{(W,Z)|Z}"]
&&
\\[.5cm]
&&
\xs_{-\frac{1}{2}}
	\ar[dl, bend left, "W|Z",pos = 0.75]
 \ar[ur, "Z|W", bend left,pos=0.7 ]
	\ar[ddd, bend left=30, "\substack{\sigma|U \\ +\tau|Z }",pos=.7]
&
\ys_{\frac{1}{2}}
	\ar[u,gray, "W"]
	\ar[d, gray, "Z"]
	\arrow[dl, bend left, "W|Z",pos = 0.65,crossing over]
&
\xs_{\frac{3}{2}}
	\ar[r, bend left, "Z|1"]
	\ar[dl, "W|Z",bend left,pos = 0.6]
	\ar[ddd, "\substack{\sigma|1 \\ +\tau|UZ}"]
&
\xs_{\frac{5}{2}}
	\ar[l, bend left, "W|U"]
	\ar[r,bend left, "Z|1"]
	\ar[ddd, "\substack{\sigma|1\\+\tau|U^2 Z}"] 
&
 \cdots
 \ar[l,bend left, "W|U"]
\\[.5cm]
\cdots
\ar[r,bend left, "Z|U"]
& 
\xs_{-\frac{3}{2}}
	\ar[ur, bend left, "Z|W"]
	\ar[l, bend left, "W|1"]
	\ar[dd,"\substack{\sigma|UW\\ +\tau|1}"]
&
\ys_{-\frac{1}{2}}
	\ar[u,gray, "W"]
	\ar[d, gray, "Z"]
 \ar[ur, "Z|W",bend left, crossing over,pos = 0.7]
&
\zs_{\frac{1}{2}}
	\ar[dl, "W|Z", bend left, crossing over, pos = 0.65]
	\ar[ur, "Z|W", bend left,crossing over,pos = 0.7]
	\ar[dd,bend right=15, "\substack{\sigma|W\\ +\tau|U}"]
&&
\\[.5cm]
&&
\zs_{-\frac{1}{2}}
	\ar[ul, "W|W", bend left]
	\ar[ur, "Z|W", bend left,crossing over,pos = 0.7]
	\ar[d,bend right, "\substack{\sigma|W^2\\ +\tau|W}"]
	\arrow[u,bend left =70, "{(Z,W)|W}"]
&&&
\\[1.5cm]
\cdots
	\ar[r, bend left, "T|1"]
&T^{-\frac{3}{2}}
	\ar[r, bend left, "T|1"]
	\ar[l, bend left, "T^{-1}|1"]
	\ar[loop below,looseness=20, "U|U"]
& T^{-\frac{1}{2}}
	\ar[r, bend left, "T|1"]
	\ar[l, bend left, "T^{-1}|1"]
	\ar[loop below,looseness=20, "U|U"]
&T^{\frac{1}{2}}
	\ar[r, "T|1", bend left]
	\ar[l, "T^{-1}|1", bend left]
	\ar[loop below,looseness=20, "U|U"]
&T^{\frac{3}{2}}
	\ar[r, bend left, "T|1"]
	\ar[l, bend left,"T^{-1}|1"]
	\ar[loop below,looseness=20, "U|U"]
&T^{\frac{5}{2}}
	\ar[r, bend left, "T|1"]
	\ar[l, bend left,"T^{-1}|1"]
	\ar[loop below,looseness=20, "U|U"]
&\cdots 
	\ar[l, bend left,"T^{-1}|1"]
\end{tikzcd}
\]
\caption{The $H$-function and $DA$-bimodule of the Mazur link $M$. }
\label{fig:Mazur}
\end{figure}

Similar as before, in the above diagram we record only $\delta^1_1$, as well as $L_W$, $L_Z$. We also have
\[
h_{Z,W}(\zs_{-\frac{1}{2}}) = \ys_{-\frac{1}{2}}\otimes W,\quad h_{W,Z}(\xs_{\frac{1}{2}} )= \ys_{\frac{1}{2}}\otimes Z. 
\]
Using the construction in Section~\ref{sec:U-equivariant}, we obtain the full $DA$-bimodule structure.

\subsection{A family of generalized Mazur patterns}
\label{sec:family-Mazur}
The Mazur link and the Whitehead link belong to a family of two-bridge links $K(6m+2,3)$ for $m\geq 1,$ where taking $m=1$ gives us the Whitehead link and taking $m=2$ gives us the Mazur link. See Figure \ref{fig:family}. Here we follow the convention in \cite{rationalknotskl} for the slope of a two-bridge link; in particular $T_{2,3}$ is $K(3,1).$ Note that the opposite convention is also used in the literature e.g. in \cite{LiuLSpaceLinks} the link $b(p,q)$ is what we call $K(p,-q).$ By taking $r=2m+1$ and $q=3$ in \cite{LiuLSpaceLinks}*{Theorem 3.8} it follows that links $K(6m+2,3)$ for $m\geq 1$ are all  L-space.  It follows from \cite{Hostetwobridge}*{Algorithm 3} that the Alexander polynomial of a link in this family is given by
\[
\Delta_{K(6m+2,3)}(t_1,t_2)= (t_1t_2)^{(1-m)/2}\Bigl(\sum^{m-1}_{i=0} (t_1 t_2)^i (t_1 + t_2) - \sum^m_{i=0}(t_1 t_2)^i\Bigr)
\]
Therefore we can compute the $H$-function and $DA$-bimodule associated to any link of this family. We carry out the computation for the case $m=3$ as shown in Figure~\ref{fig:K(20,3)}.

\begin{figure}[h]
\begin{tikzpicture}
 \node at (-3,2) {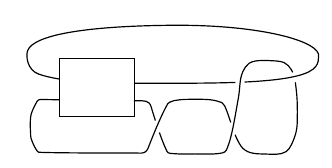};
 \node at (4,2) {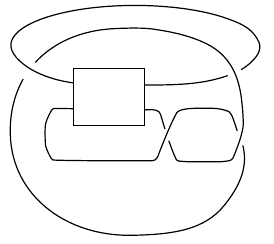};
  \node at (-3,-2.5) {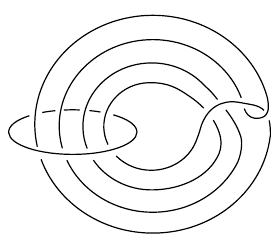};
       \node at (4,-3) {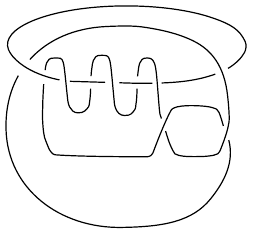};
       \node at (0.7,1.5) {\Large $\sim$};
       \node[rotate=90] at (4,-0.5) {\Large $\sim$};
        \node at (0.6,-2.5) {\Large $\sim$};
   \node at (-4.2,1.9) {$m$};
   \node at (3.5,2.4) {$m$};
    \draw [decorate,decoration={brace,amplitude=4pt,mirror},xshift=0cm,yshift=0pt]
      (-2.6,-1.1) -- (-2.6,-1.8) node [midway,yshift=0.1cm,xshift=-0.5cm] {$m$};
\end{tikzpicture}
\caption{The two-bridge link $K(6m+2,3)$. The number is the box indicates the number of right-handed full-twists. The case when $m=3$ is depicted.}
\label{fig:family}
\end{figure}

\begin{figure}[h]
\begin{tabular}{m{5cm}p{8cm}}
\begin{tikzpicture}
    \
    \node at (0,0) {\input{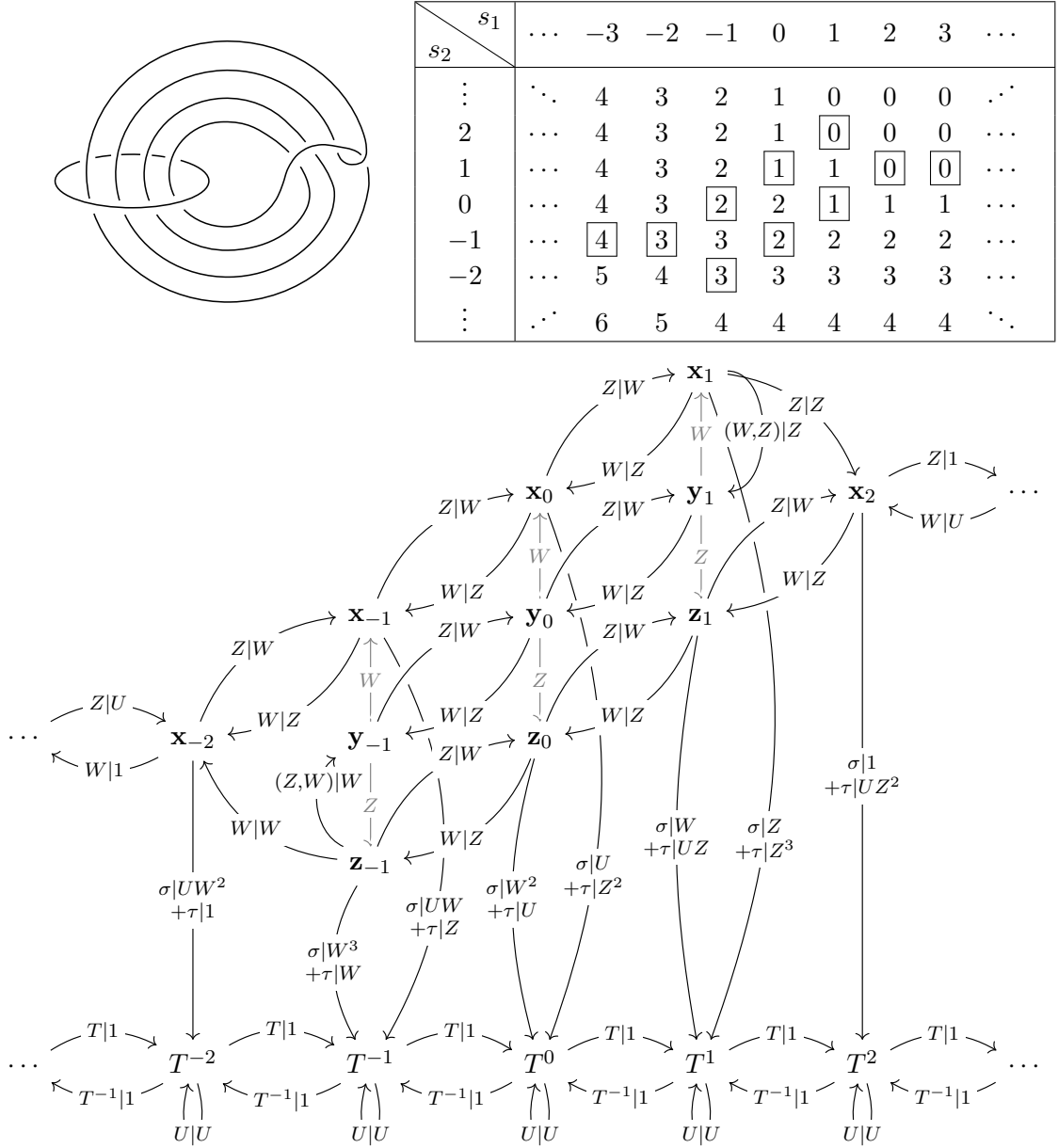}};
\end{tikzpicture}
&
$ \begin{array}{|c|cccccccccc|}
\hline
\hbox{\diagbox{$s_2$}{$s_1$}}&\cdots&-3&-2&-1&0&1&2&3&\cdots&\\
\hline	\vdots&\ddots&4&3&2&1&0&0&0&\reflectbox{$\ddots$}&\\
	2&\cdots&4&3&2&1 &\boxed{0}&0&0 & \cdots& \\
	1&\cdots&4&3&2&\boxed{1} &1&\boxed{0}&\boxed{0} &\cdots&\\
	0&\cdots&4&3&\boxed{2}&2 &\boxed{1}&1&1& \cdots&\\
        -1&\cdots&\boxed{4}&\boxed{3}&3&\boxed{2} &2&2&2& \cdots&\\
         -2&\cdots&5&4&\boxed{3}&3 &3&3&3& \cdots&\\
	\vdots&\reflectbox{$\ddots$}&6&5&4&4 &4&4&4&\ddots &\,\\
\hline
\end{array}
$
\end{tabular}
\[
\begin{tikzcd}[labels=description, column sep=1.5cm]
&&&& \xs_1
\ar[dl, bend left, "W|Z",pos =0.65]	
        \ar[dr, bend left, "Z|Z"]	
	\ar[ddddd,bend left=20, "\substack{\sigma|Z\\ +\tau|Z^3}",pos=.7]
	\arrow[d,bend left =90, "{(W,Z)|Z}"]
& 
\\[.5cm]
&&& \xs_0
	\ar[ur, bend left, "Z|W",pos = 0.7]
	\ar[dl, "W|Z",bend left,pos= 0.65]
	\ar[dddd,bend left=20, "\substack{\sigma|U\\ +\tau|Z^2}",pos=.7]
&
\ys_1
	\ar[u,gray, "W"]
	\ar[d, gray, "Z"]
	\arrow[dl, bend left, pos = 0.65, "W|Z",crossing over]
&
\xs_2
	\ar[dl, bend left, "W|Z",crossing over]
	\ar[r,bend left, "Z|1"]
	\ar[dddd, "\substack{\sigma|1\\+\tau|U Z^2}"] 
 &
  \cdots
   \ar[l,bend left, "W|U"]
\\[.5cm]
&&
\xs_{-1}
	\ar[dl, bend left, "W|Z",pos = 0.7]
 \ar[ur, "Z|W", bend left,pos=0.7 ]
	\ar[ddd, bend left=30, "\substack{\sigma|UW \\ +\tau|Z }",pos=.7]
&
\ys_0
	\ar[u,gray, "W"]
	\ar[d, gray, "Z"]
	\ar[dl, bend left, pos =0.65, "W|Z",crossing over]
 \ar[ur, "Z|W",bend left, crossing over,pos=0.7]
&
\zs_1
	\ar[ur, bend left, "Z|W", crossing over,pos =0.75]
	\ar[dl, "W|Z", bend left,pos=0.65, crossing over]
	\ar[ddd, bend right=10, "\substack{\sigma|W \\ +\tau|UZ}"]
&&
\\[.5cm]
\cdots
\ar[r,bend left, "Z|U"]
& 
\xs_{-2}
	\ar[ur, bend left, "Z|W"]
	\ar[l, bend left, "W|1"]
	\ar[dd,"\substack{\sigma|UW^2\\ +\tau|1}"]
&
\ys_{-1}
	\ar[u,gray, "W"]
	\ar[d, gray, "Z"]
 \ar[ur, "Z|W",bend left , crossing over,pos =0.7]
&
\zs_0
	\ar[dl, "W|Z", bend left,pos=.65, crossing over]
	\ar[ur, "Z|W", bend left,pos=0.7,crossing over]
	\ar[dd,bend right=15, "\substack{\sigma|W^2\\ +\tau|U}"]
&&
\\[.5cm]
&&
\zs_{-1}
	\ar[ul, "W|W", bend left]
	\ar[ur, "Z|W", bend left,pos =0.7,crossing over]
	\ar[d,bend right, "\substack{\sigma|W^3\\ +\tau|W}"]
	\ar[u, bend left =60, "{(Z,W)|W}", pos=.7]
&&&
\\[1.5cm]
\cdots
	\ar[r, bend left, "T|1"]
&T^{-2}
	\ar[r, bend left, "T|1"]
	\ar[l, bend left, "T^{-1}|1"]
	\ar[loop below,looseness=20, "U|U"]
& T^{-1}
	\ar[r, bend left, "T|1"]
	\ar[l, bend left, "T^{-1}|1"]
	\ar[loop below,looseness=20, "U|U"]
&T^{0}
	\ar[r, "T|1", bend left]
	\ar[l, "T^{-1}|1", bend left]
	\ar[loop below,looseness=20, "U|U"]
&T^{1}
	\ar[r, bend left, "T|1"]
	\ar[l, bend left,"T^{-1}|1"]
	\ar[loop below,looseness=20, "U|U"]
&T^{2}
	\ar[r, bend left, "T|1"]
	\ar[l, bend left,"T^{-1}|1"]
	\ar[loop below,looseness=20, "U|U"]
&\cdots 
	\ar[l, bend left,"T^{-1}|1"]
\end{tikzcd}
\]
\caption{The $H$-function and $DA$-bimodule of  $K(20,3)$.}
\label{fig:K(20,3)}
\end{figure}
 In the diagram of the $DA$-bimodule (bottom diagram of Figure \ref{fig:K(20,3)}) we record only $\delta^1$, $L_W $, $L_Z$, $h_{W,Z}$ and $h_{Z,W}$.  The rest of the $\delta_2^1$ and $\delta_3^1$ terms are determined as in Section~ \ref{sec:U-equivariant}.

\subsection{General cabling operations}
\label{sec:general cabling}

Section \ref{sec:T(2,2q)} discusses the case when $L_P=T_{2,2q}$. This link can also be viewed as the $(q,1)$-cable of the Hopf link (where we cable just one component). In general, we will write $C_{p,q}$ for the two component link obtained by taking the $(p,q)$-cable of the Hopf link. Using earlier notation, we think of $P$ as being the $(p,q)$ torus knot inside of the solid torus.

If $p,q>0$ and $\gcd(p,q)=1$, the link $C_{p,q}$ will be an L-space link since $C_{p,q}$ is an algebraic link:
\[
C_{p,q}= \left\{(w,z)\mid z(w^p+z^q)=0, w,z\in \mathbb{C}\right\} \cap S^3.
\]
By \cite{GorskyNemethiAlgebraicLinks}, all algebraic links are L-space links.

We will typically abuse notation and write $C_{p,q}$ for both the two-component link and the satellite operator.

Note that 
\[
C_{p,q}(K,n)=C_{p,q+pn}(K,0)\quad C_{-p,-q}(K,n)=C_{p,q}(rK,n)
\]
where $rK$ denotes $K$ with its string orientation reversed. Therefore for the sake of computing the cabling operators, it is sufficient to restrict to positive $p$ and $q$.

Using \cite{Turaev_torsion}*{Theorem~1.3}   the normalized multivariable Alexander polynomial of $L_P$ is 
\[
\Delta_{C_{p,q}}(t_1,t_2)=
t_1^{-\frac{p}{2}+1}t_2^{\frac{-pq+q+1}{2}} \left(\frac{t_1^p t_2^{pq}-1}{t_1 t_2^q-1}\right)
\]
where $t_1$ represents the meridian component, and $t_2$ represents the $(p,q)$-torus knot component.

We can perform a similar procedure as before to obtain the $H$-function of $C_{p,q}$. From the $H$-function, we can compute the $DA$-bimodule structure of ${}_{\cK}\cX(C_{p,q})^{\bF[W,Z]}$.

The main difference between the general case with the situation of $(q,1)$-torus knot as in Section \ref{sec:T(2,2q)} is now $\cS$ represents the knot Floer complex of the $(p,q)$-torus knot, which is now a non-trivial staircase, and there will usually be non-trivial $\delta_3^1(\sigma,W,-), \delta_3^1(\sigma,Z,-)$, $\delta_3^1(\tau, W,-)$, $\delta_3^1(\tau,Z,-)$, etc.

In Figure~\ref{fig:(3,2)_torus} we compute the $H$-function and $DA$-bimodule of the link $C_{3,2}$. Therein, $\cS$ denotes the complex of the trefoil $T_{2,3}$:
\[
\cS=(\begin{tikzcd} \xs & \ys \ar[l, "W",swap] \ar[r, "Z"]& \zs\end{tikzcd}).
\]
The generators labeled $\xs_{\pm \frac{1}{2}}^0$ denote single generators. To explain the maps which are not given explicit formulas in the diagram, we expand the center most region below.

 We also do not label the $\delta_3^1$ terms, though they are easily computed. The non-trivial components of the maps $h_{W,Z}$, $h_{Z,W}$, $h_{\sigma,Z}$, $h_{\sigma, W}$, $h_{\tau,Z}$ and $h_{\tau,W}$ are shown below. Therein, we write $\xs^{\veps}_{s}$ for a copy of $\xs\in \cS$, concentrated in Alexander grading $s$ and idempotent $\veps\in \{0,1\}$, likewise for $\ys^{\veps}_{s} $ and $ \zs^{\veps}_{s}$. The remaining $\delta_3^1$ actions are defined using the construction from Section~\ref{sec:U-equivariant}.
\begin{align*}
	&\delta_3^1( W,Z,\zs^0_{-\frac{3}{2}}) = \ys^0_{-\frac{3}{2}} \otimes W, &\delta_3^1( Z,W, \xs^0_{\frac{3}{2}}) = \ys^0_{\frac{3}{2}} \otimes Z, \\
	&\delta_3^1(\tau, Z,\zs^0_{-\frac{3}{2}}) = \ys^1_{-\frac{1}{2}} \otimes W, &\delta_3^1(\sigma, W,\xs^0_{\frac{3}{2}}) = \ys^1_{\frac{1}{2}} \otimes Z,\\
	&\delta_3^1(\sigma,Z,\xs^0_{-\frac{3}{2}}) = \ys^1_{-\frac{1}{2}} \otimes W^2, &\delta_3^1(\tau, W,\zs^0_{\frac{3}{2}}) = \ys^1_{\frac{1}{2}}\otimes Z^2,\\
	&\delta_3^1(\sigma,W,\xs^0_{-\frac{1}{2}}) = \ys^1_{-\frac{3}{2}}\otimes W^2, &\delta_3^1(\tau, Z, \xs^0_{\frac{1}{2}}) = \ys^{1}_{\frac{3}{2}}\otimes Z^2.
\end{align*}

\begin{figure}[p]
	\begin{tabular}{m{5cm}p{8cm}}
		\begin{tikzpicture}
			\
			\node at (0,0) {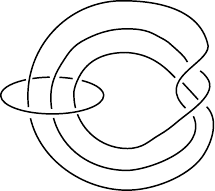};
		\end{tikzpicture}
		&
		$\begin{array}{|c|ccccccccc|}
			\hline
			\hbox{\diagbox{$s_2$}{$s_1$}}&\cdots&\sfrac{-5}{2}&\sfrac{-3}{2}&\sfrac{-1}{2}&\sfrac{1}{2}&\sfrac{3}{2}&\sfrac{5}{2}&\cdots&\\
			\hline	\vdots&\ddots&4&3&2&1&0&0&\reflectbox{$\ddots$}&\\
			\sfrac{5}{2}&\cdots&4&3 &2&1&\boxed{0} & \boxed{0}&\cdots& \\
			\sfrac{3}{2}&\cdots&4&3 &2&1&1 &1&\cdots&\\
			\sfrac{1}{2}&\cdots&4&3 &2&\boxed{1}&\boxed{1}& \boxed{1}&\cdots&\\
			\sfrac{-1}{2}&\cdots&\boxed{4}&\boxed{3} &\boxed{2}&2&2&2&\cdots&\\
			\sfrac{-3}{2}&\cdots&5&4 &3&3&3&3&\cdots&\\
			\sfrac{-5}{2}&\cdots&\boxed{5}&\boxed{4} &4&4&4&4&\cdots&\\
			\vdots&\reflectbox{$\ddots$}&6&5 &5&5&5&5&\ddots &\,\\
			\hline
		\end{array}
		$
	\end{tabular}
\begin{tikzcd}[labels=description, column sep=1.6cm, row sep=-.5cm]
\cdots
&[-1.7cm]\cS
	\ar[r, bend left, "Z|U"]
	\ar[d,"\substack{\sigma|UW^3\\ +\tau|1}"]
& \cS
	\ar[r, bend left, "Z|L_Z"]
	\ar[l, bend left, "W|1"]
	\ar[d,"\substack{\sigma|W^3\\ +\tau|1}"]
&\xs_{-\frac{1}{2}}^0
	\ar[r, bend left, "Z|W"]
	\ar[l, bend left, "W|L_W"]
	\ar[d, "\substack{\sigma|L_\sigma\\ +\tau|L_\tau}"]
& \xs_{\frac{1}{2}}^0
	\ar[r, bend left, "Z|L_Z"]
	\ar[l, bend left, "W|Z"]
	\ar[d, "\substack{\sigma|L_\sigma\\ +\tau|L_\tau}"]
& \cS
	\ar[r, bend left, "Z|1"]
	\ar[l, bend left, "W|L_W"]
	\ar[d, "\substack{\sigma|1\\ +\tau|Z^3}"]
& \cS
	\ar[l, bend left, "W|U"]
	\ar[d, "\substack{\sigma|1\\ +\tau|UZ^3}"]
&[-1.7cm] \cdots
\\[2cm]
\cdots
& \cS
	\ar[r, bend left, "T|1"]
	\ar[loop below,looseness=20, "U|U"]
& \cS
	\ar[r, bend left, "T|1"]
	\ar[l, bend left, "T^{-1}|1"]
	\ar[loop below,looseness=20, "U|U"]
&\cS
	\ar[r, bend left, "T|1"]
	\ar[l, bend left, "T^{-1}|1"]
	\ar[loop below,looseness=20, "U|U"]
&\cS
	\ar[r, bend left, "T|1"]
	\ar[l, bend left, "T^{-1}|1"]
	\ar[loop below,looseness=20, "U|U"]
&\cS
	\ar[l, bend left, "T^{-1}|1"]
	\ar[loop below,looseness=20, "U|U"]
		\ar[r, bend left, "T|1"]
& \cS
	\ar[l, bend left, "T^{-1}|1"]
	\ar[loop below,looseness=20, "U|U"]
&\cdots 
\end{tikzcd}
 \[
 	 \adjustbox{scale=.9}{
\begin{tikzcd}[labels=description, column sep=1.3cm, row sep=0cm]
 \cS
	\ar[r, bend left, "Z|L_Z"]
&\xs_{-\frac{1}{2}}^0
	\ar[r, bend left, "Z|W"]
	\ar[l, bend left, "W|L_W"]
	\ar[d, "\substack{\sigma|L_\sigma\\ +\tau|L_\tau}"]
& \xs_{\frac{1}{2}}^0
	\ar[r, bend left, "Z|L_Z"]
	\ar[l, bend left, "W|Z"]
	\ar[d, "\substack{\sigma|L_\sigma\\ +\tau|L_\tau}"]
& \cS
	\ar[l, bend left, "W|L_W"]
\\[2cm]
\,
&\cS
	\ar[r, bend left, "T|1"]
	\ar[loop below,looseness=20, "U|U"]
&\cS
	\ar[l, bend left, "T^{-1}|1"]
	\ar[loop below,looseness=20, "U|U"]
\end{tikzcd}
\hspace{-.5cm}
=
 	 	\begin{tikzcd}[labels={description},column sep=1.2cm, row sep=.6cm,shorten = -.5mm]
 \phantom{\cdots} 
 \xs^0_{-\frac{3}{2}} 
 	\arrow[dr, bend left, "Z|U" ]
 	\arrow[from = dr, bend left=40, "W|1"]
 & \phantom{\xs_{-\frac{1}{2}}}
  &[2mm] \phantom{\xs_{\frac{1}{2}}}
 & \xs^0_{\frac{3}{2}}
 &
  \\
  \ys^0_{-\frac{3}{2}}
 	\arrow[u,gray, "W"]
 	\arrow[d,gray,"Z"] 
 &
 \xs^0_{-\frac{1}{2}}
 	\arrow[r, bend left, "Z|W"]
 	\arrow[from =r, bend left, "W|Z"]
 	\arrow[dd, "\tau|1"]
 	\arrow[dddd,pos=.4, "\sigma|W", bend right =20]
 &
 \xs^0_{\frac{1}{2}}
 	\arrow[from = ur, bend right, "W|Z^2"]
 	\arrow[dd,  "\tau|Z"]
 	\arrow[dddd,pos=.4, bend left=20, "\sigma|1"]
 &
 \ys^0_{\frac{3}{2}} \arrow[u,gray, "W"]
 	\arrow[d,gray,"Z"]
 &
 \\
  \zs^0_{-\frac{3}{2}}
 	\arrow[ur, bend right, pos = 0.4,"Z|W^2"]
 	&
 \phantom{\xs_{-\frac{1}{2}}} 
 & \phantom{\xs_{\frac{1}{2}}}
 & 
 \zs^0_{\frac{3}{2}} 
 	\arrow[from = ul, bend left, "Z|1",pos = 0.3]
 	 \arrow[ul, bend left, "W|U"] 
 \\[4mm]	 	  
 \,
 & 
 \xs^1_{-\frac{1}{2}} 
 	\ar[r, bend left=20, "T"]
 &
 \xs^1_{\frac{1}{2}}
 	\ar[l,bend left=20, "T^{-1}"]
 \\
\,
 & 
 \ys^1_{-\frac{1}{2}}  
 	\arrow[u,gray, "W"] 
 	\arrow[d,gray,"Z"]  
	 \ar[r, bend left=20, "T"]
 &  
 \ys^1_{\frac{1}{2}} 
 	\arrow[u,gray, "W"] 
 	\arrow[d,gray,"Z"]  
 	\ar[l,bend left=20, "T^{-1}"]
 \\
 & \zs^1_{-\frac{1}{2}}
 	\ar[r, bend left=20, "T"]  
 &  \zs^1_{\frac{1}{2}}
 	\ar[l,bend left=20, "T^{-1}"]
 	 	\end{tikzcd}
 }
 \]
 \caption{The $H$-function and $DA$-bimodule of the link $C_{3,2}$ (the $(3,2)$-cable of the Hopf link). Here, $\cS$ denotes the the staircase complex of $T_{2,3}$.  There are also maps $h_{W,Z}$, $h_{Z,W}$, $h_{\sigma,Z}$, $h_{\sigma,W}$, $h_{\tau,Z}$ and $h_{\tau,W}$ which are not shown, but are described in Section~\ref{sec:general cabling}}
 \label{fig:(3,2)_torus}
\end{figure}

\subsection{More general L-space patterns}

\label{sec: More general L space patterns}
We now describe some additional families of L-space links which one can use to obtain L-space satellite operators. 

\begin{lem}
\label{lem:L-space-link-constructor-lemma}
Suppose that $L$ is a 2-component link with two components $L=J\cup \mu$, where $\mu$ is an unknot (if we ignore $J$). Let $J'$ be obtained by inserting a negative twist along $\mu$ to the knot $J$. Then $L$ is an L-space link if both $J$ and $J'$ are L-space knots.  
	\label{lem:L space links}
\end{lem}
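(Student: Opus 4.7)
The plan is to verify the definition of L-space link directly, i.e.\ to show that $S^3_{(N,n)}(L)$ is an L-space for all integer framings $(N,n)\gg 0$. The argument proceeds by induction on $n$, using the Ozsv\'ath--Szab\'o surgery exact triangle applied to the unknotted component $\mu$. The essential geometric input is the Rolfsen twist: for the sign convention under which $+1$-surgery on $\mu$ realizes the negative twist on $J$ through the disk bounded by $\mu$, performing $+1$-surgery on the pair $(S^3,L)$ returns $S^3$ with $J$ replaced by $J'$, while transforming the framing $N$ on $J$ into $N-\ell^2$, where $\ell:=\lk(J,\mu)$. This identifies
\[
S^3_{(N,1)}(L)\iso S^3_{N-\ell^2}(J'),
\]
which is an L-space for $N$ sufficiently large (relative to $\ell^2$), by the hypothesis that $J'$ is an L-space knot. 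This serves as the base case of the induction.

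For the inductive step, suppose $S^3_{(N,n)}(L)$ is an L-space for some $n\ge 1$, with $N\gg 0$. Applying the surgery exact triangle to $\mu\subset S^3_N(J)$ with slopes $\{n,n+1,\infty\}$ yields
\[
\HFhat(S^3_{(N,n)}(L))\to \HFhat(S^3_{(N,n+1)}(L))\to \HFhat(S^3_N(J)),
\]
where the third term arises because $\infty$-surgery on $\mu$ recovers $S^3_N(J)$. The first and third terms are L-spaces, by the inductive hypothesis and the assumption that $J$ is an L-space knot, respectively. Subadditivity of rank from the exact triangle then gives
\[
\rk\HFhat(S^3_{(N,n+1)}(L))\le |Nn-\ell^2|+N.
\]
A linking-matrix computation yields $|H_1(S^3_{(N,n)}(L))|=|Nn-\ell^2|$, and the identity $N(n+1)-\ell^2=(Nn-\ell^2)+N$ (valid for $Nn>\ell^2$) converts this bound into an equality on $|H_1|$. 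Combined with the general inequality $\rk\HFhat(Y)\ge|H_1(Y)|$ for rational homology spheres, this forces $\rk\HFhat(S^3_{(N,n+1)}(L))=|H_1(S^3_{(N,n+1)}(L))|$, so $S^3_{(N,n+1)}(L)$ is an L-space. Iterating gives the L-space property for all $n\ge 1$ and $N>\ell^2$, which suffices to verify the definition of L-space link.

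The argument is structurally straightforward, and I expect the principal obstacle to be a presentational one: correctly fixing the sign convention so that $+1$-surgery on $\mu$ (as opposed to $-1$-surgery) realizes precisely the negative twist defining $J'$, together with the resulting framing transformation $N\mapsto N-\ell^2$ on $J$. Once this sign is locked in, no further Floer-theoretic input beyond the surgery exact triangle and the characterization ``L-space $\iff\rk\HFhat=|H_1|$'' is needed, and the induction closes cleanly.
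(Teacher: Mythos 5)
Your proof is correct and takes the same essential route as the paper: blow down the $+1$-framed $\mu$ to identify $(N,1)$-surgery on $L$ with $(N-\ell^2)$-surgery on $J'$, which is an L-space for $N\gg 0$. Where the paper then simply invokes \cite{LiuLSpaceLinks}*{Lemma~2.6} to conclude, you have unpacked that citation into a self-contained argument, reproving what Liu's lemma delivers in this two-component setting by a surgery-exact-triangle induction on the framing of $\mu$, closing each step with the bound $\rk\HFhat(Y)\ge|H_1(Y)|$; this is fine, and the sign convention you fix ($+1$-surgery on $\mu$ realizing the negative twist, with framing shift $N\mapsto N-\ell^2$) matches the paper's.
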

\begin{proof}
We consider $\Lambda = (1,n)$ framed surgery on $L$ for some $n \gg 0$, i.e. we give $J$ framing $n$ and the meridian $\mu$ framing 1.  Note that a $\Lambda$-framed surgery on $L$ is homeomorphic to a $(n-\lk(J,\mu)^2)$-framed surgery on $J'$ by blowing down $\mu$. 	Then \cite{LiuLSpaceLinks}*{Lemma~2.6} applies and shows that $L$ is an L-space link. 
\end{proof}
\begin{rem}
	The converse of Lemma~\ref{lem:L-space-link-constructor-lemma} does not need to hold: If $L$ is an L-space link, then $J$ must be an L-space knot by Lemma~\ref{lem:sublinks}, but $J'$ does not need to be an L-space knot.
\end{rem}

In general, let $J$ be a knot in $S^3$, and $\mu$ be an unknot in the complement of $J$. Let $J_m$ denote the knot obtained by inserting $m$ positive twists to $J$ along $\mu$. If $m$ is negative, then $J_m$ is obtained by inserting $|m|$ negative twists to $J$ along $\mu$. Suppose there exists some integer $m$ such that both $J_{m-1}$ and $J_{m}$ are L-space knots. Then Lemma~\ref{lem:L-space-link-constructor-lemma} shows that the link $L=J_m \cup \mu$ is an L-space link. Therefore our technique can compute the $DA$-bimodule ${}_{\cK} \cX_{(0,0)}(L)^{R}$. See \cite{motegi2016space} for more examples of knots $J$ satisfying this property.

If we view $J_m$ now as being a knot in the solid torus complementary to $\mu$, we note that the $(n-m)$-framed satellite operation with pattern $J_m$ produces the same knot as the $n$-framed satellite operation with pattern $J$. Therefore our techniques can compute satellite operators constructed from patterns as above.

\begin{rem}
	In particular, \cite{motegi2016space}*{Theorem~8.1} gives an infinite family $\{J_m\}_{m\in \Z}$ of L-space knots with tunnel number two, each obtained by $m$ twists to a knot $J$ along an unknot $\mu$. By Lemma~\ref{lem:L-space-link-constructor-lemma}, the links $\mu \cup J_m$ are L-space links for all $m\in \Z$. On the other hand, if $P$ is a $(1,1)$ pattern, then $P(U,0)$ (the underlying knot of $P$ included into $S^3$) is a $(1,1)$-knot. Since any $(1,1)$-pattern knot has tunnel number $1$, this provides examples of L-space patterns which are not $(1,1)$-patterns.
	
	Conversely, there are also many $(1,1)$-pattern $P$ such that $L_P$ is not an L-space link. For example, the mirror  of the positive Whitehead pattern in Section \ref{sec:Whitehead double} is given by a $(1,1)$-pattern, but the corresponding $L_P$ (the negatively clasped Whitehead link) is not an L-space link. 
\end{rem}

\section{The identity cobordism and the elliptic involution}

\label{sec:identity-cobordism}

In this section, we compute the $DA$-bimodule for the mapping cylinder of the identity map $\bT^2\times [0,1]$ and the mapping cylinder of the elliptic involution. We recall that the elliptic involution of the torus is the map $z\mapsto -z$, if we identity $\bT^2=\R^2/\Z^2$.

We presently introduce our candidate, which we call the \emph{elliptic bimodule}, ${}_{\cK}[E]^{\cK}$. This module is rank 1 over the idempotent ring $\ve{I}$ and has structure map given by
\[
\delta_2^1(a,1)=1\otimes E(a)
\]
where $E\colon \cK\to \cK$ is the algebra morphism satisfying 
\[
E(W)=Z, \quad E(Z)=W,\quad E(\sigma)=\tau,\quad E(\tau)=\sigma,\quad E(T)=T^{-1},\quad \text{and}\quad  E(U)=U. 
\]

Recall that the identity module ${}_{\cK} [\bI]^{\cK}$ is also a rank 1 bimodule, but has structure map
\[
\delta_2^1(a,1)=1\otimes a.
\]

The main theorem of this section is the following (stated as Theorem~\ref{thm:mapping-cylinder-identity-intro} in the introduction):
\begin{thm}
\label{thm:identity-cobordism} There are homotopy equivalences
\[
{}_{\cK} \cX(\Cyl_{\bI})^{\cK}\simeq {}_{\cK} [\bI]^{\cK} \quad \text{and} \quad {}_{\cK} \cX(\Cyl_E)^{\cK}\simeq {}_{\cK}[E]^{\cK}.
\] 
\end{thm}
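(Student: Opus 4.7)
The plan is to realize $\cX(\Cyl_{\bI})^{\cK \otimes \cK}$ via an explicit small model built from a simple Heegaard diagram for $\bT^2 \times [0,1]$, and then to check that tensoring with the pairing module ${}_{\cK|\cK} \bI^{\Supset}$ from Section~\ref{sec:tensor-product} collapses this to the rank-one identity bimodule ${}_{\cK}[\bI]^{\cK}$. The elliptic case is then handled by composing this computation with the geometric elliptic involution, which descends to the algebra automorphism $E$ on $\cK$.

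First I would write down the type-$D$ module $\cX(\Cyl_{\bI})^{\cK \otimes \cK}$ directly from the construction in \cite{ZemExact}. Generators lie only in the matching idempotents $(\ve{I}_0,\ve{I}_0)$ and $(\ve{I}_1,\ve{I}_1)$, and in each matching idempotent they can be indexed by a single Alexander grading, because the two boundary parametrizations of the cylinder are identified. In a sufficiently small diagram for $\bT^2 \times [0,1]$, the holomorphic disk counts that define the structure map $\delta^1$ reduce to combinatorics of the defining relations of $\cK$ listed in Section~\ref{sec:background-surgery-algebra}, yielding essentially a "diagonal" model in $\cK\otimes\cK$.

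Second I would compute the tensor product $\cX(\Cyl_{\bI})^{\cK\otimes\cK} \boxtimes {}_{\cK|\cK} \bI^{\Supset}$ explicitly. Applying the homological perturbation lemma (Lemma~\ref{lem:HPL-modules}), together with the $m_3$-terms of ${}_{\cK|\cK}\bI^{\Supset}$ that feature $\sigma|\sigma$ and $\tau|\tau$, the diagonal generators pair up and cancel against their partners, leaving a single rank-one generator in each matching idempotent whose only surviving action is $\delta_2^1(a,1)=1\otimes a$. This is precisely ${}_{\cK}[\bI]^{\cK}$, and simultaneously establishes Corollary~\ref{cor:quasi-inverse}. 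For the elliptic statement I would apply a geometric symmetry: the elliptic involution $(x,y) \mapsto (-x,-y)$ of $\bT^2$ reverses the meridian and longitude, and on the Heegaard diagram level it swaps the roles of $W$ and $Z$, $\sigma$ and $\tau$, and $T^{\pm 1}$, which is exactly the algebra automorphism $E$. Inserting this reparametrization on one side of the previous computation attaches an $E$ to each algebra input on that side, yielding ${}_{\cK}[E]^{\cK}$.

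The main obstacle I anticipate is the topological subtlety of the completion. In idempotent $(1,1)$, the cancellations that reduce the model to ${}_{\cK}[\bI]^{\cK}$ produce infinite sums in $T^{\pm 1}$ arising from the $\sigma$ and $\tau$ actions (analogous to those appearing for ${}_{\cK}\cD$ and for the $\cX^1$-piece in Section~\ref{sec:2-component-L-space}). These series converge in the chiral topology but not in the $U$-adic topology, and carefully checking that the homological perturbation lemma applies in this topologized setting is precisely why the theorem holds for the chiral algebra but fails for its $U$-adic counterpart, explaining the remark after Corollary~\ref{cor:quasi-inverse}.
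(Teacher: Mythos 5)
Your proposal has a genuine gap at the first step. The bordered bimodule $\cX(\Cyl_{\bI})^{\cK\otimes\cK}$ from \cite{ZemExact} is not constructed by counting holomorphic disks in a bordered Heegaard diagram for $\bT^2\times[0,1]$; it is built out of the Manolescu--Ozsv\'ath link surgery formula for an auxiliary link whose surgery realizes the bordered 3-manifold. There is no ``sufficiently small diagram'' from which one can directly read off a diagonal model in $\cK\otimes\cK$, and no disk-count-to-algebra-relations dictionary of the kind you describe is available in this framework. Without an explicit model for $\cX(\Cyl_{\bI})^{\cK\otimes\cK}$, neither the box tensor with ${}_{\cK|\cK}\bI^{\Supset}$ nor the homological perturbation can begin.

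The paper supplies the missing input differently. The mapping cylinder of the identity is presented as surgery on the connected sum of a negative and a positive Hopf link (Figure~\ref{fig:cable4}), so by Theorems~\ref{thm:connected-sum} and \ref{thm:invariance} one reduces to showing ${}_{\cK}\cH_-^{\cK}\boxtimes{}_{\cK}\cH_+^{\cK}\simeq{}_{\cK}[\bI]^{\cK}$; the elliptic involution corresponds to two negative Hopf links, reducing to ${}_{\cK}\cH_-^{\cK}\boxtimes{}_{\cK}\cH_-^{\cK}\simeq{}_{\cK}[E]^{\cK}$. The Hopf link bimodule $\cH_-$ was already computed in \cite{ZemBordered} and is reproduced in Figure~\ref{fig:Hopf-link-diagram}, so the problem becomes an explicit, if lengthy, box tensor product of known $DA$-bimodules. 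The diagnostic parts of your proposal do survive: the tensor product organizes into staircases whose contractions converge only in the chiral topology (Section~\ref{sec:simplifying hopf link tensor hopf link}), and $\cH_+$ is obtained from $\cH_-$ by the string-orientation-reversal automorphism $E$ (Remark~\ref{rem:string-orientation-reversal}), which is how the elliptic statement is deduced from the identity one. But the argument has to start from the Hopf link bimodules, not from a direct small-diagram model of the cylinder.
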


We will prove Theorem~\ref{thm:identity-cobordism} by representing the corresponding bordered manifolds in terms of surgery on a link. The link for the mapping cylinder of the identity map will be the connected sum of a negative and a positive Hopf link, while the link for the elliptic involution will be the connected sum of two negative Hopf links. See Figure~\ref{fig:cable4}.

\begin{figure}[h]
\begingroup%
  \makeatletter%
  \providecommand\color[2][]{%
    \errmessage{(Inkscape) Color is used for the text in Inkscape, but the package 'color.sty' is not loaded}%
    \renewcommand\color[2][]{}%
  }%
  \providecommand\transparent[1]{%
    \errmessage{(Inkscape) Transparency is used (non-zero) for the text in Inkscape, but the package 'transparent.sty' is not loaded}%
    \renewcommand\transparent[1]{}%
  }%
  \providecommand\rotatebox[2]{#2}%
  \newcommand*\fsize{\dimexpr\f@size pt\relax}%
  \newcommand*\lineheight[1]{\fontsize{\fsize}{#1\fsize}\selectfont}%
  \ifx\svgwidth\undefined%
    \setlength{\unitlength}{185.66089085bp}%
    \ifx\svgscale\undefined%
      \relax%
    \else%
      \setlength{\unitlength}{\unitlength * \real{\svgscale}}%
    \fi%
  \else%
    \setlength{\unitlength}{\svgwidth}%
  \fi%
  \global\let\svgwidth\undefined%
  \global\let\svgscale\undefined%
  \makeatother%
  \begin{picture}(1,0.20071841)%
    \lineheight{1}%
    \setlength\tabcolsep{0pt}%
    \put(0,0){\includegraphics[width=\unitlength,page=1]{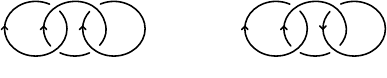}}%
    \put(0.19332656,0.00627387){\makebox(0,0)[t]{\lineheight{1.25}\smash{\begin{tabular}[t]{c}$0$\end{tabular}}}}%
    \put(0.80387161,0.00627387){\makebox(0,0)[t]{\lineheight{1.25}\smash{\begin{tabular}[t]{c}$0$\end{tabular}}}}%
  \end{picture}%
\endgroup%

\caption{Representing the mapping cylinder of the identity map on the torus (left) and the elliptic involution (right) in terms of links. These bordered 3-manifolds are obtained by removing neighborhoods of the left and right most link components, and surgering on the middle.}
\label{fig:cable4}
\end{figure}

We will write ${}_{\cK}\cH_+^{\cK}$ for the $DA$-module of the positive Hopf link, where we give both components framing 0.  We will write ${}_{\cK} \cH_-^{\cK}$ for the $DA$-bimodule of the negative Hopf link. The $DA$ bimodule ${}_{\cK} \cH_-^{\cK}$ is shown in Figure~\ref{fig:Hopf-link-diagram}. 

\begin{rem} In \cite{ZemBordered}, we wrote ${}_{\cK} \cZ_{(0,0)}^{\cK}$ for the bimodule we are denoting by ${}_{\cK} \cH_{-}^{\cK}$. Therein, ${}_{\cK} \cH_{(0,0)}^{\cK}$ denoted a larger (i.e. non-reduced) bimodule which was homotopy equivalent to ${}_{\cK} \cZ_{(0,0)}^{\cK}$.
\end{rem}

 For the proof of Theorem~\ref{thm:identity-cobordism}, it is sufficient to show that
\[
{}_{\cK} \cH_-^{\cK}\boxtimes {}_{\cK} \cH_-^{\cK}\simeq {}_{\cK} [E]^{\cK} \quad \text{and} \quad {}_{\cK} \cH_-^{\cK}\boxtimes {}_{\cK} \cH_+^{\cK}\simeq {}_{\cK} [\bI]^{\cK}. 
\]

\begin{rem}
 In \cite{ZemBordered}, we considered two modules, denoted ${}_{\cK}\cH^{\cK}$ and ${}_{\cK}\bar \cH^{\cK}$, for the negative Hopf link. This difference arose from different choices in the construction of the link surgery formula (more precisely, for different choices of \emph{arc systems}; See \cite{ZemBordered}*{Section~1.7}). These were distinguished by the presence of a $\delta_3^1$ term in the differential, and were due to the alpha/beta asymmetry in the construction of the link surgery complex. In this paper, we write $\cH_-$ for the module that was denoted $\bar \cH$ in \cite{ZemBordered}. This is the bimodule whose reduced model has no $\delta_3^1$ differential. 
\end{rem} 

\begin{rem}
\label{rem:string-orientation-reversal}
 In \cite{ZemBordered}*{Section~17}, we gave a direct computation of the module ${}_{\cK}\cH_-^{\cK}$. The same techniques also apply to the positive Hopf link, and an entirely routine modification of the aforementioned argument shows that
\begin{equation}
{}_{\cK} \cH_+^{\cK}={}_{\cK} \cH_-^{\cK}\boxtimes {}_{\cK} [E]^{\cK}.
\label{eq:string-reversal}
\end{equation}
In the subsequent sections, we will show that
\[
{}_{\cK} \cH_-^{\cK}\boxtimes {}_{\cK} \cH_-^{\cK}\simeq {}_{\cK} [E]^{\cK}.
\]
Note that combining this result with Equation~\eqref{eq:string-reversal} and the fact that ${}_{\cK} [E]^{\cK}\boxtimes {}_{\cK} [E]^{\cK}\simeq {}_{\cK} [\bI]^{\cK}$, we obtain the statement that
\[
{}_{\cK} \cH_-^{\cK}\boxtimes {}_{\cK} \cH_+^{\cK}\simeq {}_{\cK} [\bI]^{\cK}.
\]
More generally, it is straightforward to show from the description in \cite{ZemExact} that tensoring $\cX_{\Lambda}(L)^{\cK\otimes \cdots \otimes \cK}$ with ${}_{\cK} [E]^{\cK}$ has the effect of reversing the string orientation of a link component.
\end{rem}

\subsection{The Hopf link $DA$ module}

In Figure~\ref{fig:Hopf-link-diagram} we recall the type-$DA$ module of the negative Hopf link, denoted $\cH_-$. The original computation is performed in \cite{ZemBordered}*{Section~16}. Note that we are using slightly different notation. Firstly, the subscript on each generator denotes its idempotent. Secondly, in idempotent $(0,0)$, we are writing $W_{00}^i$ for what was called $W^i \ve{a}$, and we are writing $Z_{00}^i$ for what was called $Z^i \ve{d}$. Note that $W_{00}^0\neq Z_{00}^0$.  Note however that in idempotent $(0,1)$, we have $W_{01}^0=Z_{01}^0=1_{01}^0$. 
\begin{figure}[p]
\[
\begin{tikzcd}[labels=description, column sep=1.1cm, row sep=0cm]
\cdots
	\ar[r, bend left, "Z|U"]
&W^2_{01}
	\ar[r, bend left, "Z|U"]
	\ar[l, bend left, "W|1"]
& W_{01}
	\ar[r, bend left, "Z|U"]
	\ar[l, bend left, "W|1"]
&1_{01}
	\ar[r, "Z|1", bend left]
	\ar[l, "W|1", bend left]
&Z_{01}
	\ar[r, bend left, "Z|1"]
	\ar[l, bend left,"W|U"]
&Z^2_{01}
	\ar[r, bend left, "Z|1"]
	\ar[l, bend left,"W|U"]
& \cdots
	\ar[l, bend left,"W|U"]
\\[2cm]
\cdots
	\ar[r, bend left, "Z|U"]
&W^2_{00}
	\ar[r, bend left, "Z|U"]
	\ar[l, bend left, "W|1"]
	\ar[d, "\substack{\sigma|U^2Z \\ \tau|1}"]
	\ar[u, gray,"T \sigma"]
	\ar[ul, gray, "T\tau"]
& W_{00}
	\ar[r, bend left, "Z|U"]
	\ar[l, bend left, "W|1"]
	\ar[d, "\substack{\sigma|UZ \\ \tau|1}"]
	\ar[u, gray,"T \sigma"]
	\ar[ul, gray, "T\tau"]
&W_{00}^0
	\ar[r, "Z|Z", bend left]
	\ar[l, "W|1", bend left]
	\ar[d, "\substack{\sigma|Z\\ \tau|1}"]
	\ar[u, gray,"T \sigma"]
	\ar[ul, gray, "T\tau"]
&Z_{00}^0
	\ar[r, bend left, "Z|1"]
	\ar[l, bend left, "W|W"]
	\ar[d, "\substack{\sigma|1 \\ \tau|W}"]
	\ar[u,gray,"\sigma"]
	\ar[ul, gray, "\tau"]
&Z_{00}
	\ar[r, bend left, "Z|1"]
	\ar[l, bend left,"W|U"]
	\ar[d, "\substack{\sigma|1 \\ \tau|UW}"]
	\ar[u,gray,"\sigma"]
	\ar[ul, gray, "\tau"]
&\cdots
	\ar[l, bend left,"W|U"]
	\ar[d,"\substack{\sigma|1 \\ \tau|U^2W}"]
	\ar[u,gray,"\sigma"]
	\ar[ul, gray, "\tau"]
\\[2cm]
\cdots
	\ar[r, bend left, "T|1"]
&T^{-2}_{10}
	\ar[r, bend left, "T|1"]
	\ar[l, bend left, "T^{-1}|1"]
	\ar[loop below,looseness=20, "U|U"]
& T^{-1}_{10}
	\ar[r, bend left, "T|1"]
	\ar[l, bend left, "T^{-1}|1"]
	\ar[loop below,looseness=20, "U|U"]
&T^0_{10}
	\ar[r, "T|1", bend left]
	\ar[l, "T^{-1}|1", bend left]
	\ar[loop below,looseness=20, "U|U"]
&T_{10}^1
	\ar[r, bend left, "T|1"]
	\ar[l, bend left,"T^{-1}|1"]
	\ar[loop below,looseness=20, "U|U"]
&T_{10}^2
	\ar[r, bend left, "T|1"]
	\ar[l, bend left,"T^{-1}|1"]
	\ar[loop below,looseness=20, "U|U"]
& \cdots
	\ar[l, bend left,"T^{-1}|1"]
\end{tikzcd}
\]
\vspace{1cm}
\[
\begin{tikzcd}[labels=description, column sep=1.1cm, row sep=0cm]
\cdots
	\ar[r, bend left, "Z|U"]
&W^2_{01}
	\ar[r, bend left, "Z|U"]
	\ar[l, bend left, "W|1"]
	\ar[d, "\substack{\sigma|U^2\\ \tau|T^{-1} }"]
&W_{01}
	\ar[r, bend left, "Z|U"]
	\ar[l, bend left, "W|1"]
	\ar[d, "\substack{\sigma|U\\ \tau|T^{-1} }"]
&1_{01}
	\ar[r, "Z|1", bend left]
	\ar[l, "W|1", bend left]
	\ar[d, "\substack{\sigma|1\\ \tau|T^{-1} }"]
&Z_{01}
	\ar[r, bend left, "Z|1"]
	\ar[l, bend left,"W|U"]
	\ar[d, "\substack{\sigma|1\\ \tau|UT^{-1} }"]
&Z^{2}_{01}
	\ar[r, bend left, "Z|1"]
	\ar[l, bend left,"W|U"]
	\ar[d, "\substack{\sigma|1\\ \tau|U^2T^{-1} }"]
& \cdots
	\ar[l, bend left,"W|U"]
\\[2cm]
\cdots
	\ar[r, bend left, "T|1"]
&T^{-2}_{11}
	\ar[r, bend left, "T|1"]
	\ar[l, bend left, "T^{-1}|1"]
	\ar[loop below,looseness=20, "U|U"]
& T^{-1}_{11}
	\ar[r, bend left, "T|1"]
	\ar[l, bend left, "T^{-1}|1"]
	\ar[loop below,looseness=20, "U|U"]
&T^0_{11}
	\ar[r, "T|1", bend left]
	\ar[l, "T^{-1}|1", bend left]
	\ar[loop below,looseness=20, "U|U"]
&T_{11}^1
	\ar[r, bend left, "T|1"]
	\ar[l, bend left,"T^{-1}|1"]
	\ar[loop below,looseness=20, "U|U"]
&T^{2}_{11}
	\ar[r, bend left, "T|1"]
	\ar[l, bend left,"T^{-1}|1"]
	\ar[loop below,looseness=20, "U|U"]
& \cdots \ar[l, bend left,"T^{-1}|1"]
\\[2cm]
 \cdots
	\ar[r, bend left, "T|1"]
&T^{-2}_{10}
	\ar[r, bend left, "T|1"]
	\ar[l, bend left, "T^{-1}|1"]
	\ar[loop below,looseness=20, "U|U"]
	\ar[u,gray, bend right=35, "\sigma"]
	\ar[ul, "\tau",gray]
& T^{-1}_{10}
	\ar[r, bend left, "T|1"]
	\ar[l, bend left, "T^{-1}|1"]
	\ar[loop below,looseness=20, "U|U"]
	\ar[u,gray, bend right=35, "\sigma"]
	\ar[ul, "\tau",gray]
&T^{0}_{10}
	\ar[r, "T|1", bend left]
	\ar[l, "T^{-1}|1", bend left]
	\ar[loop below,looseness=20, "U|U"]
	\ar[u,gray, bend right=35, "\sigma"]
	\ar[ul, "\tau",gray]
&T^{1}_{10}
	\ar[r, bend left, "T|1"]
	\ar[l, bend left,"T^{-1}|1"]
	\ar[loop below,looseness=20, "U|U"]
	\ar[u,gray, bend right=35, "\sigma"]
	\ar[ul, "\tau",gray]
&T^{2}_{10}
	\ar[r, bend left, "T|1"]
	\ar[l, bend left,"T^{-1}|1"]
	\ar[loop below,looseness=20, "U|U"]
	\ar[u,gray, bend right=35, "\sigma"]
	\ar[ul, "\tau",gray]
& \cdots
	\ar[l, bend left,"T^{-1}|1"]
	\ar[ul, "\tau",gray]
\end{tikzcd}
\]
\caption{The $DA$-bimodule of the negative Hopf link ${}_{\cK} \cH_-^{\cK}$. The gray arrows represent structure maps $\delta^1_1: {}_{\cK} \cH_-^{\cK} \to {}_{\cK} \cH_-^{\cK} \otimes \cK$. The remaining arrows are structure maps $\delta^1_2: \cK \otimes {}_{\cK} \cH_-^{\cK} \to {}_{\cK} \cH_-^{\cK}\otimes \cK$. Subscripts denote idempotents.} 
\label{fig:Hopf-link-diagram}
\end{figure}

\subsection{Simplification of $(\cH_-\boxtimes \cH_-)^\cK$}
\label{sec:simplifying hopf link tensor hopf link}
In this section, we simplify the type-$D$ module underlying ${}_{\cK}\cH_-^{\cK}\boxtimes {}_{\cK}\cH_-^{\cK}$. We describe the computation in both the $U$-adic and chiral topologies, however we are most interested in the chiral topology.

We will write $(\cH_-\boxtimes \cH_-)^\cK$ for the underlying type-$D$ module of the tensor product. In this section, we will construct a strong deformation retraction  (see Definition~\ref{def:strong-deformation-retraction}) for the chirally completed modules
\[
\begin{tikzcd}
\ar[loop left, "H^1"](\cH_-\boxtimes \cH_-)^{\cK}\ar[r,shift left, "\Pi^1"] &\ve{I}^{\cK}, \ar[l, shift left, "I^1"]
\end{tikzcd}
\]
where $\ve{I}^{\cK}$ denotes the idempotent ring, viewed as a type-$D$ module over $\cK$ with vanishing $\delta^1$ map. 

\begin{rem} If we use the $U$-adic topology, the type-$D$ module $(\cH_-\boxtimes \cH_-)^\cK$ does not deformation retract onto $\ve{I}^{\cK}$, but instead deformation retracts onto a larger subspace.
\end{rem}

Note that by using the homological perturbation lemma for hypercubes, Lemma \ref{lem:homotopy perturbation of hypercube}, it is sufficient to construct strong deformation retractions 
\[
\ve{I}_0 \cdot (\cH_-\boxtimes \cH_-)^\cK \cdot \ve{I}_0\simeq \ve{I}_0^{\cK},\quad \ve{I}_1\cdot (\cH_-\boxtimes \cH_-)^\cK \cdot \ve{I}_0\simeq 0,
\]
\[
\ve{I}_0 \cdot (\cH_-\boxtimes \cH_-)^\cK \cdot \ve{I}_1\simeq 0,\quad \text{and} \quad \ve{I}_1\cdot (\cH_-\boxtimes \cH_-)^\cK \cdot \ve{I}_1\simeq \ve{I}_1^{\cK}.
\]
We perform the above computation in the subsequent sections. For $\veps\in \bE_2$, we will write $(I_{\veps}^1, \Pi_{\veps}^1, H_{\veps}^1)$ for the strong deformation retractions that we will construct on $\ve{I}_{\veps_1} \cdot (\cH_1\boxtimes \cH_1)^{\cK}\cdot \ve{I}_{\veps_2}$.

\subsubsection{Idempotent $(0,0)$}

We now study the $(0,0)$-idempotent of $\cH_-\boxtimes \cH_-$. In this section, we construct a strong deformation retraction
\[
\begin{tikzcd}
\ar[loop left, "H^1_{00}"]\ve{I}_0\cdot(\cH_-\boxtimes \cH_-)^{\cK}\cdot \ve{I}_0 \ar[r,shift left, "\Pi^1_{00}"] &\ve{I}_0^{\cK}. \ar[l, shift left, "I_{00}^1"]
\end{tikzcd}
\]

The differential on $(\cH_-\boxtimes \cH_-)^{\cK}$ in this idempotent is contributed from the $\delta_1^1$ map of the left $\cH_-$ being input into the $\delta_2^1$ of the right $\cH_-$, as in the following diagram: 
\[
\begin{tikzcd}[row sep=.2cm] 
\cH_-
	\ar[d] & 
\cH_-
\ar[dd]\\
\delta_1^1\ar[dr]\ar[dd]&\\
\, & \delta_2^1\ar[d]\\
\,&\,
\end{tikzcd}.
\]

Computing directly from Figure~\ref{fig:Hopf-link-diagram}, we have the following differentials of  $ \ve{I}_0\cdot \cH_-\boxtimes \cH_-\cdot \ve{I}_0$:
\[
\begin{split}
\delta_1^1[W^i_{00}|W^j_{00}] &= [W^i_{01}|T^{-j+1}_{11}]\otimes U^j Z+[W^{i+1}_{01}| T^{-j+1}_{11}]\otimes 1,
\\
\delta_1^1[W^i_{00} |Z^j_{00}]&= [W^i_{01} |T^{j+2}_{11}]\otimes 1+[W^{i+1}_{01}|T^{j+2}_{11}]\otimes U^j W
\\
\delta_1^1 [Z^i_{00}|W^j_{00}]&= [Z^i_{01}| T^{-j}_{11}]\otimes 1+[Z^{i+1}_{01}| T^{-j}_{11}]\otimes U^jZ
\\
\delta_1^1 [Z^i_{00}|Z^j_{00}]&= [Z^i_{01}|T^{j+1}_{11}]\otimes U^jW+[Z^{i+1}_{01}| T^{j+1}_{11}]\otimes 1.
\end{split}
\]
We observe that the idempotent $(0,0)$-subspace of our complex decomposes as a direct sum of staircases.
For $j\ge 0$, we write $X_j$ for the staircase complex which contains $[W^0_{00}|W^{j}_{00}]$. Also for $j\ge 0$, we write $Y_j$ for the staircase complex which contains the generator $[W^0_{00}|Z^j_{00}]$.

If $j>0$, the complex $X_{j}$ takes the following form:
\[
\begin{tikzcd}[labels=description]
\cdots
&{[W^1_{00}| W^j_{00}]}
	\ar[d, "U^j Z"]
	\ar[dl]
&{[W^0_{00}| W^j_{00}]}
	\ar[d, "U^j Z"]
	\ar[dl, "1"]
&{[Z^0_{00}| W^{j-1}_{00}]}
	\ar[dl, "1"]
	\ar[d, "U^{j-1} Z"]
&{[Z^1_{00}|W^{j-1}_{00}]}
	\ar[dl,"1"]
	\ar[d, "U^{j-1} Z"]
&\cdots
	\ar[dl]
\\
\cdots&
{[W^1_{01}| T^{-j+1}_{10}]}
&{[W^0_{01}| T^{-j+1}_{10}]}
& {[Z^1_{01}| T^{-j+1}_{10}]}
& {[Z^2_{01}|T^{-j+1}_{10}]} &\cdots
\end{tikzcd}
\]

In the following, we say that a type-$D$ module $X^\cK$ is \emph{contractible} if $\bI_X=\d(H^1)$, for some type-$D$ morphism $H^1\colon X^\cK\to X^{\cK}$.

\begin{lem}\label{lem:simplify-X_j-complexes}
For any $j>1$, the complex $X_j$ is contractible in both the chiral and $U$-adic topologies. For $j=1$, the complex $X_j$ is contractible in the chiral topology.
\end{lem}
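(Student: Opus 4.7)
The complex $X_j$ is a bi-infinite zigzag staircase, and the clean way to see this is to reindex the generators along the zigzag. Putting bottoms $b_n$ for $n \in \Z$ with $b_{-i} = [W^i_{01}|T^{-j+1}_{10}]$ for $i \geq 0$ and $b_i = [Z^i_{01}|T^{-j+1}_{10}]$ for $i \geq 1$ (and $b_0 = [1_{01}|T^{-j+1}_{10}]$), and tops $t_n$ with $t_{-i} = [W^{i}_{00}|W^j_{00}]$ for $i \geq 0$ and $t_n = [Z^{n-1}_{00}|W^{j-1}_{00}]$ for $n \geq 1$, the formulas displayed just before the lemma simplify to
\[
\delta^1(t_n) = b_{n-1}\otimes 1 + b_n \otimes \alpha_n,
\]
where $\alpha_n = U^j Z$ for $n \leq 0$ and $\alpha_n = U^{j-1} Z$ for $n \geq 1$. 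The key structural observation is that every weight lies in the commutative subalgebra $\ve I_0 \cdot \cK \cdot \ve I_0 = \bF[W,Z]$ (with $U = WZ$), and that one of the two edges out of each $t_n$ carries the unit weight $1$.

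Because of this abundance of unit weights, a candidate null-homotopy of $\mathrm{id}_{X_j}$ is the formal infinite telescoping
\[
h(t_n) = 0, \qquad h(b_n) = \sum_{k \geq 0} t_{n+1+k} \otimes \alpha_{n+1}\alpha_{n+2}\cdots\alpha_{n+k},
\]
where the $k = 0$ summand is $t_{n+1} \otimes 1$. Using commutativity of $\bF[W,Z]$ and characteristic two, both sides of $\delta h + h\delta = \mathrm{id}$ reduce to a one-line telescoping: the two sums contributing to $\delta h(b_n)$ pair off and leave $b_n$, and likewise $h\delta(t_n)$ leaves $t_n$. Hence contractibility will follow as soon as one knows the formal series defining $h(b_n)$ is genuinely an element of the completed module $X_j \otimes \cK$.

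The main substantive step is therefore the convergence analysis, and it is here that the dichotomy between the two topologies enters. Every $\alpha_n$ is divisible by $Z$, so the $k$-fold product $\alpha_{n+1}\cdots\alpha_{n+k}$ is divisible by $Z^k$. In the chiral topology the basic open right ideals satisfy $\ve I_0 \cdot J_N \cdot \ve I_0 = (W^N, Z^N)$, so $Z^k \in J_N$ once $k \geq N$, and $h(b_n)$ converges for all $j \geq 1$. In the $U$-adic topology one instead needs arbitrarily high divisibility by $U$; when $j \geq 2$ each $\alpha_n$ contains the factor $U^{j-1}$, so the products exhaust the filtration $(U^N)$ and $h$ again defines a continuous morphism, while for $j = 1$ the right half of the staircase has $\alpha_n = Z$ with no $U$-factor and no uniform convergence is possible. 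The only real obstacle is this bookkeeping, that is, verifying that the infinite sum really assembles into a continuous type-$D$ endomorphism in each prescribed topology; once it does, the telescoping identity of the preceding paragraph gives $\delta h + h \delta = \mathrm{id}$ and hence the claimed contractibility of $X_j$.
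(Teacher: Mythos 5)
Your proof is correct and is, in substance, the same as the paper's. The paper applies the homological perturbation lemma for chain complexes (Lemma~\ref{lem:HPL-chain-complexes}) with $h^1$ inverting the unit-weighted arrows and $\alpha^1$ collecting the rest; your $h$ is exactly $H^1 = h^1\circ(1+\alpha^1\circ h^1)^{-1}$ written out in closed form, and your telescoping check replaces the invocation of that lemma. The convergence observations also match: divisibility of each $\alpha_n$ by $U^{j-1}$ handles $j\ge 2$ in both topologies, while divisibility by $Z$ handles the chiral topology for all $j\ge 1$. Where the paper does more is the chiral-topology continuity check for $X_1$: it verifies explicitly that preimages of basic opens under $(1+\alpha^1\circ h^1)^{-1}$ are open, whereas you establish only termwise convergence of $h(b_n)$ and flag continuity as ``bookkeeping.'' Your observation that the $k$-fold products of the $\alpha_n$'s lie in $(W^N,Z^N)$ once $k\ge N$ is precisely the ingredient that continuity argument rests on, so the gap is one of presentation rather than substance.
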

\begin{proof} A null-homotopy may be written down using the homological perturbation lemma for chain complexes, Lemma~\ref{lem:HPL-chain-complexes}.
In terms of the notation of the present lemma, we write $X_j=(C,d^1+\a^1)$, where $d^1$ consists of the terms of the differential weighted by $1$, and $\a^1$ consists of the remaining differentials. The map $h^1$ maps backwards along the arrows weighted by $1$. The null-homotopy $H^1$ of $X_j$ is then given by
\begin{equation}
H^1=h^1\circ (1+ \a^1 \circ h^1)^{-1}=\sum_{n=0}^\infty h^1\circ (\a^1 \circ h^1)^n.
\label{eq:def-Homotopy-HPL-X_j}
\end{equation}
To apply the homological perturbation lemma, it suffices to show that $(1+\a^1\circ h^1)^{-1}=\sum_{n=0}^\infty(\a^1 \circ h^1)^n$ defines a convergent sum in the chiral topology. We write $\g^1$ for this map.

When $j>1$, the map $\g^1$ is well-defined in both the chiral and $U$-adic topologies because, given an $N$, all but finitely many terms in Equation~\eqref{eq:def-Homotopy-HPL-X_j} are in the ideal $(U^N)$.

We now consider the complex $X_1$ in the chiral topology and claim that $\g^1$ defines a well-defined and continuous map $\g^1\colon X_1\to X_1\vecotimes \bF[W,Z]$. Here $\bF[W,Z]$ is given the $I$-adic topology, where $I$ is the ideal $I=(W,Z)$, and $X_1$ is given the product topology $X_1\iso \prod_{\Z\times \{0,1\}} \bF$. (Here $\Z\times \{0\}$ enumerates the generators of the bottom level of the staircase, while $\Z\times \{1\}$ enumerates generators in the top level of the staircase). We claim that if $E\subset X_1\vecotimes \bF[W,Z]$ is open, then $\g^{-1}(E)$ is also open. Note that $E$ is open if and only if there is an open subspace $U\subset X_1$ so that $U\otimes \bF[W,Z]\subset E$ and for all $\xs\in X_1$, there is an open subspace $V_{\xs}\subset \bF[W,Z]$ so that $x\otimes V_{\xs}\subset E$. See Section \ref{sec:linear topological spaces}.

Recall that the basis of open subspaces in $X_1$ consist of the spans of cofinite collections of basis elements in $\Z\times \{0,1\}$, i.e. the subspaces $\prod_{x\in (\Z\times \{0,1\})\setminus S} \bF$ where $S$ is a finite subset of $(\Z\times \{0,1\})$. It is helpful to write $X_{\co(S)}$ for $\prod_{x\in (\Z\times \{0,1\})\setminus S} \bF$. 

Therefore, we need to show that if $S=\{\xs_1,\dots, \xs_n\}$ is a finite set of basis elements in $X_1$, and $N_{\xs}$ is a collection of positive integers for each basis element in $X_1$, then we can find a finite set of basis elements $T\subset X_1$ such that
\[
\g(X_{\co(T)})\subset X_{\co(S)}\vecotimes \bF[W,Z]+ \sum_{\xs\in X_1} \xs\otimes (W^{N_{\xs}}, Z^{N_{\xs}})
\]
where the sum $\xs\in X_1$ is taken over basis elements. Of course, this is equivalent to showing
\begin{equation}
\g(X_{\co(T)})\subset X_{\co(S)}\vecotimes \bF[W,Z]+ \sum_{\xs\in S} \xs\otimes (W^{N_{\xs}}, Z^{N_{\xs}}).
\label{eq:equivalent-condition-gamma-continuity}
\end{equation}
We let $N$ be the maximum of $N_{\xs}$. We then observe in $(1+\a \circ h)^{-1}=\sum_{n=0}^\infty (\a \circ h)^n$, only finitely many of the components pointing to a given $\xs$ will have algebra weight outside of the ideal $(W^N,Z^N)$. Therefore, we pick $T$ to contain all basis elements $\ys$ so that there is a component of $\sum_{n=0}^\infty (\a \circ h)^n$ to $\xs\otimes r$, for some $\xs\in S$ and $r\not\in (W^N,Z^N)$. There are only finitely many such $\ys$, so we know a finite $T$ can be selected so that Equation~\eqref{eq:equivalent-condition-gamma-continuity} holds. The proof is complete.
\end{proof}

Next, we study the complex $X_0$, which contains the generators $[W^0_{00}|W^0_{00}]$ and  $[Z^0_{00}|Z^0_{00}]$. This complex takes the following form:
\begin{equation*}
\begin{tikzcd}[labels=description]
\cdots
&{[W^1_{00}|W^0_{00}]}
	\ar[d, "Z"]
	\ar[dl, "1"]
&{[W^0_{00}|W^0_{00}]}
	\ar[d, "Z"]
	\ar[dl, "1"]
&{[Z^0_{00}|Z^0_{00}]}
	\ar[dl, "W"]
	\ar[d, "1"]
&{[Z^1_{00}|Z^0_{00}]}
	\ar[dl,"W"]
	\ar[d, "1"]
&\cdots
	\ar[dl, "W"]
\\
\cdots&
{[W_{01}|T^1_{10}]}
& {[1_{01}|T^1_{10}]}
& {[Z_{01}|T^1_{10}]}
& {[Z^2_{01}|T^1_{10}]} &\cdots
\end{tikzcd}
\label{eq:non-vanishing-one}
\end{equation*}

\begin{lem}
\label{lem:homotopy-equivalence-idempotent-00}
The complex $X_0$ is homotopy equivalent to the rank 1 subcomplex spanned by $[1_{01}|T_{10}^1]$ with vanishing $\delta_1^1$. This holds in both the chiral and $U$-adic topologies. 
\end{lem}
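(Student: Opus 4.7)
The plan is to apply the homological perturbation lemma (Lemma~\ref{lem:HPL-chain-complexes}) in the same spirit as the proof of Lemma~\ref{lem:simplify-X_j-complexes}. I would split the internal differential $\delta_1^1$ on $X_0$ as $d + \alpha$, where $d$ collects the arrows with algebra weight $1$ and $\alpha$ collects those with weight $W$ or $Z$. The weight-$1$ arrows set up a strong deformation retraction of $(X_0, d)$ onto the rank-$1$ subcomplex spanned by $[1_{01}|T^1_{10}]$ (with zero differential): $i$ and $\pi$ are the obvious inclusion and projection, while the null-homotopy $h$ inverts each weight-$1$ arrow, so $h([W^{i+1}_{01}|T^1_{10}]) = [W^i_{00}|W^0_{00}]$ and $h([Z^{i+1}_{01}|T^1_{10}]) = [Z^i_{00}|Z^0_{00}]$ for $i \ge 0$, and $h$ vanishes on every other basis vector, including on $[1_{01}|T^1_{10}]$ itself (no weight-$1$ arrow in $X_0$ targets it).

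Feeding this SDR and the perturbation $\alpha$ into Lemma~\ref{lem:HPL-chain-complexes} yields a perturbed differential $\beta = \pi \circ \alpha \circ (1 + h\alpha)^{-1} \circ i$ on the target. The crucial cancellation is that $[1_{01}|T^1_{10}]$ lies in the bottom row of $X_0$, which has no outgoing $\alpha$-arrow in either variable, so $\alpha \circ i = 0$ and therefore $\beta = 0$. This gives the advertised SDR of $X_0$ onto the rank-$1$ complex with vanishing differential.

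What remains is convergence of $(1 + h\alpha)^{-1} = \sum_{n \ge 0} (h\alpha)^n$ in each of the two topologies. A direct computation shows that $h\alpha$ is locally nilpotent on basis vectors: starting from $[W^i_{00}|W^0_{00}]$ one finds $(h\alpha)^k = [W^{i-k}_{00}|W^0_{00}] \otimes Z^k$ for $0 \le k \le i$ and zero for $k > i$, since the iteration terminates at $[W^0_{00}|W^0_{00}]$, whose $\alpha$-image lands on $[1_{01}|T^1_{10}]$ where $h$ vanishes; the analogous finite termination holds on the $Z$-arm with powers of $W$. Hence the series reduces to a finite sum on each basis vector, which makes the inverse well-defined termwise and yields $U$-adic continuity for free. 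Chiral continuity reduces to the cofinite-support argument of Lemma~\ref{lem:simplify-X_j-complexes}: given a finite target set $S$ and chosen ideals $(W,Z)^{N_x}$ for $x \in S$, the bound on staircase length forces all but finitely many source generators to land inside the prescribed open subspace of $X_0 \vecotimes \cK$. I expect no obstacle beyond the one already handled in the preceding lemma, so this step should amount to minor bookkeeping.
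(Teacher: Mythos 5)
Your proposal is correct and follows the same route as the paper: cancel the weight-$1$ arrows of $X_0$ via the homological perturbation lemma, observe that the bottom row (and in particular $[1_{01}|T^1_{10}]$) has no outgoing $\alpha$-arrow so the perturbed differential $\beta$ vanishes, and check convergence in both topologies — $U$-adic via local nilpotence of $h\alpha$ (only finitely many zig-zags emanate from each generator), chiral via the cofinite-support argument already given for Lemma~\ref{lem:simplify-X_j-complexes}. You are slightly more explicit about computing the iterates $(h\alpha)^k$, but the argument is the same as the paper's.
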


\begin{proof} The proof follows similarly to Lemma~\ref{lem:simplify-X_j-complexes}. A strong deformation retraction can be constructed by the homological perturbation lemma, canceling all of the arrows labeled with a $1$. Continuity and well-definedness of the map in the chiral topology follows as before. In the $U$-adic topology, we note that the maps, $\Pi^1$, $H^1$ and $I^1$ are well-defined before taking completions (as only finitely many homological perturbation zig-zags start at a given generator) and therefore are well-defined after completing at $U$.
\end{proof}

Next, we consider the complexes $Y_j$, for $j\ge 0$. Here, $Y_j$ is the staircase containing the generators $[W^0_{00}|Z^j_{00}]$. This complex takes the following form:
\[
\begin{tikzcd}[labels=description]
\cdots
&{[W^1_{00}|Z^j_{00}]}
	\ar[d, "1"]
	\ar[dl, "U^jW"]
&{[W^0_{00}|Z^j_{00}]}
	\ar[d, "1"]
	\ar[dl, "U^jW"]
&{[Z^0_{00}|Z^{j+1}_{00}]}
	\ar[dl, "U^{j+1}W"]
	\ar[d, "1"]
&{[Z^1_{00}|Z^{j+1}_{00}]}
	\ar[dl,"U^{j+1}W"]
	\ar[d, "1"]
&\cdots
	\ar[dl, ]
\\
\cdots&
{[W^1_{01}|T^{j+2}_{10}]}
&{[W^0_{01}|T^{j+2}_{10}]}
&{[Z^1_{01}|T^{j+2}_{10}]}
& {[Z^2_{01}|T^{j+2}_{10}]}
&\cdots
\end{tikzcd}
\]
Parallel to Lemma~\ref{lem:simplify-X_j-complexes}, we have the following:

\begin{lem} The complex $Y_j$ is contractible in both the chiral and $U$-adic topologies if $j\ge 1$. If $j=0$, it is contractible in the chiral topology. 
\end{lem}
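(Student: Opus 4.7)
The plan is to imitate exactly the proof of Lemma~\ref{lem:simplify-X_j-complexes} for $X_j$, since the diagram for $Y_j$ has the same combinatorial shape (an infinite zig-zag) and differs only in the algebra labels on the diagonal arrows. First I would apply the homological perturbation lemma, Lemma~\ref{lem:HPL-chain-complexes}, with $d^1$ equal to the sum of the vertical arrows labeled $1$ and $\alpha^1$ equal to the sum of the diagonal arrows (labeled $U^jW$ or $U^{j+1}W$). The homotopy $h^1$ inverts the arrows labeled $1$, i.e.\ sends each bottom generator back to the corresponding top generator. Then a null-homotopy of $Y_j$ is given, as in Equation~\eqref{eq:def-Homotopy-HPL-X_j}, by
\[
H^1 \;=\; h^1\circ (1+\alpha^1\circ h^1)^{-1} \;=\; \sum_{n=0}^\infty h^1 \circ (\alpha^1 \circ h^1)^n,
\]
provided this geometric series converges in the relevant topology.

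For $j\ge 1$, each diagonal arrow carries an algebra weight divisible by $U$ (in fact by $U^j$), so each composition $\alpha^1\circ h^1$ picks up at least one factor of $U$. Consequently, for any $N$, all but finitely many summands in the expression for $H^1$ starting at a given generator lie in the ideal $(U^N)$. This immediately gives convergence in both the $U$-adic and chiral topologies, so $Y_j$ is contractible in both.

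For $j=0$, the diagonal weights are $W$ and $UW$; the weight $W$ alone does not force $U$-convergence, so the $U$-adic argument breaks down, exactly as for $X_1$. Instead, I would run the chiral-topology argument from the proof of Lemma~\ref{lem:simplify-X_j-complexes} verbatim: give $Y_0$ the product topology with respect to its basis of generators (which is indexed by two copies of $\Z$), give $\bF[W,Z]$ the $(W,Z)$-adic topology, and verify that for any finite set $S$ of basis elements and any integers $N_{\ve{x}}>0$, there is a finite set $T$ of basis elements such that $H^1(Y_{0,\mathrm{co}(T)})\subset Y_{0,\mathrm{co}(S)}\vecotimes \bF[W,Z]+\sum_{\ve{x}\in S}\ve{x}\otimes(W^{N_{\ve{x}}},Z^{N_{\ve{x}}})$. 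The point is that along any zig-zag contributing to a fixed target $\ve{x}\in S$, the cumulative algebra weight is a monomial in $W$ and $U=WZ$ whose total degree in $W$ grows linearly with the length of the zig-zag; hence only finitely many zig-zags reaching $\ve{x}$ have weight outside $(W^{N_{\ve{x}}},Z^{N_{\ve{x}}})$, so a suitable finite $T$ exists.

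The only real subtlety, and the step I expect to require care, is the chiral-continuity verification for $j=0$: one must confirm that despite the absence of a $U$ factor on every diagonal arrow, the algebra weights along arbitrary zig-zags do escape into the chiral basis of opens. Once that is done (by the monomial-degree argument above, identical to the $X_1$ case), the homological perturbation lemma produces the chain maps $\Pi^1$ and $I^1$ onto the zero type-$D$ module together with the null-homotopy $H^1$, establishing contractibility.
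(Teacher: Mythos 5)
Your proof is correct and follows exactly the approach the paper intends (the paper itself gives no explicit proof here, only the remark that it is ``parallel'' to the proof of Lemma~\ref{lem:simplify-X_j-complexes}). You correctly identify that in $Y_j$ the contracting arrows labeled $1$ are the vertical ones while the perturbation arrows are the diagonals labeled $U^jW$ or $U^{j+1}W$, which is why the $U$-adic/chiral cutoff falls between $j=0$ and $j\ge 1$ here rather than between $j=1$ and $j>1$ as it does for $X_j$, and the chiral-continuity verification for $j=0$ runs verbatim from the $X_1$ case with $Z$ replaced by $W$.
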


\subsubsection{Idempotent $(0,1)$}

This idempotent is also a direct sum of staircases, similar to idempotent $(0,0)$. The top rows of the staircase are of the form $[W^i_{00}| W^j_{01}]$, $[W^i_{00}| Z^j_{01}]$, $[Z^i_{00}| W^j_{01}]$ or $[Z^i_{00}|Z^j_{01}]$. The bottom row is of the form $[W^i_{01}| T^j_{11}]$ or $[Z^i_{01}| T^j_{11}]$. 

We now describe the differentials:
\[
\begin{split}
\delta^1_1([W^i_{00}|W^j_{01}])=&[W^i_{01}|T_{11}^{-j+1}]\otimes U^j+[W^{i+1}_{01}|T_{11}^{-j+1}]\otimes T^{-1}\\
\delta_1^1([W_{00}^i|Z_{01}^j])=& [W^i_{01}|T^{j+1}_{11}]\otimes 1+[W^{i+1}_{01}|T^{j+1}_{11}]\otimes U^j T^{-1}\\
\delta_1^1([Z_{00}^i|W_{01}^j])=&[Z^{i+1}_{01}|T^{-j}_{11}]\otimes U^j+[Z_{01}^i|T^{-j}_{11}]\otimes T^{-1}\\
\delta_1^1([Z_{00}^i|Z_{01}^j])=&[Z^{i+1}_{01}|T^j_{11}]\otimes 1+[Z_{01}^i|T^j_{11}]\otimes U^j T^{-1}. 
\end{split}
\]
The above complexes split as a sum of $X_j$, where the complex has a generator $[W_{00}^0|W_{01}^{-j}]$ if $j<0 $ and $[W_{00}^0|Z_{01}^j]$ if $j\ge 0$. Note that both $Z^0_{01}$ and $W^0_{01}$ represent the same generator $1_{01}$ in the $(0,1)$ idempotent of $\cH_-$.

For $j\ge 0$ the complex $X_{j}$ takes the form
\[
\begin{tikzcd}[labels=description]
\cdots
&{[W^1_{00}|Z_{01}^j]}
	\ar[d, "1"]
	\ar[dl, "U^j T^{-1}"]
&{[W^0_{00}|Z^j_{01}]}
	\ar[d, "1"]
	\ar[dl, "U^jT^{-1}"]
&{[Z^0_{00}|Z^{j+1}_{01}]}
	\ar[dl, "U^{j+1}T^{-1}"]
	\ar[d, "1"]
&{[Z^1_{00}|Z^{j+1}_{01}]}
	\ar[dl,"U^{j+1}T^{-1}"]
	\ar[d, "1"]
&\cdots
	\ar[dl, ]
\\
\cdots&
{[W^1_{01}|T^{j+1}_{11}]}
&{[1_{01}|T^{j+1}_{11}]}
&{[Z^1_{01}|T^{j+1}_{11}]}
& {[Z^2_{01}|T^{j+1}_{11}]}
&\cdots
\end{tikzcd}
\]
Next, we describe $X_{-j}$ for $j>0$. This takes the form:
\[
\begin{tikzcd}[labels=description]
\cdots
&{[W^1_{00}|W_{01}^j]}
	\ar[d, "U^j"]
	\ar[dl, " T^{-1}"]
&{[W^0_{00}|W^j_{01}]}
	\ar[d, "U^j"]
	\ar[dl, "T^{-1}"]
&{[Z^0_{00}|W^{j-1}_{01}]}
	\ar[dl, "T^{-1}"]
	\ar[d, "U^{j-1}"]
&{[Z^1_{00}|W^{j-1}_{01}]}
	\ar[dl,"T^{-1}"]
	\ar[d, "U^{j-1}"]
&\cdots
	\ar[dl, ]
\\
\cdots&
{[W^1_{01}|T^{-j+1}_{11}]}
&{[1_{01}|T^{-j+1}_{11}]}
&{[Z^1_{01}|T^{-j+1}_{11}]}
& {[Z^2_{01}|T^{-j+1}_{11}]}
&\cdots
\end{tikzcd}
\]

\begin{lem} The complexes $X_j$ are contractible for all $j$ in the chiral topology. The complex $X_j$ is contractible in the $U$-adic topology for all $j\neq 0,-1$. 
\end{lem}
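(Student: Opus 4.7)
The plan is to apply the homological perturbation lemma (Lemma~\ref{lem:HPL-chain-complexes}) to each $X_j$, following the template established in Lemma~\ref{lem:simplify-X_j-complexes}. The strategy is to identify, in each staircase, a collection of ``unit-weighted'' arrows that can be canceled, and then to check convergence of the induced null-homotopy in the relevant topology.

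First I would split the analysis into two families:
\begin{enumerate}
\item For $j \ge 0$ (the $X_j$ of the first family), the vertical arrows $[W_{00}^i|Z_{01}^j]\to [W_{01}^i|T_{11}^{j+1}]$ and $[Z_{00}^i|Z_{01}^{j+1}]\to [Z_{01}^{i+1}|T_{11}^{j+1}]$ carry weight $1\in\cK$, while the diagonal arrows carry weight $U^j T^{-1}$ or $U^{j+1}T^{-1}$. This is formally identical to the situation treated in Lemma~\ref{lem:simplify-X_j-complexes}: one writes the differential as $d+\alpha$ with $d$ the sum of $1$-weighted arrows, chooses $h$ to run backward along these arrows, and defines the null-homotopy by $H^1=h\circ(1+\alpha\circ h)^{-1}=\sum_{n\ge 0} h(\alpha h)^n$. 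When $j\ge 1$ every composition $\alpha\circ h$ picks up a factor of $U^j$, so the series converges $U$-adically and hence in both topologies; when $j=0$ the factor is merely $T^{-1}$, and convergence in the chiral topology follows by exactly the argument written out at the end of the proof of Lemma~\ref{lem:simplify-X_j-complexes}.
\item For $j<0$ (the $X_{-j}$ of the second family), no $1$-weighted arrows are available when $-j\le -2$: the vertical arrows carry weights $U^{|j|}$ or $U^{|j|-1}$ and the diagonal arrows carry weight $T^{-1}$. The key observation is that $T^{-1}$ is a \emph{unit} in $\cK\cdot\ve{I}_1$, so one may still cancel along the diagonal arrows by taking $h$ to run backward along them with weight $T$. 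Writing the differential as $d+\alpha$ with $d$ the sum of $T^{-1}$-arrows, the same formula $H^1=h\circ(1+\alpha\circ h)^{-1}$ gives a candidate null-homotopy, and each composition $\alpha\circ h$ now picks up a factor of $U^{|j|}T$ or $U^{|j|-1}T$. For $|j|\ge 2$ this has positive $U$-power and the series converges $U$-adically; for $|j|=1$ (the case $X_{-1}$) one of the two possible compositions carries weight only $T$, and once again only the chiral topology yields convergence.
\end{enumerate}

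The main step to verify in case (ii) is chiral convergence of $\Pi^1$, $I^1$ and $H^1$ when $j=-1$. I would reproduce verbatim the continuity argument at the end of Lemma~\ref{lem:simplify-X_j-complexes}: given a basic open $X_{\co(S)}\vecotimes\bF[W,Z]+\sum_{\xs\in S}\xs\otimes(W^{N_\xs},Z^{N_\xs})$ in the target, one identifies a finite set $T$ of basis generators such that any infinite zig-zag in $\sum_{n\ge 0}(\alpha h)^n$ landing in one of the distinguished generators with algebra weight outside $(W^N,Z^N)$ starts in $T$. The only twist is that the new zig-zags carry powers of $T$ as well as of $U,W,Z$; but the chiral topology on $\cK$ only requires control of $W,Z$-powers at the source $\ve{I}_0$ factor (the $T^{\pm 1}$ are units and do not affect openness in $\cK\cdot\ve{I}_1$), so the same finiteness argument goes through.

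The main obstacle is essentially notational: one must carefully unwind the staircase diagrams for $X_{-j}$ (which were not previously analyzed) and confirm that cancellation along the $T^{-1}$ arrows produces a genuine strong deformation retraction. This amounts to checking the Maurer--Cartan hypothesis $\alpha^2+[d,\alpha]=0$ for the chosen splitting and verifying the side conditions $\pi\circ h=0$, $h\circ i=0$, $h\circ h=0$ — all of which are immediate from the fact that the bottom and top rows of each staircase are disjoint and the arrows weighted by $1$ or $T^{-1}$ form a perfect matching on the generators being canceled. Once convergence is in hand, Lemma~\ref{lem:HPL-chain-complexes} produces the contracting homotopy and the proof is complete.
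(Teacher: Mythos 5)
Your proof is correct and follows essentially the same route as the paper, which is quite terse here: the paper simply says to reverse the arrows labeled $1$ or $T^{-1}$ and to apply the homological perturbation lemma, noting that extra care is needed for $X_0$ and $X_{-1}$. Your more detailed accounting---canceling the $1$-weighted verticals for $j\ge 0$, canceling the $T^{-1}$-weighted diagonals (inverting the unit $T^{-1}$) for $j<0$, and identifying $j=0$ and $j=-1$ as the cases where some composition $\alpha\circ h$ carries no $U$-power so that only chiral convergence holds---is exactly the intended argument.

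One small slip in the continuity paragraph: you transcribe the basic open sets as $X_{\co(S)}\vecotimes\bF[W,Z]+\sum_{\xs\in S}\xs\otimes(W^{N_\xs},Z^{N_\xs})$, but those are the opens relevant to idempotent $(0,0)$. The complexes $X_j$ here lie in idempotent $(0,1)$, so the algebra outputs of $\delta^1$ land in $\ve{I}_1\cdot\cK\cdot\ve{I}_1=\bF[U,T,T^{-1}]$, and the relevant opens are $X_{\co(S)}\vecotimes\bF[U,T,T^{-1}]+\sum_{\xs\in S}\xs\otimes(U^{N_\xs})$. This does not change the substance: the finiteness argument from Lemma~\ref{lem:simplify-X_j-complexes} carries over with $(U^N)$ in place of $(W^N,Z^N)$, precisely because (as you say) $T^{\pm 1}$ is a unit and does not affect the $U$-adic opens in $\ve{I}_1\cdot\cK\cdot\ve{I}_1$, and because for any fixed target generator the compositions $(\alpha\circ h)^n$ land on distinct sources and eventually accumulate $U$-powers.
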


\begin{proof} The null-homotopy $H^1$ giving $\d(H^1)=\id_{X_j}$ is given via the homological perturbation lemma by reversing the arrows labeled by $1$ or $T^{-1}$, using our standard technique. It is straightforward to see that this gives well-defined and convergent maps in both both topologies. (Note some extra care is needed for $X_0$ and $X_{-1}$). 
\end{proof}

\subsubsection{Idempotent $(1,0)$}

The differential is given by
\[
\begin{split}
\delta_1^1([T_{10}^i|W^j_{00}])&=[T^i_{11}|T_{10}^{-j}]\otimes Z U^j+[T_{11}^{i-1}|T_{10}^{-j}]\otimes 1\\
\delta_1^1([T_{10}^i|Z^j_{00}])&=[T_{11}^i|T_{10}^{j+1}]\otimes 1+[T_{11}^{i-1}|T_{10}^{j+1}]\otimes W U^j. 
\end{split}
\]
The underlying type-$D$ structure in idempotent $(1,0)$ splits as a sum of $\Z$-many staircase complexes which are of one of two forms:
\[
\begin{tikzcd}[labels=description]
\cdots
&{[T^{-1}_{10}|W_{00}^j]}
	\ar[d, "ZU^j"]
	\ar[dl, "1"]
&{[T^{0}_{10}|W^j_{00}]}
	\ar[d, "ZU^j"]
	\ar[dl, "1"]
&{[T^{1}_{10}|W^{j}_{00}]}
	\ar[dl, "1"]
	\ar[d, "ZU^j"]
&{[T^{2}_{10}|W^{j}_{00}]}
	\ar[dl,"1"]
	\ar[d, "ZU^j"]
&\cdots
	\ar[dl, ]
\\
\cdots&
{[T^{-1}_{11}|T^{-j}_{10}]}
&{[T^{0}_{11}|T^{-j}_{10}]}
&{[T^{1}_{11}|T^{-j}_{10}]}
& {[T^{2}_{11}|T^{-j}_{10}]}
&\cdots
\end{tikzcd}
\]
\[
\begin{tikzcd}[labels=description]
\cdots
&{[T^{-1}_{10}|Z_{00}^j]}
	\ar[d, "1"]
	\ar[dl, "W U^j"]
&{[T^{0}_{10}|Z^j_{00}]}
	\ar[d, "1"]
	\ar[dl, "W U^j"]
&{[T^{1}_{10}|Z^{j}_{00}]}
	\ar[dl, "W U^j"]
	\ar[d, "1"]
&{[T^{2}_{10}|Z^{j}_{00}]}
	\ar[dl,"W U^j"]
	\ar[d, "1"]
&\cdots
	\ar[dl, ]
\\
\cdots&
{[T^{-1}_{11}|T^{j+1}_{10}]}
&{[T^{0}_{11}|T^{j+1}_{10}]}
&{[T^{1}_{11}|T^{j+1}_{10}]}
& {[T^{2}_{11}|T^{j+1}_{10}]}
&\cdots
\end{tikzcd}
\]

\begin{lem} In the chiral topology, all of the above complexes are contractible. In the $U$-adic topology, all of the complexes are contractible except for the ones with weights $W$ and $Z$.
\end{lem}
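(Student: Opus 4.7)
The plan is to apply the homological perturbation lemma, Lemma~\ref{lem:HPL-chain-complexes}, in exactly the same manner as in Lemmas~\ref{lem:simplify-X_j-complexes} and~\ref{lem:homotopy-equivalence-idempotent-00}. Each of the two families of staircases splits as $(C, d^1 + \alpha^1)$, where $d^1$ collects the arrows weighted by $1$ and $\alpha^1$ collects the arrows weighted by $ZU^j$ or $WU^j$. The auxiliary complex $(C, d^1)$ is strictly contractible via the map $h^1$ which reverses each $1$-weighted arrow. The homological perturbation lemma then produces a candidate null-homotopy
\[
H^1 \;=\; h^1 \circ (1 + \alpha^1 \circ h^1)^{-1} \;=\; \sum_{n \ge 0} h^1 \circ (\alpha^1 \circ h^1)^n,
\]
and the only issue is to check when this sum is well-defined and continuous in each topology.

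I would first handle the chiral topology, where the statement asserts contractibility in every case. The argument is identical in spirit to the $j=1$ case of Lemma~\ref{lem:simplify-X_j-complexes}: given a finite collection $S$ of basis elements of the target and any $N > 0$, only finitely many zigzags in $\sum (\alpha^1 \circ h^1)^n$ can terminate at a generator in $S$ with algebra weight outside the right ideal $(W^N, Z^N, U^N)$. This shows that the preimage of any open subspace is open, so $H^1$ extends continuously to the chiral completion and gives $\d(H^1) = \mathrm{id}$.

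Next I would treat the $U$-adic topology. The $n$-th term of the series carries algebra weight of the form $(ZU^j)^n$ or $(WU^j)^n$, lying respectively in the ideals $Z^n U^{jn}$ and $W^n U^{jn}$. When $j \ge 1$, these weights lie in $(U^{jn})$, so the series converges $U$-adically and produces a genuine null-homotopy. When $j = 0$, however, the weights reduce to pure powers $Z^n$ or $W^n$ with no $U$ factor at all, and the series fails to converge in the $U$-adic topology; this is exactly the exception for the complexes with weights $W$ and $Z$.

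The only mild subtlety is the continuity check in the chiral topology, but this is essentially a line-by-line adaptation of the analysis already performed in Lemma~\ref{lem:simplify-X_j-complexes}, so I would simply invoke that argument with the obvious changes of basis element labels. There is no new obstacle beyond bookkeeping.
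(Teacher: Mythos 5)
Your proof is correct and follows exactly the approach the paper intends: the paper's own proof is just a one-line pointer to the technique of Lemma~\ref{lem:homotopy-equivalence-idempotent-00} (and hence of Lemma~\ref{lem:simplify-X_j-complexes}), and you have spelled out that technique accurately, identifying that the $\alpha^1$-weights $ZU^j$ resp.\ $WU^j$ have a $U$-factor exactly when $j\ge 1$, so that the geometric series for $(1+\alpha^1\circ h^1)^{-1}$ converges $U$-adically in those cases but not when the weight is a pure $Z$ or $W$; meanwhile all weights are divisible by a positive power of $W$ or $Z$, so the chiral continuity check from Lemma~\ref{lem:simplify-X_j-complexes} carries over verbatim. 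One trivial remark: since $U^N = W^N Z^N \in (W^N)$, your ideal $(W^N, Z^N, U^N)$ is just $(W^N, Z^N)$ in $\ve{I}_0\cdot\cK\cdot\ve{I}_0$, which is the relevant right ideal here because the $(1,0)$-idempotent summand has algebra outputs in idempotent $0$.
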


The proof follows from the same technique as in Lemma~\ref{lem:homotopy-equivalence-idempotent-00}, so we leave the details to the reader.

\subsubsection{Idempotent $(1,1)$}

In this idempotent, the underlying type-$D$ modules takes the following form:
\[
\begin{split}
\delta_1^1([T_{10}^i|W^j_{01}])&=[T^i_{11}|T^{-j}_{11}]\otimes U^j+[T^{i-1}_{11}|T^{-j}_{11}]\otimes T^{-1}\\
\delta_1^1([T_{10}^i|Z^j_{01}])&=[T^i_{11}|T^{j}_{11}]\otimes 1+[T^{i-1}_{11}|T^{j}_{11}]\otimes U^j T^{-1}. 
\end{split}
\]
This decomposes as a direct sum of staircases $S_j$, $j\in \Z$, where $S_j$ contains the generator $[T_{10}^0|W_{01}^{-j}]$ if $j\le0$ and $[T_{10}^-|Z_{01}^j]$ if $j\ge 0$.

We write out $S_0$:
\[
\begin{tikzcd}[labels=description]
\cdots
&{[T^{-1}_{10}|1_{01}]}
	\ar[d, "1"]
	\ar[dl, "T^{-1}"]
&{[T^{0}_{10}|1_{01}]}
	\ar[d, "1"]
	\ar[dl, "T^{-1}"]
&{[T^{1}_{10}|1_{01}]}
	\ar[dl, "1"]
	\ar[d, "T^{-1}"]
&{[T^{2}_{10}|1_{01}]}
	\ar[dl,"1"]
	\ar[d, "T^{-1}"]
&\cdots
	\ar[dl, ]
\\
\cdots&
{[T^{-1}_{11}|T^{0}_{11}]}
&{[T^{0}_{11}|T^{0}_{11}]}
&{[T^{1}_{11}|T^{0}_{11}]}
& {[T^{2}_{11}|T^{0}_{11}]}
&\cdots
\end{tikzcd}
\]

Again both $W^0_{01}$ and $Z^0_{01}$ denote the generator $1_{01}$ in the $(0,1)$-idempotent.
 
\begin{lem}
The staircases $S_j$ are contractible if $j\neq 0$ in both the chiral and $U$-adic topologies. In the $U$-adic topology, $S_0$ admits a strong deformation retraction to the rank 1 subcomplex spanned by $[T^0_{11}|T^0_{11}]$ (with vanishing $\delta^1$). In the chiral topology, the staircase $S_0$ admits a strong deformation retraction to the rank 1 subcomplex spanned by the element $\sum_{i=-\infty}^{0} [T^i_{10}|1_{01}]\otimes T^i  +\sum_{i=1}^{\infty} [T^i_{10}|1_{01}] \otimes T^{-i+1}. $
\end{lem}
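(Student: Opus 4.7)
The plan is to apply the homological perturbation lemma (Lemma~\ref{lem:HPL-chain-complexes}) to each staircase $S_j$, choosing the deformation data to suit the topology. Let me write $\xs_i$ for the top-row generators and $\ys_i$ for the bottom-row generators of $S_j$, so that in $S_0$ we have $\delta^1(\xs_i) = \ys_i + \ys_{i-1} \otimes T^{-1}$, with the analogous formulas in $S_j$ carrying an additional factor of $U^{|j|}$ on one of the two arrows.

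For $S_j$ with $j \neq 0$, I would split $\delta^1 = d + \alpha$ into the vertical and diagonal arrows respectively and reverse $d$ by setting $h(\ys_i) = \xs_i$.  The unperturbed retract is onto the zero complex, and the $n$-th iterate $(h\alpha)^n$ carries algebra weight in $(U^{n|j|})$, so the Mauer--Cartan inversion $(1 + h\alpha)^{-1} = \sum_n (h\alpha)^n$ converges in both the $U$-adic and chiral topologies.  The perturbed SDR still lands on the zero complex, hence $S_j$ is contractible.

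For $S_0$ in the $U$-adic topology, the naive $h(\ys_i) = \xs_i$ fails because $\alpha$ carries the unit weight $T^{-1}$, outside $(U)$. Instead, I would use the finite-range telescoping identity
\[
\delta^1\Bigl(\sum_{k=0}^{i-1} \xs_{i-k} \otimes T^{-k}\Bigr) = \ys_i + \ys_0 \otimes T^{-i}, \quad i\ge 1,
\]
and the symmetric identity for $i \le -1$, to define $h(\ys_i)$ as a finite telescope for $i \ne 0$, with $h(\ys_0) = h(\xs_i) = 0$.  Together with $\pi(\ys_i) = \ys_0 \otimes T^{-i}$ and $\pi(\xs_i) = 0$, and $I(e) = \ys_0$, these finite-range formulas furnish the desired SDR onto $\bF \cdot [T^0_{11}|T^0_{11}]$; continuity in the $U$-adic topology is automatic because every map has finite support on each generator.

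For $S_0$ in the chiral topology, infinite sums over the basis $\{\xs_i\}$ with unit algebra coefficients are now permitted, because each generator appears in exactly one summand and any chiral-open subspace is defined by a cofinite subset of the basis together with finitely many $U^N$-conditions; in particular the element $v$ of the statement is a legitimate element of the chiral completion of $S_0 \otimes \cK$. I would construct the alternative SDR by applying homological perturbation with the naive choice $h(\ys_i) = \xs_i$: the inverse $(1 + h\alpha)^{-1}$, which diverges $U$-adically, converges chirally since the $n$-th zig-zag from $\xs_i$ lands at $\xs_{i-n}$ with weight $T^{-n}$, and for any fixed basis element only finitely many contributions fall outside a given chiral-open ideal.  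The inclusion of the rank $1$ retract then carries the generator to the element $v$, obtained from the HPL procedure by splitting the resulting telescope at $i=0$ and normalizing the two half-infinite pieces against each other.  The hard part will be the bookkeeping in this last step: verifying that the coefficients produced by the HPL zig-zags, after splitting, reduce to the form displayed in the statement and that the resulting triple $(I, \Pi, H)$ satisfies the SDR axioms in the chiral completion, appealing to Remark~\ref{rem:relax-HPL} in case $h \circ h \ne 0$ at some step.
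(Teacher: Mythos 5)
Your treatment of $S_j$ for $j\ne 0$ is essentially right (though note that for $j<0$ the unit-weighted arrows are the diagonal ones, so $d$ and $\alpha$ must be chosen by algebra weight rather than by direction), and the finite-telescope homotopy in the $U$-adic $S_0$ case is correct. The chiral case of $S_0$, however, has a genuine gap. If the naive choice $h(\ys_i)=\xs_i$ for all $i$ were a valid perturbation datum, the unperturbed complex $(S_0,d)$ with $d(\xs_i)=\ys_i$ would be acyclic, so the perturbed $S_0$ would be \emph{contractible} in the chiral topology --- contradicting the very lemma you are trying to prove. The HPL in fact fails to apply: your observation that each sum $(1+h\alpha)^{-1}(\xs_i)=\sum_n\xs_{i-n}\otimes T^{-n}$ is a legitimate element of the chiral completion is correct, but well-definedness of individual images is not continuity of the map. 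Fix a target basis element $\xs_j$ and the basic chiral open $\xs_j\otimes(U^m)$ with $m\ge 1$: the coefficient of $\xs_j$ in $(1+h\alpha)^{-1}(\xs_i)$ is the unit $T^{j-i}$ for \emph{every} $i\ge j$, so infinitely many inputs have image outside this open, and $(1+h\alpha)^{-1}$ is not a morphism of linear topological spaces. Thus Lemma~\ref{lem:HPL-chain-complexes} cannot be invoked with this datum.

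A correct chiral argument must set up the SDR onto the rank-$1$ cycle module directly. Chirally $\delta^1$ is onto --- one computes $\ys_0=\delta^1\bigl(\sum_{i\ge 1}\xs_i\otimes T^i\bigr)=\delta^1\bigl(\sum_{i\le 0}\xs_i\otimes T^i\bigr)$ --- while its kernel is freely generated by the cycle $w=\sum_{i\in\Z}\xs_i\otimes T^i$. An explicit SDR is $i(e)=w$, $\pi(\xs_0)=e$ with $\pi$ vanishing on all other generators, and
\[
h(\ys_i)=\sum_{j>i}\xs_j\otimes T^{j-i}\ \ (i\ge 0),\qquad
h(\ys_i)=\sum_{j\le i}\xs_j\otimes T^{j-i}\ \ (i<0),\qquad h(\xs_i)=0.
\]
The point of this particular $h$ is that, for each fixed $j$, the coefficient of $\xs_j$ in $h(\ys_i)$ is nonzero for only finitely many $i$ (namely $i\in\{0,\dots,j-1\}$ if $j>0$, $i\in\{j,\dots,-1\}$ if $j<0$, and never if $j=0$); this finite-fibre property is precisely what makes $h$, and hence the SDR, continuous in the chiral topology, and one should verify the SDR identities directly rather than by invoking HPL. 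One further remark: the generating element displayed in the lemma appears to contain a typo, since applying $\delta^1$ to it literally does not give zero; the cycle generating the chiral homology is $\sum_{i\in\Z}[T^i_{10}|1_{01}]\otimes T^i$.
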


In summary, using the chiral topology, by using the homological perturbation lemma, we may simplify all of the staircase complexes appearing as summands. The only staircases which are not contractible are the staircase $X_0$ in idempotent $(0,0)$ and the staircase $S_0$ in idempotent $(1,1)$. By using the homological perturbation lemma, we see that the sum of these two staircases is homotopy equivalent to the rank $2$ vector space $\ve{I}$, which we view as a type-$D$ module $\ve{I}^{\cK}$ with vanishing differential.

\subsection{The module structure}
\label{sec:module structure of elliptic involution}
The results of the previous section construct a strong deformation retraction 
\[
\begin{tikzcd}
\ar[loop left, "H^1"](\cH_-\boxtimes \cH_-)^{\cK}\ar[r,shift left, "\Pi^1"] &\ve{I}^{\cK} \ar[l, shift left, "I^1"]
\end{tikzcd}
\]
when we equip the module and algebra with the chiral topologies. By homological perturbation of $DA$-bimodules, there is an induced $DA$-bimodule structure on $\ve{I}^{\cK}$, for which we write ${}_{\cK} [\phi_{-,-}]^{\cK}$ and a strong deformation retraction
\[
{}_{\cK}\cH_-^{\cK}\boxtimes {}_{\cK}\cH_-^{\cK}\simeq {}_{\cK} [\phi_{-,-}]^{\cK}.
\]
There is of course another module ${}_{\cK}[\phi_{-,+}]^{\cK}$ which is a retraction of $\cH_{-}\boxtimes \cH_{+}$.

\begin{lem}
\label{lem:identity-d21-computation} In the chiral topology, we have 
\[
{}_{\cK} [\phi_{-,-}]^{\cK}={}_{\cK} [E]^{\cK}\quad \text{and} \quad {}_{\cK} [\phi_{-,+}]^{\cK}={}_{\cK} [\bI]^{\cK}. 
\]
\end{lem}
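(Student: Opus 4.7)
The plan is as follows. By Section~\ref{sec:simplifying hopf link tensor hopf link}, in the chiral topology we have an explicit strong deformation retraction of type-$D$ modules between $(\cH_-\boxtimes \cH_-)^{\cK}$ and $\ve{I}^{\cK}$, with maps $(I^1, \Pi^1, H^1)$ in which $I^1(1_0)=[1_{01}|T^1_{10}]$ and $I^1(1_1)$ is the chiral cycle supported in $S_0$. Applying the homological perturbation lemma for $DA$-bimodules (the bimodule analog of Lemma~\ref{lem:HPL-modules}, with formulas as in Figure~\ref{fig:homological-perturbation}) promotes this to a strong deformation retraction of $DA$-bimodules, yielding the induced structure $[\phi_{-,-}]^{\cK}$ on $\ve{I}$, and analogously $[\phi_{-,+}]^{\cK}$. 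By Remark~\ref{rem:string-orientation-reversal}, ${}_{\cK}\cH_+^{\cK}\simeq {}_{\cK}\cH_-^{\cK}\boxtimes {}_{\cK}[E]^{\cK}$, so $[\phi_{-,+}]\simeq [\phi_{-,-}]\boxtimes [E]$; since $E^2=\bI$, it therefore suffices to prove $[\phi_{-,-}]={}_{\cK}[E]^{\cK}$.

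To identify the induced structure, observe that $\ve{I}^{\cK}$ has rank one in each idempotent with vanishing $\delta^1_1$. Since $\cH_-$ itself carries only $\delta^1_1$ and $\delta^1_2$, and since $I^1(1_\veps)$ is a type-$D$ cycle, the first-order formula for the induced action reduces to
\[
\delta^1_2(a, 1_\veps) = \Pi^1\!\left(\delta^1_2(a, I^1(1_\veps))\right).
\]
Higher induced $\delta^1_n$ for $n\ge 3$ are chains of $\delta^1_2$-applications interleaved with $H^1$, and a direct inspection of the zig-zags in the staircases of Section~\ref{sec:simplifying hopf link tensor hopf link} shows that these compositions are null-homotopic on the rank-one retract, so after reduction we obtain a bimodule of the form ${}_{\cK}[\phi]^{\cK}$ for some algebra endomorphism $\phi\colon \cK\to\cK$. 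It remains to compute $\phi$ on each generator.

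This is an explicit calculation using the retraction maps of Section~\ref{sec:simplifying hopf link tensor hopf link}. For $a=W$: one has $\delta^1_2(W, [1_{01}|T^1_{10}]) = [W^1_{01}|T^1_{10}]\otimes 1$, and the perturbation zig-zag $[W^1_{01}|T^1_{10}]\xleftarrow{\,1\,}[W^0_{00}|W^0_{00}]\xrightarrow{\,Z\,}[1_{01}|T^1_{10}]$ in the staircase $X_0$ gives $\Pi^1([W^1_{01}|T^1_{10}])=1_0\otimes Z$, so $\phi(W)=Z$. Symmetrically, the zig-zag through $[Z^0_{00}|Z^0_{00}]$ yields $\phi(Z)=W$. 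For $T^{\pm 1}$ acting on the chiral cycle $I^1(1_1)$ in $S_0$, the index shift in the summation followed by reindexing and $\Pi^1$ produces $\phi(T^{\pm 1})=T^{\mp 1}$; similarly $\phi(U)=U$, consistent with $\phi(W)\phi(Z)=ZW=U$. For $\sigma$ (and symmetrically $\tau$), the left action on $1_{01}$ in $\cH_-^{(1)}$ produces a generator in the $(1,1)$-block of the tensor product, which retracts against $I^1(1_1)$ through the $S_0$ zig-zags to give $\phi(\sigma)=\tau$ and $\phi(\tau)=\sigma$. These values coincide with those of $E$, proving $[\phi_{-,-}]=[E]$.

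The main obstacle is the chiral-topology bookkeeping in idempotent $1$: the cycle $I^1(1_1)$ is an infinite $T$-indexed sum, and each computation of $\phi(\sigma)$, $\phi(\tau)$, $\phi(T^{\pm 1})$ requires tracking a corresponding family of perturbation zig-zags in $S_0$ and verifying convergence and correct algebra weights. The use of the chiral (rather than $U$-adic) topology is essential: as noted in Section~\ref{sec:simplifying hopf link tensor hopf link}, in the $U$-adic topology additional staircases fail to contract, so the target of the retraction is strictly larger than $\ve{I}^{\cK}$ and the clean identification with $[E]$ and $[\bI]$ fails.
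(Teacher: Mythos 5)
The overall strategy here matches the paper's: reduce to showing $[\phi_{-,-}]=[E]$ via Remark~\ref{rem:string-orientation-reversal}, apply homological perturbation, argue that the induced higher actions vanish, and compute $\delta_2^1$ on each algebra generator. However, there are two genuine gaps in the execution.

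First, the claimed ``first-order formula'' $\delta^1_2(a, 1_\veps)=\Pi^1\!\bigl(\delta^1_2(a, I^1(1_\veps))\bigr)$ is justified by asserting that $I^1(1_\veps)$ is a type-$D$ cycle, but the SDR built in Section~\ref{sec:simplifying hopf link tensor hopf link} is constructed block-by-block over the idempotent cube $\bE_2$, and the block-wise inclusion $I^1_{00}(i_0)=[1_{01}|T^1_{10}]$ is \emph{not} a cycle for the full differential: $\delta_1^1([1_{01}|T_{10}^1])=[1_{01}|T_{11}^0]\otimes\tau+[1_{01}|T_{11}^1]\otimes\sigma\neq 0$. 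To make the formula valid one must either use the fully perturbed hypercube-level $I^1,\Pi^1$ (which carry correction terms landing in the $(0,1)$-block) or, as the paper does, sum over monotone paths in $\bE_2$ as in Lemma~\ref{lem:homotopy perturbation of hypercube}. Your computations for $W,Z,T,U$ survive because $\delta_2^1(a,-)$ preserves the relevant idempotent block for those generators, so only the length-one path contributes; but for $\sigma$ and $\tau$ the correction terms are exactly what produces the answer.

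Second, and relatedly, your description of the $\sigma$ computation is incorrect as stated. The element $\delta_2^1(\sigma,[1_{01}|T^1_{10}])=[T^0_{11}|T^1_{10}]$ lies in the $(1,0)$-block (left idempotent $1$, right idempotent $0$), which contracts to zero --- it is not in the $(1,1)$-block as you claim. The paper's computation shows that the path $I^1_{00}\to\delta_2^1(\sigma,-)\to H^1_{10}\to\delta_1^1\to\Pi^1_{11}$ contributes zero, and the nonzero answer $i_1\otimes\tau$ comes from the other monotone path $(0,0)\to(0,1)\to(1,1)$, namely $\Pi^1_{11}\circ\delta_2^1(\sigma,-)\circ H^1_{01}\circ\delta_1^1\circ I^1_{00}$. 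Your sketch omits this route entirely. Finally, the argument that $\delta_j^1=0$ for $j\ge 3$ by ``direct inspection of zig-zags'' being ``null-homotopic'' is both the wrong claim (one needs the structure maps to \emph{vanish}, not be null-homotopic --- they are not morphisms) and not actually an argument; the paper's cleaner observation is that the staircase grading on $\cH_-\boxtimes\cH_-$ is supported in $\{0,1\}$, $H^1$ raises it by one, and $\delta_1^1,\delta_2^1$ preserve it, so any composite with two or more applications of $H^1$ is forced to vanish.
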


Note that Theorem~\ref{thm:identity-cobordism} is an immediate consequence of Lemma~\ref{lem:identity-d21-computation}.

\begin{proof}[Proof of Lemma~\ref{lem:identity-d21-computation}]
Note firstly that by Remark~\ref{rem:string-orientation-reversal} and the fact that ${}_{\cK}[E]^{\cK}\boxtimes {}_{\cK}[E]^{\cK}={}_{\cK}[\bI]^{\cK}$, it is sufficient to show the claim about $\phi_{-,-}$. 

Observe firstly that $\cH_- \boxtimes \cH_-$ admits an internal grading supported in $\{0,1\}$, which we call the \emph{staircase grading}. Here, we say a generator $\xs$ is in staircase grading 0 if it is in the bottom row of the staircase. Otherwise, we say it is in staircase grading 1. Recall that we write $I_{\veps}^1$, $H_{\veps}^1$ and $\Pi_{\veps}^1$ for the strong deformation retractions constructed above on the type-$D$ module $\ve{I}_{\veps_1}\cdot (\cH_-\boxtimes \cH_-)^{\cK}\cdot \ve{I}_{\veps_2}$, for $\veps=(\veps_1,\veps_2)\in \bE_2$. Note that the maps $I_{\veps}^1$, $H_{\veps}^1$ and $\Pi_{\veps}^1$ are homogeneously graded with respect to the staircase grading. The map $H_{\veps}^1$ increases the staircase grading by 1. Here $\epsilon \in \mathbb{E}_2=\left\{0,1\right\}\times \left\{0,1\right\}$.  Additionally, the maps $\delta_1^1$ and $\delta_2^1$ of ${}_{\cK} \cH_-\boxtimes \cH_{-}^{\cK}$ all preserve the staircase grading. Therefore, using the description of the actions from Lemma~\ref{lem:HPL-modules}, we observe that the structure maps $\delta_j^1$ on ${}_{\cK}[\phi_{-,-}]^{\cK}$ vanish if $j>2$.  It is easy also to see that $\delta_1^1=0$.

Therefore it is sufficient to show that
\[
\delta_2^1(W,i_0)=i_0\otimes Z,\quad \delta_2^1(Z,i_0)=i_0\otimes W,\quad \delta_2^1(\sigma,i_0)=i_1\otimes \tau\quad \delta_2^1(\tau,i_0)=i_1\otimes \sigma
\]
and
\[
\delta_2^1(U,i_1)=i_1\otimes U\quad \delta_2^1(T,i_1)=i_1\otimes T^{-1}. 
\]

We first verify that $\delta_2^1(W,i_0)=i_0\otimes Z$. We observe that $I_{00}^1(i_0)=[1_{01}|T_{10}^1]$. Applying $\delta_2^1(W,-)$ on $\cH_-\boxtimes \cH_-$ yields $[W_{01}|T_{10}^1]$. Finally, applying $\Pi^1_{00}$ yields $i_0\otimes Z$. (For the application of $\Pi_{00}^1$, the recipe is to travel backwards along the 1 arrow from $[W_{01}|T_{10}^1]$ to $[W_{00}^0|W_{00}^0]$, and then forward along the arrow labeled $Z$ to $[1_{01}|T_{10}^1]$. See the complex $X_0$ in Lemma \ref{lem:homotopy-equivalence-idempotent-00}.)
The computation that $\delta_2^1(Z,i_0)=i_0\otimes W$ is essentially the same.

Next we move on to the computation of $\delta_2^1(\sigma,i_0)$. There are now two paths of maps which contribute via homological perturbation:
\begin{enumerate}
\item Apply $I_{00}^1$, then apply $\delta_2^1(\sigma,-)$, then apply $H_{10}^1$, then apply a term of $\delta_1^1$ which increases cube grading in $\mathbb{E}_2$, and finally apply $\Pi_{11}^1$.
\item Apply $I_{00}^1$, then apply $\delta^1_1$ (increasing cube grading in $\mathbb{E}_2$), apply $H_{01}^1$, then apply $\delta_2^1(\sigma,-)$, and finally apply $\Pi_{11}^1$. 
\end{enumerate}
We now consider the first path. We have $I_{00}^1(i_0)=[1_{01}|T_{10}^1]$. We observe
\[
\delta_2^1(\sigma, [1_{01}|T_{10}^1])=[T_{11}^0|T^1_{10}]\otimes 1.
\]
Applying $H_{10}^1$ yields
\[
\sum_{k\ge 0} [T_{10}^{-k}|Z_{00}^0]\otimes W^{k-1}.
\]
Applying $\delta_1^1$ yields
\[
\sum_{k\ge 0} [T_{10}^{-k}|1_{01}]\otimes \tau W^{k-1}+[T_{10}^{-k}|Z_{01}^1]\otimes  \sigma W^{k-1}.
\]
Note that the projection map $\Pi_{11}^1$ is can be chosen to be projection onto any generator in the top row. For the sake of simplicity, we declare this map to be projection onto $[T_{01}^{-1}|1_{01}]$. (Other choices change the final module only by homotopy equivalence). Composing the above with $\Pi_{11}^1$, we obtain $0$. 

We now compute the second path of arrows. As before, $I_{00}^1$ sends $i_0$ to $[1_{01}|T_{10}^1]$, which is sent to $[1_{01}|T_{11}^0]\otimes \tau+[1_{01}|T_{11}^1]\otimes \sigma$ by $\delta_1^1$. We now apply $H_{01}^1$, which sends the above to
\[
\sum_{k\ge 0} [Z_{00}^k|1_{01}]\otimes T^{k+1}\tau+\sum_{k\ge 0} [W_{00}^k| 1_{01}]\otimes T^{-k} \sigma.  
\]
We now apply $\delta_2^1(\sigma,-)$ and obtain
\[
\sum_{k \ge 0} [T_{10}^{-k+1}|1_{01}^1]\otimes T^{k+1} \tau+ \sum_{k \ge0} [T_{10}^0|Z_{01}^1]\otimes T^{-k} \sigma. 
\]
With our choice of projection $\Pi_{11}^1$, the above is sent to $i_1\otimes \tau$.

An entirely similar argument shows that $\delta_2^1(\tau,i_0)=i_1\otimes \sigma$.

The computations of $\delta_2^1(U,i_1)$ and $\delta_2^1(T,i_1)$ are similar. We focus on $\delta_2^1(T,i_1)$ since it is more interesting. Our inclusion map sends $i_1$ to the element
\[
\sum_{k\in \Z} [T_{10}^k|1_{01}]\otimes T^{k+1}.
\]
Then the internal map $\delta_2^1(T,-)$ sends the above to $\sum_{k\in \Z} [T_{10}^{k+1}|1_{01}]\otimes T^{k+1}$, which the map $\Pi_{11}^1$ sends to $i_1\otimes T^{-1}$. The argument for $\delta_2^1(U,i)$ is similar.
\end{proof}

 \subsection{Quasi-invertibility}

Theorem~\ref{thm:identity-cobordism} implies the following corollary (stated as Corollary~\ref{cor:quasi-inverse} in the introduction):

\begin{cor}\label{cor:quasi-inverse-text} The bimodule ${}_{\cK|\cK} \bI^{\Supset}$ is quasi-invertible, with quasi-inverse $\cX(\Cyl_\bI)^{\cK\otimes \cK}$.
\end{cor}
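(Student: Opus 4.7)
The plan is to unpack the definition of the $DA$-bimodule ${}_{\cK}\cX(\Cyl_\bI)^{\cK}$ and apply Theorem~\ref{thm:identity-cobordism}. Recall from \cite{ZemExact} and the discussion just before the statement that the $DA$-bimodule associated to a manifold with parametrized torus boundary components is obtained from its underlying type-$D$ invariant by tensoring with ${}_{\cK|\cK}\bI^{\Supset}$ along each boundary component for which we want a left type-$A$ action. In particular, the mapping cylinder $\Cyl_\bI$ has two torus boundary components, so there is a type-$D$ module $\cX(\Cyl_\bI)^{\cK\otimes \cK}$, and by definition
\[
{}_{\cK}\cX(\Cyl_\bI)^{\cK}\;=\;\cX(\Cyl_\bI)^{\cK\otimes \cK}\boxtimes {}_{\cK|\cK}\bI^{\Supset},
\]
where the box tensor is along one of the two $\cK$ factors.

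The first step is therefore to invoke this definitional equality. The second step is to apply Theorem~\ref{thm:identity-cobordism}, which gives a homotopy equivalence of $DA$-bimodules
\[
{}_{\cK}\cX(\Cyl_\bI)^{\cK}\simeq {}_{\cK}[\bI]^{\cK}={}_{\cK}\bI^{\cK}.
\]
Chaining these two identifications together yields
\[
\cX(\Cyl_\bI)^{\cK\otimes \cK}\boxtimes {}_{\cK|\cK}\bI^{\Supset}\simeq {}_{\cK}\bI^{\cK},
\]
which is the definition of quasi-invertibility.

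For the last sentence of the corollary, namely that the equivalence holds independently of which $\cK$-factor the tensor product is formed along, the plan is to use the $\Z/2$-symmetry of $\Cyl_\bI$ which swaps its two boundary components. This symmetry induces an isomorphism of the underlying type-$D$ module $\cX(\Cyl_\bI)^{\cK\otimes \cK}$ that interchanges the two $\cK$-factors, so tensoring on either side yields homotopy equivalent $DA$-bimodules. Equivalently, one can run the argument of Theorem~\ref{thm:identity-cobordism} twice, once with each $\cK$-factor playing the role of the type-$A$ side.

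There is essentially no obstacle in this argument beyond Theorem~\ref{thm:identity-cobordism} itself; the content of the corollary is entirely a restatement of that theorem in the language of type-$D$ modules and quasi-inverses. The only point requiring minor care is to confirm that the symmetry of $\Cyl_\bI$ swapping its boundary components is realized at the level of the parametrization used in the construction of $\cX(\Cyl_\bI)^{\cK\otimes \cK}$, which follows from the naturality of the construction in \cite{ZemExact}.
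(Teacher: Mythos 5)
Your proposal is correct and takes essentially the same approach as the paper: unpack the definitional equality ${}_{\cK}\cX(\Cyl_{\bI})^{\cK}=\cX(\Cyl_{\bI})^{\cK\otimes\cK}\boxtimes{}_{\cK|\cK}\bI^{\Supset}$ and apply Theorem~\ref{thm:identity-cobordism}. Your additional discussion of the $\Z/2$-symmetry addressing the choice of $\cK$-factor is a reasonable elaboration of a point the paper asserts but does not spell out.
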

\begin{proof} By definition:
\[
{}_{\cK} \cX(\Cyl_{\bI})^{\cK}:=\cX(\Cyl_{\bI})^{\cK\otimes \cK} \boxtimes {}_{\cK|\cK} \bI^{\Supset}.
\]
Theorem~\ref{thm:identity-cobordism} implies the above bimodule is homotopy equivalent to ${}_{\cK} [\bI]^{\cK}$, completing the proof.
\end{proof}

 \section{The bimodule for $(q,qn +1)$-cables}
 \label{sec:cables}

In this section, we describe our satellite formula for $(q,qn+ 1)$-cables. This could be viewed as applying the satellite operation with pattern $T_{q,1}$ torus knot to the $n$-framed companion knot. Our approach is similar to the approach in Section~\ref{sec:identity-cobordism}, where we compute the bimodules for the identity cobordism and elliptic involution. In the case of cabling, we are mostly interested in the effect of cabling on the knot Floer complex.

We define our cabling module as the tensor product
\[
{}_{\cK}\scC_{q,1}^{\bF[W,Z]}:={}_{\cK}\cH_-^{\cK}\boxtimes {}_{\cK}\cX(T_{2,2q})^{\bF[W,Z]},
\]
where ${}_{\cK}\cX(T_{2,2q})^{\bF[W,Z]}$ is described in Section~\ref{sec:T(2,2q)}. We note that $T(2,2q) $ is the link $L_{P}$ for the $(q,1)$-cabling pattern $P$, i.e. $T_{2,2q}$ is obtained by taking the $(q,1)$-cable of one component of the positive Hopf link. The analysis in this section is a prototype for the analysis carried out for general L-space patterns in Section~\ref{sec:GeneralSatellite}.

In this section, we work exclusively with the chiral topologies on the algebra and modules.

\subsection{Simplifying the underlying type-$D$ module}

In this section, we simplify the underlying chain complex of ${}_{\cK}\scC_{q,1}^{\bF[W,Z]}$. The underlying chain complex naturally splits as a direct sum of infinite staircases, all but finitely many are homotopically trivial. We first establish some notations for the complex ${}_{\cK}\cX(T_{2,2q})^{\bF[W,Z]}$. We will give a slightly asymmetrical enumeration of the generators (which will make some of the later formulas easier to write down). We will enumerate generators in idempotent 0 as $\ve{x}_{t}$, where $t\in \Z$, and we will enumerate the generators in idempotent 1 as $T^t$. Below is the $DA$-bimodule ${}_{\cK}\cX(T_{2,2q})^{\bF[W,Z]}$ with $q=3$, the same as in Figure \ref{fig:T226-diagram-H}, with index shifted by $\frac{3}{2}$.

\[
\begin{tikzcd}[labels=description, column sep=1.2cm, row sep=0cm]
	\cdots
	\ar[r, bend left, "Z|U"]
	&\xs_{-1}
	\ar[r, bend left, "Z|U"]
	\ar[l, bend left, "W|1"]
	\ar[d, "\substack{\sigma|W^3U \\ \tau|1}"]
	& \xs_{0}
	\ar[r, bend left, "Z|W"]
	\ar[l, bend left, "W|1"]
	\ar[d, "\substack{\sigma|W^3 \\ \tau|1}"]
	&\xs_{1}
	\ar[r, "Z|W", bend left]
	\ar[l, "W|Z", bend left]
	\ar[d, "\substack{\sigma|W^2\\ \tau|Z}"]
	&\xs_{2}
	\ar[r, bend left, "Z|W"]
	\ar[l, bend left, "W|Z"]
	\ar[d, "\substack{\sigma|W \\ \tau|Z^2}"]
	&\xs_{3}
	\ar[r, bend left, "Z|1"]
	\ar[l, bend left,"W|Z"]
	\ar[d, "\substack{\sigma|1\\ \tau|Z^3}"]
	&\xs_{4}
	\ar[r, bend left, "Z|1"]
	\ar[l, bend left,"W|U"]
	\ar[d,"\substack{\sigma|1\\ \tau|UZ^3}"]
	& \cdots
	\ar[l, bend left,"W|U"]
	\\[2cm]
	\cdots
	\ar[r, bend left, "T|1"]
	&T^{-1}
	\ar[r, bend left, "T|1"]
	\ar[l, bend left, "T^{-1}|1"]
	\ar[loop below,looseness=20, "U|U"]
	& T^{0}
	\ar[r, bend left, "T|1"]
	\ar[l, bend left, "T^{-1}|1"]
	\ar[loop below,looseness=20, "U|U"]
	& T^{1}
	\ar[r, "T|1", bend left]
	\ar[l, "T^{-1}|1", bend left]
	\ar[loop below,looseness=20, "U|U"]
	&T^{2}
	\ar[r, bend left, "T|1"]
	\ar[l, bend left,"T^{-1}|1"]
	\ar[loop below,looseness=20, "U|U"]
	&T^3
	\ar[r, bend left, "T|1"]
	\ar[l, bend left,"T^{-1}|1"]
	\ar[loop below,looseness=20, "U|U"]
	& T^4
	\ar[l, bend left,"T^{-1}|1"]
	\ar[loop below,looseness=20, "U|U"]
	\ar[r, "T|1", bend left]
	&\cdots 
	\ar[l, "T^{-1}|1", bend left]
\end{tikzcd}
\]

 \subsubsection{Idempotent $0$}
 
 In this idempotent, the tensor product $\cH_- \boxtimes \cX(T_{2,2q})$.  We now record the differentials of the complex:

\begin{enumerate}
\item $\delta_1^1[W_{00}^i|\ve{x}_t]=\begin{cases}[W^i_{01}|T^{t+1}]\otimes W^qU^{-t}+[W^{i+1}_{01}|T^{t+1}]\otimes 1 & \text{ if } t\le0\\
[W^i_{01}|T^{t+1}]\otimes W^{q-t} +[W^{i+1}_{01}|T^{t+1}]\otimes Z^t& \text{ if } 0\le t\le q\\
[W^i_{01}|T^{t+1}]\otimes 1+[W^{i+1}_{01}|T^{t+1}]\otimes Z^q U^{t-q}& \text{ if } q\le t.
\end{cases}$
\item $\delta_1^1 [Z^i_{00}|\ve{x}_t]=\begin{cases} [Z^{i+1}_{01}|T^t]\otimes W^q U^{-t} +[Z_{01}^i|T^t]\otimes 1& \text{ if } t \le 0\\
[Z^{i+1}_{01}|T^t]\otimes W^{q-t} +[Z_{01}^i|T^t]\otimes Z^t & \text{ if } 0\le t\le q\\
[Z_{01}^{i+1}|T^t]\otimes 1 +[Z_{01}^i|T^t]\otimes Z^q U^{q-t}& \text{ if } q \le t
\end{cases}.$
\end{enumerate}

As a chain complex, the resulting complex splits as into a direct sum of staircase complexes.  We adopt similar conventions as for the elliptic bimodule and will write $Y_t$ for the subcomplex containing the generator $[1_{01}|T^t]$. Many of these staircases are easily seen to be contractible.

For $1\le t\le q$, the complex $Y_t$ takes the following form:
\[
 \begin{tikzcd}[labels=description, column sep={2.5cm,between origins}]
 \cdots
\ar[dr]
 &{[W_{00}^1|\xs_{t-1}]}
 	\ar[d, "Z^{t-1}"]
 	\ar[dr, "W^{q-t+1}"]
 &{[W_{00}^0|\xs_{t-1}]}
 	\ar[d, "Z^{t-1}"]
 	\ar[dr, "W^{q-t+1}"]
 &{[Z^0_{00}|\xs_t]}
 	\ar[d, "Z^{t}"]
 	\ar[dr, "W^{q-t}"]
 &{[Z^1_{00}|\xs_t]}
 	\ar[d, "Z^{t}"]
 	\ar[dr, "W^{q-t}"]
 &\cdots
 \\
 \cdots&
 {[W^2_{01}|T^t]}
 &{[W_{01}|T^t]}
 & {[1_{01}|T^t]}
 & {[Z_{01}|T^t]}
 &\cdots
 \end{tikzcd}
 \]

The complexes $Y_1,\dots, Y_q$ are not contractible, however $Y_t$ is contractible if $t\not\in \{1,\dots, q\}$.

 \begin{rem} It is possible to truncate the complexes $Y_1$ and $Y_q$ to half-infinite complexes. The resulting $DA$-bimodule structure will have a non-trivial $\delta_3^1$, however.
 \end{rem}

\subsubsection{Idempotent 1}

We now investigate idempotent 1. The differential takes the following form:
\[
\delta_1^1 [T_{10}^i|\ve{x}_t]=\begin{cases}
	[T_{11}^i|T^t]\otimes W^qU^{-t} + [T_{11}^{i-1}|T^{t}]\otimes  1& \text{ if }t\le 0.\\
 [T_{11}^i|T^t]\otimes W^{q-t} + [T_{11}^{i-1}|T^{t}]\otimes Z^t & \text{ if } 0\le t\le q\\
[T_{11}^i|T^t]\otimes 1 + [T_{11}^{i-1}|T^{t}]\otimes Z^qU^{t-q} & \text{ if } q\le t
\end{cases}
\]

In idempotent 1, the module splits as a sum of staircases $S_t$, $t\in \Z$, where $S_t$ contains generators of the form $[T_{11}^i|T^t]$. It is straightforward to see that $S_t$ is contractible in the chiral topology unless $t\in \{1,\dots, q-1\}$.

\subsection{The module structure}

We now investigate the module structure. It is not hard to see that $\delta_j^1=0$ if $j>2$, so we focus on computing $\delta_2^1(a,-)$ where $a\in \{W,Z,T,U,\sigma,\tau\}$. See Section \ref{sec:module structure of elliptic involution} for a similar argument. The remaining $\delta_2^1$ actions can be worked out as in Section~\ref{sec:U-equivariant}.

Before describing the module structure, it is easier for us to relabel the generators of ${}_{\cK}\scC_{q,1}^{\bF[W,Z]}$.
In idempotent 0, we write 
\[
W^i \ve{a}_t, \quad Z^i \ve{d}_t,\quad  W^{n}Z^m \theta_t
\]
for the generators $[W^i_{00}| \ve{x}_{t-1}]$, $[Z^i_{00}|\ve{x}_t]$ and $[W_{01}^n Z_{01}^m| T^t]$, respectively.  In the above, $i\ge 0$, $t\in \{1,\dots, q\}$ and $\min(m,n)=0$.
Similarly, in idempotent 1, we write
\[
T^i \psi_t\qquad \text{and} \qquad T^i \phi_t
\]
for the generators $ [T^i_{10}|\xs_{t}]$ and $[T^i_{11}|T^t] $ respectively, where $i\in \Z$ and $t\in \{1,\dots, q-1\}$. 

We now translate the differential $\delta_1^1$ into the above notation. We have
\[
\begin{split}
\delta_1^1(W^i \ve{a}_t)&=W^{i+1} \theta_t\otimes Z^{t-1}+W^i \theta_t\otimes W^{q-t+1}
\\
\delta_1^1(Z^i \ve{d}_t)&=Z^i\theta_t\otimes Z^{t}+Z^{i+1}\theta_t\otimes W^{q-t}.
\end{split}
\]
Similarly we have
\[
\delta_1^1(T^i \psi_t)=T^i\phi_t\otimes W^{q-t}+T^{i-1}\phi_t\otimes Z^t.
\]

We now describe the module structure. In idempotent 0, we have the following module actions:
\[
\delta_2^1(W, W^i \ve{a}_t)=W^{i+1}\ve{a}_t\otimes 1\quad \text{and} \quad \delta_2^1(Z, W^i \ve{a}_t)=\begin{cases} W^{i-1} \ve{a}_t\otimes U& \text{ if } i>0\\
\ve{d}_t\otimes W& \text{ if } i=0.
\end{cases}
\]
\[
\delta_2^1(W, Z^i\ve{d}_t)=\begin{cases} Z^{i-1}\ve{d}_t\otimes U & \text{ if }i>0\\
\ve{a}_t\otimes Z & \text{ if } i=0 \end{cases}\quad \text{and} \quad \delta_2^1(Z,Z^i \ve{d}_t)=Z^{i+1} \ve{d}_t\otimes 1. 
\]
In idempotent 1, we have the following actions:
\[
\delta_2^1(T^{\pm 1}, T^i\psi_t)=T^{i\pm 1} \psi_t\otimes 1,\quad \delta_2^1(T^{\pm 1}, T^i \phi_t)=T^{i\pm 1} \phi_t\otimes 1,
\]
\[
\delta_2^1(U, T^i\psi_t)=T^i\psi_t\otimes U\quad \text{and} \quad  \delta_2^1(U, T^i \phi_t)=T^i \phi_t\otimes U.
\]
The actions of $\sigma$ and $\tau$ are given as follows:
\[
\delta_2^1(\sigma, W^i \ve{a}_t)=T^{-i} \psi_{t}\otimes U^i W,\quad \delta_2^1(\tau, W^i \ve{a}_t)=T^{-i} \psi_{t-1}\otimes 1
\]
\[
\delta_2^1(\sigma, Z^i \ve{d}_t)=T^{i+1} \psi_{t}\otimes 1,\quad  \delta_2^1(\tau, Z^i \ve{d}_t)=T^{i+1} \psi_{t-1}\otimes U^i Z.
\]
(In the above, we interpret $\psi_{t}$ as being zero unless $t\in \{1,\dots, q-1\}$).
\[
\delta_2^1(\sigma, W^i \theta_t)=T^{-i} \phi_t\otimes U^i, \quad \delta_2^1(\sigma, Z^i \theta_t)=T^i \phi_t\otimes 1,
\]
\[
\delta_2^1(\tau,W^i \theta_t)=T^{-i}\phi_{t-1}\otimes 1,\qquad \delta_2^1(\tau, Z^i \theta_t)=T^i \phi_{t-1}\otimes U^i. 
\]

The module for $q=2$ is shown in Figure~\ref{fig:C2-module}.

\begin{figure}[h]
\[
\begin{tikzcd}[ column sep={1.8cm,between origins}]
\cdots
&
W \ve{a}_1
	\ar[r, "W^{2}"]
	\ar[l, "1",swap]
	\ar[d,  "\sigma|U W"]
&
W \theta_1
	\ar[d,  "\sigma|U"]
&
\ve{a}_1
	\ar[r, "W^2"]
	\ar[l, "1",swap]
	\ar[d,  "\sigma|W"]
&
\theta_1
	\ar[d,  "\sigma|1"]
&
\ve{d}_1
	\ar[r, "W"]
	\ar[l, "Z",swap]
	\ar[d,  "\sigma|1"]
&
Z \theta_1
	\ar[d,  "\sigma|1"]
&
Z \ve{d}_1
	\ar[r, "W"]
	\ar[l, "Z",swap]
	\ar[d,  "\sigma|1"]
&\cdots
\\
\cdots
&
T^{-1}\psi_1
	\ar[r, "W"]
	\ar[l, "Z",swap] 
& 
T^{-1}\phi_1
& 
\psi_1 
	\ar[r, "W"]
	\ar[l, "Z",swap] 
& 
\phi_1 
&
T\psi_1 
	\ar[r, "W"]
	\ar[l, "Z",swap] 
& 
T\phi_1
& 
T^2\psi_1
	\ar[r, "W"]
	\ar[l, "Z",swap] 
&\cdots
\\
\cdots
&
W \ve{a}_2
	\ar[r, "W"]
	\ar[l, "Z",swap]
	\ar[u,  "\tau|1"]
& 
W \theta_2
	\ar[u,  "\tau|1"]
& 
\ve{a}_2 
	\ar[r, "W"]
	\ar[l, "Z",swap] 
	\ar[u,  "\tau|1"]
& 
\theta_2 
	\ar[u,  "\tau|1"]
&
\ve{d}_2 
	\ar[r, "1"]
	\ar[l, "Z^2",swap] 
	\ar[u,  "\tau|Z"]
& 
Z \theta_2
	\ar[u,  "\tau|U"]
& 
Z \ve{d}_2
	\ar[r, "1"]
	\ar[l, "Z^2",swap] 
	\ar[u, "\tau|UZ"]
&\cdots
\end{tikzcd}
\]
\caption{The module ${}_{\cK} \scC_{2,1}^{\bF[W,Z]}$. Only $\delta_1^1$ and $\delta_2^1(\sigma,-)$ and $\delta_2^1(\tau,-)$ are shown. Not shown are the maps $\delta_2^1(W,-)$, $\delta_2^1(Z,-)$ and $\delta_2^1(T^{\pm 1},-)$. There is no $\delta_3^1$.}
\label{fig:C2-module}
\end{figure}

  \section{General L-space satellite operators}
  \label{sec:GeneralSatellite}

 If $P$ is a satellite operator, and $L_P$ the corresponding 2-component link, then the pairing theorem
 of Theorem~\ref{thm:connected-sum}, combined with naturality of the modules under Dehn surgery  (Theorem~\ref{thm:invariance}) and the idempotent reduction formula of Equation~\eqref{eq:sublink-surgery-formula}, implies that there is a homotopy equivalence
  \begin{equation}
 \cCFK(P(K,n))^{\bF[W,Z]}\simeq \cX_n(Y,K)^{\cK}\boxtimes {}_{\cK} \cH_-^{\cK} \boxtimes {}_{\cK} \cX(L_P)^{\bF[W,Z]}. \label{eq:satellite=tensorproduct}
  \end{equation}  
  Here, ${}_{\cK}\cH_-^{\cK}$ denotes the $(0,0)$-framed bimodule for the negative Hopf link. We will write $\bX(P,K,n)^{\bF[W,Z]}$ for the right-hand side of Equation~\eqref{eq:satellite=tensorproduct}.
 
 In this section, we will study the model $\bX(P,K,n)^{\bF[W,Z]}$ of $\cCFK(P(K,n))$ when $L_P$ is an L-space link, describing it first as an infinitely generated type-$D$ module. We give explicit descriptions of the generators and differentials appearing in this complex.  In the subsequent Section~\ref{sec:truncation}, we describe how to systematically truncate and obtain finitely generated type-$D$ modules. In Section \ref{sec:Examples of satellite}, we perform a large number of example computations.

Throughout, we focus on the case that $Y$ is an integer homology 3-sphere. When $Y$ is not an integer homology sphere, one can study the tensor product in Equation~\eqref{eq:satellite=tensorproduct} using similar techniques, but the bookkeeping is slightly more complicated.

\subsection{The complex $\bX(P,K,n)^{\bF[W,Z]}$}

In this section, we describe the structure of the tensor product $\bX(P,K,n)^{\bF[W,Z]}= \cX_n(K)^{\cK}\boxtimes {}_{\cK} \cH_-^{\cK} \boxtimes {}_{\cK} \cX(L_P)^{\bF[W,Z]}$. 

Before we begin, we will write
\[
\bH(P)= \left(\Z+\frac{1}{2}\right)\times \left(\Z+\frac{\lk(\mu,P)-1}{2}\right).
\]
Note that this consists of the first two components of the lattice $\bH(K\# H\# L_P)$.

We will write the complex $\bX(P,K,n)^{\bF[W,Z]}$ as a 2-dimensional hypercube:
\[
\bX(P,K,n)^{\bF[W,Z]}=\begin{tikzcd}[column sep=1.5cm, row sep=1.5cm]
 \bE \ar[r, "\Phi^{\mu}+\Phi^{-\mu}"] \ar[d, "\Phi^{K}+\Phi^{-K}",swap] \ar[dr,gray, "\Phi^{\pm K,\pm \mu}", labels=description] & \bF\ar[d, "\Phi^{K}+\Phi^{-K}"]\\
\bJ \ar[r, swap,"\Phi^{\mu}+\Phi^{-\mu}"]& \bM
\end{tikzcd}
\]
We will have decompositions
\[
\bE=\bigoplus_{(s,t)\in \bH(P)} E_{s,t},\qquad \bF=\bigoplus_{(s,t)\in \bH(P)} F_{s}\times \{t\}, 
\]
\[
\bJ=\bigoplus_{(s,t)\in \bH(P)}  J_t\times \{s\}, \qquad
\bM=\bigoplus_{(s,t)\in \bH(P)} M\times \{s,t\}.
\]
We often write $J_{s,t}$ for the corresponding copy $J_t\times \left\{s\right\}$ of $J_t$, and we make a similar definition for $F_{s,t}$ and $M_{s,t}$. Also, each of $E_{s,t}$, $F_s$, $J_t$ and $M$ is a finitely generated type-$D$ module over $\bF[W,Z]$.

\begin{rem} If we view the full surgery formula $\cC_{\Lambda}(K\# H\# L_P)$ as being a 3-dimensional hypercube, with axis directions corresponding to the three link components (with $K$ being the first component, $H$ being the first and second, and $L_P$ the second and third), then we have the following correspondence of the complexes $\bE$, $\bF$, $\bJ$ and $\bM$ with cube points:
\begin{enumerate}
\item $\bE$ corresponds to cube point $(0,0,0)$.
\item $\bF$ corresponds to cube point $(0,1,0)$.
\item $\bJ$ corresponds to cube point $(1,0,0)$.
\item $\bM$ corresponds to cube point $(1,1,0)$. 
\end{enumerate}
\label{rem:hypercubecorner}
\end{rem}

We have
\[
\Phi^K(E_{s,t})\subset J_{s,t}\quad \Phi^{-K}(E_{s,t})\subset J_{s+n,t-1}\quad \Phi^{K}(F_{s,t})\subset M_{s,t}\quad \Phi^{-K}(F_{s,t})\subset F_{s+n, t-1},
\]
\[
\Phi^{\mu}(E_{s,t})\subset F_{s,t} \quad \Phi^{-\mu}(E_{s,t})\subset F_{s-1,t}, \quad \Phi^{\mu}(J_{s,t})\subset M_{s,t}\quad \Phi^{-\mu}(J_{s,t})\subset M_{s-1,t}.
\]
We sometimes refer to $\Phi^{\pm \mu}$ and $\Phi^{\pm K}$ as the \emph{length 1 maps} of surgery complex $\bX(P,K,n)$. 

We refer to $\Phi^{\pm K,\pm \mu}$ as the \emph{length 2} maps. The map $\Phi^{K,\mu}$ shifts the $\bH(P)$-grading the same as $\Phi^K\circ \Phi^\mu$, and similarly for $\Phi^{-K,\mu}$, $\Phi^{K,-\mu}$ and $\Phi^{-K,-\mu}$.

These maps and complexes fit together to form a chain complex of the shape shown in Figure~\ref{fig:large-model-pattern}.
\begin{figure}[h]
\[
\begin{tikzcd}[column sep={2.4cm,between origins}, row sep=1.5cm, labels=description]
\,&\vdots &\vdots &\vdots &\vdots &\vdots& \,
\\
 \cdots
& E_{s-1,t} 
	\ar[l,"\Phi^{-\mu}"]
	\ar[r, "\Phi^{\mu}"]
	\ar[d, "\Phi^{K}"]
	\ar[u, "\Phi^{-K}"]
	\ar[dr,gray, "\Phi^{K,\mu}"]
	\ar[ur,gray, "\Phi^{-K,\mu}"]
	\ar[dl,gray, "\Phi^{K,-\mu}"]
	\ar[ul,gray, "\Phi^{-K,-\mu}"]
&
F_{s-1,t}
	\ar[d, "\Phi^{K}"]
	\ar[u, "\Phi^{-K}"]
&
E_{s,t}
	\ar[l,"\Phi^{-\mu}"]
	\ar[r, "\Phi^{\mu}"]
	\ar[d, "\Phi^{K}"]
	\ar[u, "\Phi^{-K}"]
	\ar[dl,gray, "\Phi^{K,-\mu}"]
	\ar[ul,gray, "\Phi^{-K,-\mu}"]
	\ar[ur,gray, "\Phi^{-K,\mu}"]
	\ar[dr,gray, "\Phi^{K,\mu}"]
&
F_{s,t}
	\ar[d, "\Phi^{K}"]
	\ar[u, "\Phi^{-K}"]
& 
E_{s+1,t}
	\ar[l,"\Phi^{-\mu}"]
	\ar[r, "\Phi^{\mu}"]
	\ar[d, "\Phi^{K}"]
	\ar[u, "\Phi^{-K}"]
	\ar[dl,gray, "\Phi^{K,-\mu}"]
	\ar[ul,gray, "\Phi^{-K,-\mu}"]
	\ar[ur,gray, "\Phi^{-K,\mu}"]
	\ar[dr,gray, "\Phi^{K,\mu}"]
&\cdots 
\\
\cdots
&
J_{s-1,t}
	\ar[l,"\Phi^{-\mu}"]
	\ar[r, "\Phi^{\mu}"]
&
M_{s-1,t}
&
J_{s,t}
	\ar[l,"\Phi^{-\mu}"]
	\ar[r, "\Phi^{\mu}"]
&
M_{s,t}
&
J_{s+1,t}
	\ar[l,"\Phi^{-\mu}"]
	\ar[r, "\Phi^{\mu}"]
&\cdots
\\
\cdots
&
E_{s-n-1,t+1}
	\ar[l,"\Phi^{-\mu}",labels=above]
	\ar[r, "\Phi^{\mu}",labels=above]
	\ar[u, "\Phi^{-K}"]
	\ar[d, "\Phi^K"]
	\ar[ur,gray, "\Phi^{-K,\mu}"]
	\ar[dr,gray, "\Phi^{K,\mu}"]
	\ar[dl,gray, "\Phi^{K,-\mu}"]
	\ar[ul,gray, "\Phi^{-K,-\mu}"]
&
F_{s-n-1,t+1}
	\ar[u, "\Phi^{-K}"]
	\ar[d, "\Phi^K"]
&
E_{s-n,t+1}
	\ar[l,"\Phi^{-\mu}",labels=above]
	\ar[r, "\Phi^{\mu}",labels=above]
	\ar[u, "\Phi^{-K}"]
	\ar[d, "\Phi^K"]
	\ar[dl,gray, "\Phi^{K,-\mu}"]
	\ar[ul,gray, "\Phi^{-K,-\mu}"]
	\ar[ur,gray, "\Phi^{-K,\mu}"]
	\ar[dr,gray, "\Phi^{K,\mu}"]
&
F_{s-n,t+1}
	\ar[u, "\Phi^{-K}"]
	\ar[d, "\Phi^K"]
&
E_{s-n+1,t+1}
	\ar[l,"\Phi^{-\mu}",labels=above]
	\ar[r, "\Phi^{\mu}",labels=above]
	\ar[u, "\Phi^{-K}"]
	\ar[d, "\Phi^K"]
	\ar[dl,gray, "\Phi^{K,-\mu}"]
	\ar[ul,gray, "\Phi^{-K,-\mu}"]
	\ar[ur,gray, "\Phi^{-K,\mu}"]
	\ar[dr,gray, "\Phi^{K,\mu}"]
&
\cdots\\
\phantom{\vdots}&\vdots&\vdots&\vdots&\vdots&\vdots&\phantom{\vdots}
\end{tikzcd}
\]
\caption{A model of the complex $\cCFK(P(K,n))$ given by the box tensor product $\bX(P,K,n)^{\bF[W,Z]}$.}
\label{fig:large-model-pattern}
\end{figure}

\subsection{The complexes $E_{s,t}$, $F_{s}$, $J_t$ and $M$.}
\label{sec:definition of E,F,J,M}

We now describe the complexes appearing $E_{s,t}$, $F_s$, $J_t$ and $M$ appearing in $\bX(K,P,n)$. Note that we continue our assumption that $Y$ is an integer homology 3-sphere.

Our description of these complexes will be in terms of several auxiliary $DA$-bimodules. These in turn will be defined in terms of the surgery $DA$-bimodule 
  \[
  {}_{\cK}\cX_{(0,0)}(L_P)^{\bF[W,Z]}.
\]
 We will schematically describe this complex as follows:
\begin{equation}
\begin{tikzcd}[labels=description, column sep=1.1cm, row sep=0cm]
\cdots
&[-1.2cm]\cC_{t_0-2}
	\ar[r, bend left, "Z|L_Z"]
	\ar[d,"\sigma|L_\sigma", bend left=15, pos=.4]
	\ar[d, "\tau|L_\tau", bend right=15, pos=.6]
	\ar[loop above, looseness=20, "\substack{(W,Z)|h_{W,Z}\\(Z,W)|h_{Z,W}}"]
& \cC_{t_0-1}
	\ar[r, bend left, "Z|L_Z"]
	\ar[l, bend left, "W|L_W"]
	\ar[d,"\sigma|L_\sigma", bend left=15, pos=.4]
	\ar[d, "\tau|L_\tau", bend right=15, pos=.6]
	\ar[loop above, looseness=20, "\substack{(W,Z)|h_{W,Z}\\(Z,W)|h_{Z,W}}"]
&\cC_{t_0}
	\ar[r, bend left, "Z|L_Z"]
	\ar[l, bend left, "W|L_W"]
	\ar[d,"\sigma|L_\sigma", bend left=15, pos=.4]
	\ar[d, "\tau|L_\tau", bend right=15, pos=.6]
	\ar[loop above, looseness=20, "\substack{(W,Z)|h_{W,Z}\\(Z,W)|h_{Z,W}}"]
& \cC_{t_0+1}
	\ar[r, bend left, "Z|L_Z"]
	\ar[l, bend left, "W|L_W"]
	\ar[d,"\sigma|L_\sigma", bend left=15, pos=.4]
	\ar[d, "\tau|L_\tau", bend right=15, pos=.6]
	\ar[loop above, looseness=20, "\substack{(W,Z)|h_{W,Z}\\(Z,W)|h_{Z,W}}"]
& \cC_{t_0+2}
	\ar[l, bend left, "W|L_W"]
	\ar[d,"\sigma|L_\sigma", bend left=15, pos=.4]
	\ar[d, "\tau|L_\tau", bend right=15, pos=.6]
	\ar[loop above, looseness=20, "\substack{(W,Z)|h_{W,Z}\\(Z,W)|h_{Z,W}}"]
&[-1.2cm] \cdots
\\[2cm]
\cdots
& T^{t_0-2}\cS
	\ar[r, bend left, "T|1"]
	\ar[loop below,looseness=20, "U|U"]
& T^{t_0-1}\cS
	\ar[r, bend left, "T|1"]
	\ar[l, bend left, "T^{-1}|1"]
	\ar[loop below,looseness=20, "U|U"]
&T^{t_0}\cS
	\ar[r, bend left, "T|1"]
	\ar[l, bend left, "T^{-1}|1"]
	\ar[loop below,looseness=20, "U|U"]
&T^{t_0+1}\cS
	\ar[r, bend left, "T|1"]
	\ar[l, bend left, "T^{-1}|1"]
	\ar[loop below,looseness=20, "U|U"]
&T^{t_0+2}\cS
	\ar[l, bend left, "T^{-1}|1"]
	\ar[loop below,looseness=20, "U|U"]
&\cdots 
\end{tikzcd}
\label{eq:X(L_P)-complex}
\end{equation}
Here $t_0$ denotes an element of $\Z+\lk(\mu,P)/2$.
 Each $T^{t} \cS$ denotes a copy of $\cS$. Additionally, there are maps $(\sigma,W)|h_{\sigma,W}$, $(\sigma, Z)|h_{\sigma,Z}$, $(\tau,W)| h_{\tau,W}$ and $(\tau,Z)|h_{\tau,Z}$ which we are not displaying in the above diagram.

We will now build the $E_{s,t}$, $F_s$, $J_t$ and $M$ complexes out of the complexes and maps in Equation~\eqref{eq:X(L_P)-complex} and the complex $\cX_n(Y,K)^{\cK}$. We will also describe their internal $(\gr_{\ws},\gr_{\zs})$-gradings, as well as the $\sigma$-normalized Alexander grading. (See Section~\ref{sec:Alexander-gradings} for these conventions). We will write $(s,t,r)\in \bH(K\# H\# L_P)$ for the components of this Alexander grading.

 We now describe the complexes $E_{s,t}$. It is helpful to first build a $DA$-bimodule ${}_{\bF[W,Z]}\scE_{*,t}^{\bF[W,Z]}$.  This bimodule will be constructed using the same algebraic procedure as for L-space links in Section~\ref{sec:2-component-L-space}. It is specified by a diagram of the following form:
 \[
 \scE_{*,t}=\begin{tikzcd}[labels=description, column sep=1cm, row sep=0cm]
 \cdots
 &[-1cm]
 \scE_{-\frac{5}{2},t}
 	\ar[r, bend left,out=45,in=135, "Z|U"]
 & \scE_{-\frac{3}{2},t}
 	\ar[r, bend left,out=45,in=135, "Z|U"]
 	\ar[l, bend left,out=45,in=135, "W|1"]
 &\scE_{-\frac{1}{2},t}
 	\ar[r, bend left,out=45,in=135, "Z|L_Z"]
 	\ar[l, bend left,out=45,in=135, "W|1"]
 	\ar[loop above, looseness=15, "{(W,Z)|h_{W,Z}}"]
 &[.6cm]\scE_{\frac{1}{2},t}
 	\ar[r, bend left,out=45,in=135, "Z|1"]
 	\ar[l, bend left,out=45,in=135, "W|L_W"]
 	\ar[loop above, looseness=15, "{(Z,W)|h_{Z,W}}"]
 & \scE_{\frac{3}{2},t}
 	\ar[r, bend left,out=45,in=135, "Z|1"]
 	\ar[l, bend left,out=45,in=135, "W|U"]
 & \scE_{\frac{5}{2},t}
 	\ar[l, bend left,out=45,in=135, "W|U"] 
 &[-1cm] \cdots
 \end{tikzcd}
 \]
 This diagram specifies a type-$DA$ bimodule via the construction from Section~\ref{sec:U-equivariant}.  Here, $\scE_{s,t}$ denotes the type-$D$ module over $R$
 \begin{equation}
 \scE_{s,t}=\begin{cases} \cC_{t-\frac{1}{2}}[2s+1,0] & \text{ if } s<0\\
 \cC_{t+\frac{1}{2}}[0,-2s+1] & \text{ if } s>0
 \end{cases}
 \label{eq:cEst-defs}
 \end{equation}
 
 We view $\scE_{s,t}$ as having Alexander grading in $(s,t,*)$. The brackets $[x,y]$ denote an (upward) shift in the $(\gr_{\ws},\gr_{\zs})$-bigrading. We define $A_P^\sigma$ (the $r$-component of the Alexander grading  of $\scE_{s,t}$) to be the same as the corresponding component of the Alexander grading on $\cC_{t\pm \frac{1}{2}}$.

For fixed $t\in \Z+(1+\lk(\mu,P))/2$, we decompose 
\[
\cCFK(K)^{\bF[W,Z]}\boxtimes {}_{\bF[W,Z]} \scE_{*,t}^{\bF[W,Z]}=\bigoplus_{s\in \Z+\frac{1}{2}} E_{s,t}.
\]
where $E_{s,t}$ denotes the subspace where the first coordinate of the Alexander grading (corresponding to $K$) takes value $s$.

Inside of ${}_{\cK} \cH_{-}^{\cK}\boxtimes {}_{\cK} \cX_{(0,0)}(L_P)^{\bF[W,Z]}$, the bimodule $\scE_{*,t}$ appears as the following subspace:
\[
\begin{tikzcd}[labels=description, column sep=.5cm, row sep=0cm]
\cdots
&[-.7cm]W_{00}^2|\cC_{t-\frac{1}{2}}
	\ar[r,bend left,out=45,in=135, "Z|U"]
& W_{00}|\cC_{t-\frac{1}{2}}
	\ar[r, bend left,out=45,in=135, "Z|U"]
	\ar[l, bend left,out=45,in=135, "W|1"]
&W_{00}^0|\cC_{t-\frac{1}{2}}
	\ar[r, bend left,out=45,in=135, "Z|L_Z"]
	\ar[l, bend left,out=45,in=135, "W|1"]
	\ar[loop above, looseness=15, "{(W,Z)|h_{W,Z}}"]
&[0.5cm]Z_{00}^0|\cC_{t+\frac{1}{2}}
	\ar[r, bend left,out=45,in=135, "Z|1"]
	\ar[l, bend left,out=45,in=135, "W|L_W"]
	\ar[loop above, looseness=15, "{(Z,W)|h_{Z,W}}"]
& Z_{00}|\cC_{t+\frac{1}{2}}
	\ar[r, bend left,out=45,in=135, "Z|1"]
	\ar[l, bend left,out=45,in=135, "W|U"]
& Z^2_{00}|\cC_{t+\frac{1}{2}}
	\ar[l, bend left,out=45,in=135, "W|U"]
&[-.7cm] \cdots
\end{tikzcd}
\]

The grading shifts described on $\scE_{s,t}$ described in Equation~\eqref{eq:cEst-defs} are the tensorial gradings on link Floer homology, where we give $W^i_{00}$ Alexander grading $(-\tfrac{1}{2}-i,\tfrac{1}{2},0)$, and we give $Z^i_{00}$ Alexander grading $(\tfrac{1}{2}+i,-\tfrac{1}{2},0)$. We also view each $\cC_{t\pm \frac{1}{2}}$ as having Alexander gradings $(0,t\pm \tfrac{1}{2},*)$. Furthermore, the third component is the same as the corresponding component (the pattern $P$) of the Alexander grading on $\cC_{t\pm\frac{1}{2}}$.

We now define the modules $F_{s,t}$ for  $(s,t)\in \bH(P)$. Recall that $F_{s,t}$ does not depend on $t$, and we sometimes write $F_{s}$ for this complex.

 To define $F_{s,t}$, we recall the $\bF[U]$-subcomplex $A_{s+\frac{1}{2}}(K)\subset \cCFK(K)$ consisting of generators in Alexander grading $s+\tfrac{1}{2}$. In our present setting, we view $A_{s+\frac{1}{2}}(K)$ as a type-$D$ module over $\bF[U]$. We may then define $F_{s,t}$ as the tensor product:
\[
F_{s,t}:= A_{s+\frac{1}{2}}(K)^{\bF[U]}\boxtimes {}_{\bF[U]} \cS^{\bF[W,Z]}.
\]
Here, ${}_{\bF[U]}\cS^{\bF[W,Z]}$ is the $DA$-bimodule which coincides with $\cS^{\bF[W,Z]}$ as a type-$D$ module, but has the additional action $\delta_2^1(U^i,\xs)=\xs\otimes U^i$ for all  $i\ge 0$.

It is helpful to view $F_{s,t}$ as being a homogeneously $(s,t,*)$ graded summand (in the $\sigma$-normalized Alexander grading) of the tensor product $\cCFK(K)^{\bF[W,Z]}\boxtimes {}_{\bF[W,Z]}\scF_{*,t}^{\bF[W,Z]}$, where ${}_{\bF[W,Z]}\scF_{*,t}^{\bF[W,Z]}$ is the $DA$ bimodule induced by the following diagram:
\[
\scF_{*,t}=\begin{tikzcd}[labels=description, column sep=.8cm, row sep=0cm]
\cdots
&[-.8cm] \scF_{-\frac{5}{2}, t}
	\ar[r,bend left,out=45,in=135, "Z|U"]
	\ar[loop below,looseness=20, "U|U"]
& \scF_{-\frac{3}{2},t}
	\ar[r, bend left,out=45,in=135, "Z|U"]
	\ar[l, bend left,out=45,in=135, "W|1"]
	\ar[loop below,looseness=20, "U|U"]
&\scF_{-\frac{1}{2},t}
	\ar[r, bend left,out=45,in=135, "Z|1"]
	\ar[l, bend left,out=45,in=135, "W|1"]
	\ar[loop below,looseness=20, "U|U"]
&\scF_{\frac{1}{2},t}
	\ar[r, bend left,out=45,in=135, "Z|1"]
	\ar[l, bend left,out=45,in=135, "W|U"]
	\ar[loop below,looseness=20, "U|U"]
&\scF_{\frac{3}{2},t}
	\ar[l, bend left,out=45,in=135, "W|U"]
	\ar[loop below,looseness=20, "U|U"]
&[-.8cm]\cdots 
\end{tikzcd}
\]

In the above, we set
\[
\scF_{s,t}=\cS[\min(2s+1,0),\min(0,-2s-1)]
\]
for all $s,t$. Here, we are viewing $\cS$ as having $(\gr_{\ws},\gr_{\zs})$-bigrading induced by its identification with $\cCFK(P)$, and $[x,y]$ denotes a shift of the $(\gr_{\ws},\gr_{\zs})$-bigrading, as above. We put $\scF_{s,t}$ in Alexander grading $(s,t,*)$.
 The module $\scF_{*,t}$ corresponds to the following subspace of $\cH_-\boxtimes \cX(L_P)$:
\[
\begin{tikzcd}[labels=description, column sep=.6cm, row sep=0cm]
\cdots
&[-.8cm] W_{01}^2|T^{t+\frac{1}{2}} \cS
	\ar[r,bend left,out=45,in=135, "Z|U"]
	\ar[loop below,looseness=20, "U|U"]
& W_{01}^1|T^{t+\frac{1}{2}}\cS
	\ar[r, bend left,out=45,in=135, "Z|U"]
	\ar[l, bend left,out=45,in=135, "W|1"]
	\ar[loop below,looseness=20, "U|U"]
&1_{01}|T^{t+\frac{1}{2}}\cS
	\ar[r, bend left,out=45,in=135, "Z|1"]
	\ar[l, bend left,out=45,in=135, "W|1"]
	\ar[loop below,looseness=20, "U|U"]
&Z_{01}^1|T^{t+\frac{1}{2}}\cS
	\ar[r, bend left,out=45,in=135, "Z|1"]
	\ar[l, bend left,out=45,in=135, "W|U"]
	\ar[loop below,looseness=20, "U|U"]
&Z_{01}^2|T^{t+\frac{1}{2}}\cS
	\ar[l, bend left,out=45,in=135, "W|U"]
	\ar[loop below,looseness=20, "U|U"]
&[-.8cm]\cdots 
\end{tikzcd}
\]

Writing $A_P^\sigma$ for the $r$-component of the $\sigma$-normalized Alexander grading on $\scF_{s,t}$, we have
\[
A_P^\sigma=A_{\cS}+\frac{\lk(P,\mu)}{2},
\]
where $A_{\cS}$ is the ordinary Alexander grading on the staircase $\cS$, viewed as $\cCFK(P)$.

We now describe $J_{s,t}$ and $M_{s,t}$. Recall that $J_{s,t}$ is independent of $s$, and $M_{s,t}$ is independent of $s$ and $t$. For simplicity, we will assume that $\cX_n(Y,K)^{\cK}$ is described so that idempotent 1 coincides with $\CF^-(Y)\otimes \bF[T,T^{-1}]$ (in particular, we assume that the differential in idempotent 1 has no $T$-powers). With this in mind, we will write
\[
B(K)=\cX_n(Y,K)\cdot \ve{I}_1,
\]
viewed as a type-$D$ module over $\bF[U]$, concentrated in Alexander grading 0.

Finally, we define
\[
J_{s,t}:=B(K)^{\bF[U]}\boxtimes{}_{\bF[U]}\cC_{t+\frac{1}{2}}\quad \text{and}\quad M_{s,t}=B(K)^{\bF[U]}\boxtimes {}_{\bF[U]}\cS.
\]
We again can view $J_{s,t}$ as the Alexander grading $(s,t,*)$ subcomplex of the tensor product of $\cX_{n}(Y,K)\cdot \ve{I}_1$ with the complex ${}_{\bF[U,T,T^{-1}]}\scJ_{*,t}^{\bF[W,Z]}$ given by the diagram
\[
\scJ_{*,t}=\begin{tikzcd}[labels=description, column sep=1cm, row sep=0cm]
\cdots
&[-.7cm]\scJ_{-\frac{5}{2},t}
	\ar[r, bend left,out=45,in=135, "T|1"]
	\ar[loop below,looseness=20, "U|U"]
& \scJ_{-\frac{3}{2},t}
	\ar[r, bend left,out=45,in=135, "T|1"]
	\ar[l, bend left,out=45,in=135, "T^{-1}|1"]
	\ar[loop below,looseness=20, "U|U"]
&\scJ_{-\frac{1}{2},t}
	\ar[r, "T|1", bend left,out=45,in=135]
	\ar[l, "T^{-1}|1", bend left,out=45,in=135]
	\ar[loop below,looseness=20, "U|U"]
&\scJ_{\frac{1}{2},t}
	\ar[r, bend left,out=45,in=135, "T|1"]
	\ar[l, bend left,out=45,in=135,"T^{-1}|1"]
	\ar[loop below,looseness=20, "U|U"]
&\scJ_{\frac{3}{2},t}
	\ar[l, bend left,out=45,in=135,"T^{-1}|1"]
	\ar[loop below,looseness=20, "U|U"]
&[-.7cm] \cdots
\end{tikzcd}
\]
where
\[
\scJ_{s,t}=\cC_{t+\frac{1}{2}}.
\]
This appears in $\cH_-\boxtimes \cX(L_P)$ as the subcomplex
\[
\begin{tikzcd}[labels=description, column sep=.5cm, row sep=0cm]
\cdots
&[-.5cm]T^{-2}_{10}|\cC_{t+\frac{1}{2}}
	\ar[r, bend left,out=45,in=135, "T|1"]
	\ar[loop below,looseness=20, "U|U"]
& T^{-1}_{10}|\cC_{t+\frac{1}{2}}
	\ar[r, bend left,out=45,in=135, "T|1"]
	\ar[l, bend left,out=45,in=135, "T^{-1}|1"]
	\ar[loop below,looseness=20, "U|U"]
&T^{0}_{10}|\cC_{t+\frac{1}{2}}
	\ar[r, "T|1", bend left,out=45,in=135]
	\ar[l, "T^{-1}|1", bend left,out=45,in=135]
	\ar[loop below,looseness=20, "U|U"]
&T^{1}_{10}|\cC_{t+\frac{1}{2}}
	\ar[r, bend left,out=45,in=135, "T|1"]
	\ar[l, bend left,out=45,in=135,"T^{-1}|1"]
	\ar[loop below,looseness=20, "U|U"]
&T^{2}_{10}|\cC_{t+\frac{1}{2}}
	\ar[l, bend left,out=45,in=135,"T^{-1}|1"]
	\ar[loop below,looseness=20, "U|U"]
&[-.5cm] \cdots
\end{tikzcd}
\]
We give $\scJ_{s,t}$ the $A_P^\sigma$ Alexander grading (the $r$-component) equal to the last component of the natural Alexander grading on $\cC_{t+\frac{1}{2}},$ corresponding to the component $P$.

Similarly, $M_{s,t}$ is the Alexander grading $(s,t,*)$ subspace of the tensor product of $\cX_n(Y,K)\cdot \ve{I}_1$ with the $DA$-bimodule shown below ${}_{\bF[U,T,T^{-1}]} \scM_{*,t}^{\bF[W,Z]}$:
\[
\scM_{*,t}=\begin{tikzcd}[labels=description, column sep=1cm, row sep=0cm]
\cdots
&[-1cm]\scM_{-\frac{5}{2},t}
	\ar[r, bend left,out=45,in=135, "T|1"]
	\ar[loop below,looseness=20, "U|U"]
&\scM_{-\frac{3}{2},t}
	\ar[r, bend left,out=45,in=135, "T|1"]
	\ar[l, bend left,out=45,in=135, "T^{-1}|1"]
	\ar[loop below,looseness=20, "U|U"]
&\scM_{-\frac{1}{2},t}
	\ar[r, "T|1", bend left,out=45,in=135]
	\ar[l, "T^{-1}|1", bend left,out=45,in=135]
	\ar[loop below,looseness=20, "U|U"]
&\scM_{-\frac{1}{2},t}
	\ar[r, bend left,out=45,in=135, "T|1"]
	\ar[l, bend left,out=45,in=135,"T^{-1}|1"]
	\ar[loop below,looseness=20, "U|U"]
&\scM_{\frac{3}{2},t}
	\ar[l, bend left,out=45,in=135,"T^{-1}|1"]
	\ar[loop below,looseness=20, "U|U"]
&[-1cm] \cdots
\end{tikzcd}
\]
where 
\[
\scM_{s,t}=\cS
\]
for all $s,t$. The complex $\scM_{*,t}$ appears in the tensor product via the subcomplex
\[
\begin{tikzcd}[labels=description, column sep=.5cm, row sep=0cm]
\cdots
&[-.5cm]T_{11}^{-2}|T^{t+\frac{1}{2}}\cS
	\ar[r, bend left,out=45,in=135, "T|1"]
	\ar[loop below,looseness=20, "U|U"]
& T^{-1}_{11}|T^{t+\frac{1}{2}}\cS
	\ar[r, bend left,out=45,in=135, "T|1"]
	\ar[l, bend left,out=45,in=135, "T^{-1}|1"]
	\ar[loop below,looseness=20, "U|U"]
&T^{0}_{11}|T^{t+\frac{1}{2}}\cS
	\ar[r, "T|1", bend left,out=45,in=135]
	\ar[l, "T^{-1}|1", bend left,out=45,in=135]
	\ar[loop below,looseness=20, "U|U"]
&T^{1}_{11}|T^{t+\frac{1}{2}}\cS
	\ar[r, bend left,out=45,in=135, "T|1"]
	\ar[l, bend left,out=45,in=135,"T^{-1}|1"]
	\ar[loop below,looseness=20, "U|U"]
&T^{2}_{11}|T^{t+\frac{1}{2}}\cS
	\ar[l, bend left,out=45,in=135,"T^{-1}|1"]
	\ar[loop below,looseness=20, "U|U"]
&[-.5cm] \cdots
\end{tikzcd}
\]
We give $M_{s,t}$ the $(\gr_{\ws},\gr_{\zs})$ grading from $\cS$, and we define $A_P^\sigma$ on $M_{s,t}$ to be
\[
A_P^\sigma=A_{\cS}+\frac{\lk(\mu,P)}{2}.
\]

\subsection{Length 1 maps}
\label{sec:length-one-maps-general}

We now describe the maps $\Phi^{\pm K}$ and $\Phi^{\pm \mu}$.

 The map $\Phi^{K}\colon E_{s,t}\to J_{s,t}$ is constructed by tensoring $\cX_n(K)^{\cK}$ with the bimodule morphism $f^K$, determined by the following diagram:
\[
\begin{tikzcd}[labels=description,row sep=2.1cm] \scE_{*,t}\ar[d, "f^K"]
\\  \scJ_{*,t}
\end{tikzcd}
\hspace{-.2cm}
=
\hspace{-.3cm}
\begin{tikzcd}[labels=description, {column sep=2.3cm,between origins}, row sep=.4cm]
\cdots
&[-1.4cm]\cC_{t-\frac{1}{2}}^{(-\frac{5}{2}, t)}
	\ar[r,bend left,out=45,in=135, "Z|U"]
	\ar[d,"\sigma|U^2 L_Z"]
& \cC_{t-\frac{1}{2}}^{(-\frac{3}{2}, t)}
	\ar[r,bend left,out=45,in=135,  "Z|U"]
	\ar[l,bend left,out=45,in=135, "W|1"]
	\ar[d,"\sigma| U L_Z"]
&\cC_{t-\frac{1}{2}}^{(-\frac{1}{2}, t)}
	\ar[r,bend left,out=45,in=135,  "Z|L_Z"]
	\ar[l,bend left,out=45,in=135,  "W|1"]
	\ar[d,"\sigma|L_Z"]
	\ar[loop above,looseness=15, "{(W,Z)|h_{W,Z}}"]
&[.4cm] \cC_{t+\frac{1}{2}}^{(\frac{1}{2},t)}
	\ar[r,bend left,out=45,in=135,  "Z|1"]
	\ar[l,bend left,out=45,in=135, "W|L_W"]
	\ar[d,"\sigma|1"]
	\ar[loop above, looseness=15, "{(Z,W)|h_{Z,W}}"]
	\ar[dl,  "{(\sigma,W)|h_{Z,W}}", end anchor ={[xshift=-1.5ex]}, start anchor ={[xshift=1.5ex]}]
& \cC_{t+\frac{1}{2}}^{(\frac{3}{2},t)}
	\ar[r,bend left,out=45,in=135,  "Z|1"]
	\ar[l,bend left,out=45,in=135,  "W|U"]
	\ar[d,"\sigma|1"]
& \cC_{t+\frac{1}{2}}^{(\frac{5}{2},t)}
	\ar[l,bend left,out=45,in=135,  "W|U"]
	\ar[d,"\sigma|1"]
&[-1.5cm] \cdots
\\[1.7cm]
\cdots
& \cC_{t+\frac{1}{2}}^{(-\frac{5}{2}, t)}
	\ar[r,bend left,out=45,in=135, "T|1"]
	\ar[loop below,looseness=20, "U|U"]
& \cC_{t+\frac{1}{2}}^{(-\frac{3}{2}, t)}
	\ar[r,bend left,out=45,in=135,  "T|1"]
	\ar[l,bend left,out=45,in=135, "T^{-1}|1"]
	\ar[loop below,looseness=20, "U|U"]
&\cC_{t+\frac{1}{2}}^{(-\frac{1}{2}, t)}
	\ar[r,bend left,out=45,in=135, "T|1"]
	\ar[l,bend left,out=45,in=135,  "T^{-1}|1"]
	\ar[loop below,looseness=20, "U|U"]
&\cC_{t+\frac{1}{2}}^{(\frac{1}{2}, t)}
	\ar[r,bend left,out=45,in=135, "T|1"]
	\ar[l,bend left,out=45,in=135,  "T^{-1}|1"]
	\ar[loop below,looseness=20, "U|U"]
&\cC_{t+\frac{1}{2}}^{(\frac{3}{2}, t)}
	\ar[r,bend left,out=45,in=135,  "T|1"]
	\ar[l,bend left,out=45,in=135,  "T^{-1}|1"]
	\ar[loop below,looseness=20, "U|U"]
&\cC_{t+\frac{1}{2}}^{(\frac{5}{2}, t)}
	\ar[l,bend left,out=45,in=135,  "T^{-1}|1"]
	\ar[loop below,looseness=20, "U|U"]	
&\cdots 
\end{tikzcd}.
\]

In the above, the arrow labeled $\scE_{*,t}\to \scJ_{*,t}$ denotes the type-$DA$ morphism determined by the vertical arrows in the diagram above (in the same way that the actions involving $\sigma$ on the $DA$-modules of L-space links are determined in Section~\ref{sec:U-equivariant}).  Additionally, the bimodules $\cC_{t\pm \frac{1}{2}}^{s,t}$ in the top row denotes the subspace $\scE_{s,t}\subset \scE_{*,t}$, and similarly for the bottom row.

The map $\Phi^{-K}\colon E_{s,t}\to J_{s+n,t-1}$ is obtained modifying the above diagram using the natural symmetry which replaces $\sigma$ with $\tau$ and $L_Z$ with $L_W$, as in the following diagram: 

\[
\begin{tikzcd}[labels=description, row sep=2.1cm] \scE_{*,t} 
\ar[d, "f^{-K}"]
\\ 
\scJ_{*,t-1}
\end{tikzcd}
\hspace{-.2cm}
 =
\hspace{-.3cm}
\begin{tikzcd}[labels=description, {column sep=2.3cm,between origins}, row sep=.4cm]
	\cdots
	&[-1.3cm]\cC_{t-\frac{1}{2}}^{(-\frac{5}{2}, t)}
	\ar[r,bend left,out=45,in=135, "Z|U"]
	\ar[d,"\tau|1"]
	& \cC_{t-\frac{1}{2}}^{(-\frac{3}{2}, t)}
	\ar[r,bend left,out=45,in=135,  "Z|U"]
	\ar[l,bend left,out=45,in=135, "W|1"]
	\ar[d,"\tau|1"]
	&\cC_{t-\frac{1}{2}}^{(-\frac{1}{2}, t)}
	\ar[r,bend left,out=45,in=135,  "Z|L_Z"]
	\ar[l,bend left,out=45,in=135,  "W|1"]
	\ar[d,"\tau|1"]
	\ar[loop above,looseness=15, "{(W,Z)|h_{W,Z}}"]
	\ar[dr, "{(\tau,Z)|h_{W,Z}}" ,crossing over,end anchor ={[xshift=2ex]}, start anchor ={[xshift=-2ex]},shorten =1mm]
	&[.4cm] \cC_{t+\frac{1}{2}}^{(\frac{1}{2},t)}
	\ar[r,bend left,out=45,in=135,  "Z|1"]
	\ar[l,bend left,out=45,in=135, "W|L_W"]
	\ar[d,"\tau|L_W"]
	\ar[loop above, looseness=15, "{(Z,W)|h_{Z,W}}"]
	& \cC_{t+\frac{1}{2}}^{(\frac{3}{2},t)}
	\ar[r,bend left,out=45,in=135,  "Z|1"]
	\ar[l,bend left,out=45,in=135,  "W|U"]
	\ar[d,"\tau|UL_W"]
	& \cC_{t+\frac{1}{2}}^{(\frac{5}{2},t)}
	\ar[l,bend left,out=45,in=135,  "W|U"]
	\ar[d,"\tau|U^2L_W"]
	&[-1.7cm] \cdots
	\\[1.4cm]
	\cdots
	& \cC_{t-\frac{1}{2}}^{(-\frac{5}{2}, t-1)}
	\ar[r,bend left,out=45,in=135, "T|1"]
	\ar[loop below,looseness=20, "U|U"]
	& \cC_{t-\frac{1}{2}}^{(-\frac{3}{2}, t-1)}
	\ar[r,bend left,out=45,in=135,  "T|1"]
	\ar[l,bend left,out=45,in=135, "T^{-1}|1"]
	\ar[loop below,looseness=20, "U|U"]
	&\cC_{t-\frac{1}{2}}^{(-\frac{1}{2}, t-1)}
	\ar[r,bend left,out=45,in=135, "T|1"]
	\ar[l,bend left,out=45,in=135,  "T^{-1}|1"]
	\ar[loop below,looseness=20, "U|U"]
	&\cC_{t-\frac{1}{2}}^{(\frac{1}{2}, t-1)}
	\ar[r,bend left,out=45,in=135, "T|1"]
	\ar[l,bend left,out=45,in=135,  "T^{-1}|1"]
	\ar[loop below,looseness=20, "U|U"]
	&\cC_{t-\frac{1}{2}}^{(\frac{3}{2}, t-1)}
	\ar[r,bend left,out=45,in=135,  "T|1"]
	\ar[l,bend left,out=45,in=135,  "T^{-1}|1"]
	\ar[loop below,looseness=20, "U|U"]
	&\cC_{t-\frac{1}{2}}^{(\frac{5}{2}, t-1)}
	\ar[l,bend left,out=45,in=135,  "T^{-1}|1"]
	\ar[loop below,looseness=20, "U|U"]	
	&\cdots 
\end{tikzcd}.
\]

The maps 
\[
\Phi^K\colon F_{s,t}\to M_{s,t}\quad \text{and} \quad \Phi^{-K}:F_{s,t}\to M_{s+n,t-1}
\] are computed by tensoring $\cX_n(Y,K)^{\cK}$ with bimodule morphisms $f^K$ and $f^{-K}$ (respectively), shown in the following diagram:
\[
\begin{tikzcd}[labels=description, row sep=2.1cm] \scF_{*,t} \ar[d, "f^{K}"]
\\ 
\scM_{*,t}
\end{tikzcd}
\hspace{-.2cm}
 =
\hspace{-.3cm}
\begin{tikzcd}[labels=description, column sep=.8cm, row sep=0cm]
\cdots
&[-.8cm] \cS^{( -\frac{5}{2},t)}
	\ar[r,bend left,out=45,in=135, "Z|U"]
	\ar[d, "\sigma|U^2"]
& \cS^{(-\frac{3}{2},t)}
	\ar[r,out=45,in=135, "Z|U"]
	\ar[l, bend left,out=45,in=135, "W|1"]
	\ar[d, "\sigma|U"]
&\cS^{(-\frac{1}{2},t)}
	\ar[r, bend left,out=45,in=135, "Z|1"]
	\ar[l, bend left,out=45,in=135, "W|1"]
	\ar[d, "\sigma|1"]
&\cS^{(\frac{1}{2},t)}
	\ar[r, bend left,out=45,in=135, "Z|1"]
	\ar[l, bend left,out=45,in=135, "W|U"]
	\ar[d, "\sigma|1"]
&\cS^{(\frac{3}{2},t)}
	\ar[l, bend left,out=45,in=135, "W|U"]
	\ar[d, "\sigma|1"]
&[-.8cm]\cdots
\\[1.5cm]
 \cdots&[-1.3cm] \cS^{(-\frac{5}{2},t)}
	\ar[r, bend left,out=45,in=135, "T|1"]
	\ar[loop below,looseness=15, "U|U"]
& \cS^{(-\frac{3}{2},t)}
	\ar[r, bend left,out=45,in=135, "T|1"]
	\ar[l, bend left,out=45,in=135,  "T^{-1}|1"]
	\ar[loop below,looseness=15, "U|U"]
&\cS^{(-\frac{1}{2},t)}
	\ar[r, bend left,out=45,in=135, "T|1"]
	\ar[l, bend left,out=45,in=135, "T^{-1}|1"]
	\ar[loop below,looseness=15, "U|U"]
&\cS^{(\frac{1}{2},t)}
	\ar[r, bend left,out=45,in=135, "T|1"]
	\ar[l, bend left,out=45,in=135, "T^{-1}|1"]
	\ar[loop below,looseness=15, "U|U"]
&\cS^{(\frac{3}{2},t)}
	\ar[l, bend left,out=45,in=135, "T^{-1}|1"]
	\ar[loop below,looseness=15, "U|U"]
&[-1.3cm]\cdots 
\end{tikzcd}
\]
\[
\begin{tikzcd}[labels=description, row sep=2.1cm] \scF_{*,t} 
\ar[d, "f^{-K}"]
\\ 
\scM_{*,t-1}
\end{tikzcd}
\hspace{-.2cm}
 =
\hspace{-.3cm}
\begin{tikzcd}[labels=description, column sep=.8cm, row sep=0cm]
\cdots
&[-1.2cm] \cS^{(-\frac{5}{2},t)}
	\ar[r,bend left,out=45,in=135, "Z|U"]
	\ar[loop above,looseness=10, "U|U"]
	\ar[d, "\tau|1"]
& \cS^{(-\frac{3}{2},t)}
	\ar[r,out=45,in=135, "Z|U"]
	\ar[l, bend left,out=45,in=135, "W|1"]
	\ar[loop above,looseness=10, "U|U"]
	\ar[d, "\tau|1"]
&\cS^{(-\frac{1}{2},t)}
	\ar[r, bend left,out=45,in=135, "Z|1"]
	\ar[l, bend left,out=45,in=135, "W|1"]
	\ar[loop above,looseness=10, "U|U"]
	\ar[d, "\tau|1"]
&\cS^{(\frac{1}{2},t)}
	\ar[r, bend left,out=45,in=135, "Z|1"]
	\ar[l, bend left,out=45,in=135, "W|U"]
	\ar[loop above,looseness=10, "U|U"]
	\ar[d, "\tau|U"]
&\cS^{(\frac{3}{2},t)}
	\ar[l, bend left,out=45,in=135, "W|U"]
	\ar[loop above,looseness=10, "U|U"]
	\ar[d, "\tau|U^2"]
&[-1cm]\cdots
\\[1.5cm]
 \cdots&[-1.3cm] \cS^{(-\frac{5}{2},t-1)}
	\ar[r, bend left,out=45,in=135, "T|1"]
	\ar[loop below,looseness=15, "U|U"]
& \cS^{(-\frac{3}{2},t-1)}
	\ar[r, bend left,out=45,in=135, "T|1"]
	\ar[l, bend left,out=45,in=135,  "T^{-1}|1"]
	\ar[loop below,looseness=15, "U|U"]
&\cS^{(-\frac{1}{2},t-1)}
	\ar[r, bend left,out=45,in=135, "T|1"]
	\ar[l, bend left,out=45,in=135, "T^{-1}|1"]
	\ar[loop below,looseness=15, "U|U"]
&\cS^{(\frac{1}{2},t-1)}
	\ar[r, bend left,out=45,in=135, "T|1"]
	\ar[l, bend left,out=45,in=135, "T^{-1}|1"]
	\ar[loop below,looseness=15, "U|U"]
&\cS^{(\frac{3}{2},t-1)}
	\ar[l, bend left,out=45,in=135, "T^{-1}|1"]
	\ar[loop below,looseness=15, "U|U"]
&[-1.3cm]\cdots 
\end{tikzcd}
\]

Next, we consider the maps $\Phi^{\pm \mu}$. The maps
\[
 \Phi^\mu\colon  E_{s,t}\to F_{s,t}\quad \text{and} \quad \Phi^{-\mu} \colon E_{s,t}\to F_{s-1,t}
\]
 are determined by tensoring $\bI\colon \cX_n(K)^{\cK}\to \cX_n(K)^{\cK}$ with the following mapping cone complexes:
\[
\begin{tikzcd}[labels=description, row sep=2.8cm] \scE_{*,t} 
\ar[d, "f^{\mu}"]
\\ 
\scF_{*,t}
\end{tikzcd}
\hspace{-.3cm}
=
\hspace{-.3cm}\begin{tikzcd}[labels=description, column sep=.8cm, row sep=0cm]
\cdots
&[-1cm]\cC_{t-\frac{1}{2}}^{(-\frac{5}{2},t)}
	\ar[r, bend left,out=45,in=135, "Z|U"]
	\ar[d,"L_\sigma",  pos=.4]
& \cC_{t-\frac{1}{2}}^{(-\frac{3}{2},t)}
	\ar[r, bend left,out=45,in=135, "Z|U"]
	\ar[l, bend left,out=45,in=135, "W|1"]
	\ar[d,"L_\sigma",  pos=.4]
&\cC_{t-\frac{1}{2}}^{(-\frac{1}{2},t)}
	\ar[r, bend left,out=35,in=145, "Z|L_Z"]
	\ar[l, bend left,out=45,in=135, "W|1"]
	\ar[d,"L_\sigma",  pos=.4]
	\ar[loop above,looseness=15, "{(W,Z)|h_{W,Z}}"]
	\ar[dr,pos=.68, "Z|h_{\sigma,Z}", end anchor={[xshift=1ex]}]
&[.8cm] \cC_{t+\frac{1}{2}}^{(\frac{1}{2},t)}
	\ar[r, bend left,out=45,in=135, "Z|1"]
	\ar[l, bend left=15,out=35,in=145, "W|L_W"]
	\ar[d,"L_\sigma", pos=.4]
	\ar[loop above, looseness=15, "{(Z,W)|h_{Z,W}}"]
	\ar[dl, pos=.27, "W|h_{\sigma,W}",crossing over, end anchor={[xshift=-1ex]}]
& \cC_{t+\frac{1}{2}}^{(\frac{3}{2},t)}
	\ar[r, bend left,out=45,in=135, "Z|1"]
	\ar[l, bend left,out=45,in=135, "W|U"]
	\ar[d,"L_\sigma",  pos=.4]
& \cC_{t+\frac{1}{2}}^{(\frac{5}{2},t)}
	\ar[l, bend left,out=45,in=135, "W|U"]
	\ar[d,"L_\sigma", pos=.4]
&[-1cm] \cdots
\\[2.7cm]
\cdots
&[.4cm]
 \cS^{(-\frac{5}{2},t)}
	\ar[r, bend left,out=45,in=135, "Z|U"]
& \cS^{(-\frac{3}{2},t)}
	\ar[r, bend left,out=45,in=135, "Z|U"]
	\ar[l, bend left,out=45,in=135, "W|1"]
&\cS^{(-\frac{1}{2},t)}
	\ar[r, bend left,out=45,in=135, "Z|1"]
	\ar[l, bend left,out=45,in=135, "W|1"]
&\cS^{(\frac{1}{2},t)}
	\ar[r, bend left,out=45,in=135, "Z|1"]
	\ar[l, bend left,out=45,in=135, "W|U"]
&\cS^{(\frac{3}{2},t)}
	\ar[r, bend left,out=45,in=135, "Z|1"]
	\ar[l, bend left,out=45,in=135, "W|U"]
&\cS^{(\frac{5}{2},t)}
	\ar[l, bend left,out=45,in=135, "W|U"]
&\cdots 
 \end{tikzcd}
 \]
  
 \[\begin{tikzcd}[labels=description, row sep=2.1cm] 
\scE_{*,t} 
\ar[d, "f^{-\mu}"]
\\ 
\scF_{*,t}
\end{tikzcd}
\hspace{-.2cm}
=
\hspace{-.3cm}\begin{tikzcd}[labels=description, column sep=.8cm, row sep=.3cm]
 \cdots
 &[-1cm]\cC_{t-\frac{1}{2}}^{(-\frac{5}{2},t)}
 	\ar[r, bend left,out=45,in=135, "Z|U"]
 & \cC_{t-\frac{1}{2}}^{(-\frac{3}{2},t)}
 	\ar[r, bend left,out=45,in=135, "Z|U"]
 	\ar[l, bend left,out=45,in=135, "W|1"]
 	\ar[dl, "L_\tau", pos=.6]
 &\cC_{t-\frac{1}{2}}^{(-\frac{1}{2},t)}
 	\ar[r, bend left,out=45,in=135, "Z|L_Z"]
 	\ar[l, bend left,out=45,in=135, "W|1"]
 	\ar[dl, "L_\tau",  pos=.6]
 	\ar[loop above,looseness=15, "{(W,Z)|h_{W,Z}}"]
 &[.5cm] \cC_{t+\frac{1}{2}}^{(\frac{1}{2},t)}
 	\ar[r, bend left,out=45,in=135, "Z|1"]
 	\ar[l, bend left,out=45,in=135, "W|L_W"]
 	\ar[dl, "L_\tau", pos=.6]
 	\ar[loop above, looseness=15, "{(Z,W)|h_{Z,W}}"]
 	\ar[dll, pos=.37, "\substack{ W|h_{\tau,W}}", shorten =2mm,end anchor={[xshift=-2ex]}, start anchor={[xshift=2ex]} ]
 & \cC_{t+\frac{1}{2}}^{(\frac{3}{2},t)}
 	\ar[r, bend left,out=45,in=135, "Z|1"]
 	\ar[l, bend left,out=45,in=135, "W|U"]
 	\ar[dl, "L_\tau",  pos=.6]
 & \cC_{t+\frac{1}{2}}^{(\frac{5}{2},t)}
 	\ar[l, bend left,out=45,in=135, "W|U"]
 	\ar[dl, "L_\tau",  pos=.6]
 &[-1cm] \cdots
 \\[2.6cm]
 \cdots
 &[.4cm] \cS^{( -\frac{5}{2},t)}
 	\ar[r, bend left,out=45,in=135, "Z|U"]
 & \cS^{( -\frac{3}{2},t)}
 	\ar[r, bend left,out=45,in=135, "Z|U"]
 	\ar[l, bend left,out=45,in=135, "W|1"]
 &\cS^{(-\frac{1}{2},t)}
 	\ar[r, bend left,out=45,in=135, "Z|1"]
 	\ar[l, bend left,out=45,in=135, "W|1"]
  	\ar[from=u,pos=.21, "\substack{ Z|h_{\tau,Z}}",crossing over]
 &\cS^{(\frac{1}{2},t)}
 	\ar[r, bend left,out=45,in=135, "Z|1"]
 	\ar[l, bend left,out=45,in=135, "W|U"]
 &\cS^{(\frac{3}{2},t)}
 	\ar[r, bend left,out=45,in=135, "Z|1"]
 	\ar[l, bend left,out=45,in=135, "W|U"]
 &\cS^{(\frac{5}{2},t)}
 	\ar[l, bend left,out=45,in=135, "W|U"]
 &\cdots 
  \end{tikzcd}
\]

The maps
\[
\Phi^\mu\colon  J_{s,t}\to M_{s,t}\quad \text{and} \quad \Phi^{-\mu} \colon J_{s,t}\to M_{s-1,t}
\]
are determined by taking the tensor product of $\bI\colon \cX_n(K)^{\cK}\to \cX_n(K)^{\cK}$  with the diagrams
\[
\begin{tikzcd}[labels=description, row sep=2.1cm] \scJ_{*,t} 
	\ar[d, "f^{\mu}"]
	\\ 
	\scM_{*,t}
\end{tikzcd}
\hspace{-.2cm}
=
\hspace{-.3cm}
\begin{tikzcd}[labels=description, column sep=1cm, row sep=0cm]
	\cdots
	&[-1cm] \cC_{t+\frac{1}{2}}^{( -\frac{5}{2},t)}
	\ar[r,bend left,out=45,in=135, "T|1"]
	\ar[loop above,looseness=10, "U|U"]
	\ar[d, "L_\sigma"]
	& \cC_{t+\frac{1}{2}}^{(-\frac{3}{2},t)}
	\ar[r,out=45,in=135, "T|1"]
	\ar[l, bend left,out=45,in=135, "T^{-1}|1"]
	\ar[loop above,looseness=10, "U|U"]
	\ar[d, "L_\sigma"]
	&\cC_{t+\frac{1}{2}}^{(-\frac{1}{2},t)}
	\ar[r, bend left,out=45,in=135, "T|1"]
	\ar[l, bend left,out=45,in=135, "T^{-1}|1"]
	\ar[loop above,looseness=10, "U|U"]
	\ar[d, "L_\sigma"]
	&\cC_{t+\frac{1}{2}}^{(\frac{1}{2},t)}
	\ar[r, bend left,out=45,in=135, "T|1"]
	\ar[l, bend left,out=45,in=135, "T^{-1}|1"]
	\ar[loop above,looseness=10, "U|U"]
	\ar[d, "L_\sigma"]
	&\cC_{t+\frac{1}{2}}^{(\frac{3}{2},t)}
	\ar[l, bend left,out=45,in=135, "T^{-1}|1"]
	\ar[loop above,looseness=10, "U|U"]
	\ar[d, "L_\sigma"]
	&[-1cm]\cdots
	\\[1.5cm]
	\cdots&[-1.3cm] \cS^{(-\frac{5}{2},t)}
	\ar[r, bend left,out=45,in=135, "T|1"]
	\ar[loop below,looseness=10, "U|U"]
	& \cS^{(-\frac{3}{2},t)}
	\ar[r, bend left,out=45,in=135, "T|1"]
	\ar[l, bend left,out=45,in=135,  "T^{-1}|1"]
	\ar[loop below,looseness=15, "U|U"]
	&\cS^{(-\frac{1}{2},t)}
	\ar[r, bend left,out=45,in=135, "T|1"]
	\ar[l, bend left,out=45,in=135, "T^{-1}|1"]
	\ar[loop below,looseness=15, "U|U"]
	&\cS^{(\frac{1}{2},t)}
	\ar[r, bend left,out=45,in=135, "T|1"]
	\ar[l, bend left,out=45,in=135, "T^{-1}|1"]
	\ar[loop below,looseness=15, "U|U"]
	&\cS^{(\frac{3}{2},t)}
	\ar[l, bend left,out=45,in=135, "T^{-1}|1"]
	\ar[loop below,looseness=15, "U|U"]
	&[-1.3cm]\cdots 
\end{tikzcd}
\]
\[
\begin{tikzcd}[labels=description, row sep=2.1cm] \scJ_{*,t} 
	\ar[d, "f^{-\mu}"]
	\\ 
	\scM_{*,t}
\end{tikzcd}
\hspace{-.2cm}
=
\hspace{-.3cm}
\begin{tikzcd}[labels=description, column sep=1cm, row sep=0cm]
	\cdots
	&[-1cm] \cC_{t+\frac{1}{2}}^{( -\frac{5}{2},t)}
	\ar[r,bend left,out=45,in=135, "T|1"]
	\ar[loop above,looseness=10, "U|U"]
	\ar[d, "L_\tau"]
	& \cC_{t+\frac{1}{2}}^{(-\frac{3}{2},t)}
	\ar[r,out=45,in=135, "T|1"]
	\ar[l, bend left,out=45,in=135, "T^{-1}|1"]
	\ar[loop above,looseness=10, "U|U"]
	\ar[d, "L_\tau"]
	&\cC_{t+\frac{1}{2}}^{(-\frac{1}{2},t)}
	\ar[r, bend left,out=45,in=135, "T|1"]
	\ar[l, bend left,out=45,in=135, "T^{-1}|1"]
	\ar[loop above,looseness=10, "U|U"]
	\ar[d, "L_\tau"]
	&\cC_{t+\frac{1}{2}}^{(\frac{1}{2},t)}
	\ar[r, bend left,out=45,in=135, "T|1"]
	\ar[l, bend left,out=45,in=135, "T^{-1}|1"]
	\ar[loop above,looseness=10, "U|U"]
	\ar[d, "L_\tau"]
	&\cC_{t+\frac{1}{2}}^{(\frac{3}{2},t)}
	\ar[l, bend left,out=45,in=135, "T^{-1}|1"]
	\ar[loop above,looseness=10, "U|U"]
	\ar[d, "L_\tau"]
	&[-1cm]\cdots
	\\[1.5cm]
	\cdots&[-1.3cm] \cS^{(-\frac{7}{2},t)}
	\ar[r, bend left,out=45,in=135, "T|1"]
	\ar[loop below,looseness=10, "U|U"]
	& \cS^{(-\frac{5}{2},t)}
	\ar[r, bend left,out=45,in=135, "T|1"]
	\ar[l, bend left,out=45,in=135,  "T^{-1}|1"]
	\ar[loop below,looseness=15, "U|U"]
	&\cS^{(-\frac{3}{2},t)}
	\ar[r, bend left,out=45,in=135, "T|1"]
	\ar[l, bend left,out=45,in=135, "T^{-1}|1"]
	\ar[loop below,looseness=15, "U|U"]
	&\cS^{(-\frac{1}{2},t)}
	\ar[r, bend left,out=45,in=135, "T|1"]
	\ar[l, bend left,out=45,in=135, "T^{-1}|1"]
	\ar[loop below,looseness=15, "U|U"]
	&\cS^{(\frac{1}{2},t)}
	\ar[l, bend left,out=45,in=135, "T^{-1}|1"]
	\ar[loop below,looseness=15, "U|U"]
	&[-1.3cm]\cdots 
\end{tikzcd}
\]

\begin{rem} The $DA$-bimodule morphisms $f^\mu$ and $f^{-\mu}$ have terms with both no algebra inputs (i.e. $(f^{\pm \mu})_1^1$) and one algebra inputs (i.e. $(f^{\pm \mu})_2^1$). There are no terms of the form $(f^{\pm \mu})_3^1$. The components of $f^{\pm\mu}$ are encoded in the above diagrams as follows. (Compare Section~\ref{sec:U-equivariant}). Terms $(f^{\pm \mu})_1^1$ are encoded by the vertical arrows labeled $L_\sigma$ and $L_\tau$. The terms $(f^{\pm\mu})_2^1$ are encoded as follows. We view the total diagram as a type-$DA$ bimodule over $(R,R)$. We let $\delta_2^1(Z,-)$ and $\delta_2^1(W,-)$ be the arrows labeled with $W|-$ and $Z|-$, respectively. We then set $\delta_2^1(Z^n,-)$ to be the composition of $n$ copies of $\delta_2^1(Z,-)$ and similarly for $\delta_2^1(W^n,-)$. We extend these maps $U$-equivariantly. Then $(f^{\pm \mu})_2^1(a,-)$ is the vertical component of the map $\delta_2^1(a,-)$ on the total diagram.

The maps $\Phi^K: E_{s,t} \to J_{s,t}$ is described by the following schematic drawing:
\[\Phi^{K}=
\begin{tikzcd}[row sep=.3cm]
	\cX_n(K)\ar[d]
		&[-1cm]\boxtimes &[-1cm]\scE_{*,-\frac{1}{2}} \ar[dd] &[-1.2cm]\\
	\delta_1^1\ar[drr, "\sigma",labels=description] \ar[dd]
	\\
	& & (f^K)_2^1\ar[d]\ar[dr] &\\
	\cX_n(K)& [-1cm]\boxtimes & [-1cm]\scJ_{*,-\frac{1}{2}} & \otimes\,\,\,\bF[W,Z]
\end{tikzcd}
+
\begin{tikzcd}[row sep=.3cm, column sep=1.1cm]
	\cX_n(K)
	\ar[d]
		&[-1cm]
	\boxtimes 
&[-1cm]
	\scE_{*,-\frac{1}{2}}
	\ar[dd] 
	&[-1.2cm]
	\\
	\delta_1^1\ar[drr, pos=.3, "U^i W^j"]
		\ar[d]
	\\
	\delta_1^1 
		\ar[rr, "\sigma",labels=description]
		\ar[d]
		& &
		(f^K)_3^1
		\ar[d]\ar[dr] &\\
	\cX_n(K)& [-1cm]\boxtimes & [-1cm]\scJ_{*,-\frac{1}{2}} & \otimes\,\,\,\bF[W,Z]
\end{tikzcd}\]
The $(f^K)_3^1$ arrow is corresponds to the $DA$-structure maps involving $(\sigma,W)|h_{Z,W} $ obtained by applying the recipe described in Section~\ref{sec:U-equivariant} to the mapping cone of $f^K$. In the diagram above, the $\sigma$ labeled arrow denotes a differential weighted by a multiple of $\sigma$. 

 The map $\Phi^{-K}$ has a similar description, except that $\tau$ weighted differentials of $\cX_n(K)^{\cK}$ contribute instead of $\sigma$ weighted arrows, and the $U^i W^j$ labeled arrow is replaced by a $U^i Z^j$ labeled arrow. The map $\Phi^K:F_{s,t} \to M_{s,t}$ has a similar description, except there is no $(\phi^K)_3^1$ term.

Schematically, the map $\Phi^{\mu}:E_{s,t} \to F_{s,t}$ is given by the below.
	\[\Phi^{\mu}=
	\begin{tikzcd}[row sep=.3cm]
		\cX_n(K)\ar[d]& [-1cm]\boxtimes &[-1cm]\scE_{*,-\frac{1}{2}} \ar[d] &[-1.2cm]\\[4mm]
		\bI\ar[d] & & L_\sigma \ar[d] \arrow[dr]&\\[4mm]
		\cX_n(K)& [-1cm]\boxtimes & [-1cm]\scF_{*,-\frac{1}{2}} & \otimes\,\,\,\bF[W,Z]
	\end{tikzcd}+
		\begin{tikzcd}[row sep=.3cm]
			\cX_n(K)
				\ar[d]
				&[-1cm]
				\boxtimes
				 &[-1cm]\scE_{*,-\frac{1}{2}}
				\ar[dd] 
				&[-1.2cm]
				\\
				\delta_1^1\ar[drr]
				\ar[dd]
				\\
				&&
				(f^{\mu})_2^1 
					\arrow[dr]
					\ar[d]
				&
				\\[4mm]
			\cX_n(K)& [-1cm]\boxtimes & [-1cm]\scF_{*,-\frac{1}{2}} & \otimes\,\,\,\bF[W,Z]
		\end{tikzcd}
	\]
In the above diagram, the algebra inputs into $(f^\mu)_2^1$ are in idempotent $(0,0)$ (i.e. elements of $\bF[W,Z]$) and are contributed by terms involving the maps $W|h_{\sigma,W}$ and $Z|h_{\sigma,Z} $ in our description of the map $f^\mu$ above.

 The maps $\Phi^{-\mu}:E_{s,t}\to F_{s-1,t}$, $\Phi^{\mu}:J_{s,t}\to M_{s,t}$ and $\Phi^{-\mu}:J_{s,t}\to M_{s-1,t} $ have similar descriptions.
\end{rem}

\subsection{Length 2 maps}

There are four length 2 maps: $\Phi^{K,\mu}$, $\Phi^{-K,\mu}$, $\Phi^{K,-\mu}$ and $\Phi^{-K,-\mu}$. Algebraically, the term $\Phi^{K,\mu}$ arises in the tensor product 
\[
\cX_n(K)^{\cK}\boxtimes {}_{\cK} \cH_-^{\cK}\boxtimes {}_{\cK} \cX(L_P)
\]
via a diagram of configurations of differentials of the following form:
\[
\Phi^{K,\mu}=
\begin{tikzcd}[row sep=.3cm]
\cX_n(K)\ar[d]& \cH \ar[dd] & \cX(L_P) \ar[dddd] \\
\delta_1^1\ar[dr, "\sigma",labels=description]\ar[dddd]
 \\
& \delta_2^1\ar[d]\ar[ddr]&\\
&\delta_1^1\ar[dd] \ar[dr, "\sigma",labels=description]&&\\
& & \delta_3^1\ar[d] \ar[dr]\\
\cX_n(K)\,&\cH\,&\cX(L_P)\,&\bF[W,Z]\,
\end{tikzcd}
\]
In the above, an arrow labeled $\sigma$, means the terms of the differential that are multiples of $\sigma$. In more detail the length two maps are obtained by boxing $\cX_n(Y,K)^{\cK}$ with one of four $DA$-bimodule morphisms $f^{\pm K,\pm \mu}$. We show $f^{K,\mu}$ and $f^{K,-\mu}$ below: 
\[
\begin{tikzcd}[labels=description,row sep=1.7cm] 
\scE_{*,t}
	\ar[d, "\substack{f^{K,\mu}\\+f^{K,-\mu}}"]
\\  
\scM_{*,t}
\end{tikzcd}
=
\hspace{-.3cm}
\begin{tikzcd}[labels=description, {column sep=2.3cm,between origins}, row sep=1cm]
\cdots
&[-1.4cm]\cC_{t-\frac{1}{2}}^{(-\frac{5}{2}, t)}
	\ar[r,bend left,out=45,in=135, "Z|U"]
	\ar[d,"\sigma|U^2 h_{\sigma,Z}", pos=.3]
& \cC_{t-\frac{1}{2}}^{(-\frac{3}{2}, t)}
	\ar[r,bend left,out=45,in=135,  "Z|U"]
	\ar[l,bend left,out=45,in=135, "W|1"]
	\ar[d,"\sigma| U h_{\sigma,Z}", pos=.3]
	\ar[dl,"\sigma| U h_{\tau,Z}"]
&\cC_{t-\frac{1}{2}}^{(-\frac{1}{2}, t)}
	\ar[r,bend left,out=45,in=135,  "Z|L_Z"]
	\ar[l,bend left,out=45,in=135,  "W|1"]
	\ar[d,"\sigma|h_{\sigma,Z}",pos=.3]
	\ar[loop above,looseness=10, "{(W,Z)|h_{W,Z}}"]
	\ar[dl,"\sigma|h_{\tau,Z}"]
&[.4cm] \cC_{t+\frac{1}{2}}^{(\frac{1}{2},t)}
	\ar[r,bend left,out=45,in=135,  "Z|1"]
	\ar[l,bend left,out=45,in=135, "W|L_W"]
	\ar[loop above, looseness=10, "{(Z,W)|h_{Z,W}}"]
& \cC_{t+\frac{1}{2}}^{(\frac{3}{2},t)}
	\ar[l,bend left,out=45,in=135,  "W|U"]
&[-1.5cm] \cdots
\\[1.7cm]
\cdots
& \cS^{(-\frac{5}{2}, t)}
	\ar[r,bend left,out=45,in=135, "T|1"]
	\ar[loop below,looseness=10, "U|U"]
& \cS^{(-\frac{3}{2}, t)}
	\ar[r,bend left,out=45,in=135,  "T|1"]
	\ar[l,bend left,out=45,in=135, "T^{-1}|1"]
	\ar[loop below,looseness=10, "U|U"]
&\cS^{(-\frac{1}{2}, t)}
	\ar[r,bend left,out=45,in=135, "T|1"]
	\ar[l,bend left,out=45,in=135,  "T^{-1}|1"]
	\ar[loop below,looseness=10, "U|U"]
&\cS^{(\frac{1}{2}, t)}
	\ar[r,bend left,out=45,in=135, "T|1"]
	\ar[l,bend left,out=45,in=135,  "T^{-1}|1"]
	\ar[loop below,looseness=10, "U|U"]
&\cS^{(\frac{3}{2}, t)}
	\ar[l,bend left,out=45,in=135,  "T^{-1}|1"]
	\ar[loop below,looseness=10, "U|U"]
&\cdots 
\end{tikzcd}
\]

The map $f^{K,\mu}$ consists of the arrows which are multiples of $\sigma|h_{\sigma,Z}$ while $f^{K,-\mu}$ consists of the arrows which are multiples of $\sigma|h_{\tau,Z}$. We display the maps $f^{-K,\mu}$ and $f^{-K,-\mu}$ below:
\[
\begin{tikzcd}[labels=description,row sep=1.7cm] 
\scE_{*,t}
	\ar[d, "\substack{f^{-K,\mu}\\+f^{-K,-\mu}}"]
\\  
\scM_{*,t}
\end{tikzcd}
=
\hspace{-.3cm}
\begin{tikzcd}[labels=description, {column sep=2.3cm,between origins}, row sep=1cm]
\cdots
&[-1.4cm] \cC_{t-\frac{1}{2}}^{(-\frac{3}{2}, t)}
	\ar[r,bend left,out=45,in=135,  "Z|U"]
&\cC_{t-\frac{1}{2}}^{(-\frac{1}{2}, t)}
	\ar[r,bend left,out=45,in=135,  "Z|L_Z"]
	\ar[l,bend left,out=45,in=135,  "W|1"]
	\ar[loop above,looseness=10, "{(W,Z)|h_{W,Z}}"]
&[.4cm] \cC_{t+\frac{1}{2}}^{(\frac{1}{2},t)}
	\ar[r,bend left,out=45,in=135,  "Z|1"]
	\ar[l,bend left,out=45,in=135, "W|L_W"]
	\ar[d,"\tau|h_{\sigma,W}",pos=.3,]
	\ar[dl, "\tau|h_{\tau, W}"]
	\ar[loop above, looseness=10, "{(Z,W)|h_{Z,W}}"]
& \cC_{t+\frac{1}{2}}^{(\frac{3}{2},t)}
	\ar[r,bend left,out=45,in=135,  "Z|1"]
	\ar[l,bend left,out=45,in=135,  "W|U"]
	\ar[dl, "U\tau|h_{\tau, W}"]
	\ar[d,"U\tau|h_{\sigma,W}",pos=.3,]
& \cC_{t+\frac{1}{2}}^{(\frac{5}{2},t)}
	\ar[l,bend left,out=45,in=135,  "W|U"]
	\ar[pos=.3,d,"U^2\tau|h_{\sigma,W}"]
	\ar[dl, "U^2\tau|h_{\tau, W}"]
&[-1.5cm] \cdots
\\[1.7cm]
\cdots
& \cS^{(-\frac{3}{2}, t-1)}
	\ar[r,bend left,out=45,in=135,  "T|1"]
	\ar[loop below,looseness=10, "U|U"]
&\cS^{(-\frac{1}{2}, t-1)}
	\ar[r,bend left,out=45,in=135, "T|1"]
	\ar[l,bend left,out=45,in=135,  "T^{-1}|1"]
	\ar[loop below,looseness=10, "U|U"]
&\cS^{(\frac{1}{2}, t-1)}
	\ar[r,bend left,out=45,in=135, "T|1"]
	\ar[l,bend left,out=45,in=135,  "T^{-1}|1"]
	\ar[loop below,looseness=10, "U|U"]
&\cS^{(\frac{3}{2}, t-1)}
	\ar[r,bend left,out=45,in=135,  "T|1"]
	\ar[l,bend left,out=45,in=135,  "T^{-1}|1"]
	\ar[loop below,looseness=10, "U|U"]
&\cS^{(\frac{5}{2}, t-1)}
	\ar[l,bend left,out=45,in=135,  "T^{-1}|1"]
	\ar[loop below,looseness=10, "U|U"]	
&\cdots 
\end{tikzcd}
\]
The map $f^{-K,\mu}$ consists of the arrows labeled by multiples of $\tau|h_{\sigma,W}$, whereas $f^{-K,-\mu}$ consists of the maps labeled labeled by multiples of $\tau|h_{\tau,W}$.

Note that all of the maps $f^{\pm K,\pm \mu}$ have no term with no algebra inputs, and have a term with one algebra input,  $(f^{\pm K,\pm \mu})_2^1$, but no term with two algebra inputs. The maps $(f^{\pm K,\pm \mu})_2^1$ are only non-trivial when the algebra input is a multiple of $\sigma$ or $\tau$, and these are given by the above diagrams.

Summarizing the analysis described above, we have the following:

\begin{prop} The bimodule ${}_{\cK} \cH_{-}^{\cK}\boxtimes {}_{\cK} \cX(L_P)^R$ consists of the following hypercube of bimodules:
\[
\begin{tikzcd}[column sep=2cm, row sep=2cm, labels=description] \scE_{*,*}\ar[r, "f^K+f^{-K}"]\ar[d,"f^\mu+f^{-\mu}"] \ar[dr,dashed] & \scJ_{*,*}\ar[d,"f^\mu+f^{-\mu}"]\\
\scF_{*,*}\ar[r, "f^K+f^{-K}"]& \scM_{*,*}
\end{tikzcd}
\]
where $\scE_{*,*}$ denotes the sum of all of the bimodules $\scE_{*,t}$ described above, and similarly for $\scF_{*,*}$, $\scJ_{*,*}$ and $\scM_{*,*}$. The diagonal map (dashed) is the sum of the four maps $f^{K,\mu}$, $f^{-K,\mu}$, $f^{K,\mu}$ and $f^{-K,-\mu}$. 
\end{prop}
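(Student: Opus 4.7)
The plan is to compute the tensor product ${}_{\cK} \cH_-^{\cK}\boxtimes {}_{\cK} \cX(L_P)^R$ by an idempotent-by-idempotent homological perturbation argument, mimicking the strategies used in Section~\ref{sec:simplifying hopf link tensor hopf link} (for $\cH_- \boxtimes \cH_-$) and in Section~\ref{sec:cables} (for the $(q,1)$-cabling bimodule $\scC_{q,1}$). The first step is to decompose the underlying type-$D$ structure according to the idempotent $(\veps_1,\veps_2)\in \bE_2$, where $\veps_1$ records the generator family of $\cH_-$ (i.e., $W_{00}^i/Z_{00}^i, 1_{01}, T^i_{10}, T^i_{11}$) and $\veps_2$ records the left idempotent of $\cX(L_P)$. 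Using the explicit description of ${}_{\cK}\cH_-^{\cK}$ from Figure~\ref{fig:Hopf-link-diagram}, the tensor product in each idempotent $(\veps_1, \veps_2)$ naturally organizes into a sum of staircase-like sub-complexes indexed by pairs of generators, one from each factor.

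In each idempotent, the strategy is then to apply the chain-complex homological perturbation lemma (Lemma~\ref{lem:HPL-chain-complexes}, together with Remark~\ref{rem:relax-HPL}) to cancel arrows weighted by $1$ or by $T^{\pm 1}$, exactly as in Lemmas~\ref{lem:simplify-X_j-complexes} and~\ref{lem:homotopy-equivalence-idempotent-00}. Working in the chiral topology, the convergence arguments used in Section~\ref{sec:simplifying hopf link tensor hopf link} apply verbatim, since they depended only on the staircase-like structure of the arrows and not on the particular link. After cancellation, the surviving generators in idempotent $(0,0)$ are exactly those of the form $W_{00}^i|\cC_{t-\frac{1}{2}}$ and $Z_{00}^i|\cC_{t+\frac{1}{2}}$, matching the ambient complex in which $\scE_{*,t}$ sits (as described in Section~\ref{sec:definition of E,F,J,M}). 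Similarly, the surviving complexes in idempotents $(0,1)$, $(1,0)$, $(1,1)$ can be identified with $\scF_{*,t}$, $\scJ_{*,t}$ and $\scM_{*,t}$, respectively.

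Having identified the type-$D$ structure, the next step is to lift the strong deformation retractions to the full $DA$-bimodule level using the homological perturbation lemma for $A_\infty$-modules (Lemma~\ref{lem:HPL-modules}) and the extension to hypercubes (Lemma~\ref{lem:homotopy perturbation of hypercube}). The induced actions of $a\in \cK$ are then computed by the standard zig-zag recipe of Figure~\ref{fig:homological-perturbation}. The module actions internal to each $\scE_{*,t}$, $\scF_{*,t}$, $\scJ_{*,t}$ and $\scM_{*,t}$ arise from the internal $\delta^1$ and $\delta_2^1(W,-), \delta_2^1(Z,-)$ on $\cX(L_P)$ applied within a single idempotent of $\cH_-$, precisely reproducing the diagrams listed in Section~\ref{sec:definition of E,F,J,M}. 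The length~$1$ maps $f^{\pm K}$ and $f^{\pm \mu}$ arise from the terms of $\delta_1^1$ on $\cH_-$ weighted by $\sigma$ or $\tau$, combined via the zig-zag with a single action on $\cX(L_P)$, exactly as worked out in Lemma~\ref{lem:identity-d21-computation} and Section~\ref{sec:module structure of elliptic involution}.

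The main technical difficulty, and the step that will require the most care, is the identification of the diagonal length-$2$ maps $f^{\pm K,\pm \mu}$. These arise from homological perturbation zig-zags that combine the $\sigma$- or $\tau$-weighted differential of $\cH_-$ with a \emph{two}-step algebra input $(\sigma,W)$, $(\sigma,Z)$, $(\tau,W)$ or $(\tau,Z)$ into $\cX(L_P)$, passing through the $h_{\sigma,W}, h_{\sigma,Z}, h_{\tau,W}, h_{\tau,Z}$ homotopies defined in Section~\ref{sec:U-equivariant}. The verification that these zig-zags produce exactly the diagrams listed in the length-2 subsection amounts to enumerating the contributing paths through the deformation retract; the sign-less $\bF$-coefficients and the grading constraints (preservation of $(\gr_{\ws},\gr_{\zs})$ and the Alexander grading) will rule out all but the advertised terms. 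The remaining check that no higher $\delta_j^1$ (for $j>2$) actions appear follows from a staircase/algebraic grading argument analogous to the one used in the proof of Lemma~\ref{lem:identity-d21-computation}: any such action would necessarily raise the algebraic grading of the chain by at least $2$, which forces it to vanish modulo homotopy.
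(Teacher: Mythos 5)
Your proposal goes wrong at a basic structural level: you set out to prove a \emph{homotopy equivalence} when the proposition asserts an \emph{exact decomposition}. The bimodules $\scE_{*,t}$, $\scF_{*,t}$, $\scJ_{*,t}$, $\scM_{*,t}$ were defined in Section~\ref{sec:definition of E,F,J,M} not as homotopy retracts but as literal subspaces of ${}_{\cK}\cH_-^{\cK}\boxtimes{}_{\cK}\cX(L_P)^R$ (the paper says in each case that the complex ``appears as the following subspace''), carved out by the idempotent of the $\cH_-$ factor: generators with $\cH_-$ in idempotent $(0,0)$ tensored with $\cC_r$ populate $\scE$, those with $(0,1)$ tensored with $T^r\cS$ populate $\scF$, and similarly $(1,0)\leftrightarrow\scJ$ and $(1,1)\leftrightarrow\scM$. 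Since every pair of generators of the box tensor product (with matching middle idempotent) lands in exactly one of these four families, there is nothing to cancel: the hypercube \emph{is} the tensor product. The ``proof'' the paper intends is simply the bookkeeping carried out in Sections~8.2--8.4, verifying that when one records the $\delta^1$ and $\delta_2^1(\sigma,-)$, $\delta_2^1(\tau,-)$ structure maps of the box tensor product relative to this idempotent decomposition, the within-piece components agree with the internal structure of $\scE$, $\scF$, $\scJ$, $\scM$, while the cross-piece components are precisely the morphisms $f^{\pm\mu}$ (from the gray $\delta_1^1$ arrows of $\cH_-$), $f^{\pm K}$ (from the $\sigma,\tau$ actions), and $f^{\pm K,\pm\mu}$ (from zig-zags through the $h_{\sigma,W}$, $h_{\tau,Z}$, etc., homotopies).

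The strategy you borrow from Section~\ref{sec:simplifying hopf link tensor hopf link} --- cancelling arrows weighted by $1$ or $T^{\pm 1}$ and invoking Lemmas~\ref{lem:simplify-X_j-complexes} and~\ref{lem:homotopy-equivalence-idempotent-00} --- is the wrong tool here. That cancellation genuinely reduced the rank of $\cH_-\boxtimes\cH_-$ to something tiny, and the analogous reduction of $\cH_-\boxtimes\cX(T_{2,2q})$ in Section~\ref{sec:cables} produced the simplified cabling module $\scC_{q,1}$. If you carried out the same cancellation for $\cH_-\boxtimes\cX(L_P)$, you would indeed obtain a \emph{smaller} bimodule, but that bimodule would \emph{not} be the hypercube of $\scE,\scF,\scJ,\scM$ --- it would be a reduced model homotopy equivalent to it. Your claim that ``after cancellation, the surviving generators are exactly those of the form $W^i_{00}|\cC_{t-\frac{1}{2}}$ and $Z^i_{00}|\cC_{t+\frac{1}{2}}$'' is therefore incorrect as written: before cancellation these are already \emph{all} of the $(0,0)$-idempotent generators (ranging over $t$), so either nothing is cancelled and the lemma is invoked vacuously, or a nontrivial cancellation happens and you leave the hypercube. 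In short, the proposition is established by inspection of the box tensor product's structure maps, and there is no homological perturbation step; you should drop the appeal to Lemmas~\ref{lem:HPL-chain-complexes} and~\ref{lem:HPL-modules} entirely and instead verify directly that the idempotent decomposition of $\cH_-$ splits $\delta^1$ and $\delta_2^1$ of the box tensor product into the advertised pieces.
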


\subsection{Gradings}
\label{sec:grading}
We now describe the absolute Maslov and Alexander gradings on the complex  $\bX(P,K,n)^{\bF[W,Z]}$ introduced in the previous section. Our main tool is the general result \cite{ZemExact}*{Theorem~10.8} about grading shifts on the link surgery formula.

The topological setting is as follows. We consider the 3-component link $K\# H\# L_P$. Denote the 3-components by $K$, $\mu$ and $P$. Following the notation of \cite{ZemExact}*{Theorem~10.8}, we write $J$ for $K\cup \mu$, and $\Lambda=(n,0)$ for the framing on $J$. Note that $S^3_{\Lambda}(J)=S^3$.

We recall some notation from \cite{ZemExact}*{Section~10}. Let $W_{\Lambda}(J)$ be the trace of surgery along $J$, oriented as a cobordism from $S^3$ to $S^3_{\Lambda}(J)$. Then $H_2(W_{\Lambda}(J)) \cong \mathbb{Z}^2$, which is generated $\Sigma_K$ and $\Sigma_\mu$, the core of the $2$-handles attached along $K$ and $\mu$ respectively. We readily compute:
\[\langle \Sigma_K,\Sigma_K\rangle = n,\quad  \langle\Sigma_K,\Sigma_\mu  \rangle = -1, \quad \langle\Sigma_\mu,\Sigma_\mu \rangle = 0. \]
It follows that for any $n$, we have
\[
\chi(W_{\Lambda}(J)) = 2, \quad\text{and}\quad \sigma(W_{\Lambda}(J)) =0.
\]

For $\ve{s} = (s,t,r)\in\bH(K\# H\# L_P)$, let $\frz_{\ve{s}}^J\in \Spin^c(W_\Lambda(J))$ be the $\Spin^c$ structure such that 
\[
\frac{\langle c_1(\frz_{\ve{s}}^J), \Sigma_K\rangle -\Sigma\cdot \Sigma_K}{2}=-s\quad \text{and} \quad \frac{\langle c_1(\frz_{\ve{s}}^J), \Sigma_\mu\rangle -\Sigma\cdot \Sigma_\mu}{2}=-t. 
\]
(Note that $\frz_{\ve{s}}^J$ does not depend on $r$). Here $\Sigma=\Sigma_K+\Sigma_\mu +\Sigma_{P}$, and $\Sigma_{P} = -l \Sigma_K -nl \Sigma_{\mu}$ in $H_2(W_{\Lambda}(J))$, where $l = \lk(\mu,P)$.

We readily compute that
\[
\begin{split} 
	c_1(\frz_{\ve{s}}^J) &= (1+2t-l)\PD[\Sigma_K]+ (1+2s+(2t-l)n)\PD[\Sigma_\mu],\quad \text{and}\\
	c_1(\frz_{\ve{s}}^J)^2 &=
	(1-(2t-l)^2)n-2(1+2t-l)(1+2s).
\end{split}
\]
Using \cite{ZemExact}*{Theorem~10.8} and Remark~\ref{rem:hypercubecorner},
we compute that the absolute Maslov $\gr_{\ws}$-grading on $\cCFK(P(K,n))$  is given by the formula
\begin{equation}
\begin{split}
\gr_{\ws}^{\mathrm{abs}} =&\gr_{\ws}+\frac{c_1^2(\frz_{\ve{s}}^J)-2\chi(W_\Lambda(J))-3 \sigma(W_{\Lambda}(J))}{4}+|J|-\veps \\
=& \gr_{\ws} + \frac{(1-(2t-l)^2)n-2(1+2t-l)(1+2s)+4}{4}-\veps,
\end{split}
\label{eq:gr-w-grading-shift}
\end{equation}
where $\veps =0$ on $\bE$, $\veps =1$ on $\bF$, $\bJ$, and $\veps=2$ on $\bM$. Here $\gr_{\ws}$ denotes the internal Maslov grading on the surgery complex obtained by the complex at each cube point as a link Floer complex, tensored with one or two copies of $\bF[T,T^{-1}]$. For example, the $\gr_{\ws}$ grading on $\bE$ is obtained by identifying $\bE$ with a suitable completion of $\cCFL(K\# H\# L_P)$. On $\bJ$, the grading $\gr_{\ws}$ is obtained by identifying $\bJ$ with a suitable completion of $\bF[T,T^{-1}]\otimes \cCFL(L_P)$.

Using \cite{ZemExact}*{Theorem~10.8}, we see that the absolute Alexander grading on $\cCFK(P(K,n))$ is given by 
\begin{equation}
\begin{split}
A^{\mathrm{abs}} =& r+\frac{\langle c_1(\frz_{\ve{s}}^J),\Sigma_P\rangle-\Sigma\cdot \Sigma_P}{2}\\
=& r + (s+tn)l.
\end{split}
\label{eq:Ar-grading-shift}
\end{equation}
Note that here $r$ denotes the third component in the $\sigma$-normalized Alexander grading on $\bX(P,K,n)$. 

For our purposes, it is most convenient to compute the $\gr_{\zs}$-grading shift as well. We do this slightly indirectly. We recall from Equation~\eqref{eq:Relation-Maslov-Alexander} that
\[
\gr_{\zs}=\gr_{\ws}-2\sum_{K_i\subset L_\veps} A_i^{\veps}.
\]
On $\cCFK(P(K,n))$, we also have that $\gr_{\zs}^{\mathrm{abs}}=\gr_{\ws}^{\mathrm{abs}}-2 A^{\mathrm{abs}}$. Furthermore, we recall from Equation~\eqref{eq:A-veps-A-sigma-relation} that $A_i^{\veps}=A_i^\sigma-\lk(K_i,L\setminus L_\veps)/2$. 
It is straightforward to combine these observations with the grading shifts computed in Equations~\eqref{eq:gr-w-grading-shift} and ~\eqref{eq:Ar-grading-shift}, to obtain the $\gr_{\zs}$-grading shift for $\cCFK(P(K,n))$.

We now summarize the $(\gr_{\ws},\gr_{\zs})$-shifts in $\bX(P,K,n)^{\bF[W,Z]}$. Write
\[
d=\frac{(1-(2t-l)^2)n-2(1+2t-l)(1+2s)+4}{4},
\]
then the following shifted complexes compute the absolute  $(\gr_{\ws},\gr_{\zs})$ gradings on $\bX(P,K,n)^{\bF[W,Z]}$:
\[
\begin{split}
	&E_{s,t}[d, d+2s+2t-2(s+tn)l]\\
	& F_{s,t}[d-1,d+2s-2(s+tn)l-l]\\
	& J_{s,t}[d-1,d+2t-2(s+tn)l]\\
	& M_{s,t}[d-2,d-2-2(s+tn)l-l].
\end{split}
\]

\section{Truncations}
\label{sec:truncation}

In this section, we describe how to truncate the complex 
\[
\bX(P,K,n)^R=\cX_n(Y,K)^{\cK}\boxtimes {}_{\cK} \cH_-^{\cK} \boxtimes {}_{\cK} \cX(L_P)^R
\]
 described in the previous section to obtain a finite dimensional model for $\cCFK(Y,P(K,n))^R$.

\begin{rem} It is possible to apply Manolescu and Ozsv\'{a}th's description \cite{MOIntegerSurgery}*{Section~10} of a truncation of the link surgery formula. However, this general technique turns out to typically give models which are larger than the ones we present for the case under consideration.
\end{rem}

\begin{prop}
	\label{prop:truncation}
	Let $K$ be a knot in an integer homology 3-sphere. Let $g$ denote the Seifert genus of $K$, and define $N\in \Z +\lk(\mu,P)/2$ to be
	\[
	N = \min\left\{ Q \in \Z+\frac{\lk(\mu,P)}{2}\middle|\begin{array}{l} L_{\sigma}\colon \cC_{q}\to\cS, \text{ and} \\ 
	L_{\tau}\colon\cC_{-q}\to \cS
	\\ \text{are homotopy equivalences} \\
	\text{for all }q\ge Q
	\end{array}
	\right\}.
	\]  
	\begin{enumerate}
		\item When $n\ge0$, let $h = \max\left\{g,-g+n+1\right\}.$ Then, $\bX(P,K,n)^R$ is homotopy equivalent to the type-$D$ module consisting of 
		\[\left(\bigoplus_{\substack{(s,t)\in \bH(P)\\ -g < s<h,\\ -N < t<N }}E_{s,t}\right) 
		\oplus
		\left(\bigoplus_{\substack{(s,t)\in \bH(P)\\ -g< s< h-1,\\ -N < t<N }}F_{s,t}\right) 
		\oplus \left(\bigoplus_{\substack{(s,t)\in \bH(P)\\ -g+n < s < h,\\ -N-1 < t< N }}J_{s,t}\right) 
		\oplus \left(\bigoplus_{\substack{(s,t)\in \bH(P)\\ -g+n < s <h-1,\\ -N-1 < t < N }}M_{s,t}\right), \]
		with all the maps $\Phi^{\pm K}$, $\Phi^{\pm\mu}$ and $\Phi^{\pm K,\pm \mu}$ between them as in $\bX(P,K,n)^R$.
		\item When $n<0$, $\bX(P,K,n)^R$ is homotopy equivalent to the type-$D$ module consisting of 
			\[\left(\bigoplus_{\substack{(s,t)\in \bH(P)\\ -g < s<g,\\ -N < t<N }}E_{s,t}\right) 
		\oplus
		\left(\bigoplus_{\substack{(s,t)\in \bH(P)\\ -g-1 < s< g,\\ -N < t<N }}F_{s,t}\right) 
		\oplus \left(\bigoplus_{\substack{(s,t)\in \bH(P)\\ -g+n < s < g,\\ -N < t< N-1 }}J_{s,t}\right) 
		\oplus \left(\bigoplus_{\substack{(s,t)\in \bH(P)\\ -g+n-1 < s < g,\\ -N < t < N-1 }}M_{s,t}\right), \]
        with all the maps $\Phi^{\pm K}$, $\Phi^{\pm\mu}$ and $\Phi^{\pm K,\pm \mu}$ between them as in $\bX(P,K,n)^R$.
	\end{enumerate}
	In particular, this gives a description of a finitely generated model of $\cCFK(P(K,n))$ as a type-$D$ module over $R=\bF\left[W,Z\right]$.
\end{prop}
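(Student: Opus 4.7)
The plan is to apply the homological perturbation lemma for hypercubes (Lemma~\ref{lem:homotopy perturbation of hypercube}) twice: first to truncate in the $t$-direction using the bound $N$, then in the $s$-direction using the Seifert genus $g$. Both truncations proceed by identifying regions of the hypercube $\bX(P,K,n)^R$ where certain edge maps are homotopy equivalences, and cancelling the paired summands.

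First, for the $t$-truncation, recall that the maps $\Phi^{\pm\mu}$ are built by tensoring $\cX_n(Y,K)^{\cK}$ with the bimodule morphisms $f^{\pm\mu}$ whose length-zero components are $L_\sigma$ and $L_\tau$ respectively (see Section~\ref{sec:length-one-maps-general}). By the defining property of $N$, $L_\sigma\colon \cC_t \to \cS$ is a homotopy equivalence when $t \geq N$ and $L_\tau\colon \cC_{-t}\to \cS$ is one when $t \leq -N$. Consequently $\Phi^\mu\colon E_{s,t} \to F_{s,t}$ is a homotopy equivalence for $t \geq N$ and $\Phi^{-\mu}\colon E_{s,t} \to F_{s-1,t}$ is one for $t \leq -N$, with parallel statements for the $J \to M$ edges. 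Homological perturbation cancels the paired $E/F$ and $J/M$ summands outside the $t$-window; the half-step offset between the resulting $E,F$ windows and the $J,M$ windows reflects the difference in Alexander normalization conventions between the two idempotents (see Section~\ref{sec:Alexander-gradings}).

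Next, for the $s$-truncation, I would exploit that $\cCFK(Y,K)$ is supported in Alexander gradings $[-g,g]$. In terms of $\cX_n(Y,K)^{\cK}$, this means that the $\sigma$-weighted component of $\delta^1_1$, which contributes $\Phi^K\colon E_{s,t} \to J_{s,t}$ after boxing, is an isomorphism of the underlying type-$D$ modules for $s > g$, and the $\tau$-weighted component, contributing $\Phi^{-K}\colon E_{s,t} \to J_{s+n, t-1}$, is one for $s < -g$. I would iteratively cancel along $\Phi^K$ for $s > g$ and along $\Phi^{-K}$ for $s < -g$; the upper bound $h = \max\{g, -g+n+1\}$ (for $n \geq 0$) emerges as the smallest integer beyond which both cancellations consistently clear the hypercube, with the $(-g+n+1)$-term arising because $\Phi^{-K}$ shifts $s$ by $n$, so its cancellations can reinstate generators in an intermediate range when $n$ is large. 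The parallel analysis of $F/M$ pairs and the $n<0$ case follow the same pattern with appropriate sign swaps in the shifts. Once the dust settles, the length-2 maps $\Phi^{\pm K,\pm \mu}$ automatically restrict to the truncated piece, since their source and target windows fall inside the ranges dictated by the length-1 cancellations.

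The hard part will be tracking the $s$-direction bounds precisely. Because $\Phi^{-K}$ shifts $s$ by $n$, cancelling $E_{s,t}$ against $J_{s+n, t-1}$ for $s < -g$ can reinstate non-trivial generators with $s$ in the intermediate range $[g,-g+n]$ when $n > 2g$. One must verify via the explicit perturbation formula of Lemma~\ref{lem:homotopy perturbation of hypercube} that this range survives without collapsing, yielding the upper bound $h = \max\{g, -g+n+1\}$, and that no exotic cancellation paths running through $\Phi^{\pm K,\pm \mu}$ zigzags force further reduction. This is a finite bookkeeping exercise using the explicit form of $\cX_n(Y,K)^{\cK}$ recalled in Section~\ref{sec:background-surgery-algebra}, but introduces no new conceptual difficulty.
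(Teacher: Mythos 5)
Your plan is in the same spirit as the paper's proof — cancel along edge maps that become homotopy equivalences at the boundary of the $\bH(P)$-window — but there is a structural choice and a convergence gap that you should not underestimate.

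First, the paper truncates in $s$ first and in $t$ second, and this order is not cosmetic. After the $s$-truncation, the surviving piece $\frC$ has $s$ constrained to a finite window $\{-g+\tfrac{1}{2},\dots,h-\tfrac{1}{2}\}$, and the paper uses this boundedness to show that the perturbation series for the $t$-truncation is literally nilpotent: $(\alpha^1\circ h^1)^j=0$ for $j>2(h+g-1)$. If you truncate in $t$ first, as you propose, the ambient complex is still unbounded in the $s$-direction, so zig-zags through $\Phi^{-\mu}$ (which decreases $s$ by one at fixed $t$) can have unbounded length, and nilpotence fails. You would then have to run a $U$-adic convergence argument along the lines of Lemma~\ref{lem:U-weight-truncations}, establishing that all but finitely many zig-zags land in $(U^m)$. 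That argument is not a bookkeeping formality: the paper introduces an explicit $\{0,1\}\times\Q$ lexicographic filtration, observes that only one $\alpha$-arrow per contributing sequence can fail to be $\Phi^{-K}$, and feeds that into the $U$-weight lemma. Your proposal asserts the conclusion but does not engage with any of this.

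Second, you invoke Lemma~\ref{lem:homotopy perturbation of hypercube}, but that lemma applies when you have a strong deformation retraction at each vertex of a hypercube and want to induce a hypercube on the retracts. The truncation here removes whole bands of the $\bH(P)$-grading, collapsing pairs connected by hypercube edges; the correct tool is the Maurer--Cartan form of the perturbation lemma for chain complexes, Lemma~\ref{lem:HPL-chain-complexes} (with Remark~\ref{rem:relax-HPL} handling $h\circ h\neq 0$). Relatedly, $\Phi^K\colon E_{s,t}\to J_{s,t}$ is a homotopy equivalence for $s>g$, not an isomorphism of the underlying type-$D$ modules; see Lemma~\ref{lem:truncation-he-cases} and the identification $\Phi^K=v\otimes\id_{\cC_{t+1/2}}$, where $v\colon A_{s-\frac12}(K)\to B(K)$ is only a homotopy equivalence. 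You also omit the boundary refinement of Lemma~\ref{lem:truncation-edge-case-1}, which the paper needs to close the $\frC^{\pm}$ truncation at the corners. The high-level shape of your argument is right, but the convergence analysis and the precise choice of homological perturbation apparatus are exactly where the work lives, and the proposal leaves both unresolved.
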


Examples of the truncations from Proposition~\ref{prop:truncation} are shown in Figures~\ref{fig:truncation for n=-1} and ~\ref{fig:truncation when n=1}.

	\begin{figure}[H]
		\adjustbox{scale=0.9}{
				\tikz[ 
			overlay]{
				\draw[thick,blue]
				(13.4,2.4)--(6.1,2.4)--(6.1,1.4)--(2.95,1.4)--(2.95,0.8)--(6.1,-0.47)--(6.1,-1.05)--(2.95,-1.05)--(2.95,-1.76)--(6.1,-2.98)--(6.1,-3.4)--(13.4,-3.4)--(13.4,2.4);
			}
		\begin{tikzcd}[column sep = 7mm]
			 &[-7mm]\phantom{\cdots} &[-7mm]-\frac{5}{2} &  -\frac{5}{2} & -\frac{3}{2} & -\frac{3}{2} &  -\frac{1}{2} &  -\frac{1}{2} &   \frac{1}{2} & \frac{1}{2} &  \frac{3}{2} & \frac{3}{2} & [-7mm]\phantom{\cdots}\\[-5mm]
			  -1 & \cdots &{}_{E} \arrow[r] \arrow[d]  & {}_{F} \arrow[d] &{}_{E}  \arrow[l,red,squiggly] \arrow[r]   \arrow[d] & {}_{F} \arrow[d] & {}_{E} \arrow[l] \arrow[r] \arrow[d]  &  {}_{F} \arrow[d] & {}_{E} \arrow[l] \arrow[d] \arrow[r]   & {}_{F} \arrow[d,red,squiggly] & {}_{E} \arrow[l] \arrow[r]  \arrow[d,red,squiggly]  & {}_{F} \arrow[d,red,squiggly] &\cdots	\\
			  -1 & \cdots &  {}_{J} \arrow[r] & {}_{M} &  {}_{J}\arrow[r] \arrow[l] &  {}_{M} &  {}_{J}\arrow[r] \arrow[l] &  {}_{M} &  {}_{J}\arrow[r] \arrow[l] & {}_{M} & {}_{J}\arrow[r] \arrow[l] &  {}_{M} &\cdots \\	
			  0 & \cdots& {}_{E} \arrow[r] \arrow[d]   &  {}_{F} \arrow[d] \arrow[ulll,red,squiggly]  & {}_{E} \arrow[l] \arrow[r]  \arrow[d]  \arrow[ull, red, squiggly]  &  {}_{F} \arrow[d] \arrow[ull,red, squiggly] &  {}_{E} \arrow[l] \arrow[r] \arrow[d] \arrow[ull]  &  {}_{F} \arrow[d] \arrow[ull]& {}_{E} \arrow[l] \arrow[d] \arrow[r] \arrow[ull]  & {}_{F} \arrow[d,red,squiggly] \arrow[ull]& {}_{E} \arrow[l] \arrow[r]  \arrow[d,red,squiggly] \arrow[ull] 
			  & {}_{F} \arrow[d,red,squiggly] \arrow[ull] &\cdots  	\arrow[ull,start anchor={[xshift=4ex]}] \\
			  0 & \cdots &  {}_{J}\arrow[r] &  {}_{M} & {}_{J}\arrow[r] \arrow[l] &  {}_{M} &  {}_{J}\arrow[r] \arrow[l] &  {}_{M}&  {}_{J}\arrow[r] \arrow[l] &  {}_{M}  &  {}_{J}\arrow[r] \arrow[l] &  {}_{M} & \cdots \\	
			  1 & \cdots &  {}_{E}\arrow[r] & {}_{F} \arrow[ulll,red,squiggly]& {}_{E}  \arrow[l] \arrow[r] \arrow[ull,red,squiggly]  & {}_{F} \arrow[ull,red,squiggly] &{}_{E} \arrow[l] \arrow[r] \arrow[ull] & {}_{F} \arrow[ull] &{}_{E}\arrow[l] \arrow[r ] \arrow[ull] & {}_{F} \arrow[ull]&{}_{E} \arrow[l] \arrow[r,red,squiggly] \arrow[ull]  &{}_{F} \arrow[ull]&\cdots  	\arrow[ull,start anchor={[xshift=4ex]}] 
		\end{tikzcd}
	}
\caption{The truncation from Proposition~\ref{prop:truncation} when $g=1,$ $ N=\frac{3}{2}$ and $n=-1$. The truncation consists of the complexes in the region bounded by the blue line. Length 2 arrows are not shown. The squiggly red arrows are quasi-isomorphisms by Lemmas~\ref{lem:truncation-he-cases} and~\ref{lem:truncation-edge-case-1}. }
\label{fig:truncation for n=-1}

	\adjustbox{scale=0.9}{
		
			\tikz[ 
		overlay]{
			\draw[thick,blue]
			(12,3.5)--(10.85,3.5)--(7.7,2.1)--(7.7,1.45)--(10.8,1.45)--(10.8,0.83)--(7.7,-0.48)--(7.7,-1.1)--(10.8,-1.1)--(10.8,-1.75)--(7.7,-3.04)--(7.7,-3.7)--(10.8,-3.7)--(10.8,-4.6)--(12,-4.6)--(12,3.5);
		}
		\begin{tikzcd}[column sep = 7mm]
			&[-7mm]\phantom{\cdots} &[-7mm]-\frac{5}{2} &  -\frac{5}{2} & -\frac{3}{2} & -\frac{3}{2} &  -\frac{1}{2} &  -\frac{1}{2} &   \frac{1}{2} & \frac{1}{2} &  \frac{3}{2} & \frac{3}{2} & [-7mm]\phantom{\cdots}\\[-5mm]
			-2 & \cdots & {}_{J} \arrow[r] & {}_{M} & {}_{J}\arrow[r] \arrow[l,red,squiggly] &  {}_{M} & {}_{J}\arrow[r] \arrow[l,red,squiggly] & {}_{M}& {}_{J}\arrow[r] \arrow[l,red,squiggly] & {}_{M} & {}_{J}\arrow[r] \arrow[l,red,squiggly] &  {}_{M} &\cdots \\	
			-1 \arrow[urrr,red,squiggly]& \cdots & {}_{E} \arrow[r] \arrow[d] \arrow[urr,red,squiggly] &  {}_{F} \arrow[d] \arrow[urr,red,squiggly] & {}_{E} \arrow[l,red,squiggly] \arrow[r]  \arrow[d] \arrow[urr,red,squiggly]  & {}_{F} \arrow[d] \arrow[urr,red,squiggly]& {}_{E} \arrow[l] \arrow[r] \arrow[d]   \arrow[urr] &  {}_{F} \arrow[urr] \arrow[d] & {}_{E}\arrow[l] \arrow[d] \arrow[r]   \arrow[urr]  & {}_{F} \arrow[urr] \arrow[d,red,squiggly] & {}_{E} \arrow[l] \arrow[r]  \arrow[d,red,squiggly]  \arrow[urr,end anchor={[xshift=4ex]}] & {}_{F} \arrow[d,red,squiggly] &\cdots	\\
			-1 & \cdots &  {}_{J} \arrow[r] & {}_{M}&  {}_{J}\arrow[r] \arrow[l] &  {}_{M} &  {}_{J}\arrow[r] \arrow[l] &  {}_{M} &  {}_{J}\arrow[r] \arrow[l] & {}_{M}  &  {}_{J}\arrow[r] \arrow[l] & {}_{M} &\cdots \\	
			0 \arrow[urrr,red,squiggly]& \cdots & {}_{E} \arrow[r] \arrow[d]   \arrow[urr,red,squiggly] &  {}_{F} \arrow[d] \arrow[urr,red,squiggly] & {}_{E}  \arrow[l] \arrow[r]   \arrow[d] \arrow[urr,red,squiggly]  & {}_{F} \arrow[d] \arrow[urr,red,squiggly]& {}_{E} \arrow[l] \arrow[r] \arrow[d]  \arrow[urr] &  {}_{F}\arrow[urr] \arrow[d] &{}_{E}\arrow[l] \arrow[d] \arrow[r]  \arrow[urr] &{}_{F} \arrow[urr] \arrow[d,red,squiggly] &{}_{E} \arrow[l] \arrow[r]  \arrow[d,red,squiggly]  \arrow[urr,end anchor={[xshift=4ex]}]  & {}_{F} \arrow[d,red,squiggly] &\cdots	\\	
		   0 & \cdots & {}_{J} \arrow[r] & {}_{M} &  {}_{J}\arrow[r] \arrow[l] &  {}_{M}&  {}_{J}\arrow[r] \arrow[l] &  {}_{M}& {}_{J}\arrow[r] \arrow[l] &  {}_{M}  & {}_{J}\arrow[r] \arrow[l] & {}_{M} &\cdots \\	
			1 \arrow[urrr,red,squiggly]& \cdots & {}_{E} \arrow[r] \arrow[d]   \arrow[urr,red,squiggly] &  {}_{F} \arrow[d] \arrow[urr,red,squiggly] & {}_{E}  \arrow[l] \arrow[r]  \arrow[d] \arrow[urr,red,squiggly]  & {}_{F} \arrow[d] \arrow[urr,red,squiggly]& {}_{E} \arrow[l] \arrow[r] \arrow[d]  \arrow[urr] & {}_{F} \arrow[urr] \arrow[d] &{}_{E}\arrow[l] \arrow[d] \arrow[r]   \arrow[urr]  &{}_{F} \arrow[urr] \arrow[d,red,squiggly] &{}_{E} \arrow[l] \arrow[r,red,squiggly]  \arrow[d,red,squiggly]  \arrow[urr,end anchor={[xshift=4ex]}] &{}_{F} \arrow[d,red,squiggly] &\cdots	\\
			1 & \cdots & {}_{J} \arrow[r,red,squiggly] & {}_{M} & {}_{J}\arrow[r,red,squiggly] \arrow[l] & {}_{M} & {}_{J}\arrow[r,red,squiggly] \arrow[l] & {}_{M}& {}_{J}\arrow[r,red,squiggly] \arrow[l] & {}_{M}  & {}_{J}\arrow[r,red,squiggly] \arrow[l] &{}_{M} &\cdots 
		\end{tikzcd}
	}
\caption{The truncation from Proposition~\ref{prop:truncation} when $g=1,$ $N=\frac{3}{2}$ and $n=1$. The final result of the truncation process is the region bounded by the blue line.}
\label{fig:truncation when n=1}
\end{figure}

\begin{rem}
When $L_P$ is an L-space link, $N$ is the highest power of $t_1$ in the multivariable Alexander polynomial of $\Delta_{L_P}(t_1,t_2)$, normalized as in Section \ref{sec:Examples of bimodules}. This follows from Proposition~\ref{prop:GNH-function}. 
 \end{rem}

 Before proving Proposition~\ref{prop:truncation}, we prove several lemmas. In the following lemmas, we remind the reader that $(s,t)$ takes value in \[
\bH(P)= \left(\Z+\frac{1}{2}\right)\times \left(\Z+\frac{\lk(\mu,P)-1}{2}\right),
\] 
so 
\[
s-g,t-N\in \frac{1}{2}+\Z,
\]
where $g$ and $N$ are as in Proposition~\ref{prop:truncation}.

\begin{lem}
\label{lem:truncation-he-cases}
Let $N$ be as in Proposition~\ref{prop:truncation}. If $s>g$ , then the following maps are quasi-isomorphisms: 
\begin{enumerate}
\item $\Phi^K\colon E_{s,t}\to J_t$; 
\item $\Phi^{-K}\colon E_{-s,t}\to J_{t-1}$;
\item $\Phi^K \colon F_{s-1}\to M$; 
\item $\Phi^{-K}\colon F_{-s}\to M$.
\end{enumerate}
Similarly if $t>N$, then the following maps are quasi-isomorphisms:
\begin{enumerate}
\item $\Phi^\mu\colon E_{s,t}\to F_s$;
\item $\Phi^{-\mu}\colon E_{s,-t}\to F_{s-1}$;
\item $\Phi^{\mu}\colon J_{t-1}\to M$; 
\item $\Phi^{-\mu}\colon J_{-t}\to M$.
\end{enumerate}
\label{lem:quasi-iso}
\end{lem}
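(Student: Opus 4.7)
The plan is to reduce each of the eight quasi-isomorphism claims to one of two inputs: the defining property of $N$ from Proposition~\ref{prop:truncation}, or the classical stabilization of the $v$- and $h$-maps in Ozsv\'ath-Szab\'o's mapping cone formula for $|s| > g$. The symmetry of $\cK$ exchanging $\sigma \leftrightarrow \tau$ (and hence $\Phi^K \leftrightarrow \Phi^{-K}$ and $\Phi^\mu \leftrightarrow \Phi^{-\mu}$ after the appropriate grading reindexings) means that in practice only two representatives need to be treated by hand, with the remaining six following by symmetry.

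For the four $t > N$ claims, the definition of $N$ directly supplies that $L_\sigma \colon \cC_q \to \cS$ (respectively $L_\tau \colon \cC_{-q} \to \cS$) is a homotopy equivalence for all $q \geq N$. Unpacking the construction of $\Phi^\mu \colon E_{s,t} \to F_{s,t}$ from Section~\ref{sec:length-one-maps-general}, this map is obtained by tensoring the identity on $\cX_n(Y,K)^{\cK}$ with the bimodule morphism $f^\mu \colon \scE_{*,t} \to \scF_{*,t}$; restricting to $A_1$-Alexander grading $t$, its leading vertical component is precisely $L_\sigma$ applied to the copy $\cC_{t \pm 1/2}$ appearing in $\scE_{s,t}$. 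For $t > N$ we have $t \pm 1/2 \geq N$ in the relevant range, so $L_\sigma$ is a homotopy equivalence, and this propagates to a homotopy equivalence after tensoring with $\cX_n(Y,K)^{\cK}$. The claims for $\Phi^{-\mu}$ and for the analogous maps $J_{t-1} \to M$, $J_{-t}\to M$ follow by the same argument with $L_\sigma$ replaced by $L_\tau$ and with the idempotent-$1$ part of $\cX_n(Y,K)^{\cK}$ playing the role of the idempotent-$0$ part.

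For the four $s > g$ claims, the key classical input is that for $s > g$ the subcomplex $A_s(K) \subset \cCFK(K)$ is quasi-isomorphic to $B(K)$ via the $v_s$-map, where $v_s$ is the $\sigma$-weighted component of $\delta^1$ in $\cX_n(Y,K)^{\cK}$ restricted to Alexander grading $s$; the parallel statement with $h_{s+n}$ handles $-K$. Reading off the construction of $\Phi^K \colon E_{s,t} \to J_{s,t}$ from Section~\ref{sec:length-one-maps-general}, its restriction to Alexander grading $s > g$ is the tensor product of $v_s$ with an essentially identity-type map on $\cC_{t+1/2}$ (since, in the diagram for $f^K$, the vertical arrow out of $\cC_{t+1/2}^{(s,t)}$ for $s > 0$ is weighted by $\sigma|1$), and hence is itself a quasi-isomorphism. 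The statements for $\Phi^K, \Phi^{-K}$ on $F_{s-1}, F_{-s} \to M$ reduce to the same classical facts about $v$ and $h$, since $F$ and $M$ both have $\cS$ as the pattern-side factor and the map $f^K \colon \scF_{*,t} \to \scM_{*,t}$ is diagonal with entries $\sigma|U^i$.

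The main obstacle I anticipate is the bookkeeping of the $\delta_2^1$ and $\delta_3^1$ components of the bimodule morphisms $f^{\pm K}, f^{\pm \mu}$ displayed in Section~\ref{sec:length-one-maps-general}. In the asymptotic regions ($s > g$ or $t > N$), these higher components must be checked either to vanish or to be compatible with the identifications used to read off the leading term as $v_s, h_{s+n}, L_\sigma$, or $L_\tau$. A convenient organizing principle is that for $|s|$ sufficiently large the bimodules $\scE_{s,*}$ and $\scJ_{s,*}$ (and respectively $\scF_{s,*}$ and $\scM_{s,*}$) become isomorphic after a uniform shift, under which $f^K$ and $f^\mu$ reduce to the identity up to an invertible power of $U$; after this identification, homological perturbation eliminates the higher components and the claimed quasi-isomorphism is immediate.
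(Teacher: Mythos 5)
Your proposal follows essentially the same route as the paper's proof: both arguments identify the two classical inputs -- stabilization of $v$/$h$ for $|s|>g$, and the defining property of $N$ for $L_\sigma$/$L_\tau$ -- and read the maps $\Phi^{\pm K}$ and $\Phi^{\pm \mu}$ as tensor products with these. The paper resolves your flagged obstacle (higher $\delta_2^1$, $\delta_3^1$ components of $f^{\pm K}$, $f^{\pm \mu}$) with a cleaner observation than the homological-perturbation appeal you sketch: when $s>g$, every Alexander grading of $\cCFK(K)$ is strictly less than $s+\tfrac{1}{2}$, so in $E_{s,t}$ every generator is replaced by the \emph{same} staircase $\cC_{t+\frac{1}{2}}$ and every $W^iZ^j$-weighted differential becomes $U^j$; thus $E_{s,t}\iso A_{s-\frac{1}{2}}(K)^{\bF[U]}\boxtimes {}_{\bF[U]}\cC_{t+\frac{1}{2}}^{\bF[W,Z]}$ as an honest tensor product, with no $L_W$, $L_Z$, or $h$-term contributing, and $\Phi^K$ is literally $v\otimes\id$. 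The same degeneration happens for the $t>N$ cases. Your argument is correct, but committing to this vanishing directly (rather than invoking perturbation) makes the bookkeeping disappear. One small point worth noting explicitly, which the paper does record: the admissible range for $\Phi^\mu\colon E_{s,t}\to F_s$ differs by a half-integer from that for $\Phi^\mu\colon J_{t-1}\to M$, because $E_{s,t}$ may contain $\cC_{t-\frac{1}{2}}$ while $J_{t-1}$ contains only $\cC_{t-\frac{1}{2}}$; your observation that $t>N$ forces $t-\tfrac{1}{2}\ge N$ handles this, but it deserves to be stated rather than folded into ``the relevant range.''
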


\begin{proof}
	 This follows quickly from our description in Section~\ref{sec:GeneralSatellite} of the complexes $E_{s,t}$, $F_{s}$, $J_{t}$, $M$ and the length $1$ maps between them.
	 
	 For example, consider the map $\Phi^K:E_{s,t}\to J_t$. When $s>g$, $E_{s,t}$ is obtained by replacing each generator of $\cCFK(K)$ by a copy of $\cC_{t+ \frac{1}{2}}$. An arrow from the differential on $\cCFK(K)$ which is weighted by $Z^iW^j$ is replaced by an arrow which is weighted by $U^j$. We can therefore identify
	 \[
	 E_{s,t}=A_{s-\frac{1}{2}}(K)^{\bF[U]}\boxtimes {}_{\bF[U]} \cC_{t+\frac{1}{2}}^{\bF[W,Z]}
	 \]
	 where the left $\bF[U]$ action on $\cC_{t+\frac{1}{2}}$ is given by $\delta_2^1(U,\xs)=\xs\otimes U$ for all $\xs$. Note that $J_s$ also has the description
	 \[
	 J_s=B(K)^{\bF[U]}\boxtimes {}_{\bF[U]}\cC_{t+\frac{1}{2}}^{\bF[W,Z]}.
	 \]
	 From the description in Section~\ref{sec:length-one-maps-general}, we see that with respect to these identifications, we have
	 \[
	 \Phi^K=v\otimes \id_{\cC_{t+\frac{1}{2}}}.
	 \]
	 Since 
	 \[
	 v\colon A_{s-\frac{1}{2}}(K)\to B(K)
	 \]
	  is a homotopy equivalence for $s>g$, we conclude that $\Phi^K$ is as well in this range. 	 The  remaining statements about $\Phi^K$ and $\Phi^{-K}$ are proven in a similar fashion.

	 The statements about $\Phi^{\pm\mu}$ are proved in the same manner, using that\[L_\sigma \colon \cC_{t'}\to \cS\qquad L_\tau \colon \cC_{-t'}\to \cS \]  are homotopy equivalence for $t' \ge N$. Note that $J_{t}$ contains only copies of $ \cC_{t+ \frac{1}{2}}$, while $E_{s,t}$ may contain both $\cC_{t-\frac{1}{2}}$ and $\cC_{t+ \frac{1}{2}}$ (depending on $s$). Therefore we can only show that $\Phi^{\mu}:E_{s,t} \to F_s$ is a homotopy equivalence for $t \ge N+\frac{1}{2}$, while $\Phi^{\mu}:J_{t} \to M$ is a homotopy equivalence for $t\ge N-\frac{1}{2}$. 
 \end{proof}
 
 There are additionally some helpful edge cases to point out:

 \begin{lem} \label{lem:truncation-edge-case-1}The maps
 \[
 \Phi^\mu\colon E_{s,t}\to F_s\quad \text{and} \quad \Phi^{-\mu} \colon E_{-s,-t}\to F_{-s}
 \] 
 are homotopy equivalences if $s>g$ and $t>N-1$. Additionally, the maps
\[
\Phi^K\colon E_{s,t}\to J_t\quad \text{and} \quad \Phi^{-K} \colon E_{-s,-t}\to J_{-t-1}
\]
are homotopy equivalences whenever $s>g-1$ and $t>N$.

 \end{lem}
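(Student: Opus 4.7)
My plan is to parallel the proof strategy of Lemma~\ref{lem:truncation-he-cases} but upgrade the argument via a two-out-of-three principle applied to commutative squares already present in $\bX(P,K,n)$. The key observation is that for each pair $(s,t)\in\bH(P)$, the subdiagram
\[
\begin{tikzcd}
E_{s,t}\ar[r, "\Phi^K"]\ar[d, "\Phi^\mu"']
	\ar[dr,dashed,"\Phi^{K,\mu}", labels=description]
& J_{s,t}\ar[d, "\Phi^\mu"]
\\
F_{s,t}\ar[r, "\Phi^K"']
& M_{s,t}
\end{tikzcd}
\]
sitting inside $\bX(P,K,n)$ is a hypercube of chain complexes, with $\Phi^{K,\mu}$ a chain homotopy between $\Phi^K\Phi^\mu$ and $\Phi^\mu\Phi^K$. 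Hence on homology this square strictly commutes (over $\bF=\Z/2$). Since all four vertices are finitely generated type-$D$ modules over $R=\bF[W,Z]$ whose underlying DG modules are projective, quasi-isomorphisms between them are automatically homotopy equivalences, and so whenever three of the four length-$1$ maps are homotopy equivalences, so is the fourth.

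To prove the claim for $\Phi^K\colon E_{s,t}\to J_t$ under $s>g-1$ and $t>N$, I dispose of the case $s>g$ by Lemma~\ref{lem:truncation-he-cases}, reducing to $s=g-\tfrac{1}{2}$. For this I apply the square above at $(g-\tfrac{1}{2},t)$: Lemma~\ref{lem:truncation-he-cases} already supplies three of the four maps as homotopy equivalences --- namely $\Phi^K\colon F_{g-1/2,t}\to M_{g-1/2,t}$ (because $g-\tfrac{1}{2}>g-1$), $\Phi^\mu\colon E_{g-1/2,t}\to F_{g-1/2,t}$ (because $t>N$), and $\Phi^\mu\colon J_{g-1/2,t}\to M_{g-1/2,t}$ (because $t>N>N-1$) --- so the two-out-of-three principle forces $\Phi^K$ at this vertex to also be a homotopy equivalence. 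The claim for $\Phi^\mu\colon E_{s,t}\to F_s$ under $s>g$ and $t>N-1$ is completely dual, with the only new case $t=N-\tfrac{1}{2}$ handled by the same square applied at $(s,N-\tfrac{1}{2})$: Lemma~\ref{lem:truncation-he-cases} supplies $\Phi^K\colon E_{s,N-1/2}\to J_{s,N-1/2}$, $\Phi^K\colon F_{s,N-1/2}\to M_{s,N-1/2}$, and $\Phi^\mu\colon J_{s,N-1/2}\to M_{s,N-1/2}$ as homotopy equivalences.

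The statements for $\Phi^{-K}$ and $\Phi^{-\mu}$ will follow by running the same two-out-of-three argument on the parallel ``negative'' subdiagram of $\bX(P,K,n)$, built from $\Phi^{-K}$, $\Phi^{-\mu}$, and the length-$2$ chain homotopy $\Phi^{-K,-\mu}$. The main care required here, and the step I expect to be the principal obstacle, is purely one of bookkeeping: one must verify that the Alexander-grading shifts of $\Phi^{-K}$ and $\Phi^{-\mu}$ conspire so that the four vertices of the negative square land precisely at the indices $E_{-s,-t}$, $F_{-s-1,-t}$, $J_{-s+n,-t-1}$, $M_{-s+n-1,-t-1}$ appearing in the statement, and that the cited cases of Lemma~\ref{lem:truncation-he-cases} genuinely apply there with their sign-reversed inequalities. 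Once this alignment is verified, the argument proceeds exactly as in the positive case.
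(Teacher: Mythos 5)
Your proof is correct, and it takes a genuinely different route from what the paper appears to have in mind. The paper leaves this verification to the reader with the remark that it is ``similar to the proof of Lemma~\ref{lem:truncation-he-cases},'' i.e.\ a direct analysis of the maps: there one identifies $E_{s,t}$, $F_s$, $J_t$, $M$ as box tensor products and observes that $\Phi^K$, $\Phi^\mu$ split as $v\otimes\mathrm{id}$, $\mathrm{id}\otimes L_\sigma$, etc. In the edge cases $s=g-\tfrac12$ (resp.\ $t=N-\tfrac12$), where $E_{s,t}$ is a hybrid of $\cC_{t-1/2}$ and $\cC_{t+1/2}$ columns, this direct route requires one additional observation, essentially that $L_Z\colon\cC_{t-1/2}\to\cC_{t+1/2}$ is also a homotopy equivalence once $t-\tfrac12\ge N$, so that the hybrid model collapses to the pure one. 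Your argument instead deduces the edge cases indirectly, by a two-out-of-three principle applied to the one-face subcube spanned by $\Phi^K$, $\Phi^\mu$, and the length-two homotopy $\Phi^{K,\mu}$ (and the $\Phi^{-K},\Phi^{-\mu},\Phi^{-K,-\mu}$ counterpart), citing only instances of Lemma~\ref{lem:truncation-he-cases} that are already proved at the three remaining corners; I checked each citation and they all fall in the allowed range (for the $\Phi^K$ edge case one needs $t\ge N+\tfrac12$, $t\ge N-\tfrac12$ and $s>g-1$; for the $\Phi^\mu$ edge case one needs $s>g$, $s>g-1$ and $t\ge N-\tfrac12$). Your approach buys a genuinely structural argument that avoids any inspection of the explicit formulas for the maps; it relies, however, on the unstated but true fact that $\Phi^{K,\mu}$ is the \emph{unique} length-two component from $E_{s,t}$ to $M_{s,t}$ --- the other three length-two maps $\Phi^{-K,\mu}$, $\Phi^{K,-\mu}$, $\Phi^{-K,-\mu}$ all land in different $(s,t)$-slots --- which is what makes the square an honest two-dimensional hypercube and justifies the ``commutes on homology'' step. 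You should make that one-line check explicit, and likewise, in the negative case, confirm (as you anticipate) that the targets of $\Phi^{-\mu}$ and $\Phi^{-K}$ land at $F_{-s-1,-t}$ and $J_{-s+n,-t-1}$ so that the cited instances of Lemma~\ref{lem:truncation-he-cases} are the intended ones; once those two bookkeeping points are written down, the argument is complete.
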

 
 In particular, the maps 
 \[
 \Phi^{\mu}\colon E_{s,N-\frac{1}{2}}\to F_{s}\quad \text{and} \quad \Phi^{-\mu} \colon E_{-s,-N+\frac{1}{2}}\to F_{-s}
\]
are homotopy equivalences whenever $s>g$.

The proof of Lemmas~\ref{lem:truncation-edge-case-1} is similar to the proof of Lemma~\ref{lem:truncation-he-cases}. We leave the verification to the reader.

To show that the homotopy equivalences needed in the truncation process converge, we require the following lemmas concerning the coefficients of the length $1$ maps $\Phi^{\pm K}, \Phi^{\pm\mu}$, which again follow directly from the descriptions of the length 1 maps in the previous section.

\begin{lem}
\label{lem:U-weight-truncations}
Suppose that $t>N+1$.
\begin{enumerate}
\item The map $\Phi^{-\mu}\colon E_{s,t}\to F_{s-1}$ has image in $F_{s-1}\otimes (U)$.
\item The map $\Phi^{-\mu}\colon J_{t-1}\to M$ has image in $M\otimes (U)$.
\item The map $\Phi^{\mu}\colon E_{s,-t}\to F_{s-1}$ has image in $F_{s-1}\otimes (U)$.
\item The map $\Phi^{\mu} \colon J_{-t}\to M$ has image in $M\otimes (U)$.
\end{enumerate}
Suppose that $s>g+1$.
\begin{enumerate}
\item The map $\Phi^K\colon E_{-s,t}\to J_{t-1}$ has image in $J_{t-1}\otimes (U)$.
\item The map $\Phi^{-K}\colon E_{s,t}\to J_{t-1}$ has image in $J_{t-1}\otimes (U)$.
\item The map $\Phi^K\colon F_{-s}\to M$ has image in $M\otimes (U)$.
\item The map $\Phi^{K}\colon F_{s-1}\to M$ has image in $M\otimes (U)$.
\end{enumerate}
\end{lem}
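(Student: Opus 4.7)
The plan is to verify each of the eight assertions by directly inspecting the formulas for the length-$1$ maps described in Section~\ref{sec:length-one-maps-general}. Each of $\Phi^{\pm K}$ and $\Phi^{\pm\mu}$ is the box tensor product of the bimodule morphism $f^{\pm K}$ or $f^{\pm\mu}$ with the type-$D$ module $\cX_n(K)^{\cK}$, so the $R$-output of each term is controlled by two ingredients: the actions $L_\sigma$, $L_\tau$, $h_{\sigma,W}$, $h_{\sigma,Z}$, $h_{\tau,W}$, $h_{\tau,Z}$ of ${}_{\cK}\cX(L_P)^R$ acting on $\cC_{t\pm 1/2}$; and the $\sigma$- and $\tau$-weighted structure maps of $\cX_n(K)^{\cK}$, which by Section~\ref{sec:background-surgery-algebra} encode the Ozsv\'{a}th--Szab\'{o} maps $v$ and $h_n$.

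For items (1)--(4) (concerning $\Phi^{\pm\mu}$), each non-trivial contribution factors through one of the L-space operations acting on $\cC_{t\pm 1/2}$. The grading formulas in Section~\ref{sec:definition-L-sigma/tau} ($\gr_{\ws}(L_{a\sigma})=\gr_{\ws}(a)+\lk(\mu,P)-2s$, and the analogous formula for $L_\tau$), together with the L-space link hypothesis (which makes the homology of $\cC_s$ one-dimensional in each Maslov grading) pin down the $U$-exponent in the $R$-coefficient of $L_\sigma$ on $\cC_s$ to be $-s-N$ for $s\leq -N$, and the $U$-exponent of $L_\tau$ on $\cC_s$ to be $s-N$ for $s\geq N$. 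A parallel analysis, using the defining relations such as $\d(h_{\sigma,W})=L_\sigma\circ L_W+UT^{-1}L_\sigma$, shows that the $h_{\sigma/\tau,W/Z}$ homotopies inherit the same $U$-exponent up to a possible shift by one. Under the hypothesis $t>N+1$, the relevant indices $t\pm\tfrac{1}{2}$ both lie at least $1$ beyond $N$ on the appropriate side, so every summand of $\Phi^{\pm\mu}$ carries at least one factor of $U$.

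For items (5)--(8) (concerning $\Phi^{\pm K}$), the argument runs analogously on the $\cX_n(K)^{\cK}$ side. Each contribution factors through the $\sigma$- or $\tau$-weighted part of $\delta^1$ on $\cX_n(K)^{\cK}$, which by Section~\ref{sec:background-surgery-algebra} encodes $v_a$ or $h_{n,a}$ at various Alexander indices $a$. The standard stabilization of the mapping cone formula outside the genus band gives that the $U$-exponent of $v_a$ on $A_a(K)$ equals $|a|-g$ for $a\leq -g$, and the $U$-exponent of $h_{n,a}$ equals $a-g$ for $a\geq g$. The hypothesis $s>g+1$ then forces the effective Alexander index contributing to $\Phi^{\pm K}$ on $E_{\pm s,t}$ (respectively $F_{\pm s}$) to lie outside the band $[-g-1,g+1]$, producing at least one factor of $U$ in every summand.

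The main technical subtlety will be tracking the higher pieces $(f^{\pm K})_3^1$ and $(f^{\pm\mu})_2^1$ of the bimodule morphisms, which couple the $h_{\sigma/\tau,W/Z}$ operations with the $W$- or $Z$-weighted structure maps of $\cX_n(K)^{\cK}$. These produce two-input compositions whose $U$-exponents are sums of the separately-controlled exponents from each factor; verifying that the resulting sum remains at least $1$ under the stated hypotheses is a direct if somewhat tedious bookkeeping exercise and does not alter the overall structure of the argument.
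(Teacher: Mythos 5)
Your overall strategy---read off the $U$-weights from the explicit descriptions of $\Phi^{\pm K}$ and $\Phi^{\pm\mu}$---is the right one, and it is essentially what the paper does. But two of your steps are imprecise in ways that leave a real gap, and both are avoided in the paper's proof by a cleaner observation.

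First, the grading argument you use to control $L_\sigma$ and $L_\tau$ is weaker than you present it. These are not scalar multiplications by a single power of $U$; they are chain maps between staircase complexes, so ``the $U$-exponent'' of $L_\tau$ on $\cC_r$ is not a single number that a Maslov grading formula pins down. What the paper uses instead is the \emph{algebraic identity} $L_\tau\circ L_Z = L_{\tau Z} = L_{UT\tau}$ coming from the relation $\tau Z=UT\tau$ in the surgery algebra, combined with the fact that $L_Z\colon\cC_r\to\cC_{r+1}$ is an isomorphism once $r\ge N$. From these two facts one sees directly that for $r\ge N+1$ the map $L_\tau$ factors through $L_Z^{-1}$ and picks up a factor of $U$, so every matrix entry lies in $(U)$. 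This works uniformly without computing any gradings. Your grading argument would need to be supplemented by an explicit basis-by-basis check to establish the divisibility claim, and that extra work is exactly what the relation-based argument bypasses.

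Second, and more seriously, your treatment of the higher terms $h_{\sigma/\tau,W/Z}$ (i.e.\ the $(f^{\pm\mu})_2^1$ and $(f^{\pm K})_3^1$ pieces) is not a proof. You claim the homotopies ``inherit the same $U$-exponent up to a possible shift by one,'' but a shift of $-1$ would destroy the conclusion, and the relation $\d(h_{\sigma,W})=L_\sigma\circ L_W+UT^{-1}L_\sigma$ alone does not control the sign of such a shift. The key point, which the paper states but you omit, is that in the parameter range in question these homotopy terms \emph{do not contribute at all}: for $t>N+1$ the indices $t\pm\tfrac12$ both lie strictly outside the genus band of the $L_P$ module, where the maps $L_\sigma,L_\tau$ already commute on the nose with $L_W,L_Z$ (because $L_W,L_Z$ are isomorphisms there and $\cC_{t\pm 1/2}$ agrees with $\cS$ up to shift), so the correcting homotopies can be taken to vanish. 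Without this observation (or a replacement for it), the proof is incomplete. The analogous remark applies to the $(f^{\pm K})_3^1$ terms when $s>g+1$.

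If you add the vanishing-of-$h$-terms observation and replace the Maslov-grading computation by the relation $L_{\tau Z}=L_\tau\circ L_Z=UT\cdot L_\tau$ together with the invertibility of $L_Z$ outside the genus band, the argument closes and matches the paper's.
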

\begin{proof} All of the proofs are rather similar and follow quickly from our description of the maps $\Phi^{\pm K}$ and $\Phi^{\pm \mu}$ in the last section, so we consider just the claim about $\Phi^{-\mu}\colon E_{s,t}\to F_{s-1}$. When $t>N+1$, we note that $t-\tfrac{1}{2}\ge N+1$ (since $t-N\in \tfrac{1}{2} +\Z$). The complexes $\cC_{t\pm \frac{1}{2}}$ are used to build $E_{s,t}$. Furthermore, $\Phi^{-\mu}$ is gotten by applying $L_\tau$ (for $t$ in this range, there will be no $h_{\tau,Z}$ or $h_{\tau,W}$ term which contribute). Furthermore then $L_\tau\colon \cC_{r}\to \cS$ has image in $\cS\otimes (U)$ whenever $r\ge N+1$. This is because $L_{\tau Z}=L_{\tau}\circ L_Z=L_{UT} \circ L_\tau$ and $L_Z\colon \cC_r\to \cC_{r+1}$ is an isomorphism for $r\ge N$. This proves the claim in this case, and the rest are left to the reader.
\end{proof}

\begin{proof}[Proof of Proposition~\ref{prop:truncation}]
To prove the statement, it is easiest to consider separately the cases that $n>0$, $n=0$, and $n<0$. In our proof, we focus on the case that $n>0$, though the other cases are similar. Also, we focus on the $U$-adic topology for concreteness, though essentially the same argument works for the the chiral topology.

 We describe the truncation in several steps. Firstly, we describe complexes $\frL$, $\frC$ and $\frR$ whose underlying spaces are as follows:
 \[
 \begin{split}
 \frL&=\left(\bigoplus_{ s<-g}E_{s,t}\right) \oplus \left( \bigoplus_{s<-g} F_{s,t}\right) 
 		\oplus \left(\bigoplus_{s<-g+n}J_{s,t}\right)\oplus \left( \bigoplus_{s<-g+n}\oplus M_{s,t}\right) 
\\
 \frC&=\left(\bigoplus_{-g<s<h} E_{s,t}\right) \oplus \left(\bigoplus_{-g<s<h-1} F_{s,t}\right)\oplus \left(\bigoplus_{-g+n<s<h} J_{s,t}\right) \oplus \left(\bigoplus_{-g+n<s<h-1} M_{s,t}\right)
 \\
 \frR&=\left(\bigoplus_{h<s}E_{s,t}\right) \oplus \left(\bigoplus_{\substack{ h-1<s}}F_{s,t} \right) 
  		\oplus \left( \bigoplus_{h<s} J_{s,t}\right) \oplus \left(\bigoplus_{h-1<s} M_{s,t}\right) 
 \end{split}
 \]
 Here, the direct sums are over all $s$ in the given range, and all $t\in \frac{\lk(K,\mu)-1}{2}+\Z$.   We equip $\frL$, $\frC$ and $\frR$ with the differentials induced by the natural components of the differential on $\bX(P,K,n)$.

 We claim that $\bX(P,K,n)\simeq \frC$. We observe that the complex $\bX(P,K,n)$ can be described as
 \[
 \begin{tikzcd}
 \frL & \frC \ar[r,"f",swap]\ar[l, "g"] & \frR
 \end{tikzcd}
 \]
 for some chain maps $f$ and $g$.  To show that $\bX(P,K,n)\simeq \frC$, it is therefore sufficient to show that $\frL$ and $\frR$ are contractible.  We begin by showing that $\frR$ is contractible. To see this, we observe that each the maps 
 \[
 \Phi^{K}\colon E_{s,t}\to J_{s,t}\quad \text{and} \quad \Phi^K \colon F_{s,t}\to M_{s,t}
 \]
 are homotopy equivalences in the range which appears in $\frR$ by Lemma~\ref{lem:truncation-he-cases}.

 We now use the homological perturbation lemma for chain complexes. We write the differential on $\frR$ as $d^1+\a^1$, where $d^1$ is the sum of $\Phi^K$ and the internal differentials of $E_{s,t}$, $F_{s,t}$, $J_{s,t}$ and $M_{s,t}$, and $\a^1$ is all of the remaining differentials. Note that $(\frR, d^1)$ is contractible. Write $h^1$ for the null-homotopy of the identity map on $(\frR,d^1)$. We now apply the homological perturbation lemma for chain complexes as in Lemma~\ref{lem:HPL-chain-complexes}, with the morphisms $h^1\colon (\frR,d^1)\to (\frR,d^1)$, $\pi^1\colon (\frR,d^1)\to 0$ and $i^1\colon 0\to (\frR, d^1)$, where $i^1$ and $\pi^1$ are zero. Note that $\pi^1\circ h^1=0$ and $h^1\circ i^1=0$, though we may have $h^1\circ h^1\neq 0$. Nonetheless we may still apply the homological perturbation lemma by Remark~\ref{rem:relax-HPL}. Note that the map $h^1$ will have components which map $J_{s,t}$ to $E_{s,t}$, and also map $M_{s,t}$ to $F_{s,t}$. Additionally, there may be components of $h^1$ which preserve the $E_{s,t}$, $F_{s,t}$, $J_{s,t}$ and $M_{s,t}$.
  
  We define an increasing filtration $\cF$ on $\frR$ which takes values in $\{0,1\}\times \Q$. We view $\{0,1\}\times \Q$ has having the lexicographic ordering, where we order $\{0,1\}$ and $\Q$ using their standard ordering. We put $E_{s,t}$ and $J_{s,t}$ into filtration level $(0,s)$ and we put $F_{s,t}$ and $M_{s,t}$ into filtration level $(1,s)$.
  
  We consider the expression $\sum_{j=0}^\infty (h^1 \circ \a^1)^j$. 
  We claim that if $m>0$, then there is an $n_0$ so that if $j>n_0$, the map $(h^1\circ \a^1)^j$ has image in the ideal $(U^m)$.    We observe that $h^1$ preserves the $\cF$-filtration level, while $\a^1$ strictly increases the filtration level.  We observe that the only term of $\a^1$ which does not increase the $\{0,1\}$ component of the filtration is $\Phi^{-K}$. We can describe $(h^1\circ \a^1)^j$ as a sum of sequences of ``$h$-arrows'' and ``$\a$-arrows''. By the filtration argument above, in any contributing sequence, at most one of the $\a$-arrows corresponds to a map other than $\Phi^{-K}$. The only arrows which decrease the $s$ coordinate are $\Phi^{-\mu}$ and $\Phi^{K,-\mu}$, and these drop the $s$-coordinate by $1$. (Note that $\Phi^{-K}$ increases the $s$ coordinate by $n$, which we assume is at least $1$). Therefore, the map $(h^1\circ \a^1)^{j}$ increases the $s$ coordinate of the filtration by at least $j-1$. Since $\frR$ has $s$-coordinates bounded from below, it follows that there is some
    $n_1$ so that the map $(h^1\circ \a^1)^{n_1}$  has image in only filtration levels $(i,s)$ with $s > g+1$.

   Therefore, we set $n_0=n_1+m+1$ and observe that if $j>n_0$, the map $(h^1\circ \a^1)^{j}$ must have image in $(U^m)$ because each contributing arrow sequence for $(h^1\circ \a^1)^j$ must have at least $m$ arrows corresponding to $\Phi^{-K}$ with domain $E_{s,t}$ or $F_{s,t}$ with $s>g+1$, which each have image in the ideal $(U)$ by Lemma~\ref{lem:U-weight-truncations}.
  
  The argument that $\frL$ is contractible is essentially the same, and we leave the details to the reader.

Next, we truncate  $\frC$ in the direction of the $t$-coordinate. We define
\[
\begin{split}
 \frC^-&=\left(\bigoplus_{\substack{-g<s<h\\ t<-N}} E_{s,t}\right) \oplus \left(\bigoplus_{\substack{-g<s<h-1\\ t<-N}} F_{s,t}\right)\oplus \left(\bigoplus_{\substack{-g+n<s<h\\ t<-N-1}} J_{s,t}\right) \oplus \left(\bigoplus_{\substack{-g+n<s<h-1 \\ t<-N-1}} M_{s,t}\right)
\\
 \frC_0&=\left(\bigoplus_{\substack{-g<s<h\\-N<t<N }} E_{s,t}\right) \oplus \left(\bigoplus_{\substack{-g<s<h-1\\ -N<t<N}} F_{s,t}\right)\oplus \left(\bigoplus_{\substack{-g+n<s<h \\ -N-1<t<N}} J_{s,t}\right) \oplus \left(\bigoplus_{\substack{-g+n<s<h-1 \\-N-1<t<N}} M_{s,t}\right)
\\
 \frC^+&=\left(\bigoplus_{\substack{-g<s<h\\ N<t}} E_{s,t}\right) \oplus \left(\bigoplus_{\substack{-g<s<h-1\\N<t}} F_{s,t}\right)\oplus \left(\bigoplus_{\substack{-g+n<s<h \\ N<t}} J_{s,t}\right) \oplus \left(\bigoplus_{\substack{-g+n<s<h-1\\ N<t}} M_{s,t}\right).
\end{split}
\]
 We equip each of these complexes with the differential induced from $\bX(P,K,n)^R$. Note that $\frC_0$ is truncation appearing in the main statement. We observe that $\frC$ decomposes as
\[
\frC=
\begin{tikzcd}
\frC^+\ar[r, "\xi^1"] & \frC_0 & \frC^- \ar[l,swap, "\zeta^1"]
\end{tikzcd}
\]
for some type-$D$ morphisms $\xi^1$ and $\zeta^1$. 

 To show that $\bX(P,K,n)^R\simeq \frC_0$, it therefore suffices to show that $\frC^+$ and $\frC^-$ are contractible. We focus on $\frC^+$, as the argument for $\frC^-$ is essentially the same. We again use the homological perturbation lemma for chain complexes. It is helpful to do this in two steps. We write the differential on $\frC^+$ as $\delta^1=d^1+\a^1$, where $d^1$ consists of the internal differentials of $E_{s,t}$, $F_{s,t}$, $J_{s,t}$ and $M_{s,t}$, as well as the maps $\Phi^\mu$. We set $\a^1$ to be the remaining terms of the hypercube differential.

There is a strong deformation retraction from $(\frC^+, d^1)$ to the complex 
\begin{equation}
\Cone\left(\Phi^K\colon \bigoplus_{N<t} E_{h-\frac{1}{2}, t}\to \bigoplus_{N<t} J_{h-\frac{1}{2}, t}\right).
\label{eq:perturbed-complex-cone}
\end{equation}
This strong deformation retraction is obtained by using the fact that $\Phi^{\mu}$ is a homotopy equivalence when $t>N$ by Lemma~\ref{lem:truncation-he-cases}. 

Let  $h^1$ denote the homotopy. The map $h^1$ acts by the inverse of $\Phi^\mu$ in this range. We claim that the series $\sum_{j=0}^\infty (\a^1\circ h^1)^j$ converges. In fact, we claim that $(\a^1\circ h^1)^j=0$ for large $j$. We can see this by the following filtration argument. We endow the space of the complex with a filtration over $\{0,1\}\times \Q$, equipped with the lexicographic ordering. We put $E_{s,t}$ and $F_{s,t}$ in filtration level $(1,s)$, and we put $J_{s,t}$ and $M_{s,t}$ in filtration level $(0,s)$. The map $h^1$ will have some components which move backwards along some $\Phi^\mu$ (via homotopy inverses), and also some components which preserve the $E_{s,t}$, $F_{s,t}$, $J_{s,t}$ and $M_{s,t}$. On the other hand $\a^1$ strictly decreases the filtration level.  We consider the components of a summand $(\a^1\circ h^1)^j$. We can think of these are contributed by a sequence of $\a$-arrows and $h$-arrows (where an $\a$-arrow means a component of the map $\a^1$, while an $h$-arrow means a component of the map $h^1$). Note that in such a sequence, at most one $\a$ arrow can lower the $\{0,1\}$ filtration level. On the other hand, $\frC^+$ is supported in only in the filtration levels $\{0,1\}\times \{-g+\tfrac{1}{2},\dots, h-\tfrac{1}{2}\}$. Therefore $(\a^1 \circ h^1)^j=0$ if $j>2(h+g-1)$.

Applying the homological perturbation lemma for chain complexes, Lemma~\ref{lem:HPL-chain-complexes}, we obtain an induced perturbation $\b^1$ on the complex in Equation~\eqref{eq:perturbed-complex-cone} so that $\frC^+$ is homotopy equivalent to the complex in Equation~\eqref{eq:perturbed-complex-cone}, perturbed by this $\beta^1$.
 The perturbation $\b^1$ consists only of some terms which send $E_{h-\frac{1}{2}, t}$ to $J_{h-\frac{1}{2}, t+1}$.
In fact, this perturbation is straightforward to compute, it is given by
\[
\b^1= \Phi^K \circ ((\Phi^{\mu})^{-1}\circ \Phi^{-\mu})^{n-1} \circ \Phi^{-\mu}.
\]
 Therefore the complex $\frC^+$ is homotopy equivalent to the complex
\[
\begin{tikzcd}
E_{h-\frac{1}{2}, N-\frac{1}{2}} \ar[r, "\Phi^K"] &J_{h-\frac{1}{2}, N-\frac{1}{2}} &E_{h-\frac{1}{2}, N+\frac{1}{2}} \ar[r, "\Phi^K"] \ar[l, "\b^1",swap] & J_{h-\frac{1}{2}, N+\frac{1}{2}}& \ar[l] \cdots
\end{tikzcd}.
\]
This complex is contractible since $\Phi^K$ is a homotopy equivalence for $s>g-1$ and $N<t$ by Lemma~\ref{lem:truncation-edge-case-1}, and each $\b^1$ has image in the ideal $(U)$ by Lemma~\ref{lem:U-weight-truncations}. We conclude that $\frC^+$ is contractible.

A symmetric argument shows that $\frC^-$ is contractible. We conclude, therefore, that $\bX(P,K,n)^R\simeq \frC_0$, as was claimed.

The arguments for the cases that $n<0$ and $n=0$ follow from very similar lines of reasoning. We leave the details to the reader.
\end{proof}

\begin{rem} When $n\ge \max(2g-1,0)$, we can also give a more symmetric description of the truncation, which depends on the parity of $n$. If $n$ is odd, then $\bX(P,K,n)^R$ is homotopy equivalent to the complex 
\[
\left(\bigoplus_{\substack{(s,t)\in \bH(P)\\ -\frac{n}{2} \le s\le \frac{n}{2}\\ -N < t<N }}E_{s,t}\right) 
\oplus
\left(\bigoplus_{\substack{(s,t)\in \bH(P)\\ -\frac{n}{2}\le  s\le \frac{n}{2}-1\\ -N < t<N }}F_{s,t}\right) 
\oplus \left(\bigoplus_{\substack{(s,t)\in \bH(P)\\ -N-1 < t< N }}J_{\frac{n}{2},t}\right). 
\]
If $n$ is even, then the following complex gives a symmetric truncation

\[\left(\bigoplus_{\substack{(s,t)\in \bH(P)\\ -\frac{n+1}{2} \le s\le\frac{n+1}{2}\\ -N < t<N }}E_{s,t}\right) 
\oplus
\left(\bigoplus_{\substack{(s,t)\in \bH(P)\\ -\frac{n+1}{2}\le s\le \frac{n-1}{2}\\ -N < t<N }}F_{s,t}\right) 
\oplus \left(\bigoplus_{\substack{(s,t)\in \bH(P)\\ s=\frac{n-1}{2}, \frac{n+1}{2}\\ -N-1 < t< N }}J_{s,t}\right) 
\oplus \left(\bigoplus_{\substack{(s,t)\in \bH(P)\\ -N-1 < t < N }}M_{\frac{n-1}{2},t}\right).\]
Note that this complex is slightly larger than the asymmetric construction described in Proposition~\ref{prop:truncation}. 
\end{rem}

\subsection{Further simplifications}

Proposition~\ref{prop:truncation} gives a uniform way to truncate the complex $\bX(P,K,n)$, but there further reductions of these truncations which can be used to give smaller models, sometimes with more complicated differentials. We now describe several modifications. We focus on the cases $n<0$, $0\le n<2g-1$ and $n \ge 2g-1$ separately. For the computations in Section~\ref{sec:Examples of satellite}, we will typically use the modified truncations described in this section.

When $n<0$, there are homotopy equivalences on the left and right boundaries of the form:
\[
 \Phi^{-K}:F_{-g-\frac{1}{2},t}\to M_{-g+n-\frac{1}{2},t-1} \quad \text{and}\quad \Phi^{K}:F_{g-\frac{1}{2},t-1}\to M_{g-\frac{1}{2},t-1}
\] respectively, for $-N+1<t<N$. See Figure~\ref{fig:truncation-example-further-1}, below, for an example when $g=1,$ $N=\frac{3}{2}$, and $n=-1$. Here, we draw all the $\Phi^{-K}$ maps as vertical arrows (as opposed to slanted arrows, as in Figure~\ref{fig:truncation for n=-1}), so there is a shift in the $s$-coordinate between rows with different $t$ values.
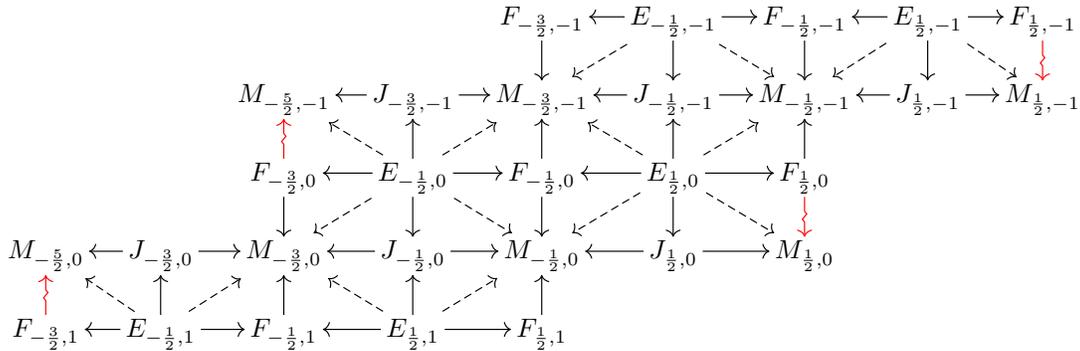
\begin{figure}[h]
	\begin{tikzcd}[column sep = tiny, row sep = small, font = \small, shorten=-1mm]		
		& & & & 	F_{-\frac{3}{2},-1} \arrow[d]& E_{-\frac{1}{2},-1}\arrow[l] \arrow[r]\arrow[d]\arrow[dr,dashed] \arrow[dl,dashed] & F_{-\frac{1}{2},-1} \arrow[d] & E_{\frac{1}{2},-1} \arrow[l] \arrow[d] \arrow[dl,dashed] \arrow[r] \arrow[dr,dashed] & F_{\frac{1}{2},-1}\arrow[d,red,squiggly] \\
		& & M_{-\frac{5}{2},-1} & J_{-\frac{3}{2},-1}\arrow[l]\arrow[r]& M_{-\frac{3}{2},-1} &
		J_{-\frac{1}{2},-1} \arrow[l]\arrow[r] & M_{-\frac{1}{2},-1} &
		J_{\frac{1}{2},-1} \arrow[l]\arrow[r]& M_{\frac{1}{2},-1}\\
		& & F_{-\frac{3}{2},0} \arrow[u,red,squiggly] \arrow[d]& E_{-\frac{1}{2},0}\arrow[l]\arrow[ul,dashed]\arrow[u]\arrow[ur,dashed]\arrow[r]\arrow[dr,dashed]\arrow[d]\arrow[dl,dashed]& F_{-\frac{1}{2},0} \arrow[u]\arrow[d] &
		E_{\frac{1}{2},0} \arrow[l]\arrow[ul,dashed]\arrow[u]\arrow[ur,dashed]\arrow[r]\arrow[dr,dashed]\arrow[d]\arrow[dl,dashed] & F_{\frac{1}{2},0} \arrow[u] \arrow[d, red, squiggly]& & \\
		M_{-\frac{5}{2},0} & J_{-\frac{3}{2},0}\arrow[l]\arrow[r]& M_{-\frac{3}{2},0} &
		J_{-\frac{1}{2},0} \arrow[l]\arrow[r] & M_{-\frac{1}{2},0} &
		J_{\frac{1}{2},0} \arrow[l]\arrow[r]& 
		M_{\frac{1}{2},0} & &\\		
		F_{-\frac{3}{2},1} \arrow[u,red,squiggly]& E_{-\frac{1}{2},1}\arrow[l]\arrow[ul,dashed]\arrow[u]\arrow[ur,dashed]\arrow[r]& F_{-\frac{1}{2},1} \arrow[u]&
		E_{\frac{1}{2},1} \arrow[l]\arrow[ul,dashed]\arrow[u]\arrow[ur,dashed]\arrow[r] & F_{\frac{1}{2},1} \arrow[u]& & & &
	\end{tikzcd}
	\caption{The truncation for $g=1,$ $N = \frac{3}{2}$, and $n=-1$.}
	\label{fig:truncation-example-further-1}
\end{figure}

In the top right corner, 
\[\begin{tikzcd}
	F_{g-\frac{1}{2},-N+\frac{1}{2}} \arrow[r,red,squiggly, "\Phi^K"]&M_{g-\frac{1}{2},-N+\frac{1}{2}}
\end{tikzcd}
\] 
is a contractible subcomplex of the truncation, and so is 
\[
\begin{tikzcd}
F_{-g+\frac{1}{2},N-\frac{1}{2}} \arrow[r,red,squiggly, "\Phi^{-K}"]&M_{-g+n-\frac{1}{2},N-\frac{3}{2}}
\end{tikzcd} 
\] in the bottom left corner. Therefore, we can further truncate out these two subcomplexes. 

For other homotopy equivalences on the right boundary, they do not form a subcomplex or quotient complex. However, one can use the homological perturbation lemma to remove $F_{g-\frac{1}{2},t}$ and $M_{g-\frac{1}{2},t}$ for $-N+1<t<N-1$, along with all the arrows pointing to or from them, at the cost of adding an extra arrow $J_{ g-\frac{1}{2},t} \to M_{g+n-\frac{1}{2},t-1} $ for each $t$, which is the composition of the following:
	\[ 
\begin{tikzcd}[labels={description},column sep=10mm]
	J_{ g-\frac{1}{2},t} \arrow[r,"\Phi^{\mu}"] &
	M_{g-\frac{1}{2},t} \arrow[r, "\eta"] & F_{g-\frac{1}{2},t} \arrow[r,"\Phi^{-K}"] &M_{g+n-\frac{1}{2},t-1}.
\end{tikzcd}
\] 
Here, $\eta$ is a homotopy inverse of $\Phi^{K}:F_{g-\frac{1}{2},t} \to M_{g-\frac{1}{2},t}$. 

Symmetrically, one can perform similar modifications on the left boundary. See the figure below for the result of this modification, where the new arrows are drawn as blue lines.
\begin{figure}[h]
	\begin{tikzcd}[column sep = small, row sep = small, font = \small,shorten =-1mm]		
		& & & 	F_{-\frac{3}{2},-1} \arrow[d]& E_{-\frac{1}{2},-1}\arrow[l] \arrow[r]\arrow[d]\arrow[dr,dashed] \arrow[dl,dashed] & F_{-\frac{1}{2},-1} \arrow[d] & E_{\frac{1}{2},-1} \arrow[l] \arrow[d] \arrow[dl,dashed]  \\
		& & J_{-\frac{3}{2},-1}\arrow[ddl,blue ]\arrow[r]& M_{-\frac{3}{2},-1} &
		J_{-\frac{1}{2},-1} \arrow[l]\arrow[r] & M_{-\frac{1}{2},-1} &
		J_{\frac{1}{2},-1} \arrow[l]\\
		& & E_{-\frac{1}{2},0}\arrow[u]\arrow[ur,dashed]\arrow[r]\arrow[dr,dashed]\arrow[d]\arrow[dl,dashed]& F_{-\frac{1}{2},0} \arrow[u]\arrow[d] &
		E_{\frac{1}{2},0} \arrow[l]\arrow[ul,dashed]\arrow[u]\arrow[ur,dashed]\arrow[d]\arrow[dl,dashed] & & \\
		J_{-\frac{3}{2},0}\arrow[r]& M_{-\frac{3}{2},0} &
		J_{-\frac{1}{2},0} \arrow[l]\arrow[r] & M_{-\frac{1}{2},0} &
		J_{\frac{1}{2},0} \arrow[l] \arrow[uur,blue]& &\\		
		E_{-\frac{1}{2},1}\arrow[u]\arrow[ur,dashed]\arrow[r]& F_{-\frac{1}{2},1} \arrow[u]&
		E_{\frac{1}{2},1} \arrow[l]\arrow[ul,dashed]\arrow[u]\arrow[ur,dashed]\arrow[r] & F_{\frac{1}{2},1} \arrow[u]& & &
	\end{tikzcd}
	\caption{A further simplification for $g=1, N = \frac{3}{2}$, and $n=-1$. }
	\label{fig:simplified truncation when n=-1}
\end{figure}
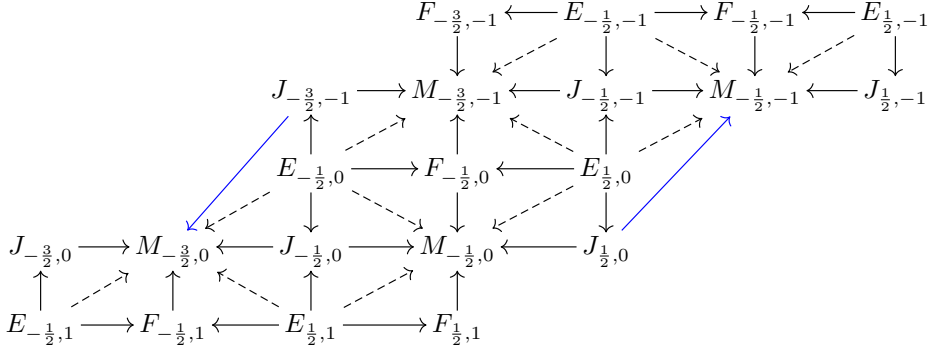

When $0\le n<2g-1$, the parameter $ h = \max\left\{g,-g+n+1\right\}$ equals $g$. See Figure \ref{fig:truncation when n=0} for an example when $g=1,N=\frac{3}{2}$ and $n=0$.
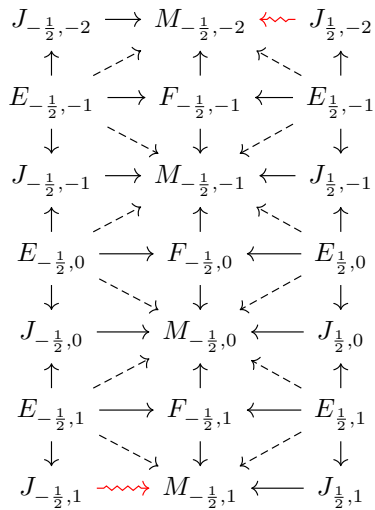
\begin{figure}[h]
	\begin{tikzcd}[column sep = small, row sep = small, font = \small]
		J_{-\frac{1}{2},-2} \arrow[r] &M_{-\frac{1}{2},-2}& J_{\frac{1}{2},-2} \arrow[l,red,squiggly] \\
		E_{-\frac{1}{2},-1}  \arrow[r]\arrow[d]\arrow[dr,dashed]\arrow[u]\arrow[ur,dashed] & F_{-\frac{1}{2},-1} \arrow[d] \arrow[u]& E_{\frac{1}{2},-1} \arrow[l] \arrow[d] \arrow[dl,dashed]\arrow[u] \arrow[ul,dashed] \\
		J_{-\frac{1}{2},-1} \arrow[r] & M_{-\frac{1}{2},-1} & J_{\frac{1}{2},-1}\arrow[l] \\
		E_{-\frac{1}{2},0} \arrow[u] \arrow[ur,dashed] \arrow[r]\arrow[d]\arrow[dr,dashed] & F_{-\frac{1}{2},0} \arrow[u] \arrow[d]& E_{\frac{1}{2},0} \arrow[u] \arrow[ul,dashed] \arrow[l] \arrow[dl,dashed] \arrow[d] \\
		J_{-\frac{1}{2},0} \arrow[r] &M_{-\frac{1}{2},0}& J_{\frac{1}{2},0} \arrow[l] \\
		E_{-\frac{1}{2},1} \arrow[u] \arrow[ur,dashed] \arrow[r] \arrow[d] \arrow[dr,dashed]&F_{-\frac{1}{2},1} \arrow[u] \arrow[d]& E_{\frac{1}{2},1} \arrow[l] \arrow[ul,dashed]\arrow[dl,dashed]\arrow[u] \arrow[d]\\
		J_{-\frac{1}{2},1} \arrow[r,red,squiggly] &M_{-\frac{1}{2},1}& J_{\frac{1}{2},1} \arrow[l]
	\end{tikzcd}
	\caption{The truncation for $g=1, N=\frac{3}{2},$ and $n=0$}
	\label{fig:truncation when n=0}
\end{figure}

 On the top boundary, there is a contractible subcomplex of the form  \[\begin{tikzcd}
	M_{-g+n+\frac{1}{2},-N-\frac{1}{2}} & J_{-g+n+\frac{3}{2},-N-\frac{1}{2}} \arrow[l,red,squiggly] \arrow[r] &\cdots \arrow[r]&M_{g-\frac{3}{2},-N-\frac{1}{2}}& J_{g-\frac{1}{2},-N-\frac{1}{2}} \arrow[l,red,squiggly].
\end{tikzcd}\]
Symmetrically, on the bottom boundary, there is a contractible subcomplex
 \[\begin{tikzcd}
 J_{-g+n+\frac{1}{2},N-\frac{1}{2}}  \arrow[r,red,squiggly] &M_{-g+n+\frac{1}{2},N-\frac{1}{2}} &\arrow[l]\cdots & J_{g-\frac{3}{2},N-\frac{1}{2}} \arrow[l] \arrow[r,red,squiggly]&M_{g-\frac{3}{2},N-\frac{1}{2}}.
 \end{tikzcd}\]

After further truncating these two subcomplexes, there is only one copy $J_{-g+n+\frac{1}{2},-N-\frac{1}{2}}$ left on the top row, and only one copy $J_{g-\frac{1}{2},N-\frac{1}{2}}$ left on the bottom row. Let $s=-g+\frac{1}{2} $ and $t=-N+\frac{1}{2} $, note that we have the following homotopy equivalence:
\[
\begin{tikzcd}[column sep=10mm]
	 J_{s+n,t-1} & [-5mm]\phantom{\cdots} &[-11mm]\phantom{\simeq} &[-11mm]M_{s+n-1,t-1} & J_{s+n,t-1} \arrow[l,red,squiggly]&[-5mm]\phantom{\cdots} &[-11mm]\phantom{\simeq} & [-11mm]\phantom{F} &\phantom{E}&[-3mm]\phantom{\cdots}\\
	 E_{s,t}\arrow[u]\arrow[r]\arrow[d]\arrow[dr,dashed]&\cdots &\simeq &F_{s-1,t} \arrow[u,red,squiggly]&E_{s,t}\arrow[l]\arrow[ul,dashed]\arrow[u]\arrow[r]\arrow[d]\arrow[dr,dashed]&\cdots &\simeq &F_{s-1,t} &E_{s,t}\arrow[l]\arrow[d]\arrow[dr,dashed] \arrow[r]&\cdots\\
	 \cdots & \cdots & & & \cdots &\cdots & & &\cdots &\cdots
\end{tikzcd},
\] 
so we can replace $ J_{-g+n+\frac{1}{2},-N-\frac{1}{2}}$ with $F_{-g-\frac{1}{2},-N+\frac{1}{2}}$ at the top left corner. Symmetrically, at the bottom right corner, we can replace $J_{g-\frac{1}{2},N-\frac{1}{2}}$ with $F_{g-\frac{1}{2},N-\frac{1}{2}}$. Hence, we can modify the truncation as in Figure~\ref{fig:truncation-swap-J-F-further}.

\begin{figure}[h]
	\begin{tikzcd}[column sep = small, row sep = small, font = \small]
		F_{-\frac{3}{2},-1} & E_{-\frac{1}{2},-1}\arrow[l] \arrow[r]\arrow[d]\arrow[dr] & F_{-\frac{1}{2},-1} \arrow[d] & E_{\frac{1}{2},-1} \arrow[l] \arrow[d] \arrow[dl]& \\
		& J_{-\frac{1}{2},-1} \arrow[r] & M_{-\frac{1}{2},-1} & J_{\frac{1}{2},-1}\arrow[l]& \\
		& E_{-\frac{1}{2},0} \arrow[u] \arrow[ur] \arrow[r]\arrow[d]\arrow[dr] & F_{-\frac{1}{2},0} \arrow[u] \arrow[d]& E_{\frac{1}{2},0} \arrow[u] \arrow[ul] \arrow[l] \arrow[dl] \arrow[d] &\\
		& J_{-\frac{1}{2},0} \arrow[r] &M_{-\frac{1}{2},0}& J_{\frac{1}{2},0} \arrow[l] & \\
		& E_{-\frac{1}{2},1} \arrow[u] \arrow[ur] \arrow[r] &F_{-\frac{1}{2},1} \arrow[u]& E_{\frac{1}{2},1} \arrow[l] \arrow[ul]\arrow[u]\arrow[r]& F_{\frac{1}{2},1}
	\end{tikzcd}
	\caption{A further simplification for $g=1, N=\frac{3}{2},$ and $n=0$.}
	\label{fig:truncation-swap-J-F-further}
\end{figure}

\subsection{Large framings}

When $n\ge 2g-1$, the truncation described in Proposition~\ref{prop:truncation} has some useful properties. Note that when $n\ge 2g-1$, the ordinary mapping cone complex $\bX_n(Y,K)$, which computes $\ve{\CF}^-(Y_n(K))$, also simplifies substantially, and becomes a direct sum of certain $A_s(K)$ complexes (with no $v$ or $h_n$ differentials) and no $B_s(K)$ complexes.	When $n\ge 2g-1$, our truncation of $\bX(P,K,n)^R$ also simplifies. 

In this case, $h =\max\left\{g,-g+n+1\right\}$ takes the value $-g+n+1$. Also, there are no $M_{s,t}$ complexes appearing in the truncation, and only one complex $J_{-g+n+1/2,t}$ for each $t$ from $-N-\frac{1}{2} $ to $N-\frac{1}{2}$. Also, none of the length 2 maps $\Phi^{\pm K,\pm \mu}$ appear in the truncation. The resulting truncation is a staircase of certain $E_{s,t}$, $F_{s,t}$ and $J_{s,t}$ complexes, as illustrated in Figure~\ref{fig:truncation when n=2}.

We can do a similar modification as in the  $0\le n<2g-1$ case to replace $J_{-g+n+\frac{1}{2},-N-\frac{1}{2}}$ with $F_{-g-\frac{1}{2},-N+\frac{1}{2}}$ and $J_{-g+n+\frac{1}{2},N-\frac{1}{2}}$ with $F_{-g+n+\frac{1}{2},N-\frac{1}{2}}$. We draw the truncation for $g=1,N=\frac{3}{2}$ and $n=2$ after this change in Figure \ref{fig:truncation when n=2}. Note that in this figure we return to the convention used in the proof of the proposition, where the horizontal coordinate corresponds to the $s$-coordinate, and $\Phi^{-K}$'s are drawn as tilted arrows. 

	\begin{figure}[h]
	\adjustbox{scale=0.85}{
		\begin{tikzcd} 
			F_{-\frac{3}{2},-1} &
			E_{-\frac{1}{2},-1} \arrow[l] \arrow[r] &
			F_{-\frac{1}{2},-1} &
			E_{\frac{1}{2},-1} \arrow[l] \arrow[r] &
			F_{\frac{1}{2},-1} &
			E_{\frac{3}{2},-1} \arrow[l] \arrow[d, red, squiggly] & \\
			& & & & & J_{\frac{3}{2},-1} & \\
		      &
			E_{-\frac{1}{2},0}  \arrow[r] \arrow[urrrr]&
			F_{-\frac{1}{2},0} &
			E_{\frac{1}{2},0} \arrow[l] \arrow[r] &
			F_{\frac{1}{2},0} &
			E_{\frac{3}{2},0} \arrow[l] \arrow[d, red, squiggly] & \\
			& & & & & J_{\frac{3}{2},1} & \\
			  &
			E_{-\frac{1}{2},1}  \arrow[r] \arrow[urrrr]&
			F_{-\frac{1}{2},1} &
			E_{\frac{1}{2},1} \arrow[l] \arrow[r] &
			F_{\frac{1}{2},1} &
			E_{\frac{3}{2},1} \arrow[l] \arrow[r,red,squiggly] & F_{\frac{3}{2},1} 
		\end{tikzcd}
	}
	\caption{The truncation for $g=1, N = \frac{3}{2}$ and the framing $n=2$}
	\label{fig:truncation when n=2}
\end{figure}

In the bottom row, if $n\ge 2g$, there is a contractible quotient complex of the form \[
\begin{tikzcd}
	E_{g+\frac{1}{2},N-\frac{1}{2}} \arrow[r,red,squiggly] & F_{g+\frac{1}{2},N-\frac{1}{2}} &\cdots \arrow[l] &E_{-g+n+\frac{1}{2},N-\frac{1}{2}}\arrow[r,red,squiggly]\arrow[l]& F_{-g+n+\frac{1}{2},N-\frac{1}{2}}
\end{tikzcd}\]
that can be truncated, where the homotopy equivalence follows from Lemma \ref{lem:truncation-edge-case-1}. Therefore, the last row will always take the form
\[\begin{tikzcd}
	E_{-g+\frac{1}{2},N-\frac{1}{2}} \arrow[r] &	F_{-g+\frac{1}{2},N-\frac{1}{2}} &\cdots \arrow[l] &E_{g-\frac{1}{2},N-\frac{1}{2}} \arrow[l]\arrow[r] &F_{g-\frac{1}{2},N-\frac{1}{2}},
\end{tikzcd}\]
as long as $n\ge 2g-1$.

For the homotopy equivalence \[\Phi^{K}:E_{-g+n+\frac{1}{2},t}\to J_{-g+n+\frac{1}{2},t}\] with $-N<t<N-1$, we can again apply the homological perturbation lemma to cancel these pairs, and add an extra arrow from $E_{-g+\frac{1}{2},t-1}$ to $F_{-g+n-\frac{1}{2},t}$, which is the composition:
\[\begin{tikzcd}[labels={description}, column sep=1.5cm]
	 E_{-g+\frac{1}{2},t-1} \arrow[r,"\Phi^{-K}"] & J_{-g+n+\frac{1}{2},t} \arrow[r,"\eta"] & E_{-g+n+\frac{1}{2},t}  \arrow[r,"\Phi^{-\mu}"] & F_{-g+n-\frac{1}{2},t},
\end{tikzcd}\]
where $\eta$ is a homotopy inverse of $\Phi^{K}:E_{-g+n+\frac{1}{2},t}\to J_{-g+n+\frac{1}{2},t}.$ The final result is shown in the following figure, with the new arrows indicated by blue lines. Note that the final outcome is a zigzag pattern consisting of $E$ terms and $F$ terms, and there is a stabilization phenomenon for $\cCFK(P(K,n))$ when $n$ grows larger than $2g-1$.
	\begin{figure}[h]
	\adjustbox{scale=0.85}{
		\begin{tikzcd} 
			F_{-\frac{3}{2},-1} &
			E_{-\frac{1}{2},-1} \arrow[l] \arrow[r] &
			F_{-\frac{1}{2},-1} &
			E_{\frac{1}{2},-1} \arrow[l] \arrow[r] &
			F_{\frac{1}{2},-1}   \\
			& E_{-\frac{1}{2},0} \arrow[urrr,blue] \arrow[r] &
			F_{-\frac{1}{2},0} &
			E_{\frac{1}{2},0} \arrow[l] \arrow[r] &
			F_{\frac{1}{2},0}\\
			& E_{-\frac{1}{2},1}  \arrow[r]  \arrow[urrr,blue]&
			F_{-\frac{1}{2},1} &
			E_{\frac{1}{2},1} \arrow[l] \arrow[r] &
			F_{\frac{1}{2},1}
		\end{tikzcd}
	}
	\caption{A further simplification for $g=1, N = \frac{3}{2}$, and $n=2$}
\end{figure}

A more symmetric choice of truncation for this case is shown in the figure below, achieved by slightly adjusting the range of truncation in the proposition to be symmetric, and using the homotopy equivalence $\Phi^{-K}:E_{-\frac{3}{2},0} \to J_{\frac{1}{2},-1}$ to do the homological perturbation.
	\begin{figure}[h]
	\adjustbox{scale=0.85}{
		\begin{tikzcd} 
			F_{-\frac{3}{2},-1} &
			E_{-\frac{1}{2},-1} \arrow[l] \arrow[r] &
			F_{-\frac{1}{2},-1} &
			E_{\frac{1}{2},-1} \arrow[l] \arrow[dlll,blue]  &   \\
			F_{-\frac{3}{2},0} & E_{-\frac{1}{2},0} \arrow[l ]\arrow[r] &
			F_{-\frac{1}{2},0} &
			E_{\frac{1}{2},0} \arrow[l] \arrow[r] &
			F_{\frac{1}{2},0}\\
			& E_{-\frac{1}{2},1}  \arrow[r]  \arrow[urrr,blue]&
			F_{-\frac{1}{2},1} &
			E_{\frac{1}{2},1} \arrow[l] \arrow[r] &
			F_{\frac{1}{2},1}
		\end{tikzcd}
	}
\end{figure}

\section{Examples}
\label{sec:Examples of satellite}

In this section, we perform a number of example computations using the truncations described in Section~\ref{sec:truncation}.

\subsection{Unknot companion}

\label{exm:unknot companion}

We begin by considering the case that the companion $K$ is an unknot (so $g=0$), to illustrate the techniques from Section~\ref{sec:truncation} in a simple case. 

Note that these examples can also be computed by using the rationally framed solid tori $\cD_{-1/n}^\cK$ from \cite{ZemBordered}*{Section~18.2}. This is because topologically, we can blow down the unknot $K=U$ to see that $P(U,n)$ is the knot $P$ in the $-1/n$ surgery on the meridianal component of $L_P$. Therefore
\[
\cCFK(P(U,n))^R\simeq \cD_{-1/n}^{\cK} \boxtimes {}_{\cK} \cX(L_P)^R.
\]

Note that in this case, $F_{s,t} = M_{s,t} = \cS$, $J_{s,t} = \cC_{t+\frac{1}{2}}, $  $E_{s,t} = \cC_{t-\frac{1}{2}} $ if $s\le -\frac{1}{2}$ and $E_{s,t} = \cC_{t+\frac{1}{2}}$ if $s\ge \frac{1}{2}$. 

The maps $\Phi^{\mu}:E_{s,t} \to F_{s,t}$ and $\Phi^{\mu}:J_{s,t} \to M_{s,t}$ are given by $L_{\sigma}$. The maps $\Phi^{-\mu}:E_{s,t} \to F_{s-1,t}$ and  $\Phi^{-\mu}:J_{s,t} \to M_{-s,t}$ are given by $L_{\tau}$.

The explicit description depends on the framing $n$ as follows.
\begin{enumerate}
	\item When $n=0$, the truncated complex consists of \[\bigoplus_{\substack{t\in N+\frac{1}{2}+\mathbb{Z} \\-N<t<N} }E_{\frac{1}{2},t} \oplus \bigoplus_{\substack{t\in N+\frac{1}{2}+\mathbb{Z} \\-N-1<t<N}}J_{\frac{1}{2},t},\]
	with $\Phi^{K}: E_{\frac{1}{2},t} \to J_{\frac{1}{2},t}$ being the identity map $\bI:\cC_{t+\frac{1}{2}} \to \cC_{t+\frac{1}{2}} $, and \newline$\Phi^{-K}: E_{\frac{1}{2},t} \to J_{\frac{1}{2},t-1} $ being the map $L_{W}:\cC_{t+\frac{1}{2}} \to \cC_{t-\frac{1}{2}}.$ Therefore, the truncated complex \[\begin{tikzcd}[labels={description}]
	\hspace*{10mm}	\cC_{-N} & \cC_{-N+1}\arrow[l,"L_{W}"] \arrow[r,red,"\bI"] & \cC_{-N+1} & \cdots \arrow[l,"L_{W}"] \arrow[r,red,"\bI"] & \cC_{N-1} & \cC_{N}\arrow[l,"L_{W}"]\arrow[r,red,"\bI"]&\cC_{N} 
	\end{tikzcd}\]
	is homotopy equivalent to $\cC_{-N}$. Note that $\cC_{-N}$ is homotopy equivalent to $\cS$, as $L_{\tau}: \cC_{-N} \to \cS$ is a homotopy equivalence by the definition of $N$ in Proposition \ref{prop:truncation}, so we get the expected result 
	\[
	\cCFK(P(U,0)) = \cCFK(P) \simeq \cS. 
	\]

	\item When $n>0$, one can use a slightly different truncation than the one in Proposition \ref{prop:truncation}, which consists of the following, and maps between them: 
	\[ \bigoplus_{\substack{(s,t)\in \bH(P)\\-1<s<n\\ -N<t<N}}E_{s,t} \oplus \bigoplus_{\substack{(s,t)\in \bH(P)\\-1<s<n-1\\ -N<t<N}}F_{s,t}  \oplus \bigoplus_{\substack{t\in N+\frac{1}{2}+\mathbb{Z}\\ -N-1<t<N}}J_{n-\frac{1}{2},t},\]
	with $\Phi^{K}: E_{n-\frac{1}{2},t} \to J_{n-\frac{1}{2},t}$ being the identity
	map $\bI:\cC_{t+\frac{1}{2}} \to \cC_{t+\frac{1}{2}}$, and $\Phi^{-K}:E_{-\frac{1}{2},t} \to J_{n-\frac{1}{2},t-1}$ being the identity map $\bI:\cC_{t-\frac{1}{2}} \to \cC_{t-\frac{1}{2}}$. An example of this truncation when $n=1$ and $N=\frac{3}{2}$ is drawn in the following diagram, where red arrows represent homotopy equivalences.

\begin{figure}[h]
		\hspace*{10mm}
	\adjustbox{scale = 0.7}{
		\begin{tikzcd}[labels={description}, column sep=7mm,shorten =-1mm]
			J_{\frac{1}{2},-2} & & & & & &\\
			E_{-\frac{1}{2},-1}\arrow[u,"\Phi^{-K}",red] \arrow[r,"\Phi^{\mu}"]& F_{-\frac{1}{2},-1} & E_{\frac{1}{2},-1} \arrow[l,"\Phi^{-\mu}"] \arrow[d,"\Phi^{K}",red]& & & & \\
			& & J_{\frac{1}{2},-1}  & & & &\\
			& & E_{-\frac{1}{2},0}\arrow[u,"\Phi^{-K}",red] \arrow[r,"\Phi^{\mu}"] & F_{-\frac{1}{2},0} & E_{\frac{1}{2},0} \arrow[l,"\Phi^{-\mu}"] \arrow[d,"\Phi^{K}",red]& &\\
			& &  & & J_{\frac{1}{2},0}  & & \\
			& &  & & E_{-\frac{1}{2},1}\arrow[u,"\Phi^{-K}",red] \arrow[r,"\Phi^{\mu}"] & F_{-\frac{1}{2},1} & E_{\frac{1}{2},1}\arrow[l,"\Phi^{-\mu}"] \arrow[d,"\Phi^{K}",red]\\
			& &  & &  & &  J_{\frac{1}{2},1}\\
		\end{tikzcd}
	\hspace*{-25mm}
	=
	\hspace*{-15mm}
	\begin{tikzcd}[labels={description}, column sep=7mm,shorten =-1mm]
		\cC_{-\frac{3}{2}} & & & & & &\\
		\cC_{-\frac{3}{2}}\arrow[u,"\bI",red] \arrow[r,"L_{\sigma}"]& \cS & \cC_{-\frac{1}{2}} \arrow[l,"L_{\tau}"] \arrow[d,"\bI",red]& & & & \\
		& & \cC_{-\frac{1}{2}} & & & &\\
		& & \cC_{-\frac{1}{2}}\arrow[u,"\bI",red] \arrow[r,"L_{\sigma}"] & \cS & \cC_{\frac{1}{2}} \arrow[l,"L_{\tau}"] \arrow[d,"\bI",red]& &\\
		& &  & & \cC_{\frac{1}{2}}  & & \\
		& &  & & \cC_{\frac{1}{2}}\arrow[u,"\bI",red] \arrow[r,"L_{\sigma}"] & \cS & \cC_{\frac{3}{2}}\arrow[l,"L_{\tau}"] \arrow[d,"\bI",red]\\
		& &  & &  & &  \cC_{\frac{3}{2}}\\
	\end{tikzcd}
	}
\end{figure}

This is homotopy equivalent to 
\[\begin{tikzcd}[labels={description}]
\cS & \cC_{-\frac{1}{2}} \arrow[l,"L_{\tau}"]\arrow[r,"L_{\sigma}"] & \cS & \cC_{\frac{1}{2}} \arrow[l,"L_{\tau}"] \arrow[r,"L_{\sigma}"] &\cS.
\end{tikzcd}\]

	In general, $\cCFK(P(U,n))$ is homotopy equivalent to the following complex:
	\[\begin{tikzcd}[labels={description}]
		\cS & \mathcal{C}'_0 \arrow[l,"L_{\tau}"] \arrow[r,"L_{\sigma}"] &\cS & \mathcal{C}'_1 \arrow[l,"L_{\tau}"] \arrow[r,"L_{\sigma}"]&\cdots & \mathcal{C}'_{(2N-1)n-1} \arrow[l,"L_{\tau}"] \arrow[r,"L_{\sigma}"] &\cS,
	\end{tikzcd}\]
where  \[\mathcal{C}'_{i} = \cC_{-N+1+\lfloor \frac{i}{n}\rfloor},\]
$\lfloor \frac{i}{n}\rfloor$ is the largest integer smaller or equal to $\frac{i}{n}$. In words, the sequence $\left\{\mathcal{C}'_i\right\}_{i=0}^{(2N-1)n-1}$ consists of $n$ copies of $ \cC_{-N+1}$, followed by $n$ copies of $\cC_{-N+2}$, and so on, ending with $n$ copies of $\cC_{N-1}$.
	\item When $n<0$, the truncated region is given by
	\[\bigoplus_{\substack{t\in N+\frac{1}{2}+\mathbb{Z} \\-N<t<N }}F_{-\frac{1}{2},t} \oplus \bigoplus_{\substack{(s,t)\in \bH(P)\\n<s<0 \\ -N<t<N-1}} J_{s,t} \oplus \bigoplus_{\substack{(s,t)\in \bH(P) \\ n-1<s<0 \\ -N<t<N-1}} M_{s,t},\] such that the maps $\Phi^{K}:F_{-\frac{1}{2},t} \to M_{-\frac{1}{2},t}$ and $\Phi^{-K}:F_{-\frac{1}{2},t} \to M_{-\frac{1}{2}+n,t-1}$ are both the identity map $\bI:\cS \to \cS$. An example of the truncation when $n=-1$ and $N = \frac{3}{2}$ is drawn in the following diagram, where red arrows are homotopy equivalences.
\begin{figure}[h]
		\hspace*{10mm}
	\adjustbox{scale = 0.8}{
		\begin{tikzcd}[labels={description}]
		& &    & & F_{-\frac{1}{2},-1}\arrow[d,red,"\Phi^{K}"] \\
		& & M_{-\frac{3}{2},-1} & J_{-\frac{1}{2},-1}\arrow[l,"\Phi^{-\mu}"]\arrow[r,"\Phi^{\mu}"]& M_{-\frac{1}{2},-1} \\
		& & F_{-\frac{1}{2},0} \arrow[u,red,"\Phi^{-K}"]\arrow[d,red,"\phi^{K}"]& & \\
		M_{-\frac{3}{2},0} & J_{-\frac{1}{2},0}\arrow[l,"\Phi^{-\mu}"] \arrow[r,"\Phi^{\mu}"]& M_{-\frac{1}{2},0}& &\\
		 F_{-\frac{1}{2},1}\arrow[u,red,"\Phi^{-K}"] & & & & 
	\end{tikzcd} 	\hspace*{-20mm} = 	\hspace*{-10mm} \begin{tikzcd}[labels={description}]
		& &    & & \cS\arrow[d,red,"\bI"] \\
		& &\cS & \cC_{-\frac{1}{2}}\arrow[l,"L_{\tau}"]\arrow[r,"L_{\sigma}"]& \cS \\
		& & \cS \arrow[u,red,"\bI"]\arrow[d,red,"\bI"]& & \\
		\cS & \cC_{\frac{1}{2}}\arrow[l,"L_{\tau}"] \arrow[r,"L_{\sigma}"]& \cS & &\\
		\cS\arrow[u,red,"\bI"] & & & & 
	\end{tikzcd}
}
\end{figure}

This is homotopy equivalent to 
\[ \begin{tikzcd}[labels={description}]
	\cC_{\frac{1}{2}} \arrow[r, "L_{\sigma}"] &\cS & \cC_{-\frac{1}{2}} \arrow[l,"L_{\tau}"].
\end{tikzcd} \]
	
	 In general, $\cCFK(P(U,n))$ is homotopy equivalent to the following complex:
	\[\begin{tikzcd}[labels={description}]
 \mathcal{C}''_0  \arrow[r,"L_{\sigma}"] &\cS & \mathcal{C}''_1 \arrow[l,"L_{\tau}"] \arrow[r,"L_{\sigma}"]&\cdots & \mathcal{C}''_{-(2N-1)n-1} \arrow[l,"L_{\tau}"]  ,
	\end{tikzcd}\]
where \[\cC''_i = \cC_{N-1-\lfloor \frac{i}{-n}\rfloor}. \] In words, $\left\{\mathcal{C}''_i\right\}_{i=0}^{-(2N-1)n-1}$ consists of $-n$ copies of $ \cC_{N-1}$, followed by $-n$ copies of $\cC_{N-2}$, and so on, ending with $-n$ copies of $\cC_{-N+1}$.
\end{enumerate}

\subsection{The $(2,2n+1)$-cable of the right hand trefoil}

In this example, we perform the computation of $\cCFK$ for the $(2,2n+1)$-cables of the right hand trefoil $T_{2,3}$ in detail to illustrate how the algorithm works.
	
Recall that the $(q,qn+1)$-cable of a knot $K$ is obtained when the link $L_P$ is the torus link $T(2,2q)$ and the input knot has framing $n$. The $DA$-bimodule structure of $T(2,2q)$ is described in Section \ref{sec:T(2,2q)}. Here, we redraw the special case when $q=2$.
	
	\[
	\begin{tikzcd}[labels=description, column sep=1.2cm, row sep=0cm]
		\cdots
		\ar[r, bend left, "Z|U"]
		&\xs_{-2}
		\ar[r, bend left, "Z|U"]
		\ar[l, bend left, "W|1"]
		\ar[d, "\substack{\sigma|UW^2 \\ \tau|1}"]
		& \xs_{-1}
		\ar[r, bend left, "Z|W"]
		\ar[l, bend left, "W|1"]
		\ar[d, "\substack{\sigma|W^2 \\ \tau|1}"]
		&\xs_0
		\ar[r, "Z|W", bend left]
		\ar[l, "W|Z", bend left]
		\ar[d, "\substack{\sigma|W  \\ \tau|Z}"]
		&\xs_1
		\ar[r, bend left, "Z|1"]
		\ar[l, bend left, "W|Z"]
		\ar[d, "\substack{\sigma|1 \\ \tau|Z^2}"]
		&\xs_2
		\ar[r, bend left, "Z|1"]
		\ar[l, bend left,"W|U"]
		\ar[d, "\substack{\sigma|1\\ \tau|UZ^2}"]
		& \cdots
		\ar[l, bend left,"W|U"]
		\\[2cm]
		\cdots
		\ar[r, bend left, "T|1"]
		&T^{-2}\cS
		\ar[r, bend left, "T|1"]
		\ar[l, bend left, "T^{-1}|1"]
		\ar[loop below,looseness=20, "U|U"]
		& T^{-1}\cS
		\ar[r, bend left, "T|1"]
		\ar[l, bend left, "T^{-1}|1"]
		\ar[loop below,looseness=20, "U|U"]
		& T^0\cS
		\ar[r, "T|1", bend left]
		\ar[l, "T^{-1}|1", bend left]
		\ar[loop below,looseness=20, "U|U"]
		&T^1\cS
		\ar[r, bend left, "T|1"]
		\ar[l, bend left,"T^{-1}|1"]
		\ar[loop below,looseness=20, "U|U"]
		&T^2\cS
		\ar[r, bend left, "T|1"]
		\ar[l, bend left,"T^{-1}|1"]
		\ar[loop below,looseness=20, "U|U"]
		&\cdots 
		\ar[l, "T^{-1}|1", bend left]
	\end{tikzcd}
	\]

	In the above diagram, each $\cC_{t_0} = \xs_{t_0}$, for $t_0 \in \Z+\lk(\mu,P)/2 = \mathbb{Z}$, and $\cS$ is a staircase with one generator and trivial differential.

	The maps \[L_{\sigma}: \cC_{t_0} \to \cS \text{   and   }L_{\tau}:\cC_{-t_0} \to \cS \] are homotopy equivalences for $t_0 \ge 1$, so $N=1$.
		
	The surgery complex $\cX_n(S^3,K)^{\cK}$ for the $n$-framed right-hand trefoil takes the following form, and the Seifert genus of $K$ is $g=1$.
	\begin{figure}[h]
		\begin{tikzcd}[column sep=large]
			& \ve{a} \arrow[dl,"Z"description]\arrow[dr,"W"description] & \\
			\ve{b} \arrow[dr," UT^{-2}\sigma + T^{n-2}\tau"description]& & \ve{c} \arrow[dl, "\sigma+UT^n\tau"description]\\
			& \ve{y} &
		\end{tikzcd}
	\end{figure}
	
	By the truncation procedure described in Section \ref{sec:truncation}, the non-trivial rows of interest are $t=-\frac{1}{2}$ and $t= \frac{1}{2}$. 
	
	For $t= -\frac{1}{2}$, $E_{s,t}$ is obtained by taking the first Alexander grading $s$ part of the box tensor product of $\cCFK(K)$ with  $\scE_{*,-\frac{1}{2}}$, which is 
	\[
	\begin{tikzcd}[labels=description, column sep=.5cm, row sep=0cm]
		\scE_{*,-\frac{1}{2}}:=\cdots
		&[-.7cm]W^2_{00}|\cC_{-1}
		\ar[r,bend left,out=45,in=135, "Z|U"]
		& W_{00}|\cC_{-1}
		\ar[r, bend left,out=45,in=135, "Z|U"]
		\ar[l, bend left,out=45,in=135, "W|1"]
		&[0.3cm]W^0_{00}|\cC_{-1}
		\ar[r, bend left,out=45,in=135, "Z|W"]
		\ar[l, bend left,out=45,in=135, "W|1"]
		&[0.5cm]Z^0_{00}|\cC_{0}
		\ar[r, bend left,out=45,in=135, "Z|1"]
		\ar[l, bend left,out=45,in=135, "W|Z"]
		&[0.5cm]Z_{00}|\cC_{0}
		\ar[r, bend left,out=45,in=135, "Z|1"]
		\ar[l, bend left,out=45,in=135, "W|U"]
		& [0.2cm]Z^2_{00}|\cC_{0}
		\ar[l, bend left,out=45,in=135, "W|U"]
		&[-.7cm] \cdots
	\end{tikzcd}.
	\]
Recall that $W^i_{00}|\cC_{-1}$ has the first Alexander grading $-1/2-i$, and $Z^i_{00}|\cC_{0}$ has the first Alexander grading $1/2+i$, and the Alexander grading on $ \cCFK(K)$ is such that \[A(\ve{a})=0,\,\,\,\,A(\ve{b}) = -1, \,\,\,\, A(\ve{c}) =1.\] Thus, $E_{s,-\frac{1}{2}}$ takes the form as in Figure \ref{fig:E for (2,2n+1)-cabling of RHT)}.

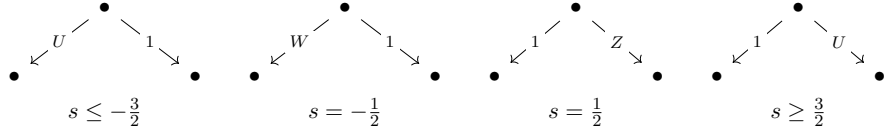
\begin{figure}[h]
	\adjustbox{scale = 0.8}{
\begin{tikzcd}[column sep=small]
	 & \bullet \arrow[dl,"U"description]\arrow[dr,"1"description] & & &\bullet \arrow[dl,"W"description]\arrow[dr,"1"description] & & & \bullet \arrow[dl,"1"description]\arrow[dr,"Z"description] & & & \bullet \arrow[dl,"1"description]\arrow[dr,"U"description]\\
	 \bullet & & \bullet & \bullet & & \bullet & \bullet & & \bullet & \bullet & & \bullet\\[-0.7cm]
      & [-0.7cm]s\leq -\frac{3}{2} &  &  & [-0.7cm]s=-\frac{1}{2} &  &   & [-0.7cm]s=\frac{1}{2}  &  &   &  [-0.7cm]s\ge \frac{3}{2} & \\
\end{tikzcd}
}
\caption{$E_{s,-\frac{1}{2}}$ for different values of $s$. Each $\bullet$ is a single generator.}
\label{fig:E for (2,2n+1)-cabling of RHT)}
\end{figure}

Similarly, $F_{s,-\frac{1}{2}}$ is obtained by taking the first Alexander grading $s$ part of the box tensor product of $\cCFK(K)$ with $\scF_{*,-\frac{1}{2}}$, which is  
\[
\begin{tikzcd}[labels=description, column sep=.6cm, row sep=0cm]
		\scF_{*,-\frac{1}{2}}:=\cdots
	&[-.8cm] W_{01}^2|T^{0} \cS
	\ar[r,bend left,out=45,in=135, "W|U"]
	& W_{01}^1|T^{0}\cS
	\ar[r, bend left,out=45,in=135, "Z|U"]
	\ar[l, bend left,out=45,in=135, "W|1"]
	&1_{01}|T^{0}\cS
	\ar[r, bend left,out=45,in=135, "Z|1"]
	\ar[l, bend left,out=45,in=135, "W|1"]
	&Z_{01}^1|T^{0}\cS
	\ar[r, bend left,out=45,in=135, "Z|1"]
	\ar[l, bend left,out=45,in=135, "W|U"]
	&Z_{01}^2|T^{0}\cS
	\ar[l, bend left,out=45,in=135, "W|U"]
	&[-.8cm]\cdots ,
\end{tikzcd}
\]

The first component of the Alexander grading of $W_{01}^i|T^{0}\cS$ is $-\frac{1}{2}-i$, while the first component of the Alexander grading of $Z_{01}^i|T^{0}\cS$ is $-\frac{1}{2}+i$, for $i\ge 0$. Therefore, $F_{s,-\frac{1}{2}} = F_{s}$ takes the form as in Figure \ref{fig:F for (2,2n+1)-cabling of RHT)}. (By the definition, $F_{s,t} = F_{s}$, which is independent of the $t$-coordinate.)
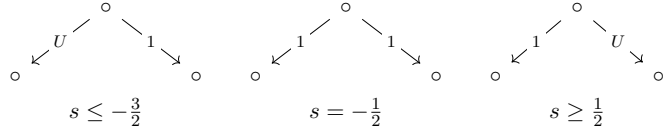
\begin{figure}[h]
	\adjustbox{scale = 0.8}{
		\begin{tikzcd}[column sep=small]
			& \circ \arrow[dl,"U"description]\arrow[dr,"1"description] & & &\circ \arrow[dl,"1"description]\arrow[dr,"1"description] & & & \circ \arrow[dl,"1"description]\arrow[dr,"U"description]&\\
		\circ & & \circ & \circ & & \circ & \circ & & \circ\\[-0.7cm]
			& [-0.7cm]s\leq -\frac{3}{2} &  &  & [-0.7cm]s=-\frac{1}{2} &  &   & [-0.7cm]s\ge\frac{1}{2}  & \\
		\end{tikzcd}
	}
	\caption{$F_{s}$ for different values of $s$. Each $\circ$ is a single generator.}
	\label{fig:F for (2,2n+1)-cabling of RHT)}
\end{figure}

The rightward horizontal map $\Phi^{\mu}: E_{s,-\frac{1}{2}}\to F_{s,-\frac{1}{2}}$ is given by tensoring $\cX_n(S^3,K)^{\cK}$ with the mapping cone of $f^\mu$. We note that in this case $f^\mu$ acts by multiplication by $W^2$ from $\cC_{-1}$ to $\cS$, and multiplication by $W$ from $\cC_{0}$ to $\cS$.  Note also that $h_{Z,W}, h_{W,Z}, h_{\sigma,Z}$ and $h_{\sigma,W}$ all vanish in this case. The map $f^\mu$ is shown below
\[
 \begin{tikzcd}[labels=description, column sep=0.9cm, row sep=1.7cm]
	\scE_{*,-\frac{1}{2}} \ar[d, "f^{\mu}="]:\,\,\cdots
	&[-1cm]\cC_{-1}
	\ar[r, bend left,out=45,in=135, "Z|U"]
	\ar[d,"W^2",  pos=.4]
	& \cC_{-1}
	\ar[r, bend left,out=45,in=135, "Z|U"]
	\ar[l, bend left,out=45,in=135, "W|1"]
	\ar[d,"W^2",  pos=.4]
	&\cC_{-1}
	\ar[r, bend left,out=45,in=135, "Z|W"]
	\ar[l, bend left,out=45,in=135, "W|1"]
	\ar[d,"W^2",  pos=.4]
	& \cC_{0}
	\ar[r, bend left,out=45,in=135, "Z|1"]
	\ar[l, bend left=15,out=45,in=135, "W|Z"]
	\ar[d,"W", pos=.4]
	& \cC_{0}
	\ar[r, bend left,out=45,in=135, "Z|1"]
	\ar[l, bend left,out=45,in=135, "W|U"]
	\ar[d,"W",  pos=.4]
	& \cC_{0}
	\ar[l, bend left,out=45,in=135, "W|U"]
	\ar[d,"W", pos=.4]
	&[-.8cm] \cdots
	\\
	\scF_{*,-\frac{1}{2}}:\,\,\cdots
	&[-1cm] \cS
	\ar[r, bend left,out=45,in=135, "Z|U"]
	& \cS
	\ar[r, bend left,out=45,in=135, "Z|U"]
	\ar[l, bend left,out=45,in=135, "W|1"]
	&\cS
	\ar[r, bend left,out=45,in=135, "Z|1"]
	\ar[l, bend left,out=45,in=135, "W|1"]
	&\cS
	\ar[r, bend left,out=45,in=135, "Z|1"]
	\ar[l, bend left,out=45,in=135, "W|U"]
	&\cS
	\ar[r, bend left,out=45,in=135, "Z|1"]
	\ar[l, bend left,out=45,in=135, "W|U"]
	&\cS
	\ar[l, bend left,out=45,in=135, "W|U"]
	&\cdots 
\end{tikzcd}
\]

Similarly, the leftward horizontal map $\Phi^{-\mu}: E_{s,-\frac{1}{2}}\to F_{s-1,-\frac{1}{2}}$ is giving by tensoring $\cX_n(S^3,K)^{\cK}$ with the mapping cone of $L_\tau$, which is multiplication by $1$ from $\cC_{-1}$ to $\cS$, and multiplication by $Z$ from $\cC_{0}$ to $\cS$. 

From the truncation procedure, the region that will appear in the truncated complex is 
\[\begin{tikzcd}[column sep=1.3cm, labels=description]
	F_{-\frac{3}{2},-\frac{1}{2}} & E_{-\frac{1}{2},-\frac{1}{2}} \arrow[l,"\Phi^{-\mu}"] \arrow[r,"\Phi^{\mu}"] & F_{-\frac{1}{2},-\frac{1}{2}} & E_{\frac{1}{2},-\frac{1}{2}} \arrow[l,"\Phi^{-\mu}"]
\end{tikzcd},
\]
which is \[
\adjustbox{scale = 0.9}{
\begin{tikzcd}[column sep =small, labels=description]
	 & \circ\arrow[dl,"U"] \arrow[dr, "1"]& & & \bullet \arrow[dl,"W"]\arrow[dr,"1"] \arrow[lll, bend right,"1"]\arrow[rrr,bend left, "W^2"]& & & \circ \arrow[dl,"1"]\arrow[dr,"1"]& & & \bullet \arrow[dl,"1"]\arrow[dr,"Z"]\arrow[lll, bend right,"Z"]& \\
	 \circ & &\circ & \bullet \arrow[lll,bend left=60,pos=0.55,"Z"] \arrow[rrr,bend right=60,pos = 0.55, "W"] & &\bullet \arrow[lll,bend left=65, pos=0.55, "1"] \arrow[rrr,bend right=60,pos =0.4,"W^2"]& \circ & &\circ & \bullet \arrow[lll,bend left=65, pos=0.45,"Z"]& &\bullet \arrow[lll,bend left=60,pos=0.45,"1"]\\
\end{tikzcd}.
}
\]

Similarly, when $t=\frac{1}{2}$, the region that will appear in the truncated complex is
\[\begin{tikzcd}[column sep=1.3cm, labels=description]
 E_{-\frac{1}{2},\frac{1}{2}}  \arrow[r,"\Phi^{\mu}"] & F_{-\frac{1}{2},\frac{1}{2}} & E_{\frac{1}{2},\frac{1}{2}} \arrow[l,"\Phi^{-\mu}"]
 \arrow[r,"\Phi^{\mu}"] &
 F_{\frac{1}{2},\frac{1}{2}} 
\end{tikzcd}.
\] 
Using $\cC_{0}$ and $\cC_{1}$ instead of $\cC_{-1}$ and $\cC_{0}$, one can compute that it equals to the following:

\[
\adjustbox{scale = 0.9}{
	\begin{tikzcd}[column sep =small, labels=description]
        \bullet \arrow[rrr,bend left=60,pos = 0.4, "1"] & &\bullet \arrow[rrr,bend left=60,pos =0.4,"W"]& \circ & &\circ & \bullet \arrow[lll,bend right=60, pos=0.4,"Z^2"] \arrow[rrr, bend left =60, pos = 0.6, "1"]& &\bullet \arrow[lll,bend right=60,pos=0.6,"Z"] \arrow[rrr,bend left=60,pos=0.6, "W"] &\circ & & \circ\\
        & \bullet \arrow[ul,"W"]\arrow[ur,"1"] \arrow[rrr,bend right, "W"]& & & \circ \arrow[ul,"1"]\arrow[ur,"1"]& & & \bullet \arrow[ul,"1"]\arrow[ur,"Z"]\arrow[lll, bend left,"Z^2"]\arrow[rrr, bend right, "1"] & & & \circ \arrow[ul,"1"]\arrow[ur,"U"]& \\
	\end{tikzcd}.
}
\]

The complexes $J_{s,t}$ and $M_{s,t}$ are simpler. $J_{s,t} = J_{t}$ is obtained by the tensor $B(K) \boxtimes \cC_{t+\frac{1}{2}}$. For knots $K$ in $S^3$, $B(K)$ has a single generator. In this example, $\cC_{t+\frac{1}{2}}$ also has a single generator for all $t$, so $J_{s,t}$ has a single generator and we will denote it by $\star$. $M_{s,t} = M$ is obtained by the tensor product $B(K) \boxtimes \cS$, which again has a single generator, and we will denote it by $\ast$. From the truncation, we only need $J_{s,t}$ and $M_{s,t}$ for $t= -\frac{1}{2}$.

The map $\Phi^{\mu}: J_{s,-\frac{1}{2}} \to M_{s,-\frac{1}{2}}$ is given by $\bI \boxtimes L_{\sigma}$, where $L_{\sigma}: \cC_{0} \to\cS$ is multiplication by $W$. Symmetrically, the map $\Phi^{-\mu}:J_{s,-\frac{1}{2}}\to M_{s-1,-\frac{1}{2}}$ is given by $\bI \boxtimes L_{\tau}$, where $L_{\tau}: \cC_{0} \to\cS$ is multiplication by $Z$.

The map $\Phi^{K}: E_{s,-\frac{1}{2}} \to J_{s,-\frac{1}{2}}$ is schematically described by the diagram
		\[\Phi^{K}=
		\begin{tikzcd}[row sep=.3cm]
			\cX_n(K)\cdot \ve{I}_0\ar[d]& [-1cm]\boxtimes &[-1cm]\scE_{*,-\frac{1}{2}} \ar[dd] &[-1.2cm]\\
			\delta_1^1\ar[drr, "\sigma",labels=description] \ar[dd]
			\\
			& & (f^K)_2^1\ar[d]\ar[dr] &\\
			\cX_n(K)\cdot \ve{I}_1& [-1cm]\boxtimes & [-1cm]\scJ_{*,-\frac{1}{2}} & \otimes\,\,\,\bF[W,Z]
		\end{tikzcd},\]
 where $\cX_n(K)\cdot \ve{I}_0$ is $\cCFK(K)$, $\cX_n(K)\cdot \ve{I}_1$ has a single generator, and the action $(f^K)^1_2(\sigma,-)$ on $\scE_{*,-\frac{1}{2}}$ is given by 
\[
\begin{tikzcd}[labels=description, {column sep=2cm,between origins}, row sep=.4cm]
	\scE_{*,-\frac{1}{2}} \ar[d, "f^K"]:\,\,\cdots
	&[-0.7cm]\cC_{-1}
	\ar[r,bend left,out=45,in=135, "Z|U"]
	\ar[d,"\sigma|U^2W"]
	& \cC_{-1}
	\ar[r,bend left,out=45,in=135,  "Z|U"]
	\ar[l,bend left,out=45,in=135, "W|1"]
	\ar[d,"\sigma| UW"]
	&\cC_{-1}
	\ar[r,bend left,out=45,in=135,  "Z|W"]
	\ar[l,bend left,out=45,in=135,  "W|1"]
	\ar[d,"\sigma|W"]
	& \cC_{0}
	\ar[r,bend left,out=45,in=135,  "Z|1"]
	\ar[l,bend left,out=45,in=135, "W|Z"]
	\ar[d,"\sigma|1"]
	& \cC_{0}
	\ar[r,bend left,out=45,in=135,  "Z|1"]
	\ar[l,bend left,out=45,in=135,  "W|U"]
	\ar[d,"\sigma|1"]
	& \cC_{0}
	\ar[l,bend left,out=45,in=135,  "W|U"]
	\ar[d,"\sigma|1"]
	&[-1.2cm] \cdots
	\\[1cm]
	\scJ_{*,-\frac{1}{2}}:\,\,\cdots
	& \cC_{0}
	\ar[r,bend left,out=45,in=135, "T|1"]
	\ar[loop below,looseness=20, "U|U"]
	& \cC_{0}
	\ar[r,bend left,out=45,in=135,  "T|1"]
	\ar[l,bend left,out=45,in=135, "T^{-1}|1"]
	\ar[loop below,looseness=20, "U|U"]
	&\cC_{0}
	\ar[r,bend left,out=45,in=135, "T|1"]
	\ar[l,bend left,out=45,in=135,  "T^{-1}|1"]
	\ar[loop below,looseness=20, "U|U"]
	&\cC_{0}
	\ar[r,bend left,out=45,in=135, "T|1"]
	\ar[l,bend left,out=45,in=135,  "T^{-1}|1"]
	\ar[loop below,looseness=20, "U|U"]
	&\cC_{0}
	\ar[r,bend left,out=45,in=135,  "T|1"]
	\ar[l,bend left,out=45,in=135,  "T^{-1}|1"]
	\ar[loop below,looseness=20, "U|U"]
	&\cC_{0}
	\ar[l,bend left,out=45,in=135,  "T^{-1}|1"]
	\ar[loop below,looseness=20, "U|U"]	
	&\cdots 
\end{tikzcd}.
\]
Note that $h_{Z,W}$ is trivial in this example. Therefore, $\Phi^K:E_{s,-\frac{1}{2}}\to J_{s,-\frac{1}{2}}$ is the following:

\begin{figure}[h]
	\adjustbox{scale = 0.8}{
		\begin{tikzcd}[labels=description,column sep=small]
			& \bullet \arrow[dl,"U"]\arrow[dr,"1"] & & &\bullet \arrow[dl,"W"]\arrow[dr,"1"] & & & \bullet \arrow[dl,"1"]\arrow[dr,"Z"] & & & \bullet \arrow[dl,"1"]\arrow[dr,"U"]\\
			\bullet \arrow[dr,"U^{-s-1/2}W"]& & \bullet \arrow[dl,"U^{-s+1/2}W"]& \bullet \arrow[dr,"U"]& & \bullet \arrow[dl,"UW"]& \bullet \arrow[dr,"U"]& & \bullet \arrow[dl,"W"] & \bullet \arrow[dr,"U"]& & \bullet\arrow[dl,"1"]\\[0.5cm]
			& \star & & & \star & & & \star & & & \star &\\[-0.7cm]
			& [-0.7cm]s\leq -\frac{3}{2} &  &  & [-0.7cm]s=-\frac{1}{2} &  &   & [-0.7cm]s=\frac{1}{2}  &  &   &  [-0.7cm]s\ge \frac{3}{2} & \\
		\end{tikzcd}
	}.
	\caption{$\Phi^K:E_{s,-\frac{1}{2}}\to J_{s,-\frac{1}{2}}$ for different values of $s$. }
\end{figure}
The map $\Phi^K:F_{s,-\frac{1}{2}} \to M_{s,-\frac{1}{2}}$ is obtained similarly by replacing $\delta^1_2(\sigma,-):\scE_{*,-\frac{1}{2}}\to \scJ_{*,-\frac{1}{2}}$ with $\delta^1_2(\sigma,-):\scF_{*,-\frac{1}{2}}\to \scM_{*,-\frac{1}{2}}$, which is
\[
\begin{tikzcd}[labels=description, column sep=1.2cm, row sep=0cm]
	\scF_{*,-\frac{1}{2}}:\,\,\cdots
	&[-1.2cm] \cS
	\ar[r,bend left,out=45,in=135, "W|U"]
	\ar[d, "\sigma|U^2"]
	& \cS
	\ar[r,out=45,in=135, "Z|U"]
	\ar[l, bend left,out=45,in=135, "W|1"]
	\ar[d, "\sigma|U"]
	&\cS
	\ar[r, bend left,out=45,in=135, "Z|1"]
	\ar[l, bend left,out=45,in=135, "W|1"]
	\ar[d, "\sigma|1"]
	&\cS
	\ar[r, bend left,out=45,in=135, "Z|1"]
	\ar[l, bend left,out=45,in=135, "W|U"]
	\ar[d, "\sigma|1"]
	&\cS
	\ar[l, bend left,out=45,in=135, "W|U"]
	\ar[d, "\sigma|1"]
	&[-1.2cm]\cdots
	\\[1.5cm]
	\scM_{*,-\frac{1}{2}}:\,\,\cdots&[-1.3cm] \cS
	\ar[r, bend left,out=45,in=135, "T|1"]
	\ar[loop below,looseness=20, "U|U"]
	& \cS
	\ar[r, bend left,out=45,in=135, "T|1"]
	\ar[l, bend left,out=45,in=135,  "T^{-1}|1"]
	\ar[loop below,looseness=20, "U|U"]
	&\cS
	\ar[r, bend left,out=45,in=135, "T|1"]
	\ar[l, bend left,out=45,in=135, "T^{-1}|1"]
	\ar[loop below,looseness=20, "U|U"]
	&\cS
	\ar[r, bend left,out=45,in=135, "T|1"]
	\ar[l, bend left,out=45,in=135, "T^{-1}|1"]
	\ar[loop below,looseness=20, "U|U"]
	&\cS
	\ar[l, bend left,out=45,in=135, "T^{-1}|1"]
	\ar[loop below,looseness=20, "U|U"]
	&\cdots 
\end{tikzcd}.
\]
So $\Phi^{K}:F_{s,-\frac{1}{2}}\to M_{s,-\frac{1}{2}}$ is the following:

\begin{figure}[h]
	\adjustbox{scale = 0.8}{
		\begin{tikzcd}[labels = description,column sep=small]
			& \circ \arrow[dl,"U"]\arrow[dr,"1"] & & &\circ \arrow[dl,"1"]\arrow[dr,"1"] & & & \circ \arrow[dl,"1"]\arrow[dr,"U"]&\\
			\circ \arrow[dr,"U^{-s-1/2}"]& & \circ \arrow[dl,"U^{-s+1/2}"]& \circ \arrow[dr,"U"]& & \circ \arrow[dl,"U"]& \circ \arrow[dr,"U"]& & \circ\arrow[dl,"1"]\\[0.5cm]
			& \ast & & & \ast & & & \ast\\[-0.7cm]
			& [-0.7cm]s\leq -\frac{3}{2} &  &  & [-0.7cm]s=-\frac{1}{2} &  &   & [-0.7cm]s\ge\frac{1}{2}  & \\
		\end{tikzcd}
	}.
	\caption{$\Phi^{-K}: F_{s,\frac{1}{2}}\to M_{s+n,-\frac{1}{2}}$ for different values of $s$.}
\end{figure}

The map $\Phi^{-K}: E_{s,\frac{1}{2}} \to J_{s+n,-\frac{1}{2}}$ is obtained by tensoring the identity map on $\cX_n(K)$ with the morphism $f^{-K}$. In our present case, this is encoded schematically by the following diagram:
\[
\Phi^{-K}=
\begin{tikzcd}[row sep=.3cm]
	\cX_n(K)\cdot \ve{I}_0\ar[d]& [-1cm]\boxtimes &[-1cm]\scE_{\frac{1}{2}} \ar[dd] &[-1.2cm]\\
	\delta_1^1\ar[drr, "\tau",labels=description] \ar[dd]
	\\
	& & (f^{-K})_2^1\ar[d]\ar[dr] &\\
	\cX_n(K)\cdot \ve{I}_1& [-1cm]\boxtimes & [-1cm]\scJ_{-\frac{1}{2}} & \otimes\,\,\,\bF[W,Z]
\end{tikzcd}
\]
We recall the map $f^{-K}$ is given by the diagram 
\[\begin{tikzcd}[labels=description, {column sep=2cm,between origins}, row sep=.4cm]
	\scE_{*,\frac{1}{2}} \ar[d, "f^{-K}"]:\,\,\cdots
	&[-0.7cm]\cC_{0}
	\ar[r,bend left,out=45,in=135, "Z|U"]
	\ar[d,"\tau|1"]
	& \cC_{0}
	\ar[r,bend left,out=45,in=135,  "Z|U"]
	\ar[l,bend left,out=45,in=135, "W|1"]
	\ar[d,"\tau| 1"]
	&\cC_{0}
	\ar[r,bend left,out=45,in=135,  "Z|W"]
	\ar[l,bend left,out=45,in=135,  "W|1"]
	\ar[d,"\tau|1"]
	& \cC_{1}
	\ar[r,bend left,out=45,in=135,  "Z|1"]
	\ar[l,bend left,out=45,in=135, "W|Z"]
	\ar[d,"\tau|Z"]
	& \cC_{1}
	\ar[r,bend left,out=45,in=135,  "Z|1"]
	\ar[l,bend left,out=45,in=135,  "W|U"]
	\ar[d,"\tau|UZ"]
	& \cC_{1}
	\ar[l,bend left,out=45,in=135,  "W|U"]
	\ar[d,"\tau|U^2Z"]
	&[-1.2cm] \cdots
	\\[1cm]
	\scJ_{*,-\frac{1}{2}}:\,\,\cdots
	& \cC_{0}
	\ar[r,bend left,out=45,in=135, "T|1"]
	\ar[loop below,looseness=20, "U|U"]
	& \cC_{0}
	\ar[r,bend left,out=45,in=135,  "T|1"]
	\ar[l,bend left,out=45,in=135, "T^{-1}|1"]
	\ar[loop below,looseness=20, "U|U"]
	&\cC_{0}
	\ar[r,bend left,out=45,in=135, "T|1"]
	\ar[l,bend left,out=45,in=135,  "T^{-1}|1"]
	\ar[loop below,looseness=20, "U|U"]
	&\cC_{0}
	\ar[r,bend left,out=45,in=135, "T|1"]
	\ar[l,bend left,out=45,in=135,  "T^{-1}|1"]
	\ar[loop below,looseness=20, "U|U"]
	&\cC_{0}
	\ar[r,bend left,out=45,in=135,  "T|1"]
	\ar[l,bend left,out=45,in=135,  "T^{-1}|1"]
	\ar[loop below,looseness=20, "U|U"]
	&\cC_{0}
	\ar[l,bend left,out=45,in=135,  "T^{-1}|1"]
	\ar[loop below,looseness=20, "U|U"]	
	&\cdots 
\end{tikzcd}.
\]

So $\Phi^{-K}: E_{s,\frac{1}{2}}\to J_{s+n,-\frac{1}{2}}$ is the following:
\begin{figure}[h]
	\adjustbox{scale = 0.8}{
		\begin{tikzcd}[labels=description,column sep=small]
			& \star & & & \star & & & \star & & & \star &\\[0.6cm]			
			\bullet \arrow[ur,"1"]& & \bullet \arrow[ul,"U"]& \bullet \arrow[ur,"Z"]& & \bullet \arrow[ul,"U"]& \bullet \arrow[ur,"UZ"]& & \bullet \arrow[ul,"U"] & \bullet \arrow[ur,pos=0.4,"U^{s+1/2}Z"]& & \bullet\arrow[ul,pos=0.4,"U^{s-1/2}Z"]\\
			& \bullet \arrow[ul,"U"]\arrow[ur,"1"] & & &\bullet \arrow[ul,"W"]\arrow[ur,"1"] & & & \bullet \arrow[ul,"1"]\arrow[ur,"Z"] & & & \bullet \arrow[ul,"1"]\arrow[ur,"U"]\\[-0.7cm]
			& [-0.7cm]s\leq -\frac{3}{2} &  &  & [-0.7cm]s=-\frac{1}{2} &  &   & [-0.7cm]s=\frac{1}{2}  &  &   &  [-0.7cm]s\ge \frac{3}{2} & \\
		\end{tikzcd}
	}.
	\caption{$\Phi^{-K}:E_{s,\frac{1}{2}}\to J_{s+n,-\frac{1}{2}}$ for different values of $s$. }
\end{figure}

The maps $\Phi^{-K}:F_{s,\frac{1}{2}} \to M_{s+n,-\frac{1}{2}}$ is obtained by using $\delta_2^1(\tau,-):\scF_{*,\frac{1}{2}}\to \scM_{*,-\frac{1}{2}}$, which is 
\[
\begin{tikzcd}[labels=description, column sep=1cm, row sep=0cm]
	\scF_{\frac{1}{2}}: \cdots
	&[-.8cm] \cS
	\ar[r,bend left,out=45,in=135, "W|U"]
	\ar[d, "\tau|1"]
	& \cS
	\ar[r,out=45,in=135, "Z|U"]
	\ar[l, bend left,out=45,in=135, "W|1"]
	\ar[d, "\tau|1"]
	&\cS
	\ar[r, bend left,out=45,in=135, "Z|1"]
	\ar[l, bend left,out=45,in=135, "W|1"]
	\ar[d, "\tau|1"]
	&\cS
	\ar[r, bend left,out=45,in=135, "Z|1"]
	\ar[l, bend left,out=45,in=135, "W|U"]
	\ar[d, "\tau|U"]
	&\cS
	\ar[l, bend left,out=45,in=135, "W|U"]
	\ar[d, "\tau|U^2"]
	&[-.8cm]\cdots
	\\[1.5cm]
	\scM_{-\frac{1}{2}}: \cdots&[-1.3cm] \cS
	\ar[r, bend left,out=45,in=135, "T|1"]
	\ar[loop below,looseness=20, "U|U"]
	& \cS
	\ar[r, bend left,out=45,in=135, "T|1"]
	\ar[l, bend left,out=45,in=135,  "T^{-1}|1"]
	\ar[loop below,looseness=20, "U|U"]
	&\cS
	\ar[r, bend left,out=45,in=135, "T|1"]
	\ar[l, bend left,out=45,in=135, "T^{-1}|1"]
	\ar[loop below,looseness=20, "U|U"]
	&\cS
	\ar[r, bend left,out=45,in=135, "T|1"]
	\ar[l, bend left,out=45,in=135, "T^{-1}|1"]
	\ar[loop below,looseness=20, "U|U"]
	&\cS
	\ar[l, bend left,out=45,in=135, "T^{-1}|1"]
	\ar[loop below,looseness=20, "U|U"]
	&[-1.3cm]\cdots 
\end{tikzcd}
\]
So $\Phi^{-K}:F_{s,\frac{1}{2}}\to M_{s+n,-\frac{1}{2}}$ is the following:
\begin{figure}[h]
	\adjustbox{scale = 0.8}{
		\begin{tikzcd}[labels = description,column sep=small]
			& \ast & & & \ast & & & \ast\\[0.5cm]
			\circ \arrow[ur,"1"]& & \circ \arrow[ul,"U"]& \circ \arrow[ur,"U"]& & \circ \arrow[ul,"U"]& \circ \arrow[ur,pos=0.4,"U^{s+3/2}"]& & \circ\arrow[ul,pos=0.4,"U^{s+1/2}"]\\
			& \circ \arrow[ul,"U"]\arrow[ur,"1"] & & &\circ \arrow[ul,"1"]\arrow[ur,"1"] & & & \circ \arrow[ul,"1"]\arrow[ur,"U"]&\\[-0.7cm]
			& [-0.7cm]s\leq -\frac{3}{2} &  &  & [-0.7cm]s=-\frac{1}{2} &  &   & [-0.7cm]s\ge\frac{1}{2}  & \\
		\end{tikzcd}
	}.
	\caption{$\Phi^K: F_{s,-\frac{1}{2}}\to M_{s,-\frac{1}{2}}$ for different values of $s$.}
\end{figure}

There is no length $2$ map in this example, since the actions of $\sigma$ and $\tau$ commute with the actions of $Z$ and $W$.

Putting everything together, we obtain the truncated complex for the $(2,2n+1)$-cable of the right hand trefoil as follows.

\begin{enumerate}
	\item When $n=-1$, the truncated complex for the (2,-1)-cable of the right hand trefoil takes the following form:
	\[
	\begin{tikzcd}[labels=description]
	 & 	F_{-\frac{3}{2},-\frac{1}{2}} \arrow[d,"\Phi^{K}"] &  E_{-\frac{1}{2},-\frac{1}{2}} \arrow[l,"\Phi^{-\mu}"] \arrow[r,"\Phi^{\mu}"] \arrow[d,"\Phi^K"]& F_{-\frac{1}{2},-\frac{1}{2}} \arrow[d,"\Phi^K"]& E_{\frac{1}{2},-\frac{1}{2}} \arrow[l,"\Phi^{-\mu}"] \arrow[d,"\Phi^K"] \\
		J_{-\frac{3}{2},-\frac{1}{2}} \arrow[r,"\Phi^{\mu}"]& M_{-\frac{3}{2},-\frac{1}{2}}& J_{-\frac{1}{2},-\frac{1}{2}} \arrow[r,"\Phi^{\mu}"] \arrow[l,"\Phi^{-\mu}"] & M_{-\frac{1}{2},-\frac{1}{2}} & J_{\frac{1}{2},-\frac{1}{2}}\arrow[l,"\Phi^{\mu}"]& \\
       E_{-\frac{1}{2},\frac{1}{2}} \arrow[r,"\Phi^{\mu}"] \arrow[u,"\Phi^{-K}"] & F_{-\frac{1}{2},\frac{1}{2}} \arrow[u,"\Phi^{-K}"] & E_{\frac{1}{2},\frac{1}{2}} \arrow[r,"\Phi^{\mu}"] \arrow[u,"\Phi^{-K}"]\arrow[l,"\Phi^{-\mu}"] & F_{\frac{1}{2},\frac{1}{2}} \arrow[u,"\Phi^{-K}"] &
	\end{tikzcd},\]
where we quotient out the contractible subcomplex $F_{-\frac{3}{2},\frac{1}{2}}\xrightarrow{\Phi^{-K}}M_{-\frac{5}{2},-\frac{1}{2}}$ and $F_{\frac{1}{2},-\frac{1}{2}}\xrightarrow{\Phi^{K}}M_{\frac{1}{2},-\frac{1}{2}}$ from the result in Proposition \ref{prop:truncation}; see Figure \ref{fig:simplified truncation when n=-1}.

The full complex is:
\[
\adjustbox{scale=0.9}{
\begin{tikzcd}[column sep =small, labels=description]
	& & & & \circ\arrow[dl,"U"] \arrow[dr, "1"]& & & \bullet \arrow[dl,"W"]\arrow[dr,"1"] \arrow[lll, bend right,"1"] \arrow[rrr,bend left, "W^2"]& & & \circ \arrow[dl,"1"]\arrow[dr,"1"]& & & \bullet \arrow[dl,"1"]\arrow[dr,"Z"]\arrow[lll, bend right,"Z"]& \\
	& & & \circ \arrow[dr,bend right=30,pos =0.6, "U"]& &\circ \arrow[dl,bend left=30,pos =0.6, "U^2"]& \bullet \arrow[lll,bend left=60,pos=0.6,"Z"] \arrow[dr,bend right =30, pos=0.6, "U"] \arrow[rrr,bend right=60,pos = 0.55, "W"] & &\bullet \arrow[lll,bend left=60, pos=0.5, "1"] \arrow[dl,bend left =30, pos=0.6, "UW"]\arrow[rrr,bend right=60,pos =0.45,"W^2"]& \circ \arrow[dr,bend right =30, pos=0.6,"U"]& &\circ \arrow[dl,bend left =30, pos=0.6,"U"]& \bullet \arrow[lll,bend left=60, pos=0.45,"Z"]\arrow[dr, bend right =30, pos=0.6, "U"]& &\bullet \arrow[lll,bend left=60,pos=0.4,"1"] \arrow[dl,bend left =30, pos=0.6,"W"]\\[1.5cm]
	& \star \arrow[rrr,"W"] & & & \ast & & & \star \arrow[lll,"Z"] \arrow[rrr,"W"] & & & \ast & & & \star \arrow[lll,"Z"] &\\[1.5cm]
     \bullet \arrow[rrr,bend left=60,pos = 0.4, "1"] \arrow[ur,bend left=30, pos=0.6,"Z"]& &\bullet \arrow[rrr,bend left=60,pos =0.45,"W"] \arrow[ul,bend right=30,pos=0.6, "U"]& \circ \arrow[ur,bend left=30,pos =0.6, "U"] & &\circ \arrow[ul,bend right=30,pos=0.6,"U"] & \bullet \arrow[lll,bend right=60, pos=0.44,"Z^2"] \arrow[rrr, bend left =60, pos = 0.55, "1"] \arrow[ur,bend left=30, pos =0.6, "UZ"]& &\bullet \arrow[lll,bend right=60,pos=0.53,"Z"] \arrow[rrr,bend left=60,pos=0.6, "W"] \arrow[ul,bend right=30,pos =0.6, "U"] &\circ \arrow[ur,bend left =30,pos =0.6, "U^2"]& & \circ \arrow[ul,bend right =30,pos =0.6, "U"]& & &\\
     & \bullet \arrow[ul,"W"]\arrow[ur,"1"] \arrow[rrr,bend right, "W"]& & & \circ \arrow[ul,"1"]\arrow[ur,"1"]& & & \bullet \arrow[ul,"1"]\arrow[ur,"Z"]\arrow[lll, bend left,"Z^2"]\arrow[rrr, bend right, "1"] & & & \circ \arrow[ul,"1"]\arrow[ur,"U"]& & & &\\
\end{tikzcd}.
}
\]

Reducing arrows labeled by $1$, it simplifies to 
\[
\begin{tikzcd}[row sep = 1cm, column sep =1cm,labels=description]
	& \circ \arrow[d,"U"]& \bullet \arrow[d,"U"]\arrow[l,"Z"]\arrow[drr,bend left, "W^2"] & &\\
	\star \arrow[r,"W"] & \ast & \star \arrow[l,"Z"]\arrow[r,"W"] & \ast & \star \arrow[l,"Z"]\\
	& & \bullet \arrow[ull,bend left, "Z^2"] \arrow[u,"U"]\arrow[r,"W"]& \circ \arrow[u,"U"]& 
\end{tikzcd} \hspace*{2mm} = \hspace*{2mm}\begin{tikzcd}
[row sep =  0.1mm, column sep =0.1mm, labels={description}]
\star \arrow[dd,shorten=-1mm,"Z"]&[1.2cm]\phantom{\circ} &[1.2cm]\bullet \arrow[ll,shorten=-1mm, "W^2"] \arrow[ddl,shorten=-1mm,pos =0.6,"U"] \arrow[dd,shorten=-1mm,"Z"] &[-2mm]\phantom{\bullet}\\[-2mm]
& \circ \arrow[dl,shorten=-1mm,"U"]& &\bullet\arrow[ll,shorten=-1mm,"W"] \arrow[dll,shorten=-1mm,pos = 0.52,"U"] \arrow[dd,shorten=-1mm,"Z^2"]\\[1.2cm]
\ast & \star \arrow[l,shorten=-1mm,"W"] \arrow[d,shorten=-1mm,"Z"]& \circ \arrow[dl,shorten=-1mm,"U",pos = 0.3]& \\[1.2cm]
& \ast & & \star\arrow[ll,shorten=-1mm,"W"]
\end{tikzcd}\hspace*{2mm}.\]
	\item When $n=0$,
	the truncated complex for the $(2,1)$-cable of the right hand trefoil takes the following form:\[
	\begin{tikzcd}[labels=description]
		F_{-\frac{3}{2},-\frac{1}{2}} &  E_{-\frac{1}{2},-\frac{1}{2}} \arrow[l,"\Phi^{-\mu}"] \arrow[r,"\Phi^{\mu}"] \arrow[d,"\Phi^K"]& F_{-\frac{1}{2},-\frac{1}{2}} \arrow[d,"\Phi^K"]& E_{\frac{1}{2},-\frac{1}{2}} \arrow[l,"\Phi^{-\mu}"] \arrow[d,"\Phi^K"]& \\
		& J_{-\frac{1}{2},-\frac{1}{2}} \arrow[r,"\Phi^{\mu}"] & M_{-\frac{1}{2},-\frac{1}{2}} & J_{\frac{1}{2},-\frac{1}{2}}\arrow[l,"\Phi^{\mu}"]& \\
    	&	E_{-\frac{1}{2},\frac{1}{2}} \arrow[r,"\Phi^{\mu}"] \arrow[u,"\Phi^{-K}"] & F_{-\frac{1}{2},\frac{1}{2}} \arrow[u,"\Phi^{-K}"] & E_{\frac{1}{2},\frac{1}{2}} \arrow[r,"\Phi^{\mu}"] \arrow[u,"\Phi^{-K}"]\arrow[l,"\Phi^{-\mu}"] & F_{\frac{1}{2},\frac{1}{2}}
	\end{tikzcd},\]
which is 
\[
\adjustbox{scale = 0.9}{
	\begin{tikzcd}[column sep =small, labels=description]
		& \circ\arrow[dl,"U"] \arrow[dr, "1"]& & & \bullet \arrow[dl,"W"]\arrow[dr,"1"] \arrow[lll, bend right,"1"] \arrow[rrr,bend left, "W^2"]& & & \circ \arrow[dl,"1"]\arrow[dr,"1"]& & & \bullet \arrow[dl,"1"]\arrow[dr,"Z"]\arrow[lll, bend right,"Z"]& & & & &\\
		\circ & &\circ & \bullet \arrow[lll,bend left=60,pos=0.5,"Z"] \arrow[dr,bend right =30, pos=0.6, "U"] \arrow[rrr,bend right=60,pos = 0.55, "W"] & &\bullet \arrow[lll,bend left=60, pos=0.5, "1"] \arrow[dl,bend left =30, pos=0.6, "UW"]\arrow[rrr,bend right=60,pos =0.45,"W^2"]& \circ \arrow[dr,bend right =30, pos=0.6,"U"]& &\circ \arrow[dl,bend left =30, pos=0.6,"U"]& \bullet \arrow[lll,bend left=60, pos=0.45,"Z"]\arrow[dr, bend right =30, pos=0.6, "U"]& &\bullet \arrow[lll,bend left=60,pos=0.4,"1"] \arrow[dl,bend left =30, pos=0.6,"W"]& & & &\\[1.5cm]
		& & & & \star \arrow[rrr,"W"]& & & \ast & & & \star\arrow[lll,"Z"] & & & &\\[1.5cm]
		 & & &\bullet \arrow[rrr,bend left=60,pos = 0.4, "1"] \arrow[ur,bend left=30, pos=0.6,"Z"]& &\bullet \arrow[rrr,bend left=60,pos =0.45,"W"] \arrow[ul,bend right=30,pos=0.6, "U"]& \circ \arrow[ur,bend left=30,pos =0.6, "U"] & &\circ \arrow[ul,bend right=30,pos=0.6,"U"] & \bullet \arrow[lll,bend right=60, pos=0.44,"Z^2"] \arrow[rrr, bend left =60, pos = 0.55, "1"] \arrow[ur,bend left=30, pos =0.6, "UZ"]& &\bullet \arrow[lll,bend right=60,pos=0.53,"Z"] \arrow[rrr,bend left=60,pos=0.5, "W"] \arrow[ul,bend right=30,pos =0.6, "U"] &\circ & & \circ\\
 	   	& & & & \bullet \arrow[ul,"W"]\arrow[ur,"1"] \arrow[rrr,bend right, "W"]& & & \circ \arrow[ul,"1"]\arrow[ur,"1"]& & & \bullet \arrow[ul,"1"]\arrow[ur,"Z"]\arrow[lll, bend left,"Z^2"]\arrow[rrr, bend right, "1"] & & & \circ \arrow[ul,"1"]\arrow[ur,"U"]& \\
	\end{tikzcd}.
}
\]A further simplification using arrows labeled by $1$ gives the following complex:

\[
\begin{tikzcd}[row sep = 1cm, column sep =1cm,labels=description]
	\circ &\bullet \arrow[l, "Z"] \arrow[d,"U"] \arrow[drr,bend left,"W^2"]& & & \\
	& \star \arrow[r,"W"]& \ast & \star\arrow[l,"Z"] & \\
	& & & \bullet \arrow[ull,bend left, "Z^2"]\arrow[u,"U"]\arrow[r, "W"]& \circ
\end{tikzcd}\hspace*{2mm} =\hspace*{6mm}
\begin{tikzcd}[arrows={shorten=-1mm},labels={description}]
	 \circ & \bullet \arrow[l,"W"] \arrow[dl,"U"] \arrow[dd,"Z^2",pos=0.3]&  \\
	 \star \arrow[d,"Z"]&                   &\bullet \arrow[ll,"W^2",pos=0.3] \arrow[d,"Z"]\arrow[dl,"U"]\\
	 \ast & \star \arrow[l,"W"] &\circ
\end{tikzcd} \hspace*{2mm}.
\]
	\item When $n=1$, the truncated complex for the $(2,3)$-cable of the right hand trefoil takes the following form: 
	
	\[
	\begin{tikzcd}[labels=description]
		F_{-\frac{3}{2},-\frac{1}{2}} &  E_{-\frac{1}{2},-\frac{1}{2}} \arrow[l,"\Phi^{-\mu}"] \arrow[r,"\Phi^{\mu}"]& F_{-\frac{1}{2},-\frac{1}{2}} & E_{\frac{1}{2},-\frac{1}{2}} \arrow[l,"\Phi^{-\mu}"] \arrow[d,"\Phi^K"]& & & \\
		&  & & J_{\frac{1}{2},-\frac{1}{2}}& & &\\
		& & &	E_{-\frac{1}{2},\frac{1}{2}} \arrow[r,"\Phi^{\mu}"] \arrow[u,"\Phi^{-K}"] & F_{-\frac{1}{2},\frac{1}{2}} & E_{\frac{1}{2},\frac{1}{2}} \arrow[r,"\Phi^{\mu}"] \arrow[l,"\Phi^{-\mu}"] & F_{\frac{1}{2},\frac{1}{2}}
	\end{tikzcd}.\]
The full complex is 
\[
\adjustbox{scale = 0.8}{
	\begin{tikzcd}[column sep =small, labels=description]
		& \circ\arrow[dl,"U"] \arrow[dr, "1"]& & & \bullet \arrow[dl,"W"]\arrow[dr,"1"] \arrow[lll, bend right,"1"] \arrow[rrr,bend left, "W^2"]& & & \circ \arrow[dl,"1"]\arrow[dr,"1"]& & & \bullet \arrow[dl,"1"]\arrow[dr,"Z"]\arrow[lll, bend right,"Z"]& & & & & & & & & & &\\
		\circ & &\circ & \bullet \arrow[lll,bend left=60,pos=0.5,"Z"] \arrow[rrr,bend right=60,pos = 0.55, "W"] & &\bullet \arrow[lll,bend left=60, pos=0.5, "1"] \arrow[rrr,bend right=60,pos =0.45,"W^2"]& \circ & &\circ & \bullet \arrow[lll,bend left=60, pos=0.45,"Z"]\arrow[dr, bend right =30, pos=0.6, "U"]& &\bullet \arrow[lll,bend left=60,pos=0.4,"1"] \arrow[dl,bend left =30, pos=0.6,"W"]& & & & & & & & & &\\[1.5cm]
		& & & & & & &  & & & \star & & & & & & & & & &\\[1.5cm]
	 & & & & & &	& & &\bullet \arrow[rrr,bend left=60,pos = 0.4, "1"] \arrow[ur,bend left=30, pos=0.6,"Z"]& &\bullet \arrow[rrr,bend left=60,pos =0.45,"W"] \arrow[ul,bend right=30,pos=0.6, "U"]& \circ  & &\circ & \bullet \arrow[lll,bend right=60, pos=0.44,"Z^2"] \arrow[rrr, bend left =60, pos = 0.55, "1"] & &\bullet \arrow[lll,bend right=60,pos=0.53,"Z"] \arrow[rrr,bend left=60,pos=0.5, "W"] &\circ & & \circ\\
	& & & & & &	& & & & \bullet \arrow[ul,"W"]\arrow[ur,"1"] \arrow[rrr,bend right, "W"]& & & \circ \arrow[ul,"1"]\arrow[ur,"1"]& & & \bullet \arrow[ul,"1"]\arrow[ur,"Z"]\arrow[lll, bend left,"Z^2"]\arrow[rrr, bend right, "1"] & & & \circ \arrow[ul,"1"]\arrow[ur,"U"]& \\
	\end{tikzcd},
} 
\]
which simplifies to \[
\begin{tikzcd}[row sep = 1cm, column sep =1cm,labels=description]
\circ & \bullet \arrow[l,"Z"] \arrow[dr,bend left,"W^2"]& & &  \\
&  & \star & & \\
& & &  \bullet \arrow[ul,bend left,"Z^2"] \arrow[r,"W"] &\circ\\
\end{tikzcd}
\hspace*{2mm} = \hspace*{4mm}
\begin{tikzcd}[row sep = 1cm, column sep =1cm,labels={description}]
	\circ & \bullet \arrow[l,"W"]\arrow[d,"Z^2"] &[5mm]\\[5mm]
	& \star & \bullet\arrow[l,"W^2"] \arrow[d,"Z"]\\
	& & \circ 
\end{tikzcd}.
\]
\end{enumerate}

With the same procedure, one can get that for $n\leq-2$, $\cCFK$ of the $(2,2n+1)$-cable of the right hand trefoil is homotopy equivalent to 
\[
\begin{tikzcd}[labels=description]
	& & & & & & & \circ \arrow[d,"U"]& \bullet \arrow[l,"Z"]\arrow[d,"U"]\arrow[drr,bend left,"W^2"]& &\\
	\star \arrow[r,"W"] & \ast & \star \arrow[l,"Z"]\arrow[r,"W"]& \ast & \star \arrow[l,"Z"]\arrow[r,"W"]& [1cm]\cdots & \star \arrow[l,"Z"]\arrow[r,"W"]& \ast& 
	\star \arrow[l,"Z"]\arrow[r,"W"]& \ast & \star \arrow[l,"Z"]\\
	& & \bullet \arrow[u,"U"]\arrow[ull,bend left,"Z^2"]\arrow[r,"W"] & \circ \arrow[u,"U"] & & & & &
\end{tikzcd},
\]
with $-n-2$ many $\star$'s between the middle two vertical arrows labeled $U$.

When $n=2$, $\cCFK$ of the $(2,5)$-cable of the right hand trefoil is homotopy equivalent to \[
\begin{tikzcd}[labels={description},column sep=1cm]
	\circ & \bullet\arrow[l,"Z"]\arrow[r,"W^3"] & \circ & \bullet \arrow[l,"Z^3"]\arrow[r,"W"] &\circ
\end{tikzcd}.\]
The form that looks more familiar with the previous case is 
\[
\begin{tikzcd}[labels={description},column sep=1cm]
	\circ & \bullet\arrow[l,"Z"]\arrow[r,"W^3"] \arrow[ddr,bend right, "W^3"]& \circ \arrow[d,"1"]& &\\ 
	& & \star & &\\
	& & \circ \arrow[u,"1"]&
	\bullet \arrow[l,"Z^3"]\arrow[r,"W"] \arrow[uul, bend right, "Z^3"] &\circ
\end{tikzcd}.\]

For $n\ge 3$, $\cCFK$ of the $(2,2n+1)$-cable of the right hand trefoil is homotopy equivalent to \[\hspace*{-2mm}\begin{tikzcd}[labels={description},column sep=1cm]
	\circ & \bullet\arrow[l,"Z"]\arrow[r,"W^3"] & \circ &\bullet \arrow[l,"Z"]\arrow[r,"W"]&\circ  &\cdots \arrow[l,"Z"]\arrow[r,"W"]&\circ&\bullet\arrow[l,"Z"]\arrow[r,"W"] &\circ& \bullet \arrow[l,"Z^3"]\arrow[r,"W"] &\circ
\end{tikzcd},\]
with $n-2$ many $\bullet$'s between the arrow labeled $W^3$ and the arrow labeled $Z^3$.  

\subsection{Whitehead doubles of the right hand trefoil }

	In this example, we compute $\cCFK$ of the Whitehead double of the right hand trefoil with framing $n$.
	
	The $DA$-bimodule of $L_P$, which is the positive Whitehead link, is shown in Figure \ref{fig:7}.	Note that $\cC_0$ is a non-trivial staircase instead of a single generator, and there are some non-trivial $\delta_3^1$-actions if one starts and ends at $\cC_{0}$. The complex $\cS$ still has only one generator, with vanishing differential, as the pattern is an unknot in $S^3$. Again, $N=1$, $g=1$, so the shape of the truncation is the same as in the previous example.

	When $t=-\frac{1}{2}$, the mapping cone $f^{\mu}: \scE_{*,-\frac{1}{2}}\to \scF_{*,-\frac{1}{2}}$ takes the following form:	
	\[\begin{tikzcd}[labels=description]
		 &[-0.7cm] \phantom{\cdots}&[-1.4cm]\phantom{\bullet} & \phantom{\bullet}& \phantom{\bullet}&[0.4cm]   \bullet\arrow[dl,bend right, "W|Z"] \arrow[r, bend left, "Z|1"] 
		  &[1.1cm] \bullet  \arrow[r, bend left, "Z|1"] \arrow[l,bend left, "W|U"] &[1.1cm] \bullet\arrow[l,bend left, "W|U"]& [-0.3cm] \phantom{\cdots}\\
	\scE_{*,-\frac{1}{2}}: \arrow[dd,shorten = 1.3mm,"f^{\mu}"]& \cdots & 
	\bullet\ar[r,bend left, "Z|U"]
	 \arrow[dd,shorten = 3mm,"U"]&\bullet \ar[r,bend left,  "Z|U"]
	 \ar[l,bend left, "W|1"]
	  \arrow[dd,shorten = 3mm,"U"]
	  &\bullet  \ar[l,bend left, "W|1"] \arrow[ur,bend right, pos = 0.4,"Z|W"]
	    \arrow[dd,shorten = 3mm,"U"]& \bullet \ar[r,bend left,  "Z|1"]\arrow[u,"W"]\arrow[d,"Z"]& \bullet  \ar[r,bend left,  "Z|1"]\arrow[u,"W"]\arrow[d,"Z"] \ar[l,bend left, "W|U"]& \bullet  \arrow[u,"W"]\arrow[d,"Z"] \ar[l,bend left, "W|U"]& \cdots \\
		 & & & & &  \bullet \arrow[ul,bend left,"W|W"] \arrow[r, bend left,"Z|1"] 
		 & \bullet \arrow[r, bend left, "Z|1"]\arrow[l,bend left, "W|U"]& \bullet \arrow[l,bend left,  "W|U"]& \\[1cm]
  	\scF_{*,-\frac{1}{2}}:& \cdots
  &[.4cm] \cS
  \ar[r, bend left, "Z|U"]
  & \cS
  \ar[r, bend left, "Z|U"]
  \ar[l, bend left, "W|1"]
  &\cS
  \ar[r, bend left, "Z|1"]
  \ar[l, bend left, "W|1"]
  &\cS  \arrow[from = uuu, bend left, crossing over, pos = 0.82, "Z"]
  \arrow[from = u, bend right, "W"]
  \ar[r, bend left, "Z|1"]
  \ar[l, bend left, "W|U"]
  &\cS
  \ar[r, bend left, "Z|1"]
  \ar[l, bend left, "W|U"]
   \arrow[from = uuu, bend left, crossing over, pos = 0.82, "Z"]
     \arrow[from = u, bend right, "W"]
  &\cS
  \ar[l, bend left, "W|U"]
   \arrow[from = uuu, bend left, crossing over, pos = 0.82, "Z"]
     \arrow[from = u, bend right, "W"]
  &\cdots 
	\end{tikzcd}.\]
All the $\delta_3^1(\sigma,Z,-)$ or $\delta_3^1(\sigma,W,-)$ in this example are again trivial. The mapping cone $f^{-\mu}:\scE_{*,-\frac{1}{2}}\to \scF_{*,-\frac{1}{2}}$ is similar,  which is the identity on the left half, and it's the same as $L_\sigma$ on the right half. The row 
	\[\begin{tikzcd}[column sep=1.3cm, labels=description]
		F_{-\frac{3}{2},-\frac{1}{2}} & E_{-\frac{1}{2},-\frac{1}{2}} \arrow[l,"\Phi^{-\mu}"] \arrow[r,"\Phi^{\mu}"] & F_{-\frac{1}{2},-\frac{1}{2}} & E_{\frac{1}{2},-\frac{1}{2}} \arrow[l,"\Phi^{-\mu}"]
	\end{tikzcd}
	\]
is given by
\[
\adjustbox{scale=0.9}{
\begin{tikzcd}[every label/.append style = {font = \tiny},labels=description,column sep =small]
	 \phantom{\circ}&\phantom{\circ}  & \phantom{\circ} &\phantom{\space} & [-0.3cm]\phantom{\bullet} &[-0.3cm]\phantom{\bullet} & [-0.9cm] \phantom{\bullet}& [0.5cm]\phantom{\bullet}& &  &  &  & & & &  &[-0.9cm]\bullet \arrow[dll,"Z"]\arrow[dr,"W"] \arrow[ddll,bend right=10,pos = 0.8,"1"]& &  \\[-0.1cm]
		  & \circ \arrow[ddl,"U"]\arrow[ddr,"1"]&  & &  & & \bullet \arrow[ddl,bend  right=10, "W"] \arrow[lllll,bend right, "1"]\arrow[rrrr,bend left, "U"]\arrow[ddr, bend left=30,"1"] & & &  &  \circ \arrow[ddl,"1"]\arrow[ddr,"1"]&  & & &\bullet \arrow[ddl, bend right =10,"1"]  \arrow[llll,bend right, "W"]&  & & \bullet \arrow[ddr,bend left=10, "Z"] \arrow[ddll,bend right =10, pos =0.6, "1"]  \arrow[lllllll,bend right=70, "Z"]&  \\
	      &  &      &      & \bullet \arrow[dl,"Z"] \arrow[dr,"W"]&  & &  & & & &  & &  & \bullet \arrow[dl,"Z"] \arrow[dr,"W"] & & & & \\[-0.1cm]
	\circ &      &\circ & \bullet \arrow[lll, bend left, "W"] \arrow[rrrrrr, bend right=40, pos=0.6, "W"]&  & \bullet  \arrow[lllll, bend left =40, "Z"] \arrow[rrrr,bend right,"Z"]& & \bullet \arrow[lllll,bend left=40,crossing over,pos = 0.55, "1"] & &\circ \arrow[from =rrrr,bend left = 40, pos = 0.4,"W"]\arrow[from = rrrrrr,bend left =40, "Z"] & & \circ \arrow[from = llll,bend right =40, crossing over, "U"]& & \bullet & & \bullet & & & \bullet \arrow[from = uullll,crossing over,bend left = 10, pos = 0.8,"W"]  \arrow[lllllll, bend left =40, crossing over, "1"]
\end{tikzcd}.
}
\]

At this stage, one can simplifying it by removing a contractible quotient complex, (since all the vertical arrows are leaving this row, a quotient complex of this row is also a quotient complex of the full truncated complex) which gives

\[\begin{tikzcd}[labels =description]
\phantom{\circ} & & &[-0.5cm] \bullet \arrow[dl, "Z"]  \arrow[dr,"W"]& [-0.5cm]& &  &[-0.5cm]\circ \arrow[dl,"1"]\arrow[dr,"1"]&[-0.5cm] & &\\
	\circ & & \bullet \arrow[ll,bend left=40, pos= 0.4,"W"] \arrow[rrrr, bend right=40, pos = 0.4, "W"]& & \bullet \arrow[llll, bend left=40, pos=0.4,"Z"]  \arrow[rr, bend right =40, pos = 0.4, "Z"]& & \circ & & \circ & & \bullet \arrow[ll, bend left = 40, "1"]
\end{tikzcd}.\]

By a change of basis summing up the two $\circ$'s with arrows labeled by $1$ pointing to them, and note that there is no $\Phi^{K}$ map leaving the top $\circ$, we can further simplify it to the following:

\[\begin{tikzcd}[labels =description]
	\phantom{\circ} & & &[-0.5cm] \bullet \arrow[dl, "Z"]  \arrow[dr,"W"]& [-0.5cm]& & &\\
	\circ & & \bullet \arrow[ll,bend left=40, pos= 0.4,"W"] \arrow[rrrr, bend right=40, pos = 0.4, "W"]& & \bullet \arrow[llll, bend left=40, pos=0.4,"Z"]  \arrow[rr, bend right =40, pos = 0.4, "Z"]& & \circ & & \bullet \arrow[ll, bend left = 40, "1"]
\end{tikzcd}\]

Symmetrically (rotating $180^\circ$ and switching $W$ with $Z$), the row \[\begin{tikzcd}[column sep=1.3cm, labels=description]
	E_{-\frac{1}{2},\frac{1}{2}}  \arrow[r,"\Phi^{\mu}"] & F_{-\frac{1}{2},\frac{1}{2}} & E_{\frac{1}{2},\frac{1}{2}} \arrow[l,"\Phi^{-\mu}"]
	\arrow[r,"\Phi^{\mu}"] &
	F_{\frac{1}{2},\frac{1}{2}} 
\end{tikzcd}
\]  
can be simplified to 
\[
\begin{tikzcd}[labels = description]
	\bullet \arrow[rr, bend left=40, "1"]& & \circ & & \bullet \arrow[ll,  bend right =40, pos =0.4, "W"] \arrow[rrrr, bend left =40, pos =0.4,"W"]&[-0.5cm] & [-0.5cm]\bullet\arrow[llll,bend right =40, pos =0.4, "Z"] \arrow[rr, bend left=40, pos =0.4, "Z"] & & \circ\\
		  & &  & &  & \bullet\arrow[ul,"Z"]\arrow[ur,"W"] &  & & 
\end{tikzcd}.
\]

The row 	
\[\begin{tikzcd}[column sep=1.3cm, labels=description]
	\cdots& [-1cm]M_{-\frac{3}{2},-\frac{1}{2}} & J_{-\frac{1}{2},-\frac{1}{2}} \arrow[l,"\Phi^{-\mu}"] \arrow[r,"\Phi^{\mu}"] & M_{-\frac{1}{2},-\frac{1}{2}} & J_{\frac{1}{2},-\frac{1}{2}} \arrow[l,"\Phi^{-\mu}"]&[-1cm] \cdots
\end{tikzcd}
\]
takes the form 

\[   \begin{tikzcd}[column sep=1.3cm, labels=description]
     &[-1cm]  & \star \arrow[dl,bend right, "Z"] \arrow[dr, bend left, "Z"]&   & \star \arrow[dl,bend right, "Z"] &[-1cm]\\
	\cdots & \ast & \star\arrow[u,"W"]\arrow[d,"Z"] & \ast & \star \arrow[u,"W"]\arrow[d,"Z"]& \cdots\\
     &   & \star \arrow[ul,bend left, "W"] \arrow[ur, bend right, "W"]&   & \star \arrow[ul,bend left, "W"]& 
\end{tikzcd},\]
and is independent of the $s$-coordinate, i.e., invariant under horizontal translation.

The maps $\Phi^{K} : E_{s,-\frac{1}{2}} \to J_{s,-\frac{1}{2}}$ and $\Phi^{K} : F_{s,-\frac{1}{2}} \to M_{s,-\frac{1}{2}}$ for the simplified model in the range of interest look like the following:
\begin{figure}[h]
	 \adjustbox{scale=0.8}{\begin{tikzcd}[labels =description, column sep = 1cm]
	 		\phantom{\circ} & & &[-0.5cm] \bullet \arrow[dl, "Z"]  \arrow[dr,"W"] & [-0.5cm]& & &\\
	 		\circ \arrow[dd,"U"]& & \bullet \arrow[ll,bend left=40, pos= 0.4,"W"] \arrow[rrrr, bend right=40, pos = 0.6, "W"]& & \bullet \arrow[llll, bend left=40, pos=0.6,"Z"]  \arrow[rr, bend right =40, pos = 0.4, "Z"]& & \circ \arrow[dd,"U"] & & \bullet \arrow[ll, bend left = 40, "1"]\arrow[d,"W"]\\[1cm]
	 		& & & \star \arrow[dlll,bend right, "Z"] \arrow[drrr,bend left, "Z"] \arrow[from =ur, bend left =30, crossing over, pos = 0.6, "U"]& & &  & & \star  \arrow[dll,bend right, "Z"] \\
	 		\ast & & & \star \arrow[u,"W"]\arrow[d,"Z"]\arrow[from = uuu, bend right=28, crossing over, pos =0.65,"U"]& & & \ast & & \star\arrow[u,"W"]\arrow[d,"Z"] \\
	 		& & & \star \arrow[ulll, bend left, "W"] \arrow[urrr, bend right, "W"] \arrow[from = uuul, bend right, crossing over, pos = 0.6, "U"]& & &  & & \star\arrow[ull, bend left, "W"]
	 	\end{tikzcd}.
	 
 }
\end{figure}

Symmetrically, the maps $\Phi^{-K}: E_{s,\frac{1}{2}}\to J_{s+n,-\frac{1}{2}}$ and  $\Phi^{-K}: F_{s,\frac{1}{2}}\to M_{s+n,-\frac{1}{2}}$ for the simplified model in the range of interest looks like:

\begin{figure}[h]
	\adjustbox{scale=0.8}{ 
	\begin{tikzcd}[labels = description, column sep = 1cm]
		\star \arrow[drr, bend left, "Z"]& & & & & [-0.4cm]\star \arrow[dlll,bend right, "Z"] \arrow[drrr,bend left, "Z"]& [-0.4cm] & &\\
		\star \arrow[d,"Z"]\arrow[u,"W"] &  & \ast & & & \star \arrow[d,"Z"] \arrow[u,"W"]& & &\ast\\
		\star \arrow[urr,bend right, "W"]& & & & & \star \arrow[ulll,bend left,"W"]  \arrow[urrr,bend right, "Z"]& & &\\[1cm]
		\bullet \arrow[rr, bend left=40, "1"] \arrow[u , "Z"]& & \circ \arrow[uu,"U"] & & \bullet \arrow[ll,  bend right =40, pos =0.4, "W"] \arrow[rrrr, bend left =40, pos =0.63,"W"] \arrow[ur,bend left, crossing over, pos=0.6,"U"]& & [-0.5cm]\bullet\arrow[llll,bend right =40, pos =0.6, "Z"] \arrow[rr, bend left=40, pos =0.4, "Z"] \arrow[uuul,bend right, crossing over, pos=0.6,"U"]& & \circ \arrow[uu,"U"]\\
		& &  & &  & \bullet\arrow[ul,"Z"]\arrow[ur,"W"]  \arrow[uuu,bend right=25, crossing over,pos =0.65, "U"]&  & & \\
	\end{tikzcd}.
}
\end{figure}

There would have been arrows in $\Phi^{K}:E_{\frac{1}{2},-\frac{1}{2}}\to J_{\frac{1}{2},-\frac{1}{2}}$ corresponding to the non-trivial $\delta_3^1(Z,W,-)$-action on $\cC_{0}$ in the $DA$-bimodule, coming from the term $\ve{y}\otimes \sigma \otimes W$ in $\delta^1(\delta^1(\ve{a})) $ in  $\cX_n(S^3,T_{2,3})^{\cK}$, but since we have made some simplification first, these arrows no longer exist in the simplified version. See the following diagram, where the contribution from $(\sigma,W)|h_{Z,W}$ is the blue arrow. The situation for $\Phi^{-K}: E_{-\frac{1}{2},\frac{1}{2}} \to J_{-\frac{1}{2}+n,-\frac{1}{2}} $ is symmetric.

\begin{figure}[h]
		\adjustbox{scale=0.8}{
	\begin{tikzcd}[labels={description}]
		E_{\frac{1}{2},-\frac{1}{2}} \arrow[d,"\Phi^{K}"]\\[10mm]
		J_{\frac{1}{2},-\frac{1}{2}}
	\end{tikzcd}
	\hspace*{5mm}= \hspace*{5mm}
	\begin{tikzcd}[labels={description}]
		 & \cC_{0} \arrow[dl,"\bI"]\arrow[dr, "L_W"] \arrow[dd,blue, "h_{Z,W}"]&\\
		\cC_{0} \arrow[dr,"U"]& & \cC_{-1}\arrow[dl,"L_Z"]\\[10mm]
		& \cC_{0}&
	\end{tikzcd}		
			
\hspace*{5mm}	=  \hspace*{5mm}\begin{tikzcd}[labels={description}]
	 	& & \bullet\arrow[dl,"Z"]\arrow[dr,"W"] \arrow[ddl, bend right=10,"1",pos = 0.8]& &\\[-1mm]
	 	& \bullet\arrow[ddl,bend right=10,"1",pos = 0.6] \arrow[dddr,blue,"W",pos = 0.7, bend left =15,crossing over]& & \bullet\arrow[ddl, bend right =10, "1",pos = 0.6]\arrow[ddr,bend left =10, "Z"] &\\[5mm]
	 	&\bullet\arrow[dl,"Z"]\arrow[dr,"W"] \arrow[ddr, bend right =10,"U"] & & & \\[-1mm]
	 	\bullet \arrow[ddr, bend right =10, "U"] & & \bullet \arrow[ddr, bend right =10, "U"] & & \bullet\arrow[from = uulll, crossing over, bend left =10, "W",pos = 0.7]\arrow[ddl, bend left =10, "W"]\\[10mm]
	 	& & \star \arrow[dl,"Z"]\arrow[dr,"W"]& &\\[-1mm]
	 	&\star & & \star &
	 \end{tikzcd}
\hspace*{5mm}  $\simeq$\hspace*{5mm}
 \begin{tikzcd}[labels={description}]
 	\\
 	\\
 	\\
 	\\
 	\\
 	\\[5mm]
 	\bullet\arrow[d,"W"] \\[10mm]
 	\star \\
 	\star \arrow[u,"W"]\arrow[d,"Z"]\\
 	\star \\
 \end{tikzcd}
 }
\end{figure}

The final step is stacking the above two diagrams of maps $\Phi^K$ and $\Phi^{-K}$  together along the row of $J_{*,-\frac{1}{2}}$ and $M_{*,-\frac{1}{2}}$, with a relative horizontal shift specified by the framing $n$. The shape is described by the truncation formula. Note that for $n\ge2$,  we need to extend the top row consisting of $E_{*,-\frac{1}{2}}$ and $F_{*,-\frac{1}{2}}$ to include terms $E_{s,-\frac{1}{2}}$ and $F_{s-1,\frac{1}{2}}$ for $s=\frac{3}{2},\frac{5}{2},...,n-\frac{1}{2}$ as well. They take the following form after simplification:
\[	\begin{tikzcd}[column sep=1cm, labels=description]
	E_{\frac{1}{2},-\frac{1}{2}} \arrow[r,"\Phi^{\mu}"] & F_{\frac{1}{2},-\frac{1}{2}} & E_{\frac{3}{2},-\frac{1}{2}} \arrow[l,"\Phi^{-\mu}"] \arrow[r,"\Phi^{\mu}"] & F_{\frac{3}{2},-\frac{1}{2}} \arrow[from=r, "\Phi^{-\mu}"]& E_{\frac{5}{2},-\frac{1}{2}} \arrow[r,"\Phi^{\mu}"]&\cdots& E_{n-\frac{1}{2},-\frac{1}{2}} \arrow[l,"\Phi^{-\mu}"] \arrow[d,"\Phi^{K}"] \\
	 &   &     &  &  & & J_{n-\frac{1}{2},-\frac{1}{2}}
\end{tikzcd}\]
\begin{figure}[h]
	\adjustbox{scale=0.8}{ 
	$\simeq$ 	\begin{tikzcd}[labels=description,column sep=small]
		& & & &  &[-0.2cm]\bullet\arrow[dl,"Z"]\arrow[dr,"W"] &[-0.1cm]  & & & &  &[-0.1cm]\bullet \arrow[dl,"Z"]\arrow[dr,"W"] &[-0.1cm] & \cdots &  &[-0.1cm]\bullet \arrow[dl,"Z"]\arrow[dr,"W"] \arrow[ddd, bend right = 28, "1"] &[-0.2cm] \\
	\hspace*{5mm}\bullet \arrow[rr,bend right, "U"]& & \circ & & \bullet \arrow[ll,bend left, "W"] \arrow[rrrr,bend right=50, "W"]& & \bullet \arrow[llll,bend left =50,"Z"] \arrow[rr, bend right, "Z"]& &\circ & & \bullet \arrow[ll,bend left, "W"] & & \bullet \arrow[llll,bend left =50,"Z"] & \cdots & \bullet \arrow[dddr, bend right, "1"] & &\bullet  \arrow[dl, bend left, "1"]\\[-0.3cm]
		& & & &  &  &  & & & &  & & &  &  &\star & \\
		& & & &  &  &  & & & &  & & &  &  &\star \arrow[u, "W"] \arrow[d,"Z"] & \\
		& & & &  &  &  & & & &  & & &  &  &\star &
	\end{tikzcd}.
}
\end{figure}

Below are some examples of the knot complex for $n$-framed Whitehead double of the right hand trefoil $(W^+(T_{2,3},n))$, with some further simplification.
\begin{enumerate}
	\item When $n=-1$, $\cCFK(W^+(T_{2,3},-1))$ is homotopy equivalent to the complex in Figure \ref{fig:CFK(Wh_{-1}(RHT))}.
	\begin{figure}[h]
			\adjustbox{scale=0.8}{
		\begin{tikzcd}[labels =description, column sep = 1cm]
	& &\phantom{\circ} & & &[-0.5cm] \bullet \arrow[dl, "Z"]  \arrow[dr,"W"] & [-0.5cm]& & &\\
	& &	\circ \arrow[dd,"U"]& & \bullet \arrow[ll,bend left=40, pos= 0.4,"W"] \arrow[drrrrrr, pos = 0.65, crossing over, bend right=10, "W^2"]& & \bullet \arrow[llll, bend left=40, pos=0.6,"Z"]  \arrow[drrrr, bend right=10, pos = 0.4, "U"]& &  & &  \\[1cm]
	\star \arrow[drr, bend left, "Z"]& &	& & & \star \arrow[dlll,bend right, "Z"] \arrow[drrr,bend left, "Z"] \arrow[from =ur, bend left =30, crossing over, pos = 0.6, "U"]& & &  & & \star  \arrow[dll,bend right, "Z"] \\
	\star \arrow[u,"W"]\arrow[d,"Z"]& &	\ast & & & \star \arrow[u,"W"]\arrow[d,"Z"]\arrow[from = uuu, bend right=28, crossing over, pos =0.65,"U"]& & & \ast & & \star\arrow[u,"W"]\arrow[d,"Z"] \\
	\star \arrow[urr, bend right,"W"] \arrow[from=drrrrrr, bend right =10, "Z^2"]& &	& & & \star \arrow[ulll, bend left, "W"] \arrow[urrr, bend right, "W"] \arrow[from = uuul, bend right, crossing over, pos = 0.6, "U"]& & &  & & \star\arrow[ull, bend left, "W"] \\[1cm]
	& &   & & \bullet \arrow[ullll,  bend right =10, pos =0.4, "U"] \arrow[rrrr, bend left =40, pos =0.63,"W"] \arrow[ur,bend left, crossing over, pos=0.6,"U"]& & [-0.5cm]\bullet   \arrow[rr, bend left=40, pos =0.4, "Z"] \arrow[uuul,bend right, crossing over, pos=0.6,"U"]& & \circ \arrow[uu,"U"] & &\\
	& &  & &  & \bullet\arrow[ul,"Z"]\arrow[ur,"W"]  \arrow[uuu,bend right=22, crossing over,pos =0.65, "U"]&  & & & &\\
\end{tikzcd}
}
\adjustbox{scale=0.7}{
\begin{tikzcd}
	[row sep =  -1mm, column sep =-1mm, arrows={shorten = -1mm}]
	\phantom{\bullet} & [3mm] \phantom{\bullet} &[3mm]  \phantom{\bullet} &[1.5cm] \phantom{\bullet} & \bullet \arrow[ddd,"Z"]  & \phantom{\bullet}&\phantom{\bullet}&\phantom{\bullet} & [1.5cm] \bullet\arrow[llll,"W"] \arrow[ddd,"Z"]  & \phantom{\bullet}&\phantom{\bullet}\\[1mm]
	\phantom{\bullet} & \phantom{\bullet} & \phantom{\bullet} &[1.8cm] \phantom{\bullet} & \phantom{\bullet} & \phantom{\bullet}&\bullet \arrow[dddd,"Z"]  &\phantom{\bullet} & [1.8cm] \phantom{\bullet} &\phantom{\bullet} & \bullet \arrow[dddd,"Z"] \arrow[llll,"W"]\\[1.8cm]
	\bullet \arrow[ddddd,"Z"]& \phantom{\bullet} & \phantom{\bullet} &[1.5cm] \bullet \arrow[lll,"W"] \arrow[ddddd,pos =0.6,"Z"] & \phantom{\bullet} & \phantom{\bullet}& &\phantom{\bullet} & [1.5cm]   &\phantom{\bullet} &  \\
	& \phantom{\bullet} & \phantom{\bullet} &[1.5cm]  & \bullet & \phantom{\bullet}& &\phantom{\bullet} & [1.5cm]  \bullet \arrow[llll,"W"]  &\phantom{\bullet} &  \\
	& \bullet& \phantom{\bullet} &[1.5cm]  &   & \bullet \arrow[llll,"W"] \arrow[dddd,"Z"] & &\phantom{\bullet} & [1.5cm]    &\phantom{\bullet}& \\
	&  & \phantom{\bullet} &[1.5cm]  &   &  & \bullet &\phantom{\bullet} & [1.5cm]    &\phantom{\bullet}& \bullet \arrow[llll,"W"]\\	
	&  & \bullet \arrow[ddd,pos =0.4,"Z"] &[1.5cm]  &   &  &   &\bullet \arrow[lllll,pos =0.6,"W"] \arrow[ddd,"Z"] & [1.5cm]    &\phantom{\bullet}& \\[1.8cm] 	
	\bullet&  &  &[1.5cm] \bullet \arrow[lll,pos =0.4,"W"]&   &  &   &  & [1.5cm]    &\phantom{\bullet}& \\[2mm] 
	&  &  &[1.5cm] &    &\bullet  &   &  & [1.5cm]    &\phantom{\bullet}& \\ [2mm]
	&  &\bullet  &[1.5cm] &   &  &    &\bullet \arrow[lllll,"W"] & [1.5cm]    &\phantom{\bullet}& \\
\end{tikzcd} 
 }
\caption{A model of $\cCFK(W^+(T_{2,3},-1))$, and a simplified version }
\label{fig:CFK(Wh_{-1}(RHT))}
	\end{figure}

	\item When $n=0$, $\cCFK(W^+(T_{2,3},0))$ is homotopy equivalent to the complex in Figure \ref{fig:CFK(Wh_{0}(RHT))}.
	
	\begin{figure}[h]
		\adjustbox{scale=0.8}{
\begin{tikzcd}[labels =description, column sep = 1cm]
	\phantom{\circ} & & &[-0.5cm] \bullet \arrow[dl, "Z"]  \arrow[dr,"W"] & [-0.5cm]& & & & \phantom{\bullet} & [-0.5cm]\phantom{\bullet} & [-0.5cm] & \\
	\circ  & & \bullet \arrow[ll,bend left=40, pos= 0.4,"W"] \arrow[drrrrrrr, bend right=7, pos = 0.4, "W^2"]& & \bullet \arrow[llll, bend left=40, pos=0.6,"Z"]  \arrow[drrrrr, bend right=7, pos = 0.4, "U"]& & & &  && & &\\[1cm]
	& & & \star\arrow[drrr,bend left, "Z"] \arrow[from =ur, bend left =30, crossing over, pos = 0.6, "U"]& & &  & & & \star  \arrow[dlll,bend right, "Z"] & & & \\
	\phantom{\ast }& & & \star \arrow[u,"W"]\arrow[d,"Z"]\arrow[from = uuu, bend right=28, crossing over, pos =0.65,"U"]& & & \ast & & & \star\arrow[u,"W"]\arrow[d,"Z"] & & & \\
	& & & \star\arrow[urrr, bend right, "W"] \arrow[from = uuul, bend right, crossing over, pos = 0.6, "U"] \arrow[from = drrrrrrr, bend right=7, pos = 0.4,"Z^2"]& & &  & &  &\star\arrow[ulll, bend left, "W"]
	& & & \\[1cm]
	& & &  & & &   & & \bullet \arrow[ulllll,  bend right =7, pos =0.5, "U"] \arrow[rrrr, bend left =40, pos =0.63,"W"] \arrow[ur,bend left, crossing over, pos=0.6,"U"]& & [-0.5cm]\bullet\arrow[rr, bend left=40, pos =0.4, "Z"] \arrow[uuul,bend right, crossing over, pos=0.6,"U"] & & \circ \\
	& & & & & &  & &  & \bullet \arrow[ul,"Z"]\arrow[ur,"W"]  \arrow[uuu,bend right=22, crossing over,pos =0.65, "U"]&  & & \\
\end{tikzcd}
}
\adjustbox{scale=0.7}{
\begin{tikzcd}
	[row sep =  -1mm, column sep =-1mm, arrows={shorten = -1mm}]
	\phantom{\bullet} & [3mm] \phantom{\bullet} &[3mm]  \phantom{\bullet} &[1.2cm] \phantom{\bullet} & \bullet \arrow[ddd,"Z"]  & \phantom{\bullet}&\phantom{\bullet}&\phantom{\bullet} & [1.5cm] \bullet\arrow[llll,"W"] \arrow[ddd,"Z"]  & \phantom{\bullet}&\phantom{\bullet}\\[1mm]
	\phantom{\bullet} & \phantom{\bullet} & \phantom{\bullet} &[1.8cm] \phantom{\bullet} & \phantom{\bullet} & \phantom{\bullet}&\bullet \arrow[dddd,"Z"]  &\phantom{\bullet} & [1.8cm] \phantom{\bullet} &\phantom{\bullet} & \bullet \arrow[dddd,"Z"] \arrow[llll,"W"]\\[1.8cm]
	\phantom{\bullet}& \phantom{\bullet} & \phantom{\bullet} &[1.5cm] \phantom{\bullet} & \phantom{\bullet} & \phantom{\bullet}& &\phantom{\bullet} & [1.5cm]   &\phantom{\bullet} &  \\
	& \phantom{\bullet} & \phantom{\bullet} &[1.5cm]  & \bullet & \phantom{\bullet}& &\phantom{\bullet} & [1.5cm]  \bullet \arrow[llll,"W"]  &\phantom{\bullet} &  \\
	& \bullet \arrow[dddd,"Z"]& \phantom{\bullet} &[1.5cm]  &   & \bullet \arrow[llll,swap,"W"] \arrow[dddd,"Z"] & &\phantom{\bullet} & [1.5cm]    &\phantom{\bullet}& \\
	\bullet  &  & \phantom{\bullet} &[1.5cm]  & \bullet \arrow[llll,"W"] \arrow[dddd,swap,"Z"]  &  & \bullet &\phantom{\bullet} & [1.5cm]    &\phantom{\bullet}& \bullet \arrow[llll,"W"]\\	
	&  &  &[1.5cm]  &   &  &   & & [1.5cm]    &\phantom{\bullet}& \\[1.5cm] 	
	&  &  &[1.5cm] &   &  &   &  & [1.5cm]    &\phantom{\bullet}& \\[2mm] 
	&  \bullet &  &[1.5cm] &    &\bullet \arrow[llll,"W"] &   &  & [1.5cm]    &\phantom{\bullet}& \\ [1mm]
	&  &  &[1.5cm] &\bullet    & &    & & [1.5cm]    &\phantom{\bullet}& \\
\end{tikzcd}
}
\caption{A model of $\cCFK(W^+(T_{2,3},0))$, and the simplified version }
\label{fig:CFK(Wh_{0}(RHT))}
\end{figure}

\item When $n=1$, $\cCFK(W^+(T_{2,3},1))$ is homotopy equivalent to the complex in Figure \ref{fig:CFK(Wh_{1}(RHT))}.
	
\begin{figure}[h]
	\adjustbox{scale=0.8}{
		\begin{tikzcd}[labels =description, column sep = 0.8cm]
	\phantom{\circ} & & &[-0.5cm] \bullet \arrow[dl, "Z"]  \arrow[dr,"W"] & [-0.5cm]& & & & & & & & & & &\\
	\circ & & \bullet \arrow[ll,bend left=40, pos= 0.4,"W"] \arrow[drrrrrr, bend right=40, pos = 0.3, "W^2"]& & \bullet \arrow[llll, bend left=40, pos=0.6,"Z"]  \arrow[drrrr, bend right =40, pos = 0.4, "U"]& &    & &  & & & & & & & &\\[-0.5cm]
	& & & \phantom{\star} & & &  & & \star  & & & & & & & &\\
	\phantom{\ast }& & & \phantom{\star} & & & \phantom{\ast }& & \star\arrow[u,"W"]\arrow[d,"Z"] & & & & & & & &\\
	& & & \phantom{\star}& & &  & & \star& & & & & & & & \\  [-0.5cm]
	& & & & & & & &  & &   & & \bullet \arrow[ullll,  bend right =40, pos =0.4, "U"] \arrow[rrrr, bend left =40, pos =0.63,"W"] & & [-0.5cm]\bullet\arrow[ullllll,bend right =40, pos =0.6, "Z^2"] \arrow[rr, bend left=40, pos =0.4, "Z"] & & \circ\\
	& & & & & & & & & &  & &  & \bullet\arrow[ul,"Z"]\arrow[ur,"W"] &  & & \\
\end{tikzcd}
	}

\adjustbox{scale=0.8}{
 \begin{tikzcd}
	 [row sep =  -1mm, column sep =-1mm, arrows={shorten = -1mm}]
	 \phantom{\bullet} & [3mm] \phantom{\bullet} &[3mm]  \phantom{\bullet} &[1.4cm] \phantom{\bullet} & \bullet \arrow[ddd,"Z"]  & \phantom{\bullet}&\phantom{\bullet}&\phantom{\bullet} & [1.2cm] \bullet\arrow[llll,"W"] \arrow[ddd,"Z"]  & \phantom{\bullet}&\phantom{\bullet}\\[1mm]
	 	 \phantom{\bullet} & \phantom{\bullet} & \phantom{\bullet} &[1.8cm] \phantom{\bullet} & \phantom{\bullet} & \phantom{\bullet}&\bullet \arrow[dddd,"Z"]  &\phantom{\bullet} & [1.5cm] \phantom{\bullet} &\phantom{\bullet} & \bullet \arrow[dddd,"Z"] \arrow[llll,"W"]\\[1.5cm]
	 	  \phantom{\bullet}& \phantom{\bullet} & \phantom{\bullet} &[1.2cm] \phantom{\bullet} & \phantom{\bullet} & \phantom{\bullet}& &\phantom{\bullet} & [1.5cm]   &\phantom{\bullet} &  \\
	 	  & \phantom{\bullet} & \phantom{\bullet} &[1.2cm]  & \bullet & \phantom{\bullet}& &\phantom{\bullet} & [1.2cm]  \bullet \arrow[llll,"W"]  &\phantom{\bullet} &  \\
	 	    & &\bullet  &[1.2cm]  &   &\bullet \arrow[lll,"W"] \arrow[dd,"Z"] & & & [1.2cm]    &\phantom{\bullet}& \\
	 	  &  &  &[1.2cm]  &    &  & \bullet &\phantom{\bullet} & [1.2cm]    &\phantom{\bullet}& \bullet \arrow[llll,"W"]\\[1.7cm]	&  & &[1.5cm]  &    &\bullet  &  & & [1.2cm]    && 
\end{tikzcd}
}
	\caption{A model of $\cCFK(W^+(T_{2,3},1))$, and a simplified version }
	\label{fig:CFK(Wh_{1}(RHT))}
\end{figure}
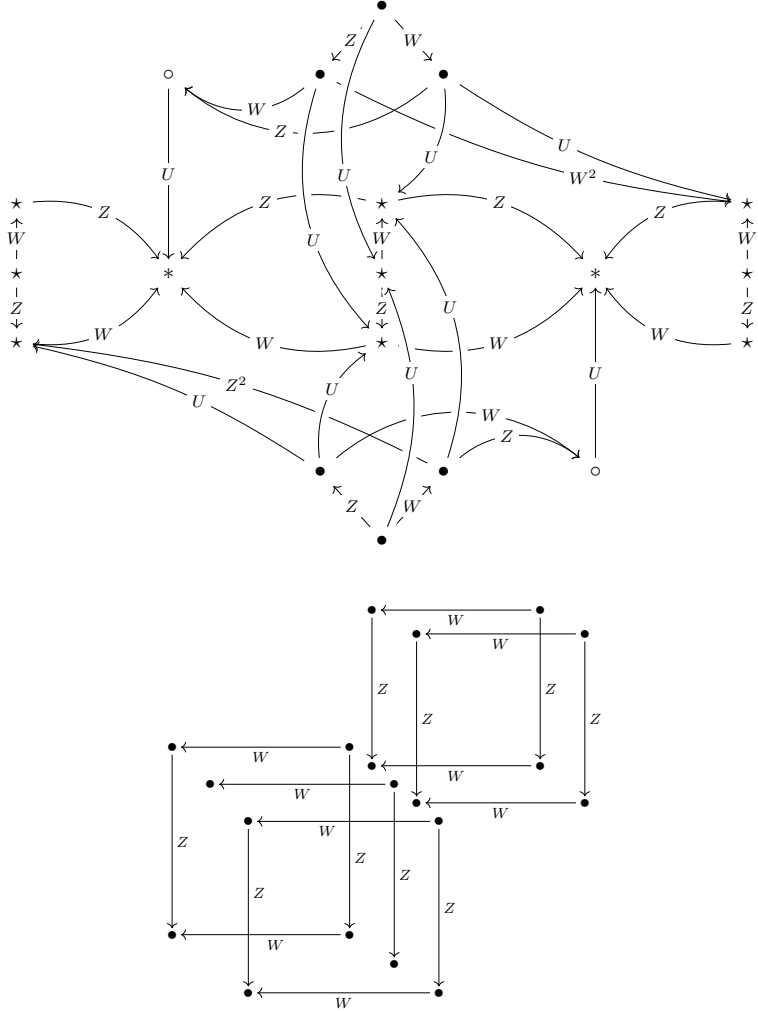

\item When $n=2$, $\cCFK(W^+(T_{2,3},2))$ is homotopy equivalent to the complex in Figure \ref{fig:CFK(Wh_{2}(RHT))}.

\begin{figure}[h]
	\adjustbox{scale=0.8}{
		\begin{tikzcd}[labels =description, column sep = 0.8cm]
	& &  & [-0.5cm]\bullet \arrow[dl, "Z"] \arrow[dr,"W"]& [-0.5cm]  & &   & &  &[-0.5cm] & [-0.5cm]   & &  \\
	\circ & & \bullet\arrow[ll, bend left=40, "W"] \arrow[rrrr, bend right=50, "UW"]& & \bullet \arrow[llll, bend left =50, "Z"] \arrow[rr, bend right =40, "UZ"]& & \circ & & \bullet\arrow[ll, bend right =40, "UW"] \arrow[rrrr, bend left =50, "W"] & & \bullet \arrow[llll, bend right =50, "UZ"]  \arrow[rr, bend left =40, "Z"]& & \circ\\
	& &  & &   & &  & &   & [-0.5cm]\bullet \arrow[ul, "Z"] \arrow[ur, "W"] &[-0.5cm]   & &  
\end{tikzcd}
	}
\adjustbox{scale=0.8}{
\begin{tikzcd}
	 [row sep =  -1mm, column sep =-1mm, arrows={shorten = -1mm}]
	 \phantom{\star} &  \phantom{\star} &[3mm]  \phantom{\star} & \phantom{\ast} & \bullet \arrow[ddd,"Z"]  & \phantom{\star}&\phantom{\bullet}&\phantom{\star} & [1.2cm] \bullet\arrow[llll,"W"] \arrow[ddd,"Z"]  & \phantom{\bullet}&\phantom{\bullet}\\[1mm]
	 	 \phantom{\star} & \phantom{\star} & \phantom{\star} &[1.5cm] \phantom{\ast} & \phantom{\bullet} & \phantom{\star}&\bullet \arrow[dddd,"Z"]  &\phantom{\star} & [1.5cm] \phantom{\bullet} &\phantom{\bullet} & \bullet \arrow[dddd,"Z"] \arrow[llll,"W"]\\[1.5cm]
	 	  \phantom{\star}& \phantom{\star} & \phantom{\star} &[1.2cm] \phantom{\star} & \phantom{\bullet} & \phantom{\star}& &\phantom{\star} & [1.5cm]   &\phantom{\bullet} &  \\
	 	  & \phantom{\star} & \phantom{\star} &[1.2cm]  & \bullet & \phantom{\star}& &\phantom{\star} & [1.5cm]  \bullet \arrow[llll,"W"]  &\phantom{\bullet} &  \\
	 	    & & \phantom{\star} &[1.5cm]  &   &  & &\phantom{\star} & [1.2cm]    &\phantom{\bullet}& \\
	 	  &  & \phantom{\star} &[1.5cm]  &\bullet    &  & \bullet &\phantom{\star} & [1.2cm]    &\phantom{\bullet}& \bullet \arrow[llll,"W"]	
\end{tikzcd}
 }
	\caption{A model of $\cCFK(W^+(T_{2,3},2))$, and a simplified version}
	\label{fig:CFK(Wh_{2}(RHT))}
\end{figure}

\item When $n\ge 3$, $\cCFK(W^+(T_{2,3},n))$ is homotopy equivalent to the complex in Figure \ref{fig:CFK(Wh_n(RHT))}, with $n-2$ many staircase of length $1$, $\bullet \xleftarrow{Z} \bullet \xrightarrow{W} \bullet$, in the middle.

\begin{figure}[h]
	\adjustbox{scale=0.7}{
		\begin{tikzcd}[labels =description, column sep = 0.8cm]
			& &  & [-0.5cm]\bullet \arrow[dl, "Z"] \arrow[dr,"W"]& [-0.5cm]  & &   &\bullet\arrow[dl, bend right=20, "W"] \arrow[dr, bend left=20, "W"]& & [-0.5cm]   &[-0.5cm] & \bullet\arrow[dl, bend right=20, "W"] \arrow[dr, bend left=20, "Z"]& & &  &[-0.5cm] & [-0.5cm]   & &  \\
			\circ & & \bullet\arrow[ll, bend left=40, "W"] \arrow[rrrr, bend right=50, "UW"]& & \bullet \arrow[llll, bend left =50, "Z"] \arrow[rr, bend right =40, "UZ"]& & \circ & \bullet\arrow[u,"Z"] \arrow[d,"W"]& \circ & \cdots& \circ & \bullet\arrow[u,"Z"] \arrow[d,"W"]& \circ & & \bullet\arrow[ll, bend right =40, "UW"] \arrow[rrrr, bend left =50, "W"] & & \bullet \arrow[llll, bend right =50, "UZ"]  \arrow[rr, bend left =40, "Z"]& & \circ\\
			& &  & &  & & &\bullet \arrow[ul, bend left=20,"Z"]\arrow[ur, bend right=20, "Z"]& &  & & \bullet \arrow[ul, bend left=20,"Z"]\arrow[ur, bend right=20, "Z"]  & & &   & [-0.5cm]\bullet \arrow[ul, "Z"] \arrow[ur, "W"] &[-0.5cm]   & &  
		\end{tikzcd}
	}
\caption{A model of $\cCFK(W^+(T_{2,3},n))$ when $n\ge 3$}
\label{fig:CFK(Wh_n(RHT))}
\end{figure}
\end{enumerate}

\subsection{Mazur patterns of the right hand trefoil}

	In this example, we compute the Mazur pattern of the right hand trefoil for some framings. A $DA$-bimodule structure of $L_P$ is drawn in Figure \ref{fig:Mazur}. The process is exactly the same as before except $N=\frac{3}{2}$ instead of $N=1$ in this case, so we are interested in the rows $t= -1,$ $0$ and $1$ respectively. 
	
	The region
	\[\begin{tikzcd}[labels = description]
		F_{-\frac{3}{2},-1} \arrow[d,"\Phi^K"]& E_{-\frac{1}{2},-1} \arrow[l,"\Phi^{-\mu}"]\arrow[r,"\Phi^{\mu}"]\arrow[d,"\Phi^K"]& 
		F_{-\frac{1}{2},-1} \arrow[d,"\Phi^K"] &
		E_{\frac{1}{2},-1} \arrow[l,"\Phi^{-\mu}"]\arrow[d,"\Phi^K"] \\
		M_{-\frac{3}{2},-1} & J_{-\frac{1}{2},-1}\arrow[l,"\Phi^{-\mu}"]\arrow[r,"\Phi^{\mu}"] & 
		M_{-\frac{1}{2},-1} &
		J_{\frac{1}{2},-1} \arrow[l,"\Phi^{-\mu}"]
	\end{tikzcd}\]
of the truncated complex, after similar simplification as before in the case of Whitehead double, takes the form shown in Figure \ref{fig: top two rows of the Mazur pattern}.

\begin{figure}[h]
	\adjustbox{scale=0.8}{
	 \begin{tikzcd}[labels =description, column sep = .8cm, row sep = 0.6cm]
	 	\phantom{\circ} & & &[-0.5cm] \bullet \arrow[dl, "Z"]  \arrow[dr,"W"] & [-0.5cm]& & &\\
	 	\circ \arrow[dd,"U"]& & \bullet \arrow[ll,bend left=40, pos= 0.4,"W"] \arrow[rrrr, bend right=40, pos = 0.6, "W^2"]& & \bullet \arrow[llll, bend left=40, pos=0.6,"Z"]  \arrow[rr, bend right =40, pos = 0.4, "U"]& & \circ \arrow[dd,"U"] & & \bullet \arrow[ll, bend left = 40, "1"]\arrow[d,"W"]\\[1cm]
	 	& & & \star \arrow[dlll,bend right, "Z"] \arrow[drrr,bend left, "U"] \arrow[from =ur, bend left =30,  pos = 0.6, "U"]& & &  & & \star  \arrow[dll,bend right, "Z"] \\
	 	\ast & & & \star \arrow[u,"W"]\arrow[d,"Z"]\arrow[from = uuu, bend right=24,  pos =0.65,"U"]& & & \ast & & \star\arrow[u,"W"]\arrow[d,"Z"] \\
	 	& & & \star \arrow[ulll, bend left, "W"] \arrow[urrr, bend right, "W^2"] \arrow[from = uuul, bend right, pos = 0.6, "U"]& & &  & & \star\arrow[ull, bend left, "W"]
	 \end{tikzcd}
  }
\caption{The region consisting of $E_{s,-1},F_{s-1,-1},J_{s,-1}, M_{s-1,-1} $ with $s= -\frac{1}{2},\frac{1}{2}$ for the Mazur pattern of the right hand trefoil }
\label{fig: top two rows of the Mazur pattern}
\end{figure}
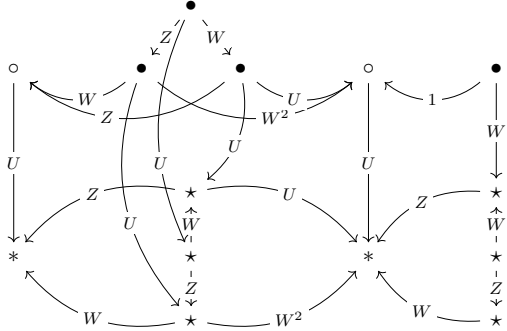

The below region, after simplification, takes the form as in Figure \ref{fig: second and third rows of the Mazur pattern}. \[\begin{tikzcd}[labels={description}]
	M_{-\frac{3}{2}+n,-1} & 
	J_{-\frac{1}{2}+n,-1} \arrow[l,"\Phi^{-\mu}"]\arrow[r,"\Phi^{\mu}"]&
	M_{-\frac{1}{2}+n,-1} & 
	J_{\frac{1}{2}+n,-1} \arrow[l,"\Phi^{-\mu}"]\arrow[r,"\Phi^{\mu}"] &
	M_{\frac{1}{2}+n,-1} \\
	F_{-\frac{3}{2},0} \arrow[u,"\Phi^{-K}"] & 
	E_{-\frac{1}{2},0} \arrow[l,"\Phi^{-\mu}"]\arrow[r,"\Phi^{\mu}"] \arrow[u,"\Phi^{-K}"]&
	F_{-\frac{1}{2},0} \arrow[u,"\Phi^{-K}"]& 
	E_{\frac{1}{2},0} \arrow[l,"\Phi^{-\mu}"]\arrow[r,"\Phi^{\mu}"] \arrow[u,"\Phi^{-K}"]&
	F_{\frac{1}{2},0}  \arrow[u,"\Phi^{-K}"]
\end{tikzcd},\]

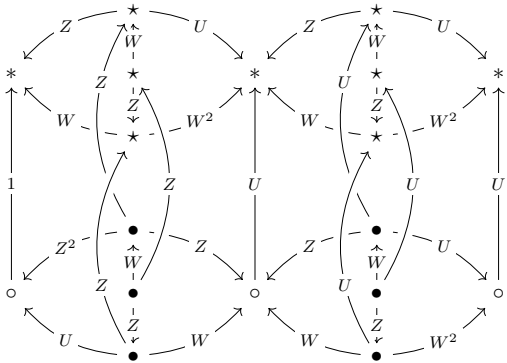
\begin{figure}[h]
	\vspace*{10mm}
	\adjustbox{scale=0.8}{
	 \begin{tikzcd}[labels =description, column sep = 1.5cm, row sep = 0.6cm]
	 	& \star & & \star & \\
	 	\ast \arrow[from=ur, bend right=20, "Z"]
	 	\arrow[from=dr, bend left=20, "W"]& \star \arrow[u,"W"]\arrow[d,"Z"] &\ast  \arrow[from=ur, bend right=20, "Z"]
	 	\arrow[from=dr, bend left=20, "W"] 
	 	\arrow[from=ul, bend left=20, "U"]
	 	\arrow[from=dl, bend right=20, "W^2"]& \star\arrow[u,"W"]\arrow[d,"Z"] &\ast   \arrow[from=ul, bend left=20, "U"]
	 	\arrow[from=dl, bend right=20, "W^2"]\\
	 	& \star & & \star & \\[0.5cm]
	 	& \bullet \arrow[uuu, bend left, crossing over, pos = 0.7, "Z"] 
	 	\arrow[dr, bend left=20, "Z"]& & \bullet \arrow[uuu, bend left, crossing over, pos = 0.7, "U"] \arrow[dr, bend left=20, "U"]& \\
	 	\circ \arrow[from=ur, bend right=20, "Z^2"]
	 	\arrow[from=dr, bend left=20, "U"] \arrow[uuu, "1"]& \bullet \arrow[u,"W"]\arrow[d,"Z"] 
	 	\arrow[uuu, bend right, crossing over, "Z"] & \circ \arrow[from=ur, bend right=20, "Z"]
	 	\arrow[from=dr, bend left=20, "W"] 
	 	\arrow[from=dl, bend right=20, "W"] \arrow[uuu, "U"]& \bullet \arrow[u,"W"]\arrow[d,"Z"] \arrow[uuu, bend right, crossing over, "U"]& \circ   
	 	\arrow[from=dl, bend right=20, "W^2"]
	 	\arrow[uuu,"U"]\\
	 	& \bullet\arrow[uuu, bend left, crossing over, pos = 0.3,"Z"] & & \bullet \arrow[uuu, bend left, crossing over, pos = 0.3, "U"]& \\
	 \end{tikzcd}
  }
\caption{The region of interest $E_{s,0},F_{s-1,0},J_{s+n,-1}, M_{s+n-1,-1} $ for the Mazur pattern of the right hand trefoil. Note that $J$ and $M$ are independent of the $s$-coordinate.  }
\label{fig: second and third rows of the Mazur pattern}
\end{figure}

The information needed to build the lower half of the truncated complex can be obtained by the symmetry of rotation $180^{\circ}$ about the plane, and then switching $W$ with $Z$. We show some final results after some simplifications. Let $\Maz(T_{2,3},n)$ denote the result of the satellite operation of the Mazur pattern on the $n$-frame right hand trefoil.

\begin{enumerate}
	\item When $n=-1$, $\cCFK(\Maz(T_{2,3},-1))$ is homotopy equivalent to the complex in Figure \ref{fig:CFK(Maz_{-1}(RHT))}. We used the further simplification as in Figure \ref{fig:simplified truncation when n=-1}.
	\begin{figure}[htb!]
		\adjustbox{scale=0.8}{
			\begin{tikzcd}[labels =description, column sep = 1cm]
		    \phantom{\bullet} &[-0.5cm]\phantom{\bullet}
		    &[-0.5cm]\phantom{\bullet} &\phantom{\circ} &\phantom{\bullet} 
		    &[-0.5cm]\phantom{\bullet}
		    &[-0.5cm]\phantom{\bullet}
		    &\phantom{\circ} &\phantom{\bullet} 
		    &[-0.5cm]\bullet\arrow[dl, "Z"]\arrow[dr, "W"]
		    &[-0.5cm]\phantom{\bullet}
		    &\phantom{\circ} &\phantom{\bullet} 
		    &[-0.5cm]\phantom{\bullet}
		    &[-0.5cm]\phantom{\bullet}\\
		    \phantom{\bullet} &[-0.5cm]\phantom{\bullet}
		    &[-0.5cm]\phantom{\bullet} &\phantom{\circ} &\phantom{\bullet} 
		    &[-0.5cm]\phantom{\bullet}
		    &[-0.5cm]\phantom{\bullet}
		    &\circ &\bullet \arrow[l, bend left =20, "W"] 
		    &[-0.5cm]
		    &[-0.5cm]\bullet \arrow[lll, bend left =30, pos =0.7, "Z"]
		    &\phantom{\circ} &\phantom{\bullet} 
		    &[-0.5cm]\phantom{\bullet}
		    &[-0.5cm]\phantom{\bullet}\\
		    \phantom{\bullet} &[-0.5cm]\phantom{\bullet}
		    &[-0.5cm]\phantom{\bullet} &\phantom{\circ} &\phantom{\bullet} 
		    &[-0.5cm]\phantom{\bullet}
		    &[-0.5cm]\phantom{\bullet}
		    & &  
		    &[-0.5cm] \star \arrow[dl,"Z"]\arrow[dr,"W"]  \arrow[from =uu, "U"]
		    &[-0.5cm]
		    &\phantom{\circ} &\phantom{\bullet} 
		    &[-0.5cm]\star \arrow[dl, pos =0.4,"Z"]\arrow[dr,pos = 0.4,"W"]
		    &[-0.5cm]\phantom{\bullet}\\
		    \phantom{\bullet} &[-0.5cm]\phantom{\bullet}
		    &[-0.5cm]\phantom{\bullet} &\phantom{\circ} &\star \arrow[rrr, bend left =30, pos =0.7, "W^2"]
		    &[-0.5cm]\phantom{\bullet}
		    &\star \arrow[r, bend left =20, "U"]
		    &\ast \arrow[from = uu, "U"] &  \star\arrow[from =uu, "U"]\arrow[l, bend left =20, "W"]\arrow[rrr, bend right =30, "W^2" pos = 0.7,]
		    &[-0.5cm] 
		    &[-0.5cm]\star \arrow[from = uu, "U"] \arrow[lll, bend left =30, pos =0.7, "Z"] \arrow[r, bend right =20, "U"]
		    &\ast &\star \arrow[l, bend left =20, "W"]
		    &[-0.5cm]
		    &\star \arrow[lll, bend left =30, "Z"] \arrow[from = uullll, bend left =45, "UW", crossing over] \arrow[from = uullllll, bend right=3, "W^3",crossing over]\\
		    \phantom{\bullet} &[-0.5cm]\phantom{\bullet}
		    &[-0.5cm]\phantom{\bullet} &\phantom{\circ} &\phantom{\bullet} 
		    &\star \arrow[ul, "Z"]\arrow[ur, "W"]
		    &[-0.5cm]\phantom{\bullet}
		    &\phantom{\circ} &\phantom{\bullet} 
		    & 
		    &[-0.5cm]\phantom{\bullet}
		    &\phantom{\circ} &\phantom{\bullet} 
		    &[-0.5cm]\phantom{\bullet}
		    &[-0.5cm]\phantom{\bullet}\\[-0.5cm]
		    \phantom{\bullet} &[-0.5cm]\phantom{\bullet}
		    &[-0.5cm]\phantom{\bullet} &\phantom{\circ} &\phantom{\bullet} 
		    &[-0.5cm]\phantom{\bullet}
		    &[-0.5cm]\phantom{\bullet}
		    &\phantom{\circ} &\phantom{\bullet} 
		    &[-0.5cm]\bullet\arrow[dl, "Z"]\arrow[dr, "W"] \arrow[uuu,pos =0.2,"U"]
		    &[-0.5cm]\phantom{\bullet}
		    &\phantom{\circ} &\phantom{\bullet} 
		    &[-0.5cm]\phantom{\bullet}
		    &[-0.5cm]\phantom{\bullet}\\
		    \phantom{\bullet} &[-0.5cm]\phantom{\bullet}
		    &[-0.5cm]\phantom{\bullet} &\phantom{\circ} &\bullet  \arrow[rrr, bend left =30, pos=0.7, "W"] \arrow[uuu, "Z"]
		    &[-0.5cm]\phantom{\bullet}
		    &\bullet \arrow[r, bend left =20, "Z"]\arrow[uuu,"Z"]
		    &\circ \arrow[uuu, "U"]&\bullet \arrow[uuu,"U"] \arrow[l, bend left=20, "W"]
		    &[-0.5cm] 
		    &\bullet \arrow[uuu, "U"]\arrow[lll, bend left =30,pos = 0.7, "Z"]
		    &\phantom{\circ} &\phantom{\bullet} 
		    &[-0.5cm]\phantom{\bullet}
		    &[-0.5cm]\phantom{\bullet}\\
		    \phantom{\bullet} &[-0.5cm]
		    &[-0.5cm]\phantom{\bullet} &\phantom{\circ} &\phantom{\bullet} 
		    &[-0.5cm]\bullet \arrow[uuu, pos = 0.3,"Z"] \arrow[ul, "Z"] \arrow[ur, "W"]
		    &[-0.5cm]\phantom{\bullet}
		    &\phantom{\circ} &\phantom{\bullet} 
		    &[-0.5cm] 
		    &[-0.5cm]\phantom{\bullet}
		    &\phantom{\circ} &\phantom{\bullet} 
		    &[-0.5cm]\phantom{\bullet}
		    &[-0.5cm]\phantom{\bullet}\\[-0.5cm]
		    \phantom{\bullet} &[-0.5cm]\phantom{\bullet}
		    &[-0.5cm]\phantom{\bullet} &\phantom{\circ} &\phantom{\bullet} 
		    &[-0.5cm]\phantom{\bullet}
		    &[-0.5cm]\phantom{\bullet}
		    &\phantom{\circ} &\phantom{\bullet} 
		    &[-0.5cm]\star\arrow[dl, "Z"]\arrow[dr, "W"] \arrow[from = uuu, pos = 0.3, "W"]
		    &[-0.5cm]\phantom{\bullet}
		    &\phantom{\circ} &\phantom{\bullet} 
		    &[-0.5cm]\phantom{\bullet}
		    &[-0.5cm]\phantom{\bullet}\\
		    \star \arrow[rrr, bend left =30, "W"] &[-0.5cm]\phantom{\bullet}
		    &\star\arrow[r, bend left =20, "Z"] &\ast \arrow[from = uuuuuur, bend right, "UW"] \arrow[from = uuuuuurrr, "UZ", crossing over, pos = 0.8, bend left =9]&\star \arrow[l, bend right =20, "U"] \arrow[rrr, bend left =30, pos =0.7,"W"] \arrow[from = uuu, "U"]
		    &[-0.5cm]\phantom{\bullet}
		    &\star \arrow[lll, bend right =30, "Z^2",pos =0.7] \arrow[r, bend left =20, "Z"] \arrow[from = uuu, "U"]
		    &\ast  \arrow[from = uuu, "U"]&\star \arrow[l, bend left =20, "U"] \arrow[from = uuu, "W"] \arrow[uuuuuurrr, "UW", crossing over, pos = 0.8, bend left =8 ]
		    &[-0.5cm] 
		    &\star \arrow[lll, bend left =30, "Z^2"] \arrow[from = uuu, "W"] \arrow[uuuuuur, bend right, "UZ"]
		    &\phantom{\circ} &\phantom{\bullet} 
		    &[-0.5cm]\phantom{\bullet}
		    &[-0.5cm]\phantom{\bullet}\\
		    \phantom{\bullet} &[-0.5cm]\star \arrow[ul, pos = 0.4,"Z"]\arrow[ur,pos = 0.4,"W"]
		    &[-0.5cm]\phantom{\bullet} &\phantom{\circ} &\phantom{\bullet} 
		    &[-0.5cm]\star \arrow[ul, "Z"]\arrow[ur,"W"] \arrow[from =uuu,pos = 0.25,"U"] \arrow[from = dd, pos = 0.4, "U"]
		    &[-0.5cm]\phantom{\bullet}
		    &\phantom{\circ} &\phantom{\bullet} 
		    &[-0.5cm] 
		    &[-0.5cm]\phantom{\bullet}
		    &\phantom{\circ} &\phantom{\bullet} 
		    &[-0.5cm]\phantom{\bullet}
		    &[-0.5cm]\phantom{\bullet}\\
		    \phantom{\bullet} &[-0.5cm]\phantom{\bullet}
		    &[-0.5cm]\phantom{\bullet} &\phantom{\circ} &\bullet \arrow[uu, "U"] \arrow[rrr ,bend left=30, "W", pos =0.75]
		    \arrow[uullll, bend left =45, "UZ", crossing over] 
		    &[-0.5cm]\phantom{\bullet}
		    &[-0.5cm]\bullet \arrow[uu, "U"] \arrow[r, bend right =20, "Z" ] \arrow[uullllll, bend right=3, "Z^3",crossing over]
		    &\circ \arrow[uu,"U"]& 
		    &[-0.5cm] 
		    &[-0.5cm] 
		    &  &\phantom{\bullet} 
		    &[-0.5cm]\phantom{\bullet}
		    &[-0.5cm]\phantom{\bullet}\\
		    \phantom{\bullet} &[-0.5cm]\phantom{\bullet}
		    &[-0.5cm]\phantom{\bullet} &\phantom{\circ} &\phantom{\bullet} 
		    &[-0.5cm]\bullet  \arrow[ul,"Z"]\arrow[ur,"W"]
		    &[-0.5cm]\phantom{\bullet}
		    &\phantom{\circ} & 
		    &[-0.5cm] 
		    &[-0.5cm] 
		    &  &\phantom{\bullet} 
		    &[-0.5cm]\phantom{\bullet}
		    &[-0.5cm]\phantom{\bullet}\\
			\end{tikzcd}
		}
	\adjustbox{scale=0.8}{
\begin{tikzcd}
	 [row sep =  -1mm, column sep =-1mm, arrows={shorten = -1mm}]
&   &[1.5cm]   &   &\bullet \arrow[dddd]    &  &   &   &[1.2cm]\bullet \arrow[llll] \arrow[dddd]   &  &   & \\
&   &   &   &   &\bullet \arrow[dddd]    &  &   &   &\bullet \arrow[llll] \arrow[dddd]   &  & \\
\bullet &   &   &\bullet  \arrow[lll] \arrow[dddddd] &   &    &\bullet  \arrow[dddd]   &  &   &   &\bullet \arrow[llll] \arrow[dddd]   & \\
\bullet\arrow[ddd]&   & \bullet \arrow[ll] \arrow[ddd]   &   &  &    &   &\bullet  \arrow[dddd]   &  &   &   &\bullet \arrow[llll]  \arrow[dddd] \\[1.2cm]
&   &   &   &\bullet   &      &   &   &\bullet \arrow[llll]   &   &   & \\
&   &   &   &   &\bullet    &  &   &   &\bullet \arrow[llll]   &  & \\
\bullet&   &\bullet\arrow[ll]  &   &   &    &\bullet    &    &   &   &\bullet \arrow[llll]   &  \\
&   &   &   &   &    &    &\bullet   &   &   &   &\bullet \arrow[llll] \\
\bullet \arrow[dd]&   & \bullet \arrow[ll] \arrow[dd] &\bullet   &   &    &   &   &   & \bullet \arrow[llllll] \arrow[ddd] &   & \\
&\bullet \arrow[dd] &   &\bullet \arrow[ll] \arrow[dd]  &  &\bullet\arrow[dd]    &   &   & \bullet \arrow[dd] \arrow[lll] &   &   & \\[1.5 cm]
\bullet&   & \bullet \arrow[ll]  &   &   &    &   &   &   &   &   & \\
&\bullet   &  &\bullet \arrow[ll]  &  &\bullet    &   &   &\bullet \arrow[lll]  & \bullet  &   & 
\end{tikzcd}
		}
		\caption{A model of $\cCFK(\Maz (T_{2,3},-1))$.  In the second diagram, each vertical arrow is marked by $Z$ and  each horizontal arrow is marked by $W$.}
		\label{fig:CFK(Maz_{-1}(RHT))}
	\end{figure}	
	\item When $n=0$, $\cCFK(\Maz(T_{2,3},0))$ is homotopy equivalent to the complex in Figure \ref{fig:CFK(Maz_0(RHT))}.
\begin{figure}[htb!]
		\adjustbox{scale=0.8}{
			\begin{tikzcd}[labels =description, column sep = 1cm]
               \phantom{\circ} &\phantom{\bullet}&[-0.5cm]\bullet \arrow[dl,"Z"]\arrow[dr,"W"]\arrow[dd,"U"] & [-0.5cm]\phantom{\bullet} & \phantom{\circ} &\phantom{\bullet} &[-0.5cm] \phantom{\bullet} &[-0.5cm]\phantom{\bullet} & \phantom{\circ}\\
               \circ &\bullet \arrow[l,bend left=20, "W"] \arrow[dd, pos=0.7, "U"]&  & \bullet \arrow[lll, bend left =30, pos=0.7,"Z"] \arrow[dd, pos =0.7, "U"]& \phantom{\circ} &\phantom{\bullet} &[-0.5cm] \phantom{\bullet} &[-0.5cm]\phantom{\bullet} & \phantom{\circ}\\
               \phantom{\circ} &\phantom{\bullet}&[-0.5cm]\star \arrow[dl,"Z"]\arrow[dr,"W"]& [-0.5cm]\phantom{\bullet} & \phantom{\circ} &\phantom{\bullet} &[-0.5cm] \phantom{\bullet} &[-0.5cm]\phantom{\bullet} & \phantom{\circ}\\
               \phantom{\circ} &\star\arrow[rrr, bend right =40, pos=0.75,"W^2"]& & \star \arrow[r, bend right=20, "U"]&\ast &\star \arrow[l, bend right=20, "W"] &[-0.5cm] \phantom{\bullet} &\star \arrow[lll, bend right=40,pos =0.5,"Z"] \arrow[from = uullll, bend left, "UW", crossing over] \arrow[from = uullllll, bend right=10, "W^3",crossing over]& \phantom{\circ}\\
               \phantom{\circ} & & &  &  &  &\star \arrow[ul, "Z"]\arrow[ur,"W"] & & \phantom{\circ}\\[-0.5cm]
               \phantom{\circ} & & \bullet \arrow[uuu,pos =0.3,"Z"] \arrow[dl, "Z"]\arrow[dr, "W"]&  &  &  &  & & \phantom{\circ}\\
               \phantom{\circ} &\bullet \arrow[uuu, "Z"] \arrow[rrr, bend right = 40, pos = 0.75, "W"] &  & \bullet\arrow[uuu, "Z"] \arrow[r, bend right = 20, "Z"]& \circ\arrow[uuu,"U"] & \bullet \arrow[uuu, "U"] \arrow[l, bend right = 20, "W"] &  &\bullet \arrow[uuu, "U"] \arrow[lll, bend right =40, pos = 0.75, "Z"] & \phantom{\circ}\\
               \phantom{\circ} & &  &  &  &  &  \bullet \arrow[uuu, "U", pos = 0.3]\arrow[ul, "Z"]\arrow[ur, "W"]& & \phantom{\circ}\\[-0.5cm]
               \phantom{\circ} & & \star \arrow[from = uuu,"U", pos = 0.3] \arrow[dl, "Z"]\arrow[dr, "W"]&  &  &  &  & & \phantom{\circ}\\
               \phantom{\circ} & \star \arrow[from = uuu, "U"] \arrow[rrr, bend right =40, pos = 0.5, "W"] &  & \star \arrow[from = uuu, "U"] \arrow[r, bend right =20, "Z"]& \ast \arrow[from = uuu, "U"] & \star\arrow[l ,bend right =20, "U"] \arrow[from = uuu, "W"]&  & \star \arrow[lll, bend right =40, pos = 0.75, "Z^2"]\arrow[from= uuu, "W"] & \phantom{\circ}\\
               \phantom{\circ} & &  &  &  &  & \star \arrow[from = uuu, "W", pos = 0.3]  \arrow[ul, "Z"]\arrow[ur, "W"]& & \phantom{\circ}\\
               \phantom{\circ} & &  &  &  & \bullet  \arrow[rrr, bend left =30,pos = 0.7, "W"]\arrow[uu, pos =0.6, "U"] \arrow[uullll, bend left, "UZ", crossing over]&  & \bullet\arrow[r, bend left = 20, "Z"] \arrow[uu, pos=0.6, "U"]
               \arrow[uullllll, bend right=10, "Z^3",crossing over] & \circ \\
                  \phantom{\circ} & &  &  &  &  &\bullet \arrow[ul, "Z"]\arrow[ur, "W"]\arrow[uu, "U"]  &  &  \\
			\end{tikzcd}
		}
	
	\adjustbox{scale=0.8}{
      \begin{tikzcd}
	 [row sep =  -1mm, column sep =-1mm, arrows={shorten = -1mm}]
\bullet &  &[1.2cm]    &\bullet  \arrow[lll] \arrow[dddddd] &\bullet\arrow[dddd]   &    &   &  &[1.6cm]\bullet\arrow[llll] \arrow[dddd]   &   &   & \\
\bullet\arrow[ddd]&   & \bullet\arrow[ll] \arrow[ddd]   &   &  &\bullet  \arrow[dddd]    &   &   &  &\bullet \arrow[llll]  \arrow[dddd]   &   & \\[1.5cm]
&   &   &   &   &    &   &   &   &  &   & \\
&   &   &   &   &    &  &   &   &   &  & \\
\bullet&   &\bullet\arrow[ll]  &   &\bullet   &    &   &    &\bullet \arrow[llll]   &   &   &  \\
&   &   &   &   &\bullet     &   &   &   &\bullet \arrow[llll]   &   & \\
\bullet \arrow[dd]&   & \bullet \arrow[ll] \arrow[dd] &\bullet   &   &    &   &   &   & \bullet \arrow[llllll] \arrow[ddd] &   & \\
 &\bullet \arrow[dd] &   &\bullet \arrow[ll] \arrow[dd]  &  &\bullet\arrow[dd]    &   &   & \bullet\arrow[dd] \arrow[lll] &   &   & \\[1.2 cm]
\bullet&   & \bullet \arrow[ll]  &   &   &    &   &   &   &   &   & \\
&\bullet   &  &\bullet \arrow[ll]  &  &\bullet    &   &   &\bullet \arrow[lll]  & \bullet  &   & 
\end{tikzcd}
}		
\caption{A model of $\cCFK(\Maz (T_{2,3},0))$. In the second diagram, each vertical arrow is marked by $Z$ and  each horizontal arrow is marked by $W$.}
		\label{fig:CFK(Maz_0(RHT))}
	\end{figure}
	\item When $n=1$, $\cCFK(\Maz (T_{2,3},1))$ is homotopy equivalent to the complex in Figure \ref{fig:CFK(Maz_1(RHT))}.
				\begin{figure}[htb!]
		\adjustbox{scale=0.8}{
			\begin{tikzcd}[labels =description, column sep = 0.8cm]
               \phantom{\circ}& \phantom{\bullet} & [-0.5cm] \bullet \arrow[dl, "Z"] \arrow[dr, "W"] &[-0.5cm]\phantom{\bullet} & \phantom{\circ} &[-0.5cm]\phantom{\bullet} &[-0.5cm]\phantom{\bullet} &[-0.5cm]\phantom{\bullet} 
               & \phantom{\circ}&\phantom{\bullet} &[-0.5cm]\phantom{\bullet} &[-0.5cm]\phantom{\bullet} & \phantom{\circ}
               &\phantom{\bullet}  &[-0.5cm]\phantom{\bullet}  &[-0.5cm]\phantom{\bullet} &\phantom{\circ} \\               
              \circ& \bullet \arrow[l, bend left=20, "W"]& [-0.5cm]  &[-0.5cm]\bullet \arrow[lll,bend left =50, pos =0.25, "Z"] & \phantom{\circ} &[-0.5cm]\phantom{\bullet} &[-0.5cm]\phantom{\bullet} &\phantom{\bullet} 
               & \phantom{\circ}&[-0.5cm]\phantom{\bullet} &[-0.5cm]\phantom{\bullet} &\phantom{\bullet} & \phantom{\circ}
               &[-0.5cm]\phantom{\bullet}  &[-0.5cm]\phantom{\bullet}  &\phantom{\bullet} &\phantom{\circ} \\               
               \phantom{\circ}& \phantom{\bullet} & [-0.5cm]  &[-0.5cm]\phantom{\bullet} & \phantom{\circ} &[-0.5cm] &[-0.5cm]\star\arrow[dl,"Z"]\arrow[dr,"W"] \arrow[from = dd,pos=0.5,"Z"]&\phantom{\bullet} 
               & \phantom{\circ}&[-0.5cm]\phantom{\bullet} &[-0.5cm]\phantom{\bullet} &\phantom{\bullet} & \phantom{\circ}
               &[-0.5cm]\phantom{\bullet}  &[-0.5cm]\phantom{\bullet}  &\phantom{\bullet} &\phantom{\circ}  \\
               \phantom{\circ}& \phantom{\bullet} & [-0.5cm]  &[-0.5cm]\phantom{\bullet} & \phantom{\circ} &[-0.5cm]\star \arrow[from =dd, "Z",pos =0.4] &[-0.5cm]\phantom{\bullet} &\star  \arrow[from = uullll, bend left=50, "UW",crossing over] \arrow[from = uullllll, bend right =60, "W^3", crossing over]
               \arrow[from =dd, "Z",pos =0.4]
               & \phantom{\circ}&[-0.5cm]\phantom{\bullet} &[-0.5cm]\phantom{\bullet} &\phantom{\bullet} & \phantom{\circ}
               &[-0.5cm]\phantom{\bullet}  &[-0.5cm]\phantom{\bullet}  &\phantom{\bullet} &\phantom{\circ}  \\
               \phantom{\circ}& \phantom{\bullet} & [-0.5cm]  &[-0.5cm]\phantom{\bullet} & \phantom{\circ} &[-0.5cm] &[-0.5cm]\bullet\arrow[dl,"Z"]\arrow[dr,"W"] &\phantom{\bullet} 
               & \phantom{\circ}&[-0.5cm]\phantom{\bullet} &[-0.5cm]\phantom{\bullet} &\phantom{\bullet} & \phantom{\circ}
               &[-0.5cm]\phantom{\bullet}  &[-0.5cm]\phantom{\bullet}  &\phantom{\bullet} &\phantom{\circ}  \\
               \phantom{\circ}& \phantom{\bullet} & [-0.5cm]  &[-0.5cm]\phantom{\bullet} & \phantom{\circ} &[-0.5cm] \bullet  \arrow[rrr,bend right =50, "W"]&  &\bullet \arrow[r, bend right =20, "Z"]
               & \circ&\bullet \arrow[l, bend right=20, "W"] &[-0.5cm]\phantom{\bullet} &\bullet  \arrow[lll, bend right =50, "Z"]& \phantom{\circ}
               &[-0.5cm]\phantom{\bullet}  &[-0.5cm]\phantom{\bullet}  &\phantom{\bullet} &\phantom{\circ}  \\
                \phantom{\circ}& \phantom{\bullet} & [-0.5cm]  &[-0.5cm]\phantom{\bullet} & \phantom{\circ} &[-0.5cm]\phantom{\bullet} &[-0.5cm]\phantom{\bullet} &[-0.5cm]\phantom{\bullet} 
               & \phantom{\circ}&\phantom{\bullet} &\bullet \arrow[ul, "Z"] \arrow[ur, "W"] &[-0.5cm]\phantom{\bullet} & \phantom{\circ}
               &\phantom{\bullet}  &[-0.5cm]\phantom{\bullet}  &[-0.5cm]\phantom{\bullet} &\phantom{\circ} \\
               \phantom{\circ}& \phantom{\bullet} & [-0.5cm]  &[-0.5cm]\phantom{\bullet} & \phantom{\circ} &[-0.5cm]\phantom{\bullet} &[-0.5cm]\phantom{\bullet} &[-0.5cm]\phantom{\bullet} 
               & \phantom{\circ}&\star \arrow[from = uu,pos=0.3, "W"]&  & \star \arrow[from = uu, pos=0.3,"W"]& \phantom{\circ}
               &\phantom{\bullet}  &[-0.5cm]\phantom{\bullet}  &[-0.5cm]\phantom{\bullet} &\phantom{\circ}\\
               \phantom{\circ}& \phantom{\bullet} & [-0.5cm]  &[-0.5cm]\phantom{\bullet} & \phantom{\circ} &[-0.5cm]\phantom{\bullet} &[-0.5cm]\phantom{\bullet} &[-0.5cm]\phantom{\bullet} 
               & \phantom{\circ}& & \star \arrow[from = uu, pos =0.5, "W"] \arrow[ul, "Z"]\arrow[ur, "W"]&  & \phantom{\circ}
               &\phantom{\bullet}  &[-0.5cm]\phantom{\bullet}  &[-0.5cm]\phantom{\bullet} &\phantom{\circ}\\
               \phantom{\circ}& \phantom{\bullet} & [-0.5cm]  &[-0.5cm]\phantom{\bullet} & \phantom{\circ} &[-0.5cm]\phantom{\bullet} &[-0.5cm]\phantom{\bullet} &[-0.5cm]\phantom{\bullet} 
               & \phantom{\circ}& &  &  & \phantom{\circ}
               &\bullet \arrow[rrr, bend left =50,pos=0.25, "W"] \arrow[uullll, crossing over, bend left =50, "UZ"] &[-0.5cm]\phantom{\bullet}  &\bullet \arrow[r,bend left=20, "Z"] \arrow[uullllll, crossing over, bend right =60, "Z^3"]&\circ\\
               \phantom{\circ}& \phantom{\bullet} & [-0.5cm]  &[-0.5cm]\phantom{\bullet} & \phantom{\circ} &[-0.5cm]\phantom{\bullet} &[-0.5cm]\phantom{\bullet} &[-0.5cm]\phantom{\bullet} 
               & \phantom{\circ}& &  &  & \phantom{\circ}
               &  &[-0.5cm]\bullet \arrow[ul, "Z"]\arrow[ur, "W"]  & & \\
         \end{tikzcd}}
\vspace*{-3mm}
	\adjustbox{scale=0.8}{
\begin{tikzcd}
	 [row sep =  -1mm, column sep =-1mm, arrows={shorten = -1mm}]
\bullet\arrow[d] &[1cm] \bullet \arrow[l]\arrow[d]  &     &  &[1cm]   &       &[1cm]  \\[1cm]
\bullet&\bullet\arrow[l]    &     &\bullet\arrow[dd]  &   &\bullet \arrow[ll]\arrow[dd]  & \\
\bullet&\bullet\arrow[l] \arrow[ddd]   &\bullet \arrow[dd]     &  &\bullet \arrow[ll]\arrow[dd]  &   & \\[1cm]
&   &    &\bullet  &   &\bullet \arrow[ll]   & \\
&   &\bullet    &  &\bullet\arrow[ll]   &   & \\
& \bullet  &   &  &\bullet \arrow[lll] \arrow[d] &\bullet\arrow[d]   &\bullet \arrow[l]\arrow[d] \\[1cm]
&   &    &  &\bullet   &\bullet  &\bullet \arrow[l] 
\end{tikzcd}		}
		\caption{A model of $\cCFK(\Maz (T_{2,3},1)).$  In the second diagram, each vertical arrow is marked by $Z$ and  each horizontal arrow is marked by $W$.}
		\label{fig:CFK(Maz_1(RHT))}
	\end{figure}
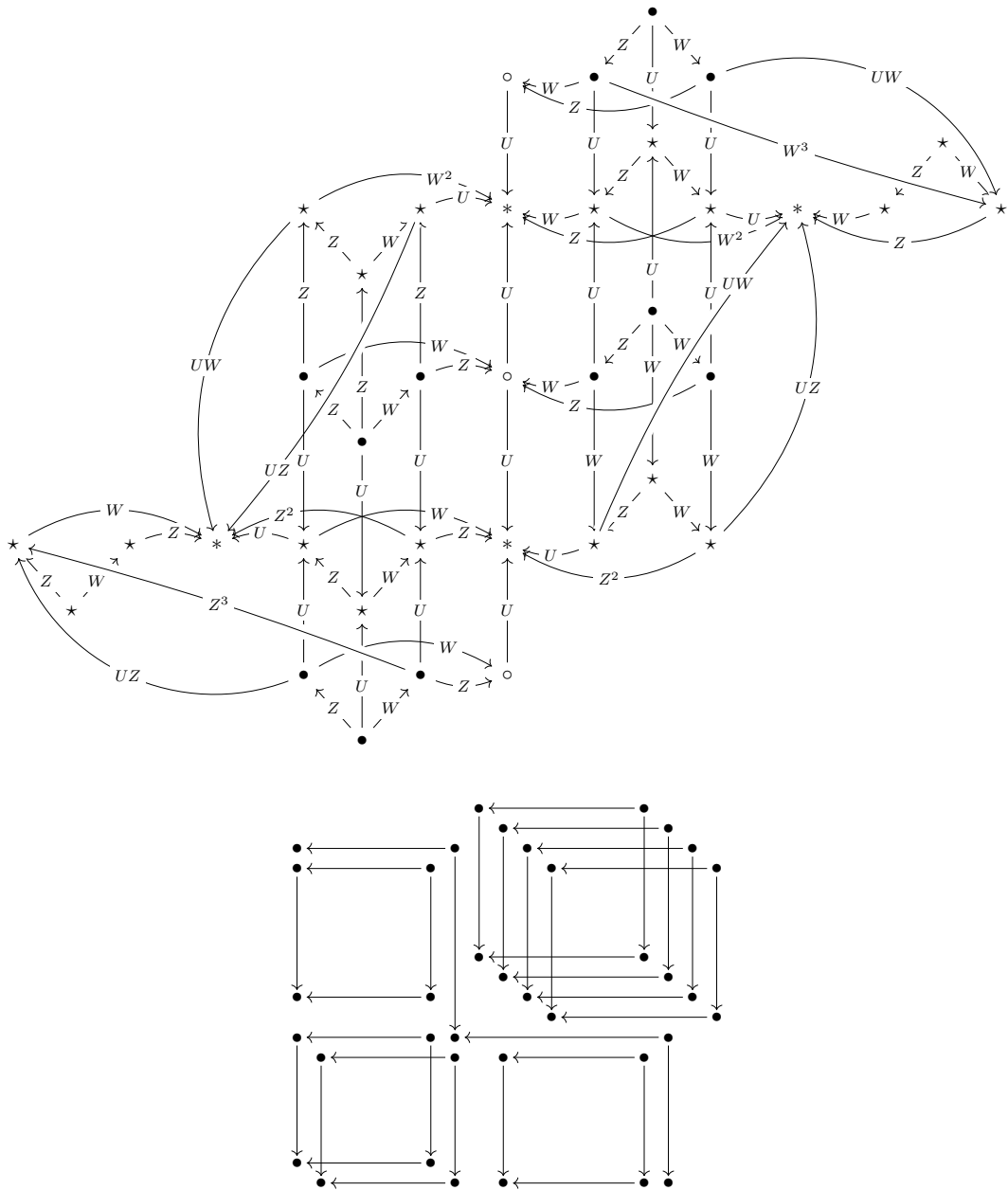
	\item When $n\ge 2$, $\cCFK(\Maz(T_{2,3},n))$ is homotopy equivalent to the one in Figure \ref{fig:CFK(Maz_n(RHT))}, with $n-2$ many copies of the left square $S_1$ on the left $\cdots$ (including the drawn one), and $n-2$ many copies of the right square $S_2$ on the right $\cdots$. 
	\[ 
	\begin{tikzcd}[labels = description]
		& \bullet\arrow[dl,"Z"]
		\arrow[dr,"U"] & &[1cm] & &\bullet \arrow[dl,"Z^2"]
		\arrow[dr,"Z"]& \\
		\circ& \bullet\arrow[u,"W"]\arrow[d,"Z"]&\circ & &\circ&\bullet\arrow[u,"W"]\arrow[d,"Z"]&\circ \\
		& \bullet \arrow[ul,"W"]\arrow[ur,"W^2"]& &  & &\bullet\arrow[ul,"U"]\arrow[ur,"W"] & \\[-5mm]
		& S_1& & & &S_2&
	\end{tikzcd}
	\]
	\begin{figure}[h]
		\adjustbox{scale=0.8}{
			\begin{tikzcd}[labels =description, column sep = 1cm]
		  \phantom{\bullet}& [-0.5cm]\bullet \arrow[dl,"Z"]\arrow[dr,"W"] &[-0.5cm] \phantom{\bullet}& \phantom{\circ}& [-0.5cm]\bullet \arrow[dl,"Z"]\arrow[dr,"U"]& [-0.5cm]\phantom{\bullet}& [-1cm]\phantom{\cdots}& [-1cm]\phantom{\circ}& [-0.5cm]\bullet \arrow[dl,"Z^2"]\arrow[dr,"Z"] &[-0.5cm] \phantom{\circ}&[-0.5cm]\bullet \arrow[dl,"Z"]\arrow[dr,"U"] &[-0.5cm]\phantom{\circ}&[-1cm]\phantom{\cdots}&[-1cm]\phantom{\circ}&[-0.5cm]\bullet \arrow[dl,"Z^2"]\arrow[dr,"Z"] &[-0.5cm]\phantom{\circ}&\phantom{\bullet} &[-0.5cm]\circ\arrow[from = dl,"W"] \arrow[from = dr,"Z"] & [-0.5cm]\phantom{\bullet}\\ 
		   \bullet\arrow[dr,"W"]&    &\bullet \arrow[dl,"Z"] & \circ \arrow[from =l, bend left =30, "U^2W"] \arrow[from = lll, bend right =40,  pos = 0.7,"UW^3"] \arrow[from =dr,"W"]&  \bullet\arrow[u,"W"] \arrow[d,"Z"]& \circ \arrow[from = dl, "W^2"]& \cdots& \circ \arrow[from=dr, "U"]&  \bullet \arrow[u,"W"] \arrow[d,"Z"] &\circ \arrow[from =dl, "W"] \arrow[from =dr,"W"]& \bullet\arrow[u,"W"] \arrow[d,"Z"] &\circ \arrow[from = dl, "W^2"]&\cdots&\circ\arrow[from = dr, "U"]& \bullet \arrow[u,"W"] \arrow[d,"Z"]&\circ\arrow[from =dl,"W"]&\bullet\arrow[from = dr,"Z"] \arrow[l,bend left =30, "U^2Z"]&  & \bullet\arrow[lll, bend right =40, pos = 0.7, "UZ^3"]\arrow[from=dl,"W"]\\ 
		    \phantom{\bullet}& [-0.5cm]\circ &[-0.5cm] \phantom{\bullet}& \phantom{\circ}& [-0.5cm]\bullet& [-0.5cm]\phantom{\bullet}& \phantom{\cdots}& \phantom{\circ}& [-0.5cm]\bullet &[-0.5cm] \phantom{\circ}&[-0.5cm]\bullet &[-0.5cm]\phantom{\circ}&\phantom{\cdots}&\phantom{\circ}&[-0.5cm]\bullet &[-0.5cm]\phantom{\circ}&\phantom{\bullet} &[-0.5cm]\bullet & [-0.5cm]\phantom{\bullet}\\  
		\end{tikzcd}
		}
		\caption{A model of $\cCFK(\Maz(T_{2,3},n))$ when $n\ge 2$}
		\label{fig:CFK(Maz_n(RHT))}
	\end{figure}
\end{enumerate}

\newpage
\subsection{L-space satellite operations on the right hand trefoil}

	Summarizing what we have done in the previous three examples, here is the general pattern for the L-space satellite operations when the companion knot $K$ is the right hand trefoil. 
	
   The row \[\begin{tikzcd}
   	\cdots&[-1cm] E_{-\frac{3}{2},t} \arrow[r,"
   	\Phi^{\mu}"] & F_{-\frac{3}{2},t} & E_{-\frac{1}{2},t} \arrow[l,"\Phi^{-\mu}"]\arrow[r,"
   	\Phi^{\mu}"] & F_{-\frac{1}{2},t} & E_{\frac{1}{2},t} \arrow[l,"\Phi^{-\mu}"]\arrow[r,"
   	\Phi^{\mu}"] & F_{\frac{1}{2},t} & E_{\frac{3}{2},t} \arrow[l,"\Phi^{-\mu}"]&[-1cm] \cdots 
   \end{tikzcd}, \] which combines the tensor product of $\cCFK(T_{2,3})$ with $f^{\mu}:\scE_{*,t} \to \scF_{*,t}$ and the tensor product of $\cCFK(T_{2,3})$ with $f^{-\mu}:\scE_{*,t} \to \scF_{*,t}$ , takes the following form as in Figure \ref{fig:E_F_rows_RHT}.
   \begin{figure}[h]
   	\adjustbox{scale=0.7}{
   		\begin{tikzcd}
   			[labels=description, column sep=3mm, shorten=-1mm]
   		\cdots&[-2mm]	& [-4mm]\cC_{t-\frac{1}{2}} \arrow[dl, "U"]\arrow[dr, "1"]\arrow[rrr, bend left=40, "L_{\sigma}"]  &[-4mm] & & [-2mm]\cS\arrow[dl, "U"] \arrow[dr,"1"] &[-2mm] & &[-4mm] \cC_{t-\frac{1}{2}}\arrow[dl,"L_Z"]\arrow[dr, "1"] \arrow[rrr, bend left=40, "L_{\sigma}"]  \arrow[lll ,bend right =40, "L_{\tau}"] \arrow[dllll, crossing over, pos = 0.4,"h_{\tau, Z}"]  \arrow[drr, crossing over, "h_{\sigma,Z}"]&[-4mm]& &[-2mm] \cS \arrow[dl, "1"]\arrow[dr,"1"] &[-2mm] & &[-4mm] \cC_{t+\frac{1}{2}} \arrow[dl,"1"] \arrow[dr, "L_W"]\arrow[rrr, bend left=40, "L_{\sigma}"] \arrow[lll ,bend right =40, "L_{\tau}"] \arrow[dll, crossing over, "h_{\tau, W}"]&[-4mm]   & & [-2mm]\cS\arrow[dl, "1"]\arrow[dr,"U"] & [-2mm] & &[-4mm]\cC_{t+\frac{1}{2}}\arrow[dl, "1"] \arrow[dr,"U"] \arrow[lll ,bend right =40, "L_{\tau}"] &[-4mm] &[-2mm] \cdots\\
   			\cdots&\cC_{t-\frac{1}{2}} \arrow[rrr, bend right=40, pos = 0.4, "L_{\sigma}"]  & &\cC_{t-\frac{1}{2}} \arrow[rrr, bend right=50, pos = 0.4, "L_{\sigma}"] & \cS & & \cS & \cC_{t+\frac{1}{2}} \arrow[rrr, bend right=40, pos = 0.4, "L_{\sigma}"] \arrow[lll, bend left =50, "L_{\tau}"]& & \cC_{t-\frac{1}{2}}  \arrow[rrr, bend right=50, pos = 0.4, "L_{\sigma}"] \arrow[lll, bend left =40, "L_{\tau}"] & \cS & & \cS & \cC_{t+\frac{1}{2}} \arrow[rrr, bend right=40, pos = 0.4, "L_{\sigma}"] \arrow[lll, bend left =50, "L_{\tau}"]& & \cC_{t-\frac{1}{2}}  \arrow[rrr, bend right=50, pos = 0.4, "L_{\sigma}"] \arrow[lll, bend left =40, "L_{\tau}"]& \cS & & \cS \arrow[from =ullll, crossing over, "h_{\sigma,W}",pos= 0.4]&\cC_{t+\frac{1}{2}} \arrow[lll, bend left =50, "L_{\tau}"] & &\cC_{t+\frac{1}{2}} \arrow[lll, bend left =40, "L_{\tau}"] &\cdots
   		\end{tikzcd}
   	}
   \caption{The $E_{*,t}, F_{*,t}$ rows when the companion knot $K$ is the right hand trefoil}
   \label{fig:E_F_rows_RHT}
   \end{figure}
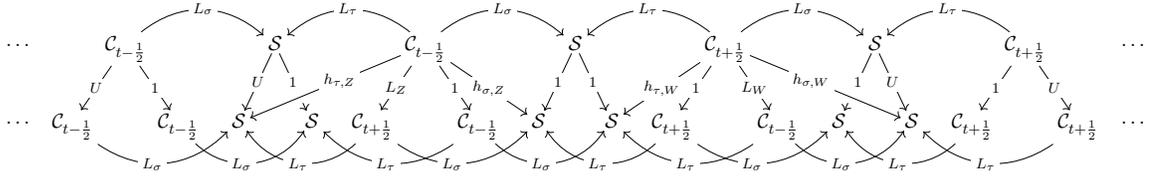

 Then, we can remove arrows $\cC_{t-\frac{1}{2}}\xrightarrow{1}\cC_{t-\frac{1}{2}}$ and $\cC_{t+\frac{1}{2}}\xrightarrow{1}\cC_{t+\frac{1}{2}}$, as they are contractible quotient complex of the row (and hence quotient complex of the whole $2-$dimensional grid as well). After that, we can remove arrows $\cS \xrightarrow{1}\cS$, as they are now null homotopic quotient complex of the new row. (For the middle $F_{\frac{1}{2},t}$, we do a change of basis by adding the two copies of $\cS$ together first.) 
 
 After the simplification, the $E_{*,t}$ and $F_{*,t}$ rows can be replaced by  
 \[\begin{tikzcd}[labels={description},column sep=8mm]
 	\cdots&[-8mm]\cC_{t-\frac{1}{2}} \arrow[r,"
 	L_{\sigma}"] & \cS & \cC_{t+\frac{1}{2}} \arrow[l,"L_{\tau}"]\arrow[r,"
 	L_{\sigma}"] & \cS & \cC_{t- \frac{1}{2}} \arrow[l,"L_{\tau}"]\arrow[r,"
 	L_{\sigma}"] & \cS & \cC_{t+\frac{1}{2}} \arrow[l,"L_{\tau}"]  & [-8mm] \cdots 
 \end{tikzcd}, \]
which continues as $\cdots\xleftarrow{L_{\tau}} \cC_{t-\frac{1}{2}} \xrightarrow{L_\sigma} \cS \cdots$ to the left end, and $\cdots\xleftarrow{L_{\tau}} \cC_{t+\frac{1}{2}} \xrightarrow{L_\sigma} \cS\cdots$ to the right end.

The $J_{*,t}$ and $M_{*,t}$ rows are by definition:
\[\begin{tikzcd}[labels={description}]
	\cdots&[-1cm] \cC_{t+\frac{1}{2}} \arrow[r,"
	L_{\sigma}"] & \cS & \cC_{t+\frac{1}{2}} \arrow[l,"L_{\tau}"]\arrow[r,"
	L_{\sigma}"] & \cS & \cC_{t+\frac{1}{2}} \arrow[l,"L_{\tau}"]\arrow[r,"
	L_{\sigma}"] & \cS & \cC_{t+\frac{1}{2}} \arrow[l,"L_{\tau}"]&[-1cm] \cdots 
\end{tikzcd}. \] 

After this simplification, the maps $\Phi^{K}$ from the $E_{*,t},F_{*,t}$ rows to the $J_{*,t},M_{*,t}$ rows are given by the vertical maps:
\[
\begin{tikzcd}[labels={description}]
	\cdots&[-1cm] \cC_{t-\frac{1}{2}} \arrow[r,"L_{\sigma}"] \arrow[d, "UL_Z"] \arrow[dr, dashed, "Uh_{\sigma, Z}"] & \cS  \arrow[d, pos = 0.6,"U"]& \cC_{t+\frac{1}{2}} \arrow[l,"L_{\tau}"]\arrow[r,"L_{\sigma}"]\arrow[d, "U"] & \cS\arrow[d, pos=0.6,"U"] & \cC_{t- \frac{1}{2}} \arrow[l,"L_{\tau}"]\arrow[r,"L_{\sigma}"] \arrow[d,"L_Z"] \arrow[dl, dashed, "h_{\tau, Z}"] \arrow[dr,dashed, "h_{\sigma,Z}"]& \cS \arrow[d,pos=0.6,"1"]& \cC_{t+\frac{1}{2}} \arrow[l,"L_{\tau}"] \arrow[d,"1"]&[-1cm] \cdots \\
	\cdots&[-1cm] \cC_{t+\frac{1}{2}} \arrow[r,"L_{\sigma}"] & \cS & \cC_{t+\frac{1}{2}} \arrow[l,"L_{\tau}"]\arrow[r,"
	L_{\sigma}"] & \cS & \cC_{t+\frac{1}{2}} \arrow[l,"L_{\tau}"]\arrow[r,"
	L_{\sigma}"] & \cS & \cC_{t+\frac{1}{2}} \arrow[l,"L_{\tau}"]&[-1cm] \cdots 
\end{tikzcd},
\]
where the leftmost vertical arrow is $\Phi^K:E_{-\frac{3}{2},t} \to J_{-\frac{3}{2},t}$, and the rightmost vertical arrow is $\Phi^K:E_{\frac{3}{2},t} \to J_{\frac{3}{2},t}$.

Symmetrically, the maps $\Phi^{-K}$ from the $E_{*,t},F_{*,t}$ rows to the $J_{*,t-1},M_{*,t-1}$ rows are the vertical maps:
\[
\begin{tikzcd}[labels={description}]
	\cdots&[-1cm] \cC_{t-\frac{1}{2}} \arrow[r,"L_{\sigma}"] & \cS   & \cC_{t-\frac{1}{2}} \arrow[l,"L_{\tau}"]\arrow[r,"L_{\sigma}"]\  & \cS & \cC_{t-\frac{1}{2}} \arrow[l,"L_{\tau}"]\arrow[r,"L_{\sigma}"]   & \cS  & \cC_{t-\frac{1}{2}} \arrow[l,"L_{\tau}"]  &[-1cm] \cdots \\
	\cdots&[-1cm] \cC_{t-\frac{1}{2}} \arrow[u,"1"] \arrow[r,"L_{\sigma}"] & \cS \arrow[u,pos = 0.4,"1"] & \cC_{t+\frac{1}{2}}\arrow[u,"L_W"] \arrow[l,"L_{\tau}"]\arrow[r,"
	L_{\sigma}"] \arrow[ul, dashed, "h_{\tau, W}"] \arrow[ur,dashed, "h_{\sigma,W}"]& \cS \arrow[u,pos = 0.4,"U"]& \cC_{t-\frac{1}{2}} \arrow[l,"L_{\tau}"]\arrow[r,"
	L_{\sigma}"]  \arrow[u,"U"]& \cS \arrow[u,pos = 0.4,"U"]& \cC_{t+\frac{1}{2}} \arrow[l,"L_{\tau}"] \arrow[u,"UL_W"] \arrow[ul,dashed, "Uh_{\tau,W}"]&[-1cm] \cdots 
\end{tikzcd},
\]
where the leftmost vertical arrow is $\Phi^K:E_{-\frac{3}{2},t} \to J_{-\frac{3}{2}+n,t-1}$, and the rightmost vertical arrow is $\Phi^K:E_{\frac{3}{2},t} \to J_{\frac{3}{2}+n,t-1}$.

The length $2$ maps in this simplification are the dashed diagonal maps in the above, assuming we choose our $DA$-bimodule structure to be $U$-equivariant, which can always be achieved via the construction in Section~\ref{sec:U-equivariant}. The absolute $(\gr_{\ws},\gr_{\zs})$-grading could be worked out using the formula in Section \ref{sec:grading}.

The final answer of the truncated complex for the L-space satellite of right hand trefoil is then determined by $N,n,\cC_{t_0}, \cS$ and maps between them. For example, when $N=\frac{3}{2}$ and $n=0$, $\cCFK(P(T_{2,3},0))$ is homotopy equivalent to the following complex in Figure \ref{fig:general formula for L-space satellite operation on RHT}.

\begin{figure}[h]
	 \begin{tikzcd}[labels={description}]
	 	\cS & \cC_{-\frac{1}{2}} \arrow[l, "L_{\tau}"] \arrow[r, "L_{\sigma}"] \arrow[d, "U"]& \cS \arrow[d, pos  =0.6,"U"] & \cC_{-\frac{3}{2}}  \arrow[l,"L_{\tau}"] \arrow[d, "L_Z"] \arrow[dl,"h_{\tau,Z}"]& \\
	 	& \cC_{-\frac{1}{2}} \arrow[r, "L_{\sigma}"]& \cS & \cC_{-\frac{1}{2}}  \arrow[l, "L_{\tau}"]& \\
	 	& \cC_{\frac{1}{2}} \arrow[u,"L_W"] \arrow[r,"L_{\sigma}"] \arrow[d,"U"] \arrow[ur, "h_{\sigma,W}"]& \cS \arrow[u,pos =0.4,"U"]\arrow[d,pos = 0.6,"U"] & \cC_{-\frac{1}{2}} \arrow[u,"U"] \arrow[l,"L_{\tau}"] \arrow[d,"L_Z"] \arrow[dl, "h_{\tau,Z}"]& \\
	 	& \cC_{\frac{1}{2}} \arrow[r,"L_{\sigma}"] & \cS & \cC_{\frac{1}{2}} \arrow[l,"L_{\tau}"]&	 \\
	 	& \cC_{\frac{3}{2}} \arrow[u,"L_{W}"] \arrow[r,"L_{\sigma}"] \arrow[ur, "h_{\sigma,W}"]& \cS \arrow[u, pos =0.4, "U"] & \cC_{\frac{1}{2}} \arrow[l,"L_{\tau}"] \arrow[u,"U"] \arrow[r,"L_{\sigma}"] & \cS.
	 \end{tikzcd}
 \caption{A formula for $\cCFK(P(T_{2,3},0))$ when $N= \frac{3}{2}$.}
 \label{fig:general formula for L-space satellite operation on RHT}
\end{figure}

We perform another example and compute the $(3,2)$-cable of the right hand trefoil. The $DA$-bimodule structure of $L_P$ is drawn in Figure \ref{fig:(3,2)_torus}. We plug  the corresponding information into the above diagram shown in Figure~\ref{fig:general formula for L-space satellite operation on RHT}. One gets that the knot Floer complex of the $(3,2)$-cable of the right hand trefoil is homotopy equivalent to the one in Figure \ref{fig:(3,2) cable of RHT}. A further simplification gives the diagram in Figure \ref{fig:simplified C_{3,2}(T_{2,3})}.
\begin{figure}[h]
	\adjustbox{scale=0.8}{
	\begin{tikzcd}[labels={description}, shorten=-1mm]
		 \phantom{\circ}& [-5mm]\circ\arrow[dl, "Z"] \arrow[dr,"W"]& [-5mm] \phantom{\circ} & [-5mm] \phantom{\bullet}&[-5mm] \phantom{\bullet}&[-5mm]\phantom{\bullet} & \phantom{\circ} & [-4mm]\circ \arrow[dl, "Z"] \arrow[dr,"W"] \arrow[dd, pos = 0.8,"U"]& [-4mm] \phantom{\circ} &\phantom{\bullet}&[-5mm] \bullet \arrow[lll, bend right =20, "1"] \arrow[dl, "Z"] \arrow[dr,"W"]&[-5mm]\phantom{\bullet} &[-5mm]\phantom{\circ}& [-5mm]\phantom{\circ}& [-5mm] \phantom{\circ} \\[-1mm]
		 \circ & \phantom{\circ}&\circ& \phantom{\bullet}&\bullet \arrow[ll, "1"]\arrow[rr,"W"] \arrow[dd, "U"]& \phantom{\bullet} & \circ \arrow[dd,"U"]&  \phantom{\circ}& \circ \arrow[dd,"U"] &\bullet \arrow[lll, bend left=20, pos = 0.55, "1"] \arrow[ddr,bend right =20,"W^2"] \arrow[dll, pos = 0.8,"W"]&  \phantom{\bullet}&\bullet  \arrow[lll, bend left=20,pos = 0.25, "1"] \arrow[ddl,bend left =20, "U"] &\phantom{\circ}&  \phantom{\circ}&   \phantom{\circ} \\	
		 \phantom{\circ} & \phantom{\circ}&\phantom{\circ}& \phantom{\bullet}&\phantom{\bullet}  & \phantom{\bullet} & \phantom{\circ} &  \ast\arrow[dl, "Z"]\arrow[dr, "W"]& \phantom{\circ} &\phantom{\bullet}&  \phantom{\bullet}&\phantom{\bullet} &\phantom{\circ}&  \phantom{\circ}&   \phantom{\circ} \\
		 \phantom{\circ} & \phantom{\circ}&\phantom{\circ}& \phantom{\bullet}&\star\arrow[rr, "W"] & \phantom{\bullet} & \ast &  \phantom{\circ}& \ast &\phantom{\bullet}&  \star \arrow[ll, "1"]&\phantom{\bullet} &\phantom{\circ}&  \phantom{\circ}&   \phantom{\circ} \\[5mm]
		 \phantom{\circ} & \phantom{\circ}&\phantom{\circ}& \phantom{\bullet}&\bullet\arrow[rr, "1"] \arrow[u,"Z"] & \phantom{\bullet} & \circ \arrow[u,"U"]&  \circ \arrow[l,"Z"] \arrow[r,"W"] \arrow[uu,"U", pos = 0.4]& \circ \arrow[u,"U"] &\phantom{\bullet}&  \bullet \arrow[ll, "1"] \arrow[u,"U"]&\phantom{\bullet} &\phantom{\circ}&  \phantom{\circ}&   \phantom{\circ} \\[5mm]
		 \phantom{\circ} & \phantom{\circ}&\phantom{\circ}& \phantom{\bullet}&\star\arrow[rr, "1"] \arrow[from = u,"U"] & \phantom{\bullet} & \ast \arrow[from = u,"U"]&  \phantom{\ast}  & \ast \arrow[from = u,"U"] &\phantom{\bullet}&  \star \arrow[ll, "Z"] \arrow[from = u,"W"]&\phantom{\bullet} &\phantom{\circ}&  \phantom{\circ}&   \phantom{\circ} \\
		 \phantom{\circ} & \phantom{\circ}&\phantom{\circ}& \phantom{\bullet}&\phantom{\star}  & \phantom{\bullet} & \phantom{\ast} &  \ast \arrow[ul, "Z"] \arrow[ur,"W"] \arrow[from = uu, "U", pos = 0.4]  & \phantom{\ast}  &\phantom{\bullet}&  \phantom{\star} &\phantom{\bullet} &\phantom{\circ}&  \phantom{\circ}&   \phantom{\circ} \\
		 \phantom{\circ} & \phantom{\circ}&\phantom{\circ}& \bullet \arrow[uur, bend left =20, "U"] \arrow[rrr, bend left=25, pos = 0.25, "1"]&\phantom{\bullet}  & \bullet \arrow[uul, bend right =20, "Z^2"]  \arrow[rrr, bend left=20, pos = 0.55, "1"] \arrow[urr, pos = 0.8,"Z"]& \circ \arrow[uu,"U"]&  \phantom{\ast}  & \circ \arrow[uu,"U"] &\phantom{\bullet}&  \bullet\arrow[uu,"U"] \arrow[rr,"1"] \arrow[ll, "Z"]&\phantom{\bullet} &\circ&  \phantom{\circ}&   \circ \\
		  \phantom{\circ} & \phantom{\circ}&\phantom{\circ}& \phantom{\bullet}&\bullet \arrow[ul,"Z"] \arrow[ur,"W"] \arrow[rrr, bend right =20, "1"]  & \phantom{\bullet} & \phantom{\ast} &  \circ\arrow[ul, "Z"] \arrow[ur,"W"] \arrow[from = uu, "U", pos = 0.4]  & \phantom{\ast}  &\phantom{\bullet}&  \phantom{\star} &\phantom{\bullet} &\phantom{\circ}&  \circ \arrow[ul, "Z"] \arrow[ur, "W"]&   \phantom{\circ} \\		 
	\end{tikzcd}
}
\caption{A model of $\cCFK(C_{3,2}(T_{2,3})).$}
\label{fig:(3,2) cable of RHT}
\end{figure}

\begin{figure}[h]
	\adjustbox{scale=0.7}{
\begin{tikzcd}[row sep =  -1mm, column sep =-1mm, arrows={shorten = -1mm},labels={description}]
	\circ &[1.5cm]\phantom{ } & \circ \arrow[ll,"W"] \arrow[ddd,"Z^3"] \arrow[dll,"UZ"] & [1.5cm] \phantom{ } & [3.5cm] \phantom{ }\\[3.5cm]
	\star \arrow[dd,"Z"]& & &\circ \arrow[lll,"W^2",pos =0.6] \arrow[dll,"U"]\arrow[ddl,"U",pos = 0.6] \arrow[ddd,"Z^2",pos = 0.6]& \\[1.5cm]
	& \ast \arrow[dd,"Z"]& & & \circ\arrow[lll,"W^3"] \arrow[ddl,"UW"] \arrow[dd,"Z"] \\
	\ast & & \ast\arrow[ll,"W"] & &\\[1.5cm]
	& \ast & & \star\arrow[ll,"W"] & \circ
\end{tikzcd}
}
\caption{A simplified model of $\cCFK(C_{3,2}(T_{2,3}))$}
\label{fig:simplified C_{3,2}(T_{2,3})}
\end{figure}

\subsection{L-space satellite operations on L-space knots}

\label{exm:L-space satellite for L-space companion}
The observation in the previous example can be applied in the more general situation when the companion knot $K$ is itself an L-space knot, i.e., when $\cCFK(K)$ is a staircase. 

Recall from Section \ref{sec:definition of E,F,J,M} that $E_{s,t}$ is obtained by taking the first Alexander grading-$s$ part of the tensor product $\cCFK(K)\boxtimes\scE_{*,t}$. In words, it means a generator $\xs \in \cCFK(K)$ is replaced by the staircase $\cC_{t-\frac{1}{2}}$ if $A(x) \ge s+\frac{1}{2}$, and replaced by the staircase $\cC_{t+\frac{1}{2}}$ if $A(x) < s+ \frac{1}{2}$, where $A(\xs)$ is the Alexander grading of $\xs$. The maps in $E_{s,t}$ are such that if there is a differential \[\ys\xrightarrow{W^iZ^j} \xs\] in $\cCFK(K)$, then it is replaced by a map between staircases by the following rule: 
\begin{enumerate}
	\item If $A(\ys), A(\xs) \ge s+\frac{1}{2}$, it is replaced by $\cC_{t- \frac{1}{2}} \xrightarrow{U^j} \cC_{t-\frac{1}{2}}$.
	\item If $A(\ys),A(\xs)< s+\frac{1}{2}$, it is replaced by $\cC_{t+\frac{1}{2}} \xrightarrow{U^i} \cC_{t+\frac{1}{2}}$.
	\item If $A(\ys)<s+\frac{1}{2} \le A(\xs)$, it is replaced by $\cC_{t+ \frac{1}{2}} \xrightarrow{U^{s-1/2-A(\ys)}L_W} \cC_{t-\frac{1}{2}}$.
	\item If $A(\xs)<s+\frac{1}{2} \le A(\ys)$, it is replaced by $\cC_{t- \frac{1}{2}} \xrightarrow{U^{A(\ys)-s-1/2 }L_Z} \cC_{t+\frac{1}{2}}$.
\end{enumerate}
Also, for a general knot $K$, the differential on $E_{s,t}$ will contain some additional terms which correspond to the $\delta_3^1$ differentials on idempotent 0 of ${}_{\cK} \cX(L_P)^R$. If $\cCFK(K)$ is a staircase, then there are no composable arrows in the differential of $\cCFK(K)$, and therefore these $\delta_3^1$ will make no contribution to the differential on $E_{s,t}$.

 We now use the notation from Section~\ref{sec:staircase-complexes} for staircase complexes, i.e., we say that $\cCFK(K)$ has an $R$-basis $\xs_0,\ys_1,\xs_2,\dots, ,\ys_{2n-1},\xs_{2n}$ with respect to which the differential takes the following form:
\[
\d(\xs_{2i})=0\quad \text{and} \quad \d(\ys_{2i+1})=\xs_{2i}\otimes W^{\a_{i}}+\xs_{2i+2}\otimes Z^{\b_{i}}
\]
for some $\a_{i},\b_{i}>0$. Then, we can replace $E_{s,t}$ by a single staircase as in the previous example. More explicitly:
\begin{enumerate}
	\item If $s<A(\xs_{2n})=-g(K)$, then all the horizontal maps in the original staircase (weighted by $W^{\a_i}$) become identity maps between $\cC_{t- \frac{1}{2}}$ in $E_{s,t}$, so $E_{s,t}$ strongly deformation retracts to the staircase $\xs_{2n}\boxtimes\cC_{t- \frac{1}{2}}$.
	\item If $s>A(\xs_{0})=g(K)$, then all the vertical maps in the original staircase (weighted by $Z^{\b_i}$) become identity maps between $\cC_{t+\frac{1}{2}}$ in $E_{s,t}$, so  $E_{s,t}$ strongly deformation retracts to the staircase $\xs_0\boxtimes\cC_{t+ \frac{1}{2}}$.
	\item If $A(\ys_{2i+1})<s<A(\xs_{2i})$ for some $i$, (note that $s \in \mathbb{Z}+\frac{1}{2}$, so it will never equal to $A(\xs_{2i})$ or $A(\ys_{2i+1})$), then all the horizontal maps $ \ys_{2j+1} \to \xs_{2j}$ become identity map between $\cC_{t-\frac{1}{2}}$ for $j<i$, and all the vertical maps $ \ys_{2j+1} \to \xs_{2j+2}$ becomes identity map between $\cC_{t+\frac{1}{2}}$ for $j\ge i$. Therefore, $E_{s,t}$ strongly deformation retracts to the staircase $\xs_{2i}\boxtimes\cC_{t- \frac{1}{2}}$.
	\item If $A(\xs_{2i+2})<s<A(\ys_{2i+1})$ for some $i$, the situation is symmetric to the previous case under flipping about the slope $ 45^{\circ}$ line passing through the generator $\ve{x}_n$, 
	and $E_{s,t}$ strongly deformation retracts to the staircase $\xs_{2i+2}\boxtimes\cC_{t+ \frac{1}{2}}$.
\end{enumerate}
In Figure \ref{fig:simplification of E and F for staircase input}, we have illustrated the situation when $\cCFK(K)$ is given by the one in Figure~\ref{fig:cable6}.

The situation for $F_{s,t}$ is similar and simpler. Each generator $\xs$ in $\cCFK(K)$ is replaced by the staircase $\cS$. Each map is replaced by a multiplication by certain multiple of $U$ from $\cS$ to itself. The specific multiplicity $U$ is determined by a formula as before. Also, there is no non-trivial $\delta_3^1$ on $\scF_{*,t}$, so no extra differentials in $F_{s,t}$ corresponding to the $\delta_3^1$-action. Again, see Figure \ref{fig:simplification of E and F for staircase input} for an illustration.
Therefore, each $F_{s,t}$ strongly deformation retracts to a copy of the staircase $\cS$, which corresponds to $\xs_{i}\boxtimes \cS$ for different $i$'s depending on the value of $s$, determined the same as in the case of $E_{s,t}$.

\begin{figure}[h]
		\adjustbox{scale=0.9}{
			\tikz[
			overlay]{
				\draw[dashed,red] (3.1,4.6)--(4,5.6);
				\draw[dashed, red]
				(3,-4)--(4,-3);
				\draw[dashed,red](6.5,5.3 )--(8,6.7);
				\draw[dashed,red](6.5,-3.4 )--(8,-2);
				\draw[dashed,red](10,5.5)--(11.3,7);
				\draw[dashed,red](10,-3.1)--(11.3,-1.7);
				\draw[dashed,red](13.4,5.8)--(14.7,7.3);
				\draw[dashed,red](13.3,-2.9)--(14.6,-1.5);
				\draw[dashed,red](1.1,2)--(2.4,3.5);
				\draw[dashed,red](1.1,-6.6)--(2.4,-5.1);
				\draw[dashed,red](4.8,2.7)--(6.3,4.2);
				\draw[dashed,red](4.8,-5.9)--(6.3,-4.4);
				\draw[dashed,red](8.3,3)--(9.8,4.5);
				\draw[dashed,red](8.2,-5.6)--(9.7,-4.1);
				\draw[dashed,red](11.6,3.5)--(12.5,4.5);
				\draw[dashed,red](11.6,-5.1)--(12.5,-4.1);
			}
	\begin{tikzcd}[labels={description}, row sep=5mm, column sep=small,shorten=-1mm]
   \cC_0& \cC_0 \arrow[l,"1"] \arrow[dd,"U^2"] &[-7mm] &[-6mm] \phantom{\cC}&[-3mm] &[-3mm]   \cC_0& \cC_0 \arrow[l,"1"] \arrow[dd,"U^2"]& [-7mm] &[-6mm]   \phantom{\cC}&[-3mm] &[-3mm]   \cC_0 &[2mm] \cC_0 \arrow[l,"1"] \arrow[dd,"U^2"]&[-7mm] &[-6mm] \phantom{\cC}& [-3mm] &[-3mm]\cC_0& \cC_0 \arrow[l,"1"] \arrow[dd,"U^2"]&[-7mm] &[-6mm] & \phantom{\cC}\\
   &  & & & &    &  & &  & &  &  & &   & &    & & &\\
   & \cC_0 & & \cC_0 \arrow[ll,"1"] & &    & \cC_0 & & \cC_0 \arrow[ll,"1"]  & &  &\cC_0  & & \cC_1 \arrow[ll, "L_W"]  & &    &\cC_0 &  &\cC_1 \arrow[ll,"UL_{W}"] \\
    & & & \cC_0 \arrow[from = u,"U"] & &    &   & & \cC_1\arrow[from =u,"L_Z"]  & &  &\  & & \cC_1 \arrow[from = u, "1"]  & &    &  &  &\cC_1 \arrow[from =u,"1"] \\[-5mm]
    &  &E_{-\frac{7}{2},\frac{1}{2}} & & &    &  &E_{-\frac{5}{2},\frac{1}{2}} &  & &  &  &E_{-\frac{3}{2},\frac{1}{2}} &   & &    & &E_{-\frac{1}{2},\frac{1}{2}} &\\[-2mm]
    \cC_0& \cC_0 \arrow[l,"1"] \arrow[dd,"UL_Z"] &[-7mm] &[-6mm] \phantom{\cC}&[-3mm] &[-3mm]   \cC_0& \cC_0 \arrow[l,"1"] \arrow[dd,"L_Z"]& [-7mm] &[-6mm]   \phantom{\cC}&[-3mm] &[-3mm]   \cC_0 & \cC_1 \arrow[l,"L_W"] \arrow[dd,"1"]&[-7mm] &[-6mm] \phantom{\cC}& [-3mm] &[-3mm]\cC_1& \cC_1 \arrow[l,"U"] \arrow[dd,"1"]&[-7mm] &[-6mm] & \phantom{\cC}\\
    &  & & & &    &  & &  & &  &  & &   & &    & & &\\
    & \cC_1 & & \cC_1 \arrow[ll,"U^2"] & &    & \cC_1 & & \cC_1 \arrow[ll,"U^2"]  & &  &\cC_1  & & \cC_1 \arrow[ll, "U^2"]  & &    &\cC_1 &  &\cC_1 \arrow[ll,"U^2"] \\
    & & & \cC_1 \arrow[from = u,"1"] & &    &   & & \cC_1\arrow[from =u,"1"]  & &  &\  & & \cC_1 \arrow[from = u, "1"]  & &    &  &  &\cC_1 \arrow[from =u,"1"] \\[-5mm]
    &  &E_{\frac{1}{2},\frac{1}{2}} & & &    &  &E_{\frac{3}{2},\frac{1}{2}} &  & &  &  &E_{\frac{5}{2},\frac{1}{2}} &   & &    & &E_{\frac{7}{2},\frac{1}{2}} &\\[-2mm]
    \cS& \cS \arrow[l,"1"] \arrow[dd,"U^2"] &[-7mm] &[-6mm] \phantom{\cC}&[-3mm] &[-3mm]   \cS& \cS \arrow[l,"1"] \arrow[dd,"U^2"]& [-7mm] &[-6mm]   \phantom{\cC}&[-3mm] &[-3mm]   \cS &[2mm] \cS \arrow[l,"1"] \arrow[dd,"U^2"]&[-7mm] &[-6mm] \phantom{\cC}& [-3mm] &[-3mm]\cS& \cS \arrow[l,"1"] \arrow[dd,"U^2"]&[-7mm] &[-6mm] & \phantom{\cC}\\
    &  & & & &    &  & &  & &  &  & &   & &    & & &\\
    & \cS & & \cS \arrow[ll,"1"] & &    & \cS & & \cS \arrow[ll,"1"]  & &  &\cS & & \cS \arrow[ll, "U"]  & &    &\cS &  &\cS \arrow[ll,"U^2"] \\
    & & & \cS \arrow[from = u,"U"] & &    &   & & \cS\arrow[from =u,"1"]  & &  &\  & & \cS \arrow[from = u, "1"]  & &    &  &  &\cS \arrow[from =u,"1"] \\[-5mm]
    &  &F_{-\frac{7}{2},\frac{1}{2}} & & &    &  &F_{-\frac{5}{2},\frac{1}{2}} &  & &  &  &F_{-\frac{3}{2},\frac{1}{2}} &   & &    & &F_{-\frac{1}{2},\frac{1}{2}} &\\[-2mm]
    \cS& \cS \arrow[l,"1"] \arrow[dd,"U"] &[-7mm] &[-6mm] \phantom{\cC}&[-3mm] &[-3mm]   \cS& \cS \arrow[l,"1"] \arrow[dd,"1"]& [-7mm] &[-6mm]   \phantom{\cC}&[-3mm] &[-3mm]   \cS &[2mm] \cS \arrow[l,"U"] \arrow[dd,"1"]&[-7mm] &[-6mm] \phantom{\cC}& [-3mm] &[-3mm]\cS& \cS \arrow[l,"U"] \arrow[dd,"1"]&[-7mm] &[-6mm] & \phantom{\cC}\\
    &  & & & &    &  & &  & &  &  & &   & &    & & &\\
    & \cS & & \cS \arrow[ll,"U^2"] & &    & \cS & & \cS \arrow[ll,"U^2"]  & &  &\cS & & \cS \arrow[ll, "U^2"]  & &    &\cS &  &\cS \arrow[ll,"U^2"] \\
    & & & \cS \arrow[from = u,"1"] & &    &   & & \cS\arrow[from =u,"1"]  & &  &\  & & \cS \arrow[from = u, "1"]  & &    &  &  &\cS \arrow[from =u,"1"] \\[-5mm]
    &  &F_{\frac{1}{2},\frac{1}{2}} & & &    &  &F_{\frac{3}{2},\frac{1}{2}} &  & &  &  &F_{\frac{5}{2},\frac{1}{2}} &   & &    & &F_{\frac{7}{2},\frac{1}{2}} & 
	\end{tikzcd}
}
\caption{Examples of $E_{s,t}$ and $F_{s,t}$ when the input companion knot is an L-space knot and $t= \frac{1}{2}$. The red dashed line indicating where the Alexander grading equals $s$.}
\label{fig:simplification of E and F for staircase input}
\end{figure}
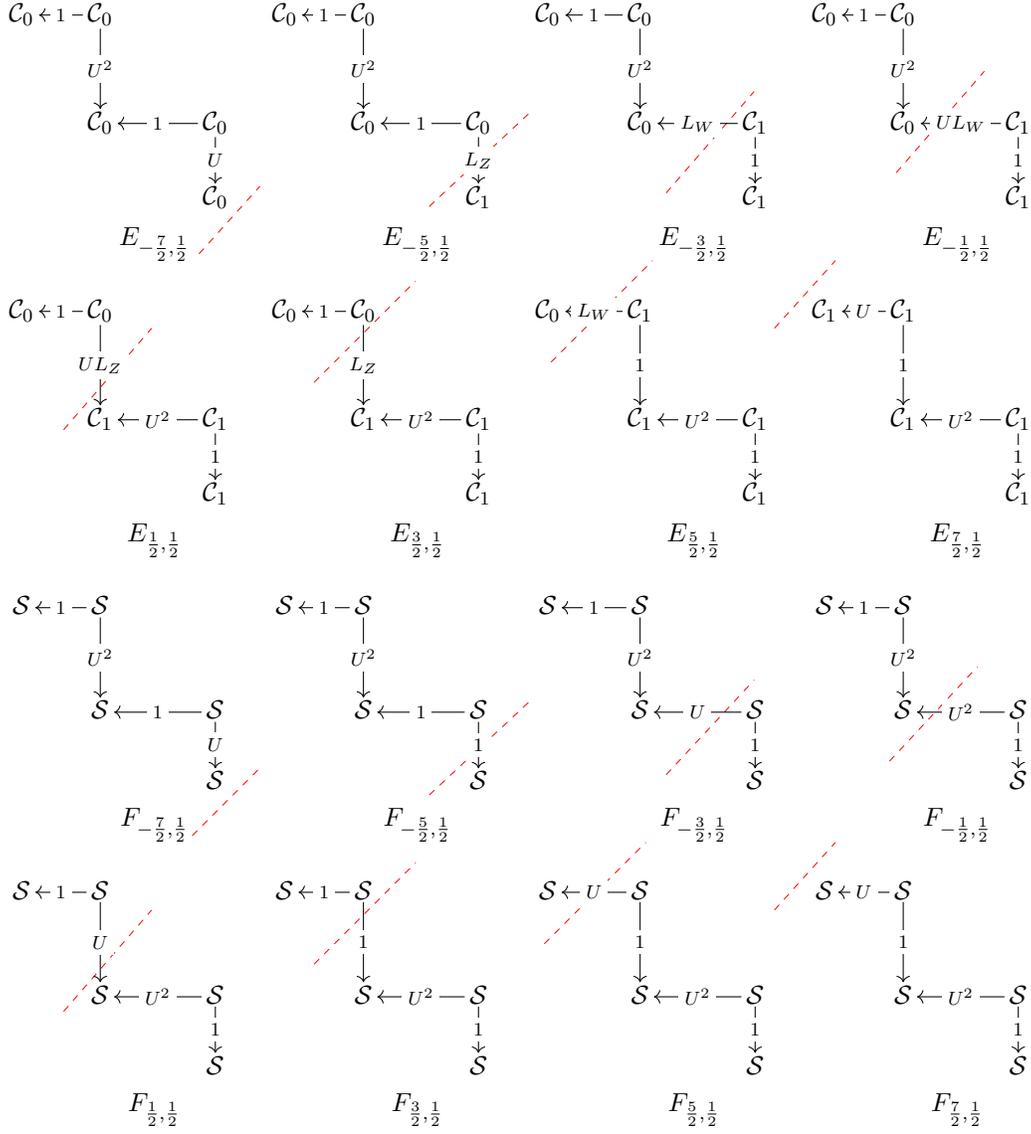

Applying the homological perturbation lemma as in Lemma \ref{lem:HPL-chain-complexes}, the row consisting of $E_{*,t}$ and  $F_{*,t}$ is homotopy equivalent to one such that each $F_{s,t}$ is replaced by a copy of $\cS$, and each $E_{s,t}$ is replaced by either
\begin{enumerate}
	\item a copy of $\cC_{t-\frac{1}{2}}$, if $s<-g(K)$, or $A(\ys_{2i+1})<s<A(\xs_{2i})$ for some $i$; or
	\item a copy of $\cC_{t+ \frac{1}{2}}$, if $s>g(K)$, or $A(\xs_{2i+2})<s<A(\ys_{2i+1})$ for some $i$.
\end{enumerate}

The maps $\Phi^{\mu}$ are replaced by $L_{\sigma}$, the maps $\Phi^{-\mu}$ are replaced by $L_{\tau}$. The maps $\Phi^{K}$ and $\Phi^{-K}$ could be worked out similarly, and are either $U^j\bI, U^jL_Z$ or $U^jL_W$ for some power of $U$. Length-$2$ maps $ \Phi^{\pm K,\pm \mu}$ are similarly given by $U^jh_{\sigma/\tau,W/Z} $ for some powers of $U$, which compensate for the non-commutativity of the square.

More explicitly, the map $\Phi^{K}:E_{s,t} \to J_{s,t} $ in the simplified model has the following description. Recall that $H_K$ is the $H$-function of the companion knot $K$ as in Definition \ref{def:H function}. 
 \begin{enumerate}
 	\item  If $s<-g(K)$, or $A(\ys_{2i+1})<s<A(\xs_{2i})$ for some $i$, then $\Phi^{K}:E_{s,t} \to J_{s,t} $ is given by	\[ U^{H_K(s+\frac{1}{2})}L_{Z}:\cC_{t-\frac{1}{2}} \to \cC_{t+\frac{1}{2}}.\]
 	\item If $s>g(K)$, or $A(\xs_{2i+2})<s<A(\ys_{2i+1})$ for some $i$, then $\Phi^{K}:E_{s,t} \to J_{s,t} $ is given by\[U^{H_K(s+ \frac{1}{2})}\bI :\cC_{t+\frac{1}{2}} \to \cC_{t+\frac{1}{2}}.\]
 \end{enumerate}

The map $\Phi^K:F_{s,t} \to M_{s,t}$ in the simplified model is given by \[U^{H_K(s+\frac{1}{2})}\bI:\cS \to \cS .\]

The maps $\Phi^{-K}: E_{s,t} \to J_{s+n,t-1}$ and $\Phi^{-K}:F_{s,t} \to M_{s+n,t-1}$ have a similar description, which could be obtained using the symmetry. More explicitly, the map $\Phi^{-K}:E_{s,t}\to J_{s+n,t-1}$ in the simplified model has the description:

\begin{enumerate}
	\item If If $s<-g(K)$, or $A(\ys_{2i+1})<s<A(\xs_{2i})$ for some $i$, then $\Phi^{-K}: E_{s,t} \to J_{s+n,t-1}$ is given by	\[ U^{s-\frac{1}{2}+H_K(s-\frac{1}{2})}\bI:\cC_{t-\frac{1}{2}} \to \cC_{t-\frac{1}{2}}.\]
	\item If $s>g(K)$, or $A(\xs_{2i+2})<s<A(\ys_{2i+1})$ for some $i$, then $\Phi^{-K}: E_{s,t} \to J_{s+n,t-1}$ is given by \[ U^{s-\frac{1}{2}+H_K(s-\frac{1}{2})}L_{W} :\cC_{t+\frac{1}{2}} \to \cC_{t-\frac{1}{2}}.\]
\end{enumerate}

The map $\Phi^K:F_{s,t} \to M_{s,t}$ in the simplified model is given by \[U^{s+\frac{1}{2}+H_K(s+\frac{1}{2})}\bI:\cS \to \cS.\]

Therefore, when the input companion knot $K$ is an L-space knot, the $2$-dimensional grid 
\[ \bX(P,K,n)^{\bF[W,Z]} =\cX_n(Y,K)^{\cK}\boxtimes {}_{\cK} \cH_-^{\cK} \boxtimes {}_{\cK} \cX(L_P)^{\bF[W,Z]} \]
is homotopy equivalent to another one, which consists of a copy of $\cC_{t_0}$ or $\cS$ at each grid point, and the maps between them are given by maps between these staircases, without further tensoring. Then, one can apply the truncation process described in Proposition~\ref{prop:truncation} to get a finitely generated model of $\cCFK(P(K,n))$.

\subsection{L-space satellite operations on the figure-eight knot}

	\label{exm:L-space satellite for square complex}
	The type-$D$ module for the figure-eight knot splits as a direct sum of the type-$D$ module for an unknot, as well as a one-by-one box:
	\[
	\begin{tikzcd}[labels=description] \bullet \ar[d, "\sigma+\tau"] \\ \bullet
	\end{tikzcd}
	\begin{tikzcd}[labels=description]
	\bullet \ar[d, "Z"] &\bullet\ar[d, "Z"]  \ar[l, "W"]\\
	\bullet & \bullet \ar[l, "W"] 
	\end{tikzcd}.
	\]
	Tensoring with the unknot summand gives the same result as applying this satellite operation to the unknotted companion, which gives $\cCFK(P(U,n))$ for the $n$-twisted pattern. See Section~\ref{exm:unknot companion} for the result.

	Therefore it suffices to compute the effect of the satellite operation on the box complex. Note that since the one-by-one box is supported in idempotent 0, the $J_{s,t}$ and $M_{s,t}$ complexes will vanish. In particular, each row of $E_{*,t}$ and $F_{*,t}$ complexes forms a summand of $\bX(P,K,n)$.
	 The rows consisting of $E_{*,t}$ and $F_{*,t}$, are shown in Figure~\ref{fig:E and F rows for box input}.   Here, we illustrate the case that $t=\frac{1}{2}$, though the row for all other $t$ take a similar form.
	 \begin{figure}[h]
		\adjustbox{scale=0.9}{
		\begin{tikzcd}[labels=description,column sep=9mm,shorten =-1mm]
		 \cdots& [-9mm]\cS \arrow[d,"U" ]&\cS\arrow[l,"1"] \arrow[d,"U"]& \cC_0 \arrow[d,"U"] \arrow[ll, bend right=30, "L_{\tau}"] \arrow[rr, bend left =30,"L_{\sigma}"]& \cC_0 \arrow[l,"1"] \arrow[d,"L_Z"]\arrow[dl, "h_{W,Z}"] \arrow[ll, bend right=30, "L_{\tau}"] \arrow[rr, bend left =30,"L_{\sigma}"] \arrow[dll,"h_{\tau,Z}",pos= 0.75, bend right =15] \arrow[drr,"h_{\sigma,Z}",pos = 0.2, bend left =15]&\cS \arrow[d,"U"]& \cS\arrow[d,"1"] \arrow[l,"1"]& \cC_0 \arrow[d,"L_Z"] \arrow[ll, bend right=30, "L_{\tau}"] \arrow[rr, bend left =30,"L_{\sigma}"] \arrow[dll, pos=0.2, "h_{\tau,Z}", bend right =15] \arrow[drr, pos = 0.75, "h_{\sigma, Z}", bend left =13] &\cC_1\arrow[l,"L_W"] \arrow[dl,"h_{Z,W}"] \arrow[ll, bend right=30, "L_{\tau}"] \arrow[rr, bend left =30,"L_{\sigma}"] \arrow[d,"1"] \arrow[r, bend right =20, "h_{\sigma,W}"] \arrow[lll, bend right =40, "h_{\tau,W}"]&\cS \arrow[d,"1"]&\cS \arrow[l,"U"] \arrow[d,"1"]&[-9mm]\cdots\\
		 \cdots& \cS &\cS\arrow[l,"1"]& \cC_0 \arrow[rr, bend right =30,"L_{\sigma}"] \arrow[ll, bend left=30, "L_{\tau}"]& \cC_1 \arrow[l,"L_W"] \arrow[rr, bend right =30,"L_{\sigma}"] \arrow[ll, bend left=30, "L_{\tau}"] \arrow[r, bend left=20, "h_{\sigma,W}"] \arrow[lll, bend left =40, "h_{\tau,W}"] &\cS & \cS \arrow[l,"U"]& \cC_1 \arrow[rr, bend right =30,"L_{\sigma}"] \arrow[ll, bend left=30, "L_{\tau}"] &\cC_1\arrow[l,"U"] \arrow[rr, bend right =30,"L_{\sigma}"] \arrow[ll, bend left=30, "L_{\tau}"]&\cS&\cS \arrow[l,"U"]&[-9mm]\cdots 
	 \end{tikzcd}
		} 
	\caption{The $E_{*,\frac{1}{2}}$, $F_{*,\frac{1}{2}}$ rows when the input is a square complex.}
	\label{fig:E and F rows for box input}
	\end{figure}
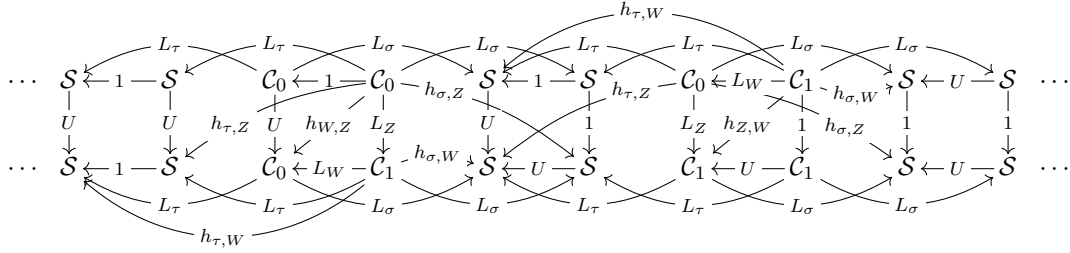

After canceling contractible sub and quotient complexes, the row consisting of $E_{*,t}$ and $F_{*,t}$ is homotopy equivalent to the complex $\frC_t$ shown in Figure~\ref{fig:simplified complex for square input}.
\begin{figure}[h]
\[
\frC_t=\begin{tikzcd}[labels={description},shorten=-1mm]
	 \cC_{t+\frac{1}{2}} \arrow[r, "L_{\sigma}"] \arrow[d, "L_{W}"] \arrow[dr, "h_{\sigma, W}"]& \cS \arrow[d,"U"]& \cC_{t-\frac{1}{2}} \arrow[l,"L_{\tau}"] \arrow[d,"L_{Z}"] \arrow[dl,"h_{\tau,Z}"]\\
	 \cC_{t-\frac{1}{2}} \arrow[r,"L_{\sigma}"] & \cS & \cC_{t+\frac{1}{2}} \arrow[l, "L_{\tau}"] 
\end{tikzcd}.
\]
\caption{The complex $\frC_t$, which is a simplification of the $E_{*,t}$, $F_{*,t}$ rows when the input is a box complex.}
\label{fig:simplified complex for square input}
\end{figure}

We observe that the complex in Figure~\ref{fig:simplified complex for square input} is contractible when  $t\ge N+\frac{1}{2}$. Indeed, when $t\ge N+\frac{1}{2}$, the maps 
\[
L_Z\to \cC_{t-\frac{1}{2}}\to \cC_{t+\frac{1}{2}} \quad \text{and} \quad L_\sigma\colon \cC_{t\pm \frac{1}{2}}\to \cS
\]
 are homotopy equivalences by definition of $N$. A symmetric argument shows that the complex in Figure~\ref{fig:simplified complex for square input} in contractible if $t\le -N-\frac{1}{2}$.

Therefore, the final result when the companion knot is the figure-eight knot $4_1$ is homotopy equivalent to
\[\cCFK(P(4_1,n))\simeq \cCFK(P(U,n)) \oplus \bigoplus_{-N<t<N}\mathfrak{C}_t,\]
where $ P(U,n)$ is the $n$-twisted pattern ($n$-framed satellite operation with pattern $P$ on the unknotted companion), and $\mathfrak{C}_t$ is the complex as in Figure \ref{fig:simplified complex for square input}.

One can also use the grading shift formula in Section in \ref{sec:grading} to compute the absolute $(\gr_{\ws},\gr_{\zs})$ grading of each $\mathfrak{C}_t$. Let \[ \xi =\frac{\left(1-(2t-l)^2\right)n}{4}+1,\] which is the expression $d$ in Section \ref{sec:grading} when setting $s = -\frac{1}{2}$, then the graded version of $\mathfrak{C}_t$ takes the following form, where $\left[a,b\right]$ denotes an upward shift of $a$ in $\gr_{\ws}$-grading and upward shift of $b$ in $\gr_{\zs}$-grading.  
\begin{figure}[h]
	\adjustbox{scale=0.8}{
	\begin{tikzcd}[labels={description},shorten=-1mm]
		\cC_{t+\frac{1}{2}} \left[\xi-1,\xi+2t+l-2tnl\right] \arrow[r, "L_{\sigma}"] \arrow[d, "L_{W}"] \arrow[dr, "h_{\sigma, W}"]& \cS\left[\xi-2,\xi-2-2tnl\right] \arrow[d,"U"]& \cC_{t-\frac{1}{2}} \left[\xi-2t+l,\xi-1-2tnl\right] \arrow[l,"L_{\tau}"] \arrow[d,"L_{Z}"] \arrow[dl,"h_{\tau,Z}"]\\
		\cC_{t-\frac{1}{2}}\left[\xi,\xi-1+2t+l-2tnl\right] \arrow[r,"L_{\sigma}"] & \cS\left[\xi-1,\xi-1-2tnl\right] & \cC_{t+\frac{1}{2}}\left[\xi-1-2t+l,\xi-2tnl\right] \arrow[l, "L_{\tau}"] 
	\end{tikzcd}
}
\end{figure}

\begin{rem}
	As the knot Floer complex of a homologically thin knot $K$ with $\tau(K)\ge 0$ is a direct sum of box complexes with a staircase, the result in Section \ref{exm:L-space satellite for L-space companion} and \ref{exm:L-space satellite for square complex} allows an efficient computation for L-space satellite for such knot. Unfortunately, when the input companion knot is the mirror of an L-space knot, we can not perform a simplification as in Section \ref{exm:L-space satellite for L-space companion} to get a much simpler expression.
\end{rem}
\bibliographystyle{custom}
\def\MR#1{}
\bibliography{biblio}

\end{document}